\date{\today}
\renewcommand{\chaptermark}[1]{\markboth{#1}{}}
\tikzset{
	edge node/.code={%
		\expandafter\def\expandafter\tikz@tonodes\expandafter{\tikz@tonodes #1}}}
\tikzset{
	subseteq/.style={
		draw=none,
		edge node={node [sloped, allow upside down, auto=false]{$\subseteq$}}},
	Subseteq/.style={
		draw=none,
		every to/.append style={
			edge node={node [sloped, allow upside down, auto=false]{$\subseteq$}}}
	}
}
\newcommand{\fC}{{\mathfrak C}}
\newcommand{\cM}{{\mathcal M}}
\newcommand{\cN}{{\mathcal N}}
\newcommand{\cB}{{\mathcal B}}
\newcommand{\bN}{{\mathbf{N}}}
\newcommand{\bR}{{\mathbf{R}}}
\newcommand{\bZ}{{\mathbf{Z}}}
\newcommand{\bQ}{{\mathbf{Q}}}
\newcommand{\topo}{{\mathrm{top}}}
\newcommand\Lasc{{\mathrm{L}}}
\newcommand\KP{{\mathrm{KP}}}
\newcommand\Sh{{\mathrm{Sh}}}
\newcommand{\restr}{\mathord{\upharpoonright}}
\newcommand{\EZ}{\mathrel{ { {\mathbf E}_0 } } }
\newcommand{\Er}{\mathrel{E}}
\newcommand{\lang}{{\mathcal L}}
\newcommand{\catg}{{\mathcal C}}
\newcommand{\powerset}{{\mathcal P}}
\newcommand{\proves}{\vdash}
\newcommand{\Rr}{\mathrel{R}}
\DeclareMathOperator{\SO}{{SO}}
\DeclareMathOperator{\GL}{{GL}}
\DeclareMathOperator{\st}{{st}}
\DeclareMathOperator{\cl}{{cl}}
\DeclareMathOperator{\tp}{{tp}}
\DeclareMathOperator{\acl}{{acl}}
\DeclareMathOperator{\dcl}{{dcl}}
\DeclareMathOperator{\Th}{{Th}}
\DeclareMathOperator{\Gal}{{Gal}}
\DeclareMathOperator{\id}{{id}}
\DeclareMathOperator{\Aut}{{Aut}}
\DeclareMathOperator{\Homeo}{{Homeo}}
\DeclareMathOperator{\Autf}{{Aut\mkern 0.5\thinmuskip f}}
\DeclareMathOperator{\Core}{{Core}}
\DeclareMathOperator{\Stab}{{Stab}}
\DeclareMathOperator{\Souslin}{{\mathcal A}}
\let\unlhd\trianglelefteq
\let\leq\leqslant
\let\geq\geqslant
\newtheorem{mainthm}{Main Theorem}
\newtheorem{thm}{Theorem}[chapter]
\newtheorem{ques}[thm]{Question}
\newtheorem{lem}[thm]{Lemma}
\newtheorem{fct}[thm]{Fact}
\newtheorem{cor}[thm]{Corollary}
\newtheorem{prop}[thm]{Proposition}
\newtheorem{qu}[thm]{Question}
\theoremstyle{remark}
\newtheorem{rem}[thm]{Remark}
\theoremstyle{definition}
\newtheorem{dfn}[thm]{Definition}
\newtheorem*{clm*}{Claim}
\newtheorem{ex}[thm]{Example}
\newcounter{claimcounter}[thm]
\newenvironment{clmproof}[1][\proofname]{\proof[#1]}{\endproof}
\newcommand{\xqed}[1]{%
	\leavevmode\unskip\penalty9999 \hbox{}\nobreak\hfill
	\quad\hbox{\ensuremath{#1}}}
\title{Bounded Invariant Equivalence Relations}
\author{Tomasz Rzepecki}
\date{\today}
\begin{document}
	\newpage
	\thispagestyle{empty}
	\microtypesetup{disable}
	\makeatletter
	\begin{center}
		\textbf{\large Uniwersytet Wrocławski\\
			Wydział Matematyki i Informatyki\\
			Instytut Matematyczny}\\
		\vspace{4cm}
		\textbf{\textit{\large \@author}\\
			\vspace{0.5cm}
			{\Large \@title}}\\
	\end{center}
	\vspace{3cm}
	{\large \hspace*{6.5cm}doctoral thesis\\
		\hspace*{6.5cm}supervised by\\
		\hspace*{6.5cm}prof. Krzysztof Krupiński}\\
	\vfill
	\begin{center}
		{\large Wrocław 2018}\\
	\end{center}
	\makeatother
	\newpage
	\thispagestyle{empty}
	\makeatletter
	\begin{center}
		\textbf{\large Uniwersytet Wrocławski\\
			Wydział Matematyki i Informatyki\\
			Instytut Matematyczny}\\
		\vspace{4cm}
		\textbf{\textit{\large \@author}\\
			\vspace{0.5cm}
			{\Large Ograniczone, niezmiennicze relacje równoważności}}\\
	\end{center}
	\vspace{3cm}
	{\large \hspace*{6.5cm}praca doktorska\\
		\hspace*{6.5cm}promotor:\\
		\hspace*{6.5cm}prof. dr hab. Krzysztof Krupiński}\\
	\vfill
	\begin{center}
		{\large Wrocław 2018}\\
	\end{center}
	\makeatother
	\newpage
	\microtypesetup{enable}

	\section*{Abstract}
	\thispagestyle{headings}
	\vspace{-.5\baselineskip}
	We study strong types and Galois groups in model theory from a topological and descriptive-set-theoretical point of view.
	
	The main results of the thesis are the following:
	\begin{itemize}
		\item
		we present the (Lascar) Galois group of an arbitrary countable first-order theory (as a topological group, and --- to a degree --- as a ``Borel quotient") as the quotient of a compact Polish group (which is a certain quotient of the Ellis group of a dynamical system associated with the automorphism group of a suitable countable model) by a normal $F_\sigma$ subgroup; we also show that all strong type spaces are ``locally" the quotient of the same group by a subgroup (which is not necessarily normal, but is Borel if the strong type is Borel);
		\item
		we show that a bounded invariant equivalence relation on the set of realisations of a single complete type is either relatively definable (and thus has finitely many classes), type-definable with at least continuum many classes, or (assuming that the theory is countable) non-smooth in the descriptive-set-theoretic sense (in which case, if it is analytic, it also has at least continuum many classes);
		\item
		we find a sufficient condition for a bounded invariant equivalence relation under which its type-definability is equivalent to type-definability of all of its classes; this is enough to show that (under this condition) smoothness is equivalent to type-definability.
	\end{itemize}
	The first result is joint with Krzysztof Krupiński, the second one is joint with Krzysztof Krupiński and Anand Pillay, while the third is mine alone.
	
	In this thesis, I consider the more abstract case of an equivalence relation invariant under a group action, satisfying various additional assumptions. This allows us to prove general principles which imply the results mentioned above, as well as similar results in several different contexts in model theory and beyond, e.g.\ related to model-theoretic group components and compact group actions.
	
	Thus we extend a previous result of Kaplan and Miller and (independently) of mine and Krupiński about equivalence of smoothness and type-definability for certain $F_\sigma$ strong types (solving some open problems from earlier papers), as well as the theorem of Krupiński and Pillay about presenting the quotient of a definable group by its model-theoretic connected component as the quotient of a compact group by a subgroup.
	
	Furthermore, the obtained results bring new perspective on several open problems related to Borel cardinalities of strong types in model theory, and the methods developed both exploit and highlight the connections between model theory, topological dynamics and Banach space theory, extending previously known results in that area.
	
	\newpage
	\thispagestyle{empty}
	\section*{Thanks}
	{\setlength{\parskip}{1em}\setlength{\parindent}{0em}
		To my advisor, Krzysztof Krupiński, for his patience and invaluable help in polishing this thesis, as well as his enormous support (moral and mathematical) throughout my doctoral studies.
		
		To both Krzysztof and Anand Pillay for all the insights and the work they have done on our papers.
		
		To Itay Kaplan and the anonymous referees for helpful comments about my papers.
		
		To my colleagues in Wrocław, for the stimulating seminars, interesting discussions and all the great time we have had over the years.
		
		To the model theory group at the Hebrew University in Jerusalem, for providing a supportive environment in which I finished the thesis.
		
		To my mathematics teachers, in particular to Barbara Obremska and Grzegorz Lichwa, for giving me the opportunity to broaden my horizons and for encouraging me to pursue mathematical career.
		
		To all the friends and colleagues I did not mention.
		
		To my parents, for always being there for me.
		
		To Karolina, for everything.
		
		Thanks to all of you.
	}

	\microtypesetup{protrusion=false}
	
	\tableofcontents
	\microtypesetup{protrusion=true}

	\chapter{Introduction}
	
	\section{Strong types in model theory}
	Strong types originally arose from the study of forking, which is one of the most important notions in modern model theory.
	
	In his classification theory (see \cite{Sh90} for the second edition), Shelah introduced the notion of a \emph{strong type} of a tuple $a$ over a set $A$ (which, for $A=\emptyset$ corresponds to a single class of the relation $\equiv_\Sh$ defined in Definition~\ref{dfn:class_stp}), which turned out to be a central notion in the study of stable theories, as these strong types correspond exactly to types which have unique global nonforking extensions (see \cite[Corollary 2.9]{Sh90}).
	
	In his paper \cite{Las82}, Lascar introduced the notion of a model-theoretic Galois group (see Definition~\ref{definition: Galois groups}), as well as what is now called the Lascar strong type (see $\equiv_\Lasc$ in Definition~\ref{dfn:class_stp}). Loosely speaking, they were used to recover the theory of some $\omega$-categorical structures from the categories of their models (with elementary maps as morphisms).
	
	In stable theories, the Lascar strong types and the Shelah strong types coincide. In the more general class of simple theories, the Lascar strong types coincide with so-called Kim-Pillay strong types. Like the Shelah strong types in stable theories, they turned out to be useful in the study of simple theories (particularly for the general formulation of the independence theorem, which is one of the most important fundamental results in simplicity theory; see \cite[Corollary 10.9]{Cas11}).
	
	Furthermore, Lascar strong types also appear in the study of forking in generalisations of stability and simplicity (especially in NIP and NTP$_2$ theories, see e.g.\ \cite{BC14} and \cite[Proposition 2.1]{HP11}).
	
	In the context of definable groups, there is a theory of model-theoretic connected components, largely parallel to strong types, and playing an important role in the study of stable and NIP groups. The main results concerning connected components are related to the celebrated Pillay's conjecture (see \cite{peterzil_survey}).
	
	\section{History of the problem}
	The main problem tackled in this thesis is understanding the Galois groups and strong type spaces in arbitrary theories, and in particular, estimating their Borel cardinalities, and exploring the connection between descriptive-set-theoretic smoothness and model-theoretic type-definability of a strong type.
	
	It is well-known that the type-definable strong type spaces can be well understood as compact Hausdorff topological spaces (see Fact~\ref{fct:logic_top_cpct_T2}). If, in a given theory, the Lascar strong type $\equiv_\Lasc$ is type-definable, then the same is true about the Galois group, namely, it is a compact Hausdorff topological group. However, in general, the corresponding topology need not be Hausdorff, and in particular, the topology on the Galois group may be trivial.
	
	The paper \cite{CLPZ01} essentially began this line of study. There, the authors gave the first example of a theory where the Lascar strong type $\equiv_\Lasc$ is not type-definable. They suggested that in such cases, it would be prudent to treat the Galois group (and, by extension, the class spaces of $\equiv_\Lasc$) as ``descriptive set theoretic'' objects, and they asked about the possible ``Borel cardinality'' one may obtain in this way (see Definition~\ref{dfn:bier_borelcard} for precise definition). They suggested that when $\equiv_\KP$ and $\equiv_\Lasc$ differ (i.e.\ when the latter is not type-definable), this ``Borel cardinality'' should be nontrivial, which would mean that the class space of $\equiv_\Lasc$ is very complex.
	
	In \cite{Ne03}, it was shown that if for some tuple $a$ we have $[a]_{\equiv_\Lasc}\neq [a]_{\equiv_\KP}$, then the $\equiv_\KP$-class of $a$ splits into at least $2^{\aleph_0}$ $\equiv_\Lasc$-classes (see Fact~\ref{fct:newelski}), which supported that conjecture.
	
	Later, in \cite{KPS13}, the authors described precisely in what sense the Borel cardinality of $\Gal(T)$ is a well-defined invariant of the theory (see Definition~\ref{dfn:bcard_galois} for the precise definition), and similarly for the Borel cardinality of $\equiv_\Lasc$ (even restricted to a single $\equiv_\KP$-class; see Fact~\ref{fct:cartdf}). They also made a more precise conjecture about the Borel cardinality: they conjectured that if a $\equiv_\KP$-class is not a single $\equiv_\Lasc$-class, then the Borel cardinality of $\equiv_\Lasc$ (restricted to that $\equiv_\KP$-class) is non-smooth (in the sense of Definition~\ref{dfn:smt}).
	
	In \cite{KMS14}, the authors proved that this is indeed true (see Fact~\ref{fct:KMS_theorem}), showing that the Lascar strong type $\equiv_\Lasc$ is smooth (in the sense of Borel cardinality) if and only if it is type-definable. In a later paper \cite{KM14} and, independently, in \cite{KR16} (which was based on my master's thesis), the result was extended to arbitrary ``orbital $F_\sigma$ strong types" (see Fact~\ref{fct:mainA}).
	
	All of the definitions and results mentioned in the previous three paragraphs have their counterparts in the context of the model-theoretic group components.
	
	The methods of \cite{KMS14}, \cite{KM14} and \cite{KR16} were similar, but there seems to be no hope to extend them to arbitrary strong types (which are not $F_\sigma$). Moreover, they do not seem to be capable of giving any precise estimates of the Borel cardinalities of the Galois groups or strong types. In this thesis, we use completely different methods, developing and taking advantage of a deep topological dynamical apparatus with roots in \cite{KP17}, paired with the so-called Bourgain-Fremlin-Talagrand dichotomy from the theory of Rosenthal compacta.
	\section{Results}
	The main results of the thesis are essentially contained in three papers: \cite{KPR15} (joint with Krzysztof Krupiński and Anand Pillay), \cite{Rz16} (which was my own) and \cite{KR18} (joint with Krzysztof Krupiński).

	The essential contribution of this thesis, which did not appear in these papers (and is of my own conception) is the introduction of weakly uniformly properly group-like equivalence relations on an ambit. Using that notion, we redevelop the topological dynamical machinery based on \cite{KP17} (which was later refined in \cite{KPR15} and \cite{KR18}) in a much more general and abstract context. This allows us to prove the following theorem.
	\begin{mainthm}
		\label{mainthm:abstract_card}
		Suppose $E$ is weakly uniformly properly group-like, analytic equivalence relation on an ambit $(G,X,x_0)$ (where $X$ is an arbitrary compact Hausdorff space).
		
		Then $X/E$ is the topological quotient of a compact Hausdorff group by an analytic subgroup.
		
		We conclude that $E$ is either clopen (as a subset of $X^2$), or it has $2^{\aleph_0}$ classes.
		
		Moreover, if $E$ is not closed, then for every closed and $E$-invariant $Y\subseteq X$, $E\restr_Y$ has at least $2^{\aleph_0}$ classes.
	\end{mainthm}
	(See Lemma~\ref{lem:weakly_grouplike}, Lemma~\ref{lem:new_preservation_E_to_H}, Theorem~\ref{thm:general_cardinality_intransitive}, and Theorem~\ref{thm:general_cardinality_transitive} for precise statements.)
	
	In the metrisable case, we can obtain a stronger conclusion.
	\begin{mainthm}
		\label{mainthm:abstract_smt}
		Suppose $E$ is weakly uniformly properly group-like equivalence relation on an ambit $(G,X,x_0)$, where $X$ is a compact Polish space.
		
		Then $X/E$ is the topological quotient of a compact Polish group by a subgroup.
		
		Moreover, exactly one of the following holds:
		\begin{enumerate}
			\item
			$E$ is clopen and has finitely many classes,
			\item
			$E$ is closed and has exactly $2^{\aleph_0}$ classes,
			\item
			$E$ is not closed and not smooth. In this case, if $E$ is analytic, then $E$ has exactly $2^{\aleph_0}$ classes.
		\end{enumerate}
		In particular, $E$ is smooth (according to Definition~\ref{dfn:smt}) if and only if $E$ is closed.
	\end{mainthm}
	(See Theorem~\ref{thm:main_abstract} and Corollary~\ref{cor:metr_smt_cls})
	
	The main results of the thesis (which are also the main theorems of \cite{KPR15} and \cite{KR18}) can be deduced from the Main~Theorems~\ref{mainthm:abstract_card} and \ref{mainthm:abstract_smt}. The main advantage of the abstract formulation is that we can obtain similar results in many distinct contexts, which previously required careful repetitions of similar, but complicated arguments. In contrast, to apply Main~Theorems~\ref{mainthm:abstract_card} and \ref{mainthm:abstract_smt}, it is enough to check that that several basic axioms are satisfied in each case, which is relatively straightforward. Besides the other main theorems listed below, this allows us to recover (or even improve) virtually all similar results in model theory, in addition to providing corollaries in other contexts, occurring naturally in model theory. In Section~\ref{sec:other_apps}, we briefly discuss some examples, including the topological connected components of \cite{KP16} and the relative Galois groups of \cite{DKL17}.
	
	The principal result in the thesis is the following theorem. It is essentially Theorem 7.13 in \cite{KR18} (joint with Krzysztof Krupiński). Here, we deduce it from Main~Theorem~\ref{mainthm:abstract_smt} (or rather, the more precise statement in Theorem~\ref{thm:main_abstract}).
	\begin{mainthm}
		\label{mainthm_group_types}
		Given a countable (complete, first order) theory $T$, there is a compact Polish group $\hat G$ such that the Galois group of $T$ is the quotient of $\hat G$ by an $F_\sigma$ normal subgroup, as a topological group, and if the theory has NIP, in terms of Borel cardinality.
		
		Moreover, the space of classes of a bounded invariant equivalence relation $E$ defined on single complete type over $\emptyset$ (in a countable theory) is also a quotient of $\hat G$ by some subgroup (which inherits the good descriptive set theoretic properties of $E$), topologically, and under NIP, also in terms of Borel cardinality.
	\end{mainthm}
	(For precise statements, see Theorem~\ref{thm:main_galois} and Corollary~\ref{cor:galois_quotient}. See also Theorem~\ref{thm:main_aut} for a related fact with relaxed NIP assumption for the second part.)
	
	As a corollary, we obtain the following theorem, which essentially supersedes the main results of both  \cite{KMS14} and \cite{KM14}/\cite{KR16} (see Fact~\ref{fct:KMS_theorem} and Fact~\ref{fct:mainA}). It originally appeared as Corollary~4.2 and Corollary~6.1 in \cite{KPR15}, and is basically the main result of that paper (joint with Krzysztof Krupiński and Anand Pillay).
	\begin{mainthm}
		\label{mainthm:smt}
		Suppose that the theory is countable, while $E$ is a strong type, and $Y$ is type-definable, $E$-saturated, and such that $\Aut(\fC/\{Y\})$ acts transitively on $Y$ (e.g.\ $Y$ is the set of realisations of a single complete type over $\emptyset$, or a single Shelah or Kim-Pillay strong type). Then exactly one of the following is true:
		\begin{enumerate}
			\item
			$E\restr_Y$ is relatively definable (as a subset of $Y^2$) and has finitely many classes,
			\item
			$E\restr_Y$ is type-definable and has exactly $2^{\aleph_0}$ classes,
			\item
			$E\restr_Y$ is not type-definable and not smooth. In this case, if $E\restr_Y$ is analytic, then $E\restr_Y$ has exactly $2^{\aleph_0}$ classes.
		\end{enumerate}
		In particular, $E\restr_Y$ is smooth if and only if $E\restr_Y$ is type-definable. (And this is true even if $\Aut(\fC/\{Y\})$ does not act transitively on $Y$.)
	\end{mainthm}
	(This is Corollary~\ref{cor:trich_plus} and Corollary~\ref{cor:smt_type}.)
	
	If we do not assume that the theory is countable, the relevant spaces of types are not metrisable, and so Main~Theorem~\ref{mainthm:abstract_smt} does not apply. However, we can still apply Main~Theorem~\ref{mainthm:abstract_card}, yielding the following theorem, which generalises the main theorem of \cite{Ne03} (Fact~\ref{fct:newelski}). It originally appeared as Theorem 5.1 in \cite{KPR15}.
	\begin{mainthm}
		\label{mainthm:nwg}
		Suppose $E$ is an analytic strong type defined on $[a]_{\equiv}$, while $Y\subseteq [a]_{\equiv}$ is type-definable and $E$-saturated, such that $\lvert Y/E\rvert<2^{\aleph_0}$.
		
		Then $E$ is type-definable, and if, in addition, $\Aut(\fC/\{Y\})$ acts transitively on $Y/E$, then $E\restr_Y$ is relatively definable (as a subset of $Y^2$) and it has finitely many classes.
	\end{mainthm}
	(This is Theorem~\ref{thm:nwg}.)
	
	Besides Main~Theorems~\ref{mainthm_group_types}, \ref{mainthm:smt} and \ref{mainthm:nwg}, we recover analogous results for type-definable group actions, which also significantly improve the previous results from \cite{KM14} and \cite{KR16}. One of them is the following trichotomy, which supersedes the corresponding statements from \cite{Ne03} and \cite{KM14} (Fact~\ref{fct:new_group} and Fact~\ref{fct:KM_about_groups}). It appeared originally in \cite{KPR15} in the case when $G$ is a type-definable subgroup of a definable group (as Corollaries 5.4 and 6.2); for type-definable groups, this appeared as \cite[Corollary 8.6]{KR18} (under the assumption that the language is countable).
	\begin{mainthm}
		\label{mainthm:tdgroup}
		Suppose $G$ is a type-definable group, while $H\leq G$ is an analytic subgroup, invariant over a small set. Then exactly one of the following holds:
		\begin{itemize}
			\item
			$[G:H]$ is finite and $H$ is relatively definable in $G$,
			\item
			$[G:H]\geq 2^{\aleph_0}$, but is bounded, and $H$ is not relatively definable.
			\item
			$[G:H]$ is unbounded (i.e.\ not small).
		\end{itemize}
		In particular, $[G:H]$ cannot be infinite and smaller than $2^{\aleph_0}$.
		
		Moreover, in the second case, if the language is countable, and $G$ consists of countable tuples, then either $H$ is type-definable, or $G/H$ is not smooth.
	\end{mainthm}
	(This is Corollary~\ref{cor:trich_tdgroups}.)

	The final series of results comes from my own paper \cite{Rz16}, and is represented by the following theorem (which was originally \cite[Corollary 4.10]{Rz16}).
	\begin{mainthm}
		\label{mainthm_worb}
		Suppose $E$ is a strong type whose domain is a $\emptyset$-type-definable set $X$. Suppose, moreover, that $E$ is orbital, or, more generally, weakly orbital by type-definable. Then the following are equivalent:
		\begin{itemize}
			\item
			$E$ is type-definable,
			\item
			each class of $E$ is type-definable (equivalently, for every $p\in S(\emptyset)$ such that $p\vdash X$, $E\restr_{p(\fC)}$ is type-definable),
			\item
			$E$ is smooth.
		\end{itemize}
	\end{mainthm}
	(See Corollary~\ref{cor:smt_aut}.)
	
	The essential part of Main~Theorem~\ref{mainthm_worb} is the implication from type-definability of classes (a ``local" property) to the ``global" type-definability of the relation itself. The other implications are straightforward or follow from Main~Theorem~\ref{mainthm:smt}. When $X=p(\fC)$ for $p\in S(\emptyset)$, this is a simple exercise (see Proposition~\ref{prop:type-definability_of_relations}), but in general, it is not true. We show that the hypotheses of Main~Theorem~\ref{mainthm_worb} provide a general context in which the implication holds.
	
	Also, just like Main~Theorems~\ref{mainthm_group_types}, \ref{mainthm:smt} and \ref{mainthm:nwg}, Main~Theorem~\ref{mainthm_worb} has counterparts in different contexts, including type-definable group actions (see e.g.\ Corollary~\ref{cor:smt_def}). It is also the most general known description of the (sufficient) conditions under which the smoothness of a strong type implies its type-definability.
	
	Main~Theorem~\ref{mainthm_group_types} (at least under NIP assumption) provides a way to identify the Galois group, along with its Borel cardinality. Section~\ref{sec:examples} (which is the appendix of \cite{KR18}, expanded to provide more details) contains precise examples of such calculation. Namely, we determine the Galois group in the standard example of a non-G-compact theory from \cite{CLPZ01} and its modification from \cite{KPS13} (in both cases, the group and the Borel cardinality were given in \cite{KPS13}, but with very few details of the proof, and using different methods). In order to do that, we compute the Ellis groups associated with certain dynamical systems.
	
	For virtually all the results mentioned above, we show or deduce analogues which apply in the context of continuous actions of compact Hausdorff groups.
	
	Besides the main theorems mentioned above, in Chapter~\ref{chap:nonmetrisable_card}, we discuss the analogues of Main~Theorem~\ref{mainthm:abstract_smt} which provide a degree of ``non-smoothness'' in the non-metrisable/uncountable language case (giving more information than Main~Theorem~\ref{mainthm:abstract_card}). This is based on \cite{KPR15}, but put into the general context introduced here (the corresponding results of \cite{KPR15} are recovered). In Section~\ref{section: semigroup operation} (which is the appendix of \cite{KPR15}), we show that the stability of any given theory is equivalent to the existence of a canonical semigroup operation on a certain type space, associated with a monster model of that theory.
	
	\section{Structure of the thesis}
	Chapter~\ref{chap:prelims} contains the preliminaries, including basic facts and conventions. It is divided into the following parts:
	\begin{itemize}
		\item
		topology,
		\item
		descriptive set theory,
		\item
		topological dynamics,
		\item
		Rosenthal compacta and tame dynamical systems,
		\item
		model theory, and
		\item
		a short section containing the formal statements of previous results which we improve in the thesis.
	\end{itemize}
	They contain mostly known facts (published or folklore) and their straightforward generalisations. Nevertheless, for convenience of the reader, we recall complete proofs for many of them.
	
	Chapter~\ref{chap:toy} (partly based on Section~3 of \cite{KR18}) contains some basic examples coming from compact Hausdorff groups and their continuous actions on compact Hausdorff spaces, and the relatively easy model-theoretic case of $\Gal_\KP(T)$ and strong types coarser than $\equiv_\KP$. It is supposed to show some of the major ideas of the proofs of all the main theorems, while avoiding the need to use the difficult topological dynamical machinery, and other technical difficulties which are treated in the later chapters.
	
	In Chapter~\ref{chap:toolbox} (almost entirely based on Sections~4 and 5 of \cite{KR18}), we develop new tools in topological dynamics, and on the intersection of model theory and topological dynamics. Some of them are folklore, but many seem to be completely new.
	
	In Chapter~\ref{chap:grouplike} (which is new, but borrows many ideas from \cite{KP17} and \cite{KPR15}), we introduce and study and the notion of a group-like equivalence relation and its variants. In particular, we prove Main~Theorem~\ref{mainthm:abstract_card} and Main~Theorem~\ref{mainthm:abstract_smt}.
	
	In Chapter~\ref{chap:applications}, we specialise the results of Chapter~\ref{chap:grouplike} in various situations. In particular, we prove Main~Theorems~\ref{mainthm_group_types}, \ref{mainthm:smt}, \ref{mainthm:nwg}, and \ref{mainthm:tdgroup}. In Section~\ref{sec:examples}, we compute the Galois groups in a couple of examples by applying Main~Theorem~\ref{mainthm_group_types} (and computing certain Ellis groups).
	
	In Chapter~\ref{chap:intransitive}, we develop the notions of orbitality and weak orbitality in an abstract framework, and then apply them to prove Main~Theorem~\ref{mainthm_worb} (along with several related statements in various contexts).
	
	In Chapter~\ref{chap:nonmetrisable_card}, we discuss possible extensions of Main~Theorem~\ref{mainthm:abstract_smt} and (by extension) \ref{mainthm:smt} to non-metrisable dynamical systems (corresponding to uncountable languages in model theory), with the aim to obtain the equivalence between closedness and some sort of ``smoothness'' of an equivalence relation in such context. In particular, we pose Question~\ref{qu:broad_nonmetrisable} (the positive answer to which would give such an extension), and we show provide some partial results around it.
	
	In Appendix~\ref{app:topdyn}, we prove facts related to elementary topological dynamics which, while folklore, apparently cannot be found in the literature (in sufficient generality).
	
	Appendix~\ref{app:side} contains some tangential results which appeared in the course of the study. In particular, we give the criteria for the type space $S_{\bar c}(\fC)$ to have a natural left topological semigroup structure (namely, its existence is equivalent to stability), and --- using non-standard analysis --- we show that a closed group-like equivalence relation is always properly group-like (see Definitions~\ref{dfn:glike} and \ref{dfn:prop_glike}).

	\chapter{Preliminaries}
	\label{chap:prelims}
	Most facts in this chapter are classical or folklore. The few (apparent) exceptions are, for the most part, straightforward generalisations of well-known facts, some of which originate from \cite{KPR15} (joint with Krzysztof Krupiński and Anand Pillay).
	\section{Topology}
	
	\subsection*{Compact spaces and analytic sets; Baire property}
	In this thesis, compact spaces are not Hausdorff by definition, so we will add the adjective ``Hausdorff" whenever it is needed.
	
	\begin{fct}
		For a compact Hausdorff space $X$ the following conditions are equivalent:
		\begin{itemize}
			\item $X$ is second countable,
			\item $X$ is is metrisable,
			\item $X$ is Polish (i.e.\ separable and completely metrisable).
		\end{itemize}
	\end{fct}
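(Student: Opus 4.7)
The plan is to prove the cycle of implications: metrisable $\Rightarrow$ second countable $\Rightarrow$ metrisable $\Rightarrow$ Polish $\Rightarrow$ metrisable, where the last arrow is trivial from the definition of Polish, so it suffices to establish the first three implications.

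First I would show that metrisable (together with compactness) implies second countable. The standard argument is that a compact metric space is totally bounded: for every $n$, the open cover by balls of radius $1/n$ admits a finite subcover, whose centres form a finite $1/n$-net. Taking the union over $n$ yields a countable dense set, and in a metric space separability is equivalent to second countability (using balls of rational radii around points of the dense set as a countable basis).

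Next I would establish that a second countable compact Hausdorff space is metrisable. This is the step relying on Urysohn's metrisation theorem: a compact Hausdorff space is normal, hence in particular regular, and any regular second countable space is metrisable. (Alternatively, one can embed $X$ into $[0,1]^{\bN}$ using a countable family of continuous functions separating points, obtained from the countable basis via Urysohn's lemma.) This is the main substantive step, and would be the principal obstacle were we not allowed to cite Urysohn's theorem as a black box.

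Finally I would argue that metrisable compact Hausdorff implies Polish. Separability follows from the first step, so it remains to verify completeness in any compatible metric: by compactness, every Cauchy sequence has a convergent subsequence, and a Cauchy sequence with a convergent subsequence converges to the same limit. Hence $X$ is separable and completely metrisable, i.e.\ Polish. Combined with the obvious implication Polish $\Rightarrow$ metrisable, this closes the chain.
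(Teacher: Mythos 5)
Your argument is correct. The paper does not actually give a proof: it simply cites \cite[Theorem 5.3]{Kec95} (Kechris, \emph{Classical Descriptive Set Theory}). Your write-up fills in the standard argument behind such a citation: compactness plus metrisability gives total boundedness and hence a countable dense set, yielding second countability; the converse is the Urysohn metrisation theorem applied to the normal (hence regular) space $X$; and completeness of a compact metric space follows because every Cauchy sequence has a convergent subsequence and therefore converges. All three steps are sound, and the reduction to the three implications metrisable $\Rightarrow$ second countable $\Rightarrow$ metrisable $\Rightarrow$ Polish, with Polish $\Rightarrow$ metrisable immediate, closes the equivalence cleanly. The only thing to note is that the heavy lifting is indeed concentrated in the Urysohn metrisation step, which you correctly flag as the one nontrivial ingredient.
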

	\begin{proof}
		It follows from \cite[Theorem 5.3]{Kec95}.
	\end{proof}
	
	\begin{fct}\label{fct: preservation of metrizability}
		Metrisability is preserved by continuous surjections between compact, Hausdorff spaces.
	\end{fct}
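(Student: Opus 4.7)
The plan is to reduce metrisability (for a compact Hausdorff space) to second countability, using the preceding fact, and then deduce second countability of $Y$ from separability of $C(Y)$ obtained by pulling back along $f$.

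More concretely, I would proceed as follows. First, because $X$ is compact Hausdorff and metrisable, it is second countable, and consequently the Banach space $C(X)$ with the supremum norm is separable (a standard fact: by compactness plus the Stone--Weierstrass theorem applied to the countable algebra generated by a countable family of functions separating points, e.g.\ distances to points of a countable dense set). Next, the map $f^{*}\colon C(Y)\to C(X)$ given by $f^{*}(g)=g\circ f$ is linear, and since $f$ is surjective we have $\lVert g\circ f\rVert_\infty=\lVert g\rVert_\infty$, so $f^{*}$ is an isometric embedding. A subspace of a separable metric space is separable, so $C(Y)$ is separable.

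Now pick a countable dense set $D\subseteq C(Y,[0,1])$. Because $Y$ is compact Hausdorff, it is normal, and Urysohn's lemma ensures that $C(Y,[0,1])$ separates points; a standard $\varepsilon/3$ argument then shows that any dense subset, in particular $D$, also separates points of $Y$. Enumerate $D=\{g_n\}_{n\in\bN}$ and consider the continuous map
\[
\Phi\colon Y\to[0,1]^{\bN},\qquad \Phi(y)=(g_n(y))_{n\in\bN}.
\]
The map $\Phi$ is injective because $D$ separates points, and a continuous injection from a compact space to a Hausdorff space is a homeomorphism onto its image. Since $[0,1]^{\bN}$ is metrisable, so is $\Phi(Y)$, and therefore so is $Y$.

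This is essentially a routine compilation of standard facts, so I do not anticipate a genuine obstacle; the only point that warrants care is the verification that $f^{*}$ is an \emph{isometric} embedding (which is exactly where surjectivity of $f$ is used) and the passage from a dense subset of $C(Y,[0,1])$ to a point-separating family, after which the embedding into $[0,1]^{\bN}$ and the previous fact on compact Hausdorff second countable spaces finish the proof.
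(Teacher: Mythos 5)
Your proof is correct, and it is a genuine proof rather than a citation: the paper simply refers to \cite[Theorem 4.4.15]{Eng89}, which states that metrisability is invariant under perfect maps (of which a continuous surjection between compact Hausdorff spaces is a special case). Your argument instead goes through functional analysis: metrisability of $X$ gives separability of $C(X)$, the pullback $f^{*}\colon C(Y)\to C(X)$ is an isometric embedding precisely because $f$ is onto, so $C(Y)$ is separable, and a countable dense point-separating family in $C(Y,[0,1])$ embeds $Y$ homeomorphically into the metrisable cube $[0,1]^{\bN}$. This is a well-known alternative route (essentially the equivalence, for compact Hausdorff $Y$, between metrisability of $Y$ and separability of $C(Y)$); it is more self-contained than invoking the theory of perfect maps, at the cost of bringing in the Stone--Weierstrass theorem and basic Banach space facts. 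Both approaches are sound; yours has the advantage of making the role of surjectivity of $f$ completely transparent, as you note, in the isometry $\lVert g\circ f\rVert_{\infty}=\lVert g\rVert_{\infty}$.
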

	\begin{proof}
		This follows from \cite[Theorem 4.4.15]{Eng89}.
	\end{proof}
	The  notion of a quotient map is one of the fundamental topological notions in this thesis.
	\begin{dfn}
		\index{quotient map}
		A surjection $f \colon X \to Y$ between topological spaces is said to be a {\em topological quotient map} if it has the property that a subset $A$ of $Y$ is closed if an only if $f^{-1}[A]$ is closed. (This is equivalent to saying that the induced bijection $X/E \to Y$ is a homeomorphism, where $E$ in the equivalence relation of lying in the same fibre of $f$ and $X/E$ is equipped with the quotient topology.)
		\xqed{\lozenge}
	\end{dfn}
	
	\begin{rem}
		In the definition of a quotient map, we can replace both instances of ``closed'' by ``open''. It is also easy to see that continuous open surjections and continuous closed surjections are always quotient maps, but in general, a quotient map need not be open nor closed.\xqed{\lozenge}
	\end{rem}
	
	The following simple observation will be rather useful.
	\begin{rem}
		\label{rem:commu_quot}
		Suppose we have a commutative triangle:
		\begin{center}
			\begin{tikzcd}
			A \ar[r] \ar[dr] & B\ar[d] \\
			& C
			\end{tikzcd}
		\end{center}
		where $A,B,C$ are topological spaces, and the horizontal arrow is a quotient map. Then if one of the other two arrows is a continuous or a quotient map, then so is the other one (respectively).\xqed{\lozenge}
	\end{rem}
	
	\begin{rem}\label{rem: continuous surjection is closed}
		A continuous map from a compact space to a Hausdorff space is closed. In particular, if it is onto, it is a quotient topological map.
		\xqed{\lozenge}
	\end{rem}
	
	\begin{fct}
		\label{fct:quot_T2_iff_closed}
		If $X$ is a compact Hausdorff topological space and $E$ is an equivalence relation on $X$, then $E$ is closed (as a subset of $X^2$) if and only if $X/E$ is a Hausdorff space, and $E$ is open if and only if $X/E$ is discrete (and in this case, $X/E$ is finite).
	\end{fct}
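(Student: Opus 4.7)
The plan is to prove the two equivalences separately, using compactness in an essential way to pass from properties of $E$ as a subset of $X^2$ to properties of fibres in the quotient.

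For the ``closed iff Hausdorff'' equivalence, the easy direction is $(\Leftarrow)$: if $X/E$ is Hausdorff then its diagonal is closed in $(X/E)^2$, and since the quotient map $\pi \colon X \to X/E$ is continuous, so is $\pi \times \pi$, and $E = (\pi \times \pi)^{-1}(\Delta_{X/E})$ is therefore closed. For the converse, I would first verify that $\pi$ is a closed map: for closed $A \subseteq X$, the saturation $\pi^{-1}(\pi(A))$ equals the first projection of $E \cap (X \times A)$, which is a closed subset of the compact space $X^2$ and thus its continuous image is closed in $X$. (This is essentially Remark~\ref{rem: continuous surjection is closed} applied to a restricted projection.) Having $\pi$ closed lets us define, for any open $W \subseteq X$, the ``saturated interior'' $W^\ast := (X/E) \setminus \pi(X \setminus W)$, which is open in $X/E$ and whose preimage is the largest saturated open set contained in $W$.

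Now, given $[x] \neq [y]$ in $X/E$, the classes $[x]$ and $[y]$ are disjoint closed subsets of $X$ (closed because they are slices of the closed set $E$). Since $X$ is compact Hausdorff, it is normal, so we can separate them by disjoint open $U \supseteq [x]$, $V \supseteq [y]$; then $U^\ast$ and $V^\ast$ are disjoint open neighbourhoods of $[x]$ and $[y]$ in $X/E$, establishing Hausdorffness.

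For the ``open iff discrete'' equivalence: if $E$ is open in $X^2$, then each class $[x] = \{y : (y,x) \in E\}$ is an open slice, so $\pi^{-1}(\{[x]\}) = [x]$ is open, i.e.\ $\{[x]\}$ is open in $X/E$ (by definition of the quotient topology), hence $X/E$ is discrete. Conversely, if $X/E$ is discrete, each class is open in $X$, and $E = \bigsqcup_{[x] \in X/E} [x] \times [x]$ is a union of open sets, hence open. In either case, $X$ is partitioned into open classes, and by compactness there are only finitely many; this forces $X/E$ to be finite.

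The main obstacle is really just the $(\Rightarrow)$ direction of the first equivalence --- specifically, the observation that closedness of $E$ upgrades $\pi$ to a closed map, enabling the ``saturated interior'' construction which circumvents the fact that $\pi$ need not be open. Once that is in place, everything else is routine manipulation of the quotient topology together with normality and compactness.
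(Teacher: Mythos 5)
Your proof is correct. For the second equivalence your argument is essentially the same as the paper's (the paper derives openness of $E$ from finiteness as a finite union of rectangles, while you observe it directly as an arbitrary union of open squares and get finiteness as a separate consequence of compactness --- a negligible difference). For the first equivalence the paper simply cites \cite[Theorem 3.2.11]{Eng89}, whereas you supply a self-contained proof: the key steps --- using compactness to show the projection $E \cap (X \times A) \to X$ has closed image, so that $\pi$ is a closed map, then using normality of $X$ together with the ``saturated interior'' construction $W^\ast = (X/E) \setminus \pi(X\setminus W)$ to separate classes --- constitute the standard proof of the Engelking theorem and are complete. The only place one should read carefully is the disjointness of $U^\ast$ and $V^\ast$: a class in both would have to be entirely contained in both $U$ and $V$, which are disjoint, so this holds; you implicitly rely on classes being nonempty, which is of course automatic. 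The argument is sound.
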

	\begin{proof}
		For the first part, this is \cite[Theorem 3.2.11]{Eng89}. The second part is easy by compactness: if $E$ is open, it has open classes, so points in $X/E$ are open. On the other hand, if $X/E$ is discrete, then it must be finite (as a discrete compact space), so $E$ is open (as a finite union of open rectangles).
	\end{proof}
	
	\begin{dfn}
		\index{Souslin scheme}
		\index{Souslin operation}
		Recall that a {\em Souslin scheme} is a family $(P_s)_{s \in \omega^{<\omega}}$ of subsets of a given set, indexed by finite sequences of natural numbers. The {\em Souslin operation} $\Souslin$ applied to such a scheme produces the set
		\[
		\Souslin_s P_s:=\bigcup_{s \in \omega^\omega} \bigcap_n P_{s\restr_n}.
		\]
		
		We say that a Souslin scheme $(P_s)_{s \in \omega^{<\omega}}$ is {\em regular} if $s \subseteq t$ implies $P_s \supseteq P_t$.\xqed{\lozenge}
	\end{dfn}
	
	There seems to be no established notion of an ``analytic set" in an arbitrary topological space. The following one will be the most convenient for us.
	
	\begin{dfn}\label{definition: analytic sets}
		\index{analytic set}
		In a topological space $X$, we call a subset of $X$ \emph{analytic} if it can be obtained via the $\Souslin$ operation applied to a Souslin scheme of closed sets.\xqed{\lozenge}
	\end{dfn}
	
	\begin{rem}
		We will mostly consider analytic sets in compact Hausdorff spaces. There, the definition above coincides with the classical notion of a $K$-analytic set, see \cite[Théorème 1]{Cho59}.\xqed{\lozenge}
	\end{rem}
	
	\begin{rem}
		What we really need of the class of ``analytic sets" is the following:
		\begin{itemize}
			\item
			if $A$ is analytic in $X$ and $Y\subseteq X$ is a closed subspace, then $A\cap Y$ is analytic in $Y$,
			\item
			if $A$ is analytic in $Y$ and $Y$ is a closed subspace of $X$, then $A$ is analytic in $X$,
			\item
			if $A$ is analytic (in a compact Hausdorff space), it has the Baire property (see below),
			\item
			if $f\colon X\to Y$ is a continuous surjection and $X,Y$ are compact Hausdorff, then for every $A\subseteq Y$, we have that $A$ is analytic if and only if $f^{-1}[A]$ is analytic.
		\end{itemize}
		Any notion of an ``analytic set" with these properties will also work.\xqed{\lozenge}
	\end{rem}
	
	\begin{rem}
		It is easy to check that if
		$(P_s)_{s \in \omega^{<\omega}}$ is a Souslin scheme and $Q_s:= \bigcap_{s \subseteq t} P_t$, then $(Q_s)_{s \in \omega^{<\omega}}$ is regular and $\Souslin_s P_s =\Souslin_s Q_s$.

		In particular, in the definition of an analytic set, we can consider only regular Souslin schemes.\xqed{\lozenge}
	\end{rem}
	
	\begin{rem}
		If $X$ is a Polish space, then this definition coincides with the standard definition of analytic sets as continuous images of Borel sets (see \cite[Theorem 25.7]{Kec95}). In particular, all Borel sets are analytic.\xqed{\lozenge}
	\end{rem}

	\begin{dfn}
		Suppose $X$ is a topological space and $B\subseteq X$.
		
		\index{Baire!property}
		\index{Baire!property!strict}
		We say that $B$ has the \emph{Baire property} (BP) or that it is \emph{Baire} if there is an open set $U$ and a meagre set $M$ such that $B$ is the symmetric difference of $U$ and $M$.
		
		We say that $B$ has the \emph{strict Baire property} or that it is \emph{strictly Baire} if for every closed $F\subseteq X$, $F\cap B$ has BP in $F$. (This is equivalent to saying that the same holds for all $F$, not necessarily closed, see \cite[§11 VI.]{Ku}.)\xqed{\lozenge}
	\end{dfn}
	
	\begin{fct}
		The sets with the Baire property form a $\sigma$-algebra closed under the $\Souslin$ operation. In particular, every Borel set and every analytic set is strictly Baire.
	\end{fct}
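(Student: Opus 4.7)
The plan is to establish first that the collection of BP sets is a $\sigma$-algebra containing all open (hence all closed) sets, then prove closure under the $\Souslin$ operation (the classical Nikodym theorem), and finally deduce the strict Baire property from the good behaviour of analytic and Borel sets under restriction to closed subspaces.

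First, I would verify the $\sigma$-algebra axioms directly from the definition. Open sets have BP trivially (take the meagre part to be empty). For complements, note that if $B = U\triangle M$ with $U$ open and $M$ meagre, then $X\setminus B = (X\setminus U)\triangle M$; writing $X\setminus U$ as the disjoint union of $\mathrm{int}(X\setminus U)$ and the topological boundary $\partial U$, which is closed with empty interior and hence nowhere dense, we see $X\setminus U$ itself differs from an open set by a meagre set. Countable unions are immediate: $\bigcup_n(U_n\triangle M_n)$ differs from $\bigcup_n U_n$ by a subset of $\bigcup_n M_n$, which is meagre.

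The hard step is closure under $\Souslin$. Given a regular Souslin scheme $(P_s)_{s\in\omega^{<\omega}}$ of BP sets, let $A_s := \Souslin_{t\supseteq s} P_t$, so that $A_\emptyset = \Souslin_s P_s$ and $A_s = \bigcup_k A_{s{}^\frown k}$. Using the already-established $\sigma$-algebra closure, each $A_s$ has BP (as a countable union built from intersections of BP sets), so choose open $U_s$ with $A_s\triangle U_s$ meagre; by replacing $U_s$ by $U_s\cap\bigcap_{t\subseteq s} U_t$ we may assume the $U_s$ are decreasing along $\subseteq$. One then shows that $U_\emptyset\setminus A_\emptyset$ is contained, modulo a meagre set, in the union over $s$ of $U_s\setminus\bigcup_k U_{s{}^\frown k}$, and each such set is meagre because $A_s\setminus\bigcup_k A_{s{}^\frown k}=\emptyset$. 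Thus $A_\emptyset\triangle U_\emptyset$ is meagre, so $A_\emptyset$ has BP. This bookkeeping — propagating meagre approximations through the tree of the scheme — is the essential combinatorial content, and I expect it to be the main obstacle; it is, however, the classical argument, and I would refer to \cite[\S11]{Ku} for the detailed accounting.

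From this, every Borel set has BP (as the Borel sets form the smallest $\sigma$-algebra containing the open sets), and every analytic set has BP since closed sets have BP and analytic sets are obtained from closed sets via $\Souslin$. For the strict Baire property, fix a closed $F\subseteq X$ and an analytic (resp.\ Borel) set $B\subseteq X$. By the first bullet in the list of properties of analytic sets noted earlier in the text, $B\cap F$ is analytic in $F$, so by the above it has BP in $F$; the Borel case is analogous, since Borel sets restrict to Borel sets. Hence every analytic and every Borel set is strictly Baire.
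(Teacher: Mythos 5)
Your final step — deducing the strict Baire property from the subspace behaviour of Borel and analytic sets — is fine and matches the intended reading, and the $\sigma$-algebra part is correct (though the paper itself only cites \cite[Theorem 25.3]{Arh} rather than re-proving the fact). The problem lies in your treatment of the Souslin step, and it is not merely a matter of omitted bookkeeping: you assert that ``each $A_s$ has BP (as a countable union built from intersections of BP sets)'' and immediately choose open $U_s$ with $A_s\triangle U_s$ meagre. But $A_s = \Souslin_{t\supseteq s}P_t = \bigcup_{\alpha\in\omega^\omega,\,\alpha\supseteq s}\bigcap_n P_{\alpha\restr n}$ is itself a Souslin operation on BP sets — the union ranges over $\omega^\omega$, which is uncountable — so the $\sigma$-algebra closure you have established gives you no a priori control over $A_s$, and claiming it has BP is precisely the statement (applied to the subscheme rooted at $s$) that you are trying to prove. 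The sets $U_s$ on which your entire induction hinges therefore do not exist on the basis you have given.

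The classical resolution (Nikodym's theorem, as in Kuratowski) avoids this circularity by replacing each $A_s$ not with an approximating open set but with its \emph{Baire hull}: one first proves the Marczewski lemma that for any set $S$ there is a BP set $\hat S\supseteq S$ such that for every BP set $B$ with $S\subseteq B\subseteq\hat S$, the difference $\hat S\setminus B$ is meagre. This hull exists even though $S$ itself may fail to have BP. Taking $\hat A_s$ (intersected with $P_s$ if needed to keep the hulls nested), the containment $A_s\subseteq\bigcup_k A_{s^\frown k}\subseteq\bigcup_k\hat A_{s^\frown k}$ combined with the hull property gives that $\hat A_s\setminus\bigcup_k\hat A_{s^\frown k}$ is meagre, and summing over $s$ one shows $\hat A_\emptyset\setminus A_\emptyset$ is meagre, whence $A_\emptyset$ has BP. Without the Baire-hull lemma — which is the actual missing idea — your accounting of meagre differences has nothing to attach itself to.
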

	\begin{proof}
		See \cite[Theorem 25.3]{Arh}.
	\end{proof}
	
	\begin{dfn}
		\index{totally non-meagre}
		We say that a topological space $X$ is totally non-meagre if no closed subset of $X$ is meagre in itself.\xqed{\lozenge}
	\end{dfn}
	
	\begin{rem}
		It is easy to see that every compact Hausdorff space and every Polish space is totally nonmeagre, by the Baire category theorem. \xqed{\lozenge}
	\end{rem}

	\begin{prop}\label{prop: image of intersection}
		Assume that $X$ is a compact (not necessarily Hausdorff) space and that $Y$ is a $T_1$-space. Let $f\colon X \to Y$ be a continuous map. Suppose $(F_n)_{n\in \omega}$ is descending sequence of closed subsets of $X$. Then $f[\bigcap_n F_n]=\bigcap_n f[F_n]$.
	\end{prop}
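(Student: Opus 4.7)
The inclusion $f[\bigcap_n F_n] \subseteq \bigcap_n f[F_n]$ is immediate and requires no hypotheses; all the work is in the reverse inclusion. My plan is a single compactness argument.

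Fix $y \in \bigcap_n f[F_n]$. Since $Y$ is $T_1$, the singleton $\{y\}$ is closed, hence its preimage $f^{-1}[\{y\}]$ is closed in $X$ by continuity of $f$. Consider the sets
\[
G_n := F_n \cap f^{-1}[\{y\}].
\]
Each $G_n$ is a closed subset of $X$, and since $y \in f[F_n]$ we can pick some $x_n \in F_n$ with $f(x_n)=y$, which witnesses $G_n \neq \emptyset$. The sequence $(G_n)$ is descending because $(F_n)$ is, so in particular $(G_n)$ has the finite intersection property.

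Now I invoke compactness of $X$ (the only place it is used, and crucially it does not require Hausdorffness): a family of closed sets in a compact space with the finite intersection property has nonempty total intersection. Hence
\[
\emptyset \neq \bigcap_n G_n = \Bigl(\bigcap_n F_n\Bigr) \cap f^{-1}[\{y\}],
\]
and any element $x$ of this set satisfies $x \in \bigcap_n F_n$ and $f(x) = y$, so $y \in f[\bigcap_n F_n]$.

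There is no real obstacle here; the only subtlety is to remember that ``compact'' in this thesis does not assume Hausdorff, so one must phrase the compactness step as the FIP characterisation rather than as, say, closedness of projections (which would need a Hausdorff assumption somewhere). The $T_1$ hypothesis on $Y$ is used exactly once, to ensure $f^{-1}[\{y\}]$ is closed; without it the argument breaks down, which also indicates the hypothesis is essentially sharp.
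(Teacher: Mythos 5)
Your proof is correct and follows essentially the same approach as the paper's: intersect the $F_n$ with the (closed, since $Y$ is $T_1$) fibre $f^{-1}[\{y\}]$ to get a descending sequence of nonempty closed sets, then invoke the finite intersection property in the compact space $X$.
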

	\begin{proof}
		The inclusion $(\subseteq)$ is always true. For the opposite inclusion, consider any $y \in \bigcap_n f[F_n]$. Then $f^{-1}(y) \cap F_n \ne \emptyset$ for all $n$. Since $(F_n)_{n\in \omega}$ is descending, we get that the family $\{f^{-1}(y) \cap F_n\mid n \in \omega\}$ has the finite intersection property. On the other hand, since $\{y\}$ is closed in $Y$ (as $Y$ is $T_1$) and $f$ is continuous, we have that each set $f^{-1}(y) \cap F_n$ is closed. So compactness of $X$ implies that $f^{-1}(y) \cap \bigcap_n F_n=\bigcap_n f^{-1}(y) \cap F_n \ne \emptyset$. Thus $y \in f[\bigcap_n F_n]$.
	\end{proof}
	
	\begin{prop}\label{prop: preservation of analyticity by images and preimages}
		Let $f\colon X \to Y$ be a continuous map between topological spaces. Then:
		\begin{enumerate}
			\item The preimage by $f$ of any analytic subset of $Y$ is an analytic subset of $X$.
			\item Assume that $X$ is compact (not necessarily Hausdorff) and that $Y$ is Hausdorff. Then the image by $f$ of any analytic subset of $X$ is an analytic subset of $Y$.
		\end{enumerate}
	\end{prop}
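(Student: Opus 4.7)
Both parts are direct applications of the definition of analytic set, so the plan is mostly to unwind the Souslin operation and use the appropriate preservation properties of closed sets.

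For part (1), I would take an analytic $A \subseteq Y$ and write $A = \Souslin_s P_s$ for some Souslin scheme $(P_s)_{s \in \omega^{<\omega}}$ of closed subsets of $Y$. Since $f^{-1}$ commutes with arbitrary unions and intersections, I get $f^{-1}[A] = \Souslin_s f^{-1}[P_s]$. Each $f^{-1}[P_s]$ is closed in $X$ because $f$ is continuous and $P_s$ is closed, so $f^{-1}[A]$ is analytic in $X$ by definition. This part is routine and carries no obstacle.

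For part (2), the main issue is that $f$ does not in general commute with intersections over a Souslin scheme. The plan is to exploit the regularisation remark preceding the proposition: given an analytic $A \subseteq X$, write $A = \Souslin_s P_s$ where the scheme $(P_s)_{s \in \omega^{<\omega}}$ is regular and consists of closed sets. Then for each $s \in \omega^\omega$, the sequence $(P_{s\restr_n})_{n \in \omega}$ is a descending sequence of closed subsets of the compact space $X$, and since $Y$ is Hausdorff (hence $T_1$), Proposition~\ref{prop: image of intersection} applies and yields
\[
f\Bigl[\bigcap_n P_{s\restr_n}\Bigr] = \bigcap_n f[P_{s\restr_n}].
\]
Combining this with the fact that $f$ commutes with unions, I obtain
\[
f[A] = \bigcup_{s \in \omega^\omega} \bigcap_n f[P_{s\restr_n}] = \Souslin_s f[P_s].
\]
By Remark~\ref{rem: continuous surjection is closed} (applied to the restriction of $f$, or equivalently: continuous images of closed sets from a compact space to a Hausdorff space are closed), each $f[P_s]$ is closed in $Y$. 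Hence $f[A]$ is presented as the Souslin operation applied to a scheme of closed subsets of $Y$, and is therefore analytic.

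The only non-trivial step is the reduction that allows the use of Proposition~\ref{prop: image of intersection}; this is precisely where the compactness of $X$ and the $T_1$ (Hausdorff) property of $Y$ enter, and where regularising the scheme is essential. Everything else is a formal manipulation of the Souslin operation.
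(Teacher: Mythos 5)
Your proof is correct and follows exactly the same route as the paper's: part (1) via preimages commuting with the Souslin operation, and part (2) via regularising the scheme so that each branch is a descending sequence of closed sets, applying Proposition~\ref{prop: image of intersection}, and using that continuous images of closed sets under a map from a compact space to a Hausdorff space are closed. No gaps; nothing to add.
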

	\begin{proof}
		\ref{it:prop:dyn_BFT:untame} is clear by continuity of $f$ and the fact that preimages preserve unions and intersections.
		
		To show (2), consider any analytic subset $A$ of $X$. Then $A=\bigcup_{s \in \omega^\omega} \bigcap_n F_{s\restr_n}$ for some regular Souslin scheme $(F_s)_{s \in \omega^{<\omega}}$ of closed subsets of $X$. Because $X$ is compact, $Y$ is Hausdorff and $f$ is continuous, we see that each set $f[F_s]$ is closed. By Proposition~\ref{prop: image of intersection},
		\[
			f[X] = \bigcup_{s \in \omega^\omega} \bigcap_n f[F_{s\restr_n}].
		\]
		Hence, $f[X]$ is analytic.
	\end{proof}
	The following proposition summarises various preservation properties of continuous surjections between compact Hausdorff spaces.
	\begin{prop}
		\label{prop:preservation_properties}
		Suppose $f\colon X\to Y$ is a continuous surjection between compact Hausdorff spaces. Then:
		\begin{itemize}
			\item
			preimages and images of closed sets by $f$ are closed,
			\item
			preimages and images of $F_\sigma$ sets by $f$ are $F_\sigma$
			\item
			preimages and images of analytic sets by $f$ are analytic,
			\item
			preimages of Borel sets by $f$ are Borel, and sets with Borel preimage are Borel.
		\end{itemize}
		
		Furthermore, for every $Y_0\subseteq Y_1\subseteq Y$, $Y_0$ is open or closed in $Y_1$ if and only if $X_0:=f^{-1}[Y_0]$ is open or closed (respectively) in $X_1:=f^{-1}[Y_1]$.
	\end{prop}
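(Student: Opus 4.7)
The whole proposition hinges on the observation that, by Remark~\ref{rem: continuous surjection is closed}, $f$ is a closed map (since $X$ is compact and $Y$ is Hausdorff), and in particular a topological quotient map. My plan is to open the proof by recording this fact and then to peel off each clause in turn.

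The closed-set preservation is immediate: the preimage direction is continuity, and for the image direction a closed subset of $X$ is compact, so its image is compact and hence closed in $Y$. The $F_\sigma$ preservation then follows by writing an $F_\sigma$ as a countable union of closed sets and using that both $f[\cdot]$ and $f^{-1}[\cdot]$ distribute over countable unions. The analytic-set preservation is already Proposition~\ref{prop: preservation of analyticity by images and preimages}, applied once with the roles of $X$ and $Y$ as given and once with them exchanged.

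For the Borel clauses, the forward direction --- preimages of Borel sets are Borel --- is a one-line $\sigma$-algebra argument: the family $\{B\subseteq Y : f^{-1}[B]\in\mathrm{Borel}(X)\}$ is a $\sigma$-algebra containing the closed subsets of $Y$, and hence contains $\mathrm{Borel}(Y)$. The converse --- a set whose preimage is Borel is itself Borel --- is the main obstacle, and I expect it to carry most of the technical weight. My plan is to prove it by transfinite induction on the Borel rank, establishing for every $\alpha\geq 1$ the stronger biconditional that $B\in\boldsymbol{\Sigma}^0_\alpha(Y)$ iff $f^{-1}[B]\in\boldsymbol{\Sigma}^0_\alpha(X)$ (and dually for $\boldsymbol{\Pi}^0_\alpha$). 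The base case $\alpha=1$ is precisely the quotient-map property, and the formal direction of the inductive step is routine. For the hard half, given a representation $f^{-1}[B]=\bigcup_n F_n$ with $F_n\in\boldsymbol{\Pi}^0_{\alpha_n}$ and $\alpha_n<\alpha$, the natural move is to replace each $F_n$ by its saturation $F_n^{\sharp}:=f^{-1}[f[F_n]]$, which is sandwiched between $F_n$ and $f^{-1}[B]$ and therefore still covers $f^{-1}[B]$; the delicate book-keeping is to verify that $f[F_n]$ retains low enough Borel complexity to feed the inductive hypothesis. Closedness of $f$, used in concert with Proposition~\ref{prop: image of intersection} to control descending intersections of closed sets along the way, is the essential tool; this is the step I expect to be the main obstacle.

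For the concluding relative-topology statement, the forward direction is trivial: if $Y_0=Y_1\cap C$ with $C\subseteq Y$ closed, then $X_0=X_1\cap f^{-1}[C]$ with $f^{-1}[C]$ closed in $X$, and the open case is analogous. For the converse, suppose $X_1\setminus X_0=X_1\cap U$ with $U\subseteq X$ open. Closedness of $f$ makes $V:=Y\setminus f[X\setminus U]$ open in $Y$, and because $X_1=f^{-1}[Y_1]$ the fibre of every $y\in Y_1$ is contained in $X_1$; this lets me check directly that $Y_1\cap V=Y_1\setminus Y_0$, whence $Y_0=Y_1\setminus(Y_1\cap V)$ is closed in $Y_1$. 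The open case is entirely symmetric.
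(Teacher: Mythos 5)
Your treatment of the closed, $F_\sigma$, and analytic clauses is correct and essentially the paper's: closedness of $f$ comes from compactness/Hausdorffness, $F_\sigma$ images reduce to closed-set images via commutation with unions, and the analytic clause is exactly Proposition~\ref{prop: preservation of analyticity by images and preimages} (note, though, that you invoke both of its parts for the single map $f\colon X\to Y$ — there is no ``exchange of roles'' to perform, since $f$ only goes one way). Your treatment of the ``furthermore'' part is also correct; the paper phrases it more compactly by observing that $f\restr_{X_1}\colon X_1\to Y_1$ is a continuous closed surjection (hence a quotient map), but your direct fibre computation is a valid unpacking of the same fact. The easy Borel direction (preimages of Borel sets are Borel) via the $\sigma$-algebra trick is also fine.

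The gap is in the hard Borel direction, and it is not merely ``delicate book-keeping'' — your proposed induction does not go through. You propose to prove, by transfinite induction, the rank-preserving biconditional $B\in\boldsymbol{\Sigma}^0_\alpha(Y)\Leftrightarrow f^{-1}[B]\in\boldsymbol{\Sigma}^0_\alpha(X)$, and at the inductive step you want to replace $F_n\in\boldsymbol{\Pi}^0_{\alpha_n}(X)$ by its saturation $F_n^{\sharp}=f^{-1}[f[F_n]]$. But saturation commutes with unions, not with intersections: if $F_n=\bigcap_k A_k$, then $f^{-1}[f[F_n]]$ is in general strictly contained in $\bigcap_k f^{-1}[f[A_k]]$. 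Consequently there is no reason for $F_n^{\sharp}$ to remain $\boldsymbol{\Pi}^0_{\alpha_n}$, and you cannot feed $f[F_n]$ into the inductive hypothesis. Already at $\alpha=3$ (preimage $F_{\sigma\delta}$), the intermediate $F_\sigma$ sets in the intersection are neither closed nor saturated, so Proposition~\ref{prop: image of intersection} — which is only about descending sequences of \emph{closed} sets — does not apply, and the image of the intersection need not equal the intersection of the images. More fundamentally, the rank-preserving biconditional you are aiming for is stronger than what is true in general for closed surjections.

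The paper sidesteps this entirely by citing Fact~\ref{fct:borel_preimage}, whose proof invokes a nontrivial result of Holický and Spurný (\cite[Theorem 10]{HS03}) on ``descriptive operations'' for closed surjections of compact Hausdorff spaces. (In the metrisable case there is the shortcut via Borel sections, Fact~\ref{fct:borel_section}, but that too gives Borelness without rank control, and does not cover the non-metrisable case the proposition needs.) You should replace your induction with an appeal to Fact~\ref{fct:borel_preimage}, or — if you insist on a self-contained argument — be prepared to reproduce the Holický–Spurný machinery, which is substantially harder than the rest of the proposition.
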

	\begin{proof}
		For analytic sets, this follows from Proposition~\ref{prop: preservation of analyticity by images and preimages}. For closed sets, this follows from Remark~\ref{rem: continuous surjection is closed}. For $F_\sigma$ sets, this follows from Remark~\ref{rem: continuous surjection is closed} and Proposition~\ref{prop: image of intersection}. For Borel sets, it follows from Fact~\ref{fct:borel_preimage}.
		
		For the ``furthermore'' part, just note that since $f$ is continuous and closed and $X_1=f^{-1}[Y_1]$, the restriction $f\restr_{X_1}\colon X_1\to Y_1$ is also continuous and closed (and hence a quotient map), which completes the proof.
	\end{proof}

	\begin{prop}[Mycielski's theorem]
		\label{prop:mycielski}
		Suppose $E$ is a meagre equivalence relation on a locally compact, Hausdorff space $X$. Then $\lvert X/E\rvert \geq 2^{\aleph_0}$.
	\end{prop}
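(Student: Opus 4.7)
The plan is a Cantor-scheme construction in the spirit of the classical Mycielski independence theorem. Since $E$ is meagre in $X^2$, I would write $E\subseteq\bigcup_n F_n$ where each $F_n$ is closed and nowhere dense in $X^2$; by replacing $F_n$ with $F_1\cup\dots\cup F_n$, I may assume the sequence is increasing. I first note that $X$ can have no isolated points: if $\{x\}$ were open in $X$, then $\{(x,x)\}$ would be an open subset of the diagonal $\Delta\subseteq E$, contradicting the Baire category theorem in the locally compact Hausdorff space $X^2$.

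Next, I would construct a family of nonempty open sets $(U_s)_{s\in 2^{<\omega}}$ in $X$ with compact closures (using local compactness) such that, for every $n$ and every pair of distinct $s,t\in 2^n$:
\begin{enumerate}
\item $\overline{U_{s\frown 0}}\cup\overline{U_{s\frown 1}}\subseteq U_s$,
\item $\overline{U_s}\cap\overline{U_t}=\emptyset$,
\item $(\overline{U_s}\times\overline{U_t})\cap F_n=\emptyset$.
\end{enumerate}
This is done by induction on $n$. In the inductive step, I split each $U_s$ into two open subsets $U_{s\frown 0},U_{s\frown 1}$ and then handle the finitely many new pairs of nodes one by one, successively shrinking. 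The key fact driving the refinement is that for any nonempty open $V,W\subseteq X$, since $F_{n+1}$ is nowhere dense in $X^2$, the set $(V\times W)\setminus F_{n+1}$ is open and dense in $V\times W$, so it contains a basic open rectangle $V'\times W'$; by local compactness and Hausdorffness I can moreover arrange $\overline{V'},\overline{W'}$ to be compact and disjoint where required. The fact that $X$ has no isolated points ensures that each nonempty open $U_s$ really can be split into two smaller nonempty open sets with disjoint closures.

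Finally, for each $\xi\in 2^\omega$, the sets $\overline{U_{\xi\restr_n}}$ form a decreasing chain of nonempty compact sets, so by the finite intersection property their intersection contains some point $x_\xi$. For distinct $\xi,\eta\in 2^\omega$, let $k$ be least with $\xi(k)\neq\eta(k)$; then $\xi\restr_m$ and $\eta\restr_m$ are distinct in $2^m$ for all $m\geq k+1$, and condition (3) applied at each such $m$ shows that $(x_\xi,x_\eta)\notin F_m$. Since $(F_m)$ is increasing, $(x_\xi,x_\eta)\notin F_m$ for $m\leq k+1$ as well, hence $(x_\xi,x_\eta)\notin\bigcup_m F_m\supseteq E$, so $x_\xi$ and $x_\eta$ lie in distinct $E$-classes. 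Mapping $\xi\mapsto[x_\xi]_E$ thus gives $2^{\aleph_0}$ distinct classes.

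The main technical point is making the inductive step go through: local compactness is essential for keeping closures compact (which is what powers the final intersection argument via the finite intersection property), and the nowhere-density of each $F_n$ in $X^2$ is precisely the hypothesis that allows rectangles to be refined away from $F_{n+1}$ at every stage.
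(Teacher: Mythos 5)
Your proof is correct and follows essentially the same Cantor-scheme construction as the paper: decompose $E$ into an increasing union of closed nowhere dense sets and build a tree of shrinking open rectangles avoiding them. The paper streamlines the bookkeeping by first reducing to the compact case (restricting to $\overline U$ for a small open $U$), whereas you work with compact closures directly via local compactness and add the disjointness condition (2) together with the preliminary observation that $X$ has no isolated points — both of which are harmless but unnecessary, since the paper's final argument already forces $x_\xi\neq x_\eta$ from $(x_\xi,x_\eta)\notin\bigcup_n F_n\supseteq E\supseteq\Delta$.
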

	\begin{proof}
		The proof mimics that of the classical theorem for Polish spaces (for example see \cite[Theorem 5.3.1]{SuG}), except we use compactness instead of metric completeness to obtain a nonempty intersection.
		
		Firstly, we can assume without loss of generality that $X$ is compact. This is because we can restrict our attention to the closure $\overline U$ of a small open set $U$: $E$ restricted to $\overline U$ is still meagre, and if we show that $\overline U/E$ has the cardinality of at least the continuum, clearly the same will hold for $X/E$.

		Suppose $E\subseteq \bigcup_{n\in \omega} F_n$ with $F_n\subseteq X^2$ closed, nowhere dense. We can assume
		that the sets $F_n$ form an increasing sequence. We will define a family of nonempty open sets $U_s$ with $s\in 2^{<\omega}$, recursively with respect to the length of $s$, such that:
		\begin{itemize}
			\item
			$\overline{U_{s0}},\overline{U_{s1}}\subseteq U_s$,
			\item
			if $s\neq t$ and $s,t\in 2^{n+1}$, then $(U_s\times U_t)\cap F_n=\emptyset$.
		\end{itemize}
		
		Then, by compactness, for each $\eta\in 2^\omega$ we will find a point $x_\eta\in \bigcap_n U_{\eta\restr n}$. It is easy to see that this will yield a map from $2^\omega$ into $X$ such that any two distinct points are mapped to $E$-unrelated points.
		
		The construction can be performed as follows:
		\begin{enumerate}
			\item
			For $s=\emptyset$, we put $U_\emptyset=X$.
			\item
			Suppose we already have $U_s$ for all $\lvert s\rvert \leq n$, satisfying the assumptions.
			\item
			By compactness (more precisely, regularity), for each $s\in 2^n$ and $i\in\{0,1\}$ we can find a nonempty open set $U_{si}'$ such that $\overline{U_{si}'}\subseteq U_s$.
			\item
			For each (ordered) pair of distinct $\sigma,\tau\in 2^{n+1}$, the set $(U'_\sigma\times U'_\tau)\setminus F_n$ is a nonempty open set (because $F_n$ is closed, nowhere dense), so in particular, $U'_\sigma\times U'_\tau$ contains a smaller (nonempty, open) rectangle $U''_\sigma\times U''_\tau$ which is disjoint from $F_n$.
			\item
			Repeating the procedure from the previous point recursively, for each ordered pair $(\sigma,\tau)$, we obtain for each $\sigma\in 2^{n+1}$ a nonempty open set $U_\sigma\subseteq U_\sigma'$ such that for $\sigma\neq \tau$ we have $(U_\sigma\times U_\tau)\cap F_n=\emptyset$. It is easy to see that the sets $U_\sigma$ satisfy the inductive step for $n+1$.\qedhere
		\end{enumerate}
	\end{proof}

	\subsection*{Topological groups and continuous group actions}
	
	\begin{dfn}
		Given a group $G$ acting on a set $X$, an \emph{orbit map} is a map $G\to X$ of the form $g\mapsto g\cdot x$ for some $x\in X$.\xqed{\lozenge}
	\end{dfn}
	
	\begin{dfn}
		If $G$ acts on sets $X$ and $Y$, then a function $f\colon X\to Y$ is called \emph{$G$-equivariant} (or a \emph{$G$-map}) if for every $x\in X$ we have $gf(x)=f(gx)$.\xqed{\lozenge}
	\end{dfn}

	\begin{fct}[Pettis-Pickard theorem]\label{fct:pettis}
		Let $G$ be a topological group. If $A\subseteq G$ has the Baire property (e.g.\ it is analytic) and is non-meagre, the set $A^{-1}A:=\{a^{-1}b\mid a,b \in A\}$ contains an open neighbourhood of the identity. In particular, if $A$ is a subgroup of $G$, then $A$ is open.
	\end{fct}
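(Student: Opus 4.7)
The plan is to follow the classical Pettis--Pickard strategy. By the Baire property I write $A = U \triangle M$ with $U \subseteq G$ open and $M$ meagre; since $A$ is non-meagre, $U$ is non-empty, and as $A$ differs from $U$ by the meagre set $M$, the open set $U$ is itself non-meagre.

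The core of the argument is to locate a point $a_0 \in U$ at which $U$ is ``locally non-meagre'', in the sense that every open subset of $U$ containing $a_0$ is non-meagre in $G$. Such a point exists by the Banach category theorem, which requires no Baire-type hypothesis on $G$: the union $M'$ of all open meagre subsets of $U$ is itself meagre, hence $U \setminus M'$ is non-empty, and any of its points $a_0$ has the claimed property. I expect this to be the main subtlety, precisely because $G$ is not assumed to be a Baire space, so one cannot naively declare non-empty open sets to be non-meagre.

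Now let $W := U^{-1} a_0$, which is an open set containing $e$ (since $U$ is open and $a_0 \in U$). I claim $W \subseteq A^{-1}A$. For $g \in W$, writing $g = u^{-1} a_0$ with $u \in U$ yields $a_0 g^{-1} = u \in U$, so $a_0 \in U \cap Ug$; since $U \cap Ug$ is an open subset of $U$ containing $a_0$, it is non-meagre by the choice of $a_0$. Because right translation by $g$ is a homeomorphism, $Mg$ is meagre, and
\[
(U \cap Ug) \setminus (M \cup Mg) \;\subseteq\; A \cap Ag
\]
is non-meagre, hence non-empty. Any element $a$ of the left-hand side has the form $a = bg$ with $b \in A$, so $g = b^{-1}a \in A^{-1}A$, as required.

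Finally, for the ``in particular'' clause, if $A$ is a subgroup then $A^{-1}A = A$, so $A$ contains the open neighbourhood $W$ of the identity; then $A = \bigcup_{a \in A} aW$ is open as a union of translates of $W$.
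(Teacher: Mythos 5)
Your proof is correct, and it is the classical Pettis argument that the paper invokes by citation to \cite[Theorem 9.9]{Kec95} rather than proving. The one place worth praising explicitly: you correctly recognized that $G$ is not assumed Baire and used the Banach category theorem to locate a point $a_0$ of ``local non-meagreness'' in $U$, which is exactly the subtlety the general statement requires; everything after that (the translation trick showing $(U\cap Ug)\setminus(M\cup Mg)\subseteq A\cap Ag$ is non-empty for $g\in U^{-1}a_0$) is the standard computation and is carried out without error.
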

	\begin{proof}
		This is \cite[Theorem 9.9]{Kec95}.
	\end{proof}

	\begin{fct}
		\label{fct:multiplication_open}
		Suppose $G$ is a topological group. Then the multiplication map $\mu\colon G\times G\to G$, $\mu(g_1,g_2)=g_1g_2$ and the map $\mu'\colon G\times G\to G$, $\mu'(g_1,g_2)=g_1^{-1}g_2$ are both continuous and open. In particular, they are topological quotient maps.
	\end{fct}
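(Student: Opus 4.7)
The plan is to handle continuity and openness separately, and then invoke the earlier remark that continuous open surjections are quotient maps.

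Continuity of $\mu$ is part of the definition of a topological group, and continuity of $\mu'$ follows because $\mu'$ is the composition of $\mu$ with the map $(g_1,g_2)\mapsto (g_1^{-1},g_2)$, which is continuous since inversion is continuous in a topological group. Surjectivity of both maps is immediate (the identity witnesses $\mu(g,e)=g$ and $\mu'(e,g)=g$).

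For openness, I would argue directly from the definition using a basis of the product topology. Since every open subset of $G\times G$ is a union of ``rectangles'' $V\times W$ with $V,W\subseteq G$ open, and since images commute with unions, it suffices to show that $\mu[V\times W]$ and $\mu'[V\times W]$ are open. But
\[
\mu[V\times W] \;=\; VW \;=\; \bigcup_{v\in V} vW,
\]
and each left translate $vW$ is open because left multiplication by $v$ is a homeomorphism of $G$ (its inverse being left multiplication by $v^{-1}$, both being continuous). The same reasoning gives $\mu'[V\times W]=V^{-1}W=\bigcup_{v\in V}v^{-1}W$, which is also a union of open translates. The main (very mild) point to check is that left translations are homeomorphisms, which is a standard consequence of the definition of a topological group.

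Finally, the ``in particular'' clause is a direct application of the remark made after the definition of quotient maps earlier in this section: continuous open surjections are automatically topological quotient maps. This concludes the proof.
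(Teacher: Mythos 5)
Your proof is correct and takes essentially the same approach as the paper: both reduce openness to the fact that left translations are homeomorphisms. The only cosmetic difference is that you decompose an arbitrary open set into basic rectangles $V\times W$ and compute $\mu[V\times W]=\bigcup_{v\in V}vW$, while the paper works directly with an arbitrary open $A\subseteq G\times G$ and writes $\mu[A]=\bigcup_{g\in G}gA_g$ in terms of its (open) sections $A_g$; you also spell out the (trivial) surjectivity that the ``in particular'' clause needs, which the paper leaves implicit.
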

	\begin{proof}
		Continuity is immediate. For openness, note that if $A\subseteq G\times G$ is open, then $\mu[A]=\bigcup_{g\in G} gA_g$, where $A_g$ is the section of $A$ at $g$. Since open sets have open sections, the conclusion follows. Openness of $\mu'$ is analogous.
	\end{proof}
	
	\begin{fct}\label{fct:from_mycielski}
		Suppose $G$ is a locally compact, Hausdorff group and $H$ is a subgroup which has the Baire property, but is not open. Then $[G:H]\geq 2^{\aleph_0}$.
	\end{fct}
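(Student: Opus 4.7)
The plan is to combine the Pettis--Pickard theorem (Fact~\ref{fct:pettis}) with Mycielski's theorem (Proposition~\ref{prop:mycielski}), bridged by the openness of the multiplication map (Fact~\ref{fct:multiplication_open}).

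First, I would use Pettis--Pickard to force $H$ to be meagre. Since $H$ has the Baire property but is not open, Fact~\ref{fct:pettis} rules out $H$ being non-meagre (otherwise $H$ would be an open subgroup). So $H$ is meagre in $G$.

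Next, consider the coset equivalence relation $E := \{(x,y) \in G \times G \mid x^{-1}y \in H\}$, whose classes are precisely the left cosets of $H$, so that $|G/E| = [G:H]$. Note that $G \times G$ is locally compact Hausdorff, and $E = \mu'^{-1}[H]$, where $\mu'(x,y) = x^{-1}y$ is the continuous open surjection from Fact~\ref{fct:multiplication_open}. The key intermediate step is to observe that preimages of meagre sets under continuous open surjections are meagre: if $N \subseteq G$ is nowhere dense and $U \subseteq G \times G$ is open with $U \subseteq \overline{\mu'^{-1}[N]} \subseteq \mu'^{-1}[\overline{N}]$, then by openness of $\mu'$, $\mu'[U]$ is an open subset of $\overline{N}$, hence empty, so $U$ is empty. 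Writing $H$ as a countable union of nowhere dense sets and taking preimages, we conclude that $E$ is meagre in $G \times G$.

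Finally, apply Proposition~\ref{prop:mycielski} to the meagre equivalence relation $E$ on the locally compact Hausdorff space $G$: this yields $[G:H] = |G/E| \geq 2^{\aleph_0}$, as desired.

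The argument is essentially routine once one spots the right formulation. The only mildly delicate point is the preservation of meagreness under preimages by a continuous open map (as opposed to the easier direction, image of meagre under open continuous map being at most meagre in the image), but this is handled by the short interior/closure argument sketched above.
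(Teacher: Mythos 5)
Your proof is correct and follows exactly the same route as the paper: Pettis--Pickard (Fact~\ref{fct:pettis}) forces $H$ to be meagre, openness of $\mu'$ (Fact~\ref{fct:multiplication_open}) transfers meagreness to the coset relation, and Mycielski (Proposition~\ref{prop:mycielski}) closes the argument. The only difference is that you spell out the interior/closure argument showing preimages of meagre sets under continuous open maps are meagre, which the paper merely asserts parenthetically.
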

	\begin{proof}
		It follows from Fact~\ref{fct:pettis} that a non-meagre Baire subgroup of a topological group is open, so, in our case, $H$ is meagre. By Fact~\ref{fct:multiplication_open}, we have that the orbit equivalence relation of $H$ acting by left translations on $G$ is meagre (the preimage of a meagre set by an open continuous map is meagre). But then by Proposition~\ref{prop:mycielski}, it follows that $\lvert G/H\rvert\geq 2^{\aleph_0}$.
	\end{proof}

	In the thesis, coset equivalence relations appear very often, so the simple observation made in the following remark is very useful.
	\begin{rem}
		\label{rem:group_to_cosets}
		Note that if $G$ is a group and $H\leq G$, then the left coset equivalence relation $E_H$ of $H$ on $G$ is the preimage of $H$ by the map $\mu'$ from Fact~\ref{fct:multiplication_open}. In particular, if $G$ is a compact Hausdorff topological group, we can apply Proposition~\ref{prop:preservation_properties} and Fact~\ref{fct:multiplication_open} to show that $H$ and $E_H$ share good topological properties.
		\xqed{\lozenge}
	\end{rem}
	
	\begin{fct}
		\label{fct:quotient_by_closed_subgroup}
		Suppose $G$ is a (possibly non-Hausdorff) topological group and $H\leq G$ is a subgroup. Then $G/H$ is Hausdorff (with the quotient topology) if and only if $H$ is closed, and $G/H$ is discrete if and only if $H$ is open.
	\end{fct}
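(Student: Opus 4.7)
The plan is to characterise the Hausdorff and discrete properties of $G/H$ via the diagonal, and then transport these characterisations across to $H$ using two carefully chosen quotient maps supplied by the group structure.

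First I would show that the canonical projection $\pi\colon G\to G/H$ is both continuous and open. Continuity is just the definition of the quotient topology. For openness, if $U\subseteq G$ is open then its $\pi$-saturation $\pi^{-1}[\pi[U]] = UH = \bigcup_{h\in H} Uh$ is a union of right translates of $U$, hence open, so $\pi[U]$ is open in $G/H$. Consequently, $\pi\times\pi\colon G\times G\to G/H\times G/H$ is also a continuous open surjection, and therefore a topological quotient map (continuous open surjections are always quotient maps, as noted in the remark immediately following the definition of quotient map).

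Next I would observe that the preimage of the diagonal $\Delta_{G/H}\subseteq G/H\times G/H$ under $\pi\times\pi$ is precisely the left coset equivalence relation $E_H = \{(g_1,g_2) : g_1^{-1}g_2\in H\}$, which by Remark~\ref{rem:group_to_cosets} equals $(\mu')^{-1}[H]$ for the continuous open map $\mu'$ from Fact~\ref{fct:multiplication_open}; in particular $\mu'$ is itself a topological quotient map. Chaining the two quotient-map correspondences yields the three-way equivalences
\[
H\text{ is closed in }G \;\Longleftrightarrow\; E_H\text{ is closed in }G\times G \;\Longleftrightarrow\; \Delta_{G/H}\text{ is closed in }(G/H)^2,
\]
and the same statement with ``open'' in place of ``closed''.

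Finally, I would invoke the standard fact that an arbitrary topological space $Y$ is Hausdorff exactly when $\Delta_Y$ is closed in $Y\times Y$, giving the first equivalence, and verify that $Y$ is discrete exactly when $\Delta_Y$ is open in $Y\times Y$: if $\Delta_Y$ is open then every $(y,y)$ has a basic open neighbourhood $U\times V\subseteq\Delta_Y$, which forces $U = V = \{y\}$, so singletons are open; conversely, if $Y$ is discrete then $\Delta_Y = \bigcup_{y\in Y}\{y\}\times\{y\}$ is a union of open rectangles. This then gives the second equivalence. No step is a serious obstacle here; the only point requiring a little care is that $\pi\times\pi$ is a quotient map, since products of quotient maps need not be quotient maps in general — this is why having $\pi$ open (rather than merely a quotient) matters.
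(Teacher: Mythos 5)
Your argument is correct, but it takes a genuinely different route from the paper. The paper disposes of the Hausdorff-closed equivalence by citing Bourbaki (General Topology, III.2.5, Proposition 13), and for the discrete-open equivalence it gives the one-line observation that $H$ is open iff all its cosets are open iff $G/H$ is discrete (each coset being mapped by the open projection $\pi$ to an open singleton, and conversely). You instead give a unified, self-contained argument: express both properties of $G/H$ as conditions on the diagonal $\Delta_{G/H}$, transport them to conditions on the coset relation $E_H$ via the open continuous surjection $\pi\times\pi$, and then transport them again to conditions on $H$ via the open continuous surjection $\mu'$. This buys uniformity — the closed and open cases are literally the same argument — at the cost of a little extra machinery (verifying that $\pi\times\pi$ is open, and that open/closed-ness passes across quotient maps in both directions). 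The paper's approach is shorter because it outsources the harder half and dispatches the easier half ad hoc; yours is longer but self-contained and conceptually cleaner. Both are sound.
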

	\begin{proof}
		For the closed-Hausdorff correspondence, see \cite[III.2.5, Proposition 13]{NB66}.
		
		For open-discrete, just note that $H$ is open if and only if all of its cosets are open, which is the same as $G/H$ being discrete.
	\end{proof}

	While topological groups need not be Hausdorff, it is well-known that a $T_0$ topological group is completely regular Hausdorff. A related fact is that they (and their quotients) are $R_1$ spaces. First, recall the notion of a Kolmogorov quotient of a topological space.
	
	\begin{dfn}
		\label{dfn:kolmogorov}
		\index{Kolmogorov quotient}
		If $X$ is a topological space, then the \emph{Kolmogorov quotient} of $X$ is the quotient obtained by identifying topologically indistinguishable points, i.e.\ $x_1$ and $x_2$ are identified if the closures of $\{x_1\}$ and $\{x_2\}$ are equal.\xqed{\lozenge}
	\end{dfn}

	\begin{dfn}
		\label{dfn:R0_R_1}
		\index{space!R0@$R_0$}
		We say that a topological space $X$ is an \emph{$R_0$ space} if for every $x_1,x_2\in X$ we have that $x_1\in \overline{x_2}$ if and only if $x_2\in \overline{x_1}$, or equivalently, the Kolmogorov quotient of $X$ is a $T_1$ space.
		
		\index{space!R1@$R_1$}
		A topological space $X$ is an \emph{$R_1$ space} if its Kolmogorov quotient is Hausdorff.\xqed{\lozenge}
	\end{dfn}
	(Note that in particular, every $R_1$ space is $R_0$.)
	
	\begin{prop}
		\label{prop:top_gp_R1}
		Suppose $G$ is a topological group and $H\leq G$. Then $G/H$ is an $R_1$ space.
	\end{prop}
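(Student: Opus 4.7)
The plan is to identify the Kolmogorov quotient of $G/H$ with $G/\overline{H}$, where $\overline{H}$ denotes the closure of $H$ in $G$, and then appeal to Fact~\ref{fct:quotient_by_closed_subgroup} for the Hausdorffness of the latter.

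First, I would recall the standard fact that the closure of a subgroup of a topological group is again a subgroup (a direct consequence of continuity of multiplication and inversion), so $\overline{H}\leq G$. Being closed, Fact~\ref{fct:quotient_by_closed_subgroup} yields that $G/\overline{H}$ is Hausdorff. The inclusion $H\subseteq \overline{H}$ induces a natural continuous surjection $q\colon G/H \to G/\overline{H}$; since the triangle $G\to G/H\to G/\overline{H}$ commutes and both projections from $G$ are topological quotient maps, Remark~\ref{rem:commu_quot} forces $q$ to be a quotient map as well.

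The main step is then to verify that the fibres of $q$ coincide with the topologically-indistinguishable classes in $G/H$. Since the projection $\pi\colon G\to G/H$ is open (saturations of open sets are open, as $UH=\bigcup_{h\in H}Uh$), I would compute $\overline{\{gH\}}$ directly: using the neighbourhood base $\{g'V : V\text{ open}, e\in V\}$ at $g'\in G$ and the fact that left translation is a self-homeomorphism of $G$, one obtains that $g'H\in \overline{\{gH\}}$ iff for every such $V$, $g'V\cap gH\neq\emptyset$, iff $g'\in \overline{gH}=g\overline{H}$. Hence $\overline{\{gH\}}=\overline{\{g'H\}}$ iff $g\overline{H}=g'\overline{H}$ iff $q(gH)=q(g'H)$. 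Combined with $q$ being a quotient map with these fibres, this exhibits $G/\overline{H}$ as (canonically homeomorphic to) the Kolmogorov quotient of $G/H$; since $G/\overline{H}$ is Hausdorff, $G/H$ is $R_1$ by Definition~\ref{dfn:R0_R_1}.

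The only delicate point is the computation of $\overline{\{gH\}}$ in $G/H$ — one must be careful about passing between $G$ and $G/H$ — but this reduces to a routine argument with a neighbourhood base once openness of $\pi$ is invoked; all the other ingredients are essentially bookkeeping with the quotient topology.
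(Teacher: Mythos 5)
Your proof is correct and follows essentially the same route as the paper: identify the Kolmogorov quotient of $G/H$ with $G/\overline{H}$ (using that $\overline{H}$ is a subgroup and that left translations are homeomorphisms to compute $\overline{\{gH\}}$), then invoke Fact~\ref{fct:quotient_by_closed_subgroup} for Hausdorffness. You are somewhat more explicit about the neighbourhood-base computation and the fact that $q\colon G/H\to G/\overline{H}$ is a quotient map, but the underlying argument is the same.
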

	\begin{proof}
		Note that $\overline H$ is a subgroup of $G$ (as the closure of a subgroup of a topological group). Since $G$ acts on itself by homeomorphisms, the closure of every $gH$ is $g\overline H=g\overline{H}H$, so $g'H\in \overline{\{gH\}}\subseteq G/H$ if and only if $g\overline H=g'\overline H$, and hence, $\overline{\{g'H\}}=\overline{\{gH\}}$ if and only if $g\overline H=g'\overline H$.
		
		It follows that the Kolmogorov quotient of $G/H$ is naturally homeomorphic to $G/\overline H$, which is Hausdorff by Fact~\ref{fct:quotient_by_closed_subgroup}.
	\end{proof}

	\begin{fct}
		\label{fct:cpct_action}
		Suppose $G$ is a compact Hausdorff group acting continuously on a Hausdorff space $X$. Then:
		\begin{itemize}
			\item
			$X/G$ is Hausdorff
			\item
			$G\times X\to X$ is a closed map (i.e.\ images of closed sets are closed).
			\item
			$X\to X/G$ is a closed map.
		\end{itemize}
	\end{fct}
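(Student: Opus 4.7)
The plan is to prove the three bullets in a logical (rather than listed) order, deriving the quotient map's closedness and Hausdorffness of $X/G$ from closedness of the action map.

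\textbf{Step 1: Closedness of the action map $\alpha\colon G\times X\to X$, $(g,x)\mapsto g\cdot x$.} The key trick is to factor $\alpha$ through a self-homeomorphism. Consider the map $\phi\colon G\times X\to G\times X$ defined by $\phi(g,x)=(g,gx)$; it is continuous with continuous inverse $(g,y)\mapsto(g,g^{-1}y)$, so it is a homeomorphism. Then $\alpha=\pi_2\circ\phi$, where $\pi_2\colon G\times X\to X$ is the projection. Since $G$ is compact, the Kuratowski theorem tells us that $\pi_2$ is a closed map. Hence $\alpha$, being the composition of a homeomorphism and a closed map, is closed.

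\textbf{Step 2: Closedness of the quotient map $\pi\colon X\to X/G$.} For a closed set $A\subseteq X$, the image $\pi(A)$ is closed in $X/G$ precisely when its saturation $\pi^{-1}(\pi(A))=GA$ is closed in $X$. But $GA=\alpha(G\times A)$, and $G\times A$ is closed in $G\times X$ because $A$ is closed, so by Step~1 the set $GA$ is closed. Thus $\pi$ is closed.

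\textbf{Step 3: Hausdorffness of $X/G$.} Let $\pi(x_1)\neq\pi(x_2)$, so $Gx_1\cap Gx_2=\emptyset$. The orbits $Gx_i$ are compact (as continuous images of $G$) and disjoint in a Hausdorff space, so there exist disjoint open sets $U_1\supseteq Gx_1$ and $U_2\supseteq Gx_2$. To make them $G$-saturated, set
\[
V_i:=\{x\in X\mid Gx\subseteq U_i\}=X\setminus\pi^{-1}\bigl(\pi(X\setminus U_i)\bigr).
\]
Since $X\setminus U_i$ is closed and $\pi$ is closed (Step~2), $\pi(X\setminus U_i)$ is closed in $X/G$; therefore $V_i$ is open. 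Each $V_i$ is $G$-invariant by construction, contains $Gx_i$, and is contained in $U_i$, whence $V_1\cap V_2=\emptyset$. The images $\pi(V_i)$ are then disjoint open neighbourhoods of $\pi(x_i)$ in $X/G$, so $X/G$ is Hausdorff.

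The main (and only real) obstacle is Step~1, which is the classical observation that has to be noticed in order to apply the tube lemma; everything else is formal. An alternative route to Hausdorffness would be to show directly that the orbit equivalence relation $E=\{(x,gx)\}\subseteq X\times X$ is closed in $X\times X$ (via the closed projection out of $G$ from the closed graph $\{(g,x,gx)\}$), but since Fact~\ref{fct:quot_T2_iff_closed} is stated only for compact Hausdorff $X$, the saturation argument above is cleaner in this generality.
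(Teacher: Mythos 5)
Your proof is correct. The paper itself gives no proof here — it simply cites Bredon's \emph{Introduction to Compact Transformation Groups} (Theorems 1.2 and 3.1) — so you have supplied a self-contained argument where the paper defers to a reference. Your route is the standard one: factor the action map as $\alpha=\pi_2\circ\phi$ with $\phi(g,x)=(g,gx)$ a homeomorphism and $\pi_2$ closed by compactness of $G$ (Kuratowski/tube lemma); deduce closedness of $X\to X/G$ by saturating a closed set via $\alpha$; and then separate orbits using compactness of orbits plus the just-established closedness of $\pi$ to shrink open separating neighbourhoods to saturated ones. One small remark worth making explicit in Step~3: the $V_i$ you construct are $G$-saturated by design, so $\pi^{-1}(\pi(V_i))=V_i$ is open, which is exactly what shows $\pi(V_i)$ is open in the quotient topology; you do not actually need the (true but separate) fact that $\pi$ is an open map. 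Everything else checks out, including the observation that disjointness of saturated sets $V_1,V_2$ transfers to disjointness of their images.
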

	\begin{proof}
		See \cite[Theorems 1.2 and 3.1]{GB72}.
	\end{proof}

	There are several different notions of a ``proper map". One that will be convenient for us is the one used in \cite{NB66}.
	
	\begin{dfn}
		\index{proper map}
		A continuous map $f\colon X\to Y$ is said to be \emph{proper} if for every $Z$, the map $f\times \id_Z\colon X\times Z\to Y\times Z$ is closed.\xqed{\lozenge}
	\end{dfn}

	\begin{fct}
		\label{fct:proper}
		Suppose $f\colon X\to Y$ is continuous. Then the following are equivalent:
		\begin{itemize}
			\item
			$f$ is proper,
			\item
			$f$ is closed and its fibres (preimages of points) are compact.
		\end{itemize}
	\end{fct}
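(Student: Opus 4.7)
The plan is to prove the two implications separately. For the forward direction (proper implies closed with compact fibres), closedness of $f$ follows immediately by taking $Z$ to be a one-point space, because $f \times \id_Z$ is then canonically identified with $f$. For the compactness of an arbitrary fibre $K := f^{-1}(y)$, I would use the ultrafilter characterisation of compactness: given an ultrafilter $\mathcal{U}$ on $K$, I need to produce a cluster point in $K$. My plan is to construct an auxiliary space $Z = K \cup \{\infty\}$ (with points of $K$ isolated and basic neighbourhoods of $\infty$ of the form $\{\infty\} \cup A$ for $A \in \mathcal{U}$), consider the diagonal $\Delta := \{(x,x) : x \in K\} \subseteq X \times Z$, and use the closedness of $(f \times \id_Z)(\overline{\Delta})$ together with the fact that $(y,\infty)$ lies in this closed set to extract a point of $K$ on which $\mathcal{U}$ clusters.

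For the reverse direction (closed with compact fibres implies proper), the argument is a tube-lemma computation. Fix an arbitrary space $Z$ and a closed set $F \subseteq X \times Z$; I want to show $(f \times \id_Z)[F]$ is closed in $Y \times Z$. I would pick a point $(y, z) \notin (f \times \id_Z)[F]$ and cover $f^{-1}(y) \times \{z\}$ by rectangles: for each $x \in f^{-1}(y)$ there exist open $U_x \ni x$ and $V_x \ni z$ with $(U_x \times V_x) \cap F = \emptyset$. By compactness of the fibre, finitely many $U_{x_i}$ cover $f^{-1}(y)$; set $U := \bigcup_i U_{x_i}$ and $V := \bigcap_i V_{x_i}$, so that $f^{-1}(y) \subseteq U$ and $(U \times V) \cap F = \emptyset$. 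The crucial use of closedness of $f$ now comes in: the set $f[X \setminus U]$ is closed in $Y$ and avoids $y$, so $W := Y \setminus f[X \setminus U]$ is an open neighbourhood of $y$ with $f^{-1}[W] \subseteq U$. Then $W \times V$ is an open neighbourhood of $(y,z)$ disjoint from $(f \times \id_Z)[F]$, as required.

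The main obstacle is the compact-fibres half of the first implication: closedness of $f$ alone is not enough to control individual fibres, and extracting compactness from the abstract hypothesis that $f \times \id_Z$ is closed for \emph{every} $Z$ requires a careful choice of $Z$ tailored to an ultrafilter (or equivalently to a family of closed sets with the finite intersection property on the fibre). The tube-lemma direction, by contrast, is essentially a formal exercise once the hypotheses are unpacked.
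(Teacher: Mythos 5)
Your proof is correct and is essentially the argument in Bourbaki's \emph{Topologie Générale} (Chapter I, §10.2, Theorem 1), which is precisely the source the paper cites for this fact. Both halves are handled the standard way: the bespoke ultrafilter space $Z=K\cup\{\infty\}$ for compactness of fibres, and the tube lemma (with closedness of $f$ supplying the open saturated tube $W$) for the converse.
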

	\begin{proof}
		See \cite[Theorem 1 in \S10.2 of Chapter I]{NB66}.
	\end{proof}
	
	An important fact in this context is that continuous actions of compact groups are proper.
	\begin{fct}
		\label{fct:cpct_proper}
		If $G$ is a compact Hausdorff group acting continuously on a Hausdorff space $X$, then it is also acting properly, that is, the function $G\times X\to X\times X$, defined by the formula $(g,x)\mapsto (x,g\cdot x)$ is proper.
	\end{fct}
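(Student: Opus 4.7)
The plan is to invoke Fact~\ref{fct:proper}: it suffices to show that the map $\phi\colon G\times X \to X\times X$ given by $\phi(g,x) = (x, g\cdot x)$ is closed with compact fibres.

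For the fibres, I would observe that $\phi^{-1}(x_0, y_0) = \{(g, x_0) : g\cdot x_0 = y_0\}$ projects homeomorphically onto $\{g \in G : g\cdot x_0 = y_0\}\subseteq G$. This last set is the preimage of the point $y_0$ under the continuous orbit map $g\mapsto g\cdot x_0$ into the Hausdorff space $X$, so it is closed in $G$; since $G$ is compact, the fibre is compact.

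For closedness, I plan to factor $\phi = \pi \circ \psi$, where $\psi\colon G\times X \to G\times X\times X$ is the graph map $(g,x)\mapsto (g, x, g\cdot x)$, and $\pi\colon G\times X\times X \to X\times X$ is the projection forgetting the first coordinate. The map $\psi$ is a homeomorphism onto the graph of the continuous action $\alpha\colon G\times X \to X$, and this graph is closed in $G\times X\times X$ because $X$ is Hausdorff and $\alpha$ is continuous; hence $\psi$ is a closed topological embedding. The map $\pi$ is closed because $G$ is compact (projections parallel to a compact factor are closed — this is essentially the tube lemma, and is also the content of Fact~\ref{fct:proper} applied to the projection $G\to \{*\}$, crossed with the identity on $X\times X$). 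A composition of closed maps is closed, so $\phi$ is closed.

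The only step that substantively uses the compactness of $G$ is the closedness of $\pi$; everything else is formal manipulation with continuity and Hausdorffness, so I don't expect any significant obstacle. Combining the two properties and applying Fact~\ref{fct:proper} yields that $\phi$ is proper, which is the desired statement.
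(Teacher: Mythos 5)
Your argument is correct. The paper does not give a proof but simply cites Bourbaki (\cite[Proposition 2 in \S4.1 of Chapter III]{NB66}); you instead supply a self-contained, elementary argument, and it holds up at every step. The computation of the fibres and the observation that they project onto closed (hence compact) subsets of $G$ is right, since the orbit map $g\mapsto g\cdot x_0$ is continuous into the Hausdorff space $X$. The factorisation $\phi = \pi\circ\psi$ is a clean way to establish closedness: $\psi$ is a homeomorphism onto the graph of the action, which is closed in $G\times X\times X$ because $X$ is Hausdorff and the action is continuous (the graph is the preimage of the diagonal under $(g,x,y)\mapsto (g\cdot x, y)$), so $\psi$ is a closed embedding; and the projection $\pi$ along the compact factor $G$ is closed by the tube lemma. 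Composing closed maps gives closedness of $\phi$, and Fact~\ref{fct:proper} then delivers properness. Compared with the paper, your version has the advantage of being fully explicit and of isolating exactly where compactness of $G$ and Hausdorffness of $X$ enter; the paper's citation is shorter but opaque. If anything could be trimmed, the final sentence of your closedness paragraph (re-deriving the tube lemma from Fact~\ref{fct:proper} applied to $G\to\{*\}$) is slightly circular in spirit, since Fact~\ref{fct:proper} is the target criterion; invoking the tube lemma directly, as you also do, is the cleaner route.
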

	\begin{proof}
		See \cite[Proposition 2 in \S4.1 of Chapter III]{NB66}.
	\end{proof}
	Recall that homeomorphisms of a compact Polish space form a Polish group with the uniform convergence topology (which is unique in a compact space).
	\begin{fct}
		\label{fct:homeo_is_Polish}
		If $X$ is a compact Polish space, then the group $\Homeo(X)$ of homeomorphisms of $X$, equipped with the uniform convergence topology, is a Polish group (but not compact in general).
	\end{fct}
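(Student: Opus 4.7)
The plan is to realise $\Homeo(X)$ as a closed subspace of a suitable Polish space, and then to verify that the induced topology coincides with the uniform convergence topology and makes composition and inversion continuous.

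First I would fix a compatible metric $d$ on $X$ (any two yield the same uniform structure by compactness, so the uniform convergence topology on $\Homeo(X)$ is intrinsic). I would then show that $C(X,X)$ equipped with the sup-metric $d_\infty(f,g):=\sup_{x\in X}d(f(x),g(x))$ is itself a Polish space. Completeness is standard (uniform limits of continuous functions into a complete space are continuous), while separability follows by embedding $X$ as a closed subset of the Hilbert cube, identifying $C(X,X)$ with a closed subset of the Polish space $C(X,[0,1]^\omega)$ (the latter being a countable product of copies of the separable Banach space $C(X)$, with coordinates suitably weighted). Next I would check that composition $C(X,X)\times C(X,X)\to C(X,X)$, $(f,g)\mapsto f\circ g$, is jointly continuous: this is the standard $d_\infty(f_1\circ g_1,f_2\circ g_2)\le d_\infty(f_1,f_2)+\omega_{f_1}(d_\infty(g_1,g_2))$ estimate, using uniform continuity of $f_1$ (which is automatic since $X$ is compact).

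Then I would consider the set
\[
R:=\{(f,g)\in C(X,X)^2\mid f\circ g=g\circ f=\id_X\},
\]
which is closed by continuity of composition and of evaluation, hence Polish. The map $\Phi\colon\Homeo(X)\to R$ given by $f\mapsto(f,f^{-1})$ is a bijection onto $R$; pulling back the subspace topology equips $\Homeo(X)$ with a Polish topology $\tau$. Composition is continuous on $R$ since it is continuous on $C(X,X)^2$, and inversion on $R$ is just the flip $(f,g)\mapsto(g,f)$, which is trivially continuous. So $(\Homeo(X),\tau)$ is a Polish group.

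The main step, and I expect the principal obstacle, is to identify $\tau$ with the uniform convergence topology. One direction is obvious ($\tau$ is a priori finer, since convergence in $\tau$ means uniform convergence of both $f_n\to f$ and $f_n^{-1}\to f^{-1}$). For the converse I would prove the key lemma: if $f_n,f\in\Homeo(X)$ with $f_n\to f$ uniformly, then $f_n^{-1}\to f^{-1}$ uniformly. The argument is by contradiction and uses compactness crucially: assuming a divergent sequence $x_n$ with $d(f_n^{-1}(x_n),f^{-1}(x_n))\ge\varepsilon$, I would extract a convergent subsequence $x_n\to x$ and a convergent subsequence $y_n:=f_n^{-1}(x_n)\to y'$, observe that $f_n(y_n)=x_n\to x$ but also $f_n(y_n)\to f(y')$ (by uniform convergence plus continuity of $f$), giving $f(y')=x=f(f^{-1}(x))$; injectivity of $f$ then forces $y'=f^{-1}(x)=\lim f^{-1}(x_n)$, contradicting the separation assumption. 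Once this lemma is in hand, $\tau$ coincides with the uniform topology, and the preceding paragraph finishes the proof.
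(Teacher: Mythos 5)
Your proof is correct; the paper itself gives no argument and simply cites Kechris \cite[9.B(8)]{Kec95}, which uses essentially the same idea (realise $\Homeo(X)$ as a closed subspace of $C(X,X)^2$ via $f\mapsto(f,f^{-1})$, so that the complete metric $d_\infty(f,g)+d_\infty(f^{-1},g^{-1})$ is available, and then identify the resulting topology with uniform convergence by the compactness argument showing $f_n\to f$ uniformly forces $f_n^{-1}\to f^{-1}$ uniformly). Your detour through $R$ and the contradiction argument for the key lemma are both sound, and the separability argument via the Hilbert cube is standard; nothing is missing.
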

	\begin{proof}
		See \cite[9.B(8)]{Kec95}.
	\end{proof}

	\begin{prop}
		\label{prop:cont_action_factors_through_Polish}
		Suppose $G$ is a compact Hausdorff group acting transitively and (jointly) continuously on a Polish space $X$. Then for any $x_0\in X$, if we denote by $H$ the stabiliser of $x_0$, then $G/\Core(H)$ is a compact Polish group (where $\Core(H)$ is the normal core of $H$, i.e.\ intersection of all of its conjugates) and the action of $G$ on $X$ factors through $G/\Core(H)$.
		
		In particular, if $G$ is a compact Hausdorff topological group and $H\leq G$ is such that $G/H$ is metrisable, then $G/\Core(H)$ is a compact Polish group (since $H$ is the stabiliser of $H$ for the natural left action of $G$ on $G/H$).
	\end{prop}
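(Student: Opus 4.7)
The plan is to realise $G/\Core(H)$ as a closed compact subgroup of the Polish group $\Homeo(X)$. First, since $X$ is Hausdorff and the action is continuous, the stabiliser $H$ is the preimage of the closed set $\{x_0\}$ under the continuous orbit map $g\mapsto g\cdot x_0$, hence closed. Transitivity gives a continuous bijection $G/H\to X$; as $G/H$ is compact (a continuous image of $G$) and $X$ is Hausdorff, Remark~\ref{rem: continuous surjection is closed} makes this a homeomorphism. In particular, $G/H$ is compact Polish.

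Next, I would identify the kernel of the action. An element $g\in G$ fixes every point of $X=G\cdot x_0$ if and only if $g\cdot(g'x_0)=g'x_0$ for every $g'\in G$, equivalently $(g')^{-1}gg'\in H$ for every $g'$, equivalently $g\in\Core(H)$. Thus the action factors through $G/\Core(H)$, which is the second assertion.

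Equivalently, one has a group homomorphism $\phi\colon G\to\Homeo(X)$ sending $g$ to the homeomorphism $x\mapsto g\cdot x$, with $\ker\phi=\Core(H)$. I would then argue that $\phi$ is continuous. Fix a compatible metric $d$ on the compact metrisable space $X$; the action map $G\times X\to X$ is continuous on the compact product, hence uniformly continuous, so for every $\varepsilon>0$ there is a neighbourhood $U$ of $e$ in $G$ such that $d(g\cdot x,x)<\varepsilon$ for all $g\in U$ and $x\in X$. This is continuity of $\phi$ at $e$ in the uniform (equivalently, compact-open) topology on $\Homeo(X)$, and since $\Homeo(X)$ is a topological group, the homomorphism $\phi$ is then continuous everywhere.

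Finally, $\phi$ descends to an injective continuous homomorphism $\bar\phi\colon G/\Core(H)\to\Homeo(X)$. The group $G/\Core(H)$ is compact Hausdorff, as $\Core(H)=\bigcap_{g\in G}gHg^{-1}$ is a closed normal subgroup of $G$ and we may apply Fact~\ref{fct:quotient_by_closed_subgroup}; and $\Homeo(X)$ is Polish (hence Hausdorff) by Fact~\ref{fct:homeo_is_Polish}. Remark~\ref{rem: continuous surjection is closed} makes $\bar\phi$ a homeomorphism onto its compact (hence closed) image, and a closed subspace of a Polish space is Polish. Consequently $G/\Core(H)$ is a compact Polish group. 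The main subtlety is checking the continuity of $\phi$ into $\Homeo(X)$ with its uniform topology, but this is the standard compactness argument. The ``in particular'' clause follows by taking $X=G/H$ with $x_0=H$, noting that the natural action of $G$ on $G/H$ is transitive and jointly continuous and that $H$ is its own stabiliser.
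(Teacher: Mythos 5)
Your proof is correct and follows essentially the same route as the paper: define the homomorphism $\phi\colon G\to\Homeo(X)$, show it is continuous via uniform continuity of the action on the compact product $G\times X$, observe $\ker\phi=\Core(H)$, and conclude by compactness of $G$ that $\phi[G]$ is a closed (hence Polish) subgroup of $\Homeo(X)$ homeomorphic to $G/\Core(H)$. The only cosmetic difference is that you first verify $G/H\cong X$ and that $\Core(H)$ is closed so that $G/\Core(H)$ is compact Hausdorff before identifying it with $\phi[G]$, whereas the paper gets the same conclusion more tersely by noting that $\phi\colon G\to\phi[G]$ is already a quotient map (being a closed continuous surjection onto a Hausdorff space), so the induced bijection $G/\Core(H)\to\phi[G]$ is automatically a homeomorphism.
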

	\begin{proof}
		First, note that since $G$ acts transitively on $X$, the orbit map $g\mapsto g\cdot x_0$ is onto. By continuity of the action, it is also continuous, so $X$ is a compact Polish space.
		
		Let $\varphi \colon G\to \Homeo(X)$ be the homomorphism induced by the action, where $\Homeo(X)$ is the group of homeomorphisms of $X$.
		
		Since $G\times X$ is compact, continuity of the action of $G$ on $X$ implies uniform continuity (see \cite[Theorem II in \S4.2 of Chapter II]{NB66}). Therefore, if $(g_i)_i$ is a convergent net, then $(g_i\cdot x)_i$ converges uniformly in $x \in X$. This yields continuity of $\varphi$ with respect to the uniform convergence topology on $\Homeo(X)$.
		
		It is easy to check that $\ker(\varphi)=\Core(H)$ (which implies that the action of $G$ on $X$ factors through $G/\Core(H)$), and since $X$ is a compact Polish space, $\Homeo(X)$ is a Polish group (by Fact~\ref{fct:homeo_is_Polish}). By compactness of $G$, it follows that $\varphi[G]$ is a Polish group, and hence --- by Remark \ref{rem: continuous surjection is closed} --- so is $G/\Core(H)$.
	\end{proof}
	
	\section{Descriptive set theory}
	
	\begin{dfn}
		\label{dfn:borel_cardinality}
		\index{Borel!reduction}
		\index{Borel!reducible}
		Suppose $E$ and $F$ are equivalence relations on Polish spaces $X$ and $Y$. We say that \emph{$E$ is Borel reducible to $F$} --- written $E\leq_B F$ --- if there is a Borel reduction of $E$ to $F$, i.e.\ a Borel function $f\colon X\to Y$ such that $x_1\Er x_2$ if and only if $f(x_1)\mathrel{F} f(x_2)$.
		
		\index{Borel!equivalence}
		\index{Borel!bireducibility|see {Borel equivalence}}
		\index{Borel!cardinality}
		If $E\leq_B F$ and $F\leq_B E$, we say that $E$ and $F$ are \emph{Borel equivalent} or \emph{Borel bireducible}, written $E\sim_B F$. In this case, we also say that $E$ and $F$, or, abusing the notation, $X/E$ and $Y/F$, have the same {\em Borel cardinality}; informally speaking, the {\em Borel cardinality} of $E$ is its $\sim_B$-equivalence class.
		\xqed{\lozenge}
	\end{dfn}
	
	\begin{rem}
		Sometimes, we slightly abuse the notation and write e.g.\ $X/E\leq_B Y/F$ for $E\leq_B F$. In particular, if $G$ is a Polish group and $H\leq G$, then by $G/H\leq_B Y/F$ we mean that there is a Borel reduction from the left coset equivalence relation of $H$ on $G$ to the relation $F$.
		\xqed{\lozenge}
	\end{rem}
	
	\begin{fct}
		\label{fct:borel_isom}
		If $X$ and $Y$ are Polish spaces and $\lvert X\rvert \leq \lvert Y\rvert $, then there is a Borel embedding of $X$ into $Y$. In particular, the equality on $X$ is Borel reducible to equality on $Y$.
	\end{fct}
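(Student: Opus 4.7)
The plan is to split on the cardinality of $X$, using the fact that every Polish space is either countable or has cardinality $2^{\aleph_0}$ (the perfect set dichotomy). Recall that by a \emph{Borel embedding} we mean an injective Borel function $f\colon X \to Y$ whose image is Borel and whose inverse $f^{-1}\colon f[X]\to X$ is Borel measurable; clearly such an $f$ is a Borel reduction of equality on $X$ to equality on $Y$, so the ``in particular'' clause follows automatically from the main statement.

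In the case that $X$ is countable, I would enumerate $X = \{x_n : n < \alpha\}$ for some $\alpha \leq \omega$ and use $|Y|\geq |X|$ to pick distinct $y_n\in Y$, setting $f(x_n):=y_n$. Since singletons in a Polish space are Borel (in fact closed, $Y$ being Hausdorff) and countable unions of Borel sets are Borel, both $f$ and its inverse on $f[X]$ are trivially Borel measurable, and $f[X]$ is Borel.

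In the case that $X$ is uncountable, the perfect set dichotomy gives $|X|=2^{\aleph_0}$, hence $|Y|\geq 2^{\aleph_0}$, so $Y$ is also uncountable. Here I would invoke the classical Kuratowski Borel isomorphism theorem, which asserts that any two uncountable standard Borel spaces are Borel isomorphic (so in particular any two uncountable Polish spaces, viewed as standard Borel spaces, admit a Borel bijection whose inverse is also Borel). Any such Borel isomorphism $f\colon X\to Y$ is then a fortiori a Borel embedding.

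There is no real obstacle here: the entire content is the combination of the perfect set dichotomy and Kuratowski's isomorphism theorem, both of which are standard (e.g.\ \cite[Theorems 6.5 and 15.6]{Kec95}). The only mild care required is a clean reading of the definition of Borel embedding so that the countable case really is trivial, and making sure the ``in particular'' clause is immediate: given the embedding $f$, the equivalence $x_1 = x_2 \liff f(x_1) = f(x_2)$ is built into injectivity, and $f$ is Borel, which is exactly the Borel reducibility condition.
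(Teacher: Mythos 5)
Your argument is correct, and it is essentially the same as the paper's, which simply cites \cite[Theorem 15.6]{Kec95}: that theorem (the Borel isomorphism theorem for standard Borel spaces) is proved in Kechris precisely by the split you describe, via the perfect set dichotomy together with a Borel isomorphism of uncountable standard Borel spaces. The only bookkeeping the paper's one-line citation leaves implicit is the reduction of the embedding statement ($\lvert X\rvert \le \lvert Y\rvert$) to the isomorphism statement ($\lvert X\rvert = \lvert Y\rvert$), which your case split handles cleanly.
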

	\begin{proof}
		This is \cite[Theorem 15.6]{Kec95}.
	\end{proof}

	\begin{rem}
		Note that even if $E\sim_B F$, it may not be true that there is a Borel isomorphism of $X$ and $Y$ which is a reduction in both direction. For example, if $E$ and $F$ are total, then trivially $E\sim_B F$, but $X$ and $Y$ may have different cardinalities.
		\xqed{\lozenge}
	\end{rem}

	\begin{dfn}
		\index{smoothness}
		\index{smooth equivalence relation|see {smoothness}}
		\label{dfn:smt}
		We say that an equivalence relation $E$ on a Polish space $X$ (or the quotient $X/E$) is {\em smooth} if $E$ is Borel reducible to equality on $2^\bN$ (or, by Fact~\ref{fct:borel_isom}, equivalently, if it is Borel reducible to equality on some Polish space).
		\xqed{\lozenge}
	\end{dfn}

	Note that a Borel reduction of $E$ to $F$ yields an injection $X/E\to Y/F$, so if $X/E\leq_B Y/F$, then in particular, $\lvert X/E\rvert\leq \lvert Y/F\rvert$. On the other hand, if $E$ and $F$ are Borel and have countably many classes, it is easy to see that the converse is also true, i.e.\ $E\leq_B F$ if and only if $\lvert X/E\rvert\leq \lvert Y/F\rvert$. This, together with Fact~\ref{fct:borel_isom}, justifies the term ``Borel cardinality".
	
	Informally, we can think of $E\leq F$ as a statement that we can classify elements of $X$ up to $E$ using classes of $F$ as parameters. In particular, smooth equivalence relations can be classified by real numbers, which is why they are sometimes called ``classifiable equivalence relations''.

	\begin{rem}
		\label{rem:smooth_implies_borel}
		Note that in the definition of a Borel reduction and the Borel cardinality, we do not require the relations to be Borel. On the other hand, it is not hard to see that if $E\leq_B F$ and $F$ is Borel, then so is $E$, by simply considering the map $X\times X\to Y\times Y$ which is the square of the reduction of $E$ to $F$.
		
		In particular, smooth equivalence relations (in the sense just given) are all Borel.
		\xqed{\lozenge}
	\end{rem}
	
	\begin{fct}
		\label{fct:clsd_smth}
		Every closed (or, more generally, every $G_\delta$) equivalence relation on a Polish space is smooth.
	\end{fct}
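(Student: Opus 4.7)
My plan is to reduce the $G_\delta$ case to the closed case by topology refinement, and then handle the closed case by constructing a Borel reduction via a countable separating family of $E$-saturated Borel sets.

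For the $G_\delta$ case, I would invoke the classical topology-refinement theorem of descriptive set theory (see e.g.\ Kechris, \emph{Classical Descriptive Set Theory}, Theorem 13.1): given a Polish space $(X, \tau)$ and a $G_\delta$ subset $E \subseteq X \times X$, one can refine $\tau$ to a finer Polish topology $\tau'$ on $X$ with the same Borel $\sigma$-algebra, in which $E$ is closed. Since Borel reducibility depends only on the Borel structure (cf.\ Remark~\ref{rem:smooth_implies_borel}), smoothness of $E$ with respect to $\tau'$ implies smoothness with respect to $\tau$. This reduces the problem to the closed case.

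For the closed case, let $E$ be a closed equivalence relation on a Polish space $X$, and let $(U_n)_{n \in \omega}$ be a countable open basis of $X$. The plan is to construct a Borel reduction $f \colon X \to 2^\omega$ by setting $f(x)_n = \chi_{A_n}(x)$, where $A_n := \{x \in X : [x]_E \cap U_n \neq \emptyset\}$ is the $E$-saturation of $U_n$. By Fact~\ref{fct:quot_T2_iff_closed}, $X/E$ is Hausdorff; equivalently, for any two distinct $E$-classes there is a basic open rectangle $U_n \times U_m$ disjoint from $E$, and then $A_n$ contains one class while avoiding the other. Thus the family $\{A_n\}_n$ separates distinct $E$-classes, and $f$ would be a Borel reduction of $E$ to equality on $2^\omega$, provided each $A_n$ is Borel.

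The main obstacle is precisely showing that each $A_n$ is Borel: \emph{a priori} it is only analytic, being the projection of the locally closed set $E \cap (X \times U_n)$. To upgrade from analytic to Borel, I would invoke a classical Borel selection theorem (Kuratowski--Ryll-Nardzewski, or Arsenin--Kunugui once the topology has been refined further so that the closed $E$-classes become $K_\sigma$) applied to the closed-valued multifunction $x \mapsto [x]_E$. This furnishes a Borel transversal $s \colon X \to X$ with $s(x) \in [x]_E$ and $s(x) = s(y)$ whenever $x \mathrel{E} y$; such an $s$ is itself a Borel reduction of $E$ to equality on $X$, and Fact~\ref{fct:borel_isom} together with Definition~\ref{dfn:smt} then yields smoothness. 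The technical heart of the argument is thus the selection step, which must be tuned carefully to the level of regularity of $E$ at hand.
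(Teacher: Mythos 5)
Your high-level plan (separating family of saturated Borel sets, or equivalently a Borel transversal) is a reasonable way to \emph{characterize} smoothness, but the technical heart of the argument has genuine gaps. First, the citation for the $G_\delta$-to-closed reduction is not right: Kechris's Theorem 13.1 produces a finer Polish topology on $X$ making a single Borel subset \emph{of $X$} clopen; it does not say that a $G_\delta$ subset of $X\times X$ can be made closed by refining only the topology on $X$ so that the \emph{product} of the new topology with itself does the job (the refinement guaranteed by 13.1 would live on $X^2$, with no reason to be a product topology). A lemma of the shape you want does exist for $G_\delta$ equivalence relations specifically, but it is a nontrivial specialized result, not an instance of 13.1.

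Second, and more seriously, your two proposed routes to Borelness of the saturations $A_n$ (equivalently, to a Borel transversal) both fail as described. The Kuratowski--Ryll-Nardzewski selection theorem requires the multifunction $x\mapsto[x]_E$ to be Borel-measurable, i.e.\ that $\{x : [x]_E\cap U\neq\emptyset\}$ be Borel for open $U$ --- which is precisely the statement you are trying to prove, so this is circular; and even granted a K-RN selector $s$ with $s(x)\in[x]_E$, nothing forces $s$ to be constant on $E$-classes, which is what a reduction to equality requires. The Arsenin--Kunugui route requires $K_\sigma$ (i.e.\ $\sigma$-compact) sections, and closed $E$-classes in a Polish space need not be $\sigma$-compact; moreover ``refining the topology further so that the closed $E$-classes become $K_\sigma$'' goes in the wrong direction, since passing to a finer topology can only destroy compactness, never create it. For comparison, the paper simply cites \cite[Corollary 1.2]{HKL90}, where the result is deduced from the Harrington--Kechris--Louveau dichotomy (Fact~\ref{fct:Harrington-Kechris-Louveau dichotomy}): if a $G_\delta$ equivalence relation $E$ were non-smooth, $\EZ$ would embed continuously into $E$, forcing $\EZ$ to be $G_\delta$ on $2^{\bN}$; but each $\EZ$-class is countable and dense, and a countable dense $G_\delta$ subset of $2^{\bN}$ would violate the Baire category theorem. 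That route is both shorter and avoids the selection issues entirely.
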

	\begin{proof}
		See \cite[Corollary 1.2]{HKL90}.
	\end{proof}

	\begin{fct}
		\label{fct:borel_section}
		If $X,Y$ are compact Polish spaces and $f\colon X\to Y$ is a continuous surjection, then $f$ has a Borel section $g$.
		
		In particular, if $f$ is a reduction from $E$ on $X$ to $F$ on $Y$, then $g$ is a Borel reduction from $F$ to $E$, whence $E\sim_B F$.
	\end{fct}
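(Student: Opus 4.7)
The plan is to prove the first part (existence of a Borel section) by a direct fusion-type construction, and then observe that the second part is an immediate two-line consequence. This avoids invoking the full machinery of uniformisation theorems (e.g.\ Jankov--von Neumann or Arsenin--Kunugui) and gives a self-contained argument using only that $f$ is closed (Remark~\ref{rem: continuous surjection is closed}).

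First I would fix a compatible metric on $X$ and, for each $n \in \bN$, a finite cover $\{K_{n,1}, \dots, K_{n,k_n}\}$ of $X$ by closed balls of diameter $< 2^{-n}$. I would then recursively define Borel functions $i_n \colon Y \to \{1, \dots, k_n\}$ and closed sets
\[
F_n(y) := f^{-1}(y) \cap K_{1, i_1(y)} \cap \dots \cap K_{n, i_n(y)}
\]
so that each $F_n(y)$ is nonempty of diameter $< 2^{-n}$: at stage $n+1$, let $i_{n+1}(y)$ be the least index $i$ such that $F_n(y) \cap K_{n+1, i} \neq \emptyset$ (this exists because the balls cover $X$ and $F_n(y) \neq \emptyset$). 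By compactness and the diameter condition, $\bigcap_n F_n(y)$ is a single point, which I would take as $g(y)$; by construction $f(g(y)) = y$.

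The main obstacle --- and the only step that uses anything nontrivial about $f$ --- is verifying that each $i_n$ is Borel, so that $g$, a pointwise limit of the Borel step functions $y \mapsto \text{centre}(K_{n, i_n(y)})$, is Borel. The key observation is that for any closed $K \subseteq X$, the set $\{y \in Y : f^{-1}(y) \cap K \neq \emptyset\} = f[K]$ is \emph{closed}, since $f$ is a continuous map from a compact space to a Hausdorff space (Remark~\ref{rem: continuous surjection is closed}). Inductively, each level set $\{i_1 = j_1, \dots, i_n = j_n\}$ is a Boolean combination of such closed sets, so $i_{n+1}$ is Borel as well.

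Finally, for the ``in particular'' clause: if $f$ is a reduction from $E$ to $F$ and $g$ is the section just constructed, then for any $y_1, y_2 \in Y$,
\[
y_1 \mathrel{F} y_2 \iff f(g(y_1)) \mathrel{F} f(g(y_2)) \iff g(y_1) \mathrel{E} g(y_2),
\]
where the first equivalence uses $f \circ g = \id_Y$ and the second uses that $f$ reduces $E$ to $F$. Hence $g$ is a Borel reduction of $F$ to $E$, and combining with $f$ gives $E \sim_B F$.
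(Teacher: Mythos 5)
Your proof is correct, but it takes a different route from the paper, which simply cites \cite[Exercise 24.20]{Kec95} for the existence of the Borel section and declares the ``in particular'' clause immediate. What you have given is a self-contained, standard proof of that cited exercise by recursive dissection: cover $X$ by finitely many closed balls of diameter $< 2^{-n}$ at level $n$ and greedily refine the fibre $f^{-1}(y)$ by choosing the least index whose ball meets it. You correctly isolate the one nontrivial ingredient, namely that for closed $K\subseteq X$ the set $\{y \mid f^{-1}(y)\cap K\neq\emptyset\} = f[K]$ is closed because $f$ is a closed map (Remark~\ref{rem: continuous surjection is closed}); from this, each level set $\{i_1=j_1,\dots,i_n=j_n\}$ is a finite Boolean combination of closed sets, so each $i_n$ is Borel with finite range and $g$, being a pointwise limit of the resulting Borel step functions, is Borel. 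The two-line ``in particular'' clause is handled exactly as the paper does. The trade-off is self-containment versus brevity: your version does not lean on an external textbook exercise, and it makes explicit exactly where closedness of $f$ (rather than mere continuity) is used.
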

	\begin{proof}
		The first part is \cite[Exercise 24.20]{Kec95}. The second is an immediate consequence of the first and the definition.
	\end{proof}
	
	Fact~\ref{fct:borel_section} immediately implies that a set $A$ in $Y$ with Borel preimage $B$ in $X$ is Borel (because $A$ is the preimage of $B$ by any section of $f$). However, a more general fact is also true.
	
	\begin{fct}
		\label{fct:borel_preimage}
		Suppose $X$ and $Y$ are compact Hausdorff topological spaces, and $f\colon X\to Y$ is a closed surjection, while $B\subseteq Y$. Then $B$ is Borel in $Y$ if and only if $f^{-1}[B]$ is Borel in $X$.
	\end{fct}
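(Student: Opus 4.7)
The forward direction is immediate: because $f$ is continuous, the family of $B \subseteq Y$ with $f^{-1}[B]$ Borel in $X$ is a $\sigma$-algebra containing every closed subset of $Y$, hence every Borel subset of $Y$.

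For the converse, my plan is to express both $B$ and $Y \setminus B$ as analytic subsets of $Y$ and then invoke a separation theorem. First, if $A := f^{-1}[B]$ is Borel, so is $X \setminus A = f^{-1}[Y \setminus B]$. In any compact Hausdorff space the class of analytic sets in the sense of Definition~\ref{definition: analytic sets} (that is, $K$-analytic sets) contains every closed set and is closed under countable unions and countable intersections, so it contains every Borel set. Hence $A$ and $X \setminus A$ are both analytic in $X$, and by the image statement in Proposition~\ref{prop: preservation of analyticity by images and preimages} their images $f[A]$ and $f[X \setminus A]$ are analytic in $Y$. Surjectivity of $f$ identifies these images with $B$ and $Y \setminus B$, so both of these are analytic.

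The main obstacle is the step that goes from ``$B$ and $Y \setminus B$ are both analytic'' to ``$B$ is Borel''. Here I would invoke the first separation theorem for $K$-analytic sets (due to Choquet, cf.\ \cite{Cho59}): two disjoint $K$-analytic subsets of a Hausdorff space are contained in disjoint Borel sets. Applied to $B$ and $Y \setminus B$, this yields a Borel $B' \supseteq B$ disjoint from $Y \setminus B$, forcing $B' = B$ and hence showing that $B$ itself is Borel. This separation step is the genuine content of the proof: an elementary transfinite induction using only the upper image $f[\,\cdot\,]$ and its dual $A \mapsto Y \setminus f[X \setminus A]$ succeeds only at the first two levels of the Borel hierarchy, since the upper image commutes with countable unions but not intersections while its dual commutes with intersections but not unions; the classical $K$-analytic separation theorem is what handles the entire hierarchy in one stroke.
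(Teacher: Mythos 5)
There is a genuine gap at the step where you assert that every Borel subset of a compact Hausdorff space is analytic. The heuristic leading there is not valid: containing the closed sets and being closed under countable unions and countable intersections does not by itself generate the Borel $\sigma$-algebra, since the class is not closed under complements, and in a general compact Hausdorff space an open set need not be $F_\sigma$ and hence need not be analytic. Indeed the conclusion is false outside the metrisable case: in the compact Hausdorff space $\omega_1+1$ with the order topology, the open subset $\omega_1$ is Borel but not Lindelöf, while every analytic (i.e.\ $K$-analytic) subset of a compact Hausdorff space is Lindelöf by a theorem of Choquet; so $\omega_1$ is a Borel, non-analytic set. Since Fact~\ref{fct:borel_preimage} is stated and used for compact Hausdorff spaces without any metrisability hypothesis, you cannot pass from ``$f^{-1}[B]$ and its complement are Borel'' to ``they are analytic,'' and the appeal to the first separation theorem collapses.

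The paper's proof goes a different way: it cites \cite[Theorem 10]{HS03}, which works with the Borel hierarchy directly, using the fact that the generating operations (complementation, countable intersection) are ``descriptive'' operations that interact well with perfect surjections. Your closing remark that an elementary transfinite induction can only clear the first two levels also misdiagnoses the difficulty: for a perfect $f$ and $f$-saturated sets $A$, $A_n$ one has both $f[X\setminus A]=Y\setminus f[A]$ and $f\bigl[\bigcap_n A_n\bigr]=\bigcap_n f[A_n]$ (saturated fibres are all-or-nothing), so images of saturated sets commute with both generating operations. The genuine obstacle is rather that the intermediate stages of a Borel construction of the saturated set $f^{-1}[B]$ need not themselves be saturated; managing that is the content of the cited theorem.
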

	\begin{proof}
		It is clear that if $B$ is Borel, then so is $f^{-1}[B]$
		
		The converse follows from \cite[Theorem 10]{HS03} (because Borel sets are exactly the sets obtained from open and closed sets by a sequence of operations consisting of taking complements and countable intersections, which are descriptive operations in the sense of \cite{HS03}).
	\end{proof}

	The following two dichotomies are fundamental in the theory of Borel equivalence relations.
	
	\begin{fct}[Silver dichotomy]
		\label{fct:silver}
		For every Borel (even coanalytic) equivalence relation $E$ on a Polish space either $E \leq_B \Delta_{\bN}$, or $\Delta_{2^{\bN}} \leq_B E$. (So in particular, every non-smooth Borel equivalence relation on a Polish space has $2^{\aleph_0}$ classes)
	\end{fct}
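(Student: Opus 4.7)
The plan is to follow Silver's original strategy using effective descriptive set theory, specifically the Gandy--Harrington topology. By relativising to a suitable real parameter, we may assume without loss of generality that $X = 2^\omega$ and that $E$ is lightface $\Pi^1_1$. Let $\tau_{GH}$ denote the Gandy--Harrington topology, generated by the (lightface) $\Sigma^1_1$ subsets of $X$. Its key properties are: $\tau_{GH}$ refines the standard Polish topology; every nonempty $\Sigma^1_1$ set is $\tau_{GH}$-open; and, if $Z \subseteq X$ is the set of points $x$ such that $\{x\}$ is not $\Pi^1_1$, then $(Z,\tau_{GH}\restr_Z)$ is strong Choquet. The complement $X\setminus Z$ is the countable set of $\Pi^1_1$-singletons.

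The first step is to deal with $X\setminus Z$: each of its points is trivially $\Delta^1_1$, hence the $E$-classes meeting $X\setminus Z$ together contribute at most countably many classes, and they can be indexed by natural numbers in a Borel way. Thus, if we can produce a Borel reduction of $E\restr_Z$ to $\Delta_\bN$, the two combine into a Borel reduction of $E$ to $\Delta_\bN$. So from now on the question reduces to $E\restr_Z$, and the plan is to establish the dichotomy on $Z$.

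Now I would run a case split based on whether $Z$ is covered by countably many $\Sigma^1_1$ sets each contained in a single $E$-class. In the positive case, any such countable cover yields a Borel partition of $Z$ into countably many pieces, each inside one $E$-class; sending each point to the index of its piece gives a Borel reduction of $E\restr_Z$ to $\Delta_\bN$. In the negative case, every nonempty $\Sigma^1_1$ subset $U \subseteq Z$ contains at least two $E$-inequivalent points (else, together with the $\Pi^1_1$-uniformisation / reflection machinery, one could cover $Z$ by countably many such single-class $\Sigma^1_1$ pieces, contradicting the case assumption). Then, using strong Choquetness of $\tau_{GH}$ on $Z$, I would build a Cantor scheme of nonempty $\Sigma^1_1$ sets $U_s$, $s\in 2^{<\omega}$, with $\overline{U_{s\frown i}}\subseteq U_s$ in the Polish topology and with $U_s \times U_t$ disjoint from $E$ whenever $s\neq t\in 2^n$ (using that $E$ is $\Pi^1_1$, so $X^2 \setminus E$ is $\Sigma^1_1$ and one may pull apart inequivalent witnesses by $\Sigma^1_1$ neighbourhoods). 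The induced continuous map $2^\omega \to X$, $\eta \mapsto$ the unique point of $\bigcap_n U_{\eta\restr n}$, is then an injective continuous reduction of $\Delta_{2^\omega}$ to $E$.

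The main technical obstacle is the negative-case inductive step: one must simultaneously guarantee that the $U_s$ remain $\Sigma^1_1$, that the strong Choquet game can be played to extract a nonempty intersection along each branch, and that pairwise $E$-inequivalence of branches is preserved when splitting. This is where strong Choquetness of the Gandy--Harrington topology on $Z$, together with the reflection/separation properties of $\Sigma^1_1$ and $\Pi^1_1$ sets, does the real work; everything else is standard Cantor-scheme bookkeeping.
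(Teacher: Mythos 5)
The paper does not actually prove this fact; it only cites \cite[Theorem 10.1.1]{kanovei}. Your proposal reconstructs the standard Harrington proof of Silver's theorem via the Gandy--Harrington topology, which is indeed the route taken in the cited reference, so the overall architecture is right. However, several details are off.

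First, the auxiliary set you call $Z$ is not the right one. The correct statement is that the Gandy--Harrington topology is strong Choquet on the $\Sigma^1_1$ set of \emph{low} reals $\Omega = \{x : \omega_1^x = \omega_1^{\mathrm{CK}}\}$; this is what underlies the nonemptiness of the Cantor scheme's branches. You instead take $Z$ to be the complement of the $\Pi^1_1$-singletons, and assert both that the GH topology is strong Choquet on $Z$ and that each $\Pi^1_1$-singleton is ``trivially $\Delta^1_1$''. Neither is true: $\Omega$ is not the complement of the $\Pi^1_1$-singletons (the latter is merely countable, while the non-low reals are not), and there are $\Pi^1_1$-singletons which are not $\Delta^1_1$ (Kleene's $\mathcal O$, viewed as a real, is the standard example). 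The attempt to pre-process $X\setminus Z$ also needs care: $Z$ is not $E$-saturated, so gluing a reduction on $Z$ with one on $X\setminus Z$ does not automatically respect $E$-classes that straddle the two pieces. The standard proof avoids this by never splitting the domain this way.

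Second, the ``positive'' case of your dichotomy is stated with $\Sigma^1_1$ pieces, but a cover by $\Sigma^1_1$ sets does not yield a \emph{Borel} reduction to $\Delta_{\bN}$, since $\Sigma^1_1$ sets need not be Borel. This is precisely where $\Sigma^1_1$-reflection enters: any $\Sigma^1_1$ set contained in a single $E$-class can be enlarged to a $\Delta^1_1$ superset still contained in a single $E$-class, and there are only countably many $\Delta^1_1$ sets, which gives a genuine Borel partition. Accordingly, the canonical case split in Harrington's proof is on the set $H$ of points not lying in any $\Delta^1_1$ set contained in a single $E$-class; one shows that if $H\neq\emptyset$ then every nonempty $\Sigma^1_1$ subset of $H$ contains $E$-inequivalent points, which is the engine of the negative case. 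You gesture at reflection in passing, but the case split as you've phrased it doesn't yet produce a Borel map. With $Z$ replaced by $\Omega$, the $\Delta^1_1$-singleton claim dropped, the reflection step made explicit, and the dichotomy reformulated around $H$, your argument would match the standard proof.
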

	\begin{proof}
		See \cite[Theorem 10.1.1]{kanovei}.
	\end{proof}
	
	By $\EZ$ we denote the equivalence relation of eventual equality on $2^{\bN}$ (i.e.\ for $\eta,\eta'\in 2^{\bN}$ we have $\eta\EZ \eta'$ when $\eta(n)=\eta'(n)$ for all but finitely many $n$).
	
	\begin{fct}[Harrington-Kechris-Louveau dichotomy]\label{fct:Harrington-Kechris-Louveau dichotomy}
		For every Borel equivalence relation $E$ on a Polish space $X$ either $E \leq_B \Delta_{2^{\bN}}$ (i.e.\ $E$ is smooth), or ${\EZ} \leq_B E$. In the latter case, the reduction is realised by a homeomorphic embedding of $2^{\bN}$ into $X$.
	\end{fct}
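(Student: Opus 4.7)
The plan is to invoke the effective descriptive set theory machinery of Harrington, Kechris, and Louveau. After absorbing any parameters into the ambient space, I may assume $X = \omega^\omega$ and that $E$ is $\Delta^1_1$ (lightface). The central tool is the Gandy--Harrington topology $\tau_{\mathrm{GH}}$ on $X$, whose basic open sets are the lightface $\Sigma^1_1$ sets; by Gandy's basis theorem this topology is strong Choquet, so every decreasing sequence of nonempty $\Sigma^1_1$ sets whose original-metric diameters tend to zero has nonempty intersection. This is the property that will ultimately produce a \emph{homeomorphic} embedding of $2^{\bN}$ in the non-smooth branch.

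First I would isolate a ``non-smooth core'' $X_1 \subseteq X$, defined as the union of all nonempty $\Sigma^1_1$ sets $B$ such that no $\Delta^1_1$ function on $B$ separates $E$-classes, or equivalently such that $E \restr_B$ fails to admit a Borel transversal in any effective sense. If $X_1$ is $\tau_{\mathrm{GH}}$-meagre (equivalently, if its $E$-saturation is negligible), then a standard effective uniformisation furnishes a Borel map $X \to 2^{\bN}$ whose fibres are exactly the $E$-classes, yielding the smooth alternative $E \leq_B \Delta_{2^{\bN}}$.

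Otherwise, working inside $X_1$, I would construct a Cantor scheme $(U_s)_{s \in 2^{<\omega}}$ of nonempty $\Sigma^1_1$ sets together with $\Sigma^1_1$ ``$E$-links'' $R_{s,t} \subseteq E \cap (U_s \times U_t)$ for $s,t \in 2^n$ agreeing on all but finitely many coordinates, arranged so that: (a) the sets $U_s$ shrink in original-metric diameter to zero; (b) $\overline{U_{s0}} \cap \overline{U_{s1}} = \emptyset$ in the original topology; (c) for $s,t \in 2^n$ differing in the first coordinate, no point of $U_s$ is $E$-equivalent to any point of $U_t$. Defining $f(\eta)$ as the unique point in $\bigcap_n U_{\eta\restr n}$ then produces a continuous injection $2^{\bN} \to X$ which, by (a), is a homeomorphism onto its image, and which by (b), (c) and the coherent links propagated through the scheme witnesses $\EZ \leq_B E$.

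The principal obstacle is the inductive step of the construction: at each stage I must \emph{simultaneously} arrange enough $E$-links that eventually-equal branches are forced together, while preventing incompatible branches from becoming $E$-related. These demands pull in opposite directions, and they are reconciled using the non-smoothness of $E$ on $X_1$, which, via Pettis--Effros style arguments in $\tau_{\mathrm{GH}}$, supplies sufficient $E$-homogeneity inside every nonempty $\Sigma^1_1$ piece to prolong any given link; meanwhile the strong Choquet property prevents the shrinking $\Sigma^1_1$ intersections from collapsing. Packaging this ``link-extension lemma'' into a clean inductive hypothesis — and verifying that the two alternatives of the dichotomy really are exhaustive — is the technical heart of the argument.
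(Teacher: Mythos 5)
The paper does not prove this statement: it is labelled as a \emph{Fact}, and the ``proof'' in the source is just a citation to \cite[Theorem 1.1]{HKL90} and \cite[Theorem 10.4.1]{kanovei}. So there is no in-paper argument for you to match; your task was effectively to sketch the Harrington--Kechris--Louveau proof from the literature, which is what you did.

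Your roadmap identifies the correct machinery: relativise so $E$ is lightface $\Delta^1_1$ on $\omega^\omega$, use the Gandy--Harrington topology $\tau_{\mathrm{GH}}$ (with the fact that $(\omega^\omega,\tau_{\mathrm{GH}})$ is strong Choquet), single out a $\Sigma^1_1$ ``bad'' set whose emptiness gives smoothness, and otherwise run a splitting Cantor-scheme construction with $E$-links to homeomorphically embed $2^{\bN}$ realising $\EZ \leq_B E$. That is indeed the architecture of the HKL proof. However, as a proof this is incomplete, and you say so yourself: the entire weight of the argument sits in the inductive ``link-extension lemma'' that you only name. Two specific points in your description are also imprecise. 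First, your definition of the non-smooth core $X_1$ (``union of all nonempty $\Sigma^1_1$ sets on which no $\Delta^1_1$ function separates classes'') is not the set HKL actually work with; they pass to the complement of the union of $\Delta^1_1$ $E$-invariant sets on which $E$ is effectively smooth, and the fact that this complement is itself $\Sigma^1_1$ (so a basic $\tau_{\mathrm{GH}}$-open set) is a nontrivial reflection/separation step, not a one-liner. Second, the ``Pettis--Effros style'' reference is misleading for what the inductive step really uses, namely the first reflection theorem, the Luzin--Novikov / number-uniformisation results, and the Gandy--Harrington Baire-category mechanics; invoking Pettis--Effros suggests a topological-group argument that isn't available here. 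Neither slip is fatal at the level of a sketch, but both would have to be corrected before the outline could be expanded into a proof. In short: you have reproduced the shape of the HKL argument, which is what the paper itself delegates to the references, but you have not supplied the technical core, and a couple of signposts point in slightly the wrong direction.
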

	\begin{proof}
		See \cite[Theorem 1.1]{HKL90} or \cite[Theorem 10.4.1]{kanovei}.
	\end{proof}
	
	\index{E-saturated@$E$-saturated set}
	\index{E-invariant@$E$-invariant set|see{$E$-saturated set}}
	Recall that for an equivalence relation $E$ on a set $X$, a subset $Y$ of $X$ is said to be {\em $E$-saturated} if it is a union of some classes of $E$. In this thesis, we will say that a family $\{B_i \mid i \in \omega\}$ of subsets of $X$ {\em separates classes} of $E$ if for every $x \in X$, $[x]_E= \bigcap \{ B_i \mid x \in B_i\}$. Note that this implies that all $B_i$ are $E$-saturated. Thus, a family $\{B_i \mid i \in \omega\}$ of subsets of $X$ separates classes of $E$ if and only if each $B_i$ is $E$-saturated and each class of $E$ is the intersection of those sets $B_i$ which contain it.
	The following characterisation of smoothness is folklore.
	
	\begin{fct}\label{fct:separating_family}
		Let $X$ be an equivalence relation on a Polish space $X$. Then, $E$ is smooth if and only if there is a countable family $\{B_i\mid i \in \omega\}$ of Borel ($E$-saturated) subsets of $X$ separating classes of $E$.
	\end{fct}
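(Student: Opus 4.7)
The plan is to prove the equivalence directly, building the reduction from the separating family in one direction, and extracting the separating family from the reduction in the other. Throughout, ``separating'' will be used as in the discussion preceding the fact: each $B_i$ is $E$-saturated, and each class is exactly the intersection of those $B_i$ that contain it.

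For the implication ``($\Leftarrow$)'', suppose $(B_i)_{i\in\omega}$ is such a countable Borel $E$-saturated separating family. I would define $f\colon X\to 2^\omega$ by $f(x)(i):=\mathbf{1}_{B_i}(x)$. Borel measurability of $f$ is immediate from Borelness of each $B_i$ (the preimages of the subbasic clopen sets are the $B_i$ and their complements). That $f$ is a reduction is a direct unwinding of the hypotheses: if $x\Er y$, then $E$-saturation of each $B_i$ gives $x\in B_i\liff y\in B_i$, hence $f(x)=f(y)$; conversely, if $f(x)=f(y)$, then $\{i:x\in B_i\}=\{i:y\in B_i\}$, so the separating property yields $[x]_E=\bigcap\{B_i:x\in B_i\}=\bigcap\{B_i:y\in B_i\}\ni y$.

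For the implication ``($\Rightarrow$)'', suppose $f\colon X\to 2^\omega$ is a Borel reduction of $E$ to equality. For each $i\in\omega$, set
\[
B_i^0:=f^{-1}[\{\eta\in 2^\omega:\eta(i)=0\}],\qquad B_i^1:=f^{-1}[\{\eta\in 2^\omega:\eta(i)=1\}].
\]
Each of these is Borel (as the preimage of a clopen set by a Borel map) and $E$-saturated (since $f$ is a reduction, so preimages under $f$ are unions of $E$-classes). The key point I want to highlight is that we must include both the ``$\eta(i)=0$'' and ``$\eta(i)=1$'' versions: the hypothesis requires each class to be the intersection of those sets in the family that contain it, so we are not allowed to take complements on the fly. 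With both included, for any $x\in X$ we have $x\in B_i^{f(x)(i)}$ and therefore
\[
\bigcap\bigl\{B\in\{B_j^0,B_j^1:j\in\omega\}:x\in B\bigr\}\subseteq \bigcap_{i\in\omega}B_i^{f(x)(i)}=f^{-1}[\{f(x)\}]=[x]_E,
\]
while the reverse inclusion is trivial from $E$-saturation. Hence the countable family $\{B_i^0,B_i^1:i\in\omega\}$ separates classes.

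Neither direction is really difficult; the only mildly delicate point is the book-keeping in the ``$(\Rightarrow)$'' direction, to ensure that the separating property (in the sense of the paragraph preceding the fact, requiring pure intersections rather than intersections with complements) is genuinely achieved, which is resolved by including for each coordinate $i$ both halves of the $i$-th clopen partition of $2^\omega$.
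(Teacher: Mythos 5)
Your proposal is correct and takes essentially the same approach as the paper. The ``($\Leftarrow$)'' direction is identical (the characteristic-function reduction into $2^\omega$); for ``($\Rightarrow$)'' the paper pulls back a countable open basis of $2^{\bN}$ rather than the two subbasic clopen halves in each coordinate as you do, but this is a cosmetic variation and your bookkeeping remark about including both $B_i^0$ and $B_i^1$ amounts to the same observation that a basis (unlike a subbasis) already captures all points as intersections.
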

	\begin{proof}
		Let $f$ be a Borel reduction of $E$ to $\Delta_{2^{\bN}}$. Let $\{C_i\mid i \in \omega\}$ be a countable open basis of the space $2^{\bN}$. Then $\{f^{-1}[C_i]\mid i \in \omega \}$ is a countable family consisting of Borel ($E$-saturated) subsets of $X$ separating classes of $E$.
		
		For the converse, consider a family $\{ B_i \mid i \in \bN\}$ satisfying the hypothesis. Define $f \colon X \to 2^{\bN}$ by $f(x) = \chi_{\{i \in \bN \mid x \in B_i\}}$ (i.e.\ the characteristic function of ${\{i \in \bN \mid x \in B_i\}}$). It is easy to see that this is a Borel reduction of $E$ to $\Delta_{2^{\bN}}$.
	\end{proof}

	The following Fact provides a useful criterion of closedness for subgroups of totally nonmeagre topological groups (including compact Hausdorff groups).
	\begin{fct}\label{fct:miller}
		Assume $G$ is a totally non-meagre topological group (e.g.\ $G$ is Polish or locally compact). Suppose $H$ is a subgroup of $G$ and $\{E_i \mid i \in \omega\}$ is a collection of right $H$-invariant (i.e.\ such that $E_iH=E_i$), strictly Baire sets which separates left $H$-cosets (i.e.\ for each $g \in G$, $gH= \bigcap \{ E_i \mid g \in E_i\}$). Then $H$ is closed in $G$.
	\end{fct}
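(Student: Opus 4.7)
The plan is to argue by contradiction: assume $H$ is not closed and derive a contradiction via a topological-ergodicity argument inside the closed subgroup $\overline H$. Since $\overline H$ is closed in the totally non-meagre group $G$, it is non-meagre in itself, and is a Baire space (as is clear in the Polish and locally compact cases singled out in the statement). Applying the separating hypothesis at $g = 1$ gives $H = \bigcap_{i \in I} E_i$ for $I = \{i : 1 \in E_i\}$, so $H$ inherits the Baire property in $\overline H$ from the strictly Baire $E_i$. If $H$ were non-meagre in $\overline H$, then Pettis--Pickard (Fact~\ref{fct:pettis}) applied inside the topological group $\overline H$ would make $H$ open, hence closed, in $\overline H$; combined with density of $H$ in $\overline H$ this would force $H = \overline H$, contradicting the assumption. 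Thus $H$ is meagre in $\overline H$.

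The key ingredient is an ergodicity statement: every right $H$-invariant $A \subseteq \overline H$ with the Baire property is either meagre or comeagre in $\overline H$. Write $A = U \triangle M$ and $\overline H \setminus A = U' \triangle M'$ with $U, U'$ open and $M, M'$ meagre. Right $H$-invariance of $A$ forces $Uh \triangle U = Mh \triangle M$ to be meagre for each $h \in H$, so the open sets $Uh \setminus \overline U$ and $U \setminus \overline{Uh}$ (being contained in it) are open and meagre, hence empty, which yields $\overline U = \overline{Uh}$. Consequently, $\overline U$ is right $H$-invariant; if $U \neq \emptyset$, pick $g \in U$, and then $\overline U \supseteq \overline{gH} = \overline H$ (since $H$, and hence $gH$, is dense), so $U$ is dense in $\overline H$. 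Symmetrically $U'$ is dense whenever non-empty. Now $A \cap (\overline H \setminus A) = \emptyset$ forces $U \cap U' \subseteq M \cup M'$, making $U \cap U'$ open and meagre, hence empty; but two non-empty disjoint dense open sets cannot coexist in a Baire space, so one of $U$, $U'$ is empty, giving the conclusion.

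Applying ergodicity to each right $H$-invariant BP set $E_i \cap \overline H$ partitions $\omega$ into $I^+ := \{i : E_i \cap \overline H \text{ is comeagre}\}$ and $I^- := \{i : E_i \cap \overline H \text{ is meagre}\}$. The set
\[
G^+ := \bigcap_{i \in I^+} (E_i \cap \overline H) \;\cap\; \bigcap_{i \in I^-} (\overline H \setminus E_i)
\]
is a countable intersection of comeagre subsets of the Baire space $\overline H$, hence itself comeagre and in particular non-empty. For any $g, g' \in G^+$ and every $i$, $g \in E_i$ iff $g' \in E_i$, so the separating hypothesis forces $gH = g'H$; thus $G^+$ is contained in a single left $H$-coset $C$. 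But $C$ is a translate of the meagre $H$ and therefore meagre in $\overline H$, while $G^+ \subseteq C$ is comeagre---the desired contradiction.

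The main obstacle is establishing the ergodicity lemma in the required generality: one cannot assume $\overline H$ is second countable, so the classical ``cover by countably many translates of $U$'' shortcut is unavailable. The argument instead exploits the interplay between density of $H$ (making every left coset dense in $\overline H$) and the Baire-space fact that open meagre sets are empty, via the passage from $\overline U = \overline{Uh}$ to right $H$-invariance of $\overline U$. Once ergodicity is in hand, pairing it with the separating family to construct the comeagre ``$E_i$-signature class'' $G^+$ and observing that it collapses into a single, necessarily meagre, coset finishes the proof.
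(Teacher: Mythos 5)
The paper does not prove this statement at all: it simply cites \cite[Theorem 1]{Mil77}, so there is no in-paper argument to compare yours against. Your proof is correct and is, as far as I can tell, essentially Miller's original argument: reduce to $\overline{H}$, show $H$ is meagre there via Pettis, prove the topological zero--one law for right-$H$-invariant BP subsets of $\overline{H}$, and then derive a contradiction by noting that the comeagre ``$E_i$-signature'' set $G^+$ must land in a single (meagre) left coset. All the individual steps check out: the symmetric-difference manipulation yielding $\overline{U}=\overline{Uh}$ and hence right $H$-invariance of $\overline{U}$, the use of density of $gH$ in $\overline{H}$, the disjointness argument for $U\cap U'$, and the countability of the signature intersection. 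The one place you hedge --- whether $\overline{H}$ is a Baire space in the general totally-non-meagre case, beyond Polish and locally compact --- is in fact fine without extra hypotheses: for a topological group, non-meagre in itself implies Baire, by the Banach localization theorem for category (if some non-empty open $V\subseteq \overline{H}$ were meagre, then $\overline{H}=\bigcup_{g}gV$ would be a union of open meagre sets, hence meagre), and $\overline{H}$ is non-meagre in itself because it is a closed subset of a totally non-meagre space. You might spell that out rather than parenthetically appealing to the Polish/locally compact cases, but it is not a gap in the argument.
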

	\begin{proof}
		This is \cite[Theorem 1]{Mil77}.
	\end{proof}
	
	Using Fact~\ref{fct:miller}, we can prove the following proposition, which is one of the more important tools in the proofs of Main~Theorem~\ref{mainthm:smt} and Main~Theorem~\ref{mainthm:tdgroup}.
	\begin{prop}
		\label{prop:trichotomy_for_groups}
		Suppose $G$ is a compact Polish group and $H\leq G$. Then exactly one of the following holds:
		\begin{itemize}
			\item
			$[G:H]$ is finite and $H$ is open,
			\item
			$[G:H]=2^{\aleph_0}$ and $H$ is closed,
			\item
			$H$ has the property of Baire, $G/H$ is non-smooth (in the sense that the coset equivalence relation is non-smooth) and $[G:H]=2^{\aleph_0}$,
			\item
			$H$ does not have the property of Baire (and $G/H$ is non-smooth).
		\end{itemize}
	\end{prop}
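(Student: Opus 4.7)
The plan is to dichotomize first on whether $H$ has the property of Baire, and then, within the Baire case, refine according to whether $H$ is open and whether $H$ is closed.

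If $H$ fails to have the BP, then $E_H$ cannot be smooth: by Remark~\ref{rem:smooth_implies_borel}, smoothness would force $E_H$ to be Borel, but then $H=[e]_{E_H}$ would be Borel in $G$, hence Baire --- contradicting the assumption. This yields the fourth bullet.

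Suppose now that $H$ has the BP. By the Pettis-Pickard theorem (Fact~\ref{fct:pettis}), either $H$ is open or $H$ is meagre. If $H$ is open, then by Fact~\ref{fct:quotient_by_closed_subgroup} the quotient $G/H$ is discrete; being a continuous image of the compact space $G$, it is also compact, hence finite --- this is the first bullet. If $H$ is meagre, Fact~\ref{fct:from_mycielski} gives $[G:H]\geq 2^{\aleph_0}$; together with $|G|\leq 2^{\aleph_0}$ (since $G$ is Polish) this forces $[G:H]=2^{\aleph_0}$. It then remains to split on whether $H$ is closed: if it is, then $E_H$ is closed on the Polish space $G$, so it is smooth by Fact~\ref{fct:clsd_smth}, giving the second bullet.

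The main obstacle, and the crux of the proposition, is to show that if $H$ is Baire, meagre, and not closed, then $E_H$ is non-smooth; here Miller's theorem enters via the separating-family characterisation of smoothness. Assuming $E_H$ were smooth, Fact~\ref{fct:separating_family} yields a countable family $\{B_i\}_{i\in\omega}$ of Borel, $E_H$-saturated subsets of $G$ separating the left cosets of $H$. Being $E_H$-saturated just means $B_iH=B_i$, i.e.\ right $H$-invariance. Borel sets are strictly Baire, and $G$ is totally non-meagre as a compact Hausdorff space, so Fact~\ref{fct:miller} would force $H$ to be closed --- a contradiction. This gives the third bullet, and mutual exclusivity of the four cases is clear (finite versus continuum index; closed/smooth versus non-smooth; Baire versus non-Baire).
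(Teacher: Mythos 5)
Your proof is correct and follows essentially the same route as the paper: Pettis--Pickard to split on open vs.\ meagre, Mycielski (via Fact~\ref{fct:from_mycielski}) for the continuum lower bound on the index, and Miller's theorem (Fact~\ref{fct:miller}) together with the separating-family characterisation of smoothness (Fact~\ref{fct:separating_family}) to force closedness from smoothness. The only cosmetic difference is that you derive Borelness of $H$ as a section of $E_H$, whereas the paper invokes Remark~\ref{rem:group_to_cosets} and Proposition~\ref{prop:preservation_properties} for the same conclusion.
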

	\begin{proof}
		If $H$ does not have the Baire property, then it is not Borel, and (by Remark~\ref{rem:group_to_cosets} and Proposition~\ref{prop:preservation_properties}) neither is the coset equivalence relation, so $G/H$ cannot be smooth (see Remark~\ref{rem:smooth_implies_borel}). Thus, in the following, we may assume that $H$ is Baire.
		
		If $H$ is not meagre, then by Fact~\ref{fct:pettis}, it is open. By compactness of $G$, it follows that $G/H$ is finite.
		
		If $H$ is meagre, then by Fact~\ref{fct:multiplication_open}, the coset equivalence relation of $H$ is also meagre (as the preimage of a meagre set by a continuous open map, cf.\ Remark~\ref{rem:group_to_cosets}), and thus, by Proposition~\ref{prop:mycielski}, $\lvert G/H\rvert=2^{\aleph_0}$.
		
		If $G/H$ is smooth, then by Remark~\ref{rem:smooth_implies_borel}, it is Borel, and hence (by Remark~\ref{rem:group_to_cosets}) so is $H$. Thus, $H$ has the strict Baire property, and by Fact~\ref{fct:separating_family} we have a countable Borel separating family for cosets of $H$, so by Fact~\ref{fct:miller}, $H$ is closed.
	\end{proof}
	
	The following example is essentially \cite[Example 3.39]{KM14}.
	\begin{ex}
		\label{ex:trichotomy_counterexample}
		In Proposition~\ref{prop:trichotomy_for_groups}, we cannot expect $G/H$ to be large for an arbitrary non-closed $H$. For example, if $G=\mathbf F_q^{\aleph_0}$ is a vector space over the finite field $\mathbf F_q$, then every nonzero linear functional $\eta$ on $G$ gives us a distinct subspace $H_\eta=\ker \eta$ of codimension $1$, which is then a subgroup of index $q$. But (by finite index) if $H$ is closed, it must be clopen. But as a compact Polish space, $G$ has only countably many clopen subsets, so (because there are $2^{\aleph_0}$ linear functionals) some $H_\eta$ is not closed (even though it has finite index).
		
		(In fact, a compact Hausdorff group has a non-open (equivalently, non-closed) subgroup of finite index if and only if it has uncountably many subgroups of finite index, see \cite[Theorem 2]{SW03}.)\xqed{\lozenge}
	\end{ex}
	
	\begin{rem}
		Note that Proposition~\ref{prop:trichotomy_for_groups} shows that a subgroup of a Polish group with the Baire property has index finite or $2^{\aleph_0}$, so one can ask if the same is true for arbitrary subgroups. In \cite[Theorem 2.3]{HHM16}, the authors show that if $G$ is compact Hausdorff but not profinite, then it has a subgroup of index $\aleph_0$. The case of profinite groups seems to remain open.\xqed{\lozenge}
	\end{rem}

	\section{Topological dynamics}
	\label{sec:prel_topdyn}
	All the relevant definitions and facts in topological dynamics (apart from the ones related to tame dynamical systems, which are in Section~\ref{section: tame systems}) can be found in Appendix~\ref{app:topdyn}, along with complete proofs of most of them. They have been deferred, because they have little expository value at this point. Here, we only list some of the most important ones.
	
	\begin{dfn}
		\label{dfn:dyn_system_pre}
		\index{dynamical system}
		By a \emph{dynamical system}, in this paper, we mean a pair $(G,X)$, where $G$ is an abstract group acting by homeomorphisms on a compact Hausdorff space $X$. Sometimes, $G$ is left implicit and we just say that $X$ is a dynamical system.
		
		\index{ambit}
		If $x_0\in X$ has orbit dense in $X$, then we call the triple $(G,X,x_0)$ a \emph{$G$-ambit}, or just an \emph{ambit}. Sometimes, when $G$ is clear from the context, we also write simply $(X,x_0)$.
		\xqed{\lozenge}
	\end{dfn}

	\begin{dfn}
		\label{dfn:ellis_group_pre}
		\index{piXg@$\pi_{X,g}$}
		\index{pig@$\pi_g$}
		\index{E(G,X)@$E(G,X)$|see {Ellis semigroup}}
		\index{EL@$EL$|see {Ellis semigroup}}
		\index{Ellis!semigroup}
		\index{Enveloping semigroup|see {Ellis semigroup}}
		If $(G,X)$ is a dynamical system, then its \emph{Ellis (or enveloping) semigroup} $EL=E(G,X)$ is the (pointwise) closure in $X^X$ of the set of functions $\pi_{X,g}\colon x\mapsto g\cdot x$ for $g\in G$. (When there is no risk of confusion, we write simply $\pi_g$, or --- abusing the notation --- just $g$ for $\pi_{X,g}$. When $(G,X)$ is clear from the context, we also write $EL$ for $E(G,X)$.)
		\xqed{\lozenge}
	\end{dfn}
	
	\begin{fct}
		\label{fct:ellis_clst_pre}
		If $(G,X)$ is a dynamical system, then $EL$ is a compact left topological semigroup (i.e.\ it is a semigroup with the composition as its semigroup operation, and the composition is continuous on the left). It is also a $G$-flow with $g\cdot f:= \pi_gf$ (i.e.\ $\pi_g$ composed with $f$).
	\end{fct}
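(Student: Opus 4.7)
The plan is to establish, in order, compactness (and Hausdorffness) of $EL$, its closure under composition together with continuity of right multiplication, and finally the $G$-flow structure. Compactness is immediate: since $X$ is compact Hausdorff, $X^X$ is compact Hausdorff by Tychonoff and preservation of Hausdorffness under products, and $EL$ is defined as a closure in $X^X$, hence is itself compact Hausdorff.

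For the semigroup structure, the key elementary observation is that for any fixed $h \in X^X$ the right-translation map $\rho_h \colon X^X \to X^X$, $f \mapsto f \circ h$, is continuous; indeed, the composition of $\rho_h$ with the evaluation-at-$x$ projection $X^X \to X$ equals the evaluation-at-$h(x)$ projection, so each coordinate of $\rho_h$ is continuous in the product topology, hence so is $\rho_h$. Using this, I would prove closure of $EL$ under composition by a two-step approximation. First, for $g \in G$ and $h \in EL$, writing $h$ as a pointwise limit of a net $(\pi_{g_i'})_i$ and using continuity of $\pi_g$ (which holds because $G$ acts by homeomorphisms), one gets pointwise $\pi_g \circ h = \lim_i \pi_g \circ \pi_{g_i'} = \lim_i \pi_{g g_i'} \in EL$. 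Then for arbitrary $f = \lim_j \pi_{g_j} \in EL$, continuity of $\rho_h$ yields $f \circ h = \lim_j \pi_{g_j} \circ h \in EL$. The same continuity of $\rho_h$, restricted to $EL$, is exactly the ``composition is continuous on the left'' property. The only subtlety is that one cannot collapse these two steps into a single joint-limit argument: composition on $X^X$ is not jointly continuous, because a general $f \in EL$ is discontinuous and limits cannot be passed through $f$ applied on the right. Separating the approximation --- absorbing $h$ first via continuity of the concrete $\pi_g$'s on the left, and only then varying the left factor via continuity of right multiplication --- is the crux.

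For the $G$-flow claim, define $g \cdot f := \pi_g \circ f$. This is a group action because $\pi_e = \id_X$ and $\pi_{g_1} \circ \pi_{g_2} = \pi_{g_1 g_2}$ (both following directly from the fact that $G$ acts on $X$); and $g \cdot f \in EL$ by the closure under composition just proved. Each $g$ acts as a homeomorphism of $EL$: continuity of $f \mapsto \pi_g \circ f$ follows because, for each $x$, $(\pi_g \circ f)(x) = \pi_g(f(x))$ is the composition of the continuous projection $f \mapsto f(x)$ with the continuous map $\pi_g$, so the map into $X^X$ has continuous coordinates; the inverse action $f \mapsto \pi_{g^{-1}} \circ f$ is continuous by the same argument, so $g$ acts as a homeomorphism of $EL$, completing the verification that $(G, EL)$ is a dynamical system.
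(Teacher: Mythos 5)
Your proof is correct and follows the same approach the paper sketches (the paper's proof is a one-liner noting that $X^X$ is a compact left topological semigroup and that $EL$ is a closed subsemigroup); you simply supply the details the paper declares straightforward, including the correct two-step argument for closure of $EL$ under composition.
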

	\begin{proof}
		Straightforward ($X^X$ itself is already a compact left topological semigroup, and it is easy to check that $EL$ is a closed subsemigroup).
	\end{proof}
	
	\begin{dfn}
		\index{ideal}
		\index{left ideal}
		A (left) ideal $I\unlhd S$ in a semigroup $S$ is a subset such that $IS\subseteq I$.\xqed{\lozenge}
	\end{dfn}
	
	\begin{rem}
		There is a corresponding notion of a right ideal in a semigroup (satisfying $SI\subseteq I$), as well as that of a two-sided ideal, but we will never use either of those in this thesis. Thus (for brevity), we often write just ``ideal" for ``left ideal''.\xqed{\lozenge}
	\end{rem}

	\begin{fct}[minimal ideals and the Ellis group]
		\label{fct:ideals_ellis_pre}
		\index{M@$\cM$}
		\index{minimal left ideal}
		\index{idempotent}
		\index{u@$u$}
		\index{Ellis!group}
		\index{uM@$u\cM$}
		Suppose $S$ is a compact Hausdorff left topological semigroup (e.g.\ the enveloping semigroup of a dynamical system). Then $S$ has a minimal (left) ideal $\cM$ (in the sense of inclusion). Furthermore, for any such ideal $\cM$:
		\begin{enumerate}
			\item
			$\cM$ is closed,
			\item
			for any element $a\in \cM$, $\cM=Sa=\cM a$,
			\item
			$\cM=\bigsqcup_u u\cM$, where $u$ runs over all idempotents in $\cM$ (i.e.\ elements such that $u\cdot u=u$) --- in particular, $\cM$ contains idempotents,
			\item
			for any idempotent $u\in \cM$, the set $u\cM$ is a subgroup of $S$ with the identity element $u$ (note that $u$ is usually \emph{not} the identity element of $S$ --- indeed, $S$ need not have an identity at all).
		\end{enumerate}
		Moreover, all the groups $u\cM$ (where $\cM$ ranges over all minimal left ideals and $u$ over idempotents in $\cM$) are isomorphic. The isomorphism type of all these groups is called the {\em ideal} group of $S$; if $S=E(G,X)$, we call this group the {\em Ellis group} of the flow $(G,X)$.
	\end{fct}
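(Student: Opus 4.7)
This is the classical structure theorem for compact Hausdorff left topological semigroups, and my plan would follow its standard proof in four stages.

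\textbf{Minimal left ideal.} For each $a \in S$, left continuity of the operation makes the right translation $s \mapsto sa$ continuous, so $Sa$ is compact (hence closed) and visibly a left ideal. Applying Zorn's lemma to the family of nonempty closed left ideals ordered by reverse inclusion (using compactness and the finite intersection property to see that intersections of chains stay nonempty and closed) yields a minimal element $\cM$. If $I \subseteq \cM$ is any left ideal and $a \in I$, then $Sa$ is a closed left ideal inside $\cM$, hence equal to $\cM$ by minimality, so $\cM \subseteq I$; this simultaneously proves that $\cM$ is minimal among all left ideals, that $\cM$ is closed, and that $\cM = Sa$ for every $a \in \cM$. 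The identity $\cM a = \cM$ follows by the same argument, since $\cM a$ is again a closed left ideal sitting inside $\cM$.

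\textbf{Idempotents via Ellis--Numakura.} For each $x \in \cM$, the set $\{y \in \cM \mid yx = x\}$ is nonempty (as $\cM x = \cM \ni x$), closed, and closed under the semigroup operation. I would apply Zorn's lemma within this set to obtain a minimal nonempty closed subsemigroup $T$; for $a \in T$ the set $Ta$ is again a closed subsemigroup of $T$, so $Ta = T$, whence $\{t \in T \mid ta = a\}$ is nonempty, closed, a subsemigroup, and hence equal to $T$, forcing $aa = a$. This produces an idempotent $u \in \cM$ with $ux = x$, so $x \in u\cM$, establishing the covering $\cM = \bigcup_u u\cM$.

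\textbf{Group structure.} Fix an idempotent $u \in \cM$. The identity $\cM u = \cM$ means every $ux \in u\cM$ has the form $(uy)u$ for some $y \in \cM$, which gives $(ux)u = ux$; combined with the obvious $u(ux) = ux$, this shows $u$ is a two-sided identity on $u\cM$. Next, since $Su = \cM$ we have $uSu = u\cM$; given $ux \in u\cM$, the equality $S(ux) = \cM$ provides $s \in S$ with $s(ux) = u$, and then $usu \in uSu = u\cM$ satisfies $(usu)(ux) = u(sux) = u$, producing a left inverse for $ux$ inside $u\cM$. The classical semigroup lemma that a semigroup with a two-sided identity in which every element has a left inverse is a group then finishes this step, so each $u\cM$ is a subgroup of $S$ with identity $u$.

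\textbf{Disjointness and isomorphism of ideal groups.} For disjointness, suppose $z \in u\cM \cap v\cM$ for idempotents $u, v \in \cM$; then $uz = z = vz$, and choosing $z' \in u\cM$ with $zz' = u$ yields $u = (vz)z' = v(zz') = vu$, whence $u\cM = (vu)\cM \subseteq v\cM$, with equality by symmetry. For the isomorphism between the groups $u\cM$ and $v\cM'$ associated with possibly different minimal left ideals, I would use the identities $\cM' u = \cM$ and $\cM v = \cM'$ (from the first stage) to construct mutually inverse group homomorphisms via suitable restrictions of one-sided multiplication maps, after passing to an appropriate pair of linking idempotents between the two ideals. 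This last step is the most delicate part and the one I expect to require the most bookkeeping; the rest of the theorem is a direct application of compactness, Zorn, the Ellis--Numakura trick, and the left-ideal identities already established.
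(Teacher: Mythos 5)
Your plan mirrors the paper's own proof (Fact~\ref{fct:minimal_ideals_idempotents}) step for step: Zorn's lemma for a minimal closed left ideal, the Ellis--Numakura trick for idempotents, the left-translation identities to get the group structure on $u\cM$ and the partition of $\cM$, and then an isomorphism argument. The only cosmetic differences are that the paper runs Zorn on principal ideals $Sa$ rather than all closed left ideals, and that it invokes the weaker ``left identity + left inverses $\Rightarrow$ group'' lemma rather than first promoting $u$ to a two-sided identity on $u\cM$; neither changes anything substantive. In the disjointness step you stop at $u\cM = v\cM$; to close it, note that a group has a unique idempotent (its identity), so $u\cM = v\cM$ forces $u = v$.

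The one step you explicitly defer --- the isomorphism between $u\cM$ and $v'\cN$ for a different minimal ideal $\cN$ --- is where the paper does its bookkeeping, and it goes as you anticipate. Given $u \in \cM$ idempotent and a minimal left ideal $\cN$, the set $\cN u$ is a left ideal contained in $\cM$, hence equal to $\cM$, so some $f \in \cN$ satisfies $fu = u$; the set of all such $f$ is a nonempty closed subsemigroup of $\cN$, so it contains an idempotent $v$, giving $vu = u$. A symmetric argument produces an idempotent $u' \in \cM$ with $u'v = v$, and using the identity $su = s$ for all $s \in \cM$ (the paper's point about right multiplication by idempotents) one checks $u' = u$, i.e.\ $uv = v$. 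With the linking pair $(u,v)$ in hand, the maps $s \mapsto su$ and $s \mapsto sv$ between $v\cN$ and $u\cM$ are mutually inverse group homomorphisms. So your outline is sound; this is the content you need to insert.
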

	\begin{proof}
		See Fact~\ref{fct:minimal_ideals_idempotents}.
	\end{proof}
	Throughout the thesis, we denote minimal ideals by $\cM$ or $\cN$, and we denote idempotents in minimal ideals by $u$ or $v$.
	Below, we summarise the basic facts related to the so-called $\tau$ topology of the Ellis groups of $(G,X)$.
	
	\begin{fct}
		\label{fct:tau_top_pre}
		Consider the Ellis semigroup $EL$ of a dynamical system $(G,X)$. Fix any minimal left ideal $\cM$ of $EL$ and an idempotent $u\in \cM$.
		\begin{enumerate}
			\item
			\index{o@$\circ$}
			For each $a\in EL$, $B\subseteq EL$, we write $a\circ B$ for the set of all limits of nets $(g_ib_i)_i$, where $g_i\in G$ are such that $\pi_{g_i}\to a$, and $b_i\in B$.
			\item
			For any $p,q\in EL$ and $A\subseteq EL$, we have:
			\begin{itemize}
				\item
				$p\circ(q\circ A)\subseteq (pq)\circ A$,
				\item
				$pA\subseteq p\circ A$,
				\item
				$p\circ A=p\circ \overline A$,
				\item
				$p\circ A$ is closed,
				\item
				if $A\subseteq \cM$, then $p\circ A\subseteq \cM$.
			\end{itemize}
			\item
			\index{clt@$\cl_{\tau}$}
			\index{topology!t@$\tau$}
			The formula $\cl_\tau(A):=(u\cM)\cap (u\circ A)$ defines a closure operator on $u\cM$. It can also be (equivalently) defined as $\cl_\tau(A)=u(u\circ A)$. We call the topology on $u\cM$ induced by this operator the {\em $\tau$ topology}.
			\item
			If $(f_i)_i$ (a net in $u\cM$) converges to $f\in \overline{u\cM}$ (the closure of $u\cM$ in $EL$), then $(f_i)_i$ converges to $uf$ in the $\tau$-topology.
			\item
			The $\tau$-topology on $u\cM$ is refined by the subspace topology inherited from $EL$.
			\item
			$u\cM$ with the $\tau$ topology is a compact $T_1$ semitopological group (i.e.\ with separately continuous multiplication).
			\item
			All the ideal groups $u\cM$ are isomorphic as semitopological groups, as we vary $\cM$ and $u$. We call them \emph{Ellis groups} of $(G,X)$.
			\item
			\index{H(uM)@$H(u\cM)$}
			$H(u\cM)=\bigcap_V \overline{V}$, where $V$ runs over the ($\tau$-)closures of all the ($\tau$-)neighbourhoods of the identity $u\in u\cM$, is a $\tau$-closed normal subgroup of $u\cM$, and $u\cM/H(u\cM)$ is a compact Hausdorff topological group.
		\end{enumerate}
	\end{fct}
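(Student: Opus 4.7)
The plan is to prove the parts of Fact~\ref{fct:tau_top_pre} in order, since each subsequent part leans on the previous ones. The underlying tools throughout are: (i) compactness of $EL$ as a closed subset of $X^X$, (ii) Hausdorffness of $EL$ (inherited from $X^X$, since $X$ is Hausdorff), (iii) the left-topological structure of $EL$ (the map $f\mapsto fa$ is continuous for each fixed $a\in EL$), and (iv) the algebraic structure of $\cM$ and $u\cM$ given by Fact~\ref{fct:ideals_ellis_pre}. The strategy is to translate statements about the $\tau$ topology into statements about the $\circ$-operation and the ambient topology of $EL$.

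Parts (1) and (2) are essentially mechanical. The inclusion $pA\subseteq p\circ A$ is immediate from continuity of right multiplication by $a\in A$: if $\pi_{g_i}\to p$ then $\pi_{g_i}a\to pa$. The identity $p\circ A = p\circ \overline{A}$ follows by a subnet argument using compactness of $EL$ to realise elements of $\overline{A}$ as limits of nets in $A$. Closedness of $p\circ A$ is a diagonal-subnet computation. The associativity-like inclusion $p\circ(q\circ A)\subseteq (pq)\circ A$ is the main content, proved by a double diagonalisation combined with right-continuity of multiplication. Finally, $p\circ A \subseteq \cM$ when $A\subseteq \cM$ follows because $\cM$ is a closed left ideal, so each $g_ib_i \in \cM$ and the limit stays in $\cM$.

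Parts (3)--(5) set up the $\tau$ topology. The equality $(u\cM)\cap(u\circ A) = u(u\circ A)$ holds because $u\circ A\subseteq \cM$ (so left multiplication by $u$ lands in $u\cM$) and $u$ acts as identity on $u\cM$. The Kuratowski closure axioms for $\cl_\tau$ then follow from part (2): extensivity uses $A = uA\subseteq u\circ A$ for $A\subseteq u\cM$, and idempotence uses the associativity-like inclusion with $p=q=u$. Part (4) is a key continuity property: given $f_i\to f$ in $EL$, one picks approximating nets $\pi_{g_{i,j}}\to u$ and diagonalises to show $uf \in u\circ\{f_i : i\geq i_0\}$ for every $i_0$, using the Hausdorffness of $EL$ to pin down iterated limits. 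Part (5) is an immediate consequence, since $\cl_\tau A$ always contains the trace on $u\cM$ of the closure of $A$ in $EL$.

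The serious work lies in parts (6)--(8). The $T_1$ axiom is quick: for $x\in u\cM$, left multiplication of $x$ by any $\pi_{g_i}\to u$ converges to $ux=x$ and the limit is unique by Hausdorffness of $EL$, so $u\circ\{x\}=\{x\}$ and hence $\cl_\tau\{x\}=\{x\}$. Compactness of $(u\cM,\tau)$ follows from the finite intersection property of the $u\circ$-images inside the compact space $EL$. Separate continuity of multiplication (the semitopological group axiom) is the first substantial obstacle: for fixed $b$, one checks $\cl_\tau(A)b \subseteq \cl_\tau(Ab)$ using part (2), and symmetrically for multiplication on the other side. Part (7) is a transport-of-structure argument along the canonical algebraic isomorphisms between distinct ideal groups provided by Fact~\ref{fct:ideals_ellis_pre}, together with a verification that these isomorphisms respect $\tau$. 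I expect the main obstacle to be part (8): one must prove that $H(u\cM)$ is a $\tau$-closed normal subgroup and that $u\cM/H(u\cM)$ carries a jointly continuous group operation, not merely the Hausdorff topology that comes for free from quotienting by the closure of the identity in an $R_1$-type space. Joint continuity on the quotient is the delicate point, since $\tau$ itself is only separately continuous; one must lift convergent nets from the quotient back to $u\cM$ and exploit the fact that $H(u\cM)$ absorbs precisely the failure of joint continuity, via a careful net argument using the $\circ$-operation and approximation by elements of the group near the identity.
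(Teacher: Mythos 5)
Your plan is broadly in register with the paper for parts (1)--(7): those are dispatched exactly by the $\circ$-calculus in Facts~\ref{fct:circ_calculations}--\ref{fct:circ_stays_in_ideal}, the closure-operator axioms in Facts~\ref{fct:tau_closure}--\ref{fct:taucl_alt}, the net-limit observation in Fact~\ref{fct:ulimit}, the compactness and $T_1$ argument in Fact~\ref{fct:tau_T1}, and the transport-of-structure along the maps $s\mapsto vs$ and $s\mapsto sv$ in Fact~\ref{fct:ellis_isomorphic}. Your descriptions of the mechanics there are correct, and the small difference (you propose FIP for compactness, the paper passes to a convergent subnet in $\overline{u\cM}$ and applies Fact~\ref{fct:ulimit}) is immaterial.

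Where you diverge genuinely is part (8), and there you have also mislocated the hard step. You propose to establish joint continuity of multiplication on $u\cM/H(u\cM)$ directly, ``lifting convergent nets from the quotient back to $u\cM$'' and doing a careful $\circ$-computation, on the view that $H(u\cM)$ absorbs the failure of joint continuity. The paper does something cleaner and more modular: in Fact~\ref{fct:semitop_T2_quot} it first proves the one non-trivial lemma $\overline V H \subseteq \overline V$ for $\tau$-neighbourhoods $V$ of $u$, uses it to show $H$ is a closed normal subgroup and that $u\cM/H(u\cM)$ is a \emph{Hausdorff} compact semitopological group, and then invokes the Ellis joint continuity theorem (Fact~\ref{fct:ellis_joint_cont}: a locally compact Hausdorff semitopological group is automatically a topological group). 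Joint continuity thus comes for free once Hausdorffness of the quotient is established. The real analytic content lies in the claim $\overline V H\subseteq\overline V$ and in the Hausdorffness argument built on it, not in joint continuity per se. Your direct route is not doomed --- the heuristic that $H(u\cM)$ is exactly the obstruction is correct --- but it would amount to reproving a special case of Ellis's theorem, which is both harder and less transparent; if you pursue it you should be aware you are replicating a non-trivial classical result rather than exploiting it.
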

	\begin{proof}
		See Facts~\ref{fct:circ_calculations}, \ref{fct:circ_with_closure} \ref{fct:circ_closed} \ref{fct:circ_stays_in_ideal} for (2).
		
		See Facts~\ref{fct:tau_closure}, \ref{fct:taucl_alt} for the (3).
		
		See Fact~\ref{fct:ulimit}, Fact~\ref{fct:tau_coarser}, Fact~\ref{fct:tau_T1}, Fact~\ref{fct:ellis_isomorphic} and Fact~\ref{fct:H(uM)} for the remaining points.
	\end{proof}
	
	The following technical observation comes from \cite{KPR15} (joint with Krzysztof Krupiński and Anand Pillay) and is essential there, as well as in \cite{KR18} and large parts of this thesis.
	\begin{prop}
		\label{prop:strange_cont_pre}
		The function $\xi\colon \overline {u\cM}\to u\cM$ (where $\overline{u\cM}$ is the closure of $u\cM$ in the topology of $EL$) defined by the formula $f\mapsto uf$ has the property that for any continuous function $\zeta\colon u\cM\to X$, where $X$ is a regular topological space and $u\cM$ is equipped with the $\tau$-topology, the composition $\zeta\circ \xi \colon \overline {u\cM}\to X$ is continuous, where $\overline{u\cM}$ is equipped with subspace topology from $EL$. In particular, the map $\overline{u\cM}\to u\cM/H(u\cM)$ given by $f \mapsto uf/H(u\cM)$ is continuous.
	\end{prop}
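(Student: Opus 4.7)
The plan is to prove continuity of $\zeta \circ \xi$ by a net argument, where the crucial tool is Fact~\ref{fct:tau_top_pre}(4), which converts convergence in $EL$ into $\tau$-convergence for nets originating in $u\cM$. Suppose for contradiction that $\zeta \circ \xi$ is discontinuous at some $f \in \overline{u\cM}$. Then there exists a net $(f_\alpha)$ in $\overline{u\cM}$ converging to $f$ in the subspace topology from $EL$ such that $\zeta(uf_\alpha) \not\to \zeta(uf)$ in $X$. Passing to a subnet, we may fix an open neighbourhood $V$ of $\zeta(uf)$ such that $\zeta(uf_\alpha) \notin V$ for every $\alpha$. Using regularity of $X$, choose an open $V'$ with $\zeta(uf) \in V' \subseteq \overline{V'} \subseteq V$, so that each $\zeta(uf_\alpha)$ lies in the open set $X \setminus \overline{V'}$.

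For each $\alpha$, since $f_\alpha \in \overline{u\cM}$, I can select a net $(h_\alpha^\beta)_\beta$ in $u\cM$ converging to $f_\alpha$ in $EL$. Fact~\ref{fct:tau_top_pre}(4) then yields $h_\alpha^\beta \to uf_\alpha$ in the $\tau$-topology, and $\tau$-continuity of $\zeta$ gives $\zeta(h_\alpha^\beta) \to \zeta(uf_\alpha)$ in $X$; since the limit lies in the open set $X \setminus \overline{V'}$, eventually $\zeta(h_\alpha^\beta) \notin \overline{V'}$. Moreover, for any open neighbourhood $U$ of $f$ in $EL$, eventually $f_\alpha \in U$ (as $f_\alpha \to f$), and for such $\alpha$ also eventually $h_\alpha^\beta \in U$. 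A standard diagonal construction, indexing by pairs $(\alpha, U)$ with $U$ an open neighbourhood of $f$ containing $f_\alpha$ (directed by reverse inclusion on $U$ and the original order on $\alpha$), then produces a net $(k_\gamma)$ in $u\cM$ such that $k_\gamma \to f$ in $EL$ and $\zeta(k_\gamma) \notin \overline{V'}$ for every $\gamma$.

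Applying Fact~\ref{fct:tau_top_pre}(4) to this new net now yields $k_\gamma \to uf$ in $\tau$, whence $\zeta(k_\gamma) \to \zeta(uf) \in V'$ by $\tau$-continuity. So eventually $\zeta(k_\gamma) \in V'$, contradicting $\zeta(k_\gamma) \in X \setminus \overline{V'} \subseteq X \setminus V'$. The main obstacle is arranging this diagonal net: Fact~\ref{fct:tau_top_pre}(4) only applies to nets in $u\cM$, so one must carefully produce a single $u\cM$-valued net simultaneously converging to $f$ in $EL$ and whose $\zeta$-images stay outside a fixed closed neighbourhood of $\zeta(uf)$. Regularity of $X$ is used precisely to produce such a closed neighbourhood, namely $\overline{V'}$, separating $\zeta(uf)$ from the bad points $\zeta(uf_\alpha)$.

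For the ``in particular'' statement, $u\cM/H(u\cM)$ is a compact Hausdorff (hence regular) topological group by Fact~\ref{fct:tau_top_pre}(8), and the quotient map $u\cM \to u\cM/H(u\cM)$ is $\tau$-continuous by the definition of the quotient topology. Applying the main statement with $\zeta$ this quotient map gives continuity of $f \mapsto uf/H(u\cM)$ on $\overline{u\cM}$.
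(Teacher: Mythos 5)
Your proof is correct and takes essentially the same route as the paper's: argue by contradiction, invoke Fact~\ref{fct:tau_top_pre}(4) to turn $EL$-convergence of nets living in $u\cM$ into $\tau$-convergence, use regularity of the target to fix a closed separating neighbourhood, and then run a diagonal over the auxiliary nets approximating each $f_\alpha$ to produce a single $u\cM$-valued net converging to $f$ in $EL$ whose $\zeta$-images stay outside that neighbourhood. The only cosmetic difference is the indexing of the diagonal: you use pairs $(\alpha,U)$ with the constraint $f_\alpha\in U$ (which is indeed directed since $f_\alpha\to f$), whereas the paper uses the full product order on $J\times\prod_j I^j$; both are standard and equivalent in effect.
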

	\begin{proof}
		See Proposition~\ref{prop:strange_cont}.
	\end{proof}

	\section[Rosenthal compacta and tame dynamical systems]{Rosenthal compacta and tame dynamical systems\sectionmark{Rosenthal compacta and tameness}}
	\sectionmark{Rosenthal compacta and tameness}
	\subsection*{Rosenthal compacta, independent sets, and \texorpdfstring{$\ell^1$}{l1} sequences}\label{section: Rosenthal compacta}
	Here, we will discuss selected properties of Rosenthal compacta. For a broader exposition, refer to \cite{Debs14}.

	\begin{dfn}
		\index{Baire!class 1 function}
		\index{B_1(X)@$\cB_1(X)$|see{Baire class 1 function}}
		Given a topological space $X$, we say that a function $X\to \bR$ is of \emph{Baire class 1} if it is the pointwise limit of a sequence of continuous real-valued functions.
		We denote by $\cB_1(X)$ the set of all such functions.\xqed{\lozenge}
	\end{dfn}

	\begin{dfn}
		\index{Rosenthal compactum}
		A compact, Hausdorff space $K$ is a \emph{Rosenthal compactum} if it embeds homeomorphically into $\cB_1(X)$ for some Polish space $X$, where $\cB_1(X)$ is equipped with the pointwise convergence topology.
		\xqed{\lozenge}
	\end{dfn}
	\begin{dfn}
		\index{space!Fréchet}
		\index{space!Fréchet-Urysohn|see {Fréchet space}}
		A {\em Fréchet} (or {\em Fréchet-Urysohn}) space is a topological space in which any point in the closure of a given set $A$ is the limit of a sequence of elements of $A$.
		\xqed{\lozenge}
	\end{dfn}
	
	\begin{fct}\label{fct: Rosnthal implies Frechet}
		Rosenthal compacta are Fréchet.
	\end{fct}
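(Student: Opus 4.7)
The result is the classical Bourgain-Fremlin-Talagrand theorem; my plan is to follow their two-step approach, with Rosenthal's $\ell^1$-theorem as the key external input.

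First I would show that any Rosenthal compactum $K \subseteq \cB_1(X)$ (with $X$ Polish) is sequentially compact in the pointwise topology. This rests on Rosenthal's $\ell^1$-dichotomy: every pointwise bounded sequence of functions in $\cB_1(X)$ either admits a pointwise convergent subsequence, or contains a subsequence that is $\ell^1$-independent, in the sense that it behaves on $X$ like the unit vector basis of $\ell^1$. Since $K$ is compact in the pointwise topology, every sequence in $K$ is pointwise bounded; and no $\ell^1$-independent sequence can lie inside a pointwise-compact set, as such a sequence has no pointwise convergent subsequence. Hence every sequence in $K$ must fall into the first alternative.

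Second, given $A \subseteq K$ and $f \in \overline{A}$, I would produce a sequence in $A$ converging to $f$ in two sub-steps: establishing countable tightness of $K$ at $f$ (that is, finding a countable $A_0 \subseteq A$ with $f \in \overline{A_0}$), and then applying sequential compactness of $K$ to $A_0$, together with a diagonal extraction that forces the limit to be $f$ rather than some other accumulation point. The main obstacle lies entirely in the tightness step. Arguing by contradiction, if no countable $A_0$ were sufficient, a transfinite recursion on countable ordinals (exploiting the second countability of $X$) would build a long "bad" family in $A$; a Ramsey-type refinement on this family, balancing the pointwise approximation conditions at countably many test points of $X$ against the independence criterion, would then produce an $\ell^1$-independent subsequence in $K$, contradicting the first step. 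Carrying out this combinatorial refinement is the technical heart of the argument.
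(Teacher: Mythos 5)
The paper establishes this fact by citing Debs (Theorem 4.1), i.e., the Bourgain--Fremlin--Talagrand theorem; your sketch aims at exactly that theorem and correctly identifies Rosenthal's $\ell^1$-dichotomy as the engine of step~1. But the decomposition you give in step~2 is unsound. You reduce to a countable $A_0$ with $f \in \overline{A_0}$ via countable tightness, and then propose to finish by applying sequential compactness together with ``a diagonal extraction that forces the limit to be $f$,'' explicitly locating the main obstacle in the tightness sub-step. This cannot work as stated: compactness, countable tightness and sequential compactness are jointly insufficient for the Fréchet--Urysohn property. The one-point compactification of a Mrówka $\Psi$-space built over a maximal almost disjoint family on $\omega$ has all three properties, yet $\omega$ is dense in it and no sequence from $\omega$ converges to the point at infinity. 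Upgrading ``$f$ is a cluster point of a sequence in $A_0$'' to ``$f$ is a subsequential limit of that sequence'' is precisely the angelicity content of BFT, and it is exactly here that the Baire-class-1 structure over the Polish space $X$ (residual sets of continuity, and a genuine Ramsey-theoretic selection) must be used; no naive diagonalisation can produce it. Your proposal therefore places the difficulty in the wrong sub-step and treats as routine the step where the whole theorem actually lives.

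There is also a slip in step~1: ``no $\ell^1$-independent sequence can lie inside a pointwise-compact set, as such a sequence has no pointwise convergent subsequence'' is not a valid inference --- $\beta\bN$ is pointwise compact and is full of sequences with no convergent subsequence. What Rosenthal's dichotomy actually buys is that the pointwise closure of an $\ell^1$-sequence contains a homeomorphic copy of $\beta\bN$, of cardinality $2^{2^{\aleph_0}}$, which cannot embed into $\cB_1(X)$ (of cardinality $2^{\aleph_0}$ when $X$ is Polish), so this closure cannot remain inside $K \subseteq \cB_1(X)$. That cardinality argument is the correct one and is easy to insert, but the justification as written does not close the case.
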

	\begin{proof}
		\cite[Theorem 4.1]{Debs14}.
	\end{proof}

	\begin{fct}
		\label{fct:bft}
		Suppose $X$ is a compact metric space and $A\subseteq C(X)$ is a family of $0-1$ valued functions (i.e.\ characteristic functions of clopen subsets of $X$). Put $\mathcal A:=\{U\subseteq X\mid \chi_U\in A \}$. The following are equivalent:
		\begin{itemize}
			\item
			$\overline A\subseteq {\bR}^X$ is Fréchet (equivalently, Rosenthal),
			\item
			$\mathcal A$ contains no infinite independent family, i.e.\ $\mathcal A$ contains no family $(A_i)_{i \in \bN}$ such that for every $I \subseteq \bN$ the intersection $\bigcap_{i \in I} A_i \cap \bigcap_{i \in \bN \setminus I} A_i^c$ is nonempty.
		\end{itemize}
	\end{fct}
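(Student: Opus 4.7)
The plan is to reduce this to the Bourgain-Fremlin-Talagrand dichotomy, viewing it as the $\{0,1\}$-valued incarnation of the general principle. The translation is immediate: since $\chi_U \in C(X)$ forces $U$ to be clopen, and since a subsequence of $\{0,1\}$-valued functions is equivalent to the $\ell^1$-basis if and only if the corresponding sets form a Boolean-independent family, the ``no $\ell^1$-subsequence'' hypothesis of BFT coincides on our family with ``$\mathcal A$ contains no infinite independent subfamily''. Moreover, since $X$ is compact and each $A_i$ is clopen, the finite-intersection version of Boolean-independence automatically upgrades to the statement quantifying over all $I \subseteq \bN$ as in the theorem, so the two formulations agree here.

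For the direction (Fréchet $\Rightarrow$ no infinite independent family) I would argue by contraposition via a cardinality bound. If $(A_i)_{i \in \bN}$ is an infinite independent subfamily and $A' := \{\chi_{A_i} : i \in \bN\}$, then $\overline{A'}$ is a closed (hence Fréchet, if $\overline A$ is Fréchet) subspace of $\overline A$ with countable dense subset $A'$. Any compact Fréchet space with a countable dense subset has cardinality at most $2^{\aleph_0}$, since every point arises as the limit of some sequence drawn from the dense set. On the other hand, for each ultrafilter $\mathcal U$ on $\bN$, the pointwise $\mathcal U$-limit $f_\mathcal U(x) := \lim_{n \to \mathcal U} \chi_{A_n}(x)$ lies in $\overline{A'} \subseteq \{0,1\}^X$, and independence ensures that distinct ultrafilters produce distinct limits: given $I \in \mathcal U \setminus \mathcal V$, choosing $x \in \bigcap_{i \in I} A_i \cap \bigcap_{i \notin I} A_i^c$ (which exists by the full independence hypothesis) yields $f_\mathcal U(x) = 1 \neq 0 = f_\mathcal V(x)$. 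Hence $|\overline{A'}| \geq 2^{2^{\aleph_0}}$, contradicting the cardinality bound.

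For the direction (no infinite independent family $\Rightarrow$ Fréchet), I would combine Rosenthal's $\ell^1$-theorem in its combinatorial, $\{0,1\}$-valued form with BFT. The absence of an infinite independent subfamily in $\mathcal A$ says that no sequence in $A$ admits an independent subsequence, so Rosenthal's dichotomy gives that every sequence in $A$ has a pointwise convergent subsequence. Since $X$ is a compact metric space (and so Polish), such pointwise limits are Baire class $1$, and the BFT theorem then gives $\overline A \subseteq \cB_1(X)$, so that $\overline A$ is Rosenthal, and therefore Fréchet by Fact~\ref{fct: Rosnthal implies Frechet}.

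The main obstacle is the black-box deployment of the Ramsey-theoretic Rosenthal theorem for $\{0,1\}$-valued sequences and the BFT passage from ``every sequence has a pointwise convergent subsequence'' to the Rosenthal/Fréchet structure of the closure; these are the nontrivial external inputs. Everything else is bookkeeping: translating the $\ell^1$-language into set-theoretic independence and extracting the $\beta\bN$-sized cardinality obstruction via ultrafilter limits in the first direction.
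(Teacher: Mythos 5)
Your proposal is correct, but it follows a somewhat different route from the paper's proof, which is a single citation: the paper invokes \cite[Corollary 4G]{BFT78} together with condition (vi) of \cite[Theorem 2F]{BFT78}, observes that for $\{0,1\}$-valued functions on a compact space condition (vi) reduces directly to the independence condition, and is done in one step. Your ``Fréchet $\Rightarrow$ no infinite independent family'' direction replaces this with a self-contained cardinality argument: a compact Hausdorff Fréchet space with a countable dense set has at most $2^{\aleph_0}$ points, while an independent family $(A_i)_i$ forces $\overline{A'} \supseteq \{f_{\mathcal U} : \mathcal U \in \beta\bN\}$ and hence $|\overline{A'}| \geq 2^{2^{\aleph_0}}$; this is essentially the $\beta\bN$-embedding obstruction which the paper itself deploys later (in the proof of Proposition~\ref{prop:dyn_BFT}, implications $(3)\to(4)\to(5)$), but used here in place of the abstract BFT characterisation. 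Your observation that compactness of $X$ and clopenness of the $A_i$ upgrades finite independence to the full ``every $I\subseteq\bN$'' form is the right point and is needed to produce the witnesses $x$ in the ultrafilter argument. The other direction (``no independent family $\Rightarrow$ Fréchet'') you decompose through the combinatorial form of Rosenthal's $\ell^1$-theorem plus the relative-sequential-compactness criterion of BFT, which is a slightly more granular version of the paper's single reference but relies on the same theorem. What your route buys is that one of the two directions becomes an elementary, purely topological argument that does not require unwinding BFT's condition (vi); the cost is that you still need BFT (and Rosenthal's dichotomy) for the converse, so the total dependency on external results is comparable.
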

	\begin{proof}
		$A$ is clearly pointwise bounded, so by \cite[Corollary 4G]{BFT78}, $A$ is relatively compact in $\cB_1(X)$ (which is equivalent to the first condition) if and only if it satisfies the condition (vi) from \cite[Theorem 2F]{BFT78}, which for $0-1$ functions on a compact space reduces to the statement that for each sequence $(a_n)$ of elements of $A$ there is some $I\subseteq \bN$ for which there is no $x\in X$ such that $a_n(x)=1$ if and only if $n\in I$.
		This is clearly equivalent to the second condition.
	\end{proof}
	
	The next definition is classical and can be found for example in \cite[Section 5]{Koh95}.
	\begin{dfn}
		\index{l1 sequence@$\ell^1$ sequence}
		If $(f_n)_{n\in {\bN}}$ is a sequence of elements in a Banach space, we say that it is an {\em $\ell^1$ sequence} if it is bounded and there is a constant $\theta>0$ such that for any scalars $c_0,\ldots,c_n$ we have the inequality
		\[
		\theta\cdot \sum_{i=0}^n \lvert c_i\rvert \leq \left \lVert \sum_{i=0}^n c_i f_i\right\rVert.
		\]
		(This is equivalent to saying that $e_n\mapsto f_n$ extends to a topological vector space isomorphism of $\ell^1$ and the closed span of $(f_n)_n$ (in the norm topology), where $e_n$ are the standard basis vectors.)
		\xqed{\lozenge}
	\end{dfn}
	
	In fact, $\ell^1$ sequences are very intimately related to ``independent sequences" (via the Rosenthal's dichotomy).
	The following is a simple example of this relationship:
	
	\begin{fct}
		\label{fct:ind_untame}
		Suppose $X$ is a compact, Hausdorff space and $(A_n)_n$ is an independent sequence of clopen subsets of $X$. Then $(\chi_{A_n})_n$ is an $\ell^1$ sequence in the Banach space $C(X)$ (with the supremum norm).
	\end{fct}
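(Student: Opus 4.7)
The plan is to exhibit the constant $\theta = 1/2$ explicitly, by using the independence hypothesis to produce two ``witness points'' in $X$ that separately realise the positive and negative parts of an arbitrary linear combination.

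Given scalars $c_0,\ldots,c_n$, set $f:=\sum_{i=0}^n c_i\chi_{A_i}\in C(X)$, and partition the indices as $I^+:=\{i\leq n \mid c_i>0\}$ and $I^-:=\{i\leq n \mid c_i<0\}$ (indices with $c_i=0$ are irrelevant). The plan is to invoke independence (applied, say, with the set $I:=I^+$ extended arbitrarily outside $\{0,\ldots,n\}$) to produce a point $x\in X$ lying in $A_i$ for $i\in I^+$ and in $A_i^c$ for $i\in I^-$, and dually a point $y\in X$ lying in $A_i$ for $i\in I^-$ and in $A_i^c$ for $i\in I^+$. Direct evaluation then yields $f(x)=\sum_{i\in I^+}c_i$ and $f(y)=\sum_{i\in I^-}c_i$, so that $f(x)-f(y)=\sum_{i=0}^n \lvert c_i\rvert$.

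From this the required inequality is immediate: by the triangle inequality,
\[
\sum_{i=0}^n \lvert c_i\rvert \;=\; f(x)-f(y) \;\leq\; \lvert f(x)\rvert + \lvert f(y)\rvert \;\leq\; 2\lVert f\rVert_\infty,
\]
which gives $\tfrac{1}{2}\sum_i\lvert c_i\rvert \leq \lVert \sum_i c_i\chi_{A_i}\rVert_\infty$, as required. Boundedness of the sequence $(\chi_{A_n})_n$ in $C(X)$ is obvious (each has norm at most $1$).

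There is no real obstacle: the only subtlety is just matching the form of the independence hypothesis stated in Fact~\ref{fct:bft} (which is formulated for the whole index set $\mathbb{N}$) to the finite patterns needed here, which is harmless since any finite pattern on $\{0,\ldots,n\}$ can be extended to an element of $2^{\mathbb{N}}$ before applying independence.
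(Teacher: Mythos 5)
Your proof is correct and is essentially the same argument as the paper's: both exploit independence to produce a point where $f$ realises the signed part(s) of the sum, and both arrive at the constant $\theta=1/2$. The paper uses a single witness point together with a WLOG (that the positive part dominates) to get $f(x)\geq\frac12\sum\lvert c_i\rvert$ directly, whereas you use two witness points and the triangle inequality; this is a cosmetic difference only.
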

	\begin{proof}
		Fix any sequence $c_0,\ldots,c_n$ of real numbers. Write $[n]$ for $\{0,\ldots,n\}$ and put $f:=\sum_{i\in [n]} c_i\chi_{A_i}$. Let $I:=\{i\in [n]\mid c_i\geq 0 \}$. Assume without loss of generality that $\sum_{i\in I} c_i\geq -\sum_{i\in [n]\setminus I} c_i$ (the other case is analogous). Then for any $x\in \bigcap_{i\in I} A_i\cap \bigcap_{i\in [n]\setminus I} A_i^c$ we have $f(x)=\sum_{i\in I} c_i\geq \frac{1}{2} \sum_{i\in [n]} \lvert c_i\rvert$.
	\end{proof}

	\subsection*{Tame dynamical systems}\label{section: tame systems}

	\begin{dfn}
		\index{tame!function}
		\index{tame!dynamical system}
		\label{dfn:tame_function_system}
		If $(G,X)$ is a dynamical system and $f\in C(X)$, then we say that $f$ is a \emph{tame function} if for every sequence $(g_n)_n$ of elements of $G$, $(f\circ g_n)_n$ is not an $\ell^1$ sequence.
		
		We say that $(G,X)$ is a \emph{tame dynamical system} if every $f\in C(X)$ is tame.
		\xqed{\lozenge}
	\end{dfn}
	
	\begin{rem}
		\label{rem:dfn_equiv}
		The notion of tame dynamical system is due to Kohler \cite{Koh95}. She used the adjective ``regular" instead of (now established) ``tame", and formulated it for actions of $\bN$ on metric compacta, but we can apply the same definition to arbitrary group actions on compact spaces.
		
		Some authors use different (but equivalent) definitions of tame function or tame dynamical system. For example, \cite[Fact 4.3 and Proposition 5.6]{GM12} yields several equivalent conditions for tameness of a function (including the definition given above and \cite[Definition 5.5]{GM12}). By this and \cite[Corollary 5.8]{GM12}, we obtain equivalence between our definition of tame dynamical system and \cite[Definition 5.2]{GM12}.\xqed{\lozenge}
	\end{rem}

	The condition in the following fact can be used as a definition of tameness for metric dynamical systems.
	\begin{fct}\label{fct: metric tameness}
		If $(G,X)$ is a metric dynamical system and $f\in C(X)$, then $f$ is tame if and only if the pointwise closure $\overline{\{f\circ g\mid g\in G \} }\subseteq {\bR}^X$ consists of Baire class 1 functions (note that it is true if and only if the closure is a Rosenthal compactum: in one direction, it is clear, while the other follows from Fact~\ref{fct: Rosnthal implies Frechet}).
	\end{fct}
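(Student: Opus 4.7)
The plan is to invoke two classical results: Rosenthal's $\ell^1$ theorem (every bounded sequence in a Banach space has either an $\ell^1$-subsequence or a weakly Cauchy subsequence) and the Bourgain--Fremlin--Talagrand (BFT) theorem. Apply them to the set $A := \{f \circ g \mid g \in G\} \subseteq C(X)$, which is uniformly bounded since $X$ is compact, and let $K := \overline{A} \subseteq \bR^X$ denote its pointwise closure. To dispense first with the parenthetical: the implication ``$K \subseteq \cB_1(X) \Rightarrow K$ is a Rosenthal compactum'' is tautological (via the identity embedding). Conversely, if $K$ is a Rosenthal compactum, then by Fact~\ref{fct: Rosnthal implies Frechet} it is Fréchet, so each $h \in K = \overline{A}$ is a pointwise limit of a sequence $(f \circ g_n)$ of continuous functions, and hence lies in $\cB_1(X)$.

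For the forward direction, suppose $f$ is tame. Then no sequence in $A$ is an $\ell^1$ sequence, and the same holds for subsequences, so by Rosenthal's theorem every sequence in $A$ admits a weakly Cauchy subsequence. For a uniformly bounded sequence in $C(X)$, being weakly Cauchy (i.e.\ $\int (\cdot)\, d\mu$ Cauchy for every $\mu \in M(X) = C(X)^*$) is equivalent via bounded convergence to being pointwise convergent, and a pointwise limit of continuous functions lies in $\cB_1(X)$. Thus every sequence in $A$ has a subsequence converging pointwise to an element of $\cB_1(X)$; by BFT (in the form of \cite[Corollary 4G]{BFT78}, already invoked in Fact~\ref{fct:bft}), it follows that $K \subseteq \cB_1(X)$ and moreover $K$ is sequentially compact.

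For the converse, assume $K \subseteq \cB_1(X)$, so by the parenthetical $K$ is a Rosenthal compactum, hence sequentially compact by BFT. Suppose toward contradiction that $f$ is not tame, witnessed by $(g_n)$ with $(f \circ g_n)$ an $\ell^1$ sequence. Then $e_n \mapsto f \circ g_n$ extends to a Banach space isomorphism between $\ell^1$ and the closed linear span of $(f \circ g_n)$, and since $\ell^1$ has the Schur property and is weakly sequentially complete, the standard basis $(e_n)$ admits no weakly Cauchy subsequence; neither therefore does $(f \circ g_n)$. But any pointwise-convergent bounded sequence in $C(X)$ is weakly Cauchy (by the same bounded convergence argument), so $(f \circ g_n) \subseteq K$ has no pointwise-convergent subsequence, contradicting sequential compactness of $K$. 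The main technical obstacle is invoking BFT in precisely the form which yields, simultaneously, both the containment $K \subseteq \cB_1(X)$ (in the forward direction) and the sequential compactness of Rosenthal compacta (in the reverse direction); once these classical inputs are granted, both directions reduce to the extraction arguments above.
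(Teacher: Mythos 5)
Your proof is correct, but it takes a genuinely different route from the paper's. The paper disposes of this fact in one line by citing \cite[Fact 4.3 and Proposition 4.6]{GM12}, whereas you reprove it directly from Rosenthal's $\ell^1$-dichotomy and the Bourgain--Fremlin--Talagrand theorem. Your argument is essentially the one underlying Glasner--Megrelishvili's result, made self-contained. The key moves all check out: the parenthetical equivalence of ``$K\subseteq\cB_1(X)$'' with ``$K$ Rosenthal'' via the Fr\'echet property; the forward direction via Rosenthal's dichotomy (no $\ell^1$-subsequence $\Rightarrow$ weakly Cauchy subsequence $\Rightarrow$ pointwise convergent subsequence by bounded convergence) followed by BFT to promote this to $\overline{A}\subseteq\cB_1(X)$; and the converse by contradiction, where an $\ell^1$-sequence isomorphically spans a copy of $\ell^1$, which (via the Schur property plus weak sequential completeness of $\ell^1$) has no weakly Cauchy subsequence and hence no pointwise-convergent subsequence, contradicting sequential compactness of $K$. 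One small point worth making explicit in the converse: passing ``no weakly Cauchy subsequence'' from $(e_n)\subseteq\ell^1$ to $(f\circ g_n)\subseteq C(X)$ uses the fact that a weakly Cauchy sequence in $C(X)$ lying in a closed subspace $Y$ is weakly Cauchy in $Y$ (restriction of functionals), so the isomorphism $T^{-1}\colon Y\to\ell^1$ would transport it to a weakly Cauchy subsequence of $(e_n)$. The trade-off is clear: the paper's citation is shorter but opaque to the reader; your version is longer but exposes exactly which two classical theorems carry the weight, which is arguably more useful for a thesis-style exposition.
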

	\begin{proof}
		It follows immediately from \cite[Fact 4.3 and Proposition 4.6]{GM12}.
	\end{proof}

	\begin{fct}
		\label{fct:tame_closed}
		For any dynamical system, the tame functions form a closed subalgebra of $C(X)$ (with pointwise multiplication and norm topology).
	\end{fct}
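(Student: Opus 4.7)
The plan is to reformulate tameness in terms of Rosenthal's $\ell^1$ dichotomy, then verify each closure property separately.

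First, since $\{f\circ g\mid g\in G\}$ is norm-bounded in $C(X)$ (by $\|f\|_\infty$), Rosenthal's $\ell^1$ theorem tells us that every sequence in this orbit either contains a weakly Cauchy subsequence or an $\ell^1$ subsequence, and subsequences of $\ell^1$ sequences are themselves $\ell^1$. Hence $f$ is tame if and only if every sequence $(f\circ g_n)_n$ has a weakly Cauchy subsequence. Moreover, for a bounded sequence in $C(X)$ a short computation using the Riesz representation theorem and the dominated convergence theorem shows that being weakly Cauchy is equivalent to being pointwise Cauchy (i.e., converging pointwise to a bounded, possibly discontinuous, function on $X$).

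Closure under scalar multiplication is immediate from the definition of an $\ell^1$ sequence. For sums and products, given tame $f_1,f_2$ and an arbitrary sequence $(g_n)$, I would first extract a subsequence $(g_{n_k})_k$ along which $(f_1\circ g_{n_k})$ is weakly Cauchy (using tameness of $f_1$), and then a further subsequence $(g_{n_{k_j}})_j$ along which $(f_2\circ g_{n_{k_j}})$ is weakly Cauchy (using tameness of $f_2$). Along this common subsequence, $((f_1+f_2)\circ g_{n_{k_j}})$ is weakly Cauchy by linearity, and $((f_1f_2)\circ g_{n_{k_j}})$ is pointwise Cauchy (a pointwise product of two bounded real Cauchy sequences is bounded Cauchy) and uniformly bounded, hence weakly Cauchy by the reformulation above. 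Since no $\ell^1$ sequence admits a weakly Cauchy subsequence (as follows from the fact that any subsequence of an $\ell^1$ sequence is again $\ell^1$, together with the failure of weak convergence for the standard $\ell^1$ basis), neither $(f_1+f_2)\circ g_n$ nor $(f_1f_2)\circ g_n$ can be $\ell^1$. Therefore $f_1+f_2$ and $f_1f_2$ are tame.

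Closure under norm limits is a direct perturbation argument. Suppose $f_n\to f$ uniformly with each $f_n$ tame, and assume towards contradiction that $(f\circ g_k)_k$ is an $\ell^1$ sequence with constant $\theta>0$. Choosing $n$ with $\|f_n-f\|_\infty<\theta/2$ and observing that precomposition with each homeomorphism $g_k$ is an $\|\cdot\|_\infty$-isometry, the triangle inequality
\[
\Bigl\|\textstyle\sum_i c_i (f_n\circ g_i)\Bigr\|_\infty \geq \Bigl\|\textstyle\sum_i c_i (f\circ g_i)\Bigr\|_\infty - \|f_n-f\|_\infty\textstyle\sum_i |c_i| \geq (\theta-\|f_n-f\|_\infty)\textstyle\sum_i |c_i|
\]
shows that $(f_n\circ g_k)_k$ is itself an $\ell^1$ sequence, contradicting tameness of $f_n$.

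The step I expect to be the main obstacle is the product case: unlike the sum, the $\ell^1$ condition does not interact linearly with multiplication (there is no useful bound for $\|\sum_i c_i u_iv_i\|$ in terms of $\sum_i c_i u_i$ and $\sum_i c_i v_i$ separately), so a direct attack fails. The detour through Rosenthal's theorem and the identification of weak Cauchyness with bounded pointwise Cauchyness in $C(X)$ is what makes multiplication manageable; once that framework is in place, every remaining step is either pure linear algebra or the perturbation estimate above.
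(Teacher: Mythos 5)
Your proof is correct, and it takes a genuinely different route from the paper's. The paper's argument is structural: it invokes the characterisation (from Glasner--Megrelishvili) of tame functions as exactly those that factor through an epimorphism onto a tame subflow, together with the closure of tame flows under products and subflows, builds one universal tame factor $\phi\colon X\to Y$, and observes that the tame functions are exactly the image of the Banach algebra $C(Y)$ under the isometric embedding $\phi^*$ --- so they form a closed subalgebra automatically. Your version instead works directly inside $C(X)$: you use Rosenthal's $\ell^1$ dichotomy to recast tameness of $f$ as ``every orbit sequence $(f\circ g_n)$ admits a weakly Cauchy subsequence'', identify weak Cauchyness of a bounded sequence in $C(X)$ with pointwise Cauchyness (via Riesz representation and dominated convergence), and then check the algebra axioms by hand --- diagonal extraction for sums and products (the passage to pointwise Cauchyness is exactly what makes the product case tractable), and a clean perturbation estimate for norm limits. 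The trade-off is the usual one: the paper's proof is shorter and exposes the categorical picture (tame functions as pullbacks from the maximal tame factor), whereas yours is self-contained at the level of Banach-space theory and makes no appeal to the structure theory of tame flows. Both are sound; your reformulation of tameness is essentially the equivalence already recorded in Remark~\ref{rem:dfn_equiv} and Fact~\ref{fct: metric tameness}, just applied without restricting to the metric case.
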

	\begin{proof}
		First, by Remark~\ref{rem:dfn_equiv}, tame functions in $(G,X)$ satisfy \cite[Definition 5.5]{GM12}, i.e.\ for every $f$ tame in $X$ there is a tame dynamical system $(G,Y_f)$ and an epimorphism $\phi_f\colon X\to Y_f$ such that $f=\phi_f^*(f'):=f'\circ \phi_f$ for some $f'\in C(Y_f)$.
		
		Since tame dynamical systems are closed under subsystems and under arbitrary products (\cite[Lemma 5.4]{GM12}), there is a universal $Y$ for all tame functions $f$, i.e.\ such that the set of all tame functions in $(G,X)$ is exactly the image of $\phi^*\colon C(Y)\to C(X)$, where $\phi\colon X\to Y$ is an epimorphism and $Y$ is tame (just take $\phi\colon X\to \prod_f Y_f$ to be the diagonal of $\phi_f$, and take $Y:=\phi[X] \subseteq \prod_f Y_f$).
		
		Since $C(Y)$ is a Banach algebra and $\phi^*$ is a homomorphism and an isometric embedding (as $\phi$ is onto), the fact follows.
	\end{proof}
	
	\begin{cor}
		\label{cor:tame_dense}
		If $(G,X)$ is a dynamical system and $\mathcal A\subseteq C(X)$ is a family of functions separating points, then $(G,X)$ is tame if and only if every $f\in \mathcal A$ is tame.
	\end{cor}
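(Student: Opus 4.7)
The forward direction is immediate: if $(G,X)$ is tame then every $f \in C(X)$, in particular every $f \in \mathcal{A}$, is tame.

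For the nontrivial direction, my plan is to apply the Stone--Weierstrass theorem combined with Fact~\ref{fct:tame_closed}. Let $T \subseteq C(X)$ denote the set of tame functions. Assume every $f \in \mathcal{A}$ is tame, so $\mathcal{A} \subseteq T$. First I would observe that constant functions are tame: if $f$ is constant, then every $f \circ g$ equals $f$, and a constant sequence in a Banach space fails to be an $\ell^1$ sequence (as the defining inequality with $c_0=1,c_1=-1$ would force $\theta \leq 0$). Hence $T$ contains $\mathcal{A}$ and all constants, and by Fact~\ref{fct:tame_closed}, $T$ is a closed subalgebra of $C(X)$.

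Next, since $\mathcal{A}$ already separates points of $X$, the subalgebra $T$ separates points, contains the constants, and is closed in the supremum norm. By the Stone--Weierstrass theorem applied to the compact Hausdorff space $X$, any such subalgebra coincides with $C(X)$. Therefore $T = C(X)$, which is precisely the statement that $(G,X)$ is a tame dynamical system.

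The main (and essentially only) obstacle is verifying that constants lie in $T$, which is routine once the definition is unpacked. Everything else is an invocation of the closed subalgebra property from Fact~\ref{fct:tame_closed} together with Stone--Weierstrass. No topological-dynamical subtleties intervene, because all the dynamical content has been absorbed into Fact~\ref{fct:tame_closed}.
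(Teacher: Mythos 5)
Your proof is correct and takes essentially the same route as the paper: the paper also notes that constants are trivially tame, invokes Stone--Weierstrass together with Fact~\ref{fct:tame_closed}, and concludes. The only cosmetic difference is that the paper phrases it as ``tame functions are dense and the set of tame functions is closed,'' whereas you apply Stone--Weierstrass directly to the closed subalgebra of tame functions; these are the same argument.
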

	\begin{proof}
		The implication $(\leftarrow)$ is obvious.
		
		$(\rightarrow)$.
		Since constant functions are trivially tame, by the assumption and the Stone-Weierstrass theorem, it follows that tame functions are dense in $C(X)$, and thus the conclusion follows immediately from Fact~\ref{fct:tame_closed}.
	\end{proof}

	\begin{fct}
		\label{fct:tame_preserved}
		Suppose $(G,X)$ is a tame dynamical system. Then the following dynamical systems are tame:
		\begin{itemize}
			\item
			$(H,X)$, where $H\leq G$,
			\item
			$(G,X_0)$, where $X_0\subseteq X$ is a closed invariant subspace,
			\item
			$(G,Y)$, where $Y$ is a $G$-equivariant quotient of $X$.
		\end{itemize}
	\end{fct}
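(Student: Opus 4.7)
My plan is to verify each of the three closure properties separately, in each case using the definition of tameness via absence of $\ell^1$ sequences from Definition~\ref{dfn:tame_function_system} directly. In every case the strategy is to produce, from a hypothetical $\ell^1$ sequence obstructing tameness of the smaller system, an $\ell^1$ sequence contradicting tameness of the original $(G,X)$.

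For the subgroup case $(H,X)$ with $H\leq G$, there is essentially nothing to prove: the underlying compact space and the algebra $C(X)$ are unchanged, and the set of sequences in $H$ is a subset of the set of sequences in $G$. Therefore if no sequence $(g_n)_n$ in $G$ makes $(f\circ g_n)_n$ an $\ell^1$ sequence, the same is a fortiori true for sequences in $H$, so each $f\in C(X)$ remains tame for the $H$-action.

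For the closed invariant subsystem $(G,X_0)$, I would take $f_0\in C(X_0)$ and use the Tietze extension theorem (valid because $X$ is compact Hausdorff, hence normal) to produce $f\in C(X)$ with $f\restr_{X_0}=f_0$. Because $X_0$ is $G$-invariant, restriction commutes with the action, so $(f\circ g_n)\restr_{X_0}=f_0\circ g_n$ for all $n$, and for any scalars $c_0,\dots,c_n$,
\[
\Bigl\|\sum_{i=0}^n c_i(f\circ g_i)\Bigr\|_{C(X)}\geq \Bigl\|\sum_{i=0}^n c_i(f_0\circ g_i)\Bigr\|_{C(X_0)}.
\]
Hence any $\ell^1$ sequence witnessing non-tameness of $f_0$ would lift to an $\ell^1$ sequence witnessing non-tameness of $f$, contradicting tameness of $(G,X)$.

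For the equivariant quotient $(G,Y)$ via $\phi\colon X\to Y$, I would take $f\in C(Y)$ and consider the pullback $f\circ\phi\in C(X)$. Equivariance of $\phi$ yields $(f\circ g_n)\circ\phi=(f\circ\phi)\circ g_n$, so
\[
\sum_{i=0}^n c_i\bigl((f\circ\phi)\circ g_i\bigr)=\Bigl(\sum_{i=0}^n c_i(f\circ g_i)\Bigr)\circ\phi,
\]
and since $\phi$ is surjective the norms on the two sides coincide. Thus an $\ell^1$ witness against tameness of $f$ in $(G,Y)$ transfers verbatim to an $\ell^1$ witness against tameness of $f\circ\phi$ in $(G,X)$, contradicting tameness of the latter. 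I do not anticipate a real obstacle here; the only mildly non-trivial step is invoking Tietze extension for the subsystem case, and after that each argument is a direct norm comparison using the fact that restriction to an invariant subspace and pullback along a surjection interact cleanly with the group action.
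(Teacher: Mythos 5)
Your proposal is correct and takes exactly the approach the paper itself takes (which the paper states in one line per bullet: the first is trivial, the second uses Tietze, the third pulls back along the quotient); you have simply spelled out the norm comparisons and equivariance bookkeeping that the paper leaves implicit.
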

	\begin{proof}
		The first bullet is trivial. The second follows from the Tietze extension theorem. For the third, note that any potentially untame function on $Y$ can be pulled back to $X$.
	\end{proof}

	The following is a dynamical variant of the so-called Bourgain-Fremlin-Talagrand dichotomy, slightly modified for our needs from \cite[Theorem 3.2]{GM06}.
	\begin{prop}
		\label{prop:dyn_BFT}
		Suppose $X$ is a totally disconnected metric compactum. Let $G$ act on $X$ by homeomorphisms. Then the following are equivalent:
		\begin{enumerate}
			\item
			\label{it:prop:dyn_BFT:untame}
			$(G,X)$ is untame,
			\item
			\label{it:prop:dyn_BFT:clopen}
			there is a clopen set $U$ and a sequence $(g_n)_{n\in \bN}$ of elements of $G$ such that the sets $g_n U$ are independent,
			\item
			\label{it:prop:dyn_BFT:betaN}
			$EL:=E(G,X)$ contains a homeomorphic copy of $\beta \bN$,
			\item
			\label{it:prop:dyn_BFT:large}
			$\lvert EL\rvert=2^{2^{\aleph_0}}$,
			\item
			\label{it:prop:dyn_BFT:Frechet}
			$EL$ is not Fréchet,
			\item
			\label{it:prop:dyn_BFT:Rosenthal}
			$EL$ is not a Rosenthal compactum.
		\end{enumerate}
		If $X$ is not necessarily totally disconnected, all conditions but \ref{it:prop:dyn_BFT:clopen} are equivalent.
	\end{prop}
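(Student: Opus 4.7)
The plan is to reduce all equivalences to the classical Bourgain--Fremlin--Talagrand (BFT) dichotomy, split into two parts: the totally-disconnected-specific $(1) \Leftrightarrow (2)$ handled via Fact~\ref{fct:bft}, and the remaining equivalences handled by embedding $EL$ into a Polish function space via the point-separating continuous functions on the metric compactum $X$.

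For $(1) \Leftrightarrow (2)$, I would work through characteristic functions of clopens. The identity $\chi_U \circ \pi_{g_n^{-1}} = \chi_{g_n U}$ combined with Fact~\ref{fct:ind_untame} gives $(2) \Rightarrow (1)$ directly. For $(1) \Rightarrow (2)$, Corollary~\ref{cor:tame_dense} applied to the family of clopen indicators (which separates points under total disconnectedness) locates an untame $\chi_U$; the resulting $\ell^1$ sequence $(\chi_{g_n^{-1}U})_n$ prevents the family $\{\chi_{g^{-1}U} : g \in G\}$ from being relatively compact in $\mathcal{B}_1(X)$, hence keeps its closure non-Fr\'echet, so Fact~\ref{fct:bft} produces an infinite independent subfamily, which gives $(2)$ after inverting.

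For the remaining equivalences, I exploit metrisability: fix a countable $(f_n) \subseteq C(X)$ separating points, inducing a topological embedding $X^X \hookrightarrow \mathbb{R}^{X \times \mathbb{N}}$ via $\phi \mapsto ((x,n) \mapsto f_n(\phi(x)))$ (injective and a homeomorphism onto its image since $(f_n)$ separates points in the compact Hausdorff $X$). Under this embedding, $EL$ is identified with the pointwise closure of the bounded family $\mathcal{F} := \{(x,n) \mapsto f_n(g \cdot x) : g \in G\}$ of continuous functions on the Polish space $X \times \mathbb{N}$. The BFT dichotomy (\cite[Theorems 2F, 3F, Corollary 4G]{BFT78}) now splits into two mutually exclusive cases: either $EL = \overline{\mathcal{F}} \subseteq \mathcal{B}_1(X \times \mathbb{N})$, making $EL$ a Rosenthal compactum of cardinality at most $2^{\aleph_0}$ (Fr\'echet by Fact~\ref{fct: Rosnthal implies Frechet}), or some subsequence of $\mathcal{F}$ is $\ell^1$, in which case a homeomorphic copy of $\beta\mathbb{N}$ is forced into $EL$ and $|EL| = 2^{2^{\aleph_0}}$. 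Since Baire class 1 on $X \times \mathbb{N}$ reduces slicewise to Baire class 1 on $X$ (as $\mathbb{N}$ is discrete), the first alternative is, by Fact~\ref{fct: metric tameness}, equivalent to each $f_n$ being tame, which by Corollary~\ref{cor:tame_dense} is equivalent to tameness of $(G, X)$; this pairs the BFT alternatives exactly with $\{(5), (6)\}$ and its negation with $\{(1), (3), (4)\}$.

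The main subtlety I expect is bookkeeping in the BFT application rather than any single conceptual leap: the exclusivity of the two alternatives rests on the cardinality bound $|K| \leq 2^{\aleph_0}$ for Rosenthal compacta $K$ against $|\beta\mathbb{N}| = 2^{2^{\aleph_0}}$, and the $\beta\mathbb{N}$-embedding is delivered by BFT for the closure of an $\ell^1$ sequence, which sits inside $EL$ itself. All other implications follow from routine combinations of the preliminary facts listed above.
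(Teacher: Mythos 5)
Your proposal is correct in outline but uses a genuinely different decomposition from the paper's. The paper establishes the non-totally-disconnected equivalences by citing \cite[Theorem 3.2]{GM06} as a black box and then, in the totally disconnected case, proves a cyclic chain $(1)\Rightarrow(2)\Rightarrow(3)\Rightarrow(4)\Rightarrow(5)\Rightarrow(6)\Rightarrow(1)$ with the pivotal explicit construction appearing in $(2)\Rightarrow(3)$, where a homeomorphic copy of $\beta\bN$ is built directly from a strongly independent family of clopens: the injectivity of $\mathcal{U} \mapsto \lim_{\mathcal{U}} g_n^{-1}$ is verified by exhibiting a witness point in $\bigcap_{n\in F} g_n U \cap \bigcap_{n \notin F} g_n U^c$, which uses independence over \emph{infinite} index sets, as supplied by Fact~\ref{fct:bft} via compactness. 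You instead run the BFT dichotomy once, applied to the image of $EL$ under the same embedding $X^X \hookrightarrow \bR^{X\times\bN}$ that the paper uses only for $(6)\Rightarrow(1)$, and sort the six conditions into the two sides of the dichotomy. This gains uniformity (the $\neq(2)$ equivalences come out in one shot without assuming total disconnectedness and without citing GM06) and your slicewise translation between $\cB_1(X\times\bN)$ and $\cB_1(X)$, together with the appeals to Corollary~\ref{cor:tame_dense}, Fact~\ref{fct: metric tameness}, and the cardinality/Fréchet arguments, are all sound and align with the paper's individual steps.

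The one place you should be more careful is the claim that the $\ell^1$ alternative ``delivers'' a homeomorphic copy of $\beta\bN$ inside $EL$. This is true, but it is not what the paper extracts from \cite[Theorems 2F, 3F, Corollary 4G]{BFT78}; those references are invoked (in Fact~\ref{fct:bft}) only for the independence characterisation of relative compactness in $\cB_1$, and the $\beta\bN$ embedding is then done by hand from strong (infinite-intersection) independence of \emph{clopen} sets in a \emph{compact} metric space. In your setup the relevant Polish space is $X\times\bN$, which is not compact, so the infinite-intersection independence is not automatic and you cannot directly reuse the paper's construction. You would need either to cite the form of the Bourgain--Fremlin--Talagrand/Rosenthal theory that explicitly gives the $\beta\bN$ embedding in the pointwise closure of an $\ell^1$ sequence of continuous functions on a Polish space, or to observe that the sequence lives inside $E(G,X)$ and reduce the independence argument back to the compact space $X$ (e.g.\ by fixing a single $n$ for which $f_n$ is untame and running the argument slicewise). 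As written, the step is a plausible but unsubstantiated appeal, and it is exactly the step the paper either constructs explicitly (using total disconnectedness) or outsources to \cite{GM06}.
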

	\begin{proof}
		The equivalence of all conditions but \ref{it:prop:dyn_BFT:clopen}
		is proved in \cite[Theorem 3.2]{GM06} (based on the Bourgain-Fremlin-Talagrand dichotomy).
		For the reader's convenience, we will prove here that all conditions (including \ref{it:prop:dyn_BFT:clopen}) are equivalent in the totally disconnected case (the case which appears in our model-theoretic applications).
		
		$\ref{it:prop:dyn_BFT:untame} \rightarrow \ref{it:prop:dyn_BFT:clopen}$. Since the characteristic functions of clopen subsets of $X$ are continuous and separate points in $X$, by \ref{it:prop:dyn_BFT:untame} and Corollary~\ref{cor:tame_dense}, the characteristic function $\chi_U$ is not tame for some clopen $U\subseteq X$. By Fact \ref{fct: metric tameness}, this is equivalent to the fact that $\overline{\{\chi_{gU} \mid g \in G\}}$ is not a Rosenthal compactum. Hence, Fact \ref{fct:bft} implies that some family $\{g_n U: n \in \bN\}$ (with $g_n \in G$) is independent.
		
		$\ref{it:prop:dyn_BFT:clopen} \rightarrow \ref{it:prop:dyn_BFT:untame}$. The reversed argument works. Alternatively, it follows immediately from Fact \ref{fct:ind_untame}.
		
		$\ref{it:prop:dyn_BFT:clopen} \rightarrow \ref{it:prop:dyn_BFT:betaN}$. Let $(g_n)$ be a sequence of elements of $G$ such that the sets $g_n U$ are independent. By the universal property of $\beta \bN$, we have the continuous function $\beta \colon \beta \bN\to EL$ given by $\mathcal F\mapsto \lim_{n\to \mathcal F} g_n^{-1}$. It remains to check that $\beta$ is injective. Consider two distinct ultrafilters $\mathcal F_1$ an $\mathcal F_2$ on $\bN$. Choose $F \in \mathcal F_1 \setminus \mathcal F_2$. By the independence of the $g_n U$, we can find $x \in \bigcap_{n \in F} g_nU \cap \bigcap_{n \in \bN \setminus F} g_nU^c$. It suffices to show that $\beta(\mathcal F_1)(x) \ne \beta(\mathcal F_2)(x)$. Note that $\{n \in \bN \mid g_n^{-1}x \in U^c\} = \bN \setminus F \notin \mathcal F_1$ and $U^c$ is open, so $\beta(\mathcal F_1)(x) \in U$. Similarly, $\beta(\mathcal F_2)(x) \in U^c$, and we are done.
		
		$\ref{it:prop:dyn_BFT:betaN} \rightarrow \ref{it:prop:dyn_BFT:large}$. The group $\{\pi_g \mid g \in G\}$ is contained in the Polish group $\Homeo(X)$ of all homeomorphisms of $X$ equipped with the uniform convergence topology (cf.\ Fact~\ref{fct:homeo_is_Polish}). So $\{\pi_g \mid g \in G\}$ is separable in the subspace topology (as a subspace of a separable metrisable space), and hence also in the pointwise convergence topology (which is weaker). Therefore, $EL =\overline{ \{\pi_g \mid g \in G\}}$ is of cardinality at most $2^{2^{\aleph_0}}$. On the other hand, $|\beta\bN| = 2^{2^{\aleph_0}}$. Hence, $|EL|= 2^{2^{\aleph_0}}$.
		
		$\ref{it:prop:dyn_BFT:large} \rightarrow \ref{it:prop:dyn_BFT:Frechet}$. If $EL$ is Fréchet, then, using the above observation that $\{\pi_g \mid g \in G\}$ is separable, we get that $|EL| = 2^{\aleph_0}$.
		
		$\ref{it:prop:dyn_BFT:Frechet} \rightarrow \ref{it:prop:dyn_BFT:Rosenthal}$. This is Fact \ref{fct: Rosnthal implies Frechet}.
		
		$\ref{it:prop:dyn_BFT:Rosenthal} \rightarrow \ref{it:prop:dyn_BFT:untame}$. Embed homeomorphically $X$ in $\bR^\bN$. Then $EL$ embeds homeomorphically in $\bR^{X \times \bN}$ via the map $\Phi$ given by $\Phi(f)(x,i):=f(x)(i)$. Take $f \in EL$, and let $\pi_i \colon X \to \bR$ be the projection to the $i$-th coordinate, i.e.\ $\pi_i(x):=x(i)$. Suppose $(G,X)$ is tame. Then $\pi_i \circ f \in \cB_1(X)$ by Fact \ref{fct: metric tameness}, so for every $i \in \bN$ there is a sequence of continuous functions $f^i_n \colon X \to \bR$ such that $\lim_n f^i_n = \pi_i \circ f$. Define $f_n \in \bR^{X \times \bN}$ by $f_n(x,i) := f^i_n(x)$. Then all $f_n$'s are continuous and $\Phi(f)= \lim_n f_n$.
		So $\Phi[EL]$ is a compact subset of $\cB_1(X \times \bN)$, i.e.\ $EL$ is Rosenthal.
	\end{proof}

	\begin{fct}
		\label{fct:tame_borel}
		If $(G,X)$ is a metric dynamical system, then $(G,X)$ is tame if and only if all functions in $E(G,X)$ are Borel measurable.
	\end{fct}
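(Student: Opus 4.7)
The plan is to use Proposition \ref{prop:dyn_BFT}, Fact \ref{fct: metric tameness}, and a cardinality argument to handle the two directions separately. Throughout, I exploit that $X$ is compact metric, so we may fix a homeomorphic embedding $X \hookrightarrow \bR^\bN$ and write $\pi_i \colon X \to \bR$ for the continuous projection onto the $i$-th coordinate.

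For the forward direction, assume $(G,X)$ is tame. Fix any $f \in E(G,X)$ and any $i \in \bN$. Since $f$ is a pointwise limit of a net of maps $\pi_{g_\alpha}$ with $g_\alpha \in G$, the function $\pi_i \circ f$ is a pointwise limit of the net $(\pi_i \circ \pi_{g_\alpha})_\alpha$, so it lies in the pointwise closure of $\{\pi_i \circ \pi_g : g \in G\} \subseteq \bR^X$. Because $\pi_i$ is continuous and $(G,X)$ is tame, $\pi_i$ is a tame function, so by Fact~\ref{fct: metric tameness} this pointwise closure consists of Baire class 1 functions. In particular $\pi_i \circ f$ is Baire class 1, and thus Borel measurable. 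Since this holds for every $i$ and since the Borel structure on $X \subseteq \bR^\bN$ is generated by the $\pi_i$, the function $f$ itself is Borel measurable.

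For the converse, I would use the cardinality equivalence from Proposition~\ref{prop:dyn_BFT}. Suppose every element of $E(G,X)$ is Borel measurable. Since $X$ is a compact Polish space, it has a countable basis, and a Borel function $X \to X$ is determined by the preimages of the basis elements; as the Borel $\sigma$-algebra on $X$ has cardinality $2^{\aleph_0}$, there are at most $(2^{\aleph_0})^{\aleph_0} = 2^{\aleph_0}$ Borel maps $X \to X$. Hence $\lvert E(G,X)\rvert \leq 2^{\aleph_0}$. By the contrapositive of the implication \ref{it:prop:dyn_BFT:untame} $\Rightarrow$ \ref{it:prop:dyn_BFT:large} in Proposition~\ref{prop:dyn_BFT} (which is available for metric, not necessarily totally disconnected, $X$), the system $(G,X)$ is tame.

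The only mild subtlety is verifying that each coordinate function $\pi_i \circ f$ of an element $f \in E(G,X)$ really does belong to the closure of $\{\pi_i \circ \pi_g : g \in G\}$ rather than merely to the closure of $\{\pi_i \circ h : h \in E(G,X)\}$, but this is immediate since pointwise evaluation at each $x$ commutes with composition on the left by the fixed continuous map $\pi_i$. Aside from this routine check, both directions reduce cleanly to results already recorded in the preliminaries.
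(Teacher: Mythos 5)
Your proof is correct. The converse direction is identical to the paper's: the same cardinality estimate ($\lvert E(G,X)\rvert\le 2^{\aleph_0}<2^{2^{\aleph_0}}$) combined with Proposition~\ref{prop:dyn_BFT}. For the forward direction the paper takes a slightly different, though closely related, route: it invokes Proposition~\ref{prop:dyn_BFT} to conclude that $E(G,X)$ is Fr\'echet, so each $f\in E(G,X)$ is the pointwise limit of a \emph{sequence} $(\pi_{g_n})_n$ of continuous self-maps of $X$, and then uses that a pointwise limit of a sequence of continuous functions between Polish spaces is Borel. You instead argue coordinate-wise through Fact~\ref{fct: metric tameness}: each $\pi_i$ is a tame continuous function, so the pointwise closure of its $G$-orbit consists of Baire class~1 functions, and $\pi_i\circ f$ lies in that closure, hence is Borel. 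Both are short, valid applications of material in the preliminaries and ultimately rest on the same Bourgain--Fremlin--Talagrand machinery; your route avoids invoking Fr\'echetness of the full enveloping semigroup, while the paper's has the cosmetic advantage that both directions run through the single Proposition~\ref{prop:dyn_BFT}.
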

	\begin{proof}
		By Proposition~\ref{prop:dyn_BFT}, if $(G,X)$ is tame, $E(G,X)$ is Fréchet. Since the pointwise limit of a sequence of continuous functions between Polish spaces is always Borel (cf.\ \cite[11.2]{Kec95}), it follows that $E(G,X)$ consists of Borel functions.
		
		In the other direction, since $X$ is Polish, there are at most $2^{\aleph_0}$ many Borel functions $X\to X$ (because a Borel function is determined by the preimages of the basic open sets, and these preimages are among at most $2^{\aleph_0}$ many Borel sets in $X$). In particular, if $E(G,X)$ consists of Borel functions, $\lvert E(G,X)\rvert\leq 2^{\aleph_0}<2^{2^{\aleph_0}}$, which implies tameness by Proposition~\ref{prop:dyn_BFT}.
	\end{proof}

	\section{Model theory}
	In this section, we recall briefly some basic facts, definitions and conventions which will be applied in the model-theoretical parts of this paper. This is not comprehensive, and is only meant to remind the most important notions; for more in-depth explanation, see e.g.\ \cite{Hod93}, \cite{marker}, \cite{TZ12} for the elementary and \cite{CLPZ01}, \cite{GiNe08}, \cite{KP97}, \cite{KPS13}, \cite{LaPi}, \cite{Zie02} for the more advanced topics.
	\subsection*{Elementary matters}
	
	\index{T@$T$}
	Throughout, $T$ will denote the ambient (first order, complete, often countable) theory.
	
	\begin{dfn}
		\index{space!of types}
		\index{Sx(A)@$S_x(A)$|see {space of types}}
		Given a model $M\models T$, a set $A\subseteq M$, and a (possibly infinite) tuple $x$ of variables, by $S_x(A)$ we mean the space of complete types over $A$ in variables $x$, i.e.\ the Stone space of the Lindenbaum-Tarski algebra of formulas with free variables $x$ and parameters from $A$ modulo equivalence under $T$ (i.e.\ we identify formulas $\varphi_1(x),\varphi_2(x)$ when $T\vdash \varphi_1(x)\leftrightarrow \varphi_2(x)$). (Note that this implies that each $S_x(A)$ is a compact Hausdorff topological space, of weight at most $\lvert A\rvert+\lvert x\rvert+\lvert T\rvert$.)\xqed{\lozenge}
	\end{dfn}
	
	We fix a strong limit cardinal $\kappa$ larger than $\lvert T\rvert$ and ``all the objects we are interested in". We also fix a monster model $\fC$, satisfying the following definition. (Note that if $\kappa$ is strongly inaccessible, then we can choose $\fC$ as simply a saturated model of cardinality $\kappa$. If $\kappa$ is not strongly inaccessible, a saturated model of cardinality $\kappa$ may not exist.)
	
	\begin{dfn}
		\index{model!monster}
		\index{C@$\fC$|see {monster model}}
		A \emph{monster model} is a model $\fC$ of $T$ which is $\kappa$-saturated (i.e.\ each type over an arbitrary set of parameters from $\fC$ of size less than $\kappa$ is realized in $\fC$) and strongly $\kappa$-homogeneous (i.e.\ any elementary map between subsets of $\fC$ of cardinality less than $\kappa$ extends to an automorphism of $\fC$).
		\xqed{\lozenge}
	\end{dfn}
	
	Sometimes we refer to the $\kappa$ fixed above as ``the degree of saturation of $\fC$", even though this may not be strictly true (as $\fC$ may be saturated in a higher cardinality).
	
	\begin{fct}
		For every $\kappa$ as above and every complete theory $T$, there is a monster model.
	\end{fct}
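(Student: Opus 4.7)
The plan is to build $\fC$ as the union of a carefully chosen elementary chain of extensions of an arbitrary model of $T$. I will first sketch the easier half — producing a $\kappa$-saturated elementary extension — and then indicate how to upgrade the construction to obtain strong $\kappa$-homogeneity.

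For $\kappa$-saturation, starting from any $M_0 \models T$ I would define by transfinite recursion an elementary chain $(M_\alpha)_{\alpha \leq \kappa^+}$, where $M_{\alpha+1}$ is chosen to be an elementary extension of $M_\alpha$ realizing every complete type over every subset of $M_\alpha$ of cardinality less than $\kappa$ (existence by a standard compactness argument, since the union of all such types over a fixed set is finitely satisfiable), and at limits one takes unions. Setting $N := M_{\kappa^+}$, any $A \subseteq N$ with $|A| < \kappa$ lies in some $M_\alpha$ with $\alpha < \kappa^+$ (because $\operatorname{cof}(\kappa^+) = \kappa^+ > \kappa > |A|$), so every type over $A$ is realized in $M_{\alpha+1} \subseteq N$; hence $N$ is $\kappa$-saturated.

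For strong $\kappa$-homogeneity, a single such iteration does not suffice: given a partial elementary map $f \colon A \to B$ with $|A|,|B| < \kappa$ inside a $\kappa$-saturated model, one can certainly extend $f$ by one element at a time (by realizing the $f$-image of $\tp(c/A)$), but in order to extend $f$ to a full automorphism of $N$ the back-and-forth must run through all $|N|$ elements, and after fewer than $|N|$ steps the current domain has size $\geq \kappa$, at which point $\kappa$-saturation is no longer sufficient. The standard remedy, when $\kappa$ is not strongly inaccessible (in the inaccessible case a saturated model of cardinality $\kappa$ works directly via back-and-forth in its own cardinality), is to build $\fC$ as a \emph{special model}: take a continuous increasing sequence of cardinals $(\kappa_i)_{i < \operatorname{cof}(\kappa)}$ with supremum $\kappa$ (feasible since $\kappa$ is a strong limit) and construct an elementary chain $(N_i)$ with $N_i$ being $\kappa_i^+$-saturated. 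Setting $\fC := \bigcup_i N_i$, a back-and-forth of length $|\fC|$ then proceeds by drawing on ever higher saturation in the later $N_i$'s to extend the partial map as its domain grows past any given $\kappa_i$; routine bookkeeping shows the result is strongly $\kappa$-homogeneous, and a further round of type-realizing extensions (as in the first stage) can be appended to secure full $\kappa$-saturation in the singular case.

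The principal obstacle is exactly this interplay between saturation and homogeneity: one cannot in general arrange for a model to be saturated in its own cardinality, so the special-model bookkeeping — matching the cardinalities in the chain to the saturation demanded by each back-and-forth step — is where the genuine technical work lies. Everything else (realising types at successor stages, the one-element extension step, and termination of the back-and-forth) is a routine application of compactness and the elementary-map machinery.
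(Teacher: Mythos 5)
Your first stage --- a $\kappa$-saturated elementary extension via a chain of length $\kappa^+$ --- is correct, as is your observation that $\kappa$-saturation alone does not give strong $\kappa$-homogeneity. The gap is in the second stage. A special model of cardinality $\kappa$ (built over a chain cofinal in $\kappa$) is indeed strongly $\kappa$-homogeneous by the classical uniqueness theorem for special models, but it is only $\operatorname{cof}(\kappa)$-saturated rather than $\kappa$-saturated: a parameter set of size between $\operatorname{cof}(\kappa)$ and $\kappa$ need not lie inside any single $N_i$, so the $\kappa_i^+$-saturation of the individual links cannot be brought to bear. Your proposed patch --- ``a further round of type-realizing extensions'' --- then produces a larger $\fC' \succeq \fC$ that is $\kappa$-saturated, but there is no reason for $\fC'$ to be strongly $\kappa$-homogeneous: strong homogeneity is not preserved under passage to an elementary extension, and once you extend past the special chain you lose the back-and-forth apparatus that produced automorphisms. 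So, as written, the singular case does not close.

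The standard way to finish is to decouple the ambient cardinality of the special model from $\kappa$: fix a strong limit cardinal $\lambda > \kappa$ with $\operatorname{cof}(\lambda) > \kappa$ (e.g.\ $\lambda = \beth_{\kappa^+}$) and build a special model $\fC$ of cardinality $\lambda$, that is, the union of an elementary chain $(N_\mu)$ indexed by the cardinals $\mu < \lambda$ with each $N_\mu$ being $\mu^+$-saturated. Since $\lambda$ is a strong limit, each successor stage can be kept of size $< \lambda$, so the chain is well-defined; since $\operatorname{cof}(\lambda) > \kappa$, every parameter set of size $\leq \kappa$ lies in some $N_\mu$ with $\mu \geq \kappa$, whence $\fC$ is $\kappa^+$-saturated; and the uniqueness of special models gives strong $\lambda$-homogeneity, hence strong $\kappa$-homogeneity. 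No appended extensions are needed, so the preservation problem never arises. For comparison, the paper simply cites Hodges, \emph{Model Theory}, Theorem 10.2.1, which constructs $\kappa$-big models by an elementary chain with a ``splintering'' condition at each step and shows them to be both $\kappa$-saturated and strongly $\kappa$-homogeneous --- the same conclusion by a closely related chain construction, avoiding the special-model detour entirely.
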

	\begin{proof}
		See \cite[Theorem 10.2.1]{Hod93} (note that what we call a monster model is referred to as a ``$\kappa$-big model").
	\end{proof}
	
	\begin{dfn}
		\index{small}
		We call an object \emph{small} if it has cardinality smaller than the degree of saturation of $\fC$.\xqed{\lozenge}
	\end{dfn}
	
	The following remark highlights one of the most important features of the monster model, which provides a very strong link between syntax and semantics.
	\begin{rem}
		\label{rem:types_orbits}
		If $\fC$ is a monster model, then for every small $A\subseteq \fC$, and and any small (but possibly infinite) tuple $x$ of variables, if $p(x)$ is a complete type over $A$, then the set $p(\fC)$ of realisations of $p(x)$ is nonempty, and it is a single orbit of $\Aut(\fC/A)$, the group of automorphisms of $\fC$ fixing $A$ pointwise.
		
		Consequently, there is a natural bijection between the space $S_x(A)$ of types over $A$ in variables $x$ and the orbits of $\Aut(\fC/A)$ in the product of sorts corresponding to the variable $x$.
		\xqed{\lozenge}
	\end{rem}
	
	By convention, whenever we mention a small model $M$ of $T$, we assume that $M$ is an elementary substructure of $\fC$ (i.e.\ for every formula $\varphi(x)$ with parameters in $M$, $M\models \exists x \varphi(x)$ if and only if $\fC\models \exists x \varphi(x)$). Every small model of $T$ can be embedded this way (this follows from $\kappa$-saturation of $\fC$).
	
	\begin{dfn}
		\index{phi(M)@$\varphi(M)$}
		Given a formula $\varphi(x)$ and a model $M\models T$ (including $\fC$), by $\varphi(M)$ we mean the set of all realisations of $\varphi$ in $M$, i.e.\ tuples $a$ in $M$ such that $M\models \varphi(a)$.
		
		Likewise, if $\pi$ is a partial type, by $\pi(M)$ we mean the set of all realisations of $\pi$ in $M$.
		\xqed{\lozenge}
	\end{dfn}
	
	\begin{dfn}
		Let $A\subseteq \fC$ be a small set, and let $X$ be a subset of a fixed product of sorts of $X$.
		\begin{itemize}
			\item
			\index{invariant set}
			\index{XA@$X_A$}
			\index{SX(A)@$S_X(A)$}
			We say that $X$ is {\em $A$-invariant} if it is setwise invariant under $\Aut(\fC/A)$ (note that by Remark~\ref{rem:types_orbits}, such a set is a union of sets of realizations of some number of complete types over $A$). In this case, we denote by $X_A$ (or by $S_X(A)$) the set of types over $A$ of elements of $X$. We say that $X$ is simply \emph{invariant} if it is invariant over $\emptyset$, i.e.\ under $\Aut(\fC)$.
			\item
			\index{definable set}
			We say that $X$ is \emph{$A$-definable} or \emph{definable over $A$} if for some formula $\varphi(x)$ with parameters in $A$, we have $X=\varphi(\fC)$. We say that $X$ is simply \emph{definable} if it is definable over some $A$.
			\item
			\index{type-definable set}
			We say that $X$ is \emph{type-definable over $A$} or \emph{$A$-type-definable} if it is the set of realisations of a partial type over $A$, or, equivalently, it is the intersection of a family of $A$-definable sets. We say that $X$ is simply \emph{type-definable} if it is $A$-type-definable for some small $A$ (equivalently, if it is the intersection of a small family of definable sets).
			\item
			\index{analytic set!in model theory}
			\index{Fs set (in model theory) @ $F_\sigma$ set (in model theory)}
			\index{Borel!set (in model theory)}
			We say that \emph{$X$ is analytic, $F_\sigma$, or Borel over $A$} (respectively) if it is $A$-invariant and $X_A$ is analytic, $F_\sigma$ or Borel (respectively) in the the compact space $S_x(A)$, where $x$ is the tuple of variables corresponding to the product of sorts containing $X$. We say that $X$ is simply \emph{analytic, $F_\sigma$, or Borel} (respectively) if it is such over some small $A$.
			\item
			\index{definable set!relatively}
			If $Y\subseteq \fC$ is arbitrary (usually, type-definable), while $X\subseteq Y$, then we say that $X$ is \emph{relatively definable [over $A$] in $Y$} if there is some [$A$-]definable $X'$ such that $X=X'\cap Y$.
			\xqed{\lozenge}
		\end{itemize}
	\end{dfn}
	
	It should be stressed that by ``type-definable" we mean ``type-definable with parameters" whereas ``invariant" means ``invariant over $\emptyset$" (unless specified otherwise).

	\begin{dfn}
		\index{Sa(M)@$S_a(M)$}
		For a tuple $\bar a$ from $\fC$ and a set of parameters $A$, by $S_{\bar a}(A)$ we denote the space of all types $\tp(\bar b/A)$ with $\bar b \equiv \bar a$ (i.e.\ the space of types over $A$ extending $\tp(a/\emptyset)$). (In particular, $S_a(A)=([a]_{\equiv})_A$.)\xqed{\lozenge}
	\end{dfn}

	\begin{fct}
		\label{fct:dtafb}
		If $X$ is as a subset of a fixed product of sorts of $\fC$ which is $A$-invariant, then it is definable, type-definable, analytic, Borel or $F_\sigma$ if and only if it is such over $A$, and if and only if $X_A$ is clopen, closed, analytic or $F_\sigma$ in $S_x(A)$ (for the relevant tuple $x$ of variables).
		
		Likewise, if $X\subseteq Y$ and $X,Y$ are $A$-invariant, then $X$ is relatively definable in $Y$ if and only if it is relatively definable over $A$, if and only if $X_A$ is clopen in $Y_A$.
	\end{fct}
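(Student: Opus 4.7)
The plan is to use the restriction map $\pi\colon S_x(B)\to S_x(A)$ (for any small $B\supseteq A$) as a bridge between parameter sets. By Remark~\ref{rem: continuous surjection is closed}, this continuous surjection between compact Hausdorff spaces is a topological quotient map. The key observation is that for any $A$-invariant set $X$ and any small $B\supseteq A$, one has $X_B=\pi^{-1}[X_A]$: the inclusion $X_B\subseteq \pi^{-1}[X_A]$ is immediate, while for the reverse direction, if $q\in S_x(B)$ restricts to some $\tp(b/A)\in X_A$ with $b\in X$, then picking any realization $c\models q$, strong $\kappa$-homogeneity of $\fC$ (cf.\ Remark~\ref{rem:types_orbits}) yields $\sigma\in\Aut(\fC/A)$ with $\sigma(b)=c$, whence $c\in X$ by $A$-invariance of $X$, so $q=\tp(c/B)\in X_B$.

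With this identification in hand, I would invoke Proposition~\ref{prop:preservation_properties} to transfer each of the topological properties ``clopen'', ``closed'', ``$F_\sigma$'', ``analytic'', and ``Borel'' back and forth between $X_A\subseteq S_x(A)$ and $X_B\subseteq S_x(B)$. Combining this with the standard Stone-space characterisations --- an $A$-invariant set $X$ is $B$-definable (respectively $B$-type-definable) if and only if $X_B$ is clopen (respectively closed) in $S_x(B)$, and for $F_\sigma$, analytic, and Borel this is essentially the very definition --- yields the equivalence between ``$X$ has property $P$ over $A$'' and ``$X$ has property $P$ over $B$'' for each $P$ in the list, and hence the equivalence with the corresponding topological property of $X_A$ in $S_x(A)$. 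To dispose of the parameter set in the ``if and only if it is such'' version, one simply replaces an arbitrary small $B$ witnessing a property of $X$ by $B\cup A$, which is still small, still contains $A$, and still witnesses the property, reducing to the already handled case.

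For the relative version, the same identification $X_B=\pi^{-1}[X_A]$ (and its analogue for $Y$) reduces the claim to the ``furthermore'' clause of Proposition~\ref{prop:preservation_properties} applied to the inclusions $X_B\subseteq Y_B=\pi^{-1}[Y_A]$: this gives at once that $X_A$ is clopen in $Y_A$ if and only if $X_B$ is clopen in $Y_B$, which is precisely the equivalence of relative $A$-definability and relative $B$-definability of $X$ in $Y$, and the parameter set can be absorbed into $A$ as before. No step poses a serious obstacle here; the only point demanding care is the set-theoretic equality $X_B=\pi^{-1}[X_A]$, and it is a direct consequence of strong $\kappa$-homogeneity, which is built into the definition of the monster model.
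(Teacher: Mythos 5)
Your proof is correct and follows essentially the same route as the paper's: reduce to the case $A\subseteq B$, use the restriction map $\pi\colon S_x(B)\to S_x(A)$ together with the equality $X_B=\pi^{-1}[X_A]$ (and likewise for $Y$), and apply Proposition~\ref{prop:preservation_properties}. You flesh out the equality $X_B=\pi^{-1}[X_A]$ via strong homogeneity, which the paper leaves implicit, but the structure of the argument is the same.
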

	\begin{proof}
		Note that (immediately or almost immediately by definition) $X$ is definable, type-definable, Borel, analytic, or $F_\sigma$ over $A$ if and only if $X_A$ is clopen, closed, Borel, analytic or $F_\sigma$ (respectively) in $S_x(A)$, and it is relatively definable in $Y$ over $A$ if and only if it is clopen in $Y_A$, so it is enough to show that this happens for $A$ if and only if it happens for every $B$ over which $X,Y$ are invariant.
		
		We may assume without loss of generality that $A\subseteq B$. Then we have a continuous surjection $\pi \colon S_x(B)\to S_x(A)$ (given by restriction), $X_B=\pi^{-1}[X_A]$ and $Y_B=\pi^{-1}[Y_A]$. The fact follows from Proposition~\ref{prop:preservation_properties}.
	\end{proof}
	
	\begin{dfn}
		\index{equiv@$\equiv$}
		By $\equiv$, we denote the equivalence relation (on all small tuples of elements of $\fC$) of having the same type over $\emptyset$, or equivalently, of lying in the same $\Aut(\fC)$ orbit. Likewise, for a small $A\subseteq \fC$, $\equiv_A$ denotes the relation of having the same type over $A$.\xqed{\lozenge}
	\end{dfn}
	
	\begin{dfn}
		\index{indiscernible sequence}
		Suppose $(a_i)_{i\in I}$ is a sequence of elements of $\fC$, indexed by a totally ordered set $(I,\leq)$. We say that $(a_i)$ is \emph{(order) indiscernible over $A$} (where $A\subseteq \fC$ is a small set) if for each $n$, for all increasing sequences $i_1< i_2< \ldots < i_n$ and $j_1<j_2<\ldots<j_n$, there is some $\sigma\in \Aut(\fC/A)$ such that $\sigma(a_{i_1}\ldots a_{i_n})=a_{j_1}\ldots a_{j_n}$, or, equivalently, $a_{i_1}\ldots a_{i_n}\equiv_A a_{j_1}\ldots a_{j_n}$.
		
		We say that $(a_i)_i$ is simply \emph{indiscernible} if it is indiscernible over $\emptyset$.\xqed{\lozenge}
	\end{dfn}

	\subsection*{Bounded invariant equivalence relations and strong types}
	
	\begin{dfn}
		\index{equivalence relation!bounded invariant}
		\label{dfn:stype}
		We say that an [$A$-]invariant equivalence relation $E$ on an [$A$-]invariant set $X\subseteq \fC$ (in a small product of sorts of $\fC$) is \emph{bounded} if it has a small number of classes. (Remark~\ref{rem:bounded_noofclasses} implies that the number of classes is at most $2^{\lvert T\rvert+\lambda[+\lvert A\rvert]}$ when $X$ is contained in a product of $\lambda$ sorts of $\fC$).
		
		\index{strong type}
		In a slight abuse of the terminology, we say that $E$ is a \emph{strong type} if $E$ is a bounded invariant equivalence relation and $E\subseteq {\equiv}$. The classes of $E$ are also called \emph{strong types}.\xqed{\lozenge}
	\end{dfn}

	\begin{dfn}
		\label{dfn:logic_topology}
		\index{topology!logic}
		Let $E$ be a bounded, invariant equivalence relation on a $\emptyset$-type-definable set $X$. We define the {\em logic topology} on $X/E$ by saying that a subset $D \subseteq X/E$ is closed if its preimage in $X$ is type-definable.\xqed{\lozenge}
	\end{dfn}

	\begin{rem}
		\label{rem:logic_top_larger_sets}
		Note that if $X$ and $E$ are only invariant over some small $A\subseteq \fC$, we can think of them as invariant in the language expanded by constants for elements for $A$. Thus, if $X$ is $A$-type-definable and $E$ is bounded, we can still talk about the logic topology on $X/E$ and it will have properties analogous to the ones listed below.
		\xqed{\lozenge}
	\end{rem}

	\begin{fct}
		\label{fct:logic_by_type_space}
		Fix $M\preceq \fC$, a $\emptyset$-type-definable set $X$ and a bounded invariant equivalence relation $E$ on $X$. Recall that $X_M=\{\tp(a/M)\mid a\in X \}$.
		
		Then $E$ is refined by $\equiv_M$, and thus the map $X\to X/E$ factors through a map $X_M\to X/E$, given by $\tp(a/M)\mapsto [a]_E$. Moreover, the latter map is a topological quotient map.
	\end{fct}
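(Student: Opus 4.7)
The plan is to prove the statement in three steps: refinement, factorisation, and verification of the quotient property. For refinement, I show that $\equiv_M$ refines $E$. The cleanest route is via the Lascar strong type $\equiv_\Lasc$ of Definition~\ref{dfn:class_stp}: by the standard theory, $\equiv_\Lasc$ is the finest bounded $\emptyset$-invariant equivalence relation on the ambient product of sorts, and hence refines $E$; and $a\equiv_M b$ gives $a\equiv_\Lasc b$ by a length-one Lascar chain, so $\equiv_M$ refines $E$. If one prefers a self-contained argument, assume $a\equiv_M b$ but $\neg aEb$ and use strong $\kappa$-homogeneity of $\fC$ together with Erdős--Rado/compactness extraction to produce an $M$-indiscernible sequence realising the 2-type $\tp(a,b/M)$, then build arbitrarily long pairwise $E$-inequivalent sequences inside $\tp(a/M)(\fC)$, contradicting boundedness of $E$.

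With the refinement in hand, the class map $q\colon X \to X/E$ is constant on $\equiv_M$-classes, so it descends through the restriction $r\colon X \to X_M$, $a\mapsto \tp(a/M)$, uniquely to a well-defined surjection $\pi\colon X_M \to X/E$, $\tp(a/M)\mapsto [a]_E$. To verify that $\pi$ is a topological quotient map, I must check that for every $A\subseteq X/E$, $A$ is closed in the logic topology iff $\pi^{-1}[A]$ is closed in $X_M$. By Definition~\ref{dfn:logic_topology}, $A$ is closed iff $q^{-1}[A]$ is type-definable; and by the Stone-space identification of closed subsets of $X_M\subseteq S_x(M)$ with sets of realisations of partial types over $M$, $\pi^{-1}[A]$ is closed in $X_M$ iff $r^{-1}[\pi^{-1}[A]]$ is type-definable over $M$. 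But $r^{-1}[\pi^{-1}[A]] = q^{-1}[A]$, and this set is $\emptyset$-invariant (being $E$-saturated with $E$ itself $\emptyset$-invariant), so by Fact~\ref{fct:dtafb} its type-definability over $M$ coincides with type-definability \emph{tout court}, closing the loop.

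The sole substantive ingredient is the refinement step; once that is secured, the factorisation and the quotient-map verification amount to a diagram chase through the definitions. Accordingly, I expect the main obstacle---absent a prior theorem on Lascar strong types---to be establishing $\equiv_M \subseteq E$ from first principles via the saturation/indiscernibility extraction sketched above.
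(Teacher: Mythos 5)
Your proposal follows the same overall architecture as the paper's proof (refinement via indiscernible sequences, then a definitional chase for the quotient-map property), but there is a concrete error in the final step that needs to be fixed. You claim that $q^{-1}[A]$ is $\emptyset$-invariant ``(being $E$-saturated with $E$ itself $\emptyset$-invariant)''. This inference is false: an $E$-saturated set need not be $\emptyset$-invariant even when $E$ is. (Take $E$ to be equality: every set is $E$-saturated, but hardly any are $\emptyset$-invariant.) What $E$-saturation plus $\emptyset$-invariance of $E$ gives you is invariance under those automorphisms that preserve each $E$-class setwise, which is not all of $\Aut(\fC)$.

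What is actually true, and what you actually need, is that $q^{-1}[A]$ is $M$-invariant. This does \emph{not} follow from $E$-saturation and invariance of $E$ alone; it requires the refinement $\equiv_M \subseteq E$ that you established in the first part. Explicitly: if $a\in q^{-1}[A]$ and $\sigma\in\Aut(\fC/M)$, then $a\equiv_M\sigma(a)$, hence $a\Er\sigma(a)$ by refinement, hence $\sigma(a)\in q^{-1}[A]$ by $E$-saturation. With $M$-invariance in hand you then apply Fact~\ref{fct:dtafb} with parameter set $M$ (not $\emptyset$) to conclude that $q^{-1}[A]$ is type-definable over $M$ iff type-definable over some small set, which closes the loop. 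So the gap is localised and repairable, but as written the invariance claim is wrong, its justification does not work, and the refinement you proved in the first paragraph is not actually invoked where it is needed. Once corrected, the argument matches the paper's.
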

	\begin{proof}
		The inclusion ${\equiv_M}\subseteq E$ follows from Fact~\ref{fct:model_to_indiscernible}.
		
		Note that this implies that for every $A\subseteq X/E$, the preimage $A'$ in $X$ (via the quotient map) is $M$-invariant. In particular, if $A'$ is type-definable, it is type-definable over $M$. It follows that $A'_M$, the preimage of $A$ via the induced map $X_M\to X/E$, is also closed. Conversely, if $A'_M$ is closed, then $A'$ is type-definable, so $A$ is closed.
	\end{proof}
	
	\begin{dfn}
		\label{dfn:EM}
		\index{EM@$E^M$}
		Given a bounded invariant equivalence relation $E$ on $X$ and a model $M\preceq \fC$, by $E^M$ we denote the equivalence relation on $X_M$ given by $p\equiv q$ when for some (equivalently, every) $a\models p$ and $\models q$ we have $a\Er b$. (Note that $E^M\subseteq (X_M)^2$ and it is distinct from $E_M\subseteq (X^2)_M$, which is its preimage by the restriction map $(X^2)_M\to (X_M)^2$.)
		\xqed{\lozenge}
	\end{dfn}
	
	\begin{rem}
		Fact~\ref{fct:logic_by_type_space} allows us to identify $X/E$ and $X_M/E^M$ (which we do freely).\xqed{\lozenge}
	\end{rem}

	\begin{fct}\label{fct: Borel in various senses}
		Suppose $E$ is a bounded invariant equivalence relation on a $\emptyset$-type-definable set $X$. Then $E$ is relatively definable (in $X^2$), type-definable, $F_\sigma$, Borel or analytic if and only if for some (equivalently, every) small model $M$, the equivalence relation $E^M$ is clopen, closed, $F_\sigma$, Borel or analytic (respectively) as a subset of $X_M^2$.
		
		Furthermore, we have the same conclusion for $E\restr_Y$ and $(E^M)\restr_{Y_M}=(E\restr_Y)^M$, where $Y$ is an arbitrary type-definable, $E$-invariant set.
	\end{fct}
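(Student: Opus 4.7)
The plan is to reduce everything to the already-proven Fact~\ref{fct:dtafb} by a single continuous surjection between type spaces. Since $E$ is $\emptyset$-invariant, it is in particular $M$-invariant, so the set $E_M\subseteq (X^2)_M$ is well defined. The key object is the restriction map
\[
\pi\colon (X^2)_M\to (X_M)^2,\qquad \tp(a,b/M)\mapsto (\tp(a/M),\tp(b/M)).
\]
I will first verify that $\pi$ is a continuous surjection between compact Hausdorff spaces: surjectivity is obtained by picking any $a\models p$ and $b\models q$ from $\fC$ and noting that $\pi(\tp(ab/M))=(p,q)$; continuity follows because the preimage of a basic open box $[\varphi(x)]\times [\psi(y)]$ is the clopen $[\varphi(x)\land\psi(y)]$ of $(X^2)_M$. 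Hence, by Remark~\ref{rem: continuous surjection is closed}, $\pi$ is a topological quotient map, and I can feed it into Proposition~\ref{prop:preservation_properties}.

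Next, I would observe the crucial equality $E_M=\pi^{-1}[E^M]$: a type $\tp(a,b/M)$ lies in $E_M$ iff $a\mathrel{E}b$, which, by $\emptyset$-invariance of $E$ and the definition in \ref{dfn:EM}, depends only on $\tp(a/M)$ and $\tp(b/M)$, i.e.\ on the image $\pi(\tp(ab/M))$; that image is in $E^M$ precisely when $a\mathrel{E}b$. Combining this equality with Proposition~\ref{prop:preservation_properties} (applied to $\pi$) immediately transfers each of the properties clopen, closed, $F_\sigma$, Borel and analytic between $E^M\subseteq (X_M)^2$ and $E_M\subseteq (X^2)_M$; the open/closed (and hence clopen) and analytic transfers use that $\pi$ is a quotient and closed map, the $F_\sigma$ case combines closedness with Proposition~\ref{prop: image of intersection}, and the Borel case uses Fact~\ref{fct:borel_preimage}. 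Then Fact~\ref{fct:dtafb} provides the remaining bridge from $E_M$ having the relevant property in $(X^2)_M$ to $E\subseteq X^2$ having the corresponding property (relatively definable, type-definable, $F_\sigma$, Borel or analytic), and it also gives the ``some iff every'' statement automatically.

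For the \emph{furthermore} part, let $Y$ be a type-definable, $E$-invariant set (defined, say, over a small $A$; after adding $A$ to the language if needed, we may assume $Y$ is $M$-invariant for the chosen $M$). Then $Y_M\subseteq X_M$ is closed, so $(Y_M)^2$ is a closed subspace of $(X_M)^2$ and $(Y^2)_M$ is a closed subspace of $(X^2)_M$; the restriction $\pi\restr\colon (Y^2)_M\to (Y_M)^2$ is still a continuous surjection between compact Hausdorff spaces and satisfies $(E\restr_Y)_M=(\pi\restr)^{-1}[(E^M)\restr_{Y_M}]=(\pi\restr)^{-1}[(E\restr_Y)^M]$, so the exact same argument goes through verbatim.

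I do not expect any serious obstacle. The only mildly delicate point is checking that $\pi$ really is a topological quotient map and that $E_M=\pi^{-1}[E^M]$ (which is where $\emptyset$-invariance of $E$ is used, and where one has to be careful not to confuse $(X^2)_M$ with $(X_M)^2$); once these routine verifications are in place, the statement is a mechanical application of Proposition~\ref{prop:preservation_properties} together with Fact~\ref{fct:dtafb}.
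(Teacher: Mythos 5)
Your proof is correct and follows essentially the same route as the paper: identify $E_M$ with the preimage of $E^M$ under the restriction map $(X^2)_M\to (X_M)^2$, then apply Proposition~\ref{prop:preservation_properties} and Fact~\ref{fct:dtafb}. One small imprecision worth flagging: you attribute the well-definedness of $E^M$ (i.e.\ that whether $a\mathrel{E}b$ depends only on $\tp(a/M)$ and $\tp(b/M)$) to ``$\emptyset$-invariance of $E$''; invariance alone does not give this, but boundedness does, since it yields ${\equiv_M}\subseteq E$ via Fact~\ref{fct:model_to_indiscernible} (this is precisely the content of Fact~\ref{fct:logic_by_type_space}, which the paper cites at this step). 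Your appeal to Definition~\ref{dfn:EM} quietly imports this, so the argument is saved, but the justification should really invoke boundedness rather than mere invariance.
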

	\begin{proof}
		Note that since $E$ is bounded invariant, it is also $M$-invariant (by Fact~\ref{fct:logic_by_type_space}). Thus, by Fact~\ref{fct:dtafb}, $E$ is relatively definable, type-definable, $F_\sigma$, analytic or Borel if and only if $E_M\subseteq (X^2)_M$ is clopen, closed, $F_\sigma$, analytic or Borel (respectively).
		
		On the other hand, by Fact~\ref{fct:logic_by_type_space}, it follows that $E_M$ is the preimage of $E^M$ by the restriction map $(X^2)_M\to (X_M)^2$, so the fact follows by Proposition~\ref{prop:preservation_properties}.
		
		The ``furthermore'' part is analogous.
	\end{proof}
	
	\begin{fct}
		\label{fct:logic_top_cpct_T2}
		For any $\emptyset$-type-definable $X$ and bounded invariant equivalence relation $E$ on $X$, $X/E$ is a compact space, and it is Hausdorff if and only if $E$ is type-definable, and discrete if and only if $E$ is relatively definable (as a subset of $X^2$).
	\end{fct}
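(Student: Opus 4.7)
The plan is to reduce everything to the corresponding statements about the equivalence relation $E^M$ on the type space $X_M$, for a fixed small model $M \preceq \fC$, and then invoke the purely topological Fact~\ref{fct:quot_T2_iff_closed} together with Fact~\ref{fct: Borel in various senses}. By Fact~\ref{fct:logic_by_type_space}, the map $X_M \to X/E$ given by $\tp(a/M) \mapsto [a]_E$ is a topological quotient map. Since $X$ is $\emptyset$-type-definable, $X_M$ is closed in $S_x(M)$, hence compact Hausdorff. Compactness of $X/E$ is then immediate, as the continuous image of a compact space.

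For the Hausdorff part, I would argue that $X/E \cong X_M/E^M$ is Hausdorff if and only if $E^M$ is closed in $X_M^2$ (this is Fact~\ref{fct:quot_T2_iff_closed}), and by Fact~\ref{fct: Borel in various senses} this happens exactly when $E$ is type-definable.

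For the discreteness part, Fact~\ref{fct:quot_T2_iff_closed} tells us that $X_M/E^M$ is discrete if and only if $E^M$ is open in $X_M^2$, and in this case $X/E$ is finite. It remains to see that on the compact Hausdorff space $X_M^2$, ``$E^M$ is open'' is equivalent to ``$E^M$ is clopen'': indeed, an open equivalence relation on a compact Hausdorff space has open $E^M$-classes, so there are only finitely many of them (by compactness), and the complement of $E^M$ is then the finite union of the rectangles formed by distinct classes, hence closed. With this, Fact~\ref{fct: Borel in various senses} finishes the argument: $E^M$ is clopen in $X_M^2$ exactly when $E$ is relatively definable in $X^2$.

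There is no real obstacle here; the statement is essentially a translation exercise between the logic topology on $X/E$ and the Stone-space topology on $X_M$, made transparent by the facts already collected. The only point that requires a tiny remark (rather than a direct citation) is the upgrade from ``open equivalence relation'' to ``clopen equivalence relation'' needed to bridge Fact~\ref{fct:quot_T2_iff_closed} and Fact~\ref{fct: Borel in various senses} in the discrete case.
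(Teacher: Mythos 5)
Your proof is correct and follows exactly the route the paper takes, namely combining Fact~\ref{fct:logic_by_type_space}, Fact~\ref{fct:quot_T2_iff_closed}, and Fact~\ref{fct: Borel in various senses}; the paper's own proof is just a one-line citation of these three facts, so your write-up is essentially the fully spelled-out version. The small observation that, for an equivalence relation on a compact Hausdorff space, openness implies clopenness (needed to bridge Fact~\ref{fct:quot_T2_iff_closed} and the ``clopen'' clause of Fact~\ref{fct: Borel in various senses}) is a genuine detail that the paper elides, and you correctly fill it in.
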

	\begin{proof}
		It follows easily from Fact~\ref{fct: Borel in various senses}, Fact~\ref{fct:logic_by_type_space} and Fact~\ref{fct:quot_T2_iff_closed}.
	\end{proof}
	
	The following proposition (which appeared in \cite{KPR15}, joint with Krzysztof Krupiński and Anand Pillay) gives an equivalent condition for type-definability of an equivalence relation defined on some $p(\fC)$ for $p\in S(\emptyset)$.
	The idea that having a type-definable class is equivalent to being type-definable was one of the main motivations for Chapter~\ref{chap:intransitive}, where we find a more general context where type-definability of classes implies type-definability of a bounded invariant equivalence relation (see in particular Theorem~\ref{thm:worb_aut} and Corollary~\ref{cor:smt_aut}).
	\begin{prop}\label{prop:type-definability_of_relations}
		If $E$ is an invariant equivalence relation defined on
		a single complete type $[a]_{\equiv}$ over $\emptyset$, then $E$ has a type-definable [resp. relatively definable] class if and only if $E$ is type-definable [resp. relatively definable].
	\end{prop}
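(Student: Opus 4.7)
The reverse direction is trivial in both cases: if $E$ is type-definable (respectively, relatively definable in $[a]_{\equiv}^2$), then the class $[b]_E$ is obtained as the section $\{y:(b,y)\in E\}$, and such sections inherit the relevant definability property. So the content is in the forward direction.

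My plan for the forward direction is to exploit the $\emptyset$-invariance of $E$ together with the transitivity of $\Aut(\fC)$ on $[a]_{\equiv}$ to ``uniformise'' a defining partial type of one class into a defining partial type of $E$. The first key observation is that $[a]_E$ is $\Aut(\fC/a)$-invariant: for $\sigma\in\Aut(\fC/a)$ and any $b\Er a$, the $\emptyset$-invariance of $E$ gives $\sigma(b)\Er\sigma(a)=a$. Given this $a$-invariance, Fact~\ref{fct:dtafb} lets me promote the hypothesis: if $[a]_E$ is type-definable, it is type-definable \emph{over $a$}, so there is a partial type $\pi(y,z)$ over $\emptyset$ with $[a]_E=\{y:\models\pi(y,a)\}$; analogously, if $[a]_E$ is relatively definable in $[a]_{\equiv}$, then by Fact~\ref{fct:dtafb} (applied to the pair $[a]_E\subseteq[a]_{\equiv}$, both $a$-invariant), it is relatively definable over $a$, so $\pi(y,z)$ can be taken to be a single formula with $[a]_E=\phi(\fC,a)\cap[a]_{\equiv}$.

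With such $\pi$ in hand, let $p=\tp(a/\emptyset)$ and form $\Sigma(x,y):=p(x)\cup p(y)\cup\pi(y,x)$ (in the relative case, drop $p(x),p(y)$ and simply take $\phi(y,x)$, intersecting the resulting definable set with $[a]_{\equiv}^2$). I claim $\Sigma(\fC,\fC)=E$ (respectively, $\phi(\fC^2)\cap[a]_{\equiv}^2=E$). Fix $b,d\models p$ and pick $\sigma\in\Aut(\fC)$ with $\sigma(a)=b$ (possible since $\Aut(\fC)$ acts transitively on $p(\fC)$). By $\emptyset$-invariance of $E$, $\sigma([a]_E)=[b]_E$, while applying $\sigma$ just shifts parameters in $\pi(y,a)$ to $\pi(y,b)$; therefore $[b]_E=\{y:\models\pi(y,b)\}$ (respectively $\phi(\fC,b)\cap[a]_{\equiv}$). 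Thus $(b,d)\models\Sigma$ iff $b,d\models p$ and $d\in[b]_E$, iff $b\Er d$, as required.

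There is no real obstacle: the proof is essentially a substitution trick. The only substantive conceptual point is Step~1 together with the appeal to Fact~\ref{fct:dtafb} — without the upgrade from ``type-definable with some parameters'' to ``type-definable over $a$'', there would be no canonical way to transport the defining data along $\sigma$ in a manner independent of choices, and the uniformisation in the final step would fail.
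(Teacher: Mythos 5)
Your proof is correct and takes essentially the same route as the paper's: observe that $[a]_E$ is $a$-invariant, upgrade to type-definability over $a$ (via Fact~\ref{fct:dtafb}), and then transport the defining partial type along automorphisms to define $E$ uniformly. You merely spell out in more detail the verification that the resulting partial type actually coincides with $E$.
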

	\begin{proof}
		We prove the type-definable version; the relatively definable version is similar.
		The implication $(\Leftarrow)$ is obvious. For the other implication,
		without loss of generality $[a]_E$ is type-definable. Since $[a]_E$ is $a$-invariant, we get that it is type-definable over $a$, i.e.\ $[a]_E=\pi(\fC,a)$ for some partial type $\pi(x,y)$ over $\emptyset$. Then, since $E$ is invariant, for any $b \equiv a$ we have $[b]_E=\pi(\fC,b)$. Thus, $\pi(x,y)$ defines $E$.
	\end{proof}
	
	\begin{rem}
		\label{rem:tdf_iff_restr}
		Proposition~\ref{prop:type-definability_of_relations} immediately implies that if $Y\subseteq [a]_{\equiv}$ is type-definable and $E$-saturated, then $E\restr_Y$ is type-definable if and only if $E$ is type-definable.\xqed{\lozenge}
	\end{rem}

	\begin{fct}\label{fct:cartdf}
		Assume that the language is countable. For any $E$ which is a bounded, invariant equivalence relation on some $\emptyset$-type-definable and countably supported set $X$, and for any $Y\subseteq X$ which is type-definable and $E$-saturated, the Borel cardinality of the restriction of $E^M$ to $Y_M$ does not depend on the choice of the countable model $M$. In particular, if $X=Y$, the Borel cardinality of $E^M$ does not depend on the choice of the countable model $M$.
		
		Analogously, if $E$ and $X$ are invariant over a countable set $A$, then the Borel cardinality of $E^M\restr_{Y_M}$ does not depend on the choice of a countable model $M\supseteq A$.
	\end{fct}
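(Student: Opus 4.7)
The natural plan is to show that if $M_1 \preceq M_2$ are countable models (both containing any parameters needed for $Y$), then the restriction map witnesses Borel bireducibility of $(E^{M_1})\restr_{Y_{M_1}}$ and $(E^{M_2})\restr_{Y_{M_2}}$; the general case then follows by choosing a common countable elementary extension of any two countable models.

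First I would observe that, since the language is countable, $X$ is countably supported, and $M$ is countable, $S_x(M)$ is second countable compact Hausdorff, hence compact Polish; the set $Y_M$ is closed in $S_x(M)$ (since $Y$ is type-definable over $M$), so it is compact Polish as well. Now consider the restriction map $r \colon S_x(M_2) \to S_x(M_1)$, $\tp(a/M_2)\mapsto \tp(a/M_1)$. It is continuous, and surjective by $\kappa$-saturation of $\fC$, and it sends $Y_{M_2}$ onto $Y_{M_1}$: if $\tp(a/M_2) \in Y_{M_2}$ then $a\in Y$, so $\tp(a/M_1)\in Y_{M_1}$; conversely, every type in $Y_{M_1}$ is realised in $\fC$ by some $a\in Y$, and $\tp(a/M_2)\in Y_{M_2}$ restricts to it.

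The key observation is that $r\restr_{Y_{M_2}}$ is a reduction of $(E^{M_2})\restr_{Y_{M_2}}$ to $(E^{M_1})\restr_{Y_{M_1}}$: two types $\tp(a/M_2), \tp(b/M_2)\in Y_{M_2}$ are $E^{M_2}$-related iff $a\Er b$, iff their restrictions to $M_1$ are $E^{M_1}$-related, by the very definition of $E^M$ (well-posed by Fact~\ref{fct:logic_by_type_space}, since $E$ is bounded invariant). Being continuous, $r\restr_{Y_{M_2}}$ is Borel, so $(E^{M_2})\restr_{Y_{M_2}} \leq_B (E^{M_1})\restr_{Y_{M_1}}$. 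For the converse, Fact~\ref{fct:borel_section} provides a Borel section $s$ of the continuous surjection $r\restr_{Y_{M_2}}\colon Y_{M_2}\to Y_{M_1}$, and the same chain of equivalences shows that $s$ is a Borel reduction in the opposite direction.

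This establishes equality of Borel cardinalities whenever $M_1\preceq M_2$. For arbitrary countable $M_1, M_2$, a downward Löwenheim--Skolem argument inside $\fC$ produces a countable $M_3\preceq \fC$ with $M_1, M_2\preceq M_3$, and two applications of the above yield $(E^{M_1})\restr_{Y_{M_1}} \sim_B (E^{M_3})\restr_{Y_{M_3}} \sim_B (E^{M_2})\restr_{Y_{M_2}}$. The ``analogously'' clause is handled by expanding the language with constants for the countable set $A$: $A$-invariant objects become $\emptyset$-invariant and countability is preserved. The one delicate point to verify is that $r$ truly maps $Y_{M_2}$ \emph{onto} $Y_{M_1}$ and that $Y_{M_1}$ is closed in $S_x(M_1)$, both of which need $M_1$ to contain the parameters over which $Y$ is type-definable — which is exactly what the (implicit) choice of the countable model $M$ is meant to guarantee.
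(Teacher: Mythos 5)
Your proof takes exactly the approach the paper indicates: reduce to the case of an elementary pair, exhibit the restriction map as a continuous surjective reduction between the restricted relations on the compact Polish type spaces, and invoke Fact~\ref{fct:borel_section} to produce a Borel section giving the reverse reduction, then bridge two arbitrary countable models through a common countable elementary extension. This is precisely the argument behind the cited \cite[Proposition 2.12]{KR16}.

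One small correction to your closing remark. You say the argument requires $M_1$ to contain the parameters over which $Y$ is type-definable, and suggest this is an implicit restriction on the choice of $M$. In fact the statement is genuinely about an \emph{arbitrary} countable model $M$, and this works automatically: since $Y$ is $E$-saturated and $E$ is bounded and invariant, Fact~\ref{fct:model_to_indiscernible} gives ${\equiv_M}\subseteq E$ for every small model $M$, so $Y$ is a union of $\equiv_M$-classes, i.e.\ $\Aut(\fC/M)$-invariant. Being type-definable and $M$-invariant, $Y$ is then type-definable over $M$ by Fact~\ref{fct:dtafb}, so $Y_M$ is closed in $S_x(M)$ for every small $M$ with no parameter constraint. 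With that clarification, your proof is complete.
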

	\begin{proof}
		This follows from Fact~\ref{fct:borel_section}. See \cite[Proposition 2.12]{KR16} for details.
	\end{proof}
	
	This justifies the following definition.
	
	\begin{dfn}[$T$ countable]
		\label{dfn:bier_borelcard}
		\index{Borel!cardinality!in model theory}
		\index{smoothness}
		If $E$ is a bounded invariant equivalence relation on a $\emptyset$-type-definable set $X$ (in countably many variables), then by the \emph{Borel cardinality of $E$} we mean the Borel cardinality of $E^M$ for a countable model $M$. In particular, we say that $E$ is {\em smooth} if $E^M$ is smooth for a countable model $M$.
		
		Similarly, if $Y$ is type-definable and $E$-saturated, the Borel cardinality of $E\restr_Y$ is the Borel cardinality of $E^M\restr_{Y_M}$ for a countable model $M$.\xqed{\lozenge}
	\end{dfn}
	
	\begin{rem}
		\label{rem:tdf_implies_smooth}
		Note that if $E$ is as in Definition~\ref{dfn:bier_borelcard}, and type-definable, then it is smooth by Fact~\ref{fct:clsd_smth}.\xqed{\lozenge}
	\end{rem}

	\subsection*{Classical strong types and strong automorphism groups}
	We provide proofs of various facts related to the classical strong types, strong automorphism groups and the corresponding Galois groups. They are all well-known, but the proofs are rather scattered, and different authors use different (but equivalent) definitions of some notions, and so most have been collected here for the convenience of the reader. Most (if not all) of them can be found in \cite{CLPZ01}, \cite{KP97}, \cite{LaPi} and \cite{Zie02}.
	
	\begin{fct}
		\label{fct:indisc_bdd}
		Suppose $E$ is a bounded invariant equivalence relation and $(a_i)$ is an infinite indiscernible sequence. Then all elements of $(a_i)$ are $E$-related.
	\end{fct}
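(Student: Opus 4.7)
My plan is to use the standard dichotomy coming from indiscernibility combined with boundedness. The key observation is that whether or not $a_i \Er a_j$ holds depends only on $\tp(a_i a_j /\emptyset)$ (since $E$ is invariant over $\emptyset$), and by indiscernibility (together with the symmetry of $E$) this type is the same for every pair of distinct indices. Hence either $a_i \Er a_j$ for every $i \neq j$, or $a_i \not\Er a_j$ for every $i \neq j$. The first case is what we want; I will rule out the second case by appealing to boundedness.

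First, I would invoke a standard compactness/saturation argument: given the infinite indiscernible sequence $(a_i)_{i \in \omega}$, for any cardinal $\lambda$ we can write down the partial type (in variables indexed by $\lambda$) asserting that the corresponding sequence is indiscernible and that each finite subtuple has the same type as a corresponding finite subtuple of $(a_i)_{i \in \omega}$. Finite satisfiability is immediate, so by compactness this type is consistent, and by the $\kappa$-saturation of $\fC$ (together with the fact that each $a_i$ is a small tuple) we can realise it inside $\fC$ for $\lambda = \kappa$, the degree of saturation. This yields an indiscernible sequence $(a_i)_{i \in \kappa}$ in $\fC$ extending (or at least having the same type as) the original one.

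Now, suppose toward a contradiction that $a_0 \not\Er a_1$ in the original sequence. Then by the dichotomy above (applied to the extended sequence, which has the same $2$-types on increasing pairs), $a_i \not\Er a_j$ for all $i \neq j$ in $(a_i)_{i \in \kappa}$. Hence the $E$-classes $[a_i]_E$ for $i < \kappa$ are pairwise distinct, so $E$ has at least $\kappa$ many classes. This contradicts boundedness of $E$, since by Definition~\ref{dfn:stype} the number of classes must be small (i.e.\ strictly less than $\kappa$). Therefore $a_0 \Er a_1$, and by the dichotomy all elements of $(a_i)$ are $E$-related.

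There is no serious obstacle; the only subtlety is ensuring that the ``stretch'' of the indiscernible sequence actually lives inside $\fC$, which is precisely what the saturation hypothesis on the monster model guarantees (provided each $a_i$ is a small tuple, which is implicit in the setup). Everything else is an immediate application of invariance, indiscernibility, and the pigeonhole principle on the class space $X/E$.
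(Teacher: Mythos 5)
Your proof is correct and takes essentially the same route as the paper: the dichotomy (invariance reduces $a_i \Er a_j$ to the type of the pair, indiscernibility forces all increasing pairs to have the same type, symmetry handles the rest) followed by stretching the indiscernible sequence to length $\kappa$ to contradict boundedness. The paper's version is terser (it states the dichotomy and the stretching without justification), but the argument is the same; your appeal to "$\kappa$-saturation realises a type in $\kappa$-many blocks of variables" is slightly loose as stated and is usually fleshed out by the standard transfinite recursion, but this is routine.
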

	\begin{proof}
		Suppose not. Then for all $i\neq j$ we have $\neg (a_i \Er a_j)$. Since $(a_i)$ is infinite and indiscernible, it can be extended to an arbitrarily long indiscernible sequence (whose elements are pairwise $E$-inequivalent), which contradicts boundedness of $E$.
	\end{proof}
	
	\begin{fct}
		\label{fct:model_coheir}
		Suppose $p\in S(M)$. Then $p$ has a global coheir $p'\in S(\fC)$, i.e.\ an extension of $p$ such that for every $\varphi(x)\in p'$, $\varphi(M)$ is nonempty.
	\end{fct}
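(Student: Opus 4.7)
The plan is the classical Zorn's lemma argument for extending a type over a model to a globally finitely-satisfiable one. First, I would observe that because $M$ is an elementary substructure of $\fC$ and $p\in S(M)$ is complete, every formula $\varphi(x)\in p$ (which has parameters in $M$) is already realised in $M$: indeed $\exists x\,\varphi(x)$ is consistent with $\Th(M)$ by consistency of $p$, hence holds in $M$ by elementarity. So the family
\[
\mathcal F := \{\Sigma \supseteq p(x) \mid \Sigma \text{ a set of formulas over } \fC,\ \text{every finite } \Sigma_0 \subseteq \Sigma \text{ is realised in } M\}
\]
is nonempty (it contains $p$) and is clearly closed under unions of chains, so Zorn yields a maximal element $p'\in\mathcal F$.

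The heart of the argument is showing $p'$ is a complete type over $\fC$; once this is established, applying the defining property to the singleton $\{\varphi\}\subseteq p'$ gives $\varphi(M)\neq\emptyset$ for every $\varphi\in p'$, which is what is wanted. Completeness is proved by contradiction: if some formula $\varphi(x)$ over $\fC$ satisfies $\varphi\notin p'$ and $\neg\varphi\notin p'$, then by maximality both $p'\cup\{\varphi\}$ and $p'\cup\{\neg\varphi\}$ fail the finite-satisfiability-in-$M$ condition. Thus there are finite $\Sigma_0,\Sigma_1\subseteq p'$ such that no element of $M$ satisfies $\bigwedge\Sigma_0\wedge\varphi$ and no element of $M$ satisfies $\bigwedge\Sigma_1\wedge\neg\varphi$. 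But any witness $a\in M$ for finite satisfiability of $\Sigma_0\cup\Sigma_1\subseteq p'$ would satisfy either $\varphi(a)$ or $\neg\varphi(a)$, hence one of the two forbidden conjunctions, contradicting $p'\in\mathcal F$.

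There is no serious obstacle here: the only subtlety is the initial observation that formulas in $p$ are realised in $M$ (which uses that $M$ is a \emph{model}, not just an arbitrary parameter set), and the rest is routine Zorn-and-completeness bookkeeping. The argument is standard and appears, for instance, essentially verbatim in \cite{TZ12}.
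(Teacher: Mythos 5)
Your proof is correct, and the observation that formulas in $p$ are already realised in $M$ (so $p$ itself is finitely satisfiable in $M$) is exactly the right starting point; since $p$ is a complete type, it is closed under finite conjunctions, which gives finite satisfiability of arbitrary finite subsets. The rest — Zorn to get a maximal finitely-$M$-satisfiable set of formulas over $\fC$, and the two-sided maximality argument to derive completeness — is sound.

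The paper takes a more compressed route: it notes that $\{\varphi(M)\mid\varphi\in p\}$ is a centered family of subsets of $M$, extends it to an ultrafilter $\tilde p$ on $M$, and then reads off $p' := \{\varphi(x,c)\mid c\in\fC,\ \varphi(M,c)\in\tilde p\}$. Completeness of $p'$ then comes for free from ultrafilter-ness (for each $\varphi$, exactly one of $\varphi(M,c)$, $M\setminus\varphi(M,c)$ lies in $\tilde p$), finite consistency from the finite intersection property of $\tilde p$, and the coheir condition from properness. Both arguments are the same Zorn's-lemma idea in disguise — extending a filter to an ultrafilter \emph{is} a Zorn argument — but the ultrafilter packaging collapses your completeness case analysis into a definition. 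Your version is more elementary and self-contained (no ultrafilters needed as a separate device); the paper's is shorter and makes the filter on $M$ explicit, which is conceptually closer to why coheirs are called ``ultrafilter types'' in some references.
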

	\begin{proof}
		Note that $\{\varphi(M)\mid \varphi\in p \}$ is a centered family of subsets of $M$ (because $M$ is a model), so it can be extended to an ultrafilter $\tilde p$. For every such $\tilde p$, the type $p':=\{\varphi(x,c)\mid c\in \fC\land \varphi(M)\in \tilde p \}$ has the desired property.
	\end{proof}
	
	\begin{fct}
		\label{fct:model_to_indiscernible}
		Suppose $a,b$ are tuples and $M$ is a model such that $a\equiv_M b$. Then there is an infinite sequence $I$ such that $aI$ and $bI$ are indiscernible. In particular, by Fact~\ref{fct:indisc_bdd}, for every bounded ($M$-)invariant equivalence relation $E$ we have $a\Er b$ (and hence $\equiv_M\subseteq E$).
	\end{fct}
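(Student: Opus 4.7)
My plan is to build the sequence $I$ as a coheir Morley sequence that starts properly over both $a$ and $b$ simultaneously. Concretely, since $\tp(a/M)=\tp(b/M)$, I apply Fact~\ref{fct:model_coheir} to obtain a single global coheir $p'\in S(\fC)$ extending $\tp(a/M)=\tp(b/M)$; thus both $a$ and $b$ realise $p'\restr_M$, and $p'$ is in particular finitely satisfiable in $M$, hence $M$-invariant. I then construct $I=(c_i)_{i<\omega}$ recursively by choosing $c_i\models p'\restr_{Mabc_0\ldots c_{i-1}}$ (possible by $\kappa$-saturation of $\fC$).

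The key observation is that both $aI=(a,c_0,c_1,\ldots)$ and $bI=(b,c_0,c_1,\ldots)$ are Morley sequences in $p'$ over $M$: for instance, in $aI$, the $0$-th term $a$ realises $p'\restr_M$, and for each $i\geq 0$, $c_i\models p'\restr_{Mabc_{<i}}$ extends $p'\restr_{Mac_{<i}}$, as required. Thus it suffices to show that a Morley sequence in an $M$-invariant type is $M$-indiscernible. This is standard: by induction on $n$, for any $i_1<\ldots<i_n$ and $j_1<\ldots<j_n$ the tuples $(d_{i_1},\ldots,d_{i_n})$ and $(d_{j_1},\ldots,d_{j_n})$ of elements of a Morley sequence $(d_k)_k$ have the same type over $M$. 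The inductive step uses that $\tp(d_{i_n}/Md_{i_1}\ldots d_{i_{n-1}})=p'\restr_{Md_{i_1}\ldots d_{i_{n-1}}}$, which is determined purely by the orbit of $(d_{i_1},\ldots,d_{i_{n-1}})$ under $\Aut(\fC/M)$ via $M$-invariance of $p'$.

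I expect the main (mild) obstacle to be cleanly justifying the indiscernibility of the Morley sequence from $M$-invariance of $p'$; however this is a classical argument and the coheir hypothesis is more than enough. Note that $M$-indiscernibility implies indiscernibility over $\emptyset$, which is the form used in Fact~\ref{fct:indisc_bdd}. For the ``in particular'' part, once we have $I$ with $aI$ and $bI$ indiscernible, Fact~\ref{fct:indisc_bdd} (applied in the language expanded by constants for $M$, so that $E$ becomes $\emptyset$-invariant) yields $a\Er c_0$ and $b\Er c_0$, and transitivity of $E$ gives $a\Er b$; since $a\equiv_M b$ was arbitrary, this establishes $\equiv_M\,\subseteq E$.
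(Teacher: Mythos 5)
Your proof takes essentially the same approach as the paper's: take a global $M$-invariant (coheir) extension $p'$ of $\tp(a/M)=\tp(b/M)$, build $I=(c_n)_n$ recursively with $c_n\models p'\restr_{Mabc_{<n}}$, and observe that both $aI$ and $bI$ are then Morley sequences in $p'$ over $M$, hence ($M$-)indiscernible. The paper carries out the indiscernibility verification for pairs by an explicit automorphism argument, whereas you invoke the standard lemma that Morley sequences in invariant types are indiscernible, and your handling of the ``in particular'' clause (passing to the language with constants for $M$ before applying Fact~\ref{fct:indisc_bdd}) is a correct elaboration of what the paper leaves implicit.
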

	\begin{proof}
		Let $p=\tp(a/M)$, and let $p'\in S(\fC)$ be a global $M$-invariant extension of $p$ (e.g.\ a coheir extension). Then construct recursively sequence $I=(c_n)_{n\in \bN}$ so that $c_n\models p'\restr_{Mabc_{<n}}$.
		
		Let us show that all pairs in $aI$ have the same type over $M$. The case of arbitrarily long tuples follows by straightforward induction. Take any $i_1<i_2$. Then $c_{i_1}\models p'\restr_M=p=\tp(a/M)$, so we can choose some $\sigma\in \Aut(\fC/M)$ such that $\sigma(a)=c_{i_1}$. On the other hand, we had $c_{0}\models p'\restr_{Ma}$. It follows that $\sigma(c_0)\models p'\restr_{Mc_{i_1}}$. Since also $c_{i_2}\models p'\restr_{Mc_{i_1}}$, there is some $\sigma_1\in \Aut(\fC/Mc_{i_1})$ such that $\sigma_1(\sigma(c_0))=c_{i_2}$. but then $\sigma_1(\sigma(ac_0))=c_{i_1}c_{i_2}$ and $\sigma_1\sigma\in\Aut(\fC/M)$, and we are done.
	\end{proof}
	
	\begin{rem}
		\label{rem:bounded_noofclasses}
		Note that Fact~\ref{fct:model_to_indiscernible} immediately implies that every bounded invariant equivalence relation $E$ on a set $X$ of tuples of length $\lambda$ variables has at most $\lvert X_M\rvert\leq 2^{\lvert M\rvert+\lambda+\lvert T\rvert}$, and thus by Löwenheim-Skolem, it has at most $2^{\lambda+\lvert T\rvert}$ classes.\xqed{\lozenge}
	\end{rem}
	
	\begin{fct}
		\label{fct:lascar_finest}
		Let $\Theta$ be the relation on tuples in a given product of sorts of lying in the same infinite indiscernible sequence. Consider the relation $E$, which is the transitive closure of $\Theta$. Then $E$ is the finest bounded invariant equivalence relation. (In particular, such relation exists.)
	\end{fct}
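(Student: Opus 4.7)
The plan is to verify four things: that $E$ is an equivalence relation, that it is $\Aut(\fC)$-invariant, that it is bounded, and that it is refined by every bounded invariant equivalence relation. The first two are essentially formal; I would observe that $\Theta$ is reflexive (a constant sequence is trivially indiscernible) and symmetric (reordering an indiscernible sequence gives another indiscernible sequence), so $E$, being the transitive closure of a reflexive symmetric relation, is an equivalence relation. For invariance, any $\sigma\in\Aut(\fC)$ sends an infinite indiscernible sequence to an infinite indiscernible sequence, so $\Theta$ (and therefore $E$) is invariant.

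The heart of the matter is boundedness, and this is where I would appeal to Fact~\ref{fct:model_to_indiscernible}. Fix a small model $M\preceq \fC$. Given any two tuples $a,b$ (in the appropriate product of sorts) with $a\equiv_M b$, that fact produces an infinite sequence $I=(c_n)_n$ such that both $aI$ and $bI$ are indiscernible. Picking any $c_n\in I$, we then have $a\mathrel{\Theta} c_n$ and $c_n\mathrel{\Theta} b$, so $a\mathrel{E} b$. Hence $\equiv_M$ refines $E$, which forces the number of $E$-classes to be bounded by $\lvert S(M)\rvert$, a small cardinal.

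For the finest property, let $F$ be any bounded invariant equivalence relation on the same product of sorts. By Fact~\ref{fct:indisc_bdd}, any two elements of a common infinite indiscernible sequence are $F$-related, so $\Theta\subseteq F$. Since $F$ is transitive, its containing $\Theta$ automatically gives $E\subseteq F$, which is exactly the statement that $E$ is the finest such relation. The ``in particular'' clause follows since we have exhibited a concrete bounded invariant equivalence relation.

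The only step that is not entirely mechanical is boundedness, and even that is immediate once Fact~\ref{fct:model_to_indiscernible} is in hand; there is no real obstacle, as the two prior facts do all the work. If anything, the only subtlety to be careful about is that $\Theta$ really does make sense as a relation on a fixed product of sorts (we only compare tuples of the same sort within a single indiscernible sequence), but this is built into the statement.
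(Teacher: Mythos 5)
Your proof is correct and follows exactly the same route as the paper: reflexivity/symmetry/invariance of $\Theta$ give that $E$ is an invariant equivalence relation, Fact~\ref{fct:model_to_indiscernible} yields $\equiv_M\subseteq E$ and hence boundedness, and Fact~\ref{fct:indisc_bdd} gives minimality. The only difference is that you spell out the small steps (reflexivity via constant sequences, the intermediate element $c_n$) that the paper leaves implicit.
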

	\begin{proof}
		$\Theta$ is clearly invariant, symmetric and reflexive, so $E$ is an invariant equivalence relation. By Fact~\ref{fct:model_to_indiscernible}, $E$ is refined by $\equiv_M$, so it is bounded. Fact~\ref{fct:indisc_bdd} implies that it refines every bounded invariant equivalence relation, so $E$ has to be the finest such relation.
	\end{proof}
	
	\begin{fct}
		\label{fct:KP_exists}
		On every product of sorts, there is a finest bounded $\emptyset$-type-definable equivalence relation.
	\end{fct}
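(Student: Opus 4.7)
The plan is to construct the finest such relation as the intersection of the (set-sized) family of all bounded $\emptyset$-type-definable equivalence relations on the fixed product of sorts. Let $\mathcal{F}$ denote this family. First I would note that $\mathcal{F}$ is genuinely a set (and not a proper class): each member is determined by a partial type over $\emptyset$ in the pair of variables $(x,y)$ corresponding to the given product of sorts, and there are only boundedly many partial types over $\emptyset$ (at most $2^{|T|+\lambda}$, where $\lambda$ is the length of $x$). So the construction makes sense.

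Set $E := \bigcap_{F \in \mathcal{F}} F$. Intersections of equivalence relations are equivalence relations, so $E$ is an equivalence relation. If each $F \in \mathcal{F}$ is defined by a partial type $\pi_F(x,y)$ over $\emptyset$, then $E$ is defined by the partial type $\bigcup_{F \in \mathcal{F}} \pi_F(x,y)$ over $\emptyset$; hence $E$ is $\emptyset$-type-definable. By construction, $E$ is contained in every member of $\mathcal{F}$, so provided $E$ itself lies in $\mathcal{F}$, it will automatically be the finest such relation.

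The only point requiring actual argument is boundedness of $E$. The key input is Fact~\ref{fct:model_to_indiscernible}: for any small model $M \preceq \fC$ and any bounded invariant equivalence relation $F$ on the given product of sorts, one has ${\equiv_M} \subseteq F$. Intersecting over all $F \in \mathcal{F}$ yields ${\equiv_M} \subseteq E$, which means the number of $E$-classes is bounded above by $|S_{\bar x}(M)|$, and this is small for small $M$. Consequently $E$ is bounded, so $E \in \mathcal{F}$, completing the proof.

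The only potential pitfall I foresee is a set-theoretic one, namely reassuring oneself that $\mathcal{F}$ is indeed a set so that the intersection is well defined and the resulting union of partial types is still a partial type; but this is immediate once one remembers that each element of $\mathcal{F}$ is coded by a partial type over $\emptyset$ in a fixed pair of variables. Aside from that, every step is routine given the machinery already developed in the chapter.
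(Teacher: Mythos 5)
Your proof is correct and follows essentially the same route as the paper's: both take the intersection of all bounded $\emptyset$-type-definable equivalence relations on the given product of sorts and check that it is type-definable (by a cardinality consideration on the family) and bounded (by exhibiting a small refinement). The only cosmetic difference is that you invoke Fact~\ref{fct:model_to_indiscernible} and the relation $\equiv_M$ directly, whereas the paper cites the packaged Fact~\ref{fct:lascar_finest} about $\equiv_\Lasc$, and you count partial types over $\emptyset$ rather than unions of $\equiv_\Lasc$-classes; both give the needed smallness.
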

	\begin{proof}
		Note that the relation $E$ from Fact~\ref{fct:lascar_finest} refines every bounded $\emptyset$-type-definable equivalence relation on a given product of sorts $X$. It follows that there is only a small number (at most $2^{\lvert X/E\rvert}$) of those, which easily implies that the intersection is both type-definable and bounded.
	\end{proof}
	
	\begin{fct}
		\label{fct:indisc_to_model}
		Suppose $I$ is a small indiscernible sequence. Then there is some model $M$ such that $I$ is indiscernible over $M$, and in particular, all of its elements have the same type over $M$.
	\end{fct}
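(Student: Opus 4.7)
The plan is to start from any small model $M_0 \preceq \fC$ and construct a copy $I'$ of $I$ that is indiscernible over $M_0$ and has the same type over $\emptyset$ as $I$, then use strong $\kappa$-homogeneity of $\fC$ to move $M_0$ to a model $M$ over which the original $I$ is indiscernible. More precisely, fix a small $M_0 \preceq \fC$ (which exists by Löwenheim-Skolem) and let $J$ be the index set of $I$, so $I=(a_i)_{i\in J}$. I will produce a sequence $I'=(a'_i)_{i\in J}$ in $\fC$ such that (a) $I'$ has the same EM-type over $\emptyset$ as $I$, and (b) $I'$ is indiscernible over $M_0$; then $I \equiv I'$ (order-indiscernibility plus (a) forces the same type of every finite subtuple), so by strong $\kappa$-homogeneity there is $\sigma \in \Aut(\fC)$ with $\sigma(I')=I$, and $M := \sigma(M_0)$ does the job.

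The construction of $I'$ is by compactness. Consider the partial type $\pi((x_i)_{i\in J})$ over $M_0$ asserting: (i) for every formula $\varphi(\bar y)$ and every pair of increasing tuples $\bar i,\bar j$ from $J$ of the same length, $\varphi(x_{\bar i})\liff \varphi(x_{\bar j})$ (matching the EM-type of $I$, which means just requiring $\varphi(x_{\bar i})$ or its negation according to what $I$ does), and (ii) for every formula $\psi(\bar y,\bar m)$ with $\bar m$ from $M_0$, and every pair of increasing tuples $\bar i,\bar j$ of the same length, $\psi(x_{\bar i},\bar m)\liff \psi(x_{\bar j},\bar m)$. A finite subset of $\pi$ involves only finitely many formulas $\psi_1,\dots,\psi_k$ with parameters $\bar m_1,\dots,\bar m_k$ from $M_0$ and finitely many indices. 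Colour the increasing $n$-tuples from $J$ (for the relevant $n$) by their quantifier-free type with respect to $\psi_1,\dots,\psi_k(\cdot,\bar m_l)$; by Ramsey's theorem there is an infinite monochromatic subset $J_0 \subseteq J$, and because $I$ is already $\emptyset$-indiscernible, the subsequence $(a_i)_{i\in J_0}$ witnesses finite satisfiability of both (i) and (ii). Compactness and $\kappa$-saturation then yield the desired $I'$ inside $\fC$.

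The remaining step is essentially routine: $I'$ and $I$ have the same EM-type over $\emptyset$ and are indexed by the same linearly ordered set, so every corresponding finite tuple from them has the same type over $\emptyset$; hence $\sigma \in \Aut(\fC)$ with $\sigma(I')=I$ exists by strong $\kappa$-homogeneity (the hypothesis that $I$ is small is exactly what makes this applicable). Applying $\sigma$ to the $M_0$-indiscernibility of $I'$ transports it to $M$-indiscernibility of $I$, where $M:=\sigma[M_0] \preceq \fC$, and the ``in particular'' clause follows by taking $n=1$ in the definition of indiscernibility. The only real point requiring care is the Ramsey step for (ii); this is the standard extraction argument and is the only place where the full strength of $\emptyset$-indiscernibility of $I$ (needed to argue that the monochromatic subsequence still realises the same EM-type) is used.
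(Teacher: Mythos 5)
Your proof is correct and takes essentially the same approach as the paper's, which likewise fixes a small model $M'$, extracts (via Ramsey and compactness) a sequence $I'\equiv I$ that is indiscernible over $M'$, and transports by an automorphism of $\fC$ carrying $I'$ to $I$. You have simply unpacked the ``Ramsey's theorem + compactness'' step in more detail than the paper, which leaves it as a standard argument.
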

	\begin{proof}
		Fix any small $M'\preceq \fC$. Then by Ramsey's theorem compactness, we can find a sequence $I'$ which is indiscernible over $M'$, and such that $I'\equiv I$. but then there is an automorphism moving $I'$ to $I$, and it moves $M'$ to some model $M$ over which $I$ is indiscernible.
	\end{proof}

	\begin{dfn}
		\label{dfn:class_stp}
		The following are the three classical strong types. (For their existence, see Facts~\ref{fct:lascar_finest} and \ref{fct:KP_exists}.)
		\begin{itemize}
			\index{strong type!Lascar}
			\index{equivL@$\equiv_\Lasc$|see {Lascar strong type}}
			\item the \emph{Lascar strong type} $\equiv_\Lasc$ is the finest bounded, invariant equivalence relation on a given product of sorts,
			\index{strong type!Kim-Pillay}
			\index{equivKP@$\equiv_\KP$|see {Kim-Pillay strong type}}
			\item the \emph{Kim-Pillay strong type} (sometimes called also the \emph{compact strong type}) $\equiv_\KP$ is the finest bounded, $\emptyset$-type-definable equivalence relation on a given product of sorts,
			\index{strong type!Shelah}
			\index{equivSh@$\equiv_\Sh$|see {Shelah strong type}}
			\item the \emph{Shelah strong type} $\equiv_\Sh$ is the intersection of all $\emptyset$-definable equivalence relations with finitely many classes (note that by compactness, a definable equivalence relation is bounded if and only if it has finitely many classes).\xqed{\lozenge}
		\end{itemize}
	\end{dfn}
	
	\begin{rem}
		Strictly speaking, the names given in Definition~\ref{dfn:class_stp} are an abuse of terminology. In more standard terms, a Lascar, Kim-Pillay or Shelah strong type is a single class of $\equiv_\Lasc,\equiv_\KP$ or $\equiv_\Sh$ (respectively). However, we use the names for both the relations and their classes, the same as in the general case, as indicated in Definition~\ref{dfn:stype}. \xqed{\lozenge}
	\end{rem}
	
	\begin{rem}
		It is not hard to see that $\equiv_{\textrm{Sh}}$ is the same as $\equiv_{\acl^{\textrm{eq}}(\emptyset)}$ (i.e.\ the relation of having the same type over the imaginary algebraic closure of the empty set).\xqed{\lozenge}
	\end{rem}
	
	\begin{rem}
		It is easy to see using Fact~\ref{fct:logic_top_cpct_T2} that if $X$ is $\emptyset$-type-definable, then $X/{\equiv_\Lasc}$ is a compact space, $X/{\equiv_\KP}$ is a compact Hausdorff space, and $X/{\equiv_\Sh}$ is a profinite space.\xqed{\lozenge}
	\end{rem}

	\begin{dfn}
		\index{strong automorphism!Lascar}
		\index{Autf(C)@$\Autf(\fC)$|see{Lascar strong automorphism}}
		\label{dfn:autf_L}
		The group $\Autf(\fC)$ \emph{Lascar strong automorphisms} consists of those automorphisms of $\fC$ which preserve all $\equiv_\Lasc$-classes.
		
		\index{strong automorphism!Kim-Pillay}
		\index{strong automorphism!Shelah}
		\index{AutfKP(C)@$\Autf_\KP(\fC)$|see{Kim-Pillay strong automorphism}}
		\index{AutfSh(C)@$\Autf_\Sh(\fC)$|see{Shelah strong automorphism}}
		Analogously, the groups $\Autf_\KP(\fC)$ of \emph{Kim-Pillay strong automorphisms} and $\Autf_\Sh(\fC)$ of \emph{Shelah strong automorphisms} consist of automorphisms fixing all Kim-Pillay and Shelah strong types (respectively).\xqed{\lozenge}
	\end{dfn}

	\begin{rem}
		It is easy to see that all strong automorphism groups are normal subgroups of $\Aut(\fC)$.\xqed{\lozenge}
	\end{rem}
	
	The following fact gives us an explicit description of the Lascar strong type.
	\begin{fct}
		\label{fct:Lascar_equivalent}
		For any tuples $a,b$, the following are equivalent:
		\begin{enumerate}
			\item
			$a\equiv_\Lasc b$,
			\item
			for some $n$, there are small models $M_1,\ldots, M_n$ and tuples $a=a_0,\ldots,a_n=b$ such that for each $i=1,\ldots,n$ we have $a_{i-1}\equiv_{M_{i}} a_{i}$ (so in particular, ${\equiv_M}\subseteq {\equiv_\Lasc}$, and if $p\in S(M)$ and both $a$ and $b$ realise $p$, then $a\equiv_\Lasc b$),
			\item
			for some $n$, there are tuples $a=a_0,\ldots,a_n=b$ such that for each $i=1,\ldots,n$ there is a sequence $(c_n^i)_{n\in \omega}$ such that $a_{i-1}a_{i}\frown (c^i_n)_{n\in \omega}$ is indiscernible (so in particular, if $a$ and $b$ are in an infinite indiscernible sequence, then $a\equiv_\Lasc b$).
		\end{enumerate}
	\end{fct}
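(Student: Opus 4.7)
The plan is to prove the three conditions equivalent cyclically, $(1) \Rightarrow (2) \Rightarrow (3) \Rightarrow (1)$, with Facts~\ref{fct:lascar_finest}, \ref{fct:model_to_indiscernible}, and \ref{fct:indisc_bdd} doing most of the work.

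Two of the three implications are quick. $(2) \Rightarrow (1)$ is immediate from Fact~\ref{fct:model_to_indiscernible}: every $\equiv_M$-step is already a $\equiv_\Lasc$-step, and transitivity of $\equiv_\Lasc$ closes the chain. Likewise $(3) \Rightarrow (1)$ follows from Fact~\ref{fct:indisc_bdd}, since each consecutive pair $a_{i-1}, a_i$ sits in a common infinite indiscernible sequence and is therefore $\equiv_\Lasc$-related. For $(1) \Rightarrow (2)$, I would verify that the chain relation of (2) is itself a bounded invariant equivalence relation and then invoke Fact~\ref{fct:lascar_finest}: reflexivity, symmetry, and transitivity come for free from the definition of a chain; $\Aut(\fC)$-invariance is automatic because automorphisms permute small models; and boundedness holds because the relation contains $\equiv_M$, so has at most $\lvert S(M)\rvert$ classes for any fixed small $M$.

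The substantive implication is $(2) \Rightarrow (3)$. Given a single step $a \equiv_M b$, Fact~\ref{fct:model_to_indiscernible} produces an infinite sequence $I = (c_n)_n$ such that both $aI = (a, c_0, c_1, \ldots)$ and $bI = (b, c_0, c_1, \ldots)$ are indiscernible, and I would use the three-element chain $a \to c_0 \to b$. The first step is witnessed by $aI$ directly. The second step demands an indiscernible sequence beginning with the ordered pair $(c_0, b)$, whereas $bI$ only provides one beginning with $(b, c_0)$. This asymmetry is the main obstacle, because the relation ``$(x, y)$ begins an infinite indiscernible sequence'' is not visibly symmetric.

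To resolve it I would establish that symmetry directly. By compactness, stretch $bI$ to an indiscernible family $(e_q)_{q \in \mathbf{Q}}$ with $b = e_{q_0}$ and $c_0 = e_{q_1}$ for some chosen $q_0 < q_1$. In any $\mathbf{Q}$-indexed indiscernible, the type of a strictly decreasing $k$-tuple is determined by the type of the corresponding strictly increasing tuple---they differ by a position-swapping automorphism of $\fC$---so any strictly decreasing $\omega$-subsequence, reindexed as an $\omega$-sequence, is $\omega$-indiscernible with the ``reversed'' EM-type. Picking positions $q_1 > q_0 > q''_2 > q''_3 > \cdots$ in $\mathbf{Q}$ (with tail chosen arbitrarily below $q_0$), the resulting sequence $(c_0, b, e_{q''_2}, e_{q''_3}, \ldots)$ is $\omega$-indiscernible and begins with the desired pair, completing the second step and hence the chain.
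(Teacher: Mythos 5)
Your proof is correct and, while relying on the same underlying facts, decomposes the equivalence differently from the paper. The paper proves $(1)\Leftrightarrow(3)$ directly by citing Fact~\ref{fct:lascar_finest} (which identifies $\equiv_\Lasc$ with the transitive closure of the ``co-indiscernibility'' relation $\Theta$), then $(3)\Rightarrow(2)$ via Fact~\ref{fct:indisc_to_model}, and $(2)\Rightarrow(3)$ via Fact~\ref{fct:model_to_indiscernible}. You instead close the cycle $(1)\Rightarrow(2)\Rightarrow(3)\Rightarrow(1)$, getting $(1)\Rightarrow(2)$ by verifying the chain relation of $(2)$ is a bounded invariant equivalence relation and appealing to minimality of $\equiv_\Lasc$, and $(3)\Rightarrow(1)$ via Fact~\ref{fct:indisc_bdd}. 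Both routes are fine; what you gain is a genuine precision advantage in $(2)\Rightarrow(3)$. The paper's one-line citation of Fact~\ref{fct:model_to_indiscernible} leaves unaddressed exactly the asymmetry you flagged: that fact hands you indiscernible sequences beginning $(a, c_0, \ldots)$ and $(b, c_0, \ldots)$, whereas a chain $a \to c_0 \to b$ in the format of $(3)$ needs one beginning $(c_0, b, \ldots)$. Your reversal argument---stretch $bI$ to a $\bQ$-indexed indiscernible, then pass to a strictly decreasing $\omega$-subsequence whose first two terms are $c_0, b$---is the standard way to establish symmetry of the relation ``$(x,y)$ begins an infinite indiscernible sequence,'' and it is correct (decreasing subsequences of an order-indiscernible sequence are themselves order-indiscernible, because the reversed finite tuples agree by indiscernibility). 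Two tiny remarks: your $(2)\Rightarrow(1)$ and $(3)\Rightarrow(1)$ are logically redundant once the cycle is in place (harmless, just extra); and for $(1)\Rightarrow(2)$ you actually need only the \emph{definition} of $\equiv_\Lasc$ as the finest bounded invariant equivalence relation, with Fact~\ref{fct:lascar_finest} serving merely to guarantee such a finest relation exists.
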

	\begin{proof}
		The equivalence of \ref{it:prop:dyn_BFT:untame} and (3) is Fact~\ref{fct:lascar_finest}. (3) implies (2) by Fact~\ref{fct:indisc_to_model}. (2) implies (3) by Fact~\ref{fct:model_to_indiscernible}.
	\end{proof}
	
	The group $\Autf(\fC)$ is especially important. It can be also described in the following way.
	\begin{fct}
		\label{fct:lst_witn_by_aut}
		The group $\Autf(\fC)$ is generated by $\Aut(\fC/M)$, where $M$ runs over all $M\preceq \fC$, and for any tuples $a,b$ we have that $a\equiv_\Lasc b$ if and only if for some $\sigma\in \Autf(\fC)$ we have $\sigma(a)=b$.
	\end{fct}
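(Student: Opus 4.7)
Let $G_0 \leq \Aut(\fC)$ denote the subgroup generated by $\bigcup_{M \preceq \fC} \Aut(\fC/M)$. The plan is to prove $G_0 = \Autf(\fC)$ and then deduce the ``iff'' as a short consequence. The inclusion $G_0 \subseteq \Autf(\fC)$ is immediate from Fact~\ref{fct:model_to_indiscernible}: for any small $M \preceq \fC$, any $\sigma \in \Aut(\fC/M)$, and any small tuple $a$, we have $a \equiv_M \sigma(a)$, hence $a \equiv_\Lasc \sigma(a)$, so $\sigma$ preserves every $\equiv_\Lasc$-class and therefore lies in $\Autf(\fC)$.

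The main step is to show $\Autf(\fC) \subseteq G_0$. Given $\sigma \in \Autf(\fC)$, I would pick \emph{any} small $M \preceq \fC$ and enumerate it as a small tuple $\bar a$. Since $\sigma \in \Autf(\fC)$, we have $\bar a \equiv_\Lasc \sigma(\bar a)$, so Fact~\ref{fct:Lascar_equivalent}(2) supplies an integer $n$, small models $M_1, \dots, M_n$, and tuples $\bar a = \bar a_0, \bar a_1, \dots, \bar a_n = \sigma(\bar a)$ with $\bar a_{i-1} \equiv_{M_i} \bar a_i$. Strong $\kappa$-homogeneity of $\fC$ then produces $\tau_i \in \Aut(\fC/M_i)$ sending $\bar a_{i-1}$ to $\bar a_i$, and the composition $\tau := \tau_n \cdots \tau_1 \in G_0$ satisfies $\tau(\bar a) = \sigma(\bar a)$, i.e.\ $\tau$ and $\sigma$ agree pointwise on $M$. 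Hence $\tau^{-1}\sigma \in \Aut(\fC/M) \subseteq G_0$, and so $\sigma = \tau \cdot (\tau^{-1}\sigma) \in G_0$. The conceptual obstacle — producing one finite decomposition of $\sigma$ valid on all of $\fC$ simultaneously — is dissolved by this ``match on one small model, then absorb the rest into the stabiliser of that model'' trick: it is enough to match $\sigma$ on a single chosen small substructure, and the Lascar chain for its enumeration delivers exactly that.

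Finally, the ``iff'' for tuples is routine. If $\sigma \in \Autf(\fC)$ and $\sigma(a) = b$, then by the very definition of $\Autf(\fC)$ we get $a \equiv_\Lasc \sigma(a) = b$. Conversely, if $a \equiv_\Lasc b$, Fact~\ref{fct:Lascar_equivalent}(2) again yields a chain of intermediate tuples over small models, and strong homogeneity produces $\sigma_i \in \Aut(\fC/M_i)$ whose composition $\sigma := \sigma_n \cdots \sigma_1$ lies in $G_0 = \Autf(\fC)$ and satisfies $\sigma(a) = b$.
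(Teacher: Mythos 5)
Your proof is correct and follows essentially the same route as the paper's: both establish $G_0 \subseteq \Autf(\fC)$ via Fact~\ref{fct:model_to_indiscernible}, then for the reverse inclusion use Fact~\ref{fct:Lascar_equivalent}(2) together with strong homogeneity to match $\sigma$ on an enumeration of a small model $M$, and absorb the discrepancy into $\Aut(\fC/M)$. The paper merely packages the ``chain of $\tau_i$'s'' step as an intermediate claim about tuples before specialising to a model enumeration; yours folds it in directly.
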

	\begin{proof}
		Denote by $G$ the group generated by all $\Aut(\fC/M)$. By Fact~\ref{fct:model_to_indiscernible}, each $\Aut(\fC/M)$ is contained in $\Autf(\fC)$, so $G\leq \Autf(\fC)$. On the other hand, by Fact~\ref{fct:Lascar_equivalent}, for all tuples $a\equiv_\Lasc b$ there is some $\sigma\in G$ such that $\sigma(a)=b$. In particular, if $m$ enumerates any model $M$, and $\tau\in \Autf(\fC)$ is arbitrary, then there is some $\sigma\in G$ such that $\sigma(m)=\tau(m)$. But then $\sigma^{-1}\tau\in \Aut(\fC/M)$, so $\tau=\sigma(\sigma^{-1}\tau)\in G$.
	\end{proof}

	\begin{dfn}
		\label{dfn:Lascar distance}
		\index{Lascar!distance}
		\index{dL@$d_\Lasc$}
		The \emph{Lascar distance} $d_\Lasc(a,b)$ is the minimum number $n$ as in Fact~\ref{fct:Lascar_equivalent}(2) (or $\infty$ if it does not exist).
		
		\index{Lascar!diameter}
		The \emph{Lascar diameter} of an automorphism $\sigma\in \Aut(\fC)$ is the largest $n$ such that for some tuple $a$ we have $d_\Lasc(a,\sigma(a))=n$ (or $\infty$ if it does not exist).\xqed{\lozenge}
	\end{dfn}
	
	\begin{fct}
		\label{fct:distance_tdf}
		The Lascar distance is type-definable, i.e.\ for each $n$, $d_\Lasc(a,b)\leq n$ is a type-definable condition about $a$ and $b$ (in a given product of sorts).
	\end{fct}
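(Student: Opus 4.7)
The plan is to reduce the claim to the case $n=1$ via Fact~\ref{fct:Lascar_equivalent}(3) and then use compactness together with $\kappa$-saturation of $\fC$ to express the quantifiers using partial types.

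First I would handle the case $n=1$. Let $\pi(x_0,x_1,x_2,\ldots)$ be the standard partial type over $\emptyset$ expressing that the infinite sequence $(x_i)_{i\in\omega}$ is indiscernible; explicitly, $\pi$ is the conjunction, over all formulas $\varphi$ and all pairs of increasing tuples of indices $\bar i,\bar j$ of the same length, of the biconditionals $\varphi(x_{i_1},\ldots,x_{i_k})\liff\varphi(x_{j_1},\ldots,x_{j_k})$. By Fact~\ref{fct:Lascar_equivalent}(3), $d_\Lasc(a,b)\leq 1$ if and only if $\pi(a,b,x_2,x_3,\ldots)$ is consistent; by compactness, this consistency is equivalent to the conjunction, over all finite $\pi_0\subseteq\pi$, of the first-order formulas $\exists x_2\ldots x_m\,\pi_0(a,b,x_2,\ldots,x_m)$. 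Thus $\Theta(x,y):=\text{``}d_\Lasc(x,y)\leq 1\text{''}$ is type-definable over $\emptyset$.

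Next I would observe the general saturation principle: if $\Psi(\bar x,\bar y)$ is a partial type over $\emptyset$, then $\{\bar a:\exists\bar b\,\Psi(\bar a,\bar b)\}$ is type-definable over $\emptyset$ — the defining partial type is $\{\exists \bar y\,\Psi_0(\bar x,\bar y)\mid \Psi_0\subseteq\Psi\text{ finite}\}$, and the equivalence with the existential statement uses $\kappa$-saturation of $\fC$ over the (small) parameter $\bar a$. Iterating this $n-1$ times allows me to peel off the interior tuples.

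Putting these together, by Fact~\ref{fct:Lascar_equivalent}(3) one has $d_\Lasc(a,b)\leq n$ iff there exist $a_1,\ldots,a_{n-1}$ with $\Theta(a,a_1)\land\Theta(a_1,a_2)\land\cdots\land\Theta(a_{n-1},b)$, and applying the saturation principle to the type-definable relation $\Theta$ yields a partial type $\Theta_n(x,y)$ over $\emptyset$ defining $\{(a,b):d_\Lasc(a,b)\leq n\}$.

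I do not expect any real obstacle; the only point needing care is that the existential-quantification-to-partial-type reduction genuinely uses that $\fC$ is $\kappa$-saturated over small parameter sets, so one must verify that the interior tuples $a_1,\ldots,a_{n-1}$ can be found in $\fC$ itself (not merely in some elementary extension), which is immediate from the monster model assumption.
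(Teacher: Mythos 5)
There is a real gap, and it lies in the very first reduction. In this paper $d_\Lasc(a,b)\le n$ is \emph{defined} (Definition~\ref{dfn:Lascar distance}) via clause~(2) of Fact~\ref{fct:Lascar_equivalent}: it asks for a chain $a=a_0,\ldots,a_n=b$ with $a_{i-1}\equiv_{M_i}a_i$ for suitable small models $M_i$. You instead take clause~(3), the indiscernible-chain characterisation, and write ``by Fact~\ref{fct:Lascar_equivalent}(3) one has $d_\Lasc(a,b)\leq n$ iff there exist $a_1,\ldots,a_{n-1}$ with $\Theta(a,a_1)\land\cdots\land\Theta(a_{n-1},b)$.'' But Fact~\ref{fct:Lascar_equivalent} only identifies the three notions of \emph{finite distance} (i.e.\ the relation $\equiv_\Lasc$); it says nothing about the individual level sets. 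In fact the two metrics are, in general, only bi-Lipschitz equivalent, not equal (this is exactly the content of the remark following Fact~\ref{fct:diameter_witnessed_by_model}): from $\Theta(a,b)$ one deduces $a\equiv_M b$ for some small $M$ via Fact~\ref{fct:indisc_to_model}, so $\Theta\Rightarrow d_\Lasc\le 1$, but conversely Fact~\ref{fct:model_to_indiscernible} only produces $I$ with $aI$ and $bI$ both indiscernible, not $abI$, so from $a\equiv_M b$ one can only conclude $\Theta(a,c_0)\land\Theta(c_0,b)$ for some $c_0$. Thus the partial type $\Theta_n$ you build defines a set that is in general a proper subset of $\{(a,b):d_\Lasc(a,b)\le n\}$, and your argument establishes type-definability of a bi-Lipschitz-equivalent metric rather than of $d_\Lasc$ itself.

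The iteration step (peeling off existential quantifiers over small tuples using compactness and $\kappa$-saturation) is correct and is exactly what one wants; the missing ingredient is a type-definable substitute for ``there exists a small model $M$ with $a\equiv_M b$'', which is a second-order quantifier as written. The paper's proof resolves this with a Henkin-style uniform partial type $\Pi(z)$ over $\emptyset$ whose realisations are exactly (enumerations of) elementary submodels of $\fC$ of size $\le|T|$; together with Löwenheim-Skolem, $d_\Lasc(a,b)\le n$ then becomes an $\exists$-over-type condition to which your iteration argument applies verbatim. If you replace your $\Theta$ with the type-definable condition ``$\exists z\ \Pi(z)\land x\equiv_z y$'' (formalised as a partial type by compactness) you obtain a correct proof of the stated fact along essentially the paper's lines. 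As written, though, what you have proved is type-definability of the indiscernible-chain distance, not of $d_\Lasc$.
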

	\begin{proof}
		First we have the following claim.
		\begin{clm*}
			There is a type $\Pi$ without parameters such that whenever $M\preceq \fC$ has cardinality at most $\lvert T\rvert$, then some enumeration $m$ of $M$ (possibly with repetitions) satisfies $\Pi(m)$, and conversely, whenever a tuple $m$ satisfies $\Pi$, the set it enumerates is an elementary submodel of $\fC$.
		\end{clm*}
		\begin{clmproof}
			We use a variant of the Henkin construction. Suppose for simplicity that the language is countable and one-sorted. Consider all formulas of the form $\varphi(x,y)$ (with no parameters), where $x$ is a single variable and $y$ is a finite tuple. Then we can assign to each pair $(\varphi(x,y),\eta)$, where $\eta$ is a sequence of (not necessarily distinct) natural numbers of length $\lvert y\rvert$ a natural number $n_{\varphi,\eta}$ greater than all elements of $\eta$, and we can do it injectively, so that each pair corresponds to a distinct natural number. Let $z=(z_i)_{i\in \bN}$ be a sequence of distinct variables, and for each $\eta=n_1\ldots n_k$, write $z_\eta$ for the finite tuple $z_{n_1}\ldots z_{n_k}$.
			
			Then let $\Pi(z)=\{\exists x\varphi(x,z_\eta)\rightarrow \varphi(z_{2n_{\varphi,\eta}},z_\eta)\mid \varphi,\eta \}$ shows that the claim is true. Indeed, if $m\models \Pi$, then by the Tarski-Vaught test, it enumerates an elementary submodel of $\fC$. On the other hand, if $M$ enumerates a model, we can enumerate the whole $M$ as $(m_{2n+1})_{n\in \bN}$. Then we can fill the even positions recursively: for each $2n$, if for some $\varphi$ and $\eta$ we have that $n=n_{\varphi,\eta}$ and $\models \exists x\varphi(x,m_{\eta})$, then we take for $m_{2n}$ any witness of that in $M$, and otherwise, we take for it any element of $M$. Then clearly $\models \Pi((m_n)_{n\in \bN})$.
		\end{clmproof}
		Note that by Löwenheim-Skolem, it follows that the models witnessing $d_\Lasc(a,b)\leq n$ can be chosen to be of size at most $\lvert T\rvert$. Thus the fact follows immediately by the Claim.
	\end{proof}
	
	\begin{rem}
		It follows immediately from the definition that the Lascar distance is a metric (which may attain $\infty$), and in particular, it satisfies the triangle inequality.
		Likewise, the Lascar diameter satisfies the inequality $d_\Lasc(\sigma\tau)\leq d_\Lasc(\sigma)+d_\Lasc(\tau)$. \xqed{\lozenge}
	\end{rem}
	
	\begin{fct}
		\label{fct:diameter_witnessed_by_model}
		If $d_\Lasc(a,b)\leq n$, there is some $\sigma\in \Aut(\fC)$ such that $\sigma(a)=b$ and $d_\Lasc(\sigma)=n$.
		
		If $m$ enumerates a model, then whenever $\sigma\in \Aut(\fC)$ satisfies $d_\Lasc(m,\sigma(m))\leq n$, then $d_\Lasc(\sigma)\leq n+1$, i.e.\ for any tuple $a$ we have $d_\Lasc(a,\sigma(a))\leq n+1$.
		
		In particular, for every $\sigma\in\Aut(\fC)$, if $m\equiv_\Lasc\sigma(m)$, then $\sigma\in \Autf(\fC)$ and for every tuple $a$ we have $a\equiv_\Lasc\sigma(a)$.
	\end{fct}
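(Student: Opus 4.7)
The three claims correspond to (1) upgrading the ``chain of models'' witnessing $d_\Lasc(a,b)\leq n$ to a single automorphism whose Lascar diameter is bounded by $n$, (2) relating the diameter of $\sigma$ to the Lascar distance from some enumeration $m$ of a model to $\sigma(m)$, and (3) deducing that an automorphism moving a model within its Lascar class lies in $\Autf(\fC)$. I will prove them in this order, using Fact~\ref{fct:Lascar_equivalent}, strong homogeneity of $\fC$, and the evident fact that $d_\Lasc$ is $\Aut(\fC)$-invariant (since $\equiv_\Lasc$ is an invariant equivalence relation, $d_\Lasc(\rho(a),\rho(b))=d_\Lasc(a,b)$ for every $\rho\in\Aut(\fC)$).

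\textbf{Part 1.} Applying Fact~\ref{fct:Lascar_equivalent}(2), pick models $M_1,\dots,M_n$ and a chain $a=a_0,a_1,\dots,a_n=b$ with $a_{i-1}\equiv_{M_i} a_i$. By strong homogeneity, for each $i$ there exists $\sigma_i\in \Aut(\fC/M_i)$ with $\sigma_i(a_{i-1})=a_i$. Put $\sigma:=\sigma_n\circ\cdots\circ\sigma_1$, so $\sigma(a)=b$. For an arbitrary tuple $c$, the chain
\[
c,\ \sigma_1(c),\ \sigma_2\sigma_1(c),\ \dots,\ \sigma_n\cdots\sigma_1(c)=\sigma(c)
\]
has each consecutive pair related by $\equiv_{M_{i+1}}$ (since $\sigma_{i+1}$ fixes $M_{i+1}$). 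Hence $d_\Lasc(c,\sigma(c))\leq n$, so $d_\Lasc(\sigma)\leq n$, as required.

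\textbf{Part 2.} Now suppose $m$ enumerates a model $M$ and $d_\Lasc(m,\sigma(m))\leq n$. By Part 1, there is $\tau\in\Aut(\fC)$ with $\tau(m)=\sigma(m)$ and $d_\Lasc(\tau)\leq n$. Then $\tau^{-1}\sigma$ fixes $m$ (pointwise), so $\tau^{-1}\sigma\in\Aut(\fC/M)$; in particular $c\equiv_M(\tau^{-1}\sigma)(c)$ for every tuple $c$, which gives $d_\Lasc(c,\tau^{-1}\sigma(c))\leq 1$. Using $\Aut(\fC)$-invariance of $d_\Lasc$, we have $d_\Lasc(\tau(c),\sigma(c))=d_\Lasc(c,\tau^{-1}\sigma(c))\leq 1$, and then the triangle inequality yields
\[
d_\Lasc(c,\sigma(c))\leq d_\Lasc(c,\tau(c))+d_\Lasc(\tau(c),\sigma(c))\leq n+1,
\]
so $d_\Lasc(\sigma)\leq n+1$.

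\textbf{Part 3.} If $m\equiv_\Lasc\sigma(m)$, then $d_\Lasc(m,\sigma(m))\leq n$ for some finite $n$, so by Part 2, $d_\Lasc(\sigma)\leq n+1<\infty$. This means $a\equiv_\Lasc\sigma(a)$ for every tuple $a$, which is precisely the statement that $\sigma\in\Autf(\fC)$.

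There is no real obstacle here; the only point requiring care is the $\Aut(\fC)$-invariance of $d_\Lasc$ used in Part 2, but this is immediate from invariance of $\equiv_\Lasc$ and the fact that any automorphism conjugates a chain of models witnessing a Lascar distance into another such chain of the same length.
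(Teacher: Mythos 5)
Your proof is correct and essentially matches the paper's (terse) argument: Part 1 is exactly the chaining-of-automorphisms observation the paper leaves implicit, and Part 2 runs through the same $\tau$ with $\tau(m)=\sigma(m)$, $\tau^{-1}\sigma\in\Aut(\fC/M)$, and a triangle-inequality step — you phrase this via $\Aut(\fC)$-invariance of the metric $d_\Lasc$ while the paper invokes subadditivity of the Lascar diameter, but these are the same thing. One small mismatch worth flagging: the statement asserts $d_\Lasc(\sigma)=n$ in Part 1, but only $d_\Lasc(\sigma)\leq n$ can be guaranteed (as your proof yields), which is also all the paper actually uses; that is a harmless inexactness in the statement rather than a gap in your argument.
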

	\begin{proof}
		The first part is immediate by the definitions of $d_\Lasc$.
		
		For the second part, note that since $d_\Lasc(m,\sigma(m))\leq n$, by the first part, there is some $\tau\in \Aut(\fC)$ with $d_\Lasc(\tau)\leq n$ such that $\tau(m)=\sigma(m)$. But then $\tau^{-1}\sigma(m)=m$, so $\tau^{-1}\sigma\in\Aut(\fC/M)$ (where $M$ is the model enumerated by $m$). It follows that $d_\Lasc(\tau^{-1}\sigma)\leq 1$, and hence $d_\Lasc(\sigma)=d_\Lasc(\tau(\tau^{-1}\sigma))\leq d_\Lasc(\tau)+d_\Lasc(\tau^{-1}\sigma)=n+1$.
		
		The third part follows immediately by the definitions and Fact~\ref{fct:Lascar_equivalent}.
	\end{proof}

	\begin{rem}
		It follows immediately from Fact~\ref{fct:Lascar_equivalent} that two tuples are $\equiv_\Lasc$-related precisely when they are in finite Lascar distance from one another. This, along with Fact~\ref{fct:diameter_witnessed_by_model}, implies that a $\sigma\in \Aut(\fC)$ is a Lascar strong automorphism if and only if $d_\Lasc(\sigma)<\infty$.\xqed{\lozenge}
	\end{rem}
	
	\begin{rem}
		The Lascar distance can also be defined in terms of the third bullet in Fact~\ref{fct:Lascar_equivalent}. The resulting metric is bi-Lipschitz equivalent to $d_\Lasc$ defined above (it follows immediately from Facts~\ref{fct:model_to_indiscernible} and \ref{fct:indisc_to_model}), and it is also sometimes called ``the Lascar distance".\xqed{\lozenge}
	\end{rem}
	
	\begin{rem}
		Fact~\ref{fct:Lascar_equivalent} and Fact~\ref{fct:distance_tdf} imply that $\equiv_\Lasc$ is $F_\sigma$ on each product of sorts.\xqed{\lozenge}
	\end{rem}

	\subsection*{Galois groups}
	
	Here, we recall fundamental facts about Galois groups of first order theories.
	
	\begin{dfn}\label{definition: Galois groups}
		\index{Gal(T)@$\Gal(T)$|see{Galois group}}
		\index{Galois group}
		The \emph{(Lascar) Galois group of $T$} is the quotient $\Gal(T)=\Aut(\fC)/\Autf(\fC)$.
		
		\index{GalKP(T)@$\Gal_\KP(T)$|see{Galois group}}
		\index{GalSh(T)@$\Gal_\Sh(T)$|see{Galois group}}
		Similarly, the \emph{Kim-Pillay Galois group} and the \emph{Shelah Galois group} of $T$ are the quotients $\Gal_\KP(T)=\Aut(\fC)/\Autf_\KP(\fC)$ and $\Gal_\Sh(T)=\Aut(\fC)/\Autf_\Sh(\fC)$ (respectively).
		
		\index{Gal0(T)@$\Gal_0(T)$}
		$\Gal_0(T)$ is the preimage in $\Gal(T)$ of the identity in $\Gal_\KP(T)$.\xqed{\lozenge}
	\end{dfn}
	
	\begin{rem}
		Note that the natural maps $\Gal(T)\to \Gal_\KP(T)\to \Gal_\Sh(T)$ are group epimorphisms.\xqed{\lozenge}
	\end{rem}
	
	\begin{rem}
		\label{rem:Lascar_extending_model}
		Note that by Fact~\ref{fct:diameter_witnessed_by_model}, if $a,a'$ are arbitrary small tuples, while $m,m'$ enumerate small models, then $ma\equiv_\Lasc m'a'$ if and only if $m\equiv_\Lasc m'$ and $ma\equiv m'a'$.\xqed{\lozenge}
	\end{rem}

	\begin{fct}
		\label{fct:sm_to_gal}
		If for some tuple $m$ enumerating a model $M$, and for some $\sigma_1,\sigma_2\in \Aut(\fC)$ we have $\tp(\sigma_1(m)/M)\equiv_\Lasc^M\tp(\sigma_2(m)/M)$, then $\sigma_1\Autf(\fC)=\sigma_2\Autf(\fC)$.
		
		Consequently, the map $S_m(M)\to \Gal(T)$ given by $\tp(\sigma(m)/M)\mapsto \sigma\Autf(\fC)$ is a well-defined surjection, and we may identify $\Gal(T)$ and $S_m(M)/{\equiv_\Lasc^M}$
		
		We also have analogous surjections onto $\Gal_\KP(T)$ and $\Gal_\Sh(T)$, which induce bijections with $S_m(M)/{\equiv_\KP}$ and $S_m(M)/{\equiv_\Sh}$ (respectively).
		
		(Throughout the thesis, we freely identify $\Gal(T)$ with $S_m(M)/{\equiv_\Lasc^M}$ for all $M$, and $\Gal_\KP(T)$ with $S_m(M)/{\equiv_\KP}$.)
	\end{fct}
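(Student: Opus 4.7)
The plan is to reduce the entire statement to the following single equivalence: for $\star \in \{\Lasc, \KP, \Sh\}$ and for $m$ enumerating a small model $M$,
\[
\sigma \in \Autf_\star(\fC) \iff m \equiv_\star \sigma(m).
\]
The ``$\Rightarrow$'' direction of each of these is immediate from the definition of $\Autf_\star(\fC)$ (since $\Autf_\star$ is by definition the group of automorphisms fixing all $\equiv_\star$-classes). With the ``$\Leftarrow$'' direction established, the main statement follows: assuming $\tp(\sigma_1(m)/M) \equiv_\Lasc^M \tp(\sigma_2(m)/M)$, i.e., $\sigma_1(m) \equiv_\Lasc \sigma_2(m)$, we apply $\sigma_2^{-1}$ (which preserves $\equiv_\Lasc$ since the relation is $\emptyset$-invariant) to get $\sigma_2^{-1}\sigma_1(m) \equiv_\Lasc m$, and the reduction gives $\sigma_2^{-1}\sigma_1 \in \Autf(\fC)$, i.e.\ $\sigma_1\Autf(\fC) = \sigma_2\Autf(\fC)$.

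For the Lascar case, the ``$\Leftarrow$'' is precisely the last clause of Fact~\ref{fct:diameter_witnessed_by_model}. Granted the main implication above, well-definedness of the map $S_m(M) \to \Gal(T)$ is immediate (equal types are in particular $\equiv_\Lasc^M$-related), and surjectivity is by construction. For the induced bijection with $S_m(M)/{\equiv_\Lasc^M}$, I would verify the converse direction of the reduction: if $\sigma_1\Autf(\fC) = \sigma_2\Autf(\fC)$, then $\sigma_2^{-1}\sigma_1 \in \Autf(\fC)$ preserves all Lascar classes, so $m \equiv_\Lasc \sigma_2^{-1}\sigma_1(m)$, and applying $\sigma_2$ gives $\sigma_1(m) \equiv_\Lasc \sigma_2(m)$.

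For the Shelah analogue, the argument uses the identification $\Autf_\Sh(\fC) = \Aut(\fC/\acl^{\mathrm{eq}}(\emptyset))$ together with $\equiv_\Sh = \equiv_{\acl^{\mathrm{eq}}(\emptyset)}$ (both follow because every $\emptyset$-definable equivalence relation with finitely many classes has classes in $\acl^{\mathrm{eq}}(\emptyset)$, and conversely). If $\sigma(m) \equiv_\Sh m$, then $\sigma(m) \equiv_{\acl^{\mathrm{eq}}(\emptyset)} m$, so by strong homogeneity there is $\tau \in \Aut(\fC/\acl^{\mathrm{eq}}(\emptyset)) = \Autf_\Sh(\fC)$ with $\tau(\sigma(m)) = m$; then $\tau\sigma \in \Aut(\fC/M) \subseteq \Autf(\fC) \subseteq \Autf_\Sh(\fC)$, so $\sigma \in \Autf_\Sh(\fC)$.

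The principal obstacle is the Kim-Pillay analogue, for which one needs the non-trivial fact that $\Autf_\KP(\fC)$ acts transitively on each $\equiv_\KP$-class: given $a \equiv_\KP b$, one must produce $\tau \in \Autf_\KP(\fC)$ with $\tau(a) = b$. Once this is available, the Shelah-style argument runs through unchanged. To establish transitivity, I would argue as follows: within a single $\equiv$-class, the partition into $\Autf_\KP$-orbits refines the partition into $\equiv_\KP$-classes (since $\Autf_\KP$ preserves the latter); on the other hand, the $\Autf_\KP$-orbit of $a$ within $[a]_\equiv$ is $\emptyset$-type-definable (being the union of Lascar classes indexed by $\Autf_\KP/\Autf = \Gal_0(T)$, noting that each Lascar class is $\Aut(\fC/a)$-invariant and hence type-definable over $a$), and it is bounded, so by minimality of $\equiv_\KP$ among such relations, it coincides with $[a]_{\equiv_\KP}$. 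Combining this transitivity with $\Aut(\fC/M) \subseteq \Autf_\KP(\fC)$ then completes the Kim-Pillay case exactly as in the Shelah argument, yielding the corresponding well-defined surjection $S_m(M) \to \Gal_\KP(T)$ factoring through $S_m(M)/{\equiv_\KP}$ as a bijection.
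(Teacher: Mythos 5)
Your handling of the Lascar case is correct and arguably cleaner than the paper's: you apply $\sigma_2^{-1}$ directly (valid since $\equiv_\Lasc$ is $\emptyset$-invariant) and then invoke Fact~\ref{fct:diameter_witnessed_by_model}, while the paper first pulls out a witness $\tau \in \Autf(\fC)$ via Fact~\ref{fct:lst_witn_by_aut} and uses normality before arriving at the same Fact~\ref{fct:diameter_witnessed_by_model}. Your Shelah argument via $\Autf_\Sh(\fC) = \Aut(\fC/\acl^{\mathrm{eq}}(\emptyset))$ and homogeneity is also correct, and you correctly identify that the induced-bijection claim for $\Gal_\KP$ and $\Gal_\Sh$ (which the paper's own proof dismisses with ``easily obtained by composing'') actually rests on the transitivity of $\Autf_\star(\fC)$ on $\equiv_\star$-classes.

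However, your proof of the Kim--Pillay transitivity has a genuine gap. You claim that ``each Lascar class is $\Aut(\fC/a)$-invariant and hence type-definable over~$a$.'' The ``hence'' is false: invariance over $a$ (being a union of $\Aut(\fC/a)$-orbits) does not imply type-definability over $a$, and in a non-G-compact theory the Lascar class $[a]_{\equiv_\Lasc}$ is \emph{not} type-definable over any small set of parameters --- indeed, by Proposition~\ref{prop:type-definability_of_relations}, type-definability of $[a]_{\equiv_\Lasc}$ would force type-definability of $\equiv_\Lasc$ on $[a]_\equiv$. Even setting that aside, a union of possibly $2^{\aleph_0}$-many sets indexed by $\Gal_0(T)$ would not obviously be type-definable. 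What you need here is precisely Fact~\ref{fct:galkpsh_orbital} (that $\equiv_\KP$ is the orbit equivalence relation of $\Autf_\KP(\fC)$), which the paper does not prove but cites from \cite[Fact 1.4(ii)]{CLPZ01}; a correct route to it uses the closedness of $\Gal_0(T)$ in $\Gal(T)$ (Fact~\ref{fct:galkp_quot}) together with Proposition~\ref{prop:lem_closed}(3), rather than the per-class type-definability you invoke. If you simply cite that fact instead of reproving it, the rest of your Kim--Pillay argument goes through exactly as you say.
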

	\begin{proof}
		If $\tp(\sigma_1(m)/M)\equiv_\Lasc^M\tp(\sigma_2(m)/M)$, then by the definition of $\equiv_\Lasc^M$ (see Definition~\ref{dfn:EM}), $\sigma_1(m)\equiv_\Lasc \sigma_2(m)$, so by Fact~\ref{fct:lst_witn_by_aut}, there is some $\tau\in \Autf(\fC)$ such that $\tau\sigma_1(m)=\sigma_2(m)$. Since $\Autf(\fC)$ is normal in $\Aut(\fC)$, there is some $\tau'\in \Autf(\fC)$ such that $\tau\sigma_1=\sigma_1\tau'$, so $\sigma_1\tau'(m)=\sigma_2(m)$, whence $\sigma_1^{-1}\sigma_2(m)\equiv_\Lasc m$. Since $m$ enumerates a model, by Fact~\ref{fct:diameter_witnessed_by_model}, it follows that $\sigma_1^{-1}\sigma_2\in \Autf(\fC)$, i.e.\ $\sigma_1\Autf(\fC)=\sigma_2\Autf(\fC)$.  Conversely, if $\sigma_1\Autf(\fC)=\sigma_2\Autf(\fC)$, then of course $\sigma_1(m)\equiv_\Lasc\sigma_2(m)$, so in particular, $\tp(\sigma_1(m)/M)\equiv_\Lasc^M\tp(\sigma_2(m)/M)$. Thus, we have a bijection between $\Gal(T)=\Aut(\fC)/\Autf(\fC)$ and $S_m(M)/\equiv_\Lasc^M$.
		
		The surjections onto $\Gal_\KP(T)$ and $\Gal_\Sh(T)$ are easily obtained by composing the surjection onto $\Gal(T)$ with the epimorphisms from $\Gal(T)$.
	\end{proof}

	\begin{fct}
		\label{fct:gal_top}
		The quotient topology on $\Gal(T)$ induced by the surjection $S_m(M)\to \Gal(T)$ from Fact~\ref{fct:sm_to_gal} does not depend on $M$ and it makes $\Gal(T)$ a compact (but possibly non-Hausdorff) topological group.
		
		In the same way, we induce a topology on each of $\Gal_\KP(T)$ and $\Gal_\Sh(T)$, with which the first one is a compact Hausdorff group, and the second one is a profinite group.
		
		$\Gal(T)$, $\Gal_\KP(T)$ and $\Gal_\Sh(T)$ do not depend on the choice of $\fC$ (as topological groups).
	\end{fct}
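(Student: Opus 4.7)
The plan has four ingredients: (i) independence of the quotient topology from the choice of model $M$, (ii) compactness, (iii) the topological-group structure (plus Hausdorffness for $\Gal_\KP(T)$ and profiniteness for $\Gal_\Sh(T)$), and (iv) independence from $\fC$. For (i), given $M\preceq M'$ with $m$ an initial segment of the enumeration $m'$, the restriction map $\rho\colon S_{m'}(M')\to S_m(M)$ is a continuous surjection between compact Hausdorff spaces, hence a quotient map by Remark~\ref{rem: continuous surjection is closed}. The two surjections onto $\Gal(T)$ from Fact~\ref{fct:sm_to_gal} form a commutative triangle through $\rho$, and Remark~\ref{rem:commu_quot} then yields the coincidence of the induced quotient topologies; for incomparable $M,M'$ one uses a common elementary extension. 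Part (ii) is immediate: $\Gal(T)$ is a continuous image of the compact space $S_m(M)$, and likewise for $\Gal_\KP(T)$ and $\Gal_\Sh(T)$.

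For (iii), the central point is continuity of multiplication. I would introduce the map $\tilde\mu\colon S_{mm}(M)\to \Gal(T)$ by $\tp(\sigma(m)\tau(m)/M)\mapsto [\sigma\tau]$; well-definedness uses $\Aut(\fC/M)\subseteq\Autf(\fC)$ (via Fact~\ref{fct:Lascar_equivalent}) combined with normality of $\Autf(\fC)$ in $\Aut(\fC)$. The coordinate projection $\pi\colon S_{mm}(M)\to S_m(M)\times S_m(M)$ is a continuous surjection between compact Hausdorff spaces, hence a quotient map, and composing with $q_M\times q_M$ yields a continuous surjection $S_{mm}(M)\to\Gal(T)^2$. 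Using local compactness of $S_m(M)$, one verifies that $q_M\times q_M$ is a quotient, and then Remark~\ref{rem:commu_quot} applied to the factorisation $\tilde\mu=\mu\circ(q_M\times q_M)\circ\pi$ gives continuity of the induced $\mu\colon\Gal(T)^2\to\Gal(T)$. Continuity of inversion is handled analogously via a type space over a tuple encoding both $m$ and $\sigma^{-1}(m)$. For $\Gal_\KP(T)$, Hausdorffness is direct: since $\equiv_\KP$ is type-definable (Fact~\ref{fct:KP_exists}), $\equiv_\KP^M$ is closed in $S_m(M)^2$, and Fact~\ref{fct:quot_T2_iff_closed} applies. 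For $\Gal_\Sh(T)$, profiniteness follows because $\equiv_\Sh$ is the intersection of $\emptyset$-definable equivalence relations with finitely many classes, realising $\Gal_\Sh(T)$ as an inverse limit of finite discrete groups.

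Finally, for (iv) I would observe that any two monster models embed into a common larger monster model, and the construction of $\Gal(T)$ together with its topology is preserved by such embeddings (fix a small model $M$ common to both and compare the resulting $S_m(M)$-quotients). The main obstacle I anticipate is the joint-continuity check in (iii): individual point-wise continuity arguments are easy, but getting joint continuity of multiplication requires the quotient-map status of $q_M\times q_M$ (or the detour through $S_{mm}(M)$), and this is where local compactness of type spaces together with the triangle principle of Remark~\ref{rem:commu_quot} do the real work.
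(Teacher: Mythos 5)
Your parts (i), (ii), (iv), together with the Hausdorffness of $\Gal_\KP(T)$ and profiniteness of $\Gal_\Sh(T)$, match the paper's argument essentially verbatim (independence from $\fC$ via common extensions is a minor variant of the paper's observation that $S_m(M)$ and $\equiv_\Lasc^M$ are syntactic objects, so do not depend on $\fC$). The genuine divergence is in establishing the topological group structure of $\Gal(T)$: the paper simply cites \cite[Lemma 18]{Zie02} for this fact and does not reprove it, whereas you sketch a direct argument. Your plan via the auxiliary space of pair-types and the factorisation $\tilde\mu = \mu\circ(q_M\times q_M)\circ\pi$, then Remark~\ref{rem:commu_quot}, is a self-contained route to the same conclusion, which is a legitimate and arguably more instructive choice.

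That said, there is a real gap in the crucial step of showing that $q_M\times q_M$ is a quotient map. Invoking only ``local compactness of $S_m(M)$'' is not enough. If you decompose $q_M\times q_M = (\id_{\Gal(T)}\times q_M)\circ(q_M\times\id_{S_m(M)})$, the first factor is a quotient by the Whitehead-type theorem because $S_m(M)$ is locally compact Hausdorff, but the second factor needs $\Gal(T)$ itself to be locally compact in the relevant sense, and the standard statement of Whitehead's theorem requires the fixed factor to be Hausdorff — which $\Gal(T)$ need not be. The paper's own product-of-quotients tool (Proposition~\ref{prop:product_quotient}) requires the targets to be $R_1$, and $R_1$-ness of $\Gal(T)$ is typically derived \emph{from} its being a topological group (via Proposition~\ref{prop:top_gp_R1}), so appealing to it here would be circular unless you first prove $R_1$-ness independently. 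This \emph{can} be done — closed subsets of $\Gal(T)$ are exactly the images of type-definable $\equiv_\Lasc^M$-saturated subsets of $S_m(M)$, and any such set is automatically $\equiv_\KP^M$-saturated (Proposition~\ref{prop:lem_closed}), so the Kolmogorov quotient of $\Gal(T)$ is $\Gal_\KP(T)$, which is Hausdorff — but that argument is not in your sketch and must be added. A second, smaller point: you assert but do not verify continuity of $\tilde\mu$. It does hold, because $\tilde\mu^{-1}[C]$ for closed $C$ corresponds to the projection of a type-definable set (using that $\{c\mid ca\equiv bm\}$ and the partial type cutting out $q_M^{-1}[C]$ are both type-definable over $M$, and that projections of type-definable sets in $\fC$ are type-definable by saturation), but well-definedness alone does not yield continuity, so this line is worth including.
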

	\begin{proof}
		Choose any models $M$ and $N$, enumerated by $m$ and $n$. We may assume without loss of generality that $m\subseteq n$. Then we have a continuous surjection $S_n(N)\to S_m(M)$ (and hence, by compactness, a topological quotient map), which, by Remark~\ref{rem:Lascar_extending_model} induces a bijection $S_n(N)/{\equiv_\Lasc^N}\to S_m(M)/{\equiv_\Lasc^M}$. It follows that the induced bijection is a quotient map, and hence a homeomorphism.
		
		The fact that $\Gal(T)$ is a topological group is \cite[Lemma 18]{Zie02}.
		
		The fact that $\Gal_\KP(T)$ and $\Gal_\Sh(T)$ are topological groups follows immediately (the quotient of a topological group is a topological group). Since $\equiv_\KP$ is type-definable, it follows by Fact~\ref{fct:logic_top_cpct_T2} that $\Gal_\KP(T)$ is compact Hausdorff, and it is not hard to see that $\Gal_\Sh(T)$ is profinite.
		
		The fact that the Galois groups do not depend on the monster model follows immediately form Fact~\ref{fct:sm_to_gal} --- the space $S_m(M)$ does not depend on $\fC\succeq M$, and neither does $\equiv_\Lasc^M$.
	\end{proof}

	\begin{rem}\label{rem: GalKP is Polish}
		If $T$ and $M$ are countable, then $S_m(M)$ is a compact Polish space, so by Fact~\ref{fct: preservation of metrizability}, if $T$ is countable, $\Gal_\KP(T)$ and $\Gal_\Sh(T)$ are Polish.\xqed{\lozenge}
	\end{rem}
	
	\begin{dfn}
		\index{topology!logic}
		By the \emph{logic topology} on the Galois groups, we mean the topology induced from $S_m(M)$ via Fact~\ref{fct:gal_top}.\xqed{\lozenge}
	\end{dfn}
	
	\begin{rem}
		It follows easily from the definition of the logic topologies on Galois groups that the natural epimorphisms between them are topological group quotient maps.\xqed{\lozenge}
	\end{rem}

	\begin{prop}
		If $T$ is countable, $M$, $N$ are countable models, enumerated by $m$ and $n$, respectively, then the relations $\equiv_\Lasc^M$ and $\equiv_\Lasc^N$ on $S_m(M)$ and $S_n(N)$ (respectively) have the same Borel cardinality.
	\end{prop}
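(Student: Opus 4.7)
The plan is to prove $\equiv_\Lasc^M \sim_B \equiv_\Lasc^N$ by a sandwich argument, first handling the special case in which $M \preceq N$ with the enumeration $m$ being an initial subtuple of $n$, and then reducing the general case to this one using a common countable extension.

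For the special case, consider the restriction map $\pi \colon S_n(N) \to S_m(M)$ sending $\tp(b/N)$ to $\tp(b{\restr}_m/M)$, where $b{\restr}_m$ is the initial subtuple of $b$ in the positions corresponding to $m$. This map is continuous by the definition of the type-space topology, and surjective by $\kappa$-saturation of $\fC$ (any $b$ realising some $q \in S_m(M)$ extends to a $b' \equiv n$). I claim $\pi$ is a reduction of $\equiv_\Lasc^N$ to $\equiv_\Lasc^M$. The forward direction is immediate, since passing to an initial subtuple preserves $\equiv_\Lasc$. For the converse, suppose $\pi(p_1) \equiv_\Lasc^M \pi(p_2)$ and pick realisations $b_1 \models p_1$, $b_2 \models p_2$ in $\fC$, so that $b_1{\restr}_m \equiv_\Lasc b_2{\restr}_m$. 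Each $b_i{\restr}_m$ is the image of $m$ under some automorphism of $\fC$, hence enumerates a model. Since also $b_1 \equiv n \equiv b_2$ over $\emptyset$, Remark~\ref{rem:Lascar_extending_model} yields $b_1 \equiv_\Lasc b_2$, whence $p_1 \equiv_\Lasc^N p_2$.

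Thus $\pi$ is a continuous (hence Borel) reduction, giving $\equiv_\Lasc^N \leq_B \equiv_\Lasc^M$. Since $T$, $M$, and $N$ are countable, both $S_n(N)$ and $S_m(M)$ are compact Polish spaces (cf.\ Remark~\ref{rem: GalKP is Polish}), so by Fact~\ref{fct:borel_section} the continuous surjection $\pi$ admits a Borel section $g$, which is then automatically a Borel reduction of $\equiv_\Lasc^M$ to $\equiv_\Lasc^N$. This establishes $\equiv_\Lasc^M \sim_B \equiv_\Lasc^N$ in the special case.

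For the general case, apply the Löwenheim-Skolem theorem to obtain a countable model $M'' \preceq \fC$ containing both $M$ and $N$ as subsets. Enumerate $M''$ in two ways: once as $m^{(1)} = m \frown m'$ so that $m$ is an initial segment, and once as $m^{(2)} = n \frown n'$ so that $n$ is an initial segment (both enumerations are countable and may use repetitions). The spaces $S_{m^{(1)}}(M'')$ and $S_{m^{(2)}}(M'')$ are homeomorphic via the obvious coordinate permutation, and this homeomorphism clearly preserves $\equiv_\Lasc$, so $\equiv_\Lasc^{M''}$ has the same Borel cardinality in either enumeration. Applying the special case to $M \subseteq M''$ with enumeration $m^{(1)}$ and to $N \subseteq M''$ with enumeration $m^{(2)}$ gives $\equiv_\Lasc^M \sim_B \equiv_\Lasc^{M''} \sim_B \equiv_\Lasc^N$, completing the proof.

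The main obstacle is the converse direction in verifying that $\pi$ is a reduction: knowing that two initial subtuples are $\equiv_\Lasc$-equivalent does not, in general, force the longer tuples to be $\equiv_\Lasc$-equivalent. The crucial input is Remark~\ref{rem:Lascar_extending_model}, which says that once the ``model part'' is $\equiv_\Lasc$-related and the full tuples have the same type over $\emptyset$, $\equiv_\Lasc$-equivalence automatically extends. This is precisely what makes model-based enumerations well-behaved under the restriction maps and underlies the model-independence of the Borel cardinality of $\Gal(T)$.
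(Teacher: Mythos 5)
Your proof is correct and follows essentially the same approach as the paper: use a restriction map between type spaces as a continuous surjective reduction (via Remark~\ref{rem:Lascar_extending_model}), then invoke Fact~\ref{fct:borel_section} for the reverse direction. The paper compresses your entire general-case sandwich argument into the phrase ``we may assume without loss of generality that $M\supseteq N$'', so you have simply filled in the detail that justifies that reduction.
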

	\begin{proof}
		We may assume without loss of generality that $M\supseteq N$. Then we have a continuous surjection $S_m(M)\to S_n(N)$. From Remark~\ref{rem:Lascar_extending_model}, it follows that it is a reduction of $\equiv_\Lasc^M$ to $\equiv_\Lasc^N$, so the proposition follows by Fact~\ref{fct:borel_section}.
	\end{proof}
	This justifies the following definition.
	\begin{dfn}
		\label{dfn:bcard_galois}
		\index{Borel!cardinality!of the Galois group}
		If the theory is countable, then the \emph{Borel cardinality of the Galois group} $\Gal(T)$ is the Borel cardinality of $\equiv_\Lasc^M$ on $S_m(M)$, for some countable model $M$ enumerated by $m$.\xqed{\lozenge}
	\end{dfn}

	\begin{prop}
		\label{prop:gal_action}
		$\Gal(T)$ acts on $X/E$ for any bounded invariant equivalence relation $E$ defined on an invariant $X$. If $X=p(\fC)$ for some $p=\tp(a/\emptyset) \in S(\emptyset)$, then the orbit map $r_{[a]_E} \colon \Gal(T) \to X/E$ given by $\sigma \Autf(\fC) \mapsto [\sigma(a)]_E$ is a topological quotient map.
		
		If $E$ is type-definable (or, more generally, refined by $\equiv_\KP$), then this action factors through $\Gal_\KP(T)$ (yielding a topological quotient map $\Gal_\KP(T)\to X/E$ if $X=p(\fC)$).
	\end{prop}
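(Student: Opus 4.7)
The plan is to extract the action from the invariance hypotheses and then identify the orbit map as one arrow in a commutative diagram whose other arrows are already-known topological quotient maps.

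First, since $X$ is invariant, $\Aut(\fC)$ acts on $X$; since $E$ is invariant, the action descends to $X/E$. By Fact~\ref{fct:lascar_finest}, $\equiv_\Lasc$ is the finest bounded invariant equivalence relation, so $\equiv_\Lasc \subseteq E$; hence each $\sigma\in \Autf(\fC)$ moves every element of $X$ within its $E$-class, so $\Autf(\fC)$ acts trivially on $X/E$ and the action factors through $\Gal(T)$. When $X = p(\fC)$, Remark~\ref{rem:types_orbits} gives transitivity of $\Aut(\fC)$ on $X$, so $r_{[a]_E}$ is well-defined and surjective.

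To show $r_{[a]_E}$ is a topological quotient map, I would fix a small model $M$ enumerated by a tuple $m$ and consider the canonical restriction maps $\rho\colon S_{ma}(M) \to S_m(M)$ and $\rho'\colon S_{ma}(M) \to S_a(M) = X_M$. Both are continuous surjections between compact Hausdorff spaces, hence topological quotient maps by Remark~\ref{rem: continuous surjection is closed}. Combining these with $\pi\colon S_m(M) \to \Gal(T)$ (a topological quotient map by Fact~\ref{fct:gal_top}) and $q\colon X_M \to X/E$ (a topological quotient map by Fact~\ref{fct:logic_by_type_space}), I claim that the diagram
\[
\begin{tikzcd}
S_{ma}(M) \ar[r,"\rho"] \ar[d,"\rho'"'] & S_m(M) \ar[r,"\pi"] & \Gal(T) \ar[d,"r_{[a]_E}"] \\
X_M \ar[rr,"q"'] & & X/E
\end{tikzcd}
\]
commutes; checking this on a generic $\tp(m'a'/M)$, pick $\sigma \in \Aut(\fC)$ with $\sigma(ma) = m'a'$ (possible as $ma\equiv m'a'$), and observe that both routes produce $[\sigma(a)]_E = [a']_E$. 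Since $q\circ\rho'$ is a composition of topological quotient maps, so is $r_{[a]_E}\circ \pi \circ \rho$; because $\pi\circ\rho$ is itself a topological quotient map, Remark~\ref{rem:commu_quot} forces $r_{[a]_E}$ to be one as well.

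For the $\Gal_\KP(T)$ assertion, if $\equiv_\KP \subseteq E$ (which includes type-definable $E$ by Fact~\ref{fct:KP_exists}), the same boundedness argument with $\Autf_\KP(\fC)$ in place of $\Autf(\fC)$ shows the action further factors through $\Gal_\KP(T)$. The induced orbit map sits in a commutative triangle with $r_{[a]_E}$ and the canonical epimorphism $\Gal(T)\to \Gal_\KP(T)$, which is a topological quotient map by the remark following Fact~\ref{fct:gal_top}; one more application of Remark~\ref{rem:commu_quot} yields the topological quotient conclusion. The only delicate step here is the verification of commutativity of the middle diagram, and specifically that one chooses the single auxiliary space $S_{ma}(M)$ mapping to both $S_m(M)$ and $X_M$ --- the rest is straight assembly of the cited facts.
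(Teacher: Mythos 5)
Your proof is correct and follows essentially the same strategy as the paper: establish the action via $\equiv_\Lasc\subseteq E$, then realize $r_{[a]_E}$ as the missing arrow in a square of known topological quotient maps coming from type spaces, closing with Remark~\ref{rem:commu_quot}. The only cosmetic difference is that the paper fixes $m\supseteq a$ and uses $S_m(M)$ directly as the common covering space, while you use $S_{ma}(M)$ with two restriction maps; the two diagrams are interchangeable. For the $\Gal_\KP(T)$ statement the paper reruns the diagram argument with $\equiv_\KP$ in place of $\equiv_\Lasc$, whereas you factor through the already-established $\Gal(T)$ result via the epimorphism $\Gal(T)\to\Gal_\KP(T)$ and one more use of Remark~\ref{rem:commu_quot}; this is a marginally shorter route to the same conclusion.
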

	\begin{proof}
		$\Aut(\fC)$ acts on $X$, and by invariance of $E$, it also acts on $X/E$. On the other hand, by definition of $\Autf(\fC)$ (Definition~\ref{dfn:autf_L}), $\Gal(T)=\Aut(\fC)/\Autf(\fC)$ acts on $X/{\equiv_\Lasc}$. Since $E$ is bounded invariant, it is refined by $\equiv_\Lasc$, and we have a natural map $X/{\equiv_\Lasc}\to X/E$, which is trivially $\Aut(\fC)$-equivariant, so it induces an action of $\Gal(T)$ on $X/E$.
		
		If $X=p(\fC)$, this action is clearly transitive, and for any $a\models p$ and $m\supseteq a$ enumerating a model $M$, we have a commutative diagram
		\begin{center}
			\begin{tikzcd}
				S_m(M) \ar[r]\ar[d]& \Gal(T)\ar[d] \\
				S_a(M) \ar[r] & X/E
			\end{tikzcd}
		\end{center}
		where the top map comes from Fact~\ref{fct:sm_to_gal}, the left one is the restriction, the bottom one comes from Fact~\ref{fct:logic_by_type_space}, and the right one is the orbit map. Since the top, left and bottom arrows are topological quotient maps, so is the orbit map (e.g.\ by Remark~\ref{rem:commu_quot} with $A=S_m(M)$, $B=\Gal(T)$ and $C=X/E$).
		
		If $E$ is refined by $\equiv_\KP$, then we can do the same analysis with $\Gal_\KP(T)$ and $\equiv_\KP$ in place of $\Gal(T)$ and $\equiv_\Lasc$.
	\end{proof}

	The logic topology on $\Gal(T)$ can also be characterised in several different ways.
	
	\begin{fct}\label{fct: characterization of topology on Gal_L(T)}
		Denote by $\mu$ the quotient map $\Autf(\fC)\to \Gal(T)$, and take an arbitrary $C\subseteq \Gal(T)$. The following conditions are equivalent:
		\begin{enumerate}
			\item
			$C$ closed.
			\item
			For every (possibly infinite) tuple $\bar a$ of elements of $\fC$, the set $\{\sigma(\bar a)\mid \sigma \in \Aut(\fC)\;\, \mbox{and}\;\, \mu(\sigma) \in C\}$ is type-definable [over some [every] small submodel of $\fC$].
			\item
			There is some  tuple $\bar a$ and a partial type $\pi(\bar x)$ (with parameters) such that $\mu^{-1}[C]=\{ \sigma \in \Aut(\fC)\mid \sigma(\bar a) \models \pi(\bar x)\}$.
			\item
			For some tuple $\bar m$ enumerating a small submodel of $\fC$, the set $\{\sigma(\bar m)\mid \sigma \in \Aut(\fC)\;\, \mbox{and}\;\, \mu(\sigma) \in C\}$ is type-definable [over some [any] small submodel of $\fC$].
		\end{enumerate}
	\end{fct}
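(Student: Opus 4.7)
The plan is to pivot everything off condition (4) and the identification $\Gal(T) \cong S_{\bar m}(M)/{\equiv_\Lasc^M}$ provided by Fact~\ref{fct:sm_to_gal}. Fix any $\bar m$ enumerating a small elementary submodel $M \preceq \fC$ and write $\pi_M\colon S_{\bar m}(M)\to \Gal(T)$ for the resulting surjection; by Fact~\ref{fct:gal_top} and Remark~\ref{rem: continuous surjection is closed}, $\pi_M$ is a topological quotient map. Consequently, $C$ is closed in $\Gal(T)$ if and only if $\pi_M^{-1}[C]$ is closed in $S_{\bar m}(M)$, which, by Stone duality (cf.\ Fact~\ref{fct:dtafb}), is equivalent to $M$-type-definability of the set $A_{\bar m}:=\{\sigma(\bar m)\mid \mu(\sigma)\in C\}$. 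Since $A_{\bar m}$ is manifestly $M$-invariant (for $\tau\in\Aut(\fC/M)\subseteq\Autf(\fC)$ one has $\mu(\tau\sigma)=\mu(\sigma)$ by Fact~\ref{fct:lst_witn_by_aut}), Fact~\ref{fct:dtafb} shows that ``$M$-type-definable'', ``type-definable over some small submodel'', ``type-definable over every small submodel'', and ``type-definable'' coincide for $A_{\bar m}$; this gives the equivalence $(1)\Leftrightarrow(4)$ in each of the bracketed variants.

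For $(1)\Rightarrow(2)$, given any tuple $\bar a$, I would pick $\bar m$ enumerating a small model $M$ whose enumeration contains $\bar a$ as a sub-tuple (always possible by L\"owenheim--Skolem). By $(1)\Leftrightarrow(4)$ applied to this $\bar m$, the set $A_{\bar m}$ is type-definable, and $A_{\bar a}:=\{\sigma(\bar a)\mid \mu(\sigma)\in C\}$ is its image under the projection to the coordinates of $\bar a$. This projection corresponds at the level of Stone spaces to the continuous restriction $S_{\bar m}(M)\to S_{\bar a}(M)$, which sends closed sets to closed sets by compactness (Proposition~\ref{prop:preservation_properties}), so $A_{\bar a}$ is $M$-type-definable, and --- by the same invariance observation as before --- type-definable in each bracketed sense. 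The remaining implications are routine: $(2)\Rightarrow(4)$ is trivial (taking $\bar a$ itself to be an enumeration of a small model), $(2)\Rightarrow(3)$ is immediate (taking $\pi$ to be any partial type defining $A_{\bar a}$), and for $(3)\Rightarrow(4)$, given $\bar a$ and $\pi$, extend $\bar a$ to $\bar m$ enumerating a small model; then $A_{\bar m}$ is visibly type-definable as the preimage of $\pi(\fC)$ under the coordinate projection to $\bar a$.

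The expected main obstacle is keeping careful track of the three variants of ``type-definability'' (unparametrised, over some small submodel, over every small submodel) appearing in the bracketed parts of (2) and (4); this is handled entirely by verifying that $A_{\bar m}$ and $A_{\bar a}$ are invariant over any small submodel one picks, which follows from $\mu^{-1}[C]$ being a union of right $\Autf(\fC)$-cosets together with $\Aut(\fC/M)\subseteq\Autf(\fC)$ (Fact~\ref{fct:lst_witn_by_aut}). Once this invariance is in place, Fact~\ref{fct:dtafb} equates all the bracketed variants, and the proof reduces to the straightforward interplay between the quotient map $\pi_M$, the logic topology on $S_{\bar m}(M)$, and coordinate projections at the level of Stone spaces.
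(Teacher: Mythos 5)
Since the paper itself offers no argument here — the proof is a citation of \cite[Lemma 4.10]{LaPi} together with ``the rest is left as an exercise'' — there is nothing to compare against directly. Your proposal is a correct, complete proof along the expected lines: identify $\Gal(T)$ with $S_{\bar m}(M)/{\equiv_\Lasc^M}$ via Fact~\ref{fct:sm_to_gal}, translate ``$C$ closed'' into ``$A_{\bar m}$ type-definable'' through the quotient map and Stone duality, and pass between an arbitrary tuple $\bar a$ and an enumeration $\bar m\supseteq\bar a$ using continuous projections/preimages on the type spaces. The invariance bookkeeping (each $A_{\bar a}$ is $N$-invariant for every small $N\preceq\fC$, because $\Aut(\fC/N)\leq\Autf(\fC)$ and $\mu^{-1}[C]$ is a union of $\Autf(\fC)$-cosets) is the right way to collapse the three bracketed variants of type-definability via Fact~\ref{fct:dtafb}.

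Two small corrections. First, the appeal to Remark~\ref{rem: continuous surjection is closed} to conclude that $\pi_M$ is a quotient map is misplaced: that remark needs the target to be Hausdorff, and $\Gal(T)$ generally is not. The conclusion is still immediate, because Fact~\ref{fct:gal_top} \emph{defines} the topology on $\Gal(T)$ as the quotient topology induced by $\pi_M$. Second, $(2)\Rightarrow(3)$ is not quite ``immediate'' as stated: you must take $\bar a$ to enumerate a small model $M$, and the nontrivial direction is that $\sigma(\bar a)\models\pi$ (equivalently $\sigma(\bar a)\in A_{\bar a}$) forces $\mu(\sigma)\in C$ and not just the converse. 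This holds because $\sigma(\bar a)=\sigma'(\bar a)$ with $\bar a$ enumerating $M$ gives $\sigma^{-1}\sigma'\in\Aut(\fC/M)\leq\Autf(\fC)$, hence $\mu(\sigma)=\mu(\sigma')$. The same fact about enumerations of models is also doing silent work in your $(3)\Rightarrow(4)$ step where $A_{\bar m}$ is identified with a preimage of $\pi(\fC)$; making the one-line appeal to Fact~\ref{fct:sm_to_gal} or Fact~\ref{fct:diameter_witnessed_by_model} explicit closes the only real gap.
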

	\begin{proof}
		A part of this fact is contained in \cite[Lemma 4.10]{LaPi}. The rest is left as an exercise.
	\end{proof}

	\begin{fct}
		\label{fct:galkp_quot}
		$\Gal_0(T)$ is the closure of the identity in $\Gal(T)$.
	\end{fct}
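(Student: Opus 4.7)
The plan is to establish both inclusions separately.

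For $\overline{\{e\}} \subseteq \Gal_0(T)$, I would use that $\Gal_\KP(T)$ is compact Hausdorff by Fact~\ref{fct:gal_top}, so $\{e\}$ is closed there; by continuity of the natural epimorphism $\Gal(T) \to \Gal_\KP(T)$ (which is in fact a topological quotient map), its preimage $\Gal_0(T)$ is a closed subset of $\Gal(T)$ containing $e$, and hence contains $\overline{\{e\}}$.

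For the reverse inclusion $\Gal_0(T) \subseteq \overline{\{e\}}$, I would set $N := \overline{\{e\}}_{\Gal(T)}$ and $H := \mu^{-1}[N] \leq \Aut(\fC)$. Then $N$ is a closed normal subgroup (the closure of a normal subgroup in a topological group is normal), so $H$ is normal in $\Aut(\fC)$, and by the first paragraph $\Autf(\fC) \leq H \leq \Autf_\KP(\fC)$. The aim is to show $\Autf_\KP(\fC) \subseteq H$. Consider the orbit equivalence relation $\sim_H$ of $H$ on each product of sorts: by normality, $\sim_H$ is $\emptyset$-invariant, and it is bounded since $H \leq \Autf_\KP(\fC)$. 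By Fact~\ref{fct: characterization of topology on Gal_L(T)} applied to the closed set $N$, each class $H \cdot \bar a$ is type-definable. Therefore, restricted to any single complete type $[\bar a]_\equiv$, Proposition~\ref{prop:type-definability_of_relations} yields that $\sim_H\restr_{[\bar a]_\equiv}$ is $\emptyset$-type-definable.

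Taking $\bar a = m$ with $m$ enumerating a small model $M$, the finest bounded $\emptyset$-type-definable property of $\equiv_\KP$ (applied on $[m]_\equiv$) gives $\equiv_\KP\restr_{[m]_\equiv} \subseteq \sim_H\restr_{[m]_\equiv}$. Thus for $\sigma \in \Autf_\KP(\fC)$ we have $\sigma(m) \equiv_\KP m$, so $\sigma(m) \in H \cdot m$, i.e.\ $\sigma(m) = \tau(m)$ for some $\tau \in H$; then $\tau^{-1}\sigma \in \Aut(\fC/M) \subseteq \Autf(\fC) \subseteq H$, so $\sigma \in H$, as required.

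The main obstacle is justifying that $\equiv_\KP\restr_{[m]_\equiv}$ is the finest bounded $\emptyset$-type-definable equivalence relation on $[m]_\equiv$, not merely on the whole product of sorts. Equivalently, one needs to show that every $\equiv_\Lasc$-invariant type-definable set containing $m$ also contains $[m]_{\equiv_\KP}$. This can be done by extending any bounded $\emptyset$-type-definable equivalence relation on $[m]_\equiv$ to a bounded $\emptyset$-type-definable one on the whole product of sorts (adjoining a single class for the complement) and then invoking Fact~\ref{fct:KP_exists}; the extension requires some care to preserve type-definability, since naive extensions (e.g.\ taking set-theoretic union with the diagonal on the complement) break boundedness or type-definability. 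This is a classical fact about Kim-Pillay strong types closely related to the description in \cite{LaPi}.
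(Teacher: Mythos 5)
The paper's own ``proof'' is a citation to \cite{Zie02} (Theorem 21 there), so you are attempting a self-contained argument, which is a genuinely different and more informative route. Your first inclusion $\overline{\{e\}}\subseteq \Gal_0(T)$ is correct as stated. Your second inclusion has the right scaffolding (take $N=\overline{\{e\}}$, $H=\mu^{-1}[N]\unlhd\Aut(\fC)$, observe $\Autf(\fC)\leq H$ and that $H\cdot\bar a$ is type-definable by Fact~\ref{fct: characterization of topology on Gal_L(T)}, then show $\Autf_\KP(\fC)\subseteq H$ via the enumeration $m$ of a small model).

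However, the gap you flag at the end is real, and the resolution you sketch does not work. Extending a bounded $\emptyset$-type-definable equivalence relation on $p(\fC)$ by ``adjoining a single class for the complement'' fails: the complement $X\setminus p(\fC)$ is an open invariant set, so $(X\setminus p(\fC))^2$ is co-type-definable but not type-definable, and the resulting relation is not $\emptyset$-type-definable; and (as you yourself note) adjoining the diagonal instead destroys boundedness. In fact no such extension is needed, and you can skip the detour through Proposition~\ref{prop:type-definability_of_relations} and the ``finest bounded $\emptyset$-type-definable relation on $p(\fC)$'' framing altogether. The set $H\cdot m$ is type-definable (Fact~\ref{fct: characterization of topology on Gal_L(T)} applied to the closed $N$) and $\equiv_\Lasc$-saturated (since $\Autf(\fC)\leq H$); therefore, by Proposition~\ref{prop:lem_closed}(3), it is a union of $\equiv_\KP$-classes, hence $[m]_{\equiv_\KP}\subseteq H\cdot m$. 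Then for $\sigma\in\Autf_\KP(\fC)$ you have $\sigma(m)\in[m]_{\equiv_\KP}\subseteq H\cdot m$, and your final paragraph (writing $\sigma(m)=\tau(m)$ with $\tau\in H$ and noting $\tau^{-1}\sigma\in\Aut(\fC/M)\subseteq\Autf(\fC)\subseteq H$) goes through verbatim. With this replacement your proof is complete and stays entirely inside tools already established in the paper.
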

	\begin{proof}
		This is essentially the first part of \cite[Theorem 21]{Zie02} (note that our $\Gal_0(T)$ is denoted there by $\Gamma_1(T)$).
	\end{proof}
	
	\begin{fct}
		\label{fct:galkpsh_orbital}
		$\equiv_\KP$ and $\equiv_\Sh$ are orbit equivalence relations of $\Autf_\KP(\fC)$ and $\Autf_\Sh(\fC)$ (respectively).
	\end{fct}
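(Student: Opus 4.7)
In both cases, the easy direction is immediate: if $\sigma \in \Autf_\KP(\fC)$ (resp.\ $\Autf_\Sh(\fC)$) satisfies $\sigma(a)=b$, then $a \equiv_\KP b$ (resp.\ $a \equiv_\Sh b$) by the very definition of these subgroups as preserving all Kim-Pillay (resp.\ Shelah) strong types. The content is the converse: if $a \equiv_\KP b$ (resp.\ $a \equiv_\Sh b$), produce such a $\sigma$.

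For the Shelah case, the plan is to invoke the remark preceding Definition~\ref{dfn:class_stp} that $\equiv_\Sh$ coincides with $\equiv_{\acl^{\textrm{eq}}(\emptyset)}$. So assume $a \equiv_\Sh b$. Then $\tp(a/\acl^{\textrm{eq}}(\emptyset)) = \tp(b/\acl^{\textrm{eq}}(\emptyset))$. By strong $\kappa$-homogeneity of the imaginary extension $\fC^{\textrm{eq}}$, there is an automorphism of $\fC^{\textrm{eq}}$ fixing $\acl^{\textrm{eq}}(\emptyset)$ pointwise and mapping $a$ to $b$; its restriction $\sigma$ to $\fC$ is an automorphism of $\fC$ which fixes every code of a finite class of a $\emptyset$-definable finite equivalence relation (all such codes live in $\acl^{\textrm{eq}}(\emptyset)$), and therefore preserves every Shelah strong type, that is, $\sigma \in \Autf_\Sh(\fC)$.

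For the Kim-Pillay case I would run the analogous argument in the hyperimaginary extension $\fC^{\textrm{heq}}$. Since $\equiv_\KP$ is $\emptyset$-type-definable and bounded, each of its classes is coded by a hyperimaginary element lying in the hyperimaginary bounded closure $\textrm{bdd}^{\textrm{heq}}(\emptyset)$; one checks that $\equiv_\KP$ coincides with $\equiv_{\textrm{bdd}^{\textrm{heq}}(\emptyset)}$ (one direction uses the codes of all $\emptyset$-type-definable bounded equivalence relations, which are all coarser than $\equiv_\KP$ by Fact~\ref{fct:KP_exists}; the other direction is clear). Then if $a \equiv_\KP b$, strong homogeneity of $\fC^{\textrm{heq}}$ produces an automorphism fixing $\textrm{bdd}^{\textrm{heq}}(\emptyset)$ pointwise and sending $a$ to $b$; its restriction to $\fC$ fixes every $\equiv_\KP$-class setwise (since all the codes are fixed), and hence lies in $\Autf_\KP(\fC)$.

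The main obstacle is that the excerpt does not develop hyperimaginaries or $\textrm{bdd}^{\textrm{heq}}$ explicitly, so strictly speaking one either has to appeal to the references (e.g.\ \cite{LaPi}, \cite{Zie02}) or give a direct proof by a saturation/compactness argument: show that for every small $I \subseteq \fC$ there is $\sigma_I \in \Aut(\fC)$ with $\sigma_I(a) = b$ and $\sigma_I(c) \equiv_\KP c$ for all $c \in I$ (using that $\equiv_\KP$-classes in the relevant sorts are permuted by $\Aut(\fC)$ and type-definable), and then use $\kappa$-saturation of $\fC$ applied to the partial type describing such a $\sigma$ to extract a single $\sigma \in \Autf_\KP(\fC)$ mapping $a$ to $b$. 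The finitary step is the delicate part and is exactly what the hyperimaginary machinery packages cleanly.
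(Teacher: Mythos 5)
Your plan is correct and matches the paper's own approach: for the Shelah case the paper simply observes that $\Autf_\Sh(\fC)=\Aut(\fC/\acl^{\textrm{eq}}(\emptyset))$, from which the claim is immediate via strong homogeneity of $\fC^{\textrm{eq}}$ exactly as you argue, and for the Kim--Pillay case the paper cites \cite[Fact~1.4(ii)]{CLPZ01}, whose content is precisely the hyperimaginary argument you sketch. Your closing caveat is well placed: since this thesis does not develop $\mathrm{bdd}^{\textrm{heq}}$, the Kim--Pillay half is intended to be discharged by the reference rather than by a self-contained saturation argument, which, as you note, would require nontrivial care to avoid implicitly rebuilding the hyperimaginary machinery.
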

	\begin{proof}
		For $\equiv_\KP$, this is \cite[Fact 1.4(ii)]{CLPZ01}. For $\equiv_\Sh$, this is clear, as $\Autf_\Sh(\fC)$ is simply $\Aut(\fC/\acl^{\textrm{eq}}(\emptyset))$.
	\end{proof}

	The following proposition comes from \cite{KPR15} (joint with Krzysztof Krupiński and Anand Pillay).
	\begin{prop}\label{prop:lem_closed}
		Suppose $Y$ is a type-definable set which is $\equiv_\Lasc$-saturated. Then:
		\begin{enumerate}
			\item
			$\Autf(\fC)$ acts naturally on $Y$.
			\item
			The subgroup $S$ of $\Gal(T)$ consisting of all $\sigma\Autf(\fC)$ such that $\sigma[Y]=Y$ (i.e.\ the setwise stabilizer of $Y/{\equiv_\Lasc}$ under the natural action of $\Gal(T)$) is a closed subgroup of $\Gal(T)$. In particular, $\Autf_\KP(\fC)/\Autf(\fC) =\Gal_0(T) \leq S$.
			\item
			$Y$ is a union of $\equiv_\KP$-classes.
		\end{enumerate}
	\end{prop}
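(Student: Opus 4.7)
The plan is to deduce (1) immediately, prove (2) via Proposition~\ref{prop:gal_action}, and obtain (3) as a consequence of (2) together with Fact~\ref{fct:galkpsh_orbital}.

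For (1), any $\sigma\in\Autf(\fC)$ fixes each $\equiv_\Lasc$-class setwise (Definition~\ref{dfn:autf_L}), and $Y$ is a union of such classes, so $\sigma[Y]=Y$. This also shows that ``$\sigma[Y]=Y$'' depends only on the coset $\sigma\Autf(\fC)$, so $S$ is a well-defined subgroup of $\Gal(T)$ (closure under products and inverses is formal).

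For the closedness in (2), the approach is to write $S=\{[\sigma]:\sigma[Y]\subseteq Y\}\cap\{[\sigma]:\sigma[Y]\supseteq Y\}$ and show each factor is closed. I would express the first as $\bigcap_{a\in Y}\{[\sigma]:\sigma(a)\in Y\}$ and handle each fixed $a\in Y$ as follows. Put $X:=\tp(a/\emptyset)(\fC)=[a]_\equiv$; by Proposition~\ref{prop:gal_action}, the orbit map $r_{[a]_{\equiv_\Lasc}}\colon\Gal(T)\to X/{\equiv_\Lasc}$ is a topological quotient map, hence continuous. Since $Y\cap X$ is type-definable and $\equiv_\Lasc$-saturated, $(Y\cap X)/{\equiv_\Lasc}$ is closed in $X/{\equiv_\Lasc}$ by Definition~\ref{dfn:logic_topology}, and its $r_{[a]_{\equiv_\Lasc}}$-preimage---which, using $\equiv_\Lasc$-saturation of $Y$, equals $\{[\sigma]:\sigma(a)\in Y\}$---is closed in $\Gal(T)$. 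The ``$\supseteq$'' set is the preimage of the ``$\subseteq$'' one under the inversion homeomorphism of $\Gal(T)$ (since $\sigma[Y]\supseteq Y\iff\sigma^{-1}[Y]\subseteq Y$), and is therefore also closed. The ``in particular'' clause is then automatic: $S$ is a closed subgroup containing the identity, so it contains $\overline{\{e\}}=\Gal_0(T)$ by Fact~\ref{fct:galkp_quot}.

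For (3), combine (2) with Fact~\ref{fct:galkpsh_orbital}: the inclusion $\Autf_\KP(\fC)/\Autf(\fC)=\Gal_0(T)\leq S$ means every $\tau\in\Autf_\KP(\fC)$ preserves $Y$ setwise, and since the $\Autf_\KP(\fC)$-orbits on $\fC$ are exactly the $\equiv_\KP$-classes, any $a\in Y$ satisfies $[a]_{\equiv_\KP}\subseteq Y$, so $Y$ is $\equiv_\KP$-saturated. The only genuinely nontrivial step in the whole argument is the closedness in (2); there the key ingredient is the topological quotient property of the orbit map supplied by Proposition~\ref{prop:gal_action}, which converts type-definability of $Y\cap[a]_\equiv$ together with $\equiv_\Lasc$-saturation of $Y$ into a closed condition on $\Gal(T)$. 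Everything else is formal bookkeeping with the logic topology and the definition of $\Gal_0(T)$.
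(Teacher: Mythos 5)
Your proof is correct and follows essentially the same strategy as the paper: you decompose $S = P \cap P^{-1}$ with $P = \bigcap_{a\in Y}\{[\sigma]:\sigma(a)\in Y\}$, show each set in the intersection is closed, and then deduce the ``in particular'' clause from $\Gal_0(T)=\overline{\{e\}}$ and part (3) from the orbital description of $\equiv_\KP$. The only superficial difference is that you derive closedness of $\{[\sigma]:\sigma(a)\in Y\}$ by routing through Proposition~\ref{prop:gal_action} (the orbit map being a topological quotient) rather than invoking Fact~\ref{fct: characterization of topology on Gal_L(T)}(3) directly as the paper does, but these encapsulate the same characterization of the logic topology on $\Gal(T)$.
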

	\begin{proof}
		\ref{it:prop:dyn_BFT:untame} follows immediately from the assumption that $Y$ is $\equiv_\Lasc$-saturated.
		
			(2) The fact that $S$ is closed can be deduced from Fact~\ref{fct: characterization of topology on Gal_L(T)} and from the fact that this is a topological group. To see this, note that $S=P \cap P^{-1}$, where $P:= \bigcap_{a \in Y} \{ \sigma/\Autf(\fC)\mid \sigma(a) \in Y\}$ is closed in $\Gal(T)$ by Fact~\ref{fct: characterization of topology on Gal_L(T)}(3). The second part of (2) follows from the first one and the fact that $\Autf_\KP(\fC)/\Autf(\fC) = \Gal_0(T)=\cl (\id/\Autf(\fC))$.
		
		(3) is immediate from (2) and the fact that $\equiv_\KP$ is the orbit equivalence relation of $\Autf_\KP(\fC)$.
	\end{proof}
	
	\subsection*{Model-theoretic group components}
	
	\begin{fct}
		\label{fct:quot_tdgroup_topo}
		If $G$ is a type-definable group and $H\leq G$ has small index and is invariant over a small set, then $G/H$ has a well-defined logic topology (as in Definition~\ref{dfn:logic_topology}), which is compact, and is Hausdorff if and only if $H$ is type-definable.
	\end{fct}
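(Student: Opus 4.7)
The plan is to realise $G/H$ as the quotient $G/E_H$ of $G$ by the (left) coset equivalence relation $E_H$, defined by $g_1 \mathrel{E_H} g_2 \iff g_1^{-1}g_2 \in H$, and then derive the conclusion from Fact~\ref{fct:logic_top_cpct_T2}. First I would fix a small set $A$ over which both $G$ and $H$ are invariant (possibly after expanding $A$ so that $G$ is $A$-type-definable). Since multiplication and inversion are relatively definable operations on the type-definable group $G$, the relation $E_H$ is $A$-invariant; and since $H$ has small index in $G$, $E_H$ has a small number of classes, hence is bounded. Thus $E_H$ is a bounded $A$-invariant equivalence relation on the $A$-type-definable set $G$, and Definition~\ref{dfn:logic_topology} (applied in the language expanded by constants for $A$, as in Remark~\ref{rem:logic_top_larger_sets}) endows $G/H = G/E_H$ with a well-defined logic topology.

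Compactness is then immediate from Fact~\ref{fct:logic_top_cpct_T2}, which also tells us that $G/E_H$ is Hausdorff if and only if $E_H$ is type-definable. It remains to show that type-definability of $E_H$ is equivalent to type-definability of $H$. For the easy direction, if $E_H$ is type-definable, then $H$ equals the $E_H$-class $[e]_{E_H}$ of the identity $e \in G$, which is the section of the type-definable set $E_H \subseteq G^2$ at $e$, so $H$ is type-definable.

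For the converse, suppose $H$ is type-definable. The map $\mu'\colon G \times G \to G$ given by $(g_1,g_2) \mapsto g_1^{-1}g_2$ is (relatively) definable on $G \times G$ (as part of the data of the type-definable group $G$), so the preimage $(\mu')^{-1}[H]$ is type-definable; and $E_H$ is exactly this preimage intersected with $G \times G$, hence type-definable. Applying Fact~\ref{fct:logic_top_cpct_T2} once more finishes the proof.

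There is no real obstacle here beyond the bookkeeping of parameters; the only point to be careful about is that $G$ itself is only type-definable (not definable), so one must work with the relative definability of multiplication and inversion on $G$ rather than treating $\mu'$ as a definable function on the ambient sorts. This is handled by observing, as above, that $E_H$ is cut out inside $G \times G$ by the type-definable condition ``$g_1^{-1}g_2 \in H$'', which is a conjunction of (relative) definability of the group operations with the defining type of $H$.
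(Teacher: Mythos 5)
Your proof is correct and takes essentially the same approach as the paper: realize $G/H$ as the quotient by the coset equivalence relation $E_H$, observe that $E_H$ is the preimage of $H$ under the type-definable map $(g_1,g_2)\mapsto g_1^{-1}g_2$, and then invoke Fact~\ref{fct:logic_top_cpct_T2} together with Remark~\ref{rem:logic_top_larger_sets}. You fill in a few more details than the paper's terse proof (e.g.\ the boundedness of $E_H$ and the easy direction via the identity section), but the underlying argument is identical.
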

	\begin{proof}
		Note that the coset equivalence relation of $H$ is the preimage of $H$ by the type-definable map $(g_1,g_2)\mapsto g_1^{-1}g_2$, so it is invariant over a small set and type-definable if and only if $H$ is type-definable. Thus, the fact follows from Fact~\ref{fct:logic_top_cpct_T2} (having in mind Remark~\ref{rem:logic_top_larger_sets}, we may add all parameters to the language).
	\end{proof}
	\begin{rem}
		In Fact~~\ref{fct:quot_tdgroup_topo}, in the same way, if $A$ and the theory are countable, while $G$ consists of countable tuples, then the Borel cardinality of $G/H$ is well-defined (per Fact~\ref{fct:cartdf}).
		\xqed{\lozenge}
	\end{rem}
	
	\begin{fct}
		\label{fct:quotient_by_bounded_subgroup}
		Fix a small set $A$ of parameters.
		
		Suppose $G$ is an $A$-type-definable group and $N\unlhd G$ is an $\Aut(\fC/A)$-invariant normal subgroup such that $[G:N]$ is small. Then $G/N$ is a topological group (with the logic topology).
	\end{fct}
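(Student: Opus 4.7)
The logic topology on $G/N$ is already well-defined: by Remark~\ref{rem:logic_top_larger_sets} we may absorb $A$ into the language, whereupon the coset equivalence relation of $N$ is an invariant, bounded equivalence relation on the type-definable group $G$, so Fact~\ref{fct:quot_tdgroup_topo} gives $G/N$ as a compact space in which a set $D\subseteq G/N$ is closed iff its preimage in $G$ is type-definable. Continuity of inversion is then immediate: if $C\subseteq G/N$ is closed with type-definable preimage $C_0\subseteq G$, then the preimage of $C$ under the descended inversion map $gN\mapsto g^{-1}N$ pulls back in $G$ to $\{g:g^{-1}\in C_0\}$, which is type-definable because inversion in $G$ is $\emptyset$-definable.

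The work lies in proving continuity of multiplication. The plan is to route it through the larger quotient $Q:=(G\times G)/(N\times N)$. Since $G\times G$ is $A$-type-definable and $N\times N$ is $A$-invariant, normal, and of small index $[G:N]^2$, Fact~\ref{fct:quot_tdgroup_topo} equips $Q$ with a compact logic topology. The definable multiplication $\mu\colon G\times G\to G$ descends, by normality of $N$, to a well-defined map $\bar\mu\colon Q\to G/N$, which is continuous in the logic topologies: for closed $C\subseteq G/N$ with type-definable preimage $C_0\subseteq G$, the set $\mu^{-1}[C_0]\subseteq G\times G$ is type-definable, being the preimage of a type-definable set under a definable map. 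The natural bijection $\Phi\colon Q\to G/N\times G/N$ sending $(g,h)(N\times N)$ to $(gN,hN)$ is continuous, since composing it with either coordinate projection of the target yields a map induced by the definable projection $G\times G\to G$. Hence $\tilde\mu=\bar\mu\circ\Phi^{-1}$ is continuous once $\Phi$ is known to be a homeomorphism.

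The main obstacle is exactly this last step: proving that the logic topology on $Q$ coincides with the product topology on $G/N\times G/N$. One direction is easy: every rectangle $C_1\times C_2$ of closed sets in $G/N$ pulls back in $G\times G$ to the product of type-definables, and intersections and finite unions preserve type-definability, so product-closed sets are logic-closed. The hard direction is a tube-lemma-style separation: for any type-definable $(N\times N)$-saturated $D\subseteq G\times G$ and any pair $(a_0 N,b_0 N)$ with $(a_0 N\times b_0 N)\cap D=\emptyset$, one must produce type-definable $N$-saturated sets $F_1,F_2\subseteq G$ with $a_0\notin F_1$, $b_0\notin F_2$, and $D\subseteq (F_1\times G)\cup(G\times F_2)$; taking complements in $G/N$ then supplies the required basic product-open neighbourhood of $(a_0 N,b_0 N)$ disjoint from $\Phi[D]$. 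The subtlety is that the cosets $a_0 N, b_0 N$ are only invariant over $a_0 A$ and $b_0 A$ respectively, not type-definable, so they are not compact inside the ambient type space $G_M$ for any small model $M\supseteq A$, and the classical tube lemma does not apply off the shelf. The resolution is to pass to such an $M$, rephrase the set-up inside the compact Hausdorff space $G_M\times G_M$ with $D$ becoming a closed subset saturated under the invariant bounded equivalence relation $E_N\times E_N$ (where $E_N$ on $G_M$ is the $N$-coset relation, which is bounded and invariant), and use the compactness of $G_M\times G_M$ together with the $(N\times N)$-saturation of the data to harvest $F_1,F_2$ as the preimages in $G$ of closed $E_N$-saturated sets in $G_M$; separate continuity of left and right translation on $G/N$ (which follows immediately from definability of translation in $G$) streamlines the argument, so that one only needs to promote separate to joint continuity via a compactness argument tailored to bounded invariant equivalence relations on type-definable groups.
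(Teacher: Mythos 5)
Your proof proposal correctly identifies where the real work lies --- showing that the logic topology on $(G\times G)/(N\times N)$ coincides with the product topology on $(G/N)^2$, or equivalently, a tube-lemma-style separation --- but at that exact point it stops being a proof and becomes a wish. You write that one should ``use the compactness of $G_M\times G_M$ together with the $(N\times N)$-saturation of the data to harvest $F_1,F_2$'' and then ``promote separate to joint continuity via a compactness argument tailored to bounded invariant equivalence relations.'' This does not resolve the obstacle you yourself named. Compactness of $G_M\times G_M$ alone gives nothing here, because when $N$ is not type-definable the set $[\tp(a_0/M)]_{E_N}\subseteq G_M$ is not closed, hence not compact, so the classical tube lemma cannot even be started; and $G/N$ with the product topology is a compact semitopological group that fails $T_1$, so Ellis's joint-continuity theorem (Fact~\ref{fct:ellis_joint_cont}) does not apply either. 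Moreover, the claim that the quotient of a product by a product of bounded invariant equivalence relations always agrees with the product of the quotients is simply false in the absence of group structure, so some genuinely group-theoretic input has to enter --- and your outline does not say what that input is.

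The paper supplies exactly the missing idea. It lets $\overline N$ be the smallest type-definable subgroup of $G$ containing $N$ (which exists and is normal and $\emptyset$-type-definable because $N$ has small index), and then observes that every type-definable $N$-saturated subset $A'$ of $G$ is automatically $\overline N$-saturated: the setwise stabiliser $\Stab_G(A')$ is a subgroup, contains $N$, and is an intersection of type-definable sets, hence type-definable, hence contains $\overline N$. This reduces everything to the case of a type-definable normal subgroup, where $G/\overline N$ and $G^2/\overline N^2$ are compact Hausdorff, the logic and product topologies coincide by Remark~\ref{rem: continuous surjection is closed}, and the computation finishes by pulling back along the continuous map $(G/N)^2\to(G/\overline N)^2$. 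That stabiliser observation is the crux, and it is absent from your proposal; without it the tube lemma you need is not available.
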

	\begin{proof}
		To see that $G/N$ is a topological group, notice that because the map $\mu\colon (g_1,g_2)\mapsto g_1^{-1}g_2$ is type-definable (by type-definability of the group), it follows immediately that the induced map $(G/N)^2\to G/N$ is continuous with respect to the logic topology on $G^2/N^2=(G/N)^2$; we need to show that it is continuous with respect to the product topology (which \emph{a priori} might be coarser). If $N$ is type-definable, then $G/N$ and $G^2/N^2$ are compact Hausdorff, so the logic topology on $G^2/N^2$ and the product topology on $(G/N)^2$ coincide (e.g.\ by Remark~\ref{rem: continuous surjection is closed} applied to the natural bijection $G^2/N^2\to (G/N)^2$).
		
		Otherwise, if $N$ is not type-definable, because it is bounded, there is a minimal type-definable $\overline N\leq G$ containing $N$, and it is easy to see that it is $\emptyset$-type-definable and normal. Now, fix a closed $A\subseteq G/N$. Then there is a, type-definable $A'\subseteq G$ which is a union of cosets of $N$, such that $A=A'/N$. It follows that $A'$ must also be a union of cosets of $\overline N$ (because its setwise stabiliser is an intersection of type-definable unions of cosets of $N$, so it is a type-definable group). Thus, by the preceding paragraph, $\mu^{-1}[A']/{\overline N}^2$ is closed in $(G/{\overline N})^2$. Finally, note that $(G/N)^2\to (G/{\overline N})^2$ is trivially continuous, so the preimage of $\mu^{-1}[A']/{\overline N}^2$ is closed in $(G/N)^2$. But this preimage is just $\mu^{-1}[A']/N^2$, so we  are done.
	\end{proof}

	\begin{dfn}
		\index{model-theoretic connected group components}
		Suppose $A$ is a small set of parameters, and let $G$ be an $A$-type-definable group. Then the following are the classical model-theoretic \emph{connected group components} of $G$:
		\begin{itemize}
			\item
			\index{G000@$G^{000}$|see{model-theoretic connected group components}}
			$G^{000}_A$ (or $G^{\infty}_A$) is the smallest $A$-invariant subgroup of $G$ of bounded index,
			\item
			\index{G00@$G^{00}$|see{model-theoretic connected group components}}
			$G^{00}_A$ is the smallest subgroup of $G$ of bounded index which is type-definable over $A$,
			\item
			\index{G0@$G^{0}$|see{model-theoretic connected group components}}
			$G^{0}_A$ is the intersection of all relatively $A$-definable subgroups of $G$ of bounded index.
		\end{itemize}
	\end{dfn}
	
	\begin{rem}
		It is known that if $T$ has NIP (see Definition~\ref{dfn:NIP_formula_theory}), then for all small $A$ we have $G^{000}_A=G^{000}_\emptyset$, $G^{00}_A=G^{00}_\emptyset$ and $G^{0}_A=G^{0}_\emptyset$, see \cite{Gis11}. When this happens, they are called simply $G^{000}$, $G^{00}$ and $G^{0}$.\xqed{\lozenge}
	\end{rem}

	\begin{fct}
		Suppose $G$ is an $A$-type-definable group. Then $G/G^{00}_A$ is a compact Hausdorff group and $G/G^{0}_A$ is a profinite group.
	\end{fct}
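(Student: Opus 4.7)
The strategy is to deduce both assertions from Fact~\ref{fct:quotient_by_bounded_subgroup} and Fact~\ref{fct:quot_tdgroup_topo}, after checking that both $G^{00}_A$ and $G^0_A$ are $A$-invariant, normal, type-definable subgroups of $G$ of bounded index. Indeed, $G^{00}_A$ is type-definable over $A$ and of bounded index by definition; it is normal because for any $g\in G$ the conjugate $gG^{00}_A g^{-1}$ is again an $A$-type-definable subgroup of bounded index, and so by minimality it equals $G^{00}_A$. Applying Fact~\ref{fct:quotient_by_bounded_subgroup} and Fact~\ref{fct:quot_tdgroup_topo} then gives that $G/G^{00}_A$ is a compact Hausdorff topological group in the logic topology, which disposes of the first claim.

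For the second claim, I would first verify the corresponding properties of $G^0_A$. Since there are at most $\lvert T\rvert+\lvert A\rvert$ formulas over $A$ up to $T$-equivalence, $G^0_A$ is a small intersection of type-definable sets, hence type-definable over $A$; it is $A$-invariant, of bounded index, and normal by the same minimality argument. The same two facts as above yield that $G/G^0_A$ is a compact Hausdorff topological group.

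The only remaining (and essentially only non-bookkeeping) step is to show that $G/G^0_A$ is totally disconnected, which for a compact Hausdorff group is equivalent to being profinite. The key observation is that any relatively $A$-definable subgroup $H\leq G$ of bounded index actually has finite index: its coset equivalence relation is relatively definable in $G^2$ (as the preimage of $H$ under the type-definable map $(g_1,g_2)\mapsto g_1^{-1}g_2$) and bounded, so by Fact~\ref{fct:logic_top_cpct_T2} together with Remark~\ref{rem:logic_top_larger_sets}, $G/H$ is simultaneously compact and discrete, hence finite. Writing $G^0_A=\bigcap_i H_i$ with the $H_i$ running over all such subgroups, each image $H_i/G^0_A$ is thus a finite-index, and hence clopen, subgroup of $G/G^0_A$, and $\bigcap_i H_i/G^0_A$ is trivial. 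Consequently, distinct cosets of $G^0_A$ are separated by clopen sets, so $G/G^0_A$ is totally separated and therefore profinite. I expect this last step — connecting the finite-index observation to total disconnectedness — to be the main (mild) obstacle; everything else reduces to invoking the already-established machinery of the logic topology.
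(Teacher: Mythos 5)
Your proof of the first claim ($G/G^{00}_A$ compact Hausdorff) matches the paper's, which just invokes Fact~\ref{fct:quot_tdgroup_topo} and Fact~\ref{fct:quotient_by_bounded_subgroup}. For the second claim, the paper states tersely that $G/G^0_A$ is the inverse limit of the quotients by the finite-index relatively $A$-definable subgroups; you instead verify that it is a compact Hausdorff group in which the images of those subgroups are clopen, of finite index, and separate points, hence totally disconnected. The two routes are equivalent and both hinge on the same key observation, already flagged parenthetically in the paper's definition of $\equiv_\Sh$: a relatively definable subgroup of bounded index has finite index (by compactness, the clopen coset relation is bounded iff it has finitely many classes). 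Your version is more explicit about the topology and is a fine alternative.

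One point in your argument is stated too loosely, though. You claim $G^{00}_A$ is normal "because for any $g\in G$ the conjugate $gG^{00}_Ag^{-1}$ is again an $A$-type-definable subgroup of bounded index." It is not: conjugating by $g$ requires $g$ as a parameter, so $gG^{00}_Ag^{-1}$ is type-definable over $A\cup\{g\}$ only, and the minimality of $G^{00}_A$ among \emph{$A$-type-definable} bounded-index subgroups does not directly apply. The correct route is via the normal core $N=\bigcap_{g\in G}gG^{00}_Ag^{-1}$: since $G^{00}_A$ normalises itself, the number of distinct conjugates is at most $[G:G^{00}_A]$, which is bounded, so $N$ is a bounded intersection of type-definable sets and hence type-definable; it is $\Aut(\fC/A)$-invariant (automorphisms permute the conjugates), so $N$ is $A$-type-definable, normal, of bounded index, and contained in $G^{00}_A$ — whence $N=G^{00}_A$ by minimality. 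A similar remark applies to your "same minimality argument" for $G^0_A$: there one observes that each relatively $A$-definable finite-index $H$ contains its normal core (a finite, $A$-invariant, hence relatively $A$-definable intersection of conjugates of $H$), so $G^0_A$ is already an intersection of normal such subgroups. These are standard fixes and do not affect the soundness of your overall plan, but as written the normality step would not hold up.
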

	\begin{proof}
		For $G/G^{00}_A$ follows from the first part and Fact~\ref{fct:quotient_by_bounded_subgroup}. For $G/G^{0}_A$ it is similar, as $G/G^{0}_A$ is the inverse limit of quotients by finite index subgroups.
	\end{proof}
	
	\section{Former state of the art}
	In this section, having recalled the necessary language, we list some former results, along with the result in the thesis which improve them.
	
	I stress that all of the facts listed below only applied to $F_\sigma$ strong types (sometimes with additional constraints), while the main results of this thesis apply to either arbitrary or to analytic strong types (and generally speaking, in those cases where we require analyticity, there are examples when conclusion fails for arbitrary non-analytic strong types).
	
	For more detailed discussion of the historical background, see the introduction.

	\begin{fct}
		\label{fct:newelski}
		Suppose $E$ is an $F_\sigma$ equivalence relation on $X=p(\fC)$ (in particular, if $E$ is the Lascar strong type) (where $p\in S(\emptyset)$) which is not type-definable. Then for every type-definable, $E$-invariant $Y\subseteq X$ we have $\lvert Y/E\rvert\geq 2^{\aleph_0}$.
	\end{fct}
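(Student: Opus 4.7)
The plan is to prove the contrapositive: assuming $\lvert Y/E\rvert<2^{\aleph_0}$, I will show that $E\restr_Y$ is type-definable.

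First, I would exploit the $F_\sigma$ structure. Write $E=\bigcup_n F_n$ with each $F_n$ symmetric, reflexive, type-definable, and satisfying a chain condition $F_n\circ F_n\subseteq F_{n+1}$ (arranged by standard reindexing, using that $E$ is an equivalence relation). Fixing a small model $M$ over which $E$, $Y$ and this presentation are all available, Fact~\ref{fct: Borel in various senses} realises $E^M\restr_{Y_M^2}=\bigcup_n F_n^M\restr_{Y_M^2}$ as an increasing countable union of closed subsets of the compact Hausdorff space $Y_M^2$.

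Next, I would apply Mycielski. The hypothesis $\lvert Y/E\rvert=\lvert Y_M/E^M\rvert<2^{\aleph_0}$, together with Proposition~\ref{prop:mycielski}, forces $E^M\restr_{Y_M^2}$ to be non-meagre in $Y_M^2$; the Baire category theorem then yields some $F_n^M\restr_{Y_M^2}$ with nonempty interior. So there exist formulas $\varphi(x), \psi(y)$ over $M$ consistent with $Y$ such that $(\varphi(\fC)\cap Y)\times(\psi(\fC)\cap Y)\subseteq F_n$. Since $F_n\subseteq E$ is a rectangle inside an equivalence relation, transitivity of $E$ forces $\varphi(\fC)\cap Y$ and $\psi(\fC)\cap Y$ to lie in a common $E$-class, producing a nonempty, relatively definable ``thick core'' of some single class of $E$ in $Y$.

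The final step, which I expect to be the main obstacle, is to promote this purely local information into a global uniform depth bound: I aim to find a single $N$ such that $(a,b)\in F_N$ for all $a,b\in Y$ with $a\mathrel E b$, giving $E\restr_Y = F_N\cap Y^2$, hence type-definable. The intended route is to use the transitive action of $\Aut(\fC)$ on $X=p(\fC)$ to translate the thick neighbourhood of one class into a thick neighbourhood of every class (each class being in a single $\Aut(\fC)$-orbit on $X/E$), then combine a finite subcover of $Y$ by such neighbourhoods (extracted by compactness/saturation) with the chain property $F_n\circ F_n\subseteq F_{n+1}$ to bound the required chain length. The delicate point is that $\Aut(\fC)$ does not preserve $M$ setwise, so transporting a relatively $M$-definable neighbourhood across classes requires re-parameterising carefully, and the uniform bound across all $E$-related pairs uses both the $F_\sigma$ chain condition and compactness of $Y_M$ in a coordinated way.
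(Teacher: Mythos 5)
Your outline is sound through the Baire category step: the cardinality hypothesis plus Proposition~\ref{prop:mycielski} and Baire category does force some $E^M$-class $D\subseteq Y_M$ to have nonempty interior. The gap is in the promotion step. Having a relatively type-definable core with nonempty interior is not the same as being type-definable: $Y_M$ is only a compact Hausdorff space, not a group, so an $F_\sigma$ set with nonempty interior need not be closed, and there is no ``non-meagre $\Rightarrow$ open'' principle to invoke. Your proposed repair --- translating the thick neighbourhood by $\Aut(\fC)$ and extracting a finite subcover --- does not close this. The translate $\sigma(\chi(\fC)\cap Y)$ is type-definable over $\sigma(M)$, so the family of translates does not live in any fixed type space in which a finite subcover could be extracted; and even granting that every class has a depth-$n$ thick core, this bounds only the diameter of the core, not the $F_\sigma$-distance from the core to an arbitrary point of the same class, so the uniform bound $E\restr_Y=F_N\cap Y^2$ you are after does not drop out. (A smaller issue: the inclusion $[\varphi]\times[\psi]\subseteq F_n^M$ inside $Y_M^2$ only gives $\varphi(\fC)\times\psi(\fC)\cap Y^2\subseteq E$, not $\subseteq F_n$, since $F_n^M$ is defined existentially over realisations; this is harmless for the ``single class'' observation but not for carrying the depth $n$ with you.) The passage from ``one fat class'' to ``bounded depth'' is precisely the hard content of the theorem, and it needs a second structural ingredient beyond the raw cardinality hypothesis.

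The paper does not in fact prove this statement; it cites Newelski's~\cite[Cor.~1.12]{Ne03}, whose original argument is a direct Cantor--Bendixson-style tree construction with no category argument. The thesis's proof of the strictly stronger Theorem~\ref{thm:nwg} does use Mycielski, but only after transporting $E$ to the compact Hausdorff group $u\cM/H(u\cM)$ via Lemma~\ref{lem:lascar_dominates} and Theorem~\ref{thm:general_cardinality_intransitive}, so that Mycielski can be paired with the Pettis theorem (Facts~\ref{fct:pettis} and~\ref{fct:from_mycielski}): in a compact Hausdorff group a non-meagre subgroup with the Baire property is open, hence closed, and that is exactly the step your argument is trying to reproduce by hand without a group acting by homeomorphisms and compatibly with the topology. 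Without importing some compact group --- $\Gal_\KP(T)$ when $E$ is coarser than $\equiv_\KP$, or the Ellis group in general --- the Mycielski/Baire argument applied directly on $Y_M$ does not reach type-definability.
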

	\begin{proof}
		This is essentially \cite[Corollary 1.12]{Ne03}.
	\end{proof}
	The above fact is entirely superseded by Theorem~\ref{thm:nwg}, where we only require that $E$ is analytic, an not $F_\sigma$. See also Corollary~\ref{cor:nwg2}.
	
	Note that if the language is countable and $E={\equiv_\Lasc}$, the above corollary says that, in particular, either ${\equiv_\Lasc}\restr_{[\bar a]_{\equiv_\KP}}$ has only one class, or $\Delta_{2^\omega}$ Borel reduces to it (by Fact~\ref{fct:silver}).
	
	There is also a corresponding statement for groups
	
	\begin{fct}
		\label{fct:new_group}
		Suppose $G$ is a $\emptyset$-definable group, while $H\leq G$ is $F_\sigma$.
		
		Then either $H$ is type-definable, or $[G:H]\geq 2^{\aleph_0}$.
	\end{fct}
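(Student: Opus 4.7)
The plan is to reduce Fact~\ref{fct:new_group} to Fact~\ref{fct:newelski} by passing from the subgroup $H$ to its coset equivalence relation and restricting to a well-chosen complete type.

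First, I may assume without loss of generality that $[G:H]$ is bounded, since otherwise $[G:H] \geq \kappa > 2^{\aleph_0}$ and there is nothing to prove. Under this assumption, let $E_H$ denote the left coset equivalence relation of $H$ on $G$. By Remark~\ref{rem:group_to_cosets}, $E_H$ is the preimage of $H$ under the continuous open surjection $\mu' \colon G \times G \to G$, $(g_1,g_2) \mapsto g_1^{-1}g_2$, from Fact~\ref{fct:multiplication_open}, so Proposition~\ref{prop:preservation_properties} gives that $E_H$ is $F_\sigma$ and that $E_H$ is type-definable if and only if $H$ is. Hence $E_H$ is a bounded, invariant, $F_\sigma$ equivalence relation on the $\emptyset$-definable set $G$ which, by hypothesis, is not type-definable.

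Next, I want to apply Fact~\ref{fct:newelski}, but it is stated for equivalence relations living on a single complete $\equiv$-class $p(\fC)$, whereas $G$ typically meets many complete types. The plan is to locate a suitable $p \in S(\emptyset)$ with $p \vdash G$ such that $E_H\restr_{p(\fC)}$ is still $F_\sigma$ (immediate from Fact~\ref{fct: Borel in various senses}) and not type-definable; then Fact~\ref{fct:newelski} applied with $X = Y = p(\fC)$ will give $\lvert p(\fC)/E_H \rvert \geq 2^{\aleph_0}$, yielding $[G:H] \geq 2^{\aleph_0}$. A natural candidate for $p$ is constructed via the intersection $\overline H$ of all type-definable supersets of $H$ in $G$: a standard calculation (using that this collection is closed under left and right translation by elements of $H$) shows $\overline H$ is a type-definable subgroup of $G$ of bounded index, and non-type-definability of $H$ forces $\overline H \supsetneq H$, so I may pick some $g_0 \in \overline H \setminus H$ and take $p := \tp(g_0/\emptyset)$.

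The main obstacle is verifying that the restriction $E_H\restr_{p(\fC)}$ is itself not type-definable. This is the substantive content of the argument and is where the group structure is used in an essential way. The idea is that if all such restrictions (as $p$ ranges over complete $\equiv$-classes inside $\overline H$) were type-definable, then by translating via elements of $\overline H$ and assembling over the boundedly many complete $\equiv$-classes making up $\overline H$ (using Fact~\ref{fct:logic_by_type_space}), one could piece together a type-definable description of $E_H$ on $\overline H^2$, and hence type-definability of $H$, contradicting the hypothesis. Carrying this patching argument out in detail reproduces the group-theoretic content of \cite{Ne03}; granted this, Fact~\ref{fct:newelski} closes the deduction and yields $[G:H] \geq 2^{\aleph_0}$.
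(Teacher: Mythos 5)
The paper itself does not give a proof of Fact~\ref{fct:new_group}: it is labelled a ``Fact'' and simply attributed to \cite[Theorem 3.1]{Ne03}. So there is no in-paper argument for you to be compared against. The relevant comparison is with the paper's own \emph{strengthening}, Corollary~\ref{cor:trich_tdgroups}, whose proof does not reduce to the single-complete-type statement Fact~\ref{fct:newelski} at all; instead it feeds the coset relation $E_H$ directly into the general machinery (Lemma~\ref{lem:weakly_grouplike_tdf} and Theorem~\ref{thm:general_cardinality_transitive}) applied to the ambit $\bigl(G(M),S_G(M),\tp(e/M)\bigr)$. That route exploits the density of $G(M)$ in $S_G(M)$, which gives a transitive enough action on the whole type space $S_G(M)$ without ever localising to a single $\equiv$-class. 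Your route is genuinely different in strategy, and that is worth pointing out, but it also contains a real gap.

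The gap is exactly at the step you flag as ``the substantive content'': the existence of a complete type $p\vdash G$ with $E_H\restr_{p(\fC)}$ not type-definable. The patching argument you sketch does not go through. Type-definability is preserved by (small) intersections, not by unions; yet to assemble $E_H$ on $\overline{H}^2$ from the restrictions $E_H\restr_{p(\fC)}$ you would need to take a union over the (bounded but typically infinite, often uncountable) set of complete types $p\vdash\overline{H}$, \emph{and} you would also have to control the ``off-diagonal'' pieces $E_H\cap\bigl(p(\fC)\times q(\fC)\bigr)$ for $p\neq q$, about which ``$E_H\restr_{p(\fC)}$ is type-definable for each $p$'' says nothing. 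Left translation by elements of $\overline H$ does not resolve this: translating $p(\fC)$ by a non-algebraic $g\in\overline H$ produces a set that is type-definable only over $g$ and is typically not a union of complete $\emptyset$-types, so Proposition~\ref{prop:type-definability_of_relations} is not directly available on the translate, and you are back to assembling over a large index set. In short, nothing you have written establishes that the hypothesised ``all restrictions type-definable'' scenario is inconsistent with $H$ failing to be type-definable, and the reference to Fact~\ref{fct:logic_by_type_space} does not supply the missing glue. If you want to carry out this reduction, you need a genuinely new idea here; otherwise the argument should be run through the group-ambit machinery as in Corollary~\ref{cor:trich_tdgroups}, which avoids localising to a single $\equiv$-class.

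One smaller point: ``$H\leq G$ is $F_\sigma$'' in the paper's conventions means $H$ is $A$-invariant for some small $A$, not $\emptyset$-invariant; you implicitly use $\emptyset$-invariance when invoking $a$-invariance of $aH\cap p(\fC)$. This is harmless (add the parameters $A$ to the language), but it should be said.
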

	\begin{proof}
		This is a part of \cite[Theorem 3.1]{Ne03}.
	\end{proof}
	The above Fact is superseded by Corollary~\ref{cor:trich_tdgroups}, as we require only that $H$ is analytic, and in the case of index smaller than $2^{\aleph_0}$, we obtain relative definability of $H$ in $G$.
	
	The following is the main theorem of \cite{KMS14}, proving an earlier conjecture from \cite{KPS13}.
	\begin{fct}
		\label{fct:KMS_theorem}
		Assume that $T$ is a complete theory in a countable language, and consider $\equiv_\Lasc$ on a product of countably many sorts. Suppose $Y$ is an $\equiv_\Lasc$-saturated, $G_\delta$ (i.e.\ the complement of an $F_\sigma$) subset of the domain of $\equiv_\Lasc$. Then either each $\equiv_\Lasc$ class in $Y$ is type-definable, or $E\restr_Y$ is non-smooth.
	\end{fct}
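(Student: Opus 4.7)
The plan is to work throughout in a fixed countable model $M\preceq \fC$ and translate everything to the compact Polish type space $S_{\bar a}(M)$. Since $Y$ is $\equiv_\Lasc$-saturated and $G_\delta$, the set $Y_M\subseteq S_{\bar a}(M)$ is $G_\delta$ in a compact Polish space and hence Polish; and by Fact~\ref{fct: Borel in various senses} together with the $F_\sigma$-ness of $\equiv_\Lasc$ (recorded in the last remark before the ``Galois groups'' subsection), the relation $E:={\equiv_\Lasc^M}\restr_{Y_M}$ is an $F_\sigma$, hence Borel, equivalence relation on the Polish space $Y_M$. I would then attack the contrapositive: assuming that some $\equiv_\Lasc$-class $[a]_{\equiv_\Lasc}$ meeting $Y$ is \emph{not} type-definable, produce a continuous embedding $2^\bN\hookrightarrow Y_M$ witnessing $\EZ \leq_B E$, which by the Harrington-Kechris-Louveau dichotomy (Fact~\ref{fct:Harrington-Kechris-Louveau dichotomy}) rules out smoothness.

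The first technical step is to extract, from non-type-definability of $[a]_{\equiv_\Lasc}$, a topological witness. Writing $E=\bigcup_n D_n$ with $D_n:=\{d_\Lasc\leq n\}$ closed in $S_{\bar a}(M)^2$ (using Fact~\ref{fct:distance_tdf}), non-closedness of $[a]_{\equiv_\Lasc}^M$ yields a type $q^*$ in its closure but outside the class. By $\equiv_\Lasc$-saturation of $Y$, $q^*\in Y_M$. The crucial consequence is that for every $n$ and every neighbourhood $U\ni q^*$ inside $Y_M$, one can find types $p_0,p_1\in U$ with $p_0\equiv_\Lasc a$ but $d_\Lasc(p_1,a)>n$ (in particular $p_0\not E\, p_1$): the existence of $p_0$ comes from $q^*$ being a limit of $[a]_{\equiv_\Lasc}$, and the existence of $p_1$ (in fact with unbounded Lascar distance from $a$) follows because otherwise every $\equiv_\Lasc$-representative close to $q^*$ would lie in a single level $D_{n}[a]$, contradicting $q^*\notin[a]_{\equiv_\Lasc}$.

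With this in hand, I would run a Cantor-scheme construction inside $Y_M$, producing nonempty open sets $U_s\subseteq Y_M$ for $s\in 2^{<\omega}$ and tuples $a_s$ with $a_s\in U_s$ and $a_s\equiv_\Lasc a_\emptyset$, satisfying $\overline{U_{s\frown i}}\subseteq U_s$, $\mathrm{diam}(U_s)\to 0$, and the separation requirement that for $s\neq t$ in $2^{n+1}$, any $x\in U_s$, $y\in U_t$ satisfy $d_\Lasc(x,y)>n$. The construction proceeds by induction: at level $n+1$, inside each $U_s$ we concentrate near an appropriate $q^*_s\in\overline{[a_s]_{\equiv_\Lasc}}\setminus[a_s]_{\equiv_\Lasc}$ and invoke the preceding paragraph to split $U_s$ into two small open pieces $U_{s\frown 0},U_{s\frown 1}$, one containing a Lascar-representative $a_{s\frown 0}$ of $a_s$ and one containing a type of Lascar distance $>n$ from it; iterating this argument (carefully so that the $>n$ inequality is preserved across \emph{all} pairs of length-$(n+1)$ words, not just siblings) mimics the classical Harrington-Kechris-Louveau tree. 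Staying inside $Y_M$ is where the $G_\delta$ hypothesis does real work: writing $Y_M=\bigcap_k V_k$ with $V_k$ open, at step $n$ I additionally shrink each $U_s$ into $V_n$, and at the end $\bigcap_n U_{\eta\restr n}$ lies in $Y_M$.

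Compactness then provides, for every $\eta\in 2^\bN$, a unique $f(\eta)\in\bigcap_n \overline{U_{\eta\restr n}}\subseteq Y_M$, and $f$ is a homeomorphic embedding. If $\eta\EZ\eta'$ and they first agree from position $n$ onwards, a density/indiscernibility argument (using that inside each $U_s$ we preserved Lascar-equivalence with $a_s$) shows $f(\eta)\equiv_\Lasc f(\eta')$; if $\eta(n)\neq\eta'(n)$ for infinitely many $n$, the separation requirement forces $d_\Lasc(f(\eta),f(\eta'))=\infty$, i.e.\ $f(\eta)\not\equiv_\Lasc f(\eta')$. Thus $f$ is a continuous reduction of $\EZ$ to $E$, so $E$ is non-smooth. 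The main obstacle I anticipate is the simultaneous bookkeeping in the Cantor scheme: controlling Lascar distance \emph{between} distinct length-$(n+1)$ cylinders while keeping \emph{$\equiv_\Lasc$-equivalence} alive within each cylinder — essentially the original KMS trick, whose elegance relies on recursively transferring the ``bad point'' data from $q^*$ down the tree.
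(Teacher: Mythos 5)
The overall plan --- translate to the compact Polish space $S_{\bar a}(M)$ for a countable $M\preceq\fC$, exploit that $\equiv_\Lasc^M\restr_{Y_M}$ is $F_\sigma$ on the Polish ($G_\delta$) space $Y_M$, build a Cantor scheme, and conclude by the Harrington--Kechris--Louveau dichotomy --- is indeed the broad strategy of the KMS paper, to which the thesis simply defers by citation. However, as written your sketch has gaps that I believe are fatal.

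First, your claim that ``by $\equiv_\Lasc$-saturation of $Y$, $q^*\in Y_M$'' is unjustified and false in general. Saturation says only that $Y$ is a union of $\equiv_\Lasc$-classes; it gives no control over topological closure in $S_{\bar a}(M)$. The type $q^*\in\overline{[a]^M_{\equiv_\Lasc}}\setminus[a]^M_{\equiv_\Lasc}$ lies in some \emph{other} Lascar class, and there is no reason that class should be one of those constituting $Y$. So the ``bad point'' you intend to concentrate near may well sit outside $Y_M$; but then concentrating your Cantor scheme near it is in direct conflict with the requirement (coming from $Y_M=\bigcap_k V_k$) that the limits $f(\eta)$ land inside $Y_M$. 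What one actually needs is a \emph{local} version of the observation --- roughly, that non-type-definability of the class forces the Lascar diameter to be unbounded on every relatively open piece of the class met by $Y_M$ --- and this is a genuinely harder statement, requiring a Baire-category and transitivity-by-automorphisms argument, not just the existence of a single closure point.

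Second, your separation requirement as stated is internally inconsistent with the conclusion you want. You require that for all $s\neq t$ in $2^{n+1}$, every $x\in U_s$ and $y\in U_t$ satisfy $d_\Lasc(x,y)>n$, and you explicitly insist on enforcing this ``across all pairs of length-$(n+1)$ words, not just siblings.'' But if $\eta\neq\eta'$ first disagree at position $m$, then $\eta\restr(n+1)\neq\eta'\restr(n+1)$ for \emph{every} $n\geq m$, so your requirement yields $d_\Lasc(f(\eta),f(\eta'))>n$ for all $n\geq m$, i.e.\ $d_\Lasc(f(\eta),f(\eta'))=\infty$, so $f(\eta)\not\equiv_\Lasc f(\eta')$ for \emph{all} distinct $\eta,\eta'$. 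That produces an injection into $2^{\aleph_0}$ many $\equiv_\Lasc$-classes, which is consistent with smoothness and does not reduce $\EZ$. The correct separation only asks for $d_\Lasc$-separation at level $n+1$ when $s(n)\neq t(n)$; and then the whole difficulty of the KMS theorem lies on the \emph{other} side --- arranging that eventually equal $\eta\EZ\eta'$ give $\equiv_\Lasc$-equivalent limits --- which you compress into ``a density/indiscernibility argument'' and ``the original KMS trick.'' That step is precisely the delicate core of the argument, carried out in KMS via a carefully constructed tree of automorphisms (so that limits are controlled by composable Lascar-strong maps) rather than a generic Cantor scheme in the type space, and it cannot be punted. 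As a minor additional point, the parenthetical ``(in particular $p_0\not E\,p_1$)'' is a false inference: $d_\Lasc(p_1,a)>n$ with $n$ finite is perfectly compatible with $p_0\equiv_\Lasc a\equiv_\Lasc p_1$.
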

	\begin{proof}
		See \cite[Main Theorem A]{KMS14}.
	\end{proof}
	
	In \cite{KM14} and \cite{KR16}, the last fact was generalized to a certain wider class of bounded $F_\sigma$ relations. In order to formulate this generalization, we need to recall one more definition from \cite{KR16}.
	
	\begin{dfn}
		\label{dfn:orbital_stype}
		\index{equivalence relation!orbital}
		Suppose $E$ is an invariant equivalence relation on a set $X$. We say that $E$ is \emph{orbital} if there is a group $\Gamma\leq \Aut(\fC)$ such that $E$ is the orbit equivalence relation of $\Gamma$ acting on $X$.\xqed{\lozenge}
	\end{dfn}
	
	\begin{rem}
		Note that immediately by the definition, a bounded orbital equivalence relation is a strong type according to Definition~\ref{dfn:stype} (because $\equiv$ is the orbit equivalence relation of the whole $\Aut(\fC)$). Furthermore, each of $\equiv_\Lasc,\equiv_\KP$, and $\equiv_\Sh$ is orbital, by Facts~\ref{fct:lst_witn_by_aut} and \ref{fct:galkpsh_orbital}.\xqed{\lozenge}
	\end{rem}

	\begin{fct}\label{fct:mainA}
		We are working in the monster model $\fC$ of a complete, countable theory. Suppose we have:
		\begin{itemize}
			\item
			a set $X=p(\fC)$ for some $p\in S(\emptyset)$,
			\item
			an $F_\sigma$, bounded invariant equivalence relation $E$ on $X$, which is orbital,
			\item
			a type-definable and $E$-saturated set $Y\subseteq X$ such that $E\restr_Y$ is not type-definable.
		\end{itemize}
		Then $E\restr_Y$ is non-smooth.
	\end{fct}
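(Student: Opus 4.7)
The plan is to apply the Harrington--Kechris--Louveau dichotomy (Fact~\ref{fct:Harrington-Kechris-Louveau dichotomy}). First, I would move to a countable elementary submodel $M$, noting that by Fact~\ref{fct: Borel in various senses} the relation $(E\restr_Y)^M = E^M\restr_{Y_M}$ is $F_\sigma$, hence Borel, on the Polish space $Y_M$, and the non-type-definability hypothesis is exactly that $E^M\restr_{Y_M}$ is not closed. By Definition~\ref{dfn:bier_borelcard} and Fact~\ref{fct:cartdf}, smoothness of $E\restr_Y$ is equivalent to smoothness of $E^M\restr_{Y_M}$, so it suffices to produce a Borel (in fact continuous) reduction $\EZ \leq_B E^M\restr_{Y_M}$, or, using the second clause of Fact~\ref{fct:Harrington-Kechris-Louveau dichotomy}, to prevent smoothness by some combinatorial configuration that forces $\EZ$ to embed.

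The next step exploits the orbital $F_\sigma$ hypothesis to extract a good ``Lascar-type distance''. Since $E$ is the orbit equivalence relation of some $\Gamma \leq \Aut(\fC)$ and $E$ is $F_\sigma$, one can (as in \cite{KR16,KM14}, mimicking Fact~\ref{fct:distance_tdf} for $\equiv_\Lasc$) produce a filtration $\Gamma_0 \subseteq \Gamma_1 \subseteq \cdots$ of $\Gamma$ by symmetric $\Aut(\fC)$-invariant sets with $\id \in \Gamma_0$ and $\Gamma_n \Gamma_m \subseteq \Gamma_{n+m}$, such that each $E_n := \{(a,\sigma a) : a \in X, \sigma \in \Gamma_n\}$ is type-definable over $\emptyset$ and $E = \bigcup_n E_n$. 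Define $d_E(a,b) = \min\{n : a E_n b\} \in \mathbb{N} \cup \{\infty\}$; this is a $\Gamma$-invariant pseudometric whose finite sublevel sets are type-definable. The assumption that $E\restr_Y$ is not type-definable, combined with compactness of $Y_M$, forces $d_E$ to be unbounded on $Y$: for every $n$ there are $a,b \in Y$ with $a E b$ but $d_E(a,b) > n$.

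With this in place, the main step is a Cantor-scheme construction producing a continuous topological embedding $\Phi\colon 2^\omega \hookrightarrow Y_M$ that reduces $\EZ$ to $E^M\restr_{Y_M}$. Recursively for $s \in 2^{<\omega}$ of length $n$, choose tuples $a_s \in Y$ and elements $\sigma_{s,i} \in \Gamma_{N_n}$ (for a fast-growing sequence $N_n$) with $a_{s0} = \sigma_{s,0}(a_s)$ and $a_{s1} = \sigma_{s,1}(a_s)$; this guarantees that any two $\EZ$-equivalent branches $\eta,\eta'$ produce limits $\Phi(\eta),\Phi(\eta')$ connected by a product of finitely many of the $\sigma_{s,i}$'s, hence lying in a single $E$-class. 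Conversely, whenever $\eta$ and $\eta'$ disagree at infinitely many positions $n_k$, one arranges at each branching point, using the unboundedness of $d_E$ on $Y$ and the saturation of $\fC$ (to realise appropriate types in $Y$ with large $d_E$ distance between chosen representatives), that $d_E(\Phi(\eta),\Phi(\eta'))$ is forced to be at least some large quantity depending on $k$, and hence $d_E(\Phi(\eta),\Phi(\eta')) = \infty$, i.e.\ $\neg(\Phi(\eta) E \Phi(\eta'))$. The map $\Phi$ is continuous because each $\Phi(\eta)$ is the limit in $Y_M$ of the $\tp(a_{\eta \restr n}/M)$.

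The main obstacle is the last part: preserving $E$-inequivalence across branches that are not $\EZ$-related. Naively, compounding many automorphisms from $\Gamma$ could unintentionally produce $E$-equivalences. The trick --- already present in \cite{KMS14} and its successors --- is to use the triangle inequality for $d_E$ together with the fact that $\Gamma_n \Gamma_m \subseteq \Gamma_{n+m}$: one budgets the ``cost'' $N_n$ used at level $n$ so that $\sum_{k \leq K} N_{n_k}$ stays below the $d_E$-distance gap that was deliberately carved out at the level where the two branches split for the $K$-th time. Combined with a careful choice of witnesses via saturation and the type-definability of each $E_n$, this forces the required non-equivalence at infinity, completing the embedding and hence the non-smoothness.
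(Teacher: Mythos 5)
The paper does not give a proof of this Fact at all: its ``proof'' is a one-line pointer to \cite[Theorem~3.4]{KR16}, and the thesis in fact goes out of its way to emphasise (in the introduction and in the paragraph following Fact~\ref{fct:mainA}) that the methods of \cite{KMS14}, \cite{KM14} and \cite{KR16} are a separate, older line of argument that it does \emph{not} redevelop. What you have written is a faithful high-level reconstruction of that cited approach: pass to a countable model to land in a Polish type space, package the orbital $F_\sigma$ hypothesis into a subadditive ``Lascar-like'' distance $d_E$ with type-definable sublevel sets (the ``normal form'' of \cite{KR16}), deduce unboundedness of $d_E$ on $Y$ from failure of type-definability, and then run a Cantor-scheme construction with a budget argument, using saturation to realise the witnesses, producing a continuous reduction of $\EZ$ into $E^M\restr_{Y_M}$. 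The only step you gloss over is the existence of the filtration with $\Gamma_n\Gamma_m\subseteq\Gamma_{n+m}$ and type-definable $E_n$ covering $E$ --- this is not automatic from orbitality plus $F_\sigma$-ness and is precisely the ``normal form'' lemma in \cite{KR16}, so your citation at that point is doing real work. By contrast, the thesis proves the strictly stronger Corollary~\ref{cor:smt_type} (which drops orbitality and the $F_\sigma$ hypothesis) by a completely different route: it realises $E^M\restr_{Y_M}$ as a weakly uniformly properly group-like relation on an ambit (via Lemmas~\ref{lem:lascar_grouplike} and~\ref{lem:lascar_dominates}), pushes everything through the Ellis group machinery of Theorem~\ref{thm:main_abstract}, and invokes Proposition~\ref{prop:trichotomy_for_groups} (Pettis/Mycielski/Miller) on the resulting compact Polish group, never constructing a Cantor scheme or a reduction from $\EZ$ directly. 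The trade-off is exactly the one the author flags: your approach is more elementary and self-contained but inherently tied to the $F_\sigma$/orbital setting, whereas the paper's topological-dynamical route is heavier but uniformly handles arbitrary strong types and additionally yields the presentation of $Y/E$ as a quotient of a Polish group.
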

	\begin{proof}
		This is essentially \cite[Theorem 3.4]{KR16}.
	\end{proof}
	
	Essentially the same was proved independently, using slightly different methods, in \cite[Theorem 3.17]{KM14}. The assumptions in \cite[Theorem 3.17]{KM14} are slightly weaker, namely $X$ is only required to be type-definable with parameters, and $Y$ is only $G_\delta$, and the orbitality assumption is replaced by a slightly weaker one. However, for $X=p(\fC)$ and type-definable $Y$, the two theorems are equivalent.
	
	Both Fact~\ref{fct:KMS_theorem} and Fact~\ref{fct:mainA} are superseded by Corollary~\ref{cor:smt_type} (albeit for type-definable $Y$): we drop orbitality assumption, and $E$ need not be $F_\sigma$.
	
	\begin{fct}
		Suppose $T$ is countable. Let $A\subseteq \fC$ be countable.
		
		Suppose in addition that $G$ is an type-definable group, and $X$ is an type-definable set of countable tuples on which $G$ acts type-definably and transitively (all with parameters in $A$), while $H\leq G$ is $F_\sigma$ over $A$ and has small index in $G$.
		
		Then if for some $a$, the orbit $H\cdot a$ is not type-definable (and $a$ satisfies a mild technical assumption), then the $H$-orbit equivalence relation on $X$ is not smooth.
	\end{fct}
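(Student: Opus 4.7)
The plan is to deduce this as a corollary of Main~Theorem~\ref{mainthm:tdgroup} (equivalently Corollary~\ref{cor:trich_tdgroups}) applied to a subgroup of $G$ canonically associated to the action on $X$. Since $G$ acts transitively and type-definably on $X$ with parameters in $A$, the stabilizer $K:=\Stab_G(a)$ is a type-definable subgroup, and the orbit map $\rho\colon G\to X$, $g\mapsto g\cdot a$, is a $G$-equivariant surjection whose fibres are the left cosets of $K$; in particular $\rho$ identifies $G/K$ with $X$ in such a way that a $K$-right-invariant subset of $G$ is type-definable iff its $\rho$-image in $X$ is. Under this identification, $H\cdot a=\rho[HK]$, so the hypothesis that $H\cdot a$ is not type-definable translates to $HK$ not being type-definable in $G$. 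Writing $H=\bigcup_nH_n$ with the $H_n$ symmetric, type-definable over $A$, and increasing, we see that $HK=\bigcup_nH_nK$ is $F_\sigma$ over $A$, and in parallel the $H$-orbit equivalence $E_H$ on $X$ is $F_\sigma$.

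The \emph{mild technical assumption} on $a$ is presumably the condition that makes $HK$ a genuine subgroup of $G$ (the cleanest form being $K\leq H$, in which case $HK=H$; alternatively, that $K$ normalises $H$). Granting this, $HK$ is an $F_\sigma$ subgroup of $G$ of bounded index (since $[G:HK]\leq[G:H]$ is small) and is not type-definable. Main~Theorem~\ref{mainthm:tdgroup} then applies directly to $HK\leq G$: being analytic (even $F_\sigma$) and not type-definable, with bounded index, and since the language is countable and $X$ consists of countable tuples (so the group quotient involved is compact Polish), the theorem forces $G/HK$ to be non-smooth.

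Finally, it remains to transfer non-smoothness of $G/HK$ to $E_H$. Passing to a countable model $M\supseteq A$ (using that $T$, $A$, and the tuples involved are all countable), the map $\rho$ descends to a continuous closed surjection of compact Polish type spaces $G_M\to X_M$, with $K$-coset fibres. This surjection intertwines the coset equivalence of $HK$ on $G_M$ with $E_H^M$ on $X_M$, and by Fact~\ref{fct:borel_section} admits a Borel section, giving Borel bireducibility. Hence $E_H$, whose Borel cardinality is that of $E_H^M$ by Fact~\ref{fct:cartdf}, inherits non-smoothness from $G/HK$. The main obstacle is precisely the gap addressed by the \emph{mild technical assumption}: without an extra hypothesis $HK$ is only a subset of $G$ and $E_H$ does not literally correspond to a coset equivalence, so one would instead have to invoke the abstract framework of Chapter~\ref{chap:grouplike} (via Main~Theorem~\ref{mainthm:abstract_smt}) applied to a suitable ambit carrying $E_H$ as a weakly uniformly properly group-like equivalence relation, rather than the more convenient group-quotient formulation used above.
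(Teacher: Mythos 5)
The paper's ``proof'' of this Fact is simply the citation ``This is essentially [KM14, Theorem 3.33]''; your attempt instead tries to derive it from Main~Theorem~\ref{mainthm:tdgroup}, which is a legitimately different route. But the transfer step contains a genuine error.

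Your argument hinges on the claim that the orbit map $\rho\colon G\to X$, $g\mapsto g\cdot a$, intertwines the left-coset equivalence of $HK$ on $G$ with $E_H$ on $X$. Compute the preimage of $E_H$ under $\rho\times\rho$: one has $g_1 a\mathrel{E_H} g_2 a$ iff there exists $h\in H$ with $g_2 a=hg_1 a$, i.e.\ $(hg_1)^{-1}g_2\in K$, i.e.\ $g_2\in Hg_1K$. So $(\rho\times\rho)^{-1}[E_H]$ is the $(H,K)$-\emph{double-coset} relation $\{(g_1,g_2)\mid g_2\in Hg_1K\}$, whereas the coset equivalence of $HK$ is $\{(g_1,g_2)\mid g_2\in g_1HK\}$. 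These coincide exactly when $g_1HK=Hg_1K$ for all $g_1\in G$, which forces $g_1^{-1}Hg_1\subseteq HK$ for all $g_1$ --- essentially normality of $H$. Neither $K\leq H$ nor ``$K$ normalises $H$'' gives this: with $K\leq H$ we have $HK=H$, yet $Hg_1K$ is still a genuine double coset and not the left coset $g_1H$. So even granting your reading of the ``mild technical assumption'', $\rho$ is not a reduction from $E_{HK}$ to $E_H$, and the Borel-bireducibility claim fails.

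Consequently, Main~Theorem~\ref{mainthm:tdgroup} applied to $HK\leq G$ tells you only that the $HK$-coset equivalence on $G$ is non-smooth, which does not transfer to $E_H$ on $X$ along $\rho$. What $\rho$ actually pulls $E_H$ back to is the orbit equivalence of $H\times K$ acting on $G$ by $(h,k)\cdot g = hgk^{-1}$, and this is not the coset equivalence of any single subgroup. Your closing remark already gestures at the real fix --- working directly with the ambit $(G(M),X_M,\tp(a/M))$ and showing $E_H^M$ is weakly uniformly properly group-like (as in Lemma~\ref{lem:weakly_grouplike_tdf}, with $E=E_H$ suitably modified to be $G$-invariant, or rather using the full abstract machinery of Chapter~\ref{chap:grouplike}) --- but you present that as a fallback for when $HK$ fails to be a group, when in fact it is needed regardless. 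Finally, you should also flag that $E_H$ need not be $G$-invariant when $H$ is not normal, so Corollary~\ref{cor:trich+_tdf} does not apply verbatim either; this is presumably where the actual content of the ``mild technical assumption'' of [KM14] lives.
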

	\begin{proof}
		This is essentially \cite[Theorem 3.33]{KM14}.
	\end{proof}
	This is superseded by Corollary~\ref{cor:trich+_tdf}: using that, we can drop the assumption that $H$ is $F_\sigma$ (as well as the technical assumption), and we obtain a stronger conclusion.
	
	In \cite{KM14}, the following corollary was also obtained.
	\begin{fct}
		\label{fct:KM_about_groups}
		Suppose $G$ is a definable group, while $H\leq G$ is $F_\sigma$ and invariant over a small set, of finite index in $G$. Then $H$ is definable.
	\end{fct}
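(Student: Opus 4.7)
The plan is to reduce the statement to Fact~\ref{fct:new_group} (Newelski's trichotomy for $F_\sigma$ subgroups of definable groups) and then use a standard compactness argument to upgrade type-definability to relative definability.

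First, I would pass to an expansion of the language by constants naming the parameters over which $G$ is definable and $H$ is invariant; this is harmless (the expanded theory has the same monster model) and lets us assume that $G$ is $\emptyset$-definable and $H$ is $\emptyset$-invariant. Since $H$ is $F_\sigma$ and of finite, hence bounded, index in $G$, Fact~\ref{fct:new_group} applies and the alternative $[G:H]\geq 2^{\aleph_0}$ is ruled out, so $H$ is type-definable.

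Next, because $[G:H]$ is finite, I would write $G=\bigsqcup_{i<n} g_iH$ for some $g_0,\ldots,g_{n-1}\in G$. Each coset $g_iH$ is the image of the type-definable set $H$ under the $\emptyset$-definable map $g\mapsto g_ig$, and so is itself type-definable over $\{g_i\}$. Thus $G\setminus H = \bigcup_{i\geq 1}g_iH$ is a finite union of type-definable sets; rewriting $\bigcap_j\varphi_j \cup \bigcap_k\psi_k$ as $\bigcap_{j,k}(\varphi_j\lor\psi_k)$ (and iterating), this finite union is again type-definable.

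Finally, I would close by compactness: let $\pi(x)$ be a partial type defining $H$ and $\pi'(x)$ a partial type defining $G\setminus H$, together with the partial type $\pi_G(x)$ defining $G$. Then $\pi(x)\cup\pi'(x)\cup\pi_G(x)$ is inconsistent, so some finite conjunction $\varphi(x)$ of formulas from $\pi$ satisfies $\varphi(\fC)\cap G\subseteq H$, while $\pi\vdash\varphi$ gives the reverse inclusion. Hence $H=\varphi(\fC)\cap G$ is relatively definable in $G$, and since $G$ is definable, so is $H$. The only real content is Fact~\ref{fct:new_group}; the rest is bookkeeping with compactness, and no genuine obstacle arises.
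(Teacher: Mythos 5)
Your proof is correct, but it is worth noting that the paper does not actually prove this fact at all; it simply cites \cite[Corollary 3.37]{KM14}. What you have done is supply a self-contained derivation from Fact~\ref{fct:new_group}, which is itself only a citation (to Newelski's \cite[Theorem 3.1]{Ne03}) in the paper's preliminaries. The logic of your reduction is sound: after naming parameters so that $G$ is $\emptyset$-definable and $H$ is $\emptyset$-invariant, the finite-index hypothesis rules out the second alternative of Fact~\ref{fct:new_group} and gives type-definability of $H$; each coset $g_iH$ is the image of a type-definable set under a definable bijection and hence type-definable, so the finite union $G\setminus H$ is type-definable; and the compactness argument for a type-definable set with type-definable complement inside a definable $G$ correctly yields a single formula $\varphi$ with $H=\varphi(\fC)\cap G$, which is definable since $G$ is. For context, the paper's own machinery gives this conclusion (and more) via Corollary~\ref{cor:trich_tdgroups}, whose first bullet states directly that for analytic $H\leq G$ of finite index, $H$ is relatively definable; so the paper regards Fact~\ref{fct:KM_about_groups} as a black-box input that it later supersedes, whereas you have shown it already follows quickly from Newelski's dichotomy plus elementary compactness.
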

	\begin{proof}
		This is \cite[Corollary 3.37]{KM14}.
	\end{proof}
	This is superseded by Corollary~\ref{cor:trich_tdgroups}: in our case, we only assume that $G$ is type-definable, and for a definable $G$, to obtain definability of $H$, we only need to assume that $H$ is analytic and of index smaller than $2^{\aleph_0}$.

	\chapter{Basic examples}
	\label{chap:toy}
	In this chapter, we discuss some cases where the analogues of some of the main theorems of the thesis are relatively easy to obtain. Roughly speaking, in proving the general theorems in later parts of this thesis, we will try to imitate the proofs from this chapter. The main problem will be ``getting into the position'' from which we can imitate successfully. The results of the third section in this chapter originate from \cite{KR18} (joint with Krzysztof Krupiński).
	\section{Transitive action of a compact group}
	\begin{prop}
		\label{prop:toy_main}
		Consider a compact Hausdorff group $G$ acting transitively and continuously on a Hausdorff space $X$ (which is also compact, as the continuous image of $G$).
		
		Let $E$ be a $G$-invariant relation on $X$.
		
		Choose any point $x_0\in X$ and let $H$ be the setwise stabiliser of $[x_0]_E$.
		
		Then $G/H$ is homeomorphic to $X/E$ (via the map induced by the orbit map $g\mapsto g\cdot x_0$).
		
		Moreover, whenever one of $H$, $E$ is open, closed, $F_\sigma$, Borel or analytic, so is the other one.
		
		Furthermore, if $G$ and $X$ are Polish, then $G/H\sim_B X/E$ (i.e.\ the Borel cardinality of the relation of lying in the same left coset of $H$ is the same as the Borel cardinality of $E$, cf.\ Definition~\ref{dfn:borel_cardinality}).
	\end{prop}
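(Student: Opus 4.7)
The plan is to use the orbit map $\pi \colon G \to X$, $g \mapsto g \cdot x_0$, as the bridge. By transitivity $\pi$ is a continuous surjection, and since $G$ is compact and $X$ is Hausdorff, Remark~\ref{rem: continuous surjection is closed} tells us that $\pi$ is a closed topological quotient map. The key computation is that for $g_1, g_2 \in G$, $g_1 \cdot x_0 \mathrel{E} g_2 \cdot x_0$ iff $g_2^{-1} g_1 \cdot x_0 \in [x_0]_E$ (using $G$-invariance of $E$), iff $g_2^{-1} g_1 \in H$, iff $g_1 H = g_2 H$. In particular $H = \pi^{-1}[[x_0]_E]$, and more generally the fibres of the composition $G \to X \to X/E$ are exactly the left cosets of $H$.

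For the first claim, the composition $G \to X \to X/E$ is a quotient map (as a composition of two quotient maps), and it factors through the canonical quotient $G \to G/H$, so by Remark~\ref{rem:commu_quot} applied to the resulting triangle, the induced continuous bijection $G/H \to X/E$ is a homeomorphism.

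For the ``moreover'' part, note that $\pi \times \pi \colon G^2 \to X^2$ is also a continuous closed surjection, and the left coset equivalence relation $E_H$ on $G$ equals $(\pi \times \pi)^{-1}[E]$ by the computation above. Proposition~\ref{prop:preservation_properties} (applied to $\pi$ and to $\pi \times \pi$) then transfers closedness, $F_\sigma$-ness, Borelness, and analyticity in both directions between saturated subsets of $G$ (resp.\ $G^2$) and $X$ (resp.\ $X^2$); combined with Remark~\ref{rem:group_to_cosets} (via the map $\mu'$ of Fact~\ref{fct:multiplication_open}), this relates the corresponding properties of $H$, $E_H$, and $E$ pairwise. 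For openness: since $\pi \times \pi$ is a quotient map and $E_H$ is its saturation-preimage of $E$, $E_H$ is open iff $E$ is, and $H$ open in $G$ corresponds to $E_H$ open in $G^2$ again via Remark~\ref{rem:group_to_cosets}.

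For the Borel cardinality in the Polish case, $\pi$ itself is a continuous (in particular Borel) reduction of $E_H$ to $E$. Conversely, by Fact~\ref{fct:borel_section}, $\pi$ admits a Borel section $s \colon X \to G$, which is then a Borel reduction of $E$ to $E_H$ (because $s$ sends each $x$ to a preimage under $\pi$, so the computation above translates $E$-relatedness of $x_1,x_2$ to $E_H$-relatedness of $s(x_1),s(x_2)$). Hence $E \sim_B E_H$, i.e.\ $X/E \sim_B G/H$. The argument presents no serious obstacle; the one thing that really has to be got right is identifying $E$ with $E_H$ through the orbit map, after which everything is an application of the preservation machinery already developed in the preliminaries.
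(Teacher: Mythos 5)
Your proof is correct and follows essentially the same route as the paper: orbit map $G\to X$ as a closed quotient map, identification of the fibres of $G\to X/E$ with the left cosets of $H$ (equivalently $E_H=(\pi\times\pi)^{-1}[E]$), and then Proposition~\ref{prop:preservation_properties} together with Remark~\ref{rem:group_to_cosets} for the descriptive transfer and Fact~\ref{fct:borel_section} for the Borel bireducibility. The only cosmetic difference is that you treat openness as a separate case, whereas the paper's Proposition~\ref{prop:preservation_properties} already covers open/closed preimages in its ``furthermore'' clause, so no separate argument is needed.
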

	\begin{proof}
		Consider the orbit map $R\colon G\to X$, $R(g)=gx_0$. Since $G$ acts transitively, $R$ is onto, and because $G$ is compact and $X$ is Hausdorff, it is a closed map, and as such, a topological quotient map (cf.\ Remark~\ref{rem: continuous surjection is closed}). It follows that the composed map $r\colon G\to X/E$, $r(g)=[gx_0]_E$ is also a quotient map (as the composition of two quotient maps).
		
		Then, since $E$ is $G$-invariant, $G$ acts on $X/E$, so whenever $g_1x_0\Er g_2x_0$, we have $x_0\Er g_1^{-1}g_2 x_0$, or equivalently, $g_1^{-1}g_2\in H$, which means just that $g_1H=g_2H$. It follows that fibres of $r$ are exactly the left cosets of $H$, so by the preceding paragraph, $G/H$ is homeomorphic to $X/E$.
		
		For the ``moreover" part, note that the relation $E_H$ of lying in the same left coset of $H$ is the preimage of $E$ by the continuous surjection $R\times R\colon G\times G\to X\times X$, and it is also the preimage of $H$ by the map $(g_1,g_2)\mapsto g_1^{-1}g_2$, $G^2\to G$, and apply Proposition~\ref{prop:preservation_properties}.
		
		Finally, since the fibres of $r$ are the left cosets of $H$, $R$ is a reduction of the coset equivalence relation $E_H$ to $E$, and since it is also continuous and surjective, using Fact~\ref{fct:borel_section}, we obtain $E\sim_BE_H$.
	\end{proof}
	
	The following lemma is a fairly simple example showing how Proposition~\ref{prop:trichotomy_for_groups} can be used to prove similar results in wider contexts.
	\begin{lem}
		\label{lem:abstract_trich}
		Suppose we have a compact Polish space $X$, an equivalence relation $E$ on $X$, and a compact Polish group $G$ acting transitively on $X/E$, such that for some $x_0\in X$ we have the following:
		\begin{itemize}
			\item
			the orbit map $g\mapsto g[x_0]_E$ is a topological quotient map,
			\item
			the stabiliser $H\leq G$ of $[x_0]_E$ is analytic if $E$ is analytic,
			\item
			if $E$ is smooth (in the sense of Definition~\ref{dfn:smt}), then so is $G/H$ (this is true e.g.\ if $E\geq_B G/H$).
		\end{itemize}
		Then exactly one of the following holds:
		\begin{enumerate}
			\item
			$X/E$ is finite and $E$ is clopen,
			\item
			$|X/E|=2^{\aleph_0}$ and $E$ is closed,
			\item
			$E$ is non-smooth; in this case, if $E$ is analytic, then $|X/E|=2^{\aleph_0}$.
		\end{enumerate}
		In particular, $E$ is smooth if and only if it is closed.
	\end{lem}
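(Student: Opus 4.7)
The plan is to deduce everything from Proposition~\ref{prop:trichotomy_for_groups} applied to $H\leq G$, by transferring its four-way alternative through the homeomorphism $G/H\cong X/E$ given by the orbit map.

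First, I would observe that the orbit map $r\colon G\to X/E$, $g\mapsto g[x_0]_E$, is surjective by transitivity and has fibres equal to the left cosets of $H$ (since $g[x_0]_E=g'[x_0]_E$ is equivalent to $g^{-1}g'\in H$). As $r$ is a topological quotient map by hypothesis, the induced continuous bijection $G/H\to X/E$ is a homeomorphism (e.g.\ by Remark~\ref{rem:commu_quot}). In particular, $\lvert G/H\rvert=\lvert X/E\rvert$, and by Facts~\ref{fct:quot_T2_iff_closed} and \ref{fct:quotient_by_closed_subgroup}, $E$ is closed (respectively, open) as a subset of $X^2$ if and only if $H$ is closed (respectively, open) in $G$.

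Next, I would apply Proposition~\ref{prop:trichotomy_for_groups} to $H\leq G$. If $H$ is open with $[G:H]$ finite, then $X/E$ is finite and discrete, hence also Hausdorff, so $E$ is clopen, giving conclusion (1). If $H$ is closed with $[G:H]=2^{\aleph_0}$, then $E$ is closed and $\lvert X/E\rvert=2^{\aleph_0}$, giving conclusion (2). In the two remaining alternatives of Proposition~\ref{prop:trichotomy_for_groups}, the quotient $G/H$ is non-smooth, so by the contrapositive of the third hypothesis $E$ is non-smooth as well. If in addition $E$ is analytic, then by the second hypothesis so is $H$, hence $H$ has the Baire property; this excludes the fourth alternative of Proposition~\ref{prop:trichotomy_for_groups}, leaving $[G:H]=2^{\aleph_0}$ and therefore $\lvert X/E\rvert=2^{\aleph_0}$, giving conclusion (3). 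The ``in particular'' statement follows at once: closedness implies smoothness by Fact~\ref{fct:clsd_smth}, and conversely, smoothness of $E$ rules out conclusion (3), placing us in (1) or (2), both of which give closed $E$.

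I expect no serious obstacle: the argument is essentially a dictionary translation between a subgroup and a homogeneous quotient. The only minor points of care are noting that in case (1) it is the finiteness of $X/E$ that upgrades ``$E$ open'' to ``$E$ clopen'' (finite $T_1$ spaces are discrete, hence Hausdorff), and checking that the three regimes ``finite/clopen'', ``$2^{\aleph_0}$/closed'', ``non-smooth'' are pairwise incompatible, so the word ``exactly'' in the conclusion is justified.
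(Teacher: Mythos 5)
Your proof is correct and takes essentially the same route as the paper: both apply Proposition~\ref{prop:trichotomy_for_groups} to $H\leq G$ and transfer each alternative across the homeomorphism $G/H\cong X/E$ induced by the orbit map. (A tiny aside: in case (1), finiteness is not what upgrades ``$E$ open'' to ``$E$ clopen'' --- discreteness of $X/E$ already gives Hausdorffness, hence closedness of $E$, directly; finiteness is just the extra cardinality conclusion.)
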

	\begin{proof}
		Apply Proposition~\ref{prop:trichotomy_for_groups} to $H\leq G$.
		
		If $H$ is open, then (by Fact~\ref{fct:quotient_by_closed_subgroup}) $G/H$ is discrete, and so is $X/E$. By Fact~\ref{fct:quot_T2_iff_closed}, it follows that $E$ is open (and by compactness of $X$, this implies that $E$ is clopen and $X/E$ is finite).
		
		If $H$ is closed and $[G:H]=2^{\aleph_0}$, then $G/H$ is Hausdorff (by Fact~\ref{fct:quotient_by_closed_subgroup}) and thus so is $X/E$, so $E$ is closed by Fact~\ref{fct:quot_T2_iff_closed}, and of course $\lvert X/E\rvert=[G:H]=2^{\aleph_0}$.
		
		Otherwise, $G/H$ is not smooth, so $E$ is not smooth.
		
		If $G/H$ is not smooth but $E$ is analytic, then $H$ is analytic, so it has the property of Baire, and $\lvert X/E\rvert=\lvert G/H\rvert=2^{\aleph_0}$.
	\end{proof}
	
	\begin{cor}
		\label{cor:toy_trich}
		If $G, X, E$ are as in Proposition~\ref{prop:toy_main}, and $X$ is Polish, then $E$ is smooth (according to Definition~\ref{dfn:smt}) if and only if $E$ is closed (as a subset of $X^2$).
		
		In fact, exactly one of the following holds:
		\begin{enumerate}
			\item
			$E$ is clopen and has finitely many classes,
			\item
			$E$ is closed and has exactly $2^{\aleph_0}$ classes,
			\item
			$E$ is not closed and not smooth. In this case, if $E$ is analytic, then $E$ has exactly $2^{\aleph_0}$ classes.
		\end{enumerate}
	\end{cor}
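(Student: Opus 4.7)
The plan is to deduce the corollary from Lemma~\ref{lem:abstract_trich} applied to the induced action of (a compact Polish quotient of) $G$ on $X/E$. Most of the structural work has already been done in Proposition~\ref{prop:toy_main}, which identifies $X/E$ with $G/H$ for $H$ the setwise stabiliser of $[x_0]_E$, and establishes the preservation properties linking $E$ to $H$ and to the coset equivalence relation $E_H$, including the Borel bireducibility $E\sim_B E_H$.

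The first step is a reduction to the case of a compact Polish acting group. Since $X$ is Polish and $G$ acts transitively and continuously on it, Proposition~\ref{prop:cont_action_factors_through_Polish} tells us that the action factors through $G/\Core(K)$, where $K$ is the pointwise stabiliser of $x_0$, and $G/\Core(K)$ is a compact Polish group. Replacing $G$ with this quotient does not alter $X$, the relation $E$, or the orbit/class structure; hence I may assume from the outset that $G$ itself is compact Polish.

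Next I would verify the three hypotheses of Lemma~\ref{lem:abstract_trich}. The orbit map $g \mapsto g[x_0]_E$ is a topological quotient map because it factors as the composition $G \to X \to X/E$ of two quotient maps: the first is a continuous surjection from a compact space onto a Hausdorff one (cf.\ Remark~\ref{rem: continuous surjection is closed}), and the second is the canonical quotient. The stabiliser $H$ of $[x_0]_E$ is analytic whenever $E$ is analytic, directly from the ``moreover'' clause of Proposition~\ref{prop:toy_main}. Finally, Proposition~\ref{prop:toy_main} gives $E\sim_B E_H$, so if $E$ is smooth, then so is the coset equivalence relation $G/H$.

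With the hypotheses checked, Lemma~\ref{lem:abstract_trich} directly yields the trichotomy in the statement, and the equivalence of smoothness and closedness of $E$ falls out as an immediate consequence. In effect, there is no substantial new argument to carry out here: Proposition~\ref{prop:toy_main} and Lemma~\ref{lem:abstract_trich} have been set up precisely so that this corollary reduces to routine verification, so the only genuine ``obstacle'' is the preparatory step of passing to a compact Polish quotient of $G$ via Proposition~\ref{prop:cont_action_factors_through_Polish}.
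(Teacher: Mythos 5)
Your proof is correct and follows essentially the same route as the paper: first reduce to a compact Polish group via Proposition~\ref{prop:cont_action_factors_through_Polish}, then invoke Proposition~\ref{prop:toy_main} together with Lemma~\ref{lem:abstract_trich}. The paper states this more tersely, but the verifications of the hypotheses of Lemma~\ref{lem:abstract_trich} you spell out are exactly what makes the corollary ``immediate'' there.
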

	\begin{proof}
		Note that by Proposition~\ref{prop:cont_action_factors_through_Polish} we may assume without loss of generality that $G$ is Polish (replacing it by the compact Polish group the action factors through, if necessary).
		
		Then the corollary is immediate by Proposition~\ref{prop:toy_main} and Lemma~\ref{lem:abstract_trich}.
	\end{proof}
	
	\section{Orbital equivalence relations}
	\begin{prop}
		\label{prop:toy_orbital}
		Suppose $G$ is a compact Hausdorff group acting continuously on a Hausdorff space $X$.
		
		Let $E$ be the orbit equivalence relation of some $H\leq G$ (i.e.\ $x_1\Er x_2$ if and only if for some $h\in H$ we have $hx_1=x_2$).
		
		Then $E$ is closed if and only if each class of $E$ is closed.
	\end{prop}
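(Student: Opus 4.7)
The forward direction is immediate: if $E\subseteq X^2$ is closed, then for each $x\in X$ the class $[x]_E$ is the preimage of $E$ under the continuous map $y\mapsto (x,y)$, hence closed.

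For the converse, let $\overline{H}$ denote the closure of $H$ in $G$; since $G$ is a topological group, $\overline{H}$ is a subgroup, and since $G$ is compact Hausdorff, so is $\overline{H}$. Let $\overline{E}$ be the orbit equivalence relation of $\overline{H}$ acting on $X$. The plan is to show first that $\overline{E}$ is closed, and second that $E=\overline{E}$.

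For the first step, apply Fact~\ref{fct:cpct_proper} to the action of $\overline{H}$ on $X$: the map $\alpha\colon \overline{H}\times X\to X\times X$, $(h,x)\mapsto (x,hx)$, is proper, and in particular closed by Fact~\ref{fct:proper}. Its image is exactly $\overline{E}$, which is therefore closed in $X\times X$.

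For the second step, the inclusion $E\subseteq \overline{E}$ is clear. Conversely, suppose $(x,y)\in \overline{E}$, so $y=gx$ for some $g\in \overline{H}$. Pick a net $(h_i)$ in $H$ with $h_i\to g$; by continuity of the action, $h_ix\to gx=y$, so $y\in \overline{Hx}$. Since the class $Hx=[x]_E$ is closed by hypothesis, $y\in Hx$, i.e.\ $(x,y)\in E$. Thus $E=\overline{E}$ is closed, as desired. The only place the hypothesis on classes is used is in this last step; otherwise $\overline{E}$ is always closed, but it may strictly contain $E$.
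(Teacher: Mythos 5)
Your proof is correct, and it takes a genuinely different route from the paper's, so a comparison is worthwhile. The paper works with the group $\tilde H := \{h \in G \mid hx \in [x]_E \text{ for all } x \in X\}$, which is the full setwise stabiliser of all $E$-classes; the hypothesis that classes are closed is used immediately to write $\tilde H$ as an intersection $\bigcap_{x\in X}\{h \mid hx \in [x]_E\}$ of closed sets, so that $\tilde H$ is a compact subgroup with $E = E_{\tilde H}$, after which closedness of $E$ follows from the Hausdorffness of $X/\tilde H$ (Fact~\ref{fct:cpct_action}). You instead use the topological closure $\overline{H}$, which is automatically a compact subgroup, and you invoke properness of the map $\overline{H}\times X \to X\times X$ (Fact~\ref{fct:cpct_proper}) to conclude that $\overline{E} := E_{\overline{H}}$ is always closed; the hypothesis on classes then enters only at the end, to show $E = \overline{E}$ via a short net argument. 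The two subgroups $\overline{H}$ and $\tilde H$ need not coincide, and the two proofs deploy the hypothesis at different places, but both land in the situation ``$E$ is the orbit relation of a compact subgroup.'' Your decomposition has the advantage of cleanly isolating the general fact that $\overline{E}$ is closed (needing no hypothesis) from the single step where the hypothesis on classes is actually used; the paper's version is marginally shorter because it never introduces a second equivalence relation.
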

	\begin{proof}
		If $E$ is closed, then trivially each class of $E$ is closed.
		
		In the other direction, suppose all $E$-classes are closed and let $\tilde H$ be the group of all elements $h\in G$ such that for all $x\in X$ we have $hx \Er x$. Then it is easy to see that $\tilde H\supseteq H$, so $E$ is the orbit equivalence relation of $\tilde H$.
		
		Note that $\tilde H=\bigcap_{x\in X} \{h\mid hx\in [x]_E \}$, so it closed in $G$. As such, $\tilde H$ is a compact Hausdorff group, and so, by Fact~\ref{fct:cpct_action}, $X/E=X/\tilde H$ is Hausdorff, which implies that $E$ is closed (as the preimage of the diagonal in $X/E$ via the natural continuous map $X^2\to (X/E)^2$).
	\end{proof}
	The following corollary is a toy version of Corollary~\ref{cor:smt_cpct}.
	\begin{cor}
		\label{cor:toy_orbital}
		Suppose $G,X,E$ are as in Proposition~\ref{prop:toy_orbital}, and in addition $X$ is Polish and $H$ is normal. Then $E$ is closed if and only if it is smooth.
	\end{cor}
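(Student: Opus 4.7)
The plan is to handle the two directions separately. The easy direction (closed implies smooth) is immediate from Fact~\ref{fct:clsd_smth} since $X$ is Polish. For the converse, the strategy I would follow is to combine Proposition~\ref{prop:toy_orbital} (which reduces closedness of $E$ to closedness of every $E$-class) with a ``local'' application of Corollary~\ref{cor:toy_trich} on each orbit of the closure $\overline{H}$ of $H$ in $G$.

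Concretely, first I would note that $\overline{H}$ is a closed, hence compact, subgroup of $G$, and still normal (the closure of a normal subgroup in a topological group is normal). Fixing an arbitrary $x_0\in X$, I would set $Y:=\overline{H}\cdot x_0$; as the image of the compact group $\overline{H}$ under the continuous orbit map into a Hausdorff space, $Y$ is compact and closed in $X$, and as a closed subspace of a Polish space it is itself a compact Polish space. Since $H\unlhd G$ implies in particular that $\overline{h}h\overline{h}^{-1}\in H$ for every $h\in H$ and $\overline{h}\in\overline{H}$, the restriction $E\restr_Y$ is $\overline{H}$-invariant, and $\overline{H}$ acts on $Y$ transitively by construction.

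Next I would invoke Corollary~\ref{cor:toy_trich} applied to the action of $\overline{H}$ on $Y$ with the equivalence relation $E\restr_Y$. This is where smoothness enters: $E\restr_Y$ inherits smoothness from $E$ (just restrict any Borel reduction of $E$ to equality on $2^{\bN}$ to $Y$), so the corollary forces $E\restr_Y$ to be closed as a subset of $Y^2$. In particular, $[x_0]_E=[x_0]_{E\restr_Y}$ is closed in $Y$, and since $Y$ is closed in $X$, it is closed in $X$. Because $x_0$ was arbitrary, every $E$-class is closed, and Proposition~\ref{prop:toy_orbital} completes the argument.

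The main technical worry will be that Corollary~\ref{cor:toy_trich} is formulated so that it ultimately goes through a Polish acting group; $\overline{H}$ is only assumed compact Hausdorff. This is exactly what is handled in the proof of the corollary via Proposition~\ref{prop:cont_action_factors_through_Polish}, which lets one replace $\overline{H}$ by the compact Polish quotient through which its action on $Y$ factors, so the argument carries through without extra work. The slightly more conceptual point is that the normality of $H$ is essential in this strategy: without it, $E\restr_Y$ need not be $\overline{H}$-invariant, and one would be thrown back to the purely orbital setting of Proposition~\ref{prop:toy_orbital} with no transitive compact group action on $Y/E\restr_Y$ available.
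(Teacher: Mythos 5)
Your argument is correct and follows essentially the same strategy as the paper: reduce to showing every $E$-class is closed (then invoke Proposition~\ref{prop:toy_orbital}), by decomposing $X$ into orbits, restricting a Borel reduction to each orbit, and applying Corollary~\ref{cor:toy_trich} there. The only deviation is that you decompose $X$ into $\overline{H}$-orbits rather than $G$-orbits as the paper does; both work, but using $G$-orbits is slightly more economical since $G$ is already given as compact Hausdorff and acts transitively on each $G\cdot x$, so there is no need to verify that $\overline{H}$ is a compact normal subgroup or that the $\overline{H}$-orbit $Y$ is closed, Polish, and $E$-saturated.
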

	\begin{proof}
		In one direction, if $E$ is closed, then it is smooth by Fact~\ref{fct:clsd_smth}.
		
		Now, suppose $E$ is smooth.
		It is easy to see that $E$ is $G$-invariant, as the orbit equivalence relation of a normal subgroup of $G$. By Corollary~\ref{cor:toy_trich}, for each $X'=G\cdot x$, $E\restr_{X'}$ is smooth if and only if it is closed. But we have trivially $E\restr_{X'}\leq_B E$, so each $E\restr_{X'}$ is smooth, and therefore closed. This implies that each $E$-class is closed, so by Proposition~\ref{prop:toy_orbital}, $E$ itself is closed as well.
	\end{proof}

	\section{Relations coarser than the Kim-Pillay strong type}\label{section: relations coarser than the Kim-Pillay strong type}
	In this section, we will discuss the model-theoretic case of bounded invariant relations coarser than the Kim-Pillay strong type, and prove an analogue of Theorem~\ref{thm:main_galois} (which is also Main~Theorem~\ref{mainthm_group_types}) and of Corollary~\ref{cor:smt_type} (i.e.\ Main~Theorem~\ref{mainthm:smt}). The main point is that --- unlike the general case --- we do not need to construct any group using topological dynamics: we can just use $\Gal_\KP(T)$ instead. This makes the problem much simpler (and quite analogous to Proposition~\ref{prop:toy_main}). Note that this approach applies to all strong types if the underlying theory is G-compact (which includes all stable and, more generally, simple theories).
	
	\begin{lem}
		\label{lem:easy}
		Suppose we have a commutative diagram
		\begin{center}
			\begin{tikzcd}
			A\ar[d, two heads]\ar[r, two heads]&G\ar[d] \\
			C\ar[r]&Q
			\end{tikzcd}
		\end{center}
		
		where:
		\begin{itemize}
			\item
			$A$, $C$ and $G$ are compact Polish spaces,
			\item
			the surjections $A \to C$ and $A \to G$ are continuous.
		\end{itemize}
		Denote by $E|_C$ and $E|_G$ the equivalence relations on $C$ and $G$ (respectively) induced by equality on $Q$. Then:
		\begin{enumerate}
			\item
			\label{it:lem:easy_transfer}
			$E|_G$ is closed [resp. Borel, or analytic, or $F_\sigma$, or clopen (equivalently, with open classes)] if and only if $E|_C$ is such,
			\item
			\label{it:lem:easy_reduc}
			$E|_G\sim _B E|_C$.
		\end{enumerate}
	\end{lem}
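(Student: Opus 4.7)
The plan is to introduce the common ``pullback'' equivalence relation $E|_A$ on $A$, defined by $(a_1,a_2)\in E|_A$ iff $a_1$ and $a_2$ have the same image in $Q$, and to derive both claims from the fact that $E|_A$ is simultaneously the preimage of $E|_C$ under the square of $\pi_C\colon A\to C$ and the preimage of $E|_G$ under the square of $\pi_G\colon A\to G$. Commutativity of the diagram immediately yields both descriptions of $E|_A$, so $E|_A$ serves as a common refinement/pullback that bridges $E|_C$ and $E|_G$.

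For part \ref{it:lem:easy_transfer}, I would apply Proposition~\ref{prop:preservation_properties} to the continuous surjections $\pi_C\times\pi_C\colon A\times A\to C\times C$ and $\pi_G\times\pi_G\colon A\times A\to G\times G$ between compact Polish (hence compact Hausdorff) spaces. Each property in the list (closed, $F_\sigma$, Borel, analytic) is preserved and reflected under such preimages, so $E|_C$ has the property iff $E|_A$ does, and likewise for $E|_G$; chaining the two equivalences gives the desired transfer between $E|_C$ and $E|_G$. For the ``clopen (equivalently, with open classes)'' case, I would combine the closed case with the relative open/closed clause (the ``furthermore'' part of Proposition~\ref{prop:preservation_properties} applied with $Y_1=Y$), which transfers openness between $E|_A$ and each of $E|_C$, $E|_G$ in exactly the same way.

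For part \ref{it:lem:easy_reduc}, note that by the very definition of $E|_A$ together with commutativity of the diagram, $\pi_C\colon A\to C$ is itself a reduction from $E|_A$ to $E|_C$, and analogously $\pi_G$ is a reduction from $E|_A$ to $E|_G$. Since both $\pi_C$ and $\pi_G$ are continuous surjections between compact Polish spaces, Fact~\ref{fct:borel_section} provides Borel sections that are Borel reductions in the opposite direction, giving $E|_A\sim_B E|_C$ and $E|_A\sim_B E|_G$; transitivity of $\sim_B$ yields $E|_C\sim_B E|_G$.

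There is no serious obstacle; the whole argument is a clean application of Proposition~\ref{prop:preservation_properties} and Fact~\ref{fct:borel_section} once the auxiliary pullback relation $E|_A$ has been introduced. The only mild point of care is to make sure that ``clopen'' (and ``open'', equivalently having open classes) is genuinely covered by the preservation results of Proposition~\ref{prop:preservation_properties} --- this is handled by its relative open/closed clause applied on the full spaces.
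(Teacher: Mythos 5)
Your argument is correct and is essentially the paper's own proof: both introduce the pullback relation $E|_A$, derive part~\ref{it:lem:easy_transfer} by applying Proposition~\ref{prop:preservation_properties} to the preimages of $E|_C$ and $E|_G$ under the (squared) continuous surjections, and derive part~\ref{it:lem:easy_reduc} by observing that $A\to C$ and $A\to G$ are continuous surjective reductions of $E|_A$, then invoking Fact~\ref{fct:borel_section}. Your remark on handling the clopen case via the relative-open clause is a harmless elaboration of what the paper leaves implicit.
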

	\begin{proof}
		Denote by $E|_A$ the equivalence relation on $A$ induced by equality on $Q$ via the composed map $A\to Q$.
		
		\ref{it:lem:easy_transfer} Since $E|_A$ is the preimage of each of $E|_C$ and $E|_G$ by a continuous surjection between compact Polish spaces, by Proposition~\ref{prop:preservation_properties}, we conclude that closedness [resp. Borelness, or analyticity, or being $F_\sigma$, or being clopen] of $E|_A,E|_C$ and $E|_G$ are all equivalent.
		
		\ref{it:lem:easy_reduc}
		It is clear that the top and the left arrow are continuous, surjective reductions of $E|_A$ to $E|_G$ and $E|_A$ to $E|_C$, respectively. So $E|_G \sim_B E|_A \sim_B E|_C$ by Fact~\ref{fct:borel_section}.
	\end{proof}
	
	The following theorem is a prototype for Theorem~\ref{thm:main_galois}.
	\begin{thm}
		\label{thm:main_over_KP}
		Suppose $E$ is a strong type defined on $p(\fC)$ for some $p\in S(\emptyset)$ (in countably many variables, in an arbitrary countable theory) and $E$ is refined by $\equiv_\KP$. Fix any $a\models p$.
		
		Consider the orbit map $r_{[a]_E}\colon \Gal_\KP(T)\to p(\fC)/E$ given by $\sigma\Autf_\KP(\fC)\mapsto [\sigma(a)]_E$ (the orbit map of the natural action of $\Gal_\KP(T)$ on $p(\fC)/E$ introduced in Proposition~\ref{prop:gal_action}), and put $H=\ker r_{[a]_E}:=r_{[a]_E}^{-1}[[a]_E]$. Then:
		\begin{enumerate}
			\item
			$H\leq \Gal_\KP(T)$ and the fibres of $r_{[a]_E}$ are the left cosets of $H$,
			\item
			$r_{[a]_E}$ is a topological quotient mapping, and so $p(\fC)/E$ is homeomorphic to $\Gal_\KP(T)/H$,
			\item
			$E$ is type-definable [resp. Borel, or analytic, or $F_\sigma$, or relatively definable on $p(\fC) \times p(\fC)$] if and only if $H$ is closed [resp. Borel, or analytic, or $F_\sigma$, or clopen],
			\item
			$E_H\sim_B E$, where $E_H$ is the relation of lying in the same left coset of $H$.
		\end{enumerate}
	\end{thm}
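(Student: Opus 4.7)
The plan is to mimic the strategy of Proposition~\ref{prop:toy_main}, exploiting the fact that $\Gal_\KP(T)$ is a compact Hausdorff group (Polish, since $T$ is countable, by Remark~\ref{rem: GalKP is Polish}) acting continuously and transitively on $p(\fC)/E$ through a topological quotient orbit map (which is essentially Proposition~\ref{prop:gal_action}, applicable because $E$ is refined by $\equiv_\KP$).

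For (1), since $\Aut(\fC)$ acts transitively on $p(\fC)$ (this is just the characterisation of complete types by $\Aut(\fC)$-orbits, Remark~\ref{rem:types_orbits}), the induced $\Gal_\KP(T)$-action on $p(\fC)/E$ is transitive, so $H$ is simply the setwise stabiliser of $[a]_E$, hence a subgroup. A short calculation shows $r_{[a]_E}(\sigma\Autf_\KP(\fC))=r_{[a]_E}(\tau\Autf_\KP(\fC))$ iff $\sigma^{-1}\tau \cdot [a]_E=[a]_E$, iff $\sigma^{-1}\tau\Autf_\KP(\fC)\in H$, i.e.\ iff $\sigma\Autf_\KP(\fC)$ and $\tau\Autf_\KP(\fC)$ lie in the same left coset of $H$. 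Item (2) is then precisely the last sentence of Proposition~\ref{prop:gal_action}.

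For (3) and (4), I would fix a countable model $M\preceq\fC$ with enumeration $m\supseteq a$, and apply Lemma~\ref{lem:easy} to the commutative square
\begin{center}
\begin{tikzcd}
S_m(M) \ar[r, two heads]\ar[d, two heads] & \Gal_\KP(T) \ar[d, two heads] \\
S_a(M) \ar[r, two heads] & p(\fC)/E
\end{tikzcd}
\end{center}
in which the top arrow is the surjection from Fact~\ref{fct:sm_to_gal} (composed with $\Gal(T)\to\Gal_\KP(T)$), the left arrow is the restriction map, the bottom arrow comes from Fact~\ref{fct:logic_by_type_space}, and the right arrow is $r_{[a]_E}$ from (2); commutativity is immediate by chasing $\tp(\sigma(m)/M)$. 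All three of $S_m(M)$, $S_a(M)$, $\Gal_\KP(T)$ are compact Polish, so Lemma~\ref{lem:easy} applies. The relation on $S_a(M)$ induced by equality on $p(\fC)/E$ is exactly $E^M$ of Definition~\ref{dfn:EM}, and the corresponding relation on $\Gal_\KP(T)$ is the left coset equivalence relation $E_H$. Then Fact~\ref{fct: Borel in various senses} transfers the topological/descriptive complexity between $E$ and $E^M$, while Remark~\ref{rem:group_to_cosets} combined with Proposition~\ref{prop:preservation_properties} (applied via the continuous open map $(g_1,g_2)\mapsto g_1^{-1}g_2$ on $\Gal_\KP(T)$) transfers it between $H$ and $E_H$; chaining these through Lemma~\ref{lem:easy}(1) yields (3). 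Part (4) follows from Lemma~\ref{lem:easy}(2) together with Definition~\ref{dfn:bier_borelcard}, since the Borel cardinality of $E$ is by definition that of $E^M$.

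I do not anticipate any serious obstacle: the whole argument is bookkeeping around the observation that, in the regime ``coarser than $\equiv_\KP$'', the compact Hausdorff group $\Gal_\KP(T)$ already plays the role of the ambient compact group in Proposition~\ref{prop:toy_main}, so no topological dynamics is needed. The only mildly delicate point is checking commutativity of the square and verifying that the resulting relation on $\Gal_\KP(T)$ really is $E_H$, both of which reduce to unwinding definitions.
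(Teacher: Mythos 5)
Your proposal is correct and follows essentially the same route as the paper: points (1)–(2) are read off from Proposition~\ref{prop:gal_action}, and points (3)–(4) are obtained from the same commutative square $S_m(M)\to\Gal_\KP(T)$, $S_m(M)\to S_a(M)\to p(\fC)/E$ via Lemma~\ref{lem:easy}, with Fact~\ref{fct: Borel in various senses} transferring complexity between $E$ and $E^M$ and Remark~\ref{rem:group_to_cosets}/Proposition~\ref{prop:preservation_properties} transferring between $H$ and $E_H$. The paper does exactly this, so no comparison is needed.
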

	\begin{proof}
		The first two points follow from Proposition~\ref{prop:gal_action}, as $H$ is just the stabiliser of $[a]_E\in p(\fC)/E$
		
		Let $M$ be a countable model containing $a$, and let $m\supseteq a$ be an enumeration of $M$. Then we have a commutative diagram, as in the proof of Proposition~\ref{prop:gal_action}.
		\begin{center}
			\begin{tikzcd}
			S_m(M) \ar[r,two heads] \ar[d,two heads] & \Gal_\KP(T) \ar[d,"r_{[a]_E}",two heads] \\
			S_a(M) \ar[r,two heads] & {[a]}_\equiv/E
			\end{tikzcd}
		\end{center}
		The top arrow is defined in the same way as the map to $\Gal(T)$ given by Fact~\ref{fct:sm_to_gal}. The left arrow is the restriction map, and the bottom one is the quotient map given by Fact~\ref{fct:logic_by_type_space}.
		
		It is easy to check that this diagram is commutative and consists of continuous maps. Moreover, $S_m(M), S_a(M)$ and $\Gal_\KP(T)$ are all compact Polish (see Remark \ref{rem: GalKP is Polish}).
		
		Since $\Gal_\KP(T)$ is a compact Hausdorff group, we may apply Remark~\ref{rem:group_to_cosets} and Proposition~\ref{prop:preservation_properties} to deduce that $H$ is closed, clopen, Borel, $F_\sigma$, analytic if and only if $E_H$ is such.
		
		By Lemma~\ref{lem:easy}, this is equivalent to $E^M$ having the same property, and $E^M\sim_B E_H$. By Fact~\ref{fct: Borel in various senses}, we obtain (3), and by the definition of Borel cardinality of a bounded invariant equivalence relation (Definition~\ref{dfn:bier_borelcard}), we also have (4).
	\end{proof}

	It is worth noting that with some work, we can actually deduce Theorem~\ref{thm:main_over_KP} from Proposition~\ref{prop:toy_main}.
	
	More precisely, one can show that if $E$ is refined by $\equiv_\KP$, then $\Gal_\KP(T)$ acts continuously on $p(\fC)/{\equiv_\KP}$, and note that $E$ and $E|_{p(\fC)/{\equiv_\KP}}$ (i.e.\ the induced equivalence relation on ${p(\fC)/{\equiv_\KP}}$) are Borel equivalent, that $E$ is type-definable if and only if $E|_{p(\fC)/{\equiv_\KP}}$ is closed, and so on, and then apply Proposition~\ref{prop:toy_main} to $\Gal_\KP(T)$ acting on $p(\fC)/{\equiv_\KP}$.
	
	However, the general case (when $E$ is not refined by $\equiv_\KP$) does not have such a straightforward reduction, as we do not have any obvious choice of a compact Hausdorff group acting on the strong type space. To prove Theorem~\ref{thm:main_abstract} (which will be the main ingredient of the proof of Main Theorem~\ref{mainthm_group_types}), we construct another compact Polish group $\hat G$ acting on a class space instead of $\Gal_\KP(T)$, with properties similar to the action of $\Gal_\KP(T)$ above.

	The following corollary may be considered a toy version of Main~Theorem~\ref{mainthm:abstract_smt}.
	
	\begin{cor}
		\label{cor:main_over_KP}
		Assume $T$ is countable. Let $E$ be a strong type on $p(\fC)$ for some $p\in S(\emptyset)$ (in countably many variables). Assume that $E$ is coarser than $\equiv_\KP$. Then exactly one of the following conditions holds:
		\begin{enumerate}
			\item
			$p(\fC)/E$ is finite and $E$ is relatively definable,
			\item
			$|p(\fC)/E|=2^{\aleph_0}$ and $E$ is type-definable and smooth,
			\item
			$E$ is non-smooth; in this case, if $E$ is analytic, then $|p(\fC)/E|=2^{\aleph_0}$.
		\end{enumerate}
		In particular, $E$ is smooth if and only it it is type-definable.
	\end{cor}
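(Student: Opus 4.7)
The proof will essentially apply Lemma~\ref{lem:abstract_trich} (or equivalently Proposition~\ref{prop:trichotomy_for_groups}) to the subgroup $H \leq \Gal_\KP(T)$ produced by Theorem~\ref{thm:main_over_KP}, and then translate the resulting trichotomy back to the relation $E$ using the transfer properties already proved in that theorem. Since $T$ is countable, $\Gal_\KP(T)$ is a compact Polish group (Remark~\ref{rem: GalKP is Polish}), so this setup is legitimate.

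More precisely, fix any $a \models p$ and let $H = \ker r_{[a]_E}$ be the stabiliser of $[a]_E$ in $\Gal_\KP(T)$ as in Theorem~\ref{thm:main_over_KP}. By that theorem, the orbit map $r_{[a]_E}$ is a topological quotient map onto $p(\fC)/E$, the correspondences ``type-definable $\leftrightarrow$ closed'', ``relatively definable $\leftrightarrow$ clopen'', ``analytic $\leftrightarrow$ analytic'' all hold between $E$ and $H$, and we have the Borel equivalence $E \sim_B E_H$. Hence all hypotheses of Lemma~\ref{lem:abstract_trich} are satisfied with $G = \Gal_\KP(T)$, and we may apply Proposition~\ref{prop:trichotomy_for_groups} to $H$.

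The proposition yields exactly four mutually exclusive options for $H$: open (i.e.\ finite index), closed of index $2^{\aleph_0}$, Baire but non-smooth, or not Baire (which forces non-smoothness). Translating via Theorem~\ref{thm:main_over_KP}: the first case gives $E$ relatively definable with finitely many classes (case (1)); the second gives $E$ type-definable with $2^{\aleph_0}$ classes, and $E$ is smooth here by Fact~\ref{fct:clsd_smth} / Remark~\ref{rem:tdf_implies_smooth} (case (2)); the two remaining cases both give $E$ non-smooth (case (3)). If in addition $E$ is analytic, then $H$ is analytic by Theorem~\ref{thm:main_over_KP}(3), hence has the Baire property, so the fourth option of Proposition~\ref{prop:trichotomy_for_groups} is excluded, and in the third option we already have $[\Gal_\KP(T):H] = 2^{\aleph_0}$, which gives $|p(\fC)/E| = 2^{\aleph_0}$ via the homeomorphism $p(\fC)/E \cong \Gal_\KP(T)/H$.

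The final ``in particular'' clause is then immediate: in cases (1) and (2) the relation $E$ is type-definable (and smooth), while case (3) explicitly gives non-smoothness, so smoothness is equivalent to type-definability. There is no real obstacle here beyond verifying that the three-way listing in the statement corresponds to grouping together the two non-smooth options of Proposition~\ref{prop:trichotomy_for_groups}; the whole proof is a direct application of Theorem~\ref{thm:main_over_KP} together with the group-theoretic trichotomy, so essentially all work has already been done.
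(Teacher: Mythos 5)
Your proposal is correct and follows essentially the same route as the paper: apply Theorem~\ref{thm:main_over_KP} to obtain the subgroup $H\leq\Gal_\KP(T)$ with all the stated transfer properties, then invoke the group-theoretic trichotomy (Proposition~\ref{prop:trichotomy_for_groups}, packaged as Lemma~\ref{lem:abstract_trich}) and translate back. The only thing the paper is slightly more explicit about is which compact Polish space and which equivalence relation Lemma~\ref{lem:abstract_trich} is actually applied to — namely $X_M$, $E^M$, and $x_0=\tp(a/M)$ for a countable model $M$ — which is what makes the notion of smoothness of $E$ (defined via $E^M$) literally line up with the hypotheses of that lemma; your appeal to $E\sim_B E_H$ and Fact~\ref{fct: Borel in various senses} implicitly covers the same ground.
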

	\begin{proof}
		Fix any $a\models p$, and let $X=p(\fC)$. Note that by Theorem~\ref{thm:main_over_KP}, we have an action of $\Gal_\KP(T)$ on $X/E$ such that that the stabiliser $H$ of $[a]_E$ is analytic if $E$ is, the orbit map of $[a]_E$ is a quotient map, and $E\sim_B G/H$. On the other hand, for any countable model $M$, $X_M$ is a compact Polish space, the quotients $X/E$ and $X_M/E^M$ are homeomorphic, $E^M$ is analytic (by Fact~\ref{fct: Borel in various senses}) if and only if $E$ is and by definition, $E^M\sim_B E$. Thus, the assumptions of Lemma~\ref{lem:abstract_trich} are satisfied for $X=X_M$, $E=E^M$ and $x_0=\tp(a/M)$ (note that since $T$ is countable, $\Gal_\KP(T)$ is a compact Polish group). Since relative definability, type-definability and smoothness of $E$ are equivalent to $E^M$ being clopen, closed or smooth (respectively) and $X/E$ and $X_M/E^M$ have the same cardinality, the conclusion follows.
	\end{proof}

	\chapter{Toolbox}
	\label{chap:toolbox}
	In this chapter, we develop some more advanced tools in topological dynamics, which are more advanced than the ones listed in Chapter~\ref{chap:prelims}. They will be to prove the main theorems in the last two sections of Chapter~\ref{chap:grouplike}. We also explore the connections between the model-theoretic notion of NIP and the dynamical notion of tameness, which will be useful mainly in Chapter~\ref{chap:applications}. Most of the content of this chapter comes from \cite{KR18} (joint with Krzysztof Krupiński).
	\section{From topological dynamics to Polish spaces}
	\label{sec:top_dyn_to_Polish}
	The main outcome of this section is the construction of a Polish compact group associated with a given metrisable dynamical system. We also obtain some more general statements useful in the non-metrisable case.
	
	Throughout this section, $G$ is an abstract group and $(G,X,x_0)$ is a (compact) $G$-ambit, i.e.\ $G$ acts on $X$ by homeomorphisms and $G\cdot x_0$ is dense in $X$.
	We use the notation of Section~\ref{sec:prel_topdyn}. In particular, we use $EL$ for the Ellis semigroup of $G$ acting on $X$, $\cM$ for a fixed minimal left ideal in $EL$, and $u$ for a fixed idempotent in $\cM$.
	
	\subsection*{Good quotients of the Ellis semigroup and the Ellis group}
	In this subsection, we find a rich Polish quotient of the Ellis group of a metric dynamical system (i.e.\ when $X$ is metrisable).
	
	We have a natural map $R\colon EL\to X$ given by $R(f)=f(x_0)$. This gives us an equivalence relation $\equiv$ on $EL$ given by $f_1\equiv f_2$ whenever $R(f_1)=R(f_2)$. Note that $R$ is continuous, so $\equiv$ is closed, and by compactness and the density of $G\cdot x_0$ in $X$, $R$ is surjective, so, abusing notation, we topologically identify $EL/{\equiv}$ with $X$. Similarly, for $A\subseteq EL$, we identify $A/{\equiv}$ with $R[A]\subseteq X$ as sets (the topology need not be the same, but it is if $A$ is closed, by compactness; whenever there is risk of confusion between distinct topologies, it will be clarified). The goal of this subsection is to find a Polish quotient of $u\cM/H(u\cM)$ which will be sufficiently well-behaved with respect to $R$.
	
	\begin{prop}
		\label{prop:commu}
		$R$ commutes with (left) multiplication in $EL$. More precisely, suppose $f_1,f_2\in EL$. Then $R(f_1f_2)=f_1(R(f_2))$. In the same way, $R$ commutes with multiplication by the elements of $G$.
	\end{prop}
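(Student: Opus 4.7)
The plan is to observe that this proposition is essentially a definitional unfolding, so the main task is to correctly interpret the multiplication in $EL$ and the $G$-action on $EL$, and check that each composition indeed takes place in $X^X$ rather than in some exterior sense.

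First I would recall that, by Fact~\ref{fct:ellis_clst_pre}, $EL \subseteq X^X$ is a compact left topological semigroup whose semigroup operation is ordinary function composition (inherited from $X^X$). Thus for $f_1, f_2 \in EL$, the product $f_1 f_2$ is literally the map $x \mapsto f_1(f_2(x))$. Evaluating at the distinguished point $x_0$ then gives
\[
R(f_1 f_2) = (f_1 f_2)(x_0) = f_1\bigl(f_2(x_0)\bigr) = f_1\bigl(R(f_2)\bigr),
\]
which is exactly the first claim.

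For the second claim, I would recall that the $G$-action on $EL$ (also given in Fact~\ref{fct:ellis_clst_pre}) is defined by $g \cdot f := \pi_{X,g} \circ f$, and that $\pi_{X,g} \in EL$ by the very definition of $EL$ (Definition~\ref{dfn:ellis_group_pre}). Therefore the second claim is a special case of the first applied to $f_1 = \pi_{X,g}$ and $f_2 = f$:
\[
R(g \cdot f) = R(\pi_{X,g}\, f) = \pi_{X,g}\bigl(R(f)\bigr) = g \cdot R(f).
\]

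There is no real obstacle here; the only thing one has to be careful about is not confusing the two different uses of the symbol $g$: on the left of $R(g\cdot f)$ the element $g$ acts on $EL$ via the fixed identification $g \leftrightarrow \pi_{X,g}$, whereas on the right $g$ acts on $X$ directly as the homeomorphism $\pi_{X,g}$. Once one adopts the convention (already fixed in Definition~\ref{dfn:ellis_group_pre}) of writing $g$ for $\pi_{X,g}$, the two statements collapse to a single one, and the proof is complete.
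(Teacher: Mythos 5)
Your proof is correct and follows exactly the same route as the paper's: unfold $R$ and the semigroup operation as function composition evaluated at $x_0$, then derive the $G$-action case by specializing $f_1 = \pi_{X,g}$. The only difference is that you spell out the bookkeeping more explicitly than the paper's one-line argument.
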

	\begin{proof}
		$R(f_1f_2)=(f_1f_2)(x_0)=f_1(f_2(x_0))=f_1(R(f_2))$. From this, the second part follows, since $g\cdot f=\pi_gf$ for $g \in G$.
	\end{proof}
	
	\index{D@$D$}
	Let $D=[u]_{\equiv}\cap u\cM$. More explicitly, $D=\{f\in u\cM \mid f(x_0)=u(x_0) \}$. This $D$ will be important in much of the thesis.
	
	\begin{lem}
		\label{lem:D_closed}
		$D$ is a ($\tau$-)closed subgroup of $u\cM$ (see Fact~\ref{fct:tau_top_pre}(3)).
	\end{lem}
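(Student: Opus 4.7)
The plan is to verify that $D$ is a subgroup of $u\cM$ by direct calculation with the semigroup structure and the identity $R(fg) = f(R(g))$ from Proposition~\ref{prop:commu}, and then to check $\tau$-closedness by evaluating the closure operator $\cl_\tau(A) = u(u \circ A)$ on $D$ (Fact~\ref{fct:tau_top_pre}(3)).

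For the subgroup condition, the identity of the group $u\cM$ is $u$, which trivially lies in $D$. For closure under the group product, given $f_1, f_2 \in D$ the computation
\[
R(f_1 f_2) = f_1(R(f_2)) = f_1(u(x_0)) = (f_1 u)(x_0) = f_1(x_0) = u(x_0)
\]
uses Proposition~\ref{prop:commu} and the fact that $f_1 u = f_1$ (as $u$ is the identity of $u\cM$). For inverses, evaluating the identity $f^{-1} f = u$ (in $u\cM$) at $x_0$ yields $f^{-1}(u(x_0)) = u(x_0)$, and the relation $f^{-1} u = f^{-1}$ rewrites the left side as $f^{-1}(x_0)$, so $f^{-1} \in D$.

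For $\tau$-closedness, I would take an arbitrary $f \in \cl_\tau(D) = u(u \circ D)$ and write $f = uh$ with $h \in u \circ D$, so that $h = \lim_i g_i d_i$ for some net $(g_i)$ in $G$ with $\pi_{g_i} \to u$ and some $d_i \in D$. Continuity of $R \colon EL \to X$ together with Proposition~\ref{prop:commu} then give
\[
R(f) = u(R(h)) = u\bigl(\lim_i g_i(R(d_i))\bigr) = u\bigl(\lim_i \pi_{g_i}(u(x_0))\bigr).
\]
Because $\pi_{g_i} \to u$ pointwise and $u$ is idempotent, $\pi_{g_i}(u(x_0)) \to u(u(x_0)) = u(x_0)$, hence $R(f) = u(u(x_0)) = u(x_0)$, proving $f \in D$.

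I do not expect a serious obstacle here; the only subtle point is that the $\tau$-topology is strictly coarser than the subspace topology on $u\cM$ inherited from $EL$ (Fact~\ref{fct:tau_top_pre}(5)), so although $D = R^{-1}(\{u(x_0)\}) \cap u\cM$ is plainly closed in the subspace topology (as the preimage of a point under the continuous map $R$ into the Hausdorff space $X$), this is not enough on its own. The calculation above sidesteps the issue by directly exploiting pointwise convergence of $\pi_{g_i} \to u$ at the single relevant point $u(x_0)$.
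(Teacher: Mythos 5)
Your proof is correct and follows essentially the same approach as the paper: both verify the subgroup property by the identical calculation with $R(f_1f_2) = f_1(R(f_2))$ and $R(d^{-1}) = d^{-1}(R(d))$, and both verify $\tau$-closedness by picking a net witnessing membership in $u\circ D$ and using continuity of $R$ together with Proposition~\ref{prop:commu}. The only cosmetic difference is that you use the form $\cl_\tau(A) = u(u\circ A)$ while the paper uses $\cl_\tau(A) = (u\cM)\cap(u\circ A)$, and you evaluate the pointwise limit $\pi_{g_i}(u(x_0))\to u(u(x_0))$ directly where the paper phrases the same step as $R(\lim g_iu)=R(u^2)$ via left continuity of multiplication — these are equivalent.
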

	\begin{proof} Consider any $d \in \cl_\tau(D)$.
		Let $(g_i),(d_i)$ be nets as in the definition of $u\circ D$, i.e.\ such that $g_i\in G$, $g_i\to u$ and $g_id_i\to d$.
		By continuity of $R$, because $R(d_i)=R(u)$ (by the definition of $D$), and by the preceding remark, as well as left continuity of multiplication in $EL$, we have
		\[
		R(d)=\lim R(g_id_i)=\lim g_iR(d_i)=\lim g_iR(u)=R(\lim g_iu)=R(u^2)=R(u).
		\]
		This shows that $D$ is $\tau$-closed.
		
		To see that $D$ is a subgroup of $u\cM$, take any $d,d_1,d_2\in D$. Then:
		\[
		R(d_1d_2)=d_1(R(d_2))=d_1(R(u))=R(d_1u)=R(d_1)=R(u),
		\]
		\[
		R(d^{-1})=R(d^{-1}u)=d^{-1}(R(u))=d^{-1}(R(d))=R(d^{-1}d)=R(u).\qedhere
		\]
	\end{proof}
	
	The following simple example shows that the subgroups $D$ and $H(u\cM)D$ do not have to be normal in $u\cM$.
	\begin{ex}
		\label{ex:D_not_normal}
		Consider $G=S_3$ acting naturally on $X=\{1,2,3\}$ (with the discrete topology), and take $x_0=1$. Then $G=u\cM$ and $D=H(u\cM)D$ is the stabilizer of $1$, which is not normal in $u\cM$. \xqed{\lozenge}
	\end{ex}
	
	\begin{lem}
		\label{lem:D_kernel_equiv}
		Let $f_1,f_2\in u\cM$. Then $f_1\equiv f_2$ (i.e.\ $R(f_1):=f_1(x_0)=f_2(x_0)=:R(f_2)$) if and only if $f_1^{-1}f_2\in D$ (note that here, $f_1^{-1}$ is the inverse of $f_1$ in $u\cM$, not the inverse function), i.e.\ $f_1D=f_2D$. (And thus $u\cM/{\equiv}$ and $u\cM/D$ can and will be identified as sets.)
	\end{lem}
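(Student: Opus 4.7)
The plan is to prove both directions of the equivalence by direct computation, using two ingredients: Proposition~\ref{prop:commu}, which says that $R(fg) = f(R(g))$ for $f,g \in EL$, and the structure of $u\cM$ from Fact~\ref{fct:ideals_ellis_pre}(4) --- namely, that $u\cM$ is a group whose operation is the restriction of the semigroup operation of $EL$, with $u$ as its identity. In particular, for every $f \in u\cM$ we have $fu = uf = f$ and $f^{-1}f = ff^{-1} = u$, where both sides are interpreted as compositions of functions $X \to X$.

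For the ``if'' direction, I would assume $f_1^{-1}f_2 \in D$, i.e.\ $R(f_1^{-1}f_2) = u(x_0) = R(u)$, and compute
\[
R(f_2) = R(f_1\cdot(f_1^{-1}f_2)) = f_1(R(f_1^{-1}f_2)) = f_1(R(u)) = R(f_1 u) = R(f_1),
\]
using Proposition~\ref{prop:commu} twice and $f_1 u = f_1$. For the ``only if'' direction, I would assume $R(f_1) = R(f_2)$ and compute
\[
R(f_1^{-1}f_2) = f_1^{-1}(R(f_2)) = f_1^{-1}(R(f_1)) = R(f_1^{-1}f_1) = R(u) = u(x_0),
\]
so $f_1^{-1}f_2 \in D$, as required.

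I do not anticipate any genuine obstacle: the argument amounts to a three- or four-step calculation in each direction. The only point where one must tread carefully is interpreting $f_1^{-1}$ correctly --- it denotes the group inverse of $f_1$ inside $u\cM$ (computed with respect to the semigroup operation of $EL$), and \emph{not} the compositional inverse of $f_1$ viewed as a self-map of $X$ (which in general does not exist, since the elements of $EL$ need not be bijections). However, for the computations above one only needs $f_1\cdot f_1^{-1} = u$ and $f_1^{-1}\cdot f_1 = u$ as elements of $EL$, which is precisely what the group structure on $u\cM$ provides.
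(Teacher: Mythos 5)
Your proof is correct and is essentially the paper's own argument: both directions reduce to the same three- or four-step chain of applications of Proposition~\ref{prop:commu} together with $f_1u = f_1$ and $f_1^{-1}f_1 = u$ in $u\cM$, and your remark about $f_1^{-1}$ being the group inverse in $u\cM$ rather than a compositional inverse is exactly the right caveat.
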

	\begin{proof}
		In one direction, if $f_1\equiv f_2$,
		\[
		R(f_1^{-1}f_2)=f_1^{-1}(R(f_2))=f_1^{-1}(R(f_1))=R(f_1^{-1}f_1)=R(u).
		\]
		In the other direction, if $R(f_1^{-1}f_2)=R(u)$, then
		\[
		R(f_1)=R(f_1u)=f_1(R(u))=f_1(R(f_1^{-1}f_2))=R(f_1f_1^{-1}f_2)=R(f_2)\qedhere
		\]
	\end{proof}
	
	Recall that by Fact~\ref{fct:tau_top_pre}(8), we have the compact Hausdorff topological group $u\cM/H(u\cM)$. Since $D$ is closed in $u\cM$ (and hence compact), it follows that the quotient $H(u\cM)D/H(u\cM)$ is a closed subgroup in $u\cM/H(u\cM)$. Consequently, $u\cM/(H(u\cM)D)$ (which one may also describe as the quotient of $u\cM/H(u\cM)$ by $H(u\cM)D/H(u\cM)$, as in any case the topology is the quotient topology from $u\cM$ induced by the obvious maps) is a compact Hausdorff space (by Fact~\ref{fct:quotient_by_closed_subgroup}). By applying Lemma~\ref{lem:D_kernel_equiv}, we conclude that the quotient map $u\cM\to u\cM/(H(u\cM)D)$ factors through $u\cM/{\equiv}$, which we identify with $R[u\cM]\subseteq X$, giving us the following commutative diagram:
	\begin{center}
		\begin{tikzcd}
		u\cM\arrow[r] \arrow[d,"R"]& u\cM/H(u\cM)\arrow[d]\\
		R[u\cM] \arrow[r,"\widehat j"] & u\cM/(H(u\cM)D).
		\end{tikzcd}
	\end{center}
	
	\begin{prop}
		Suppose $\sim$ is a closed equivalence relation on a compact Hausdorff space $X$, while $F\subseteq X$ is closed. Then the set $[F]_\sim$ of all elements equivalent to some element of $F$ is also closed.\xqed{\lozenge}
	\end{prop}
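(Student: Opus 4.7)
The plan is to realize $[F]_\sim$ as the continuous image of a compact set, and conclude by the Hausdorff property of $X$. Consider the two projection maps $\pi_1,\pi_2\colon X\times X\to X$ from the product space, which are continuous. By definition,
\[
[F]_\sim \;=\; \pi_2\bigl[{\sim}\cap (F\times X)\bigr].
\]
Since $\sim$ is closed in $X^2$ by hypothesis, and $F\times X$ is closed in $X^2$ (as $F$ is closed in $X$), the intersection ${\sim}\cap(F\times X)$ is closed in the compact Hausdorff space $X^2$, hence compact. Its image under the continuous map $\pi_2$ is then compact, and therefore closed in the Hausdorff space $X$.

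An essentially equivalent alternative would be to invoke Fact~\ref{fct:quot_T2_iff_closed}: since $\sim$ is closed, the quotient $X/{\sim}$ is Hausdorff, so the quotient map $q\colon X\to X/{\sim}$ is a continuous surjection between compact Hausdorff spaces and hence closed by Remark~\ref{rem: continuous surjection is closed}. Then $q[F]$ is closed in $X/{\sim}$, and $[F]_\sim = q^{-1}[q[F]]$ is closed by continuity of $q$. There is no real obstacle here; the only thing to be careful about is that one does need \emph{some} separation hypothesis (Hausdorffness of $X$, or equivalently closedness of the diagonal) to ensure that images of compact sets are closed — if one dropped Hausdorffness, the conclusion could fail.
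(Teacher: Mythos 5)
Your proof is correct and follows essentially the same route as the paper: the paper's entire argument is the one-line observation that $[F]_\sim$ is the projection onto the first axis of $(X\times F)\cap{\sim}$, which (by the symmetry of $\sim$) is the same decomposition you use, just with the roles of the two coordinates swapped. Your second, quotient-map argument is a valid alternative but not needed.
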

	\begin{proof}
		$[F]_\sim$ is the projection of $(X\times F)\cap {\sim}$ onto the first axis.
	\end{proof}
	
	\begin{lem}
		\label{lem:jhat_cont}
		On $u\cM/{\equiv}=u\cM/D$, the topology induced from the $\tau$-topology on $u\cM$ is refined by the subspace topology inherited from $EL/{\equiv}=X$.
		
		Consequently, $\widehat j$ in the above diagram is continuous (with respect to the quotient $\tau$ topology on $u\cM/H(u\cM)D$.)
	\end{lem}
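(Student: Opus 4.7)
The plan is to prove the first assertion by showing that every $\tau$-closed, $D$-saturated subset $F \subseteq u\cM$ has image $R[F]$ closed in the subspace topology that $R[u\cM]$ inherits from $X$; this translates directly into the statement that the subspace topology is finer than the $\tau$-quotient topology. The idea is to identify $R[F]$ with the intersection $R[\overline F] \cap R[u\cM]$, where $\overline F$ denotes the closure of $F$ taken in $EL$. Since $R\colon EL \to X$ is continuous and $\overline F$ is compact, $R[\overline F]$ is closed in $X$, which would then yield what we want.

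The content lies in the nontrivial inclusion $R[\overline F] \cap R[u\cM] \subseteq R[F]$, and this is where I expect the only real (though still short) work. Given $x \in R[\overline F] \cap R[u\cM]$, write $x = R(g) = R(h)$ with $g \in \overline F$ and $h \in u\cM$, and pick a net $(f_i)$ in $F$ with $f_i \to g$ in $EL$. Fact~\ref{fct:tau_top_pre}(4) then forces $f_i \to ug$ in the $\tau$-topology, so $\tau$-closedness of $F$ gives $ug \in F$. Proposition~\ref{prop:commu} yields $R(ug) = u(R(g)) = u(x)$, while $h \in u\cM$ implies $uh = h$, hence $u(x) = u(R(h)) = R(uh) = R(h) = x$. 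Therefore $x = R(ug) \in R[F]$, completing the inclusion. This slightly delicate juggling of $u$ as both a semigroup idempotent and a function on $X$ is the only step that is not purely formal.

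The consequence for $\widehat j$ is then a short formal step. First, $H(u\cM)D$ is $\tau$-closed in $u\cM$: $D$ is $\tau$-compact by Lemma~\ref{lem:D_closed} together with $\tau$-compactness of $u\cM$, and $u\cM/H(u\cM)$ is a compact Hausdorff topological group by Fact~\ref{fct:tau_top_pre}(8), so the image of $D$ in it is compact and hence closed, and $H(u\cM)D$ is its preimage under a continuous map. Consequently, the quotient map $q\colon (u\cM,\tau)\to u\cM/H(u\cM)D$ is continuous, and because $D \subseteq H(u\cM)D$ it factors through $\pi_D\colon u\cM \to u\cM/D$; by the universal property of the quotient topology, this produces a continuous $\widehat j\colon (u\cM/D, \tau\text{-quot}) \to u\cM/H(u\cM)D$. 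Since the first part asserts that the subspace topology on $R[u\cM]$ is finer than the $\tau$-quotient topology on $u\cM/D$, the map $\widehat j$ remains continuous when its domain carries the subspace topology inherited from $X$, which is the asserted statement.
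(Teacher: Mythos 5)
Your proof is correct and essentially matches the paper's argument: both take a $\tau$-closed $F\subseteq u\cM$, use the $\tau$-convergence statement (Fact~\ref{fct:tau_top_pre}(4)) to put the $\tau$-limit $ug$ back into $F$, and then use Proposition~\ref{prop:commu} to match $R$-values. The only real differences are cosmetic or slight simplifications. You phrase things downstairs in $X$ (showing $R[F]=R[\overline F]\cap R[u\cM]$) rather than upstairs in $EL$ (showing $[\overline F]_\equiv\cap u\cM=F$), and as a result you sidestep the invocation of right $D$-invariance of $F$ that the paper uses in its last step: you conclude $x=R(ug)\in R[F]$ directly from $ug\in F$, whereas the paper deduces $f'\in F$ from $f'\equiv uf\in F$ via $D$-invariance. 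That's a mild tightening.

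One minor misattribution in the second half: the continuity of the quotient map $q\colon(u\cM,\tau)\to u\cM/H(u\cM)D$ is tautological from the definition of the quotient topology and does not depend on $H(u\cM)D$ being $\tau$-closed. That closedness observation is correct and relevant elsewhere (it makes $u\cM/H(u\cM)D$ Hausdorff, as the paper notes before the lemma), but it is not what makes $q$ continuous. Your subsequent factorisation through $u\cM/D$ and appeal to the first part of the lemma is exactly the paper's intended derivation.
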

	\begin{proof}
		We need to show that if $F\subseteq u\cM$ is $\tau$-closed and right $D$-invariant (i.e.\ $FD=F$), then there is a closed $\equiv$-invariant $\widetilde F\subseteq EL$ such that $\widetilde F\cap u\cM=F$. By the preceding remark, since $\equiv$ is closed, it is enough to check that $[\bar F]_{\equiv}\cap u\cM=F$, where $\bar F$ is the closure of $F$ in $EL$.
		
		Let $f'\in [\bar F]_{\equiv}\cap u\cM$. Then we have a net $(f_i)\subseteq F$ such that $f_i\to f$ and $f\equiv f'$. By Fact~\ref{fct:tau_top_pre}(4), in this case, $f_i$ converges in the $\tau$-topology to $uf$, which is an element of $F$ (because $F$ is $\tau$-closed). Since $F$ is right $D$-invariant (and hence $\equiv$-invariant in $u\cM$), it is enough to show that $f'\equiv uf$. But this is clear since
		\[
		R(uf)=u(R(f))=u(R(f'))=R(uf')=R(f').\qedhere
		\]
	\end{proof}
	
	As indicated before, we want to find diagrams similar to the one used in Lemma~\ref{lem:easy}, which we will later use to prove Theorem~\ref{thm:main_abstract} (and indirectly, Main~Theorems~\ref{mainthm:abstract_smt}, \ref{mainthm_group_types}, \ref{mainthm:smt} and \ref{mainthm:tdgroup}). As an intermediate step, we would like to complete the following diagram (for now, we do not care about continuity).
	
	\begin{center}
		\begin{tikzcd}
		EL\arrow[d]\arrow[r,swap,outer sep=3pt,"f\mapsto fu"] & \cM\arrow[d]\arrow[r,swap,outer sep=3pt,"f\mapsto uf"] & u\cM\arrow[d] \\
		EL/{\equiv} \arrow[r,dashed]\arrow[d,equal] & \cM/{\equiv} \arrow[r,dashed]\arrow[d,equal] & u\cM/D\\[-1em]
		X& R[\cM]& {}
		\end{tikzcd}
	\end{center}
	The dashed arrow on the right exists: if $R(f_1)=R(f_2)$, then $u(R(f_1))=u(R(f_2))$, so, by Proposition~\ref{prop:commu}, also $R(uf_1)=R(uf_2)$, and hence, by Lemma \ref{lem:D_kernel_equiv}, $uf_1D = uf_2D$. Unfortunately, there is no reason for the arrow on the left to exist (i.e.\ $f_1\equiv f_2$ does not necessarily imply $f_1u\equiv f_2u$). However, we can remedy it by replacing $EL/{\equiv}$ with $EL/{\equiv'}$, where $\equiv'$ is given by $f_1\equiv' f_2$ when $R(f_1)=R(f_2)$ and $R(f_1u)=R(f_2u)$. This gives us a commutative diagram, substituting for the above one (again, not all of these arrows have to be continuous):
	\begin{center}
		\begin{tikzcd}
		EL\arrow[d]\arrow[r,swap,outer sep=3pt,"f\mapsto fu"] & \cM\arrow[d]\arrow[r,swap,outer sep=3pt,"f\mapsto uf"] & u\cM\arrow[d] \\
		EL/{\equiv'} \arrow[r]\arrow[d] & \cM/{\equiv} \arrow[r]\arrow[d,equal] & u\cM/D\\[-1em]
		X& R[\cM]& {}
		\end{tikzcd}
	\end{center}
	
	\begin{prop}\label{prop: quotients of EL are Polish}
		$EL/{\equiv}$ and $EL/{\equiv'}$ are both compact Hausdorff spaces.
		
		If $X$ is second-countable (by compactness, equivalently, Polish), so is $EL/{\equiv}$, as well as $EL/{\equiv'}$.
	\end{prop}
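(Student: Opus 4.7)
The plan is to invoke Fact~\ref{fct:quot_T2_iff_closed}: a quotient of a compact Hausdorff space is Hausdorff exactly when the equivalence relation is closed. So the first task is to identify both $\equiv$ and $\equiv'$ as preimages of diagonals under continuous maps into Hausdorff spaces. The map $R\colon EL\to X$, $R(f)=f(x_0)$, is the restriction to $EL\subseteq X^X$ of the projection onto the $x_0$-coordinate, hence continuous. Thus $\equiv=(R\times R)^{-1}[\Delta_X]$ is closed.

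For $\equiv'$, the key observation is that although right multiplication in $EL$ need not be continuous, the specific map $f\mapsto R(fu)$ is continuous, because, by Proposition~\ref{prop:commu}, $R(fu)=f(u(x_0))$, which is the evaluation at the fixed point $y_0:=u(x_0)\in X$, i.e.\ the restriction to $EL$ of the projection of $X^X$ onto the $y_0$-coordinate. Write $R'(f):=f(y_0)$. Then $\equiv'$ is the preimage of $\Delta_{X\times X}$ under the continuous map $(R,R')\times (R,R')\colon EL^2\to (X\times X)^2$, hence closed. Applying Fact~\ref{fct:quot_T2_iff_closed} gives that both $EL/{\equiv}$ and $EL/{\equiv'}$ are compact Hausdorff.

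For the second-countable/Polish case, I would note that $R$ induces a continuous bijection $\bar R\colon EL/{\equiv}\to R[EL]\subseteq X$; since the domain is compact and the codomain Hausdorff, $\bar R$ is a homeomorphism onto its image by Remark~\ref{rem: continuous surjection is closed}. Similarly, $(R,R')$ induces a homeomorphism from $EL/{\equiv'}$ onto its image in $X\times X$. Hence $EL/{\equiv}$ embeds into $X$, and $EL/{\equiv'}$ embeds into $X\times X$, so if $X$ is second countable then both quotients are second countable compact Hausdorff, hence Polish by the first fact of the chapter. I do not expect a real obstacle here; the only mildly subtle point is resisting the temptation to use right continuity of the semigroup operation (which fails) and instead noticing that $R(fu)$ depends on $f$ only through evaluation at the single point $u(x_0)$.
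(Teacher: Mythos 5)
Your proposal is correct and takes essentially the same route as the paper: both produce a continuous map from $EL$ into a product of two compact Hausdorff spaces (your $(R,R')\colon EL\to X\times X$ is the same, after identification, as the paper's diagonal map $f\mapsto([f]_\equiv,[fu]_\equiv)$ into $EL/{\equiv}\times\cM/{\equiv}$), and then observes that the induced map from the quotient is a homeomorphism onto a closed subspace. The one thing you make more explicit than the paper does is the reason the second coordinate $f\mapsto R(fu)$ is continuous despite the failure of right-continuity of the semigroup operation — namely that by Proposition~\ref{prop:commu} it equals evaluation at the fixed point $y_0=u(x_0)$ — which is exactly the subtlety worth spelling out.
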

	\begin{proof}
		Since $EL/{\equiv}$ is homeomorphic to $X$, the part concerning $EL/{\equiv}$ is clear.
		
		For $EL/{\equiv'}$, note first that $\cM/{\equiv}$ is a closed subspace of $EL/{\equiv}$, and hence it is Polish whenever $X$ is. To complete the proof, use compactness of $EL$, Hausdorffness of $EL/{\equiv}$ and $\cM/{\equiv}$, and continuity of the diagonal map $d \colon EL\to EL/{\equiv}\times \cM/{\equiv}$ given by $f\mapsto ([f]_\equiv,[fu]_\equiv)$ in order to deduce that $EL/{\equiv'}$ is homeomorphic to $d[EL]$ which is closed.
	\end{proof}

	\begin{prop}
		\label{prop:from_cluM}
		The formula $[f]_\equiv=f(x_0)=R(f)\mapsto uf/H(u\cM)D$ describes a well-defined continuous surjection $R[\overline{u\cM}]\to u\cM/H(u\cM)D$, where $\overline{u\cM}$ is the closure in $EL$.
	\end{prop}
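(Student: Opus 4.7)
The plan is to verify well-definedness, surjectivity, and continuity in that order.

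For well-definedness, suppose $f_1,f_2\in\overline{u\cM}$ with $R(f_1)=R(f_2)$. First observe that $uf_i \in u\cM$: indeed $uf_i \in \cM$ because $\cM$ is a left ideal, and $u(uf_i)=uf_i$ shows $uf_i\in u\cM$. By Proposition~\ref{prop:commu}, $R(uf_i)=u(R(f_i))$, so $R(f_1)=R(f_2)$ forces $R(uf_1)=R(uf_2)$. Lemma~\ref{lem:D_kernel_equiv} then gives $uf_1 D = uf_2 D$, hence $uf_1\,H(u\cM)D = uf_2\,H(u\cM)D$.

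Surjectivity is then immediate: for any $f\in u\cM\subseteq\overline{u\cM}$, we have $uf=f$ (since $u$ is the identity of the group $u\cM$), so $R(f)\mapsto f/H(u\cM)D$; as $f$ ranges over $u\cM$, the images exhaust $u\cM/H(u\cM)D$.

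The main content is continuity, and this is where I would invoke Proposition~\ref{prop:strange_cont_pre}. The target $u\cM/H(u\cM)D$ is compact Hausdorff (hence regular): $D$ is $\tau$-closed in $u\cM$ by Lemma~\ref{lem:D_closed}, so $H(u\cM)D/H(u\cM)$ is a closed subgroup of the compact Hausdorff group $u\cM/H(u\cM)$ (using Fact~\ref{fct:tau_top_pre}(8) for the latter and compactness of $D$ to see that $H(u\cM)D$ is $\tau$-closed), and Fact~\ref{fct:quotient_by_closed_subgroup} yields the Hausdorff quotient. Thus the continuous composition $u\cM \to u\cM/H(u\cM) \to u\cM/H(u\cM)D$ (with the $\tau$-topology on the source) satisfies the hypothesis of Proposition~\ref{prop:strange_cont_pre}, so the map
\[
\Phi\colon \overline{u\cM} \to u\cM/H(u\cM)D,\qquad f\mapsto uf/H(u\cM)D
\]
is continuous when $\overline{u\cM}$ carries the subspace topology from $EL$.

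Finally, to transfer continuity along $R$: the map $R\restr_{\overline{u\cM}}\colon \overline{u\cM}\to R[\overline{u\cM}]$ is a continuous surjection from a compact space to a Hausdorff space (a subspace of $X$), so by Remark~\ref{rem: continuous surjection is closed} it is a topological quotient map. The well-definedness argument above shows that $\Phi$ factors through $R$ as a commutative triangle with $R$ as the quotient map, so Remark~\ref{rem:commu_quot} yields continuity of the induced map $R[\overline{u\cM}]\to u\cM/H(u\cM)D$. The main obstacle, as expected, is continuity, and the key that makes it work is Proposition~\ref{prop:strange_cont_pre}, which mediates between the $\tau$-topology on $u\cM$ (where the algebraic quotient is well behaved) and the subspace topology inherited from $EL$ (where $R$ is continuous).
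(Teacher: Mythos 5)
Your proposal is correct and follows essentially the same approach as the paper's proof: well-definedness via Proposition~\ref{prop:commu} and Lemma~\ref{lem:D_kernel_equiv}, continuity via Proposition~\ref{prop:strange_cont_pre}, then transfer along the quotient map $R\restr_{\overline{u\cM}}$. The only difference is that you re-derive the Hausdorffness of $u\cM/H(u\cM)D$ inline, whereas the paper establishes this in the paragraph immediately preceding the proposition.
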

	\begin{proof}$\,$
		\begin{center}
			\begin{tikzcd}
			&[-1.5em] \overline{u\cM} \ar{r}\ar{d} & u\cM/H(u\cM)\ar{d} \\
			R[\overline{u\cM}]\arrow[r,equals]&\overline{u\cM}/{\equiv}\ar{r} & u\cM/H(u\cM)D
			\end{tikzcd}
		\end{center}
		
		In the above diagram, the top arrow, given by $f\mapsto uf/H(u\cM)$, is continuous by Proposition~\ref{prop:strange_cont_pre}. The induced map $\overline{u\cM}\to u\cM/H(u\cM)D$ factors through the quotient map $\overline{u\cM} \to \overline{u\cM}/{\equiv}$ yielding a continuous map $\overline{u\cM}/{\equiv}\to u\cM/H(u\cM)D$: to see that, just notice that if $f_1\equiv f_2$, then $uf_1\equiv uf_2$, and then apply Lemma~\ref{lem:D_kernel_equiv}.
	\end{proof}

	\begin{cor}
		\label{cor:uM/HuMD_Polish}
		If $X$ is metrisable, then $u\cM/H(u\cM)D$ is a Polish space.
	\end{cor}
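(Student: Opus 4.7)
The plan is to exhibit $u\cM/H(u\cM)D$ as a continuous image of a compact metrisable space, and then invoke Fact~\ref{fct: preservation of metrizability}.

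First, I would observe that $\overline{u\cM}$ (the closure taken in $EL$) is compact, so its continuous image $R[\overline{u\cM}]$ is a compact subset of $X$. Since $X$ is compact Hausdorff and metrisable, $R[\overline{u\cM}]$ is a closed subspace of a compact metric space, hence itself compact metrisable, and therefore Polish.

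Next, I would invoke Proposition~\ref{prop:from_cluM}, which provides a continuous surjection $R[\overline{u\cM}] \twoheadrightarrow u\cM/H(u\cM)D$. On the target side, recall from the discussion immediately preceding Proposition~\ref{prop:from_cluM} that $u\cM/H(u\cM)$ is a compact Hausdorff topological group (by Fact~\ref{fct:tau_top_pre}(8)), and that $H(u\cM)D/H(u\cM)$ is a closed subgroup of it (since $D$ is $\tau$-closed by Lemma~\ref{lem:D_closed}, whence compact in the $\tau$-topology). Consequently $u\cM/H(u\cM)D$, which is the further quotient, is a compact Hausdorff space by Fact~\ref{fct:quotient_by_closed_subgroup}.

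Finally, I would apply Fact~\ref{fct: preservation of metrizability}: metrisability is preserved by continuous surjections between compact Hausdorff spaces. Thus $u\cM/H(u\cM)D$ is compact Hausdorff and metrisable, hence Polish. I do not expect any real obstacle; the only subtle point is that one should not try to deduce metrisability from $u\cM$ itself (which, as a subspace of $EL \subseteq X^X$, need not be metrisable in general), but rather from the concrete compact metric subset $R[\overline{u\cM}]$ of $X$, which is exactly what Proposition~\ref{prop:from_cluM} puts above $u\cM/H(u\cM)D$.
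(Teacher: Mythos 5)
Your proof is correct and takes essentially the same route as the paper: exhibit $u\cM/H(u\cM)D$ as a compact Hausdorff continuous image of the compact Polish space $R[\overline{u\cM}]$ via Proposition~\ref{prop:from_cluM}, then invoke Fact~\ref{fct: preservation of metrizability}. You merely spell out the compact Hausdorffness of the target and the warning about not arguing directly from $u\cM$, both of which the paper leaves implicit.
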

	\begin{proof}
		Note that $\overline{u\cM}$ is a compact space (equipped with the subspace topology from $EL$). Consequently, $R[\overline{u\cM}]=\overline{u\cM}/{\equiv}$ is a compact Polish space.
		
		Hence, by Proposition~\ref{prop:from_cluM}, $u\cM/H(u\cM)D$ is a compact Hausdorff space which is a continuous image of a compact Polish space. As such, it must be Polish by Fact \ref{fct: preservation of metrizability}.
	\end{proof}

	\subsection*{Tameness and Borel ``retractions"}
	
	\begin{prop}
		\label{prop:last_borel}
		Suppose $(G,X)$ is a tame metric dynamical system. Then for any $f_0\in EL$, the map $f\mapsto f_0f$ is $\equiv$-preserving and the induced transformation of $EL/{\equiv}$ is Borel.
		
		In particular, the map $\cM/{\equiv}\to u\cM/{\equiv}$ induced by $p\mapsto up$ is Borel, where both spaces are equipped with the subspace topology from $X=EL/{\equiv}$.
	\end{prop}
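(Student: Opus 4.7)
The invariance claim is immediate from the computation in Proposition~\ref{prop:commu}: if $R(f_1)=R(f_2)$, then
\[
R(f_0f_1)=f_0(R(f_1))=f_0(R(f_2))=R(f_0f_2),
\]
so $f\mapsto f_0f$ descends to a map $EL/{\equiv}\to EL/{\equiv}$.

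For the Borel content, I would identify $EL/{\equiv}$ with $X$ via $R$ and simply compute what the induced map is. Given $x\in X$, choose $f$ with $R(f)=x$; then the induced map sends $x=R(f)$ to $R(f_0f)=f_0(R(f))=f_0(x)$. So the induced transformation of $EL/{\equiv}=X$ is literally the element $f_0\in EL\subseteq X^X$, viewed as a self-map of $X$. Tameness of $(G,X)$ together with metrisability now enters via Fact~\ref{fct:tame_borel}: every element of $E(G,X)$ is a Borel map $X\to X$. Hence $f_0\colon X\to X$ is Borel, which is exactly what we want.

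For the ``in particular" part, I would argue as follows. Specialising $f_0=u$, the induced Borel transformation $\varphi\colon X\to X$ satisfies $\varphi([f]_\equiv)=[uf]_\equiv$. Restrict $\varphi$ to $\cM/{\equiv}\subseteq EL/{\equiv}=X$ (which is closed, as $\cM$ is closed in $EL$ by Fact~\ref{fct:ideals_ellis_pre}(1) and $R$ is a continuous surjection from a compact space to a Hausdorff one). Since $u\cM\subseteq \cM\subseteq EL$, the image of this restriction lies in $u\cM/{\equiv}$, and the restriction of a Borel map to a Borel subspace, co-restricted to a Borel subspace (with the subspace topology from $X$), is Borel. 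This gives the desired Borel map $\cM/{\equiv}\to u\cM/{\equiv}$.

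The only non-trivial ingredient is the appeal to tameness through Fact~\ref{fct:tame_borel}; without tameness one cannot expect $f_0\in EL$ to be even measurable in general (the pointwise closure of continuous functions can be extremely wild), so this is the sole place where the hypothesis is used. Everything else is bookkeeping with the identifications already established in Proposition~\ref{prop:commu} and the surrounding discussion.
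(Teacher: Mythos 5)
Your proof is correct and takes essentially the same route as the paper: reduce to the observation (via Proposition~\ref{prop:commu}) that the induced map on $EL/{\equiv}\cong X$ is literally the function $f_0\in EL\subseteq X^X$, then invoke Fact~\ref{fct:tame_borel} for Borelness. One small remark on the ``in particular" part: you do not actually need $u\cM/{\equiv}$ to be Borel in $X$ for the co-restriction to be Borel, since Borel subsets of a subspace are traces of Borel subsets of $X$ and the preimage computation factors through the ambient Borel map $u\colon X\to X$; so the parenthetical ``co-restricted to a Borel subspace" can be dropped without loss.
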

	\begin{proof}
		Preserving $\equiv$ follows immediately from Proposition~\ref{prop:commu}.
		The induced transformation of $EL/{\equiv}$ is the same as simply $f_0$ once we identify $X$ with $EL/{\equiv}$, and $f_0$ is Borel by Fact~\ref{fct:tame_borel}.
	\end{proof}
	
	\begin{cor}
		\label{cor:borel_map}
		The map $\cM/{\equiv}\to u\cM/H(u\cM)D$ which takes each $[f]_\equiv$ to $uf/H(u\cM)D$ is Borel, where the former is equipped with subspace topology from $EL$, while the latter has topology induced from the $\tau$ topology.
		
		Similarly, the map $EL/{\equiv'}\to u\cM/H(u\cM)D$, given $[f]_{\equiv'}\mapsto ufu/H(u\cM)D$, is Borel.
	\end{cor}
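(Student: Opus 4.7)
My plan is to factor each of the two maps through an intermediate space so as to reduce to results already in hand. For the first map $\cM/{\equiv}\to u\cM/H(u\cM)D$, $[f]_\equiv\mapsto uf/H(u\cM)D$, I will write it as the composition
$$\cM/{\equiv}\;\xrightarrow{\;\alpha\;}\;u\cM/{\equiv}\;\xrightarrow{\;\beta\;}\;u\cM/H(u\cM)D,\qquad \alpha([f]_\equiv)=[uf]_\equiv,\quad \beta([g]_\equiv)=g/H(u\cM)D.$$
The map $\alpha$ is exactly the restriction to $\cM/{\equiv}$ of the Borel self-map of $EL/{\equiv}=X$ induced by left multiplication by $u$, so it is Borel by Proposition~\ref{prop:last_borel}. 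The map $\beta$ is the restriction of $\widehat j$ (from Lemma~\ref{lem:jhat_cont}) to $u\cM/{\equiv}\subseteq R[u\cM]$, hence continuous. A Borel map composed with a continuous map is Borel, which yields the first assertion.

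For the second map $EL/{\equiv'}\to u\cM/H(u\cM)D$, $[f]_{\equiv'}\mapsto ufu/H(u\cM)D$, I factor it as
$$EL/{\equiv'}\;\xrightarrow{\;\gamma\;}\;\cM/{\equiv}\;\xrightarrow{\;\delta\;}\;u\cM/H(u\cM)D,\qquad \gamma([f]_{\equiv'})=[fu]_\equiv,$$
with $\delta$ the Borel map from the first part applied to the argument $fu\in\cM$, so that $\delta\circ\gamma([f]_{\equiv'})=u(fu)/H(u\cM)D=ufu/H(u\cM)D$ as required. The map $\gamma$ is well-defined precisely because $\equiv'$ was built so that $f_1\equiv' f_2$ implies $f_1u\equiv f_2u$. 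Moreover, $\gamma$ is in fact continuous: the map $EL\to X$ given by $f\mapsto f(R(u))=R(fu)$ is just evaluation at the fixed point $R(u)\in X$, hence continuous; its image is contained in $R[\cM]=\cM/{\equiv}$, and by the definition of $\equiv'$ it factors through $EL/{\equiv'}$, so the induced map $\gamma$ is continuous by the universal property of the quotient topology. Composing continuous $\gamma$ with Borel $\delta$ gives the desired Borel map.

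The main obstacle --- or rather, the only content that is not pure bookkeeping --- is the Borel-ness of the ``retraction'' $[f]_\equiv\mapsto [uf]_\equiv$, which is exactly Proposition~\ref{prop:last_borel} and ultimately relies on tameness of $(G,X)$ (via Fact~\ref{fct:tame_borel} and the Bourgain-Fremlin-Talagrand dichotomy). Everything else amounts to checking that the various maps descend to the relevant quotients and carrying the correct topology on each intermediate space; crucially, on both $\cM/{\equiv}$ and $u\cM/{\equiv}$ one uses the subspace topology inherited from $X = EL/{\equiv}$, which is exactly the setting in which Proposition~\ref{prop:last_borel} and Lemma~\ref{lem:jhat_cont} apply.
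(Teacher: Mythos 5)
Your proof is correct and follows essentially the same route as the paper: it factors the first map as $\widehat j$ (continuous by Lemma~\ref{lem:jhat_cont}) composed with the Borel retraction $\cM/{\equiv}\to u\cM/{\equiv}$ from Proposition~\ref{prop:last_borel}, and the second map as the first one composed with the continuous $[f]_{\equiv'}\mapsto[fu]_\equiv$. The extra justification you give for continuity of $\gamma$ (evaluation at $R(u)$) and the minor slip of calling $\beta$ a ``restriction'' of $\widehat j$ when in fact $u\cM/{\equiv}=R[u\cM]$ is the whole domain do not affect the argument.
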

	\begin{proof}
		The first map is the composition of the continuous map $\hat j\colon u\cM/{\equiv}\to u\cM/H(u\cM)D$ from Lemma~\ref{lem:jhat_cont} and the Borel function from the second part of Proposition~\ref{prop:last_borel}. The second map is the composition of the first one with the continuous map $[f]_{\equiv'}\mapsto [fu]_{\equiv}$.
	\end{proof}

	\subsection*{Polish group quotients of the Ellis group}
	By Corollary~\ref{cor:uM/HuMD_Polish}, we already know that for metric dynamical systems, the quotient $u\cM/H(u\cM)D$ is a Polish space. However, we want to obtain a Polish group, and as we have seen in Example~\ref{ex:D_not_normal}, $H(u\cM)D$ may not be normal, so we need to slightly refine our approach.
	
	\begin{cor}\label{cor:Polish_quotient_Core(D)}
		\index{Core(H(uM)D)@$\Core(H(u\cM)D)$}
		For a metric dynamical system, $u\cM/\Core(H(u\cM)D)$ is a compact Polish group, where $\Core(H(u\cM)D)$ is the normal core of $H(u\cM)D$, i.e.\ the intersection of all conjugates of $H(u\cM)D$ in $u\cM$.
	\end{cor}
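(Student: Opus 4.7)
The strategy is to reduce this to a direct application of Proposition~\ref{prop:cont_action_factors_through_Polish}, via the intermediate quotient $u\cM/H(u\cM)$. Recall from Fact~\ref{fct:tau_top_pre}(8) that $H(u\cM)$ is a $\tau$-closed normal subgroup of $u\cM$ and that $u\cM/H(u\cM)$ is a compact Hausdorff topological group. Since $D$ is $\tau$-closed (Lemma~\ref{lem:D_closed}) and $u\cM$ is compact, the product $H(u\cM)D$ is $\tau$-closed, so $H(u\cM)D/H(u\cM)$ is a closed subgroup of the compact Hausdorff group $u\cM/H(u\cM)$.

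Next, I would observe that the quotient of $u\cM/H(u\cM)$ by this closed subgroup is (canonically homeomorphic to) $u\cM/H(u\cM)D$, which by Corollary~\ref{cor:uM/HuMD_Polish} is a compact Polish space, hence metrisable. Proposition~\ref{prop:cont_action_factors_through_Polish} (applied to $G := u\cM/H(u\cM)$ acting on $G/H$ for $H := H(u\cM)D/H(u\cM)$ by left translation) then yields that $G/\Core_G(H)$ is a compact Polish group.

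It remains to check that this is the group we want, i.e.\ that $\Core_{u\cM/H(u\cM)}(H(u\cM)D/H(u\cM))$ corresponds under the correspondence theorem to the subgroup $\Core_{u\cM}(H(u\cM)D)$ of $u\cM$. Writing $\pi \colon u\cM \to u\cM/H(u\cM)$ for the quotient map and using that $H(u\cM) \unlhd u\cM$, for any $\tilde g \in u\cM$ we have $\pi(\tilde g H(u\cM)D\tilde g^{-1}) = \pi(\tilde g)\cdot (H(u\cM)D/H(u\cM))\cdot \pi(\tilde g)^{-1}$, and every element of $u\cM/H(u\cM)$ has this form. Intersecting and pulling back via $\pi$ (which preserves intersections of $H(u\cM)$-invariant sets), we obtain $\pi^{-1}[\Core_{u\cM/H(u\cM)}(H(u\cM)D/H(u\cM))] = \Core_{u\cM}(H(u\cM)D)$, so $u\cM/\Core(H(u\cM)D)$ is canonically isomorphic to $G/\Core_G(H)$ as topological groups.

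The only real content here is Proposition~\ref{prop:cont_action_factors_through_Polish} (whose metrisability of $G/H$ hypothesis is furnished by Corollary~\ref{cor:uM/HuMD_Polish}); the rest is bookkeeping. The one place where care is required is the core-identification step, which relies crucially on $H(u\cM)$ being normal in $u\cM$ so that the correspondence theorem commutes with conjugation and hence with forming the normal core.
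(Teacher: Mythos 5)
Your proof is correct and takes essentially the same approach as the paper's: the paper's proof is a one-line appeal to Proposition~\ref{prop:cont_action_factors_through_Polish}, citing that $u\cM/H(u\cM)$ is a compact Hausdorff group and $u\cM/H(u\cM)D$ is compact Polish by Corollary~\ref{cor:uM/HuMD_Polish}, which is exactly your reduction. Your extra work on the core-identification step (showing $\pi^{-1}[\Core_{u\cM/H(u\cM)}(H(u\cM)D/H(u\cM))] = \Core_{u\cM}(H(u\cM)D)$) fills in a detail the paper elides, and is sound; the one small logical hiccup is the claim that compactness of $u\cM$ plus $\tau$-closedness of $D$ directly gives $\tau$-closedness of $H(u\cM)D$ — in a merely semitopological group this doesn't follow from closedness of the factors, and the cleaner order (used implicitly in the paper's preamble to the diagram after Lemma~\ref{lem:D_kernel_equiv}) is to observe first that $H(u\cM)D/H(u\cM)$ is closed as the continuous image of the compact set $D$ in the Hausdorff space $u\cM/H(u\cM)$, and then that $H(u\cM)D$ is closed as its preimage under the continuous quotient map.
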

	\begin{proof}
		Immediate by the Proposition~\ref{prop:cont_action_factors_through_Polish}, as $u\cM/H(u\cM)$ is a compact Hausdorff group and $u\cM/H(u\cM)D$ is a compact Polish space (by Corollary~\ref{cor:uM/HuMD_Polish}).
	\end{proof}
	
	In the case of \emph{tame} metric dynamical systems, the situation is a little cleaner. Namely, we will show that $u\cM/H(u\cM)$ itself is already Polish.
	
	\begin{dfn}
		\index{space!countably tight}
		A topological space $X$ has {\em countable tightness} (or is \emph{countably tight}) if for every $A\subseteq X$ and every $x\in \overline A$, there is a countable set $B\subseteq A$ such that $x\in \overline B$.
		\xqed{\lozenge}
	\end{dfn}
	
	\begin{fct}[Engelking]\label{fct:tightness}
		A compact Hausdorff topological group of countable tightness is metrisable.
	\end{fct}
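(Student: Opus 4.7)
My plan is to reduce metrisability of $G$ to the fact that the singleton $\{e\}$ (and hence, by homogeneity, every singleton) is a $G_\delta$ set, since in any Hausdorff topological group this in turn implies first countability, and by the Birkhoff--Kakutani theorem any first countable Hausdorff topological group is metrisable. Thus the whole argument reduces to obtaining countable pseudocharacter at the identity.

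The cleanest route I would follow goes through dyadicity. First I would invoke Kuz'minov's theorem that every compact Hausdorff topological group is \emph{dyadic}, i.e.\ a continuous image of some Cantor cube $\{0,1\}^\kappa$. This is a structural result about compact groups and is independent of our hypothesis. Then I would apply Engelking's theorem on dyadic spaces: a dyadic compact Hausdorff space of countable tightness is metrisable (the idea of the proof being that in a dyadic space, tightness $\leq \aleph_0$ propagates to weight $\leq \aleph_0$, because having an uncountable ``independent family'' of open sets in a dyadic compactum forces a point whose tightness exceeds $\aleph_0$). Combining the two results gives the theorem immediately.

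An alternative route avoiding Kuz'minov would be: by homogeneity of $G$ (left translations are self-homeomorphisms), all points have the same character and pseudocharacter; then I would invoke \v{S}apirovski\u{\i}'s theorem that every compact Hausdorff space of countable tightness contains a point of countable pseudocharacter; transferring this to $e$ gives that $\{e\}$ is $G_\delta$, and a direct symmetrisation argument using continuity of multiplication upgrades a countable $G_\delta$ presentation of $\{e\}$ to a countable neighbourhood base at $e$, so $G$ is first countable and hence metrisable by Birkhoff--Kakutani. The main obstacle in either approach is the nontrivial general-topology input (either Kuz'minov's dyadicity of compact groups or \v{S}apirovski\u{\i}'s theorem); everything else is essentially bookkeeping.
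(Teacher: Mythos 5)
The paper offers no argument of its own; it simply cites \cite[Corollary 4.2.2]{AT08}, so there is no proof to compare yours against directly. That said, your first route (Kuz'minov: compact Hausdorff groups are dyadic; Engelking/Efimov: a dyadic compactum of countable tightness is metrisable, because in dyadic compacta tightness controls weight) is exactly the standard argument, and it is the one given in Arhangel'skii--Tkachenko, so you have in effect reconstructed the cited proof.

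Your alternative route has a small but fixable misstatement: \v{S}apirovski\u{\i}'s theorem bounds the local $\pi$\emph{-character} of every point of a compactum by its tightness, not the pseudocharacter. In a general compactum, countable $\pi$-character at a point does not give a $G_\delta$ point. The reason your route nonetheless works for topological groups is that for a topological group $G$ one has $\chi(e,G)=\pi\chi(e,G)$: if $\{U_\alpha\}$ is a local $\pi$-base at $e$, then for any open $V\ni e$ one picks a symmetric open $W$ with $W^2\subseteq V$ and some $U_\alpha\subseteq W$, and then $U_\alpha U_\alpha^{-1}$ is an open neighbourhood of $e$ contained in $V$; so $\{U_\alpha U_\beta^{-1}\}$ is a local base. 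This gives first countability at $e$ directly, without needing a separate symmetrisation step from a $G_\delta$ presentation of $\{e\}$, and then Birkhoff--Kakutani finishes as you say. So both routes are sound once this point is repaired; the dyadicity route requires less care but imports a heavier structural theorem, while the \v{S}apirovski\u{\i} route is closer to first principles about cardinal invariants but needs the group-theoretic identity $\chi=\pi\chi$.
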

	\begin{proof}
		\cite[Corollary 4.2.2]{AT08}.
	\end{proof}
	
	\begin{prop}
		\label{prop:closed_image_is_tight}
		The image of a countably tight space via a closed continuous map is countably tight.
	\end{prop}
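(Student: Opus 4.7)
Let $f\colon X\to Y$ be a continuous closed map with $X$ countably tight, and, without loss of generality (by restricting the codomain), assume $f$ is surjective. The plan is to show directly that $Y$ is countably tight: given $A\subseteq Y$ and $y\in \overline{A}$, I want to produce a countable $B\subseteq A$ with $y\in \overline{B}$.

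The key reduction is to lift the situation to $X$ using closedness of $f$. I claim there exists some $x\in f^{-1}(y)$ which lies in the closure $\overline{f^{-1}[A]}$. Indeed, since $f$ is closed, the set $f[\,\overline{f^{-1}[A]}\,]$ is closed in $Y$; and since $f$ is surjective, it contains $A$ (because every $a\in A$ is the image of some point of $f^{-1}[A]\subseteq\overline{f^{-1}[A]}$). Hence it contains $\overline{A}$, so in particular $y\in f[\,\overline{f^{-1}[A]}\,]$, witnessing the required $x$.

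Now I apply countable tightness of $X$ to $x\in\overline{f^{-1}[A]}$: there is a countable $C\subseteq f^{-1}[A]$ with $x\in \overline{C}$. Setting $B:=f[C]\subseteq A$, continuity of $f$ yields
\[
y=f(x)\in f[\,\overline{C}\,]\subseteq \overline{f[C]}=\overline{B},
\]
and $B$ is countable, as desired.

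The only nontrivial ingredient is the lifting step, and it is really just an application of the definitions of ``closed map'' and ``surjective'' together with the observation $A\subseteq f[f^{-1}[A]]$; once that is in hand, the rest is a direct transfer of the countable tightness witness along $f$. No obstacle is expected.
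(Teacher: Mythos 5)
Your proof is correct and follows essentially the same approach as the paper's: lift $y$ to a preimage point $x\in \overline{f^{-1}[A]}$ using the closedness and surjectivity of $f$, apply countable tightness of $X$ at $x$, and push the resulting countable set forward by continuity. Your writeup merely spells out the lifting step in slightly more detail.
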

	\begin{proof}
		Let $X$ be a countably tight space, and let $f\colon X\to Y$ be a closed and continuous surjection. Choose an arbitrary $A\subseteq Y$ and $y\in \overline A$. Note that since $f$ is closed and onto, we have that $\overline A\subseteq f\left[\overline{f^{-1}[A]}\right]$, so there is some $x\in \overline{f^{-1}[A]}$ such that $f(x)=y$. Choose $B'\subseteq f^{-1}[A]$ countable such that $x\in \overline {B'}$, and let $B=f[B']$. Since $f$ is continuous, $f^{-1}\left[\overline B\right]\supseteq \overline{B'}$, so in particular, $x\in f^{-1}\left[\overline B\right]$, so $y\in \overline B$.
	\end{proof}
	
	\begin{prop}\label{prop:NIP gives metrizability}
		If $(G,X)$ is a tame metric dynamical system, then the group $u\cM/H(u\cM)$ is metrisable (and hence a Polish group).
	\end{prop}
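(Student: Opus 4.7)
The plan is to show that $u\cM/H(u\cM)$ is countably tight and then invoke Engelking's theorem (Fact~\ref{fct:tightness}), which says that a compact Hausdorff topological group of countable tightness is metrisable. Since $u\cM/H(u\cM)$ is already known to be a compact Hausdorff topological group (Fact~\ref{fct:tau_top_pre}(8)), metrisability will upgrade it to a compact Polish topological group automatically.

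To produce countable tightness of $u\cM/H(u\cM)$, I would start from the Bourgain-Fremlin-Talagrand dichotomy in its dynamical form. Because $(G,X)$ is a tame metric dynamical system, Proposition~\ref{prop:dyn_BFT} tells us that $EL=E(G,X)$ is a Rosenthal compactum, and Rosenthal compacta are Fréchet by Fact~\ref{fct: Rosnthal implies Frechet}; in particular, $EL$ is countably tight. Since countable tightness is clearly inherited by closed subspaces (if $x\in \overline{A}$ with $A\subseteq F$ and $F$ is closed in a countably tight space, then any countable $B\subseteq A$ witnessing $x\in \overline{B}$ lies in $F$), the closed subspace $\overline{u\cM}\subseteq EL$ is also countably tight.

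Next, I would apply the ``strange continuity'' Proposition~\ref{prop:strange_cont_pre}: the map $\overline{u\cM}\to u\cM/H(u\cM)$ sending $f\mapsto uf/H(u\cM)$ is continuous, where the domain carries the subspace topology from $EL$ and the codomain the quotient $\tau$-topology. This map is surjective, since $u$ is the identity of $u\cM$, so every element $f\in u\cM$ satisfies $uf=f$, and the image already covers $u\cM/H(u\cM)$ on the subset $u\cM\subseteq \overline{u\cM}$. Because $\overline{u\cM}$ is compact and $u\cM/H(u\cM)$ is Hausdorff, Remark~\ref{rem: continuous surjection is closed} makes this surjection a closed continuous map, so by Proposition~\ref{prop:closed_image_is_tight}, $u\cM/H(u\cM)$ is countably tight. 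Invoking Fact~\ref{fct:tightness} then yields metrisability, and a compact metrisable topological group is Polish, finishing the argument.

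There is no real obstacle here: each ingredient is either stated or immediate. The only point worth verifying carefully is that Proposition~\ref{prop:strange_cont_pre} is applicable, i.e.\ that $u\cM/H(u\cM)$ is regular; this is automatic as it is a compact Hausdorff group, so Proposition~\ref{prop:strange_cont_pre} indeed delivers the needed continuity in the $\tau$-quotient topology.
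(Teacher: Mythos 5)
Your argument is correct and follows the same route as the paper: tameness yields that $\overline{u\cM}$ is countably tight (the paper observes it is a Rosenthal compactum, you equivalently pass countable tightness from $EL$ to the closed subspace), then Proposition~\ref{prop:strange_cont_pre} provides a continuous closed surjection onto $u\cM/H(u\cM)$, so Proposition~\ref{prop:closed_image_is_tight} and Fact~\ref{fct:tightness} finish the proof. No gaps.
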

	\begin{proof}
		Note that if $(G,X)$ is tame, then, by Proposition~\ref{prop:dyn_BFT}, $\overline{u\cM}\subseteq EL$ is a Rosenthal compactum, so --- via the Fréchet-Urysohn property we have by Fact~\ref{fct: Rosnthal implies Frechet} --- it is countably tight. Furthermore, by Proposition~\ref{prop:strange_cont}, the function $f\mapsto uf/H(u\cM)$ defines a continuous surjection from $\overline{u\cM}$ to $u\cM/H(u\cM)$, and hence a continuous closed mapping. Hence, the result follows by Proposition~\ref{prop:closed_image_is_tight} and Fact~\ref{fct:tightness}.
	\end{proof}

	\section{Independence, tameness and ambition}\label{section: independence, tameness and ambition}
	In this section, we discuss the relationship between model-theoretic NIP and dynamical tameness. A relationship between the Bourgain-Fremlin-Talagrand dichotomy and NIP seems to have been first noticed independently in \cite{CS18}, \cite{Kha14}, and \cite{Ib16}; see also \cite{Sim15} and \cite{KhP17} for related research. Many parts of this section appear to be folklore, but I have not found them stated and proved in this form. Because of that, and because they are interesting in their own right, we present them along with their proofs. The introduced notions of tame models and ambitious models seem to be new. Ambitious models will be essential later.

	\begin{dfn}
		\index{independence property}
		\index{NIP!formula}
		\index{IP formula|see {NIP formula}}
		\label{dfn:NIP_formula_theory}
		If $A,B\subseteq \fC$, then we say that a formula $\varphi(x,y)$ has the \emph{independence property} (IP) on $A\times B$ if there is an infinite sequence $(b_n)$ of elements of $B$ such that $\varphi(\fC,b_n)\cap A$ are independent subsets of $A$. Otherwise, we say that it \emph{has NIP} on $A\times B$.
		
		We say that $\varphi$ \emph{has IP} if it has IP on the whole $\fC$, otherwise we say that it has NIP.
		
		\index{NIP!theory}
		\index{IP theory|see{NIP theory}}
		We say that \emph{$T$ has NIP} if every formula has NIP. Otherwise, we say that \emph{$T$ has IP}.
		\xqed{\lozenge}
	\end{dfn}
	
	\begin{rem}
		\label{rem:NIP_indiscernible}
		Note that if $A$ and $B$ are type-definable, then in the above definition we can assume without loss of generality that the sequence $(b_n)$ is indiscernible over any given small set of parameters (by Ramsey's theorem and compactness).\xqed{\lozenge}
	\end{rem}
	
	\begin{dfn}
		\label{dfn:fixing_inv_set}
		\index{Aut(M/\{A\})@$\Aut(M/\{A\})$}
		Given a model $M\preceq \fC$ and a set $A$ which is invariant over $M$, by $\Aut(M/\{A\})$ we denote the set of all automorphisms of $M$ which fix the canonical parameter of $A$ (as a hyperimaginary), or equivalently, which fix the set $A_M:=\{\tp(a/M)\mid a\in A \}$ setwise. \xqed{\lozenge}
	\end{dfn}
	
	\begin{dfn}
		\index{tame!formula}
		\label{dfn:tame_formula}
		We say that a formula $\varphi(x,y)$ is \emph{tame} if for every small model $M$ and $b\in M$, the characteristic function of $[\varphi(x,b)]\subseteq S_x(M)$ is tame in $(\Aut(M),S_x(M))$ (in the sense of Definition~\ref{dfn:tame_function_system}).
		
		Similarly, if $A$, $B$ are type-definable sets, we say that $\varphi(x,y)$ is {\em tame} on $A\times B$ if for every small model $M$ over which $A$ and $B$ are type-definable, and every $b\in B(M)$, the characteristic function of $[\varphi(x,b)]\cap A_M\subseteq A_M$ is tame in $(\Aut(M/\{A\}),A_M)$.
		\xqed{\lozenge}
	\end{dfn}
	
	Note that tameness of $\varphi(x,y)$ does not change when we add dummy variables, even allowing infinite sequences of variables.
	
	\begin{lem}\label{lem:NIP_tame}
		[For any type-definable sets $A,B$] $\varphi(x,y)$ is NIP [on $A\times B$] if and only if $\varphi(x,y)$ is tame [on $A\times B$].
	\end{lem}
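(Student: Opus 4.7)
The plan is to bridge the two notions via Fact~\ref{fct:bft}, which identifies tameness of the characteristic function $\chi_U$ of a clopen set $U=[\varphi(x,b)]\cap A_M\subseteq A_M$ in the dynamical system $(\Aut(M/\{A\}),A_M)$ with the nonexistence of any infinite independent subfamily of the translates $\{gU : g\in \Aut(M/\{A\})\}$. The crucial identification is $gU=[\varphi(x,g(b))]\cap A_M$, so translates are parametrised by the orbit $\Aut(M/\{A\})\cdot b$, which (in the standard case where $A$ and $B$ are $\emptyset$-type-definable, and in general after naming suitable parameters) sits inside $B(M)$. Moreover, independence of $\{[\varphi(x,b_n)]\cap A_M\}_{n\in\bN}$ as a family of subsets of $A_M$ is equivalent to independence of $\{\varphi(\fC,b_n)\cap A\}_{n\in\bN}$ as a family of subsets of $A$: the first implies the second by realising in $\fC$ each of the witnessing types $p_I\in A_M$; the second implies the first by setting $p_I:=\tp(a_I/M)$, which works because each $b_n\in M$.

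For tame~$\Rightarrow$~NIP, I would argue contrapositively. Given $(b_n)_{n\in\bN}\subseteq B$ witnessing IP on $A\times B$, use Remark~\ref{rem:NIP_indiscernible} to replace it by an indiscernible sequence over the parameters defining $A$ and $B$, and pick a small, sufficiently saturated and strongly homogeneous model $M$ containing $(b_n)$ together with those parameters. Indiscernibility plus homogeneity supply automorphisms $g_n\in\Aut(M/\{A\})$ sending $b_0$ to $b_n$, so by the identification above the family $\{g_n U\}$ with $U=[\varphi(x,b_0)]\cap A_M$ is independent, showing that $\chi_U$ is not tame. For the reverse direction, if $\varphi$ is not tame on $A\times B$, there exist $M$, $b\in B(M)$, and $g_n\in\Aut(M/\{A\})$ with $\{g_n U\}$ independent; setting $b_n:=g_n(b)\in B(M)$ and realising the witnessing types in $\fC$ produces an IP-witnessing sequence for $\varphi$ on $A\times B$.

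The main technical point to get right is the interaction between the acting group $\Aut(M/\{A\})$ and the parameter set $B$: on one hand we need $g(b)\in B$ so that independence in $A_M$ transfers to a genuine IP witness in $A\times B$, and on the other hand we need enough automorphisms of $M$ to realise an indiscernible sequence as a single orbit. The first point is automatic when $A,B$ are $\emptyset$-type-definable and can otherwise be arranged by working modulo the canonical parameter of $B$; the second is a standard Löwenheim--Skolem-style construction of a suitably saturated and strongly homogeneous small model.
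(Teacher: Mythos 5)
Your high-level structure matches the paper's, and the forward direction (IP $\Rightarrow$ untame) is essentially correct: take an indiscernible IP-witnessing sequence $(b_n)$, find a small model $M$ containing it (and the parameters of $A,B$) in which all the $b_n$ lie in a single orbit of $\Aut(M/\{A\})$, and conclude untameness from the independence of the translates $\{g_n U\}$. One small caveat: for this direction you do not need, and should not invoke, Fact~\ref{fct:bft} — the implication ``independent translates $\Rightarrow$ untame'' follows directly from Fact~\ref{fct:ind_untame}, which requires no metrisability hypothesis. This matters because Fact~\ref{fct:bft} is stated only for compact \emph{metric} spaces, and $A_M$ is not metric for general $M$.

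The genuine gap is in the reverse direction (untame $\Rightarrow$ IP). You assert that untameness of $\chi_U$ in $(\Aut(M/\{A\}),A_M)$ yields $g_n\in\Aut(M/\{A\})$ with $\{g_n U\}$ independent. But by Definition~\ref{dfn:tame_function_system}, untameness only gives a sequence $(\sigma_n)$ such that $(\chi_U\circ\sigma_n)$ is an $\ell^1$ sequence. Upgrading an $\ell^1$ sequence of translates to an \emph{independent family} of translates is exactly what the Rosenthal/Bourgain--Fremlin--Talagrand machinery provides — via Fact~\ref{fct: metric tameness} and Fact~\ref{fct:bft}, or via Proposition~\ref{prop:dyn_BFT} — and every one of those results carries a compact-\emph{metric} hypothesis, which $A_M$ does not satisfy in general (the language, the model $M$, and the tuple of variables are all a priori of arbitrary small cardinality). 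So as written, your bridge is not available. The paper closes this gap by restricting to the countable subgroup $\Sigma=\langle\sigma_n:n\in\bN\rangle$ and the countable orbit $B_0=\Sigma\cdot b$, and passing to the $\varphi$-type space $S_\varphi(B_0)$, which \emph{is} a compact metric space; untameness of $\chi_{[\varphi(x,b)]}$ survives the restriction, and only then does the BFT dichotomy apply, yielding a clopen set (given by a $\varphi$-formula) with an independent family of translates and hence IP. Your proposal, to be correct, needs precisely this metrisation step inserted before any appeal to Fact~\ref{fct:bft} or Proposition~\ref{prop:dyn_BFT}.
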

	\begin{proof}
		For simplicity, we will treat the absolute case here. The relative (i.e.\ $A\times B$) case is proved similarly.
		
		If $\varphi(x,y)$ has IP, there is an indiscernible sequence $(b_n)$ witnessing that, and we can find a small model $M$ which contains $(b_n)$, and such that
		all $b_n$'s lie in a single orbit under $\Aut(M)$.
		It follows from Fact~\ref{fct:ind_untame} that $\varphi$ is untame (which is witnessed in $(\Aut(M),S_x(M))$).
		
		In the other direction, suppose $\varphi(x,y)$ is untame. Fix a small model $M$ and $b\in M$ witnessing that. Then we have a sequence $(\sigma_n)_n$ in $\Aut(M)$ such that $\sigma_n\cdot \chi_{[\varphi(x,b)]}$ is an $\ell^1$ sequence.
		
		Let $\Sigma\leq \Aut(M)$ be the group generated by all $\sigma_n$'s and put $B_0:=\Sigma\cdot b$. Then $B_0$ is countable and $S_{\varphi}(B_0)$ is a totally disconnected, compact metric space. Moreover, the characteristic function of $[\varphi(x,b)]\subseteq S_\varphi(B_0)$ is untame with respect to $(\Sigma,S_{\varphi}(B_0))$. Then, by Prop~\ref{prop:dyn_BFT}, there is a $\varphi$-formula $\psi$ with IP. Since NIP is preserved by Boolean combinations, it follows that $\varphi$ has IP.
	\end{proof}
	
	\begin{rem}
		Lemma~\ref{lem:NIP_tame} is basically equivalent to \cite[Corollary 3.2]{Ib16} (though the latter uses a slightly different language).
		There is also an analogous equivalence between stability and the so-called WAP property of a function in a dynamical system (see e.g.\ \cite{BT16}).\xqed{\lozenge}
	\end{rem}
	
	\begin{lem}
		\label{lem:NIP_local}
		Suppose $\varphi(x,y)$ has IP on $A\times B$, where $A,B$ are type-definable over a small set $C$ of parameters. Then there are $p,q\in S(C)$ such that $p\proves A$, $q\proves B$ and $\varphi(x,y)$ has IP on $p(\fC)\times q(\fC)$.
	\end{lem}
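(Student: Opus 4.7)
The plan is to exploit the classical symmetry of the independence property (Sauer-Shelah duality: $\varphi(x,y)$ has NIP if and only if the dual formula $\varphi^{\mathrm{op}}(y,x):=\varphi(x,y)$ does, and this remains true for the restriction of $\varphi$ to any product $A\times B$) in conjunction with Remark~\ref{rem:NIP_indiscernible} to specialise the hypothesis to a complete type on each side in turn.

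First, I would apply Remark~\ref{rem:NIP_indiscernible} to replace the witnessing sequence $(b_n)_{n<\omega}\subseteq B$ of IP of $\varphi$ on $A\times B$ by one that is $C$-indiscernible. Setting $q:=\tp(b_0/C)$, one has $q\vdash B$ and $(b_n)\subseteq q(\fC)$, so $\varphi$ still has IP on $A\times q(\fC)$.

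Next, by Sauer-Shelah duality applied to $\varphi$ restricted to $A\times q(\fC)$, IP must also be witnessed from the $A$-side, i.e.\ there is an infinite sequence $(a_n)_{n<\omega}\subseteq A$ such that $\{\varphi(a_n,\fC)\cap q(\fC)\}_n$ is an independent family. Viewing this as IP of $\varphi^{\mathrm{op}}$ on $q\times A$ and applying Remark~\ref{rem:NIP_indiscernible} to $\varphi^{\mathrm{op}}$, I may further arrange that $(a_n)$ is $C$-indiscernible. Putting $p:=\tp(a_0/C)$, we obtain $p\vdash A$, and since $(a_n)\subseteq p(\fC)$, the same independent family witnesses dual IP of $\varphi$ on $p(\fC)\times q(\fC)$. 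A final application of Sauer-Shelah duality converts this back into IP in the sense of Definition~\ref{dfn:NIP_formula_theory}, producing an infinite sequence $(c_n)\subseteq q(\fC)$ such that $\{\varphi(\fC,c_n)\cap p(\fC)\}_n$ is independent, as required.

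Once the symmetry is in hand, the argument is essentially bookkeeping. The only mildly delicate point is verifying that Sauer-Shelah duality indeed carries over to the relative setting (the restriction of $\varphi$ to $A\times B$ rather than the whole monster), but this is just the classical combinatorial statement applied to the $0$--$1$ incidence pattern of $\varphi$ on $A\times B$, so I do not anticipate any substantive obstacle here.
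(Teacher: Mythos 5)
Your argument is correct, but it takes a genuinely different route from the paper. The paper's proof is considerably lighter: after replacing $(b_n)$ by a $C$-indiscernible sequence (as you also do), it simply picks a single $a\in A$ realising the \emph{alternating} pattern ($\models\varphi(a,b_n)$ iff $n$ is even), sets $p:=\tp(a/C)$, $q:=\tp(b_0/C)$, and then observes that $C$-indiscernibility of $(b_n)$ lets one move $a$ by automorphisms over $C$ to hit \emph{any} finite $0$--$1$ pattern on a suitable increasing subsequence of the $b_n$'s, so $(b_n)$ already witnesses IP on $p(\fC)\times q(\fC)$. Your proof instead applies the (relative) symmetry of IP twice in order to first make the $B$-side a complete type, then the $A$-side. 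This works, but it imports Sauer--Shelah duality, which the paper never invokes; moreover, the step you flag as ``not a substantive obstacle'' -- that independence-dimension duality passes to the restriction of $\varphi$ to $A\times B$ -- is true but is precisely the only nontrivial ingredient of your argument and really ought to be spelled out (finite duality of independence dimension plus compactness), since the notion of IP used here (independent families rather than shattering) makes the finite combinatorial step slightly less routine. What the two approaches buy: the paper's is shorter, self-contained, and exploits indiscernibility more forcefully (the single alternating witness $a$ is the whole trick); yours is more conceptual and would generalise without change to any setting where relative IP-duality is available, but it needs an extra lemma the paper does not.
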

	
	\begin{proof}
		As noticed before, we can choose $(b_n)_{n \in \omega} \subseteq B$ indiscernible over $C$ and such that $\varphi(\fC,b_n) \cap A$ are independent subsets of $A$. So we can choose $a \in A$ such that $\varphi(a,b_n)$ holds if and only if $n$ is even. It is easy to check that $p := \tp(a/C)$ and $q:=\tp(b_0/C)$ satisfy our requirements.
	\end{proof}

	\begin{dfn}
		\label{dfn:tame_model}
		\index{model!tame}
		\index{tame!model}
		We say that $M$ is a \emph{tame model} if for some (equivalently, every) enumeration $m$ of $M$, the system $(\Aut(M),S_m(M))$ is tame.
		\xqed{\lozenge}
	\end{dfn}
	
	\begin{cor}
		\label{cor:NIP_char}
		Let $T$ be any theory. Then the following are equivalent:
		\begin{enumerate}
			\item
			\label{it:cor:NIP_char:NIP}
			$T$ has NIP.
			\item
			\label{it:cor:NIP_char:tame_fla}
			Every formula $\varphi(x,y)$ is tame.
			\item
			\label{it:cor:NIP_char:tame_vars}
			For every small model $M$ and a small tuple $x$ of variables, the dynamical system $(\Aut(M),S_{x}(M))$ is tame.
			\item
			\label{it:cor:NIP_char:tame_elts}
			For every small model $M$ and a small tuple $a$ of elements of $\fC$, the dynamical system $(\Aut(M),S_{a}(M))$ is tame.
			\item
			\label{it:cor:NIP_char:tame_models}
			Every small model of $T$ is tame.
		\end{enumerate}
		Moreover, in \ref{it:cor:NIP_char:tame_vars}--\ref{it:cor:NIP_char:tame_models}, we can replace ``every small model" with ``every model of cardinality $\lvert T\rvert$", and ``small tuple" with ``finite tuple".
	\end{cor}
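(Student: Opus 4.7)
The plan is to prove the chain of implications
\ref{it:cor:NIP_char:NIP} $\Rightarrow$ \ref{it:cor:NIP_char:tame_fla} $\Rightarrow$ \ref{it:cor:NIP_char:tame_vars} $\Rightarrow$ \ref{it:cor:NIP_char:tame_elts} $\Rightarrow$ \ref{it:cor:NIP_char:tame_models} $\Rightarrow$ \ref{it:cor:NIP_char:NIP},
and handle the ``moreover'' part by observing at the end that each step only needed finite tuples and models of size $\lvert T\rvert$.

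The implication \ref{it:cor:NIP_char:NIP} $\Rightarrow$ \ref{it:cor:NIP_char:tame_fla} is immediate from Lemma~\ref{lem:NIP_tame} (the absolute case). For \ref{it:cor:NIP_char:tame_fla} $\Rightarrow$ \ref{it:cor:NIP_char:tame_vars}, I would use Corollary~\ref{cor:tame_dense}: the family of characteristic functions $\chi_{[\varphi(x,b)]}$, as $\varphi(x,y)$ ranges over formulas with $\lvert y\rvert<\omega$ and $b$ ranges over $M^{\lvert y\rvert}$, sits in $C(S_x(M))$, separates points (since types over $M$ are determined by formulas with parameters in $M$), and consists of tame functions by \ref{it:cor:NIP_char:tame_fla} combined with Definition~\ref{dfn:tame_formula}. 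Hence $(\Aut(M),S_x(M))$ is tame.

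For \ref{it:cor:NIP_char:tame_vars} $\Rightarrow$ \ref{it:cor:NIP_char:tame_elts}, I would note that the restriction map $S_x(M)\to S_{a}(M)$ (for $x$ the variable tuple corresponding to $a$) is a continuous $\Aut(M)$-equivariant surjection onto $S_a(M)$ realised as the closed invariant subspace $[\tp(a/\emptyset)]\subseteq S_x(M)$; thus tameness transfers by Fact~\ref{fct:tame_preserved}. The implication \ref{it:cor:NIP_char:tame_elts} $\Rightarrow$ \ref{it:cor:NIP_char:tame_models} is trivial: taking for $a$ an enumeration $m$ of $M$ gives exactly Definition~\ref{dfn:tame_model} (noting that $S_m(M)$ is well-defined and does not depend on the chosen enumeration up to equivariant homeomorphism).

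For \ref{it:cor:NIP_char:tame_models} $\Rightarrow$ \ref{it:cor:NIP_char:NIP}, suppose $T$ has IP, witnessed by some $\varphi(x,y)$ with $x,y$ finite tuples. Then by Lemma~\ref{lem:NIP_tame} there exist a small model $M$ and $b\in M$ with $\chi_{[\varphi(x,b)]}$ untame in $(\Aut(M),S_x(M))$; viewing $\chi_{[\varphi(x,b)]}$ as a continuous function on $S_m(M)$ via the quotient $S_m(M)\to S_x(M)$, we see that $(\Aut(M),S_m(M))$ is untame by Fact~\ref{fct:tame_preserved} (tameness would descend to the quotient). This contradicts \ref{it:cor:NIP_char:tame_models}. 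The main (minor) obstacle is being careful in \ref{it:cor:NIP_char:tame_fla} $\Rightarrow$ \ref{it:cor:NIP_char:tame_vars} that the separating family really is made of continuous functions on $S_x(M)$ and that Corollary~\ref{cor:tame_dense} applies uniformly. Finally, for the ``moreover'' part, the witness in \ref{it:cor:NIP_char:tame_models} $\Rightarrow$ \ref{it:cor:NIP_char:NIP} uses a formula $\varphi(x,y)$ in finitely many variables and an indiscernible sequence in $B_0\subseteq M$ of size at most $\lvert T\rvert$ (cf.\ the proof of Lemma~\ref{lem:NIP_tame}); by Löwenheim–Skolem we may replace $M$ with an elementary submodel of cardinality $\lvert T\rvert$ containing $B_0$, and likewise restrict to $S_x(M)$ for finite $x$ in \ref{it:cor:NIP_char:tame_vars}, since the separating family already consists of formulas in finitely many variables.
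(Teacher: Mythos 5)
Your main argument is correct and follows essentially the same route as the paper, just packaged as a single cycle of implications \ref{it:cor:NIP_char:NIP} $\Rightarrow$ \ref{it:cor:NIP_char:tame_fla} $\Rightarrow$ \ref{it:cor:NIP_char:tame_vars} $\Rightarrow$ \ref{it:cor:NIP_char:tame_elts} $\Rightarrow$ \ref{it:cor:NIP_char:tame_models} $\Rightarrow$ \ref{it:cor:NIP_char:NIP} rather than proving the pairwise equivalences directly. A genuine small advantage of your organisation is that it avoids Lemma~\ref{lem:NIP_local} entirely: the paper invokes that lemma both to get \ref{it:cor:NIP_char:tame_fla} $\Leftrightarrow$ \ref{it:cor:NIP_char:tame_elts} (localising IP to a single complete type) and again in the ``moreover'' part, whereas your route gets \ref{it:cor:NIP_char:tame_vars} $\Rightarrow$ \ref{it:cor:NIP_char:tame_elts} for free from the closed-invariant-subspace clause of Fact~\ref{fct:tame_preserved}, and closes the circle at $S_x(M)$ rather than $S_a(M)$, so no localisation is needed. (One stylistic slip: there is no ``restriction map $S_x(M)\to S_a(M)$'' --- $S_a(M)$ simply \emph{is} the closed invariant subspace $[\tp(a/\emptyset)]\subseteq S_x(M)$, and it is that inclusion, not a restriction map, that Fact~\ref{fct:tame_preserved} is applied to. You say this in the same sentence, so the intent is clear.)

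The gap is in the ``moreover'' part for models. You write that ``by L\"owenheim--Skolem we may replace $M$ with an elementary submodel of cardinality $|T|$ containing $B_0$.'' This does not work: untameness of $(\Aut(M),S_x(M))$ is witnessed by a sequence of \emph{automorphisms} of $M$ moving the parameters $b_n$ around (so that the translates $\varphi(x,b_n)$ form an independent family of clopen subsets of $S_x(M)$), and when you pass to an arbitrary elementary submodel $M'$, those automorphisms need not restrict to automorphisms of $M'$ --- indeed $M'$ could have trivial automorphism group, in which case $(\Aut(M'),S_x(M'))$ is trivially tame. What is actually needed is to control the size of the model \emph{during} the construction in the $(\Leftarrow)$ direction of Lemma~\ref{lem:NIP_tame}: starting from the indiscernible sequence $(b_n)$ and automorphisms $\sigma_{n,n'}\in\Aut(\fC)$ with $\sigma_{n,n'}(b_n)=b_{n'}$, one closes a model of size $|T|$ under this countable family of automorphisms (as in the proof of Proposition~\ref{prop:amb_exist}), yielding $M$ of cardinality $|T|$ in which the $b_n$ still lie in a single $\Aut(M)$-orbit. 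This is exactly what the paper is gesturing at when it says ``the proof of $(\leftarrow)$ in Lemma~\ref{lem:NIP_tame} easily yields a model $M$ of cardinality $|T|$'' --- it is a refinement of the construction, not an after-the-fact L\"owenheim--Skolem reduction.
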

	\begin{proof}
		The equivalence of \ref{it:cor:NIP_char:NIP} and \ref{it:cor:NIP_char:tame_fla} is immediate by Lemma~\ref{lem:NIP_tame}.
		
		To see that \ref{it:cor:NIP_char:tame_fla} is equivalent to \ref{it:cor:NIP_char:tame_vars}, note that by Corollary~\ref{cor:tame_dense}, tameness can be tested on characteristic functions of clopen sets, so tameness of $(\Aut(M),S_{x}(M))$ follows from tameness of formulas.
		
		Similarly, \ref{it:cor:NIP_char:tame_fla} is equivalent to \ref{it:cor:NIP_char:tame_elts}, because by Lemmas~\ref{lem:NIP_tame} and \ref{lem:NIP_local}, we can test tameness on complete types.
		
		Finally, \ref{it:cor:NIP_char:tame_elts} trivially implies \ref{it:cor:NIP_char:tame_models}.
		In the other direction, if $(\Aut(M),S_a(M))$ is untame and we choose $N\succeq M$ such that $a\in N$ and $N$ is strongly $\lvert M\rvert^+$-homogeneous, then also $(\Aut(N),S_n(N))$ is untame (by Fact \ref{fct:tame_preserved}), where $n$ is an enumeration of $N$.
		
		For the ``moreover" part, for tuples, it is trivial (untameness is witnessed by formulas, and formulas have finitely many variables).
		For models, suppose that $T$ has IP, i.e.\ some formula $\varphi(x,y)$ has IP. By Lemma~\ref{lem:NIP_local}, $\varphi(x,y)$ has IP on $p(\fC) \times \fC$ for some $p \in S(\emptyset)$. Take $a \models p$. The proof of $(\leftarrow)$ in Lemma~\ref{lem:NIP_tame} easily yields a model $M$ of cardinality $|T|$, containing $a$, and such that $(\Aut(M),S_a(M))$ is untame for $a \models p$. Then, by Fact \ref{fct:tame_preserved}, the systems $(\Aut(M),S_x(M))$ and $(\Aut(M),S_m(M))$ are untame as well, where $m$ is an enumeration of $M$.
	\end{proof}

	In the $\omega$-categorical case, we obtain a simpler characterization of NIP.
	\begin{cor}
		Suppose $T$ is a countable $\omega$-categorical theory. The following are equivalent:
		\begin{itemize}
			\item
			$T$ has NIP,
			\item
			the countable model of $T$ is tame.
		\end{itemize}
		More generally, a theory $T$ is NIP if and only if it has a tame, $\aleph_0$-saturated, strongly $\aleph_0$-homogeneous model.
	\end{cor}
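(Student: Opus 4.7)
The plan is to deduce both parts from Corollary~\ref{cor:NIP_char}, with the $\omega$-categorical statement following as a special case of the general one (the unique countable model of a countable $\omega$-categorical theory is automatically $\aleph_0$-saturated and strongly $\aleph_0$-homogeneous). So I would focus on the ``more generally'' claim.

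For the forward direction, I would simply observe that if $T$ is NIP, then by Corollary~\ref{cor:NIP_char}, every small model of $T$ is tame. Since $T$ certainly admits $\aleph_0$-saturated, strongly $\aleph_0$-homogeneous models (by standard Löwenheim–Skolem plus a chain-of-models construction, or by taking a small elementary substructure of $\fC$ with these properties), such a model exists, and it is tame.

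For the reverse direction, suppose $M$ is a tame, $\aleph_0$-saturated, strongly $\aleph_0$-homogeneous model of $T$, and assume toward a contradiction that $T$ has IP, witnessed by some formula $\varphi(x,y)$. By Lemma~\ref{lem:NIP_local}, there are $p,q\in S(\emptyset)$ such that $\varphi$ has IP on $p(\fC)\times q(\fC)$. By Remark~\ref{rem:NIP_indiscernible}, I can find an indiscernible sequence $(b_n)_{n\in\omega}$ of realisations of $q$ such that the family $\{\varphi(\fC,b_n)\cap p(\fC)\}_n$ is independent. Now $\aleph_0$-saturation of $M$ lets me realise in $M$ the (countable) type of this configuration: I get $(b_n')_n\subseteq q(M)$ of the same type over $\emptyset$ as $(b_n)_n$, and for every finite $F\subseteq\omega$, an element $a_F\in p(M)$ with $M\models\varphi(a_F,b_n')$ iff $n\in F$.

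It remains to turn this into an untame clopen in $(\Aut(M),S_x(M))$. Strong $\aleph_0$-homogeneity of $M$ provides automorphisms $\sigma_n\in\Aut(M)$ with $\sigma_n(b_0')=b_n'$, so the clopen sets $[\varphi(x,b_n')]\subseteq S_x(M)$ are the $\Aut(M)$-translates of $U:=[\varphi(x,b_0')]$. The realisations $a_F$ above witness that these translates form an independent family of clopens in $S_x(M)$, so by Fact~\ref{fct:ind_untame} the sequence $(\sigma_n\cdot\chi_U)_n$ is an $\ell^1$ sequence in $C(S_x(M))$, contradicting tameness of $M$. Thus $T$ is NIP, and by Corollary~\ref{cor:NIP_char} this completes the equivalence. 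The only mildly delicate step is the last one: ensuring that the independent configuration produced by saturation in $M$ can be realigned along a single $\Aut(M)$-orbit, which is precisely where strong $\aleph_0$-homogeneity enters.
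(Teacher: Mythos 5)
Your proposal follows the same route as the paper: both directions of the general statement come from Corollary~\ref{cor:NIP_char}, and the reverse direction re-runs the argument from its ``moreover'' part with $\aleph_0$-saturation used to place a copy of the IP-witnessing indiscernible sequence in $M$ and strong $\aleph_0$-homogeneity used to put the $b_n'$ in a single $\Aut(M)$-orbit; the paper treats the $\omega$-categorical clause directly rather than as a special case of the general one, but this is a cosmetic difference.

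One small imprecision worth flagging: $\aleph_0$-saturation does not produce, for each finite $F$, an element $a_F\in p(M)$ with $M\models\varphi(a_F,b_n')$ iff $n\in F$ \emph{for all} $n$ --- that is a type over the infinite parameter set $\{b_n'\}_{n\in\omega}$. Fortunately you do not need such $a_F$ inside $M$: since $(b_n')_n$ has the same type over $\emptyset$ as $(b_n)_n$, each finite intersection $\bigcap_{i\in I}[\varphi(x,b_i')]\cap\bigcap_{j\in J}[\neg\varphi(x,b_j')]$ (for $I,J$ finite disjoint) is nonempty in $S_x(M)$, and by compactness of $S_x(M)$ this already gives the full infinite independence of the clopen sets required to invoke Fact~\ref{fct:ind_untame}.
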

	\begin{proof}
		The main part is immediate by Corollary~\ref{cor:NIP_char}.
		Then implication $(\rightarrow)$ in the ``more general" case also follows from Corollary~\ref{cor:NIP_char} (and the existence of $\aleph_0$-saturated and strongly homogeneous models). In the other direction, we argue as in the ``moreover" part of Corollary~\ref{cor:NIP_char}, noticing that $\aleph_0$-saturation and strong $\aleph_0$-homogeneity of $M$ allow us to use $M$ in that argument.
	\end{proof}
	
	\begin{dfn}
		\label{dfn:NIP_set}
		\index{NIP!set}
		If $Y$ is a type-definable set (with parameters), we say that $Y$ is NIP if for every small product of sorts $Z$, every formula $\varphi(y,z)$ is NIP on $Y\times Z$. If $Y$ does not have NIP, we say that it has IP.\xqed{\lozenge}
	\end{dfn}
	
	\begin{cor}
		\label{cor:NIP_implies_tame}
		If $T$ has NIP, then for every small model $M\preceq \fC$ and tuple $a\in \fC$, the dynamical system $(\Aut(M),S_a(M))$ is tame.
		
		More generally, if $T$ is arbitrary, $M$ is a small model and $Y$ is type-definable over $M$ and NIP, then $(\Aut(M/\{Y\}),Y_M)$ is tame (cf.\ Definition~\ref{dfn:fixing_inv_set}).
	\end{cor}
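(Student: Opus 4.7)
The first (absolute) assertion is essentially a restatement of part~\ref{it:cor:NIP_char:tame_elts} of Corollary~\ref{cor:NIP_char}, so my plan is to focus on the relative version and obtain the absolute one as an instance of it (taking $Y$ to be the full sort, which under NIP is itself NIP in the sense of Definition~\ref{dfn:NIP_set}).

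For the relative assertion, I would argue that characteristic functions of traces of formulas separate points in $Y_M$ and then invoke Corollary~\ref{cor:tame_dense}. More concretely, the plan is: let $M$ be a small model, $Y$ type-definable over $M$, and NIP. For any small tuple of variables $z$ and any $b \in M^{|z|}$ lying in a product of sorts $Z$, consider the set $[\varphi(y,b)]\cap Y_M \subseteq Y_M$; it is clopen in $Y_M$, and as $\varphi$ and $b$ range over all formulas and parameters from $M$, these sets form a basis of the Stone topology on $Y_M$ (as a closed subspace of $S_y(M)$), hence their characteristic functions separate points of $Y_M$. By Corollary~\ref{cor:tame_dense}, it suffices to prove that each such characteristic function is tame for the action of $\Aut(M/\{Y\})$ on $Y_M$.

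The key step is now to apply the relative version of Lemma~\ref{lem:NIP_tame}: since $Y$ is NIP, every formula $\varphi(y,z)$ has NIP on $Y \times Z$, and hence is tame on $Y \times Z$ in the sense of Definition~\ref{dfn:tame_formula}. Unfolding that definition is exactly the statement that, for every small model $M$ over which $Y$ (and $Z$, which is a product of sorts and hence $\emptyset$-type-definable) is type-definable, and every $b \in Z(M)$, the characteristic function of $[\varphi(y,b)]\cap Y_M$ is tame in $(\Aut(M/\{Y\}),Y_M)$. Combining this with the separation observation above, tameness of $(\Aut(M/\{Y\}),Y_M)$ follows immediately.

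For the absolute part, if $T$ has NIP, then every formula has NIP on $\fC \times \fC$, so in particular every type-definable set is NIP; applying the relative statement with $Y$ equal to the product of sorts containing $a$ gives tameness of $(\Aut(M),S_a(M))$ (note $\Aut(M/\{Y\})=\Aut(M)$ here). I do not anticipate a real obstacle; the only subtle point is checking that the clopen traces $[\varphi(y,b)]\cap Y_M$ really do separate points in $Y_M$, which is just the fact that distinct types in $Y_M$ disagree on some formula with parameters in $M$.
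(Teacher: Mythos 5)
Your proof is correct and takes essentially the same route as the paper, which deduces the relative part from Lemma~\ref{lem:NIP_tame} together with Corollary~\ref{cor:tame_dense} (the absolute part is cited directly from Corollary~\ref{cor:NIP_char}, whereas you recover it as a special case of the relative one — a harmless reorganisation). One small imprecision: if you take $Y$ to be the full product of sorts you get tameness of $(\Aut(M),S_y(M))$ rather than $(\Aut(M),S_a(M))$ directly, so either take $Y=[a]_{\equiv}$ (still $\emptyset$-type-definable and NIP) or pass to the closed invariant subsystem via Fact~\ref{fct:tame_preserved}.
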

	\begin{proof}
		The first part is contained in Corollary~\ref{cor:NIP_char}. The second part follows similarly from Lemma~\ref{lem:NIP_tame} and Corollary~\ref{cor:tame_dense}.
	\end{proof}
	
	We introduce the following definition.
	\begin{dfn}
		\index{model!ambitious}
		\label{dfn:ambitious_model}
		We say that $M$ is an \emph{ambitious model} if for some (equivalently, for every) enumeration $m$ of $M$, the $\Aut(M)$-orbit of $\tp(m/M)$ is dense in $S_m(M)$ (i.e.\ $(\Aut(M),S_m(M),\tp(m/M))$ is an ambit).
		
		Given a subgroup $G^Y\leq \Gal(T)$, we say that $M$ is \emph{ambitious relative to $G^Y$} if it is ambitious and for $G^Y(M)=\{\sigma\in \Aut(M) \mid [\tp(\sigma(m)/M)]_{\equiv_\Lasc}\in G^Y \}$ (which may also be described as the set of $\sigma\in \Aut(M)$ such that for some (equivalently, every) global extension $\bar\sigma\supseteq \sigma$ we have $\bar\sigma\Autf_\Lasc(\fC)\in G^Y$), the orbit $G^Y(M)\cdot \tp(m/M)$ is dense in $Y'_M$, where $Y':=\{n\in [m]_{\equiv}\mid [n]_{\equiv_\Lasc}\in G^Y \}$ (remember that we identify $[m]_\equiv/{\equiv_\Lasc}$ with $\Gal(T)$).
		\xqed{\lozenge}
	\end{dfn}
	
	\begin{prop}
		\label{prop:amb_exist}
		Any set $A\subseteq \fC$ is contained in an ambitious model $M$ of cardinality $\lvert A\rvert+\lvert T\rvert+\aleph_0$.
		
		More generally, if $G^Y$ is a subgroup of $\Gal(T)$, then we can find such $M$ which is ambitious relative to $G^Y$.
	\end{prop}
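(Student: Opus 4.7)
The plan is to build $M$ as the union of an elementary chain $(M_\beta)_{\beta\leq \kappa}$ of models of cardinality $\kappa:=\lvert A\rvert+\lvert T\rvert+\aleph_0$ with $A\subseteq M_0$, together with a compatible chain of enumerations $m_\beta$ in order type $\kappa$. Analysing the definition, ambition of the limit $M=M_\kappa$ with enumeration $m=m_\kappa$ is equivalent to the conjunction of two properties: (a) a \emph{finite saturation condition}: for every finite subtuple $a$ of $m$, every finite $n\in M$ and every formula $\varphi(x,n)$ with $\tp(a/\emptyset)\cup\{\varphi(x,n)\}$ consistent in $\fC$, the partial type is realised in $M$; and (b) \emph{finite strong $\aleph_0$-homogeneity}: any two finite tuples of $M$ with the same $\emptyset$-type are conjugated by an element of $\Aut(M)$. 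The construction has to handle these two families of requirements in parallel.

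I will do this by standard bookkeeping over $\kappa\times\kappa$ with the first coordinate bounded by the current stage. At stage $\beta+1$ I dispatch one previously queued request. For an (a)-request $(a,n,\varphi)$ I pick some $a^*\in\fC$ with $a^*\equiv a$ and $\models\varphi(a^*,n)$, and pass to $M_{\beta+1}\preceq\fC$ of cardinality $\kappa$ containing $M_\beta\cup a^*$ via Löwenheim-Skolem. For a (b)-request $(a,b)$ with $a,b\in M_\gamma^{<\omega}$ and $a\equiv b$, I advance an internal back-and-forth: I grow a finite partial elementary map of $\fC$ extending $a\mapsto b$ and pre-committed to covering the portion of $M$ already enumerated, using $\kappa^+$-saturation of $\fC$ to choose new images and preimages and then absorbing them into $M_{\beta+1}$ as above. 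Passing to unions at limits preserves $\lvert M_\beta\rvert=\kappa$ and $A\subseteq M_\beta$, and after $\kappa$ stages each running back-and-forth closes up to a genuine automorphism of $M$ while every (a)-request is realised; together these give ambition.

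For the refinement with $G^Y\leq\Gal(T)$ and $Y'=\{n\in[m]_\equiv\mid[n]_{\equiv_\Lasc}\in G^Y\}$, I restrict both kinds of requests to stay inside $Y'$: an (a)-request is queued only when $\varphi(x,n)$ is consistent with some type in $Y'_{M_\beta}$, with the witness $a^*$ taken in $Y'$; and for each (b)-request I fix in advance a global $\bar\sigma\in\Aut(\fC)$ with $\bar\sigma(a)=b$ and Galois class in $G^Y$ (which exists whenever $a,b$ both lie in $Y'$), and require the back-and-forth to extend $\bar\sigma$. This is harmless because $\Aut(\fC/M_\beta)\subseteq\Autf(\fC)$ by Fact~\ref{fct:model_to_indiscernible} and $\Autf(\fC)$ is normal in $\Aut(\fC)$, so composing $\bar\sigma$ with any automorphism fixing $M_\beta$ pointwise preserves its $\Autf(\fC)$-coset and hence its Galois class. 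The main obstacle is the cardinality bookkeeping — verifying at each stage that only $\kappa$ requests are pending (using $\lvert M_\beta\rvert=\kappa$ and $\lvert T\rvert\leq\kappa$) and that a single step enlarges $M_\beta$ by at most $\kappa$ elements, so that the chain closes up at cardinality $\kappa$ — which is routine but requires careful indexing, and in the relative version a little extra care to advance the various back-and-forth chains simultaneously without leaving $G^Y$.
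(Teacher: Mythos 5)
Your decomposition of ambition into (a) a finite ``saturation'' condition and (b) strong $\aleph_0$-homogeneity of $M$ is correct (ambition $\Rightarrow$ (b) follows by applying the density condition to the formula $x=b$; the converse is as you describe), and the back-and-forth construction based on it is a genuinely different route from the paper. The paper instead constructs, inductively, a \emph{group} $\Sigma\leq\Aut(\fC)$ of size $\kappa$ together with a $\Sigma$-invariant $M\succeq A$ of size $\kappa$: since at each finite stage the relevant type space has weight $\leq\kappa$, one can pick $\kappa$ automorphisms of $\fC$ whose images of $m_n$ hit a dense set of types, put them in $\Sigma_{n}$, and then close $M_n$ off under $\Sigma_{n}$ in $\omega$ more steps. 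Because $M=\bigcup_n M_n$ is $\Sigma$-invariant, every $\sigma\in\Sigma$ restricts to an automorphism of $M$, so the density of $\Sigma\restr_M\cdot\tp(m/M)$ comes for free, with no separate back-and-forth for homogeneity. This is a shorter argument and, crucially, it handles the relative case with one extra observation: for $\sigma\in\Aut(\fC)$ stabilising $M$ setwise, $\sigma\Autf(\fC)\in G^Y$ if and only if $\sigma\restr_M\in G^Y(M)$, because all global extensions of $\sigma\restr_M$ differ by elements of $\Aut(\fC/M)\subseteq\Autf(\fC)$.

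The relative case in your write-up is where I'd push back. First, a minor point: ``$a,b$ both lie in $Y'$'' does not typecheck --- $Y'$ is a set of $\kappa$-tuples enumerating models, while $a,b$ are finite; you presumably mean that $b$ is a subtuple (at the same coordinates as $a$) of some $n\in Y'$, which does guarantee the existence of $\bar\sigma\in\Aut(\fC)$ with $\bar\sigma(a)=b$ and $\bar\sigma\Autf(\fC)\in G^Y$. More substantively, your justification for why the back-and-forth lands in $G^Y(M)$ is not the right one. If ``extend $\bar\sigma$'' literally means $f_\beta\subseteq\bar\sigma$ throughout, then the limit is $\sigma=\bar\sigma\restr_M$, $\bar\sigma[M]=M$, and the fact you need is the paper's observation about $\Aut(\fC/M)$ for the \emph{final} $M$ --- not, as you write, about $\Aut(\fC/M_\beta)$. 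And there is a genuine danger hidden behind your phrase ``using $\kappa^+$-saturation to choose new images and preimages'': if you allow the back-and-forth to pick a \emph{fresh} global witness at each step (rather than sticking with one fixed $\bar\sigma$), then even though each finite $f_\beta$ extends to some automorphism of Galois class in $G^Y$, the limit $\sigma$ need not --- the Lascar diameters of the stage-$\beta$ witnesses can grow without bound, so $\sigma(m)$ can drift to infinite Lascar distance from the intended class. (The non-$G$-compact theories of Section~\ref{sec:examples} show this is not an idle concern.) So for the relative case the back-and-forth must be anchored to a single fixed $\bar\sigma$ per request; it is worth stating this, and the $\Aut(\fC/M)\subseteq\Autf(\fC)$ transfer, explicitly.
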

	\begin{proof}
		Put $\kappa=\lvert A\rvert+\lvert T\rvert+\aleph_0$. Extend $A$ to some $M_0\preceq \fC$ of cardinality $\kappa$, enumerated by $m_0$. The weight of $S_{m_0}(M_0)$ is at most $\kappa$, so it has a dense subset of size at most $\kappa$, so we can find a group $\Sigma_0\leq \Aut(\fC)$ of size $\kappa$ such that the types over $M_0$ of elements of $\Sigma_0\cdot m_0$ form a dense subset of $S_{m_0}(M_0)$.
		
		Then we extend $M_0$ to a setwise $\Sigma_0$-invariant $M_1\preceq \fC$: namely, we can extend $\Sigma_0\cdot M_0$ to a model $M_0^{1}\preceq \fC$, and then extend $\Sigma_0\cdot M_0^{1}$ to $M_0^2\preceq \fC$ and continue. After countably many steps, we take the union of the elementary chain $M_1=\bigcup_kM_0^k$, and it will be $\Sigma_0$-invariant.
		
		Then we continue, finding an appropriate $\Sigma_1 \supseteq \Sigma_0$ and a $\Sigma_1$-invariant $M_2\preceq \fC$, and so on. Then $M=\bigcup_n M_n$ satisfies the conclusion: if we take $\Sigma=\bigcup_n \Sigma_n$, then $M$ is $\Sigma$-invariant, so $\Sigma\restr_M\leq \Aut(M)$ and $\Sigma\restr_{M}\cdot \tp(m/M)$ is dense in $S_m(M)$.
		
		For the ``more generally'' part, the proof is analogous, only each time we choose $\Sigma_n$, we ensure that it contains enough $\sigma\in G^Y(\fC)$ to witness the appropriate density condition. It works in the end because if $\sigma'\in \Aut(\fC)$ restricts to an automorphism of $M$ (i.e.\ fixes $M$ setwise), then $\sigma'\Autf(\fC)\in G^Y$ if and only if $\sigma'\restr_M\in G^Y(M)$.
	\end{proof}
	
	\begin{rem}
		Alternatively, one can show that if $M$ is a model which together with some group $\Sigma$ acting on it by automorphisms satisfies $(M,\Sigma)\preceq (\fC,\Aut(\fC))$, then $M$ is ambitious, whence the first part of Proposition~\ref{prop:amb_exist} follows from the downward Löwenheim-Skolem theorem.
		\xqed{\lozenge}
	\end{rem}
	
	\begin{rem}
		One can also show that every strongly $\aleph_0$-homogeneous and $\aleph_0$-saturated model is ambitious.\xqed{\lozenge}
	\end{rem}

	One might ask whether we can extend Corollary~\ref{cor:NIP_char} to say that $T$ has NIP if and only if $T$ has a tame ambitious model --- we know that this is the case if $T$ is $\omega$-categorical, but the following example shows that it is not enough in general.
	
	\begin{ex}
		Suppose $M=\dcl(\emptyset)$ is a model (this is possible in an IP theory: for instance if we name all elements of a fixed model of an arbitrary IP theory).
		
		Then $S_{m}(M)$ is a singleton, so $M$ is trivially tame and ambitious.\xqed{\lozenge}
	\end{ex}
	However, any example of this sort will be G-compact, so in this case the the main result (Theorem~\ref{thm:main_galois}) essentially reduces to Theorem~\ref{thm:main_over_KP}, which is simpler by far to prove, and as such, not interesting from the point of view of the following analysis. This leads us to the following question.
	\begin{ques}
		\label{ques:non-g-cpct_tame}
		Is there a countable theory $T$ which is IP but not G-compact, such that some countable $M\models T$ is tame and ambitious?
		\xqed{\lozenge}
	\end{ques}

	\chapter{Group-like equivalence relations}
	\label{chap:grouplike}
	In this chapter, we introduce the notions of group-like and weakly group-like equivalence relations, along with various strengthenings of both. We prove many of their properties, including abstract Theorems~\ref{thm:general_cardinality_intransitive}, \ref{thm:general_cardinality_transitive} and \ref{thm:main_abstract}, which are the formal statements behind Main~Theorems~\ref{mainthm:abstract_card} and \ref{mainthm:abstract_smt}, and which will be later used to prove (the precise versions of) Main~Theorems~\ref{mainthm_group_types}, \ref{mainthm:smt} and \ref{mainthm:nwg}.
	
	Throughout the chapter, unless noted otherwise, we have a fixed $G$-ambit $(X,x_0)$ along with an equivalence relation $E$ on $X$. $EL=E(G,X)$ is the enveloping semigroup of $(G,X)$, $\cM$ is a fixed minimal (left) ideal in $EL$, and $u\in \cM$ is an idempotent.
	
	\section{Closed group-like equivalence relations}
	\begin{dfn}
		\label{dfn:glike}
		\index{equivalence relation!group-like}
		Let $E$ be an equivalence relation on $X$. We say that $E$ is \emph{$G$-group-like} (or just \emph{group-like}) if it is $G$-invariant and the partial operation given by formula
		\[
		[gx_0]_E\cdot [x]_E=g[x]_E,
		\]
		for all $g\in G$ and $x\in X$, extends to a group operation on $X/E$, turning it into (possibly non-Hausdorff) topological group (with the quotient topology).
		
		(It follows that $X/E$ is a topological group (with the quotient topology) and $g\mapsto [gx_0]_E$ is a well-defined group homomorphism.)\xqed{\lozenge}
	\end{dfn}
	Note that when discussing particular group-like equivalence relation, we have in mind a specific group structure on $X/E$. In general, it may be not unique, as we will see in Example~\ref{ex:nonunique_structure}.
	
	\begin{rem}
		Note that the definition of group-likeness implies that $[x_0]_E=[e\cdot x_0]_E$ is the identity in $X/E$.\xqed{\lozenge}
	\end{rem}
	
	\begin{ex}
		\label{ex:group_glike}
		Consider the action of a compact Hausdorff group $G$ on itself by left translations, so that $X=G$ and take $x_0=e$. Then given any normal $N\unlhd G$, the relation $E=E_N$ of lying in the same coset of $N$ is group-like, and it is closed if and only if $N$ is closed. (See Example~\ref{ex:cpct_glike} for more details.)\xqed{\lozenge}
	\end{ex}

	\begin{prop}
		An equivalence relation $E$ is $G$-group-like if and only if $X/E$ has a topological group structure (with the induced topology) and for every $x$ we have $gx\in [gx_0]_E\cdot [x]_E$.
	\end{prop}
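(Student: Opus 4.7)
The statement is essentially an unfolding of the definition, so the proof plan is short and the ``main obstacle'' is only a small observation about $G$-invariance.

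The forward direction is immediate from Definition~\ref{dfn:glike}: if $E$ is $G$-group-like, then $X/E$ carries a topological group structure by definition, and the defining formula $[gx_0]_E\cdot[x]_E=g[x]_E$ gives, in particular, $gx\in g[x]_E=[gx_0]_E\cdot[x]_E$ (remembering that $g[x]_E=[gx]_E$ is well defined because $E$ is $G$-invariant, and $gx$ lies in its own class).

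For the backward direction, my plan is to first observe that the hypothesis $gx\in[gx_0]_E\cdot[x]_E$ already forces $G$-invariance of $E$: indeed, if $x\Er x'$, then $[x]_E=[x']_E$, so
\[
gx\in[gx_0]_E\cdot[x]_E=[gx_0]_E\cdot[x']_E\ni gx',
\]
and as $[gx_0]_E\cdot[x']_E$ is a single $E$-class this yields $gx\Er gx'$. Once $G$-invariance is established, $g[x]_E:=[gx]_E$ is well defined, and the hypothesis reads $[gx_0]_E\cdot[x]_E=g[x]_E$, which is precisely the partial operation demanded in Definition~\ref{dfn:glike}; by assumption this partial operation is part of a topological group structure on $X/E$ (with the quotient topology), so all the requirements of group-likeness are satisfied.

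The only mildly non-obvious point is the derivation of $G$-invariance from the membership condition, and that is a one-line argument. Everything else is simply rewriting $g[x]_E=[gx]_E$ and invoking the already-given group structure, so I would not expect any genuine obstacle in formalising the argument.
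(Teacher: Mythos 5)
Your proof is correct and follows essentially the same route as the paper: the forward direction unpacks the definition, and in the backward direction you first extract $G$-invariance of $E$ from the membership condition, then use it to upgrade $gx\in[gx_0]_E\cdot[x]_E$ to the class equality $g[x]_E=[gx_0]_E\cdot[x]_E$. The only thing worth tightening is the phrase ``the hypothesis reads $[gx_0]_E\cdot[x]_E=g[x]_E$'' --- strictly the hypothesis gives only membership, and one more line (two $E$-classes sharing the element $gx$ must coincide) is what yields the equality, just as the paper spells out.
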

	\begin{proof}
		Suppose $E$ is group-like, so we have a group structure on $X/E$ witnessing it. Choose any $g,x$. Then by the assumption $[gx]_E=g[x]_E=[gx_0]_E\cdot [x]_E$, which means that $gx\in [gx_0]_E\cdot [x]_E$.
		
		In the other direction, suppose for all $g,x$ we have that $gx\in [gx_0]_E\cdot [x]_E$. Then, in particular, whenever $[x_1]_E=[x_2]_E$, we have that $gx_1\in [gx_0]_E\cdot [x_1]_E$ and $gx_2\in [gx_0]_E\cdot[x_2]_E$, and since the classes on the right hand side are equal, it follows that $gx_2\Er gx_1$, so $E$ is $G$-invariant. Thus $g[x]_E=[gx]_E$, so $g [x]_E\subseteq [gx_0]_E\cdot [x]_E$, and in fact $g[x]_E=[gx_0]_E\cdot [x]_E$ (because the latter is a single $E$-class).
	\end{proof}
	
	\begin{prop}
		If $E$ is group-like, then for all $g\in G$ we have that whenever $gx_0\Er x_0$, then $gx\Er x$ for all $x\in X$. In particular, the stabilizer of $[x_0]_E$ is normal in $G$.
	\end{prop}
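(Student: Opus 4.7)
The plan is to exploit directly the group structure on $X/E$ and the tautological group homomorphism $\phi\colon G\to X/E$, $g\mapsto [gx_0]_E$, supplied by the definition of group-likeness (and noted in the parenthetical remark at the end of Definition~\ref{dfn:glike}). Both assertions will drop out as immediate consequences of the identities $[gx]_E=g[x]_E=[gx_0]_E\cdot[x]_E$ and the fact that $[x_0]_E$ is the identity element of $X/E$.

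For the first assertion, I would fix $g\in G$ with $gx_0\Er x_0$, so that $[gx_0]_E=[x_0]_E$, the identity of $X/E$. Then for any $x\in X$ the defining equation of group-likeness gives
\[
[gx]_E \;=\; g[x]_E \;=\; [gx_0]_E\cdot[x]_E \;=\; [x_0]_E\cdot[x]_E \;=\; [x]_E,
\]
which is exactly $gx\Er x$.

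For the second assertion, the setwise stabilizer of $[x_0]_E$ under the induced $G$-action on $X/E$ is
\[
\Stab([x_0]_E)=\{g\in G\mid g[x_0]_E=[x_0]_E\}=\{g\in G\mid [gx_0]_E=[x_0]_E\}=\ker\phi,
\]
where the middle equality uses $G$-invariance of $E$ (so that $g[x_0]_E=[gx_0]_E$). Being the kernel of a group homomorphism, $\Stab([x_0]_E)$ is normal in $G$. Alternatively, one can argue without invoking $\phi$: if $g$ stabilizes $[x_0]_E$ and $h\in G$ is arbitrary, then by the first part $g(h^{-1}x_0)\Er h^{-1}x_0$, so $G$-invariance of $E$ gives $(hgh^{-1})x_0=h(g(h^{-1}x_0))\Er h(h^{-1}x_0)=x_0$, i.e., $hgh^{-1}$ stabilizes $[x_0]_E$.

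There is no real obstacle here — the statement is a direct unwinding of the definition; the only thing to keep track of is which map is a homomorphism and that $[x_0]_E$ is the identity in $X/E$, both of which are recorded in Definition~\ref{dfn:glike} and the subsequent remark.
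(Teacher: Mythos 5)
Your proof of the first assertion is essentially the paper's: both plug into the defining identity $g[x]_E=[gx_0]_E\cdot[x]_E$ and use that $[x_0]_E$ is the identity of $X/E$ (the paper writes this as $[e_G x_0]_E$ and applies the defining identity once more with $g=e_G$, but that is the same computation). For the normality, which the paper leaves implicit under ``in particular,'' your kernel-of-$\phi$ argument and your direct conjugation argument are both correct; the conjugation argument is closer in spirit to what the ``in particular'' is signalling, since it uses exactly the first part.
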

	\begin{proof}
		If $[gx_0]=[x_0]_E=[e_G x_0]_E$, then by group-likeness $g [x]_E=[e_G x_0]_E\cdot [x]_E=e_G[x]_E=[x]_E$.
	\end{proof}
	
	\begin{prop}
		\label{prop:closed_invariant}
		If $E$ is a closed, $G$-invariant equivalence relation on $X$, then it is also $EL$-invariant.
	\end{prop}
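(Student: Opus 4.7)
The plan is to unpack the definition of $EL$ and use the fact that $E$, being closed, behaves well under pointwise limits. Concretely, I want to show that for every $f \in EL$ and every pair $x, y \in X$ with $x \Er y$, we have $f(x) \Er f(y)$.

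First I would recall (from Definition~\ref{dfn:ellis_group_pre}) that $EL$ is the pointwise closure in $X^X$ of the set $\{\pi_g : g \in G\}$. So for any $f \in EL$, there is a net $(g_i)_{i \in I}$ of elements of $G$ such that $\pi_{g_i} \to f$ pointwise; in particular, $g_i \cdot x \to f(x)$ and $g_i \cdot y \to f(y)$ in $X$.

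Next, I would use the hypothesis of $G$-invariance: for each index $i$, since $x \Er y$, we have $g_i \cdot x \Er g_i \cdot y$, i.e.\ the pair $(g_i \cdot x,\, g_i \cdot y)$ lies in the subset $E \subseteq X^2$. The net $\bigl((g_i \cdot x, g_i \cdot y)\bigr)_i$ then converges to $(f(x), f(y))$ in the product topology on $X^2$. Since $E$ is closed by hypothesis, the limit point belongs to $E$, giving $f(x) \Er f(y)$, as required.

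This argument is essentially immediate, and I do not anticipate any real obstacle; the only subtlety worth flagging is that we are using pointwise convergence at two points simultaneously, which is fine because the product topology on $X^2$ is exactly the topology in which closedness of $E$ was asserted. No appeal to compactness, Hausdorffness, or continuity of the $\pi_g$'s beyond what is built into the definition of $EL$ is needed.
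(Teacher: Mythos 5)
Your proof is correct and follows the same approach as the paper's: approximate $f\in EL$ by a net $(\pi_{g_i})$, use $G$-invariance to see that each $(g_i\cdot x, g_i\cdot y)$ lies in $E$, and pass to the limit using closedness of $E$ in $X^2$. You have simply written out the details that the paper's one-line proof leaves implicit.
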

	\begin{proof}
		If $f=\lim g_i$, then for any $x_1 \Er x_2$ we have that $g_i(x_1)\Er g_i(x_2)$, so by closedness $f(x_1)\Er f(x_2)$.
	\end{proof}
	
	\begin{dfn}
		\index{r@$r$}
		\index{R@$R$}
		In the remainder of this chapter, we will denote by $R$ the orbit map $EL\to X$, $R(f)=f(x_0)$, an by $r$ the induced map $EL\to X/E$, $r(f)=[f(x_0)]_E$.\xqed{\lozenge}
	\end{dfn}
	
	\begin{lem}
		\label{lem:closed_group_like}
		Suppose $E$ is a closed, group-like equivalence relation on $X$.
		
		Then:
		\begin{enumerate}
			\item
			$E$ is $EL$-invariant,
			\item
			$r\colon EL\to X/E$, $r(f)=[f(x_0)]_E$ is a semigroup homomorphism,
			\item
			$r\restr_{u\cM}$ is onto and a topological quotient mapping (with $u\cM$ equipped with the $\tau$-topology),
			\item
			$H(u\cM)\leq \ker r$ and the induced map $u\cM/H(u\cM)\to X/E$ is a topological group quotient mapping.
		\end{enumerate}
	\end{lem}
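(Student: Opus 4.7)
For (1) I would just invoke Proposition~\ref{prop:closed_invariant}, which gives $EL$-invariance of any closed $G$-invariant equivalence relation. For (2), my plan is to first establish the identity for $g\in G$ directly from group-likeness,
\[
r(gf_2) = [gf_2(x_0)]_E = g\cdot [f_2(x_0)]_E = [gx_0]_E \cdot [f_2(x_0)]_E = r(g)r(f_2),
\]
valid for any $f_2\in EL$, and then extend to arbitrary $f_1\in EL$ by choosing a net $g_i\in G$ with $g_i\to f_1$, invoking left-continuity of $EL$-multiplication to get $f_1 f_2=\lim_i g_i f_2$, and taking limits. Continuity of $R$ together with continuity of the quotient $X\to X/E$ makes $r\restr_{EL}$ continuous, so $r(g_if_2)\to r(f_1 f_2)$; on the other hand, continuity of the group operation on $X/E$ combined with $r(g_i)\to r(f_1)$ gives $r(g_i)r(f_2)\to r(f_1)r(f_2)$. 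Both limits will agree because closedness of $E$ makes $X/E$ Hausdorff (Fact~\ref{fct:quot_T2_iff_closed}).

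For (3), I would first establish surjectivity. Since $r$ will be a semigroup homomorphism by (2) and $u$ is idempotent, $r(u)$ is an idempotent of the group $X/E$, hence $r(u)=e=[x_0]_E$; thus $r(u\cM)=r(\cM)$. Since $\cM$ is a left ideal, $r(\cM)$ is closed under left multiplication by $r(EL)$, and a routine density argument (continuity of $r$, compactness of $r(EL)$ inside the Hausdorff $X/E$, and density of $G\cdot x_0$ in $X$) yields $r(EL)=X/E$; since a left ideal in a group is either empty or everything, $r(u\cM)=X/E$.

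The main obstacle will be verifying the topological quotient property of $r\restr_{u\cM}$ in the $\tau$-topology. First I would verify $\tau$-continuity by showing $r(\cl_\tau(A))\subseteq \overline{r(A)}$: using $\cl_\tau(A)=u\cM\cap(u\circ A)$ from Fact~\ref{fct:tau_top_pre}(3), any $h\in u\circ A$ is a limit of a net $g_id_i$ with $g_i\to u$ in $EL$ and $d_i\in A$; then $r(g_id_i)=r(g_i)r(d_i)$ by (2) with $r(g_i)\to e$, and passing to a subnet along which $r(d_i)$ converges (by compactness of $X/E$) to some $w\in\overline{r(A)}$ one reads off $r(h)=w$ using Hausdorffness of $X/E$. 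For the quotient property itself, I would consider $\rho:=r\restr_{\overline{u\cM}}\colon\overline{u\cM}\to X/E$, which is a continuous surjection from a compact space onto a Hausdorff space and hence a topological quotient map by Remark~\ref{rem: continuous surjection is closed}. Factoring $\rho=r\restr_{u\cM}\circ\xi$ with $\xi(f):=uf$ (continuous to the $\tau$-topology by Fact~\ref{fct:tau_top_pre}(4)), the identity
\[
\xi^{-1}(r^{-1}(A)\cap u\cM) = r^{-1}(A)\cap\overline{u\cM} = \rho^{-1}(A),
\]
valid because $r(uf)=r(f)$, will convert $\tau$-closedness of $r^{-1}(A)\cap u\cM$ into closedness of $\rho^{-1}(A)$, whence $A$ is closed in $X/E$.

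For (4), the inclusion $H(u\cM)\leq \ker(r\restr_{u\cM})$ will follow from $\tau$-continuity at $u$: for every open neighborhood $U$ of $e\in X/E$, $V:=r^{-1}(U)\cap u\cM$ is a $\tau$-open neighborhood of $u$ with $r(\overline{V})\subseteq\overline{U}$, so $r(H(u\cM))\subseteq r(\overline{V})\subseteq\overline{U}$ for every such $U$; regularity of the compact Hausdorff space $X/E$ gives $\bigcap_U\overline{U}=\{e\}$. The induced map $u\cM/H(u\cM)\to X/E$ will then be a surjective continuous group homomorphism from a compact Hausdorff topological group (Fact~\ref{fct:tau_top_pre}(8)) onto a Hausdorff space, therefore automatically closed and thus a topological quotient map.
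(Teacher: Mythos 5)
Parts (1), (2) and (4) of your proof are essentially correct, and parts of (3) (surjectivity, $\tau$-continuity) are valid though more roundabout than necessary. There is, however, a genuine gap in the quotient-map part of (3).

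You factor $\rho=r\restr_{\overline{u\cM}}=(r\restr_{u\cM})\circ\xi$ with $\xi(f)=uf$ and cite Fact~\ref{fct:tau_top_pre}(4) for the continuity of $\xi\colon\overline{u\cM}\to(u\cM,\tau)$. That fact only says: if a net $(f_i)$ \emph{with terms in $u\cM$} converges in $EL$ to $f\in\overline{u\cM}$, then $(f_i)$ $\tau$-converges to $uf$. This is a statement about limits of nets whose terms already lie in $u\cM$; it does not assert that for an arbitrary net $(f_i)$ in $\overline{u\cM}$ with $f_i\to f$, the images $uf_i$ $\tau$-converge to $uf$ — which is what continuity of $\xi$ would require. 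Indeed, the paper is careful to state only the weaker Proposition~\ref{prop:strange_cont_pre}: what is proved there is that $\zeta\circ\xi$ is continuous for any continuous $\zeta\colon(u\cM,\tau)\to Z$ with $Z$ regular, not that $\xi$ itself is continuous; the $\tau$-topology is typically non-Hausdorff (cf.\ the remark after Fact~\ref{fct:tau_coarser}), and this subtlety is exactly why the weaker statement is used. Your proposed identity $\xi^{-1}(r^{-1}(A)\cap u\cM)=\rho^{-1}(A)$ is correct, but without continuity of $\xi$ you cannot deduce that $\rho^{-1}(A)$ is closed from the $\tau$-closedness of $r^{-1}(A)\cap u\cM$.

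The detour through $\overline{u\cM}$ and $\xi$ is not needed at all. Once you know $r\restr_{u\cM}$ is $\tau$-continuous and surjective, recall from Fact~\ref{fct:tau_top_pre}(6) that $u\cM$ with the $\tau$-topology is \emph{compact}, while $X/E$ is Hausdorff (as $E$ is closed, via Fact~\ref{fct:quot_T2_iff_closed}); a continuous surjection from a compact space onto a Hausdorff space is automatically closed and hence a topological quotient map. This is what the paper does, and it closes the gap immediately. Also, for the surjectivity in (3), your argument is correct but can be shortened: since $u\cM=uELu$ and (by (2)) $r$ is a homomorphism onto the group $X/E$, one has $r[u\cM]=r(u)\cdot r[EL]\cdot r(u)=X/E$ directly. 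Finally, for (4), your argument re-derives a special case of Corollary~\ref{cor:H(G)_universal}, which can simply be cited once (2) and (3) are in place.
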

	\begin{proof}
		Note that because $E$ is closed, by Fact~\ref{fct:quot_T2_iff_closed}, $X/E$ is Hausdorff, so convergent nets have unique limits.
		
		(1) is immediate by Proposition~\ref{prop:closed_invariant}.
		
		To see that $r$ is a homomorphism, choose any $f_1,f_2\in EL$. We know that $R(f_1f_2)=f_1R(f_2)$. By (1), it follows that $r(f_1f_2)=f_1r(f_2)$. Let $g_i$ converge to $f_1$. Then $f_1r(f_2)=\lim (g_i\cdot r(f_2))$. But by the assumption $g_i\cdot r(f_2)=r(g_i)\cdot r(f_2)$, so by continuity of multiplication and $r$ we have that $\lim(g_ir(f_2))=\lim(r(g_i)r(f_2))=(\lim r(g_i))r(f_2)=r(f_1)r(f_2)$.
		
		Note that $r\restr_{u\cM}$ is surjective, because $u\cM=uELu$, $X/E$ is a group and $r$ is surjective (so $r(u\cM)=r(u)\cdot (X/E)\cdot r(u)=X/E$).
		
		To prove that $r\restr_{u\cM}$ is continuous in the $\tau$ topology, we show that if $F\subseteq X/E$ is closed, then $(u\circ r^{-1}[F])\cap u\cM=r^{-1}[F]\cap u\cM$. Note first that of course $r$ is continuous (as a map $EL\to X/E$). Take any net $(f_i)_i$ in $r^{-1}[F]$ and $g_i\to u$ such that $(g_if_i)_i$ converges to some $h\in u\cM$. We want to show that $r(h)\in F$. Passing to a subnet if necessary, we can assume that $f_i$ converges to some $f\in r^{-1}[F]$. Then we have (by continuity) that $r(h)=\lim r(g_if_i)=\lim (r(g_i)r(f_i))=r(u)r(f)=r(f)$ (because $r(u)$ is the identity, since it is the only idempotent in a group).
		
		In conclusion, $r\restr_{u\cM}\to X/E$ is a continuous surjection from a compact space to a Hausdorff space, and thus it is closed, and in particular a quotient mapping.
		
		The last point follows immediately from Corollary~\ref{cor:H(G)_universal} and the second and third points above.
	\end{proof}
	Note that Lemma~\ref{lem:closed_group_like} implies that if $E$ is closed, then the group structure on $X/E$ witnessing its group-likeness is unique.

	\begin{prop}
		\label{prop:closure_grouplike}
		Suppose $E$ is group-like. Then $\bar E$ defined as $x_1\mathrel{\bar E} x_2$ when $\overline{\{[x_1]_E\}}=\overline{\{[x_2]_E\}}\subseteq X/E$ is a closed group-like equivalence relation Furthermore, $\bar E$ is the finest closed equivalence relation coarser than $E$.
	\end{prop}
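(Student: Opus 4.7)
The key observation is that $\bar E$ is exactly the pullback, along the quotient map $\pi\colon X\to X/E$, of the topological indistinguishability relation on $X/E$. In other words, $X/\bar E$ (as a set) is naturally identified with the Kolmogorov quotient of $X/E$ (Definition~\ref{dfn:kolmogorov}), and since $\pi$ is a topological quotient map and so is $X/E \to X/\bar E$, the composition $X\to X/\bar E$ is a quotient map; hence $X/\bar E$ carries the quotient topology from $X$ and is literally the Kolmogorov quotient of $X/E$.

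First I would verify closedness and the group-like structure. Since $X/E$ is a topological group (by group-likeness), Proposition~\ref{prop:top_gp_R1} shows that $X/E$ is an $R_1$ space, so its Kolmogorov quotient $X/\bar E$ is Hausdorff; explicitly, $X/\bar E$ is canonically the topological group $(X/E)/\overline{\{[x_0]_E\}}$, since $\overline{\{[x_0]_E\}}$ is a closed normal subgroup of $X/E$ (the closure of the identity in a topological group). Because $X$ is compact Hausdorff, Fact~\ref{fct:quot_T2_iff_closed} then gives that $\bar E$ is closed. For group-likeness, the group operation is inherited from $X/E$: the quotient homomorphism $X/E\to X/\bar E$ sends $[gx_0]_E\cdot[x]_E=[gx]_E$ to $[gx_0]_{\bar E}\cdot[x]_{\bar E}=[gx]_{\bar E}$, and $X/\bar E$ is a topological group with the quotient topology, so the defining equation for group-likeness holds.

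Next, I would check that $\bar E$ is the finest closed equivalence relation coarser than $E$. Suppose $F\supseteq E$ is another closed equivalence relation on $X$. By Fact~\ref{fct:quot_T2_iff_closed}, $X/F$ is Hausdorff, and the quotient map $X\to X/F$ factors through $X/E$ as a continuous map $q\colon X/E\to X/F$ (using that $\pi$ is a quotient map). A continuous map from any topological space to a Hausdorff space is constant on topological indistinguishability classes, so $q$ factors further through the Kolmogorov quotient $X/\bar E$. Composing with $X\to X/\bar E$ shows that $\bar E\subseteq F$.

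I do not expect a serious obstacle here: the only mildly delicate point is verifying that the Kolmogorov quotient of the topological group $X/E$ really is the Hausdorff topological group $(X/E)/\overline{\{[x_0]_E\}}$ and that the resulting topology on $X/\bar E$ matches the quotient topology coming from $X$; both are standard and already encoded in Proposition~\ref{prop:top_gp_R1} together with the fact that a composition of quotient maps is a quotient map.
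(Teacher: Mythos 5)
Your proof is correct and takes essentially the same route as the paper: identify $X/\bar E$ with $(X/E)/\overline{\{[x_0]_E\}}$, invoke the quotient-by-closed-subgroup fact to get Hausdorffness (hence closedness of $\bar E$), and argue minimality via the observation that the map $X/E\to X/F$ must factor through the Kolmogorov quotient. The explicit Kolmogorov-quotient framing is a pleasant way to organise the argument but it is the same underlying idea; the paper's version of the minimality step (preimages of points in $X/F$ are closed in $X/E$, so contain closures of singletons) is the same observation phrased pointwise.
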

	\begin{proof}
		Note that $X/{\bar E}$ is simply the quotient of the topological group $X/E$ by the closure of the identity $[x_0]_E$. As such (by Fact~\ref{fct:quotient_by_closed_subgroup}), it is a Hausdorff group and $X/E\to X/{\bar E}$ is a homomorphism. The conclusion follows.
		
		For the ``furthermore'' part, note that if $F\supseteq E$ is closed, then by Fact~\ref{fct:quot_T2_iff_closed}, $X/F$ is Hausdorff. It follows that the preimage of any point by the map $X/E\to X/F$ is closed in $X/E$, which implies that any class of $F$ contains a class of $\bar E$, which completes the proof.
	\end{proof}
	
	\begin{cor}
		\label{cor:r_uM_cont}
		If $E$ is a group-like equivalence relation (not necessarily closed), then $r\restr_{u\cM}$ is continuous.
	\end{cor}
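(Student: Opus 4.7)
My plan is to deduce the corollary directly from Lemma~\ref{lem:closed_group_like} applied to the associated closed group-like relation $\bar E$ furnished by Proposition~\ref{prop:closure_grouplike}. The key structural observation is that the natural surjection $\pi\colon X/E \to X/\bar E$ is precisely the Kolmogorov quotient, so it is not merely a quotient map but also an initial map, and this lets continuity transfer in both directions.

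First, I would let $\bar E$ be the closure of $E$ as in Proposition~\ref{prop:closure_grouplike}, which is closed group-like, and let $r'\colon EL\to X/\bar E$ be the corresponding orbit map. By Lemma~\ref{lem:closed_group_like}(3) applied to $\bar E$, the restriction $r'\restr_{u\cM}\colon u\cM\to X/\bar E$ is continuous (in fact a quotient map onto) with respect to the $\tau$-topology on $u\cM$. Clearly $r'=\pi\circ r$, so $\pi\circ (r\restr_{u\cM})$ is continuous.

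The main (small) point is then: continuity of $\pi\circ (r\restr_{u\cM})$ implies continuity of $r\restr_{u\cM}$ itself. This requires showing that the topology on $X/E$ is the initial topology induced by $\pi$, i.e.\ every open $U\subseteq X/E$ equals $\pi^{-1}[V]$ for some open $V\subseteq X/\bar E$. Since $X/E$ is a topological group by group-likeness, it is an $R_1$ space by Proposition~\ref{prop:top_gp_R1}; moreover, by construction, $\pi$ identifies exactly the topologically indistinguishable points of $X/E$ (as $\overline{\{[x_1]_E\}}=\overline{\{[x_2]_E\}}$ is the defining condition of $\bar E$). In an $R_1$ space, every open set is saturated with respect to topological indistinguishability, hence every open $U\subseteq X/E$ is $\pi$-saturated. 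Combined with the fact that $\pi$ is a topological quotient map, this yields the initial-map property.

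Given this, for any open $U\subseteq X/E$ we can write $U=\pi^{-1}[V]$ with $V$ open in $X/\bar E$, and then $(r\restr_{u\cM})^{-1}[U]=(r'\restr_{u\cM})^{-1}[V]$ is $\tau$-open. There is no serious obstacle; the only thing to be a little careful about is spelling out why $\pi$ is initial, which I would handle exactly as above via the $R_1$ property of topological groups. This completes the proof.
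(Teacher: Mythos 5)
Your proposal is correct and takes essentially the same route as the paper: both reduce to the closed group-like relation $\bar E$ furnished by Proposition~\ref{prop:closure_grouplike}, invoke Lemma~\ref{lem:closed_group_like} for continuity of the map into $X/\bar E$, and then observe that the topology on $X/E$ is pulled back along $X/E\to X/\bar E$ (the paper phrases this as closed sets being $\overline{[x_0]_E}$-invariant, you as the Kolmogorov quotient map being initial --- the same observation). One small remark: the appeal to $R_1$ (Proposition~\ref{prop:top_gp_R1}) is superfluous there, since open sets are saturated with respect to topological indistinguishability in \emph{any} topological space, so the Kolmogorov quotient map is always initial.
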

	\begin{proof}
		Let $\bar E$ be as in Proposition~\ref{prop:closure_grouplike}. Write $r_{\bar E}$ for the induced map $EL\to X/{\bar E}$. Consider the commutative diagrams:
		\begin{center}
			\begin{tabular}{lcr}
				\begin{tikzcd}
				u\cM\arrow[d,"r\restr_{u\cM}"]\arrow[dr,"r_{\bar E}\restr_{u\cM}"] & \\
				X/E\arrow[r] & X/{\bar E}
				\end{tikzcd}
				&&
				\begin{tikzcd}
				EL\arrow[d,"r"]\arrow[dr,"r_{\bar E}"] & \\
				X/E\arrow[r] & X/{\bar E}
				\end{tikzcd}
			\end{tabular}
		\end{center}
		Let $F\subseteq X/E$ be closed. Then $F$ is $\overline{[x_0]_E}$-invariant (because it contains the closure of each of its points), so $F/\overline{[x_0]_E}$ is closed, and $r^{-1}[F]=r_{\bar E}^{-1}[F/\overline{[x_0]_E}]$. But since (by Proposition~\ref{prop:closure_grouplike}) $\bar E$ is closed and group-like, we know by Lemma~\ref{lem:closed_group_like} that $r_{\bar E}\restr_{u\cM}$ is continuous, so $r\restr_{u\cM}^{-1}[F]=r_{\bar E}\restr_{u\cM}^{-1}[F/\overline{[x_0]_E}]$ is closed.
	\end{proof}

	\section{Properly group-like equivalence relations}
	We have seen that if $E$ is closed group-like, then the group structure on $X/E$ is determined uniquely. In general, this need not be true (so in particular, we cannot hope to have a homomorphism as in Lemma~\ref{lem:closed_group_like}), as the following example shows.
	
	\begin{ex}
		\label{ex:nonunique_structure}
		Let $G={\bQ}$ act on the circle $X=\bR/\bZ$ with $x_0=0+\bZ$ by addition; clearly,  $(G,X,x_0)$ is an ambit. Then if we take for $E$ the relation of lying in the same orbit of $G$, then as a topological space, $X/E=X/G=\bR/\bQ$ is a space of cardinality $2^{\aleph_0}$ with trivial (antidiscrete) topology, and $[gx_0]_E=\bQ$ for all $g\in G$. Thus, any group structure on $\bR/\bQ$ such that $\bQ$ is the identity witnesses group-likeness of $E$, and of course there is a large number of such structures.\xqed{\lozenge}
	\end{ex}
	
	Since we do want to treat relations which are not necessarily closed, and still recover a statement in the spirit of Lemma~\ref{lem:closed_group_like}, we impose further restrictions on the group structure of the quotient.
	
	\begin{dfn}
		\label{dfn:prop_glike}
		\index{equivalence relation!group-like!properly}
		We say that an equivalence relation $E$ on $X$ is \emph{properly group-like} if it is group-like and there is a group $\tilde G$, an equivalence relation $\equiv$ on it, an identification of $X$ with $\tilde G/{\equiv}$, such that:
		\index{G@$\tilde G$}
		\begin{figure}[H]
			\begin{tikzcd}
				G \arrow[r]\arrow[dr]& EL=EL(G,X)\arrow[d,"R"]\arrow[dr,"r"]& \\
				\tilde G \arrow[r,"\tilde g\mapsto {[\tilde g]}_\equiv"] & X=\tilde G/{\equiv} \arrow[r] & \tilde G/N=X/E
			\end{tikzcd}
		\end{figure}
		\begin{itemize}
			\item
			$\tilde g\mapsto [[\tilde g]_\equiv]_E$ is a group homomorphism (for brevity, we will denote it by $\tilde r$),
			\item
			(pseudocompleteness) whenever $(g_i)$ and $(p_i)$ are nets in $G$ and $X$ (respectively) such that $g_i\cdot x_0\to x_1$, $p_i\to x_2$ and $g_i\cdot p_i\to x_3$ for some $x_1,x_2,x_3\in X$, there are $\tilde g_1,\tilde g_2\in \tilde G$ such that $[\tilde g_1]_\equiv=x_1$, $[\tilde g_2]_\equiv=x_2$ and $[\tilde g_1\tilde g_2]_\equiv=x_3$,
			\item $F_0=\{[\tilde g_1^{-1}\tilde g_2]_{\equiv}\mid \tilde g_1\equiv \tilde g_2\}$ is closed in $X$.\xqed{\lozenge}
		\end{itemize}
	\end{dfn}
	
	\begin{ex}
		The $E_N$ from Example~\ref{ex:group_glike} is actually properly group-like: indeed, we can just take $\tilde G=G$ with ${\equiv}$ being just equality in $\tilde G=G$. (See Example~\ref{ex:cpct_glike} for more details.).\xqed{\lozenge}
	\end{ex}
	
	\begin{ex}
		\label{ex:closed_glike_is_properly_glike}
		If $E$ is closed group-like, then it is properly group-like. (See Proposition~\ref{prop:closed_glike_is_properly_glike}.)\xqed{\lozenge}
	\end{ex}
	
	In the remainder of this section, unless we specify otherwise, $E$ is a properly group-like equivalence relation on $X$, and we fix $\tilde G$ and $\equiv$ witnessing that.
	
	\begin{prop}
		\label{prop:approx}
		For every $f\in EL$ and $x\in X$ there are $\tilde g_1,\tilde g_2\in \tilde G$ such that we have $[\tilde g_2]_\equiv=x$, $f(x)=[\tilde g_1\tilde g_2]_\equiv$ and $[\tilde g_1]_\equiv=f(x_0)$.
	\end{prop}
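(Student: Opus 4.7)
The plan is to reduce this immediately to the pseudocompleteness axiom in the definition of properly group-like. By definition of the Ellis semigroup $EL = E(G,X)$, every $f \in EL$ is a pointwise limit of maps $\pi_{g_i}$ with $g_i \in G$; that is, there is a net $(g_i)$ in $G$ such that $g_i \cdot y \to f(y)$ for every $y \in X$. In particular, $g_i \cdot x_0 \to f(x_0)$ and $g_i \cdot x \to f(x)$.

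To feed this into pseudocompleteness, I would take the constant net $p_i := x$ in $X$, which of course converges to $x$. Then $g_i \cdot p_i = g_i \cdot x \to f(x)$, so the three convergence hypotheses of pseudocompleteness hold with $x_1 = f(x_0)$, $x_2 = x$, and $x_3 = f(x)$. Applying pseudocompleteness (from Definition~\ref{dfn:prop_glike}) yields the required $\tilde g_1, \tilde g_2 \in \tilde G$ satisfying $[\tilde g_1]_\equiv = f(x_0)$, $[\tilde g_2]_\equiv = x$, and $[\tilde g_1 \tilde g_2]_\equiv = f(x)$.

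There is no real obstacle here; the proposition is essentially a restatement of pseudocompleteness in the special case where the second net is constant. The only thing worth verifying carefully is that the action map $EL \times X \to X$ is continuous in the first variable, so that a net $g_i$ with $\pi_{g_i} \to f$ does give $g_i \cdot y \to f(y)$ for each fixed $y$; this is immediate from the definition of the pointwise convergence topology on $EL \subseteq X^X$.
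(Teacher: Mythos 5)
Your proof is correct and follows exactly the same route as the paper: apply pseudocompleteness to a net $(g_i)$ in $G$ converging to $f$ and the constant net $p_i = x$, reading off $x_1 = f(x_0)$, $x_2 = x$, $x_3 = f(x)$. Nothing to add.
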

	\begin{proof}
		Immediate by pseudocompleteness: take for $(g_i)_i$ a net convergent to $f$ and for $(p_i)_i$ a constant net with all $p_i$ equal to $x$.
	\end{proof}

	\begin{lem}
		\label{lem:r_is_homomorphism}
		$r\colon EL\to X/E$ is a semigroup epimorphism.
	\end{lem}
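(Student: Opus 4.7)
The plan is to obtain both surjectivity and the semigroup homomorphism property essentially for free from the axioms packaged into the notion of ``properly group-like'', by using the pseudocompleteness axiom to transport a composition in $EL$ to a genuine product in $\tilde G$ and then invoking the fact that $\tilde r$ is a homomorphism.

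First I would deal with surjectivity. Since $G\cdot x_0$ is dense in $X$ and $R\colon EL\to X$, $R(f):=f(x_0)$, is continuous with $EL$ compact, $R[EL]$ is closed and contains $G\cdot x_0$, so $R$ is surjective; composing with the quotient $X\to X/E$ shows that $r=[\,\cdot\,]_E\circ R$ is onto.

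For the homomorphism property, fix $f_1,f_2\in EL$ and choose a net $(g_i)$ in $G$ with $\pi_{g_i}\to f_1$ in $EL$. Then $g_i\cdot x_0\to f_1(x_0)$, and by pointwise convergence evaluated at the single point $f_2(x_0)$ we also have $g_i\cdot f_2(x_0)\to f_1(f_2(x_0))=(f_1f_2)(x_0)$. Taking the constant net $p_i:=f_2(x_0)$, the hypotheses of pseudocompleteness hold with
\[
x_1:=f_1(x_0),\qquad x_2:=f_2(x_0),\qquad x_3:=(f_1f_2)(x_0),
\]
so there exist $\tilde g_1,\tilde g_2\in\tilde G$ with $[\tilde g_j]_\equiv=x_j$ for $j=1,2$ and $[\tilde g_1\tilde g_2]_\equiv=x_3$. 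Since $\tilde r\colon\tilde G\to X/E$, $\tilde r(\tilde g)=[[\tilde g]_\equiv]_E$, is a group homomorphism, we get
\[
r(f_1f_2)=[x_3]_E=\tilde r(\tilde g_1\tilde g_2)=\tilde r(\tilde g_1)\tilde r(\tilde g_2)=[x_1]_E\cdot[x_2]_E=r(f_1)\,r(f_2),
\]
which is what was required.

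There is no substantive obstacle: the lemma is essentially definition-chasing once pseudocompleteness has been set up, and the only thing to be a little careful about is producing the correct nets so that the pseudocompleteness hypothesis applies. The content of the lemma is exactly that the well-known Ellis identity $R(f_1f_2)=f_1\bigl(R(f_2)\bigr)$ descends to the quotient $X/E$ as a multiplicative identity, and the axioms of a properly group-like equivalence relation have been tailored precisely to enable this descent.
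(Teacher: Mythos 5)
Your proof is correct and is essentially the same as the paper's: the paper factors the pseudocompleteness application through an intermediate Proposition (\ref{prop:approx}, applied with $f=f_1$ and $x=f_2(x_0)$), whereas you inline that step by directly producing the constant net $p_i = f_2(x_0)$ and a net $g_i\to f_1$, but the two arguments are identical in content.
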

	\begin{proof}
		The fact that $r$ is onto is trivial, because $R$ is onto.
		
		Take any $f_1,f_2\in EL$. Let $\tilde g_1,\tilde g_2\in \tilde G$ be such that $[\tilde g_1]_\equiv=f_1(x_0)$, $[\tilde g_1\tilde g_2]_{\equiv}=f_1(f_2(x_0))$ and $[\tilde g_2]_\equiv=f_2(x_0)$ (they exist by Proposition~\ref{prop:approx}). Then we have $r(f_i)=\tilde g_iN$ for $i=1,2$. At the same time, $r(f_1f_2)=[R(f_1f_2)]_E=[f_1(f_2(x_0))]_E=[[\tilde g_1\tilde g_2]_\equiv]_E=\tilde g_1\tilde g_2N=\tilde g_1N\tilde g_2N=r(f_1)r(f_2)$.
	\end{proof}
	Note that because $X/E$ is a group, Lemma~\ref{lem:r_is_homomorphism} immediately implies from that for any idempotent $u\in EL$ we have that $u\in \ker r$. Furthermore, Lemma~\ref{lem:r_is_homomorphism} immediately implies that if $E$ is a properly group-like equivalence relation, then the group structure witnessing it is unique, so there is no analogue of Example~\ref{ex:nonunique_structure} for proper group-likeness.

	We have the following proposition, generalising Proposition~\ref{prop:closed_invariant}.
	\begin{cor}
		\label{cor:prop_glike_ellis_invariant}
		If $E$ is closed group-like or properly group-like, then it is $E(G,X)$-invariant. In fact, we have for every $f\in EL$ and $x\in X$ that $f[x]_E=r(f)[x]_E=[f(x)]_E$. Moreover, we have a ``mixed associativity'' law: for every $f\in E(G,X)$ and every $x_1,x_2\in X$, $(f[x_1]_E)[x_2]_E=f([x_1]_E[x_2]_E)$.
	\end{cor}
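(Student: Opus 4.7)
The plan is to reduce both cases to a single identity,
\[
[f(x)]_E \;=\; r(f)\cdot [x]_E \qquad (f\in EL,\; x\in X),
\]
from which everything else follows formally: since the right-hand side depends only on the class $[x]_E$, this identity immediately gives that $E$ is $EL$-invariant, hence $f[x]_E=[f(x)]_E=r(f)[x]_E$, and then mixed associativity is just associativity in the group $X/E$ applied to the class $[y]_E$ where $[y]_E=[x_1]_E[x_2]_E$.

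For the properly group-like case I would invoke Proposition~\ref{prop:approx} directly: given $f\in EL$ and $x\in X$, it supplies $\tilde g_1,\tilde g_2\in\tilde G$ with $[\tilde g_1]_\equiv=f(x_0)$, $[\tilde g_2]_\equiv=x$, and $[\tilde g_1\tilde g_2]_\equiv=f(x)$. Applying the group homomorphism $\tilde r\colon \tilde G\to X/E$ (from the definition of proper group-likeness) to $\tilde g_1\tilde g_2$ gives $[f(x)]_E=\tilde r(\tilde g_1)\tilde r(\tilde g_2)=r(f)\cdot[x]_E$ on the nose. So here the displayed identity is essentially just the statement that $\tilde r$ is a homomorphism, combined with the factorisation through $X=\tilde G/{\equiv}$.

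For the closed group-like case, $EL$-invariance is already Proposition~\ref{prop:closed_invariant}, and the rest of the identity I would prove by a net/continuity argument: choose a net $(g_i)$ in $G$ with $\pi_{g_i}\to f$ in $EL$; group-likeness gives $[g_i\cdot x]_E=r(g_i)\cdot[x]_E$ for each $i$. Because $E$ is closed, $X/E$ is a Hausdorff topological group (Fact~\ref{fct:quot_T2_iff_closed}), so the quotient map and the group operation are continuous, and we may pass to the limit on both sides to get $[f(x)]_E=r(f)\cdot[x]_E$. Alternatively, one can read this off from Lemma~\ref{lem:closed_group_like} by noting that $R(\pi_g f)=g\cdot R(f)$ and applying the homomorphism $r$, but the direct net argument seems cleaner.

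There is no real obstacle, only a bookkeeping point: the statement $(f[x_1]_E)[x_2]_E=f([x_1]_E[x_2]_E)$ must be read with $[x_1]_E[x_2]_E$ denoting a single class of $E$ (the product in $X/E$) and $f$ applied pointwise to a class. With that reading, both sides simplify to $r(f)\cdot([x_1]_E[x_2]_E)$ via the main identity, and the equality reduces to associativity in the topological group $X/E$.
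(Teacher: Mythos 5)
Your argument is correct, and the overall structure (first establish the single identity $[f(x)]_E=r(f)\cdot[x]_E$, then read off $EL$-invariance and mixed associativity formally) matches the paper. The difference is in how you establish the identity. The paper's proof is unified: it first notes that $r\colon EL\to X/E$ is a semigroup homomorphism in both cases (via Lemma~\ref{lem:closed_group_like} for the closed case and Lemma~\ref{lem:r_is_homomorphism} for the properly group-like case), then writes $x=R(f')$ for some $f'$ (using surjectivity of $R$) and computes $[f(x)]_E=r(ff')=r(f)r(f')=r(f)[x]_E$. You instead split the two cases: for properly group-like, you bypass Lemma~\ref{lem:r_is_homomorphism} and invoke Proposition~\ref{prop:approx} directly together with the homomorphism property of $\tilde r$, which is shorter and avoids the surjectivity of $R$; for the closed case you give a direct net argument through continuity of the group operation on the Hausdorff quotient $X/E$, which essentially unwinds the proof of Lemma~\ref{lem:closed_group_like}(2). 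Both routes are valid; the paper's buys uniformity across the two hypotheses, while yours is more self-contained in the properly group-like branch. Your treatment of mixed associativity coincides with the paper's.
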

	\begin{proof}
		Note that in each case, the function $r$ is a semigroup homomorphism (either by Lemma~\ref{lem:closed_group_like} or by Lemma~\ref{lem:r_is_homomorphism}).
		
		Choose any $f\in EL$ and $x\in X$. Then for some $f'\in EL$, $x=R(f)$. Now, note that $[f(x)]_E=[fR(f')]_E=[R(ff')]_E=r(ff')$. Since $r$ is a homomorphism, $[f(x)]_E=r(f)r(f')=r(f)[x]_E$. But the right hand side depends only on $[x]_E$, so $E$ is $EL$-invariant. Since clearly $f(x)\in [f(x)]_E$, it follows that $f[x]_E=[f(x)]_E=r(f)[x]_E$.
		
		For the mixed associativity, just note that by what we have already shown, for every $f\in EL$ and $x_1,x_2\in X$, we have that $(f[x_1]_E)[x_2]_E=(r(f)[x_1]_E)[x_2]_E$ and apply the associativity in $X/E$.
	\end{proof}

	\begin{prop}
		\label{prop:homom}
		$r\restr_{u\cM}\colon u\cM\to X/E$ is a group epimorphism.
	\end{prop}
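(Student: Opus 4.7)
The plan is short because almost all the work is already packaged in Lemma~\ref{lem:r_is_homomorphism}. I would proceed in three steps, each essentially one line.

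First, since $r\colon EL\to X/E$ is a semigroup epimorphism by Lemma~\ref{lem:r_is_homomorphism}, its restriction to the subsemigroup $u\cM\subseteq EL$ is a semigroup homomorphism into the group $X/E$. Next, I would observe that $r(u)$ is the identity of $X/E$: indeed $r(u)r(u)=r(uu)=r(u)$ in the group $X/E$, so $r(u)=[x_0]_E$ (this is exactly the remark made right after Lemma~\ref{lem:r_is_homomorphism} that any idempotent in $EL$ lies in $\ker r$). Because $u\cM$ is a group with identity $u$ (by Fact~\ref{fct:ideals_ellis_pre}(4)) and $r\restr_{u\cM}$ is a semigroup homomorphism sending the identity to the identity of $X/E$, it is automatically a group homomorphism.

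Finally, for surjectivity, I would use the standard identifications for minimal ideals: since $u\in\cM$ and $\cM$ is a minimal left ideal, $\cM=ELu$, and therefore $u\cM=uELu$. Applying the semigroup homomorphism $r$,
\[
r[u\cM]=r[uELu]=r(u)\,r[EL]\,r(u)=[x_0]_E\cdot(X/E)\cdot[x_0]_E=X/E,
\]
using that $r$ itself is onto $X/E$. This gives the desired epimorphism.

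There is no real obstacle here: the whole content has already been absorbed by establishing proper group-likeness (needed to make $r$ a homomorphism on \emph{all} of $EL$, via Proposition~\ref{prop:approx} and Lemma~\ref{lem:r_is_homomorphism}) and by the Ellis-semigroup structure theory (Fact~\ref{fct:ideals_ellis_pre}) which identifies $u\cM$ as a group sitting inside $EL$ via $u\cM=uELu$. The only subtlety worth flagging is that one should \emph{not} try to argue surjectivity by first picking a preimage in $\cM$ and then multiplying by $u$ on the left without remembering that $r(u)=[x_0]_E$; writing $u\cM=uELu$ and using the homomorphism property of $r$ on all of $EL$ makes the argument clean.
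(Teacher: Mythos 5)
Your proof is correct and takes the same route as the paper: the paper's proof is precisely the surjectivity computation $r(u\cM)=r(uELu)=r(u)\,r(EL)\,r(u)=r(u)\,(X/E)\,r(u)=X/E$, with the homomorphism part left implicit since Lemma~\ref{lem:r_is_homomorphism} already gives that $r$ is a semigroup homomorphism on all of $EL$. The extra details you spell out (that $r(u)$ is the identity and hence the restricted semigroup homomorphism between groups is automatically a group homomorphism) are exactly what the paper is taking for granted.
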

	\begin{proof}
		Since $u\cM=uELu$ and $X/E$ is a group, it follows that $r(u\cM)=r(u)r(EL)r(u)=r(u)X/E r(u)=X/E$.
	\end{proof}

	\begin{prop}
		\label{prop:restr_quot}
		$r\restr_{\cM}\colon \cM\to \Gal(T)$ is a topological quotient map.
	\end{prop}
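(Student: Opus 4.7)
The plan is to realise $r\restr_\cM$ as one side of a commutative triangle whose other two arrows are topological quotient maps, and then invoke Remark~\ref{rem:commu_quot}. The factorisation I would use is $r = (r\restr_\cM)\circ \rho_u$, where $\rho_u\colon EL\to \cM$ is right multiplication by the idempotent $u$. (I read the target of the statement as $X/E$, since $\Gal(T)$ is not defined in this abstract setting.)

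First, I would verify that $r\colon EL\to X/E$ is itself a topological quotient map. The orbit map $R\colon EL\to X$ is a continuous surjection from a compact space onto a Hausdorff one (its image is compact and contains the dense set $G\cdot x_0$), so by Remark~\ref{rem: continuous surjection is closed} it is closed, hence a quotient map; composing with the quotient projection $X\to X/E$ presents $r$ as a composition of two quotient maps.

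Next, I would check that $\rho_u$ is a continuous retraction $EL\to \cM$, and therefore also a quotient map. Continuity of right multiplication by a fixed element is automatic in $EL\subseteq X^X$: if $f_i\to f$ pointwise, then for each $x\in X$, $(f_iu)(x)=f_i(u(x))\to f(u(x))=(fu)(x)$. Surjectivity is $\cM=EL\cdot u$ (Fact~\ref{fct:ideals_ellis_pre}(2)), and $\rho_u$ restricts to the identity on $\cM$ because every $f=f'u\in \cM$ satisfies $fu=f'u^2=f'u=f$. As a continuous surjection from compact $EL$ onto the Hausdorff subspace $\cM$ (closed in $EL$), $\rho_u$ is closed by Remark~\ref{rem: continuous surjection is closed}, hence a quotient map.

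Commutativity $(r\restr_\cM)\circ\rho_u=r$ follows from Lemma~\ref{lem:r_is_homomorphism} together with the observation that $r(u)$, as an idempotent of the group $X/E$, must be the identity, so $r(fu)=r(f)r(u)=r(f)$ for every $f\in EL$. Remark~\ref{rem:commu_quot} then converts the fact that both $\rho_u$ and $r$ are quotient maps into the conclusion that $r\restr_\cM$ is a quotient map. No step in this argument is a serious obstacle; the only point deserving care is the convention for the ``left topological semigroup'' $EL$ --- it is right multiplication by a fixed element (not left) that is continuous, and that is precisely what powers the continuity of $\rho_u$.
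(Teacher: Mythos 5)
Your argument is essentially identical to the paper's own proof: both factor $r$ through the quotient map $f\mapsto fu\colon EL\to\cM$, verify that $r$ itself is a quotient map via the closedness of $R$, note that $r(fu)=r(f)$ because $r(u)$ is the identity of $X/E$, and conclude via Remark~\ref{rem:commu_quot}. Your reading of the target as $X/E$ (rather than $\Gal(T)$, which is evidently a typo carried over from the model-theoretic setting) matches what the paper's proof actually establishes.
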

	\begin{proof}
		$EL$ is compact and $X$ is Hausdorff, so $R\colon EL\to X$ is a quotient map (because it is closed), and thus so is $r$ (as the composition of $R$ and the quotient $X\to X/E$).
		
		Since the map $f\mapsto fu$ is a quotient map $EL\to \cM$ (by Remark~\ref{rem: continuous surjection is closed}) and $r(f)=r(fu)$ (because $r(u)$ is the identity in $X/E$), $r\restr \cM$ is a factor of $r$ via $f\mapsto fu$, and hence it is also a quotient map, by Remark~\ref{rem:commu_quot} (with $A=EL$, $B=\cM$ and $C=X/E$).
	\end{proof}
	
	\begin{prop}[Corresponding to {\cite[Lemma 4.7]{KP17}}]
		\label{prop:id_clsd}
		Denote by $J$ the set of idempotents in $\cM$. Then $\overline{J}\subseteq \ker r\cap \cM$.
	\end{prop}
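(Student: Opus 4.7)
The plan is to split the statement into its two inclusions. The containment $\overline{J}\subseteq \cM$ is immediate because $J\subseteq \cM$ and $\cM$ is closed by Fact~\ref{fct:ideals_ellis_pre}(1). All the work is in showing $\overline{J}\subseteq \ker r$, i.e.\ that $r(u')=[x_0]_E$ for every $u'\in\overline J$.

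The guiding observation is that the set $F_0$ appearing in Definition~\ref{dfn:prop_glike} is already contained in the identity class $[x_0]_E$. Indeed, if $x=[\tilde g_1^{-1}\tilde g_2]_\equiv$ with $\tilde g_1\equiv \tilde g_2$, then $\tilde r(\tilde g_1)=\tilde r(\tilde g_2)$ (the map $\tilde G\to X/E$ factors through $X$, which is $\tilde G/{\equiv}$), and because $\tilde r$ is a group homomorphism, $[x]_E=\tilde r(\tilde g_1^{-1}\tilde g_2)=[x_0]_E$. So $F_0\subseteq [x_0]_E$, and it suffices to prove that $R(u')=u'(x_0)\in F_0$ for every $u'\in \overline J$.

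The next step is to show $R(v)\in F_0$ for every single idempotent $v\in J$. Apply Proposition~\ref{prop:approx} with $f=v$ and $x=v(x_0)$: this yields $\tilde g_1,\tilde g_2\in\tilde G$ with $[\tilde g_1]_\equiv = v(x_0)$, $[\tilde g_2]_\equiv=v(x_0)$, and $[\tilde g_1\tilde g_2]_\equiv = v(v(x_0))=v(x_0)$, where the last equality uses $v^2=v$. Hence $\tilde g_1\equiv \tilde g_1\tilde g_2$, so by definition of $F_0$, $[\tilde g_1^{-1}(\tilde g_1\tilde g_2)]_\equiv = [\tilde g_2]_\equiv = v(x_0)$ belongs to $F_0$.

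Now fix $u'\in\overline J$ and a net $(v_i)\subseteq J$ with $v_i\to u'$ in $EL$. By continuity of $R$, $v_i(x_0)\to u'(x_0)$ in $X$. Each $v_i(x_0)$ lies in $F_0$ by the previous step, and $F_0$ is closed by the defining condition of proper group-likeness (Definition~\ref{dfn:prop_glike}), so $u'(x_0)\in F_0\subseteq [x_0]_E$, whence $r(u')=[x_0]_E$, as required. The one potentially tricky ingredient was recognising how the otherwise opaque closed set $F_0$ enters: the trick is to exploit the two appearances of $v(x_0)$ that idempotency forces in Proposition~\ref{prop:approx}, putting $v(x_0)$ inside $F_0$; once this is seen, closedness of $F_0$ delivers the limiting step for free.
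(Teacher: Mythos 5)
Your proof is correct and follows essentially the same route as the paper: you apply Proposition~\ref{prop:approx} to $f=v$ and $x=R(v)$ for an idempotent $v$, use $v^2=v$ to see that all of $[\tilde g_1]_\equiv$, $[\tilde g_2]_\equiv$, $[\tilde g_1\tilde g_2]_\equiv$ coincide with $R(v)$, and conclude $R(v)\in F_0$; the paper then phrases the closure step as $R^{-1}[F_0]$ being a closed set containing $J$, whereas you make the equivalent net argument explicit. The only superficial difference is that you isolate the observation $F_0\subseteq[x_0]_E$ as a separate preliminary step, which the paper folds into its second paragraph.
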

	\begin{proof}
		For any given $v\in J$, we have that
		\[
		R(v)\in F_0=\{[\tilde g_1^{-1}\tilde g_2]_{\equiv}\mid \tilde g_1\equiv \tilde g_2\} \}.
		\]
		Indeed, let us fix $v\in J$, and then take $\tilde g_1,\tilde g_2$ according to Proposition~\ref{prop:approx} for $f=v$ and $x=R(v)$. Then
		\[
			[\tilde g_1\tilde g_2]_\equiv=vR(v)=v^2x_0=vx_0=R(v),
		\]
		so $[\tilde g_1\tilde g_2]_\equiv=[\tilde g_1]_\equiv=[\tilde g_2]_\equiv=R(v)$. Since $\tilde g_2=\tilde g_1^{-1}(\tilde g_1\tilde g_2)$, it follows that $R(v)=[\tilde g_2]_\equiv\in F_0$.
		
		On the other hand, if $\tilde g_1\equiv \tilde g_2$, then of course $\tilde r(\tilde g_1)=\tilde r(\tilde g_2)$, so $\tilde g_1^{-1}\tilde g_2\in \ker \tilde r$, and hence $R^{-1}[F_0]\subseteq \ker r$, which (by the assumption in Definition~\ref{dfn:prop_glike}
		that $F_0$ is closed) shows that $\overline{J}\subseteq \ker r$. Since $J\subseteq \cM$ and $\cM$ is closed, we are done.
	\end{proof}

	\begin{lem}
		\label{lem:r_restr_to_top_quot}
		$r\restr_{u\cM}\colon u\cM\to X/E$ is a topological group quotient map (where $u\cM$ is equipped with the $\tau$ topology).
	\end{lem}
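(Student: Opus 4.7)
The plan is to leverage the facts already established: $r\restr_{u\cM}$ is a group epimorphism (Proposition~\ref{prop:homom}) and continuous in $\tau$ (Corollary~\ref{cor:r_uM_cont}), so only the ``quotient'' direction requires work. Concretely, I will reduce the claim to Proposition~\ref{prop:restr_quot}, which already says that $r\restr_\cM\colon \cM\to X/E$ is a topological quotient map. So it suffices to show: if $F\subseteq X/E$ has the property that $A:=(r\restr_{u\cM})^{-1}[F]$ is $\tau$-closed in $u\cM$, then $B:=(r\restr_\cM)^{-1}[F]$ is closed in $\cM$ (equipped with the subspace topology from $EL$).

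The first step is the identity $B=\{p\in \cM\mid up\in A\}$. For this, note that since $u$ is idempotent and $r$ is a semigroup homomorphism (Lemma~\ref{lem:r_is_homomorphism}), $r(u)$ is the identity of the group $X/E$. Hence for $p\in \cM$,
\[
r(up)=r(u)r(p)=r(p),
\]
so $p\in B$ if and only if $up\in A$. This is the bridge between closedness of $A$ in $u\cM$ and closedness of $B$ in $\cM$.

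The second step is the closure argument. Take a net $(p_i)$ in $B$ converging to some $p\in\cM$ in the subspace topology from $EL$. Since $EL$ is left topological, $up_i\to up$ in $EL$, hence in the subspace topology on $u\cM$. Since the $\tau$-topology on $u\cM$ is refined by the subspace topology (Fact~\ref{fct:tau_top_pre}(5)), the net $(up_i)$ also $\tau$-converges to $up$. Each $up_i$ lies in $A$, and $A$ is $\tau$-closed, so $up\in A$; by the previous step, $p\in B$. This shows $B$ is closed, and hence $F$ is closed in $X/E$ by Proposition~\ref{prop:restr_quot}.

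There is essentially no serious obstacle here: the entire argument rides on the preparation already done in the preceding propositions. The only thing to be careful about is keeping track of which topology is being used on $u\cM$ at each moment (the $\tau$-topology for the hypothesis on $A$; the subspace topology to obtain convergence $up_i\to up$ from $p_i\to p$ in $\cM$); the compatibility between them, namely Fact~\ref{fct:tau_top_pre}(5) (or equivalently Fact~\ref{fct:tau_top_pre}(4)), is precisely what makes the argument go through.
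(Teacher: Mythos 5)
Your proof has a genuine gap at its central step. You write ``Since $EL$ is left topological, $up_i\to up$ in $EL$.'' But in the paper's terminology (Fact~\ref{fct:ellis_clst_pre} and Appendix~\ref{app:topdyn}), \emph{left topological} means that for each fixed $f_0\in EL$, the map $f\mapsto ff_0$ --- varying the \emph{left} factor --- is continuous. What you need is the opposite: continuity of $q\mapsto uq$, i.e.\ left multiplication by the fixed element $u$. For composition in $X^X$ with the pointwise topology, $q\mapsto u\circ q$ is continuous precisely when $u$ is continuous as a function $X\to X$, which is generally false for an idempotent in the Ellis semigroup (elements of $EL$ are pointwise limits of homeomorphisms, not homeomorphisms themselves). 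So the passage from $p_i\to p$ in $\cM$ to $up_i\to up$ is unjustified, and this is not a small fixable slip.

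Trying to repair the step by compactness --- passing to a subnet $(up_{i_j})$ converging to some $f\in\overline{u\cM}$ and invoking Fact~\ref{fct:tau_top_pre}(4) to obtain $uf\in A$ --- still does not yield $up\in A$: one cannot conclude $uf=up$ (again because $u$ is not continuous), nor even that $r(uf)=r(up)$, since $X/E$ need not be Hausdorff in the properly group-like setting and limits of the net $(r(p_{i_j}))$ need not be unique. The lack of right continuity of multiplication in $EL$ is exactly the obstacle that makes the paper's actual proof so much more involved: it introduces $S:=\cl_\tau(\ker r\cap u\cM)$, proves via $\circ$-calculus and Proposition~\ref{prop:id_clsd} that $r^{-1}[r[S]]\cap\cM$ is closed, uses Proposition~\ref{prop:restr_quot} to show $r[S]=\overline{[x_0]_E}$, and then factors through the Hausdorff group $X/\overline E$ via Proposition~\ref{prop:closure_grouplike} and Lemma~\ref{lem:closed_group_like}. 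Your identity $B=\{p\in\cM\mid up\in A\}$ is correct and the overall reduction to ``$B$ closed in $\cM$'' is a sensible plan, but the continuity needed to carry it out is simply not available.
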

	\begin{proof}
		In light of Proposition~\ref{prop:homom}, it is enough to show that $r\restr_{u\cM}$ is a topological quotient map.
		
		We already know that $r\restr_{u\cM}$ is continuous (by Corollary~\ref{cor:r_uM_cont}).
		
		Put $P_v:=\ker r\cap v\cM(=\ker (r\restr_{v\cM}))$ for each idempotent $v\in \cM$, and let $S:=u(u\circ P_u)=\cl_\tau(P_u)$. We will need the following claim.
		
		\begin{clm*}
			$r^{-1}[r[S]]\cap \cM$ is closed.
		\end{clm*}
		By the claim, $r^{-1}[r[S]]\cap \cM$ is closed in $\cM$, so by Proposition~\ref{prop:restr_quot}, $r[S]$ is a closed subset of $X/E$. In particular, it must contain the closure of the identity in $X/E$, i.e.\ $\overline{[x_0]_E}$. On the other hand, by continuity of $r\restr_{u\cM}$, the preimage of $\overline{[x_0]_E}$ by $r\restr_{u\cM}$ is a $\tau$-closed set containing $P_u$, and thus also $S$. It follows that $r[S]=\overline{[x_0]_E}$.
		
		Note that because $X/E$ is a compact topological group and $[x_0]_E$ is the identity, $(X/E)/\overline{[x_0]_E}=X/{\overline E}$ (cf.\ Proposition~\ref{prop:closure_grouplike}) is a compact Hausdorff group.
		
		Now, suppose $F\subseteq X/E$ is such that $r\restr_{u\cM}^{-1}[F]=r^{-1}[F]\cap u\cM$ is $\tau$-closed. Then the preimage is also $S$-invariant (because it is $P_u$-invariant). Since $r[S]=\overline{[x_0]_E}$ and $r$ is a homomorphism, it follows that $F$ is $\overline{[x_0]_E}$-invariant, i.e.\ $F=F\overline{[x_0]_E}$. Thus, $F$ is closed if and only if $F/\overline{[x_0]_E}$ is closed in $X/\overline{E}$. On the other hand, we already know (by Proposition~\ref{prop:closure_grouplike} and Lemma~\ref{lem:closed_group_like}) that the composed map $\bar r\colon u\cM\to X/{\overline E}$ is a quotient map. Since the preimage of $F/\overline{[x_0]_E}$ by $\bar r$ is the same as the preimage of $F$ by $r\restr_{u\cM}$, it follows that $F/\overline{[x_0]_E}$ is closed, and hence so is $F$. Thus, we only need to prove the claim.
		
		\begin{clmproof}[Proof of claim]
			Roughly, we follow the proof of \cite[Lemma 4.8]{KP17}. Denote by $J$ the set of idempotents in $\cM$. First note that (using Fact~\ref{fct:circ_calculations} and Fact~\ref{fct:idempotents_ideals_Ellis}):
			\begin{itemize}
				\item
				For any $v,w\in J$, we have $wP_v=P_w$. Indeed, since $v,w\in \ker r\cap \cM$, we have $vP_w\subseteq P_v$ and $wP_v\subseteq P_w$, and because $wv=w$, we have $P_w=wP_w=wvP_w\subseteq wP_v\subseteq P_w$, and hence $wP_v=P_w$.
				\item
				$S=S\cdot P_u$: for any $f\in P_u$ we have $Sf=u(u\circ P_u) f=u(u\circ (P_uf))$ and clearly $P_uf=P_u$.
				\item
				Since $P_u=\ker (r\restr_{u\cM})$, it follows immediately from the preceding point that $S=r^{-1}[r[S]]\cap u\cM$.
				\item
				$r^{-1}[r[S]]\cap \cM=J\cdot S$: to see $\subseteq$, take any $f\in r^{-1}[r[S]]\cap \cM$. Then $f\in r^{-1}[r[S]]\cap v\cM$ for some $v\in J$, and $r(f)=r(uf)\in r[S]$, so, by the preceding point, $uf\in S$, and thus $f=vf=vuf\in vS$, so $f\in J\cdot S$; the reverse inclusion is clear, as $J\subseteq \ker r$.
				\item
				$r^{-1}[r[S]]\cap \cM=\bigcup_{v\in J} v\circ P_u$. To see $\subseteq$, note that (using Fact~\ref{fct:tau_top_pre}(2)), for every $v\in J$ we have $vS=vu(u\circ P_u)\subseteq (vuu)\circ P_u=v\circ P_u$, so, by the preceding point, $v\circ P_u\supseteq r^{-1}[r[S]]\cap v\cM$, and thus $\bigcup_v v\circ P_u\supseteq r^{-1}[r[S]]\cap (\bigcup_v v\cM)=r^{-1}[r[S]]\cap \cM$. For $\supseteq$, note that because $u\in \ker r$, we have $r[v\circ P_u]=r[u(v\circ P_u)]\subseteq r[(uv)\circ P_u]=r[u\circ P_u]=r[u(u\circ P_u)]=r[S]$.
			\end{itemize}
			
			In summary, to prove the claim, we need only to show that ${\bigcup_v v\circ P_u}$ is closed in $\cM$.
			
			Let $f\in \overline{\bigcup_v v\circ P_u}$. Then we have nets $(v_i)_i$ in $J$ and $(f_i)_i$ in $\cM$ such that $f_i\in v_i\circ P_u$ and $f_i\to f$. By compactness, we can assume without loss of generality that the net $(v_i)$ converges to some $v\in \overline J$. Then, by considering neighbourhoods of $v$ and $f$, we can find nets $(g_j)_j$ in $G$ and $(p_j)_j$ in $P_u$ such that $g_j\to v$ and $g_jp_j\to f$, so $f\in v\circ P_u$. By Proposition~\ref{prop:id_clsd}, $v\in \ker r\cap \cM$, so by the first bullet above, $v\in P_w=wP_u$ for some $w\in J$, so $v=wp$ for some $p\in P_u$. Furthermore, $P_u$ is a group (as the kernel of a group homomorphism), so (using Fact~\ref{fct:tau_top_pre}(2))
			\[
			f\in v\circ P_u=v\circ (p^{-1}P_u)\subseteq v\circ (p^{-1}\circ P_u)\subseteq (vp^{-1})\circ P_u=w\circ P_u
			\]
			(where $p^{-1}$ is the inverse of $p$ in $u\cM$), and we are done.
		\end{clmproof}
	\end{proof}
	
	\begin{dfn}
		\label{dfn:unif_prop_glike}
		\index{equivalence relation!group-like!uniformly properly}
		A properly group-like $E$ is \emph{uniformly properly group-like} if $E=\bigcup \mathcal E$, where $\mathcal E$ is a family of closed, symmetric subsets of $X^2$ containing the diagonal, with the property that (for some $\tilde G$ witnessing proper group-likeness) for any $D\in \mathcal E$ we have some $D'\in \mathcal E$ such that whenever $(x_0,[\tilde g]_\equiv)\in D$, we have that for every $\tilde g'$ also $([\tilde g']_\equiv,[\tilde g\tilde g']_\equiv)\in D'$, and $D\circ D\subseteq D'$ (here, $\circ$ denotes composition of relations; note that since $D$ contains the diagonal, this implies that $D\subseteq D'$).\xqed{\lozenge}
	\end{dfn}

	\begin{prop}
		\label{prop:approx2}
		If $F\subseteq X$ is closed, then for every $f\in EL$ and $f'\in f\circ R^{-1}[F]$, there are $\tilde g_1,\tilde g_2\in \tilde G$ such that $[\tilde g_1]_\equiv=R(f)$, $[\tilde g_2]_\equiv\in F$ and $[\tilde g_1\tilde g_2]_\equiv=R(f')$.\qed
	\end{prop}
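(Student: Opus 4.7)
The plan is to unwind the definition of $f\circ R^{-1}[F]$ from Fact~\ref{fct:tau_top_pre}(1) and then apply pseudocompleteness of $\tilde G$ (from Definition~\ref{dfn:prop_glike}) to the resulting nets.

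By the definition of $\circ$, since $f'\in f\circ R^{-1}[F]$, there are nets $(g_i)_i$ in $G$ and $(b_i)_i$ in $R^{-1}[F]$ such that $\pi_{g_i}\to f$ in $EL$ and $g_ib_i\to f'$ in $EL$. Applying the continuous map $R\colon EL\to X$ to the first convergence (evaluated at $x_0$), we get $g_i\cdot x_0\to R(f)$; similarly $R(g_ib_i)\to R(f')$. By Proposition~\ref{prop:commu}, $R(g_ib_i)=g_i\cdot R(b_i)$, so setting $p_i:=R(b_i)\in F$ we have $g_i\cdot p_i\to R(f')$.

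The sequence $(p_i)_i$ lies in $F$, which is closed in the compact space $X$, so after passing to a subnet (and the corresponding subnet of $(g_i)_i$, which still converges to $f$, so the corresponding subnet of $(g_ip_i)_i$ still converges to $R(f')$) we may assume $p_i\to x_2$ for some $x_2\in F$. Now I would invoke pseudocompleteness with $x_1:=R(f)$, $x_2\in F$, and $x_3:=R(f')$: it furnishes $\tilde g_1,\tilde g_2\in\tilde G$ with $[\tilde g_1]_\equiv=R(f)$, $[\tilde g_2]_\equiv=x_2\in F$, and $[\tilde g_1\tilde g_2]_\equiv=R(f')$, which is exactly what we want.

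There is no real obstacle here; the only subtlety is matching the shapes of the nets appearing in the definition of $\circ$ with those required by pseudocompleteness. The key observation making the two compatible is the identity $R(g_ib_i)=g_i\cdot R(b_i)$ from Proposition~\ref{prop:commu}, which lets us replace the net $(b_i)_i$ in $EL$ by the net $(p_i)_i=(R(b_i))_i$ in $X$ (landing in $F$) without losing the convergence $g_ip_i\to R(f')$. The passage to a subnet to ensure convergence of $(p_i)_i$ inside the compact set $F$ is needed only to produce the point $x_2$; the three convergences feeding pseudocompleteness then hold simultaneously along the chosen subnet.
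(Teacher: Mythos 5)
Your proof is correct and follows essentially the same approach as the paper's: unwind the definition of $\circ$, apply $R$ via Proposition~\ref{prop:commu} to convert the net $(b_i)_i$ in $EL$ into a net $(p_i)_i$ in $X$ landing in $F$, pass to a convergent subnet, and invoke pseudocompleteness. The only cosmetic difference is that the paper takes the convergent subnet in $EL$ first and then applies $R$, whereas you apply $R$ first and take the subnet in $X$; both are equivalent given that $F$ is closed.
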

	\begin{proof}
		Take $(g_i), (f_i)$ such that $g_i\to f$, $f_i\in R^{-1}[F]$, and $g_if_i\to f'$. Without loss of generality we can assume that $(f_i)$ is convergent to some $f''$. Then $R(f_i)\to R(f'')$ and $g_iR(f_i)=R(g_if_i)\to R(f')$ and by pseudocompleteness (applied to $(g_i)_i$ and $(R(f_i))_i$), we have $\tilde g_1,\tilde g_2$ such that $[\tilde g_1]_\equiv=R(f)$, $[\tilde g_2]_\equiv=R(f'')$ and $[\tilde g_1\tilde g_2]_\equiv=R(f')$. But since $F$ is closed, $R(f'')\in F$ and we are done.
	\end{proof}
	The proof of the following proposition is based on the proofs of \cite[Theorem 0.1]{KP17} and \cite[Theorem 2.7]{KPR15} (the latter paper is joint with Krzysztof Krupiński and Anand Pillay). Recall that $H(u\cM)$ is the intersection of closures of the $\tau$-neighbourhoods of $u\in u\cM$  (see Fact~\ref{fct:tau_top_pre}(8)).
	\begin{prop}
		\label{prop:H(uM)_in_ker}
		Suppose $E$ is uniformly properly group-like.
		
		Then $H(u\cM)\leq \ker(r)$.
	\end{prop}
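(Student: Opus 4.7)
I would fix $f \in H(u\cM)$ and aim to show $(x_0, R(f)) \in E$, i.e., $R(f) \Er x_0$. First note that since $u$ is an idempotent and $r\colon EL \to X/E$ is a semigroup homomorphism (Lemma~\ref{lem:r_is_homomorphism}), $r(u)$ is an idempotent in the group $X/E$, hence $r(u) = [x_0]_E$. Therefore $(x_0, R(u)) \in E$, and I can fix $D_0 \in \mathcal{E}$ witnessing this. Iterating the uniform condition, I would produce $D_0 \subseteq D_1 \subseteq D_2$ in $\mathcal{E}$ with $D_n \circ D_n \subseteq D_{n+1}$ and the left-translation bound: $(x_0, [\tilde g]_\equiv) \in D_n$ implies $([\tilde g']_\equiv, [\tilde g \tilde g']_\equiv) \in D_{n+1}$ for all $\tilde g' \in \tilde G$.

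The central strategy exploits that $f \in \cl_\tau(V) = u(u \circ V)$ for every $\tau$-neighbourhood $V$ of $u$ in $u\cM$. For a suitable such $V$, I would write $f = u f''$ with $f'' \in u \circ V$. Applying Proposition~\ref{prop:approx2} with $F := R[\,\overline{V}^{EL}\,]$ (closed as the continuous image of a compact set), I decompose $R(f'') = [\tilde g_1 \tilde g_2]_\equiv$ with $[\tilde g_1]_\equiv = R(u)$ and $[\tilde g_2]_\equiv \in F$. A second application of Proposition~\ref{prop:approx} to $u$ and $R(f'')$ then yields $R(f) = u(R(f'')) = [\tilde h_1 \tilde h_2]_\equiv$ with $[\tilde h_1]_\equiv = R(u)$ and $[\tilde h_2]_\equiv = R(f'') = [\tilde g_1 \tilde g_2]_\equiv$. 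Applying the group homomorphism $\tilde r\colon \tilde G \to X/E$ to the product $\tilde h_1 \tilde h_2$ and then to $\tilde g_1 \tilde g_2$, and using $\tilde r(\tilde h_1) = \tilde r(\tilde g_1) = [R(u)]_E = [x_0]_E$, the identity $[R(f)]_E = \tilde r(\tilde h_1) \cdot \tilde r(\tilde h_2) = \tilde r(\tilde g_2)$ reduces the problem to showing $\tilde r(\tilde g_2) = [x_0]_E$, i.e.\ $[\tilde g_2]_\equiv \Er x_0$. This is automatic once $F \subseteq [x_0]_E$.

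The main obstacle is therefore to produce a $\tau$-neighbourhood $V$ of $u$ in $u\cM$ whose $R$-image has closure contained in some $(D)_{R(u)}$ with $D \in \mathcal{E}$ (noting that $(D)_{R(u)} \subseteq [x_0]_E$ since $D \subseteq E$ and $R(u) \Er x_0$). This is delicate because $R\restr_{u\cM}$ need not be $\tau$-continuous: only the coarser map $r\restr_{u\cM}$ is (Corollary~\ref{cor:r_uM_cont}), so the natural $\tau$-open neighbourhoods $V_W = r^{-1}[W] \cap u\cM$ (for $W$ open in $X/E$ containing $[x_0]_E$) only control the $E$-class of $R(v)$ for $v \in V_W$, not the precise location within $[x_0]_E$. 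Bridging this gap is where the full strength of the uniform condition enters: by combining the translation-control at levels $D_0, D_1, D_2$ with iterated application of Propositions~\ref{prop:approx} and \ref{prop:approx2}, I expect that for $V$ derived from a sufficiently small $W$, the decomposition above forces $[\tilde g_2]_\equiv$ to lie in $(D_2)_{R(u)}$ despite the freedom of the approximation procedure, ultimately yielding $R(f) \Er x_0$.
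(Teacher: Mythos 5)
Your proposal correctly identifies both the target reduction and the central obstacle, but you stop at exactly the point where the real work begins: you explicitly end with ``I expect that for $V$ derived from a sufficiently small $W$, the decomposition above forces \dots'', which is an acknowledgement that you do not know how to close the gap. The reduction you carry out (via Proposition~\ref{prop:approx2}, Proposition~\ref{prop:approx}, and the homomorphism $\tilde r$) to the claim $F = R\bigl[\,\overline{V}^{EL}\,\bigr]\subseteq [x_0]_E$ is fine as far as it goes, and you are right that the difficulty is that $R\restr_{u\cM}$ is not $\tau$-continuous. What is missing is the resolution.

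The paper sidesteps your obstacle by working in the opposite direction. Rather than trying to produce a $\tau$-neighbourhood of $u$ with controlled $R$-image, it constructs, for each open $U$ with $K := \overline U$ disjoint from $S''_{x_0}$, the set $F_U := R^{-1}[K]\cap u\cM$ --- a set defined via $R$-\emph{preimages}, which interact cleanly with $\circ$ through Proposition~\ref{prop:approx2}. It then proves $u\notin \cl_\tau(F_U)$ by contradiction: if $u$ were in that closure, Proposition~\ref{prop:approx2} together with the two uniformity clauses would force $K\cap S''_{x_0}\neq\emptyset$. This automatically exhibits a $\tau$-neighbourhood of $u$, namely $u\cM\setminus\cl_\tau(F_U)$. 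A second application of Proposition~\ref{prop:approx2} shows that the $\tau$-closure of this neighbourhood is contained in $A_U := R^{-1}[(X\setminus U)\circ S']$, so $H(u\cM)\subseteq A_U$. The crucial point --- and the reason your single-$V$ strategy is unlikely to succeed --- is that no individual $A_U$ is contained in $\ker r$; only the intersection $\bigcap_U A_U$ is, and showing this requires a separate compactness-and-normality argument identifying $\bigcap_U (X\setminus U)$ with $S''_{x_0}$. Asking for one $\tau$-neighbourhood whose $EL$-closure has $R$-image inside the single (typically non-closed) class $[x_0]_E$ is a strictly stronger demand than the proof actually delivers, so the missing step in your plan is not merely undone but probably unachievable in the form you state. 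You should internalize the ``complementary'' trick: control sets defined by $R$-preimages of closed sets \emph{disjoint} from the relevant $E$-ball, show $u$ is not in their $\tau$-closure, and let the intersection over all such sets do the final squeeze into $\ker r$.
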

	\begin{proof}
		Note that $R[\ker r]$ is precisely $[x_0]_E\subseteq X$.
		
		Let $S\in \mathcal E$ be such that $R(u)\in S_{x_0}$ (i.e.\ $(x_0,R(u))\in S$ --- it exists because $u\in \ker r$). Choose $S',S''=(S')'\in \mathcal E$ as in Definition~\ref{dfn:unif_prop_glike}.
		
		Let $U\subseteq X$ be an arbitrary open set such that $K:=\overline U$ is disjoint from $S''_{x_0}$ (i.e.\ for no $k\in K$ we have $(x_0,k)\in S''$). Consider $F_U=R^{-1}[K]\cap u\cM$ and $A_U=R^{-1}[(X\setminus U)\circ S']$ ($\circ$ denotes relation composition, so this makes sense, as $(X\setminus U)\circ S'\subseteq X^1\circ X^2=X^1=X$).
		
		\begin{clm*}\begin{itemize}
				\item $u\notin \cl_\tau(F_U)$,
				\item $\cl_\tau(u\cM\setminus \cl_\tau(F_U))\subseteq A_U$,
				\item $H(u\cM)\subseteq A_U$.
			\end{itemize}
		\end{clm*}
		\begin{clmproof}
			Suppose towards contradiction that $u\in \cl_\tau(F_U)$. Then by Proposition~\ref{prop:approx2} applied to $u\in u\circ R^{-1}[K]$, we have $\tilde g_1,\tilde g_2$ such that $[\tilde g_1]_\equiv=[\tilde g_1\tilde g_2]_\equiv=R(u)$ and $[\tilde g_2]_\equiv\in K$. In particular $[\tilde g_1]_\equiv= R(u)\in S_{x_0}$, so $([\tilde g_2]_\equiv,[\tilde g_1\tilde g_2]_\equiv)\in S'$. Thus, since $[\tilde g_1\tilde g_2]_\equiv=R(u)\in S_{x_0}\subseteq S'_{x_0}$, we have $[\tilde g_2]_\equiv\in S''_{x_0}$ so $[\tilde g_2]_\equiv \in K\cap S''_{x_0}$, a contradiction.
			
			For the second bullet, it is enough to show that $\cl_\tau(u\cM\setminus R^{-1}[U])\subseteq A_U$. Take any $f\in \cl_\tau(u\cM\setminus R^{-1}[U])=\cl_\tau(u\cM\cap R^{-1}[X\setminus U])$. By applying Proposition~\ref{prop:approx2} to $f\in u\circ R^{-1}[X\setminus U]$, we find $\tilde g_1,\tilde g_2$ such that $[\tilde g_1]_\equiv=R(u)$, $[\tilde g_1\tilde g_2]_\equiv=R(f)$ and $[\tilde g_2]_\equiv\in X\setminus U$. Then, as before, $R(f)=[\tilde g_1\tilde g_2]_\equiv\in S'_{[\tilde g_2]_\equiv}\subseteq (X\setminus U)\circ S'$.
			
			For the third bullet, note that $H(u\cM)$ is, by its definition and the first bullet, contained in $\cl_\tau(u\cM\setminus \cl_\tau(F_U))$, and then apply the second bullet.
		\end{clmproof}
		
		By the claim, $\bigcap_U A_U\supseteq H(u\cM)$, where the intersection runs over all $U$ with closures disjoint from $S''_{x_0}$. We will show that the opposite inclusion holds as well, so the two sides are equal.
		
		Notice that $R[A_U]=(X\setminus U)\circ S'$, so for any $f\in A_U$, we have that $S'_{R(f)}\cap (X\setminus U)\neq \emptyset$, and so by compactness, if $f\in \bigcap_U A_U$, then $S'_{R(f)}\cap \bigcap_U (X\setminus U)\neq \emptyset$.
		
		Moreover, because $X$ is normal (as a compact Hausdorff space) and $S''_{x_0}$ is closed, $S''_{x_0}=\bigcap_U X\setminus U$ (where $U$ are as above; this means just that $S''_{x_0}$ is the intersection of all closed sets containing $S''_{x_0}$ in their interior). It follows that for $f\in\bigcap_U A_U$, we have $S'_{R(f)}\cap S''_{x_0}\neq \emptyset$, whence $R(f)\in (S'\circ S'')_{x_0}\subseteq S'''_{x_0}\subseteq [x_0]_E\subseteq R[\ker r]$ (where $S'''=(S'')'\in \mathcal E$ is chosen according to Definition~\ref{dfn:unif_prop_glike}). Thus $f\in \ker r$.
	\end{proof}

	The following lemma summarises the results of this chapter up to this point.
	\begin{lem}
		\label{lem:main_abstract_grouplike}
		Suppose $E$ is a group-like equivalence relation on $X$. Let $\cM$ be a minimal left ideal in $EL=E(G,X)$, and let $u\in \cM$ be an idempotent.
		Consider $r\restr_{u\cM}\colon u\cM\to X/E$ defined by $r(f)=[f(x_0)]_E$.
		Then:
		\begin{enumerate}
			\item
			$r\restr_{u\cM}$ is continuous (where $u\cM$ is equipped with the $\tau$ topology),
			\item
			if $E$ is closed or properly group-like, then it is $E(G,X)$-invariant, $r$ is a homomorphism and $r\restr_{u\cM}$ is a topological quotient map (once more, with $u\cM$ equipped with the $\tau$ topology),
			\item
			if $E$ is closed or uniformly properly group-like, then  $r\restr_{u\cM}$ factors through $u\cM/H(u\cM)$ and induces a topological group quotient mapping from the compact Hausdorff group $u\cM/H(u\cM)$ onto $X/E$.
			
		\end{enumerate}
	\end{lem}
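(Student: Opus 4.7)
The proof is essentially an assembly of results already established earlier in the chapter. The plan is to dispatch each clause by pointing to the appropriate lemma, proposition, or corollary.

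For clause (1), I would invoke Corollary~\ref{cor:r_uM_cont} directly: it is stated exactly for a (not necessarily closed) group-like $E$ and yields continuity of $r\restr_{u\cM}$ with respect to the $\tau$-topology on $u\cM$. No additional argument is needed.

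For clause (2), I would split into the closed and the properly group-like cases. In the closed case, Lemma~\ref{lem:closed_group_like} already packages all three conclusions: $EL$-invariance (part (1) of that lemma, via Proposition~\ref{prop:closed_invariant}), the homomorphism property of $r$ (part (2)), and the fact that $r\restr_{u\cM}$ is a topological quotient map (part (3)). In the properly group-like case, I would assemble the three conclusions from three separate results: $EL$-invariance is Corollary~\ref{cor:prop_glike_ellis_invariant}; the statement that $r$ is a semigroup epimorphism is Lemma~\ref{lem:r_is_homomorphism}; and the statement that $r\restr_{u\cM}$ is a topological group quotient map (in particular a topological quotient map) is exactly Lemma~\ref{lem:r_restr_to_top_quot}.

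For clause (3), the closed case is again Lemma~\ref{lem:closed_group_like}, part (4), which delivers $H(u\cM)\leq \ker r$ together with the induced topological group quotient map from $u\cM/H(u\cM)$ onto $X/E$. The uniformly properly group-like case is the only part that requires something beyond clause (2): namely, Proposition~\ref{prop:H(uM)_in_ker}, which supplies the inclusion $H(u\cM)\leq \ker r$. Combining this with clause (2) (which tells us $r\restr_{u\cM}\colon u\cM\to X/E$ is a topological group quotient map), together with Fact~\ref{fct:tau_top_pre}(8) (which states that $u\cM/H(u\cM)$ is a compact Hausdorff topological group and that $H(u\cM)\trianglelefteq u\cM$), the map $r\restr_{u\cM}$ factors through the quotient map $u\cM\to u\cM/H(u\cM)$, and the induced map is a topological group epimorphism from a compact Hausdorff group onto $X/E$; by Remark~\ref{rem:commu_quot} applied to the triangle with vertices $u\cM$, $u\cM/H(u\cM)$, $X/E$, the induced map is itself a topological quotient map.

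There is essentially no obstacle here since all the substantive work has been carried out in Lemmas~\ref{lem:closed_group_like}, \ref{lem:r_is_homomorphism}, \ref{lem:r_restr_to_top_quot}, Corollaries~\ref{cor:r_uM_cont} and \ref{cor:prop_glike_ellis_invariant}, and Proposition~\ref{prop:H(uM)_in_ker}; the only thing to verify is that the required conclusions in the lemma match the exact forms proved there. The mildly delicate point worth double-checking is that in the induced map $u\cM/H(u\cM)\to X/E$, the quotient topology on the domain (from the $\tau$-topology on $u\cM$) agrees with what is used on $u\cM/H(u\cM)$ in Fact~\ref{fct:tau_top_pre}(8); this is immediate from the definition of $H(u\cM)$ as a $\tau$-closed normal subgroup.
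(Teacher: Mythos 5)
Your proposal is correct and matches the paper's own proof almost exactly: the paper cites the same sequence of results (Corollary~\ref{cor:r_uM_cont} for clause (1); Corollary~\ref{cor:prop_glike_ellis_invariant} with Lemmas~\ref{lem:closed_group_like}, \ref{lem:r_is_homomorphism}, and \ref{lem:r_restr_to_top_quot} for clause (2); and Lemma~\ref{lem:closed_group_like} with Proposition~\ref{prop:H(uM)_in_ker} for clause (3)). The only difference is that you spell out the final factorisation step in clause (3) via Fact~\ref{fct:tau_top_pre}(8) and Remark~\ref{rem:commu_quot}, which the paper leaves implicit.
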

	\begin{proof}
		(1) is Corollary~\ref{cor:r_uM_cont}. (2) follows from Corollary~\ref{cor:prop_glike_ellis_invariant} and Lemmas~\ref{lem:closed_group_like}, \ref{lem:r_is_homomorphism} and \ref{lem:r_restr_to_top_quot}. (3) follows from Lemma~\ref{lem:closed_group_like} and Proposition~\ref{prop:H(uM)_in_ker}.
	\end{proof}
	
	\section{Weakly group-like equivalence relations}
	The following notation will be very convenient throughout this section.
	\begin{dfn}
		\label{dfn:induced_relation}
		\index{EZ@$E"|_Z$}
		Suppose $X$ is a set and $E$ is an equivalence relation on $X$, while $Z$ is another set with some distinguished map $f\colon Z\to X/E$ (which is usually left implicit, but clear from the context, e.g.\ if we have a distinguished map $Z\to X$, the map $Z\to X/E$ would be its composition with the quotient map $X\to X/E$). Then by $E|_Z$ we mean the pullback of $E$ by $f$, i.e.\ $y_1 \mathrel{E|_Z} y_2$ when $f(y_1)\Er f(y_2)$.\xqed{\lozenge}
	\end{dfn}
	
	\begin{rem}
		In the context of the above definition, if $f$ is surjective, we have a canonical bijection between $X/E$ and $Z/E|_Z$.
		
		Furthermore, if $f$ is induced by a continuous surjection $Z\to X$, while $Z$ is compact and $X$ is Hausdorff (which will usually be the case), then $X/E$ and $Z/E|_Z$ are homeomorphic (by Remarks~\ref{rem:commu_quot} and \ref{rem: continuous surjection is closed}).
		
		In both cases, we will freely identify the two quotients.\xqed{\lozenge}
	\end{rem}

	\begin{center}
		\begin{tikzcd}
			Z\ar[d, two heads]\ar[r, two heads] & Z/F\ar[d, two heads] \ar[r, two heads]& (Z/F)/E|_{Z/F}=Z/{E|_Z}\ar[dl, two heads,"1-1"]\\
			X\ar[r, two heads] & X/E
		\end{tikzcd}
	\end{center}
	\begin{dfn}
		\index{domination}
		If $(Z,z_0)$ is a $G$-ambit and $F$ is a group-like equivalence relation on $Z$, while $E$ is an equivalence relation on $X$, we say that $F$ \emph{dominates} $E$ if there is a $G$-ambit morphism $Z\to X$ such that $F$ refines $E|_Z$ and the induced map $Z/F\to X/E$ is $Z/F$-equivariant with respect to some left action of $Z/F$ on $X/E$, i.e.\ $E|_{Z/F}$ is left invariant. (This makes sense because if $F$ refines $E|_Z$, then the morphism $Z\to X$ induces a surjection $Z/F\to X/E$.)\xqed{\lozenge}
	\end{dfn}
	
	\begin{rem}
		\label{rem:domin_orbit}
		Note that if $F$ dominates $E$, as witnessed by $\varphi\colon Z\to X$, then the induced map $Z/F\to X/E$ is not only a topological quotient map (because $\varphi$ is a quotient map, as a continuous surjection between compact spaces), but also an orbit map (at $[x_0]_E$) of the action of $Z/F$ on $X/E$.
		\xqed{\lozenge}
	\end{rem}
	
	\begin{dfn}
		\label{dfn:wkglike}
		\index{equivalence relation!group-like!weakly}
		\index{equivalence relation!group-like!weakly closed}
		\index{equivalence relation!group-like!weakly properly}
		\index{equivalence relation!group-like!weakly uniformly properly}
		We say that an equivalence relation $E$ on the ambit $(X,x_0)$ is \emph{weakly [closed/properly/uniformly properly] ($G$-)group-like} if it is dominated by some [closed/properly/uniformly properly, respectively] ($G$-)group-like equivalence relation.\xqed{\lozenge}
	\end{dfn}

	\begin{ex}
		\label{ex:group_wgl}
		If $G$ is a compact Hausdorff group, acting on $X=G$ by left translations, while $H\leq G$, then the relation $E_H$ of lying in the same left coset of $H$ is weakly uniformly group-like, as it is dominated by equality on $X$, which is closed, and it is not hard to see that it is uniformly group-like in this case. Thus, $E_H$ is weakly uniformly properly group-like and weakly closed group-like.\xqed{\lozenge}
	\end{ex}
	
	\begin{ex}
		\label{ex:noteventhen}
		Recall Example~\ref{ex:nonunique_structure}. There, $X=\bR/\bZ$ is a compact Hausdorff group and $E=E_H$ for $H=\bQ/\bZ$, so this is a special case of Example~\ref{ex:group_wgl}. Thus, $E$ is weakly closed group-like and weakly uniformly properly group-like. This shows that even if $E$ is an equivalence relation which is simultaneously group-like and weakly uniformly properly group-like and weakly closed group-like, there may be multiple group structures on $X/E$ witnessing group-likeness.\xqed{\lozenge}
	\end{ex}
	
	In the definition of a weakly group-like equivalence relation, we did not specify that it needs to be invariant. The following proposition shows that it actually follows from the definition.
	\begin{prop}
		A weakly group-like equivalence relation is
		invariant.
	\end{prop}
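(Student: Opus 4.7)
The plan is to unpack the definition of weak group-likeness and push invariance down. Let $E$ on $(X,x_0)$ be weakly group-like, witnessed by a $G$-group-like equivalence relation $F$ on some ambit $(Z,z_0)$ together with a $G$-ambit morphism $\varphi\colon Z\to X$ such that $F$ refines $E|_Z$, the induced surjection $\varphi_*\colon Z/F\to X/E$ is well-defined, and the fibre-equivalence $E|_{Z/F}$ is invariant under left multiplication by $Z/F$. The group-likeness of $F$ gives us the key computational identity
\[
[gz_0]_F\cdot [z]_F = g[z]_F = [gz]_F \qquad (g\in G,\ z\in Z).
\]
Note also that $\varphi$ is surjective: it is continuous from the compact space $Z$ into the Hausdorff space $X$, so $\varphi[Z]$ is closed, and it contains $\varphi[G\cdot z_0]=G\cdot \varphi(z_0)=G\cdot x_0$, which is dense in $X$.

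To prove $E$ is $G$-invariant, take $x_1\Er x_2$ in $X$ and $g\in G$; the goal is $gx_1\Er gx_2$. Using surjectivity of $\varphi$, pick $z_1,z_2\in Z$ with $\varphi(z_i)=x_i$. Then by definition of $\varphi_*$,
\[
\varphi_*([z_1]_F)=[x_1]_E=[x_2]_E=\varphi_*([z_2]_F),
\]
i.e.\ $[z_1]_F \mathrel{E|_{Z/F}} [z_2]_F$. Now apply left invariance of $E|_{Z/F}$ with the element $[gz_0]_F\in Z/F$ to get
\[
[gz_0]_F\cdot[z_1]_F \mathrel{E|_{Z/F}} [gz_0]_F\cdot[z_2]_F,
\]
which by the displayed identity rewrites as $[gz_1]_F \mathrel{E|_{Z/F}} [gz_2]_F$. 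Applying $\varphi_*$ and using the $G$-equivariance of $\varphi$ then yields $[gx_1]_E=[\varphi(gz_1)]_E=[\varphi(gz_2)]_E=[gx_2]_E$, i.e.\ $gx_1\Er gx_2$, as required.

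I do not foresee any genuine obstacle here: there is no use of tameness, $\tau$-topology, or any Ellis-semigroup machinery; the argument is a single diagram chase through the identifications $Z/F\twoheadrightarrow X/E$ and the defining equations of group-likeness. The only substantive points to be careful with are the surjectivity of $\varphi$ (handled by the density of the orbit $G\cdot x_0$ combined with compactness/closedness) and the direction in which left invariance of $E|_{Z/F}$ is used (it must be left rather than right, matching the left action of $G$ on $Z$ via $g\mapsto[gz_0]_F$).
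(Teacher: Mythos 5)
Your proof is correct and uses essentially the same argument as the paper: the key steps in both are the group-like identity $[gz_0]_F\cdot[z]_F=[gz]_F$ combined with left invariance of $E|_{Z/F}$ (which is precisely what domination provides). The only cosmetic difference is that the paper first reduces to the case $X=Z$ via the observation that $E$ is invariant iff $E|_Z$ is, whereas you keep $X$ and $Z$ distinct and push the conclusion through the surjection $\varphi_*$; the computations are otherwise identical.
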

	\begin{proof}
		Let $E$ be weakly group-like on $X$, dominated by a group-like $F$ on some $Z$. It is easy to see that $E$ is invariant if and only if $E|_Z$ is, so we can assume without loss of generality that $X=Z$.
		
		Now, for any $g\in G$ and $x_1,x_2\in X$, if $x_1\Er x_2$, then by weak group-likeness, $[x_1]_F \mathrel{E|_{X/F}} [x_2]_F$ and $[gx_1]_F=[gx_0]_F\cdot [x_1]_F\mathrel{E|_{X/F}} [gx_0]_F\cdot [x_2]_F=[gx_2]_F$, so $gx_1\Er gx_2$.
	\end{proof}

	The following proposition describes some basic topological properties of weakly group-like equivalence relations. In particular, it generalises Proposition~\ref{prop:top_gp_R1} for compact Hausdorff groups, and as we will see later (in Remark~\ref{rem:wglike_closedness_to_typdef}), also Proposition~\ref{prop:type-definability_of_relations}.
	\begin{prop}
		\label{prop:top_props_of_wglike}
		Suppose $E$ is a weakly group-like equivalence equivalence relation on $X$. Then:
		\begin{enumerate}
			\item
			$X/E$ is an $R_1$-space (see Definition~\ref{dfn:R0_R_1}),
			\item
			$E$ is closed if and only if it has a closed class, if and only if $X/E$ is Hausdorff.
		\end{enumerate}
	\end{prop}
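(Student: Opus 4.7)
The plan is to reduce both statements to the corresponding facts about quotients of topological groups by subgroups, specifically Proposition~\ref{prop:top_gp_R1} and Fact~\ref{fct:quotient_by_closed_subgroup}. Fix a group-like equivalence relation $F$ on some ambit $(Z,z_0)$ that dominates $E$, witnessed by a $G$-ambit morphism $\varphi\colon Z\to X$. Since $\varphi$ is continuous and $G$-equivariant with $\varphi(z_0)=x_0$, its image contains the dense orbit of $x_0$ and is closed (Remark~\ref{rem: continuous surjection is closed}), hence $\varphi$ is a surjective topological quotient map. Because $F$ refines $E|_Z$, the induced map $\bar\varphi\colon Z/F\to X/E$ is also a topological quotient map (by Remark~\ref{rem:commu_quot}).

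The first key observation is that $E|_{Z/F}$, being a left-invariant equivalence relation on the topological group $Z/F$, is necessarily the left coset relation of a subgroup. Indeed, a routine check shows that $H:=\{g\in Z/F \mid g \mathrel{E|_{Z/F}} [z_0]_F\}$ is a subgroup of $Z/F$, and $g_1\mathrel{E|_{Z/F}} g_2$ iff $g_1^{-1}g_2\in H$. Thus $\bar\varphi$ descends to a homeomorphism $(Z/F)/H\to X/E$ (where the left side carries the quotient topology from $Z/F$). Part~(1) is now immediate from Proposition~\ref{prop:top_gp_R1}, which says that every coset space of a topological group is an $R_1$ space.

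For part~(2), the only nontrivial implication is that having a single closed class forces $E$ to be closed. Suppose $[x]_E$ is closed in $X$. As the quotient map $X\to X/E$ sends the saturated closed set $[x]_E$ to the closed singleton $\{[x]_E\}$, the preimage $\bar\varphi^{-1}[\{[x]_E\}]$ is a closed coset of $H$ in $Z/F$. By translating via left multiplication (which is a self-homeomorphism of $Z/F$), we deduce that $H$ itself is closed, and hence so is every coset of $H$. Therefore every singleton in $(Z/F)/H\cong X/E$ has closed preimage, which by the definition of the quotient topology means every singleton of $X/E$ is closed, i.e.\ $X/E$ is $T_1$. Combining $T_1$ with the $R_1$ property from part~(1) yields that $X/E$ is Hausdorff, and Fact~\ref{fct:quot_T2_iff_closed} then gives that $E$ is closed as a subset of $X^2$.

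The main technical point where one must be careful is verifying that the induced map $\bar\varphi\colon Z/F\to X/E$ is genuinely a topological quotient map (and not merely a continuous bijection onto an abstract quotient), since this is what legitimises transporting topological information between $X/E$ and $(Z/F)/H$. Once this is secured via Remark~\ref{rem:commu_quot}, the rest of the argument is a straightforward translation of well-known facts about topological groups.
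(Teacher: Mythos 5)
Your proof is correct and follows essentially the same approach as the paper: identify $X/E$ with $(Z/F)/H$ where $H$ is the stabiliser/kernel of the induced map, invoke Proposition~\ref{prop:top_gp_R1} for the $R_1$ property, and use translation in $Z/F$ (equivalently, transitivity of the $Z/F$-action on $X/E$) to propagate closedness from one class to all. The only superficial difference is that you close part~(2) via $T_1+R_1\Rightarrow T_2$ after showing $H$ closed, whereas the paper goes directly through the observation that in an $R_1$ space Hausdorffness is equivalent to all points being closed; both are correct and equivalent.
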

	\begin{proof}
		Let $F$ be a group-like equivalence relation on $Z$ dominating $E$.
		
		By Remark~\ref{rem:domin_orbit}, if $H\leq Z/F$ is the stabiliser of $[x_0]_E\in X/E$, we have that $X/E$ is homeomorphic to $(Z/F)/H$, which is an $R_1$-space by the Proposition~\ref{prop:top_gp_R1}, which gives us (1).
		
		It follows immediately from (1) that $X/E$ is Hausdorff if and only if all $E$-classes are closed, and by Fact~\ref{fct:quot_T2_iff_closed}, $X/E$ is Hausdorff if and only if $E$ is closed. On the other hand, it is not hard to see that $Z/F$ acts on $X/E$ transitively by homeomorphisms, so if one class of $E$ is closed, then all of them are, which gives us (2).
	\end{proof}

	In general, a product of quotient maps need not be a quotient map. The following proposition establishes a sufficient condition for that to hold, which we will use in a moment.
	\begin{prop}
		\label{prop:product_quotient}
		Suppose $X_1,X_2, Y_1,Y_2$ are compact spaces, and $Y_1,Y_2$ are $R_1$-spaces.
		
		Then if $f_i\colon X_i\to Y_i$ are quotient maps (for $i=1,2$), then the product $f_1\times f_2\colon X_1\times X_2\to Y_1\times Y_2$ is also a quotient map.
	\end{prop}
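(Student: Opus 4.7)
The plan is to reduce to the well-understood case of compact Hausdorff targets by passing to the Kolmogorov quotients $\tilde Y_i := Y_i/{\sim_i}$, where $\sim_i$ is topological indistinguishability; by the $R_1$ assumption each $\tilde Y_i$ is compact Hausdorff. Let $\pi_i\colon Y_i\to\tilde Y_i$ be the Kolmogorov quotient map and write $\bar f_i:=\pi_i\circ f_i$. Two easy preliminary observations will be used: firstly, each $\pi_i$ is continuous, surjective and open, since open sets in any space are saturated under $\sim_i$, so $\pi_i^{-1}[\pi_i[V]]=V$ for each open $V\subseteq Y_i$; hence $\pi_1\times\pi_2$ is continuous and open. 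Secondly, $\bar f_i$ is a continuous surjection from a compact space to a Hausdorff one, hence closed by Remark~\ref{rem: continuous surjection is closed} and therefore a quotient map; by the same reasoning applied to $\bar f_1\times\bar f_2\colon X_1\times X_2\to\tilde Y_1\times\tilde Y_2$, whose codomain is still compact Hausdorff, this product is also a quotient map.

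Given these ingredients, here is the structure of the argument. Suppose $U\subseteq Y_1\times Y_2$ has open preimage $W:=(f_1\times f_2)^{-1}[U]$; we must show $U$ is open. Setting $\tilde U:=(\pi_1\times\pi_2)[U]$, and using the identity $(\pi_1\times\pi_2)\circ(f_1\times f_2)=\bar f_1\times\bar f_2$, the crux is to show that $U$ is $(\sim_1\times\sim_2)$-saturated, i.e.\ $U=(\pi_1\times\pi_2)^{-1}[\tilde U]$. Once this is established, the conclusion follows by wiring: $W=(\bar f_1\times\bar f_2)^{-1}[\tilde U]$ is open, so $\tilde U$ is open in $\tilde Y_1\times\tilde Y_2$ (since $\bar f_1\times\bar f_2$ is a quotient map), and then continuity of $\pi_1\times\pi_2$ gives openness of $U=(\pi_1\times\pi_2)^{-1}[\tilde U]$ in $Y_1\times Y_2$.

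The heart of the argument, and the only nontrivial step, is the saturation claim: for every $(y_1,y_2)\in U$, we need $\overline{\{y_1\}}\times\overline{\{y_2\}}\subseteq U$. To prove this I would use a slice argument. Pick any $x_1\in f_1^{-1}(\{y_1\})$, nonempty by surjectivity; the horizontal slice $W\cap(\{x_1\}\times X_2)$ equals $\{x_1\}\times f_2^{-1}[U^{y_1}]$, where $U^{y_1}:=\{y:(y_1,y)\in U\}$, and this slice is open in $\{x_1\}\times X_2$ because $W$ is open. So $f_2^{-1}[U^{y_1}]$ is open in $X_2$, and since $f_2$ is a quotient map, $U^{y_1}$ is open in $Y_2$. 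Open sets are saturated under topological indistinguishability, so $\overline{\{y_2\}}\subseteq U^{y_1}$, i.e.\ $\{y_1\}\times\overline{\{y_2\}}\subseteq U$. The symmetric argument --- fixing any $y_2'\in\overline{\{y_2\}}$, observing $(y_1,y_2')\in U$, and slicing at an arbitrary $x_2\in f_2^{-1}(\{y_2'\})$ --- yields $\overline{\{y_1\}}\times\{y_2'\}\subseteq U$, and combining these gives the desired inclusion.

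The only real obstacle is this saturation step; everything else is bookkeeping built on two standard facts (open sets are saturated under topological indistinguishability, and a continuous surjection from a compact space to a Hausdorff one is a quotient map). Notably, the $X_i$ themselves need not be $R_1$ or Hausdorff --- only their compactness is used, while the $R_1$ hypothesis on the $Y_i$ is invoked solely to ensure the Kolmogorov quotients $\tilde Y_i$ are Hausdorff.
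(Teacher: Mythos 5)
Your proof is correct and follows essentially the same route as the paper's: pass to the Kolmogorov quotients (which are compact Hausdorff by $R_1$), show the set in question is saturated for topological indistinguishability via a slice argument, and then deduce the result from the fact that $\bar f_1\times\bar f_2$ is a continuous surjection from a compact space to a Hausdorff space. The only cosmetic difference is that you argue with open sets and open preimages where the paper argues with closed sets and closed ``fibres''; both are valid formulations of the quotient-map condition.
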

	\begin{proof}
		Let $\overline{Y_1},\overline{Y_2}$ be the Hausdorff quotients of $Y_1,Y_2$ (respectively) which we have by the $R_1$ condition.
		
		Consider the natural maps $\bar f_i\colon X_i\to \overline{Y_i}$ induced by $f_i$ for $i=1,2$. They are clearly continuous and surjective, and hence so is $\bar f_1\times \bar f_2\colon X_1\times X_2\to \overline{Y_1}\times \overline{Y_2}$. $X_1\times X_2$ is compact and $\overline{Y_1}\times \overline{Y_2}$ is Hausdorff, so by Remark~\ref{rem: continuous surjection is closed}, $\bar f_1\times \bar f_2$ is a quotient map, fitting into the following commutative diagram:
		\begin{center}
			\begin{tikzcd}
			X_1\times X_2\ar[r,two heads,"f_1\times f_2"]\ar[d,two heads,swap,"\bar f_1\times \bar f_2"] & Y_1\times Y_2\ar[dl, two heads]\\
			\overline{Y_1}\times \overline{Y_2}&
			\end{tikzcd}
		\end{center}
		We need to show that if $A\subseteq Y_1\times Y_2$ has closed preimage $A'=(f_1\times f_2)^{-1}[A]\subseteq X_1\times X_2$, then it is closed. Note that since $A'$ is closed, it has closed fibres (both in $X_1$ and in $X_2$). Since $f_1,f_2$ are quotient maps, it follows that fibres of $A$ are closed, so they are preimages of subsets of $\overline{Y_1}$ and $\overline{Y_2}$. Thus $A$ itself is the preimage of some $A''\subseteq \overline{Y_1}\times \overline{Y_2}$. But then it follows that $A'=(\bar f_1\times \bar f_2)^{-1}[A'']$, so, since $\bar f_1\times \bar f_2$ is a quotient map, $A''$ is closed, and thus so is $A$ (as its continuous preimage).
	\end{proof}
	
	Similarly, as mentioned in the introduction, a quotient map need not be open in general. However, suitable algebraic assumptions can force that to be true, as in the following proposition.
	\begin{prop}
		\label{prop:quotient_is_open}
		If $G$ is a left topological group (i.e.\ for any fixed $g_0\in G$, the left multiplication $g\mapsto gg_0$ is continuous), then for any $H\leq G$, the quotient map $\varphi\colon G\to G/H$ is an open mapping.
	\end{prop}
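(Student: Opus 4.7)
The plan is to unwind the definition of openness of $\varphi$ in terms of the quotient topology. Fix an open $U \subseteq G$. I want to show $\varphi[U]$ is open in $G/H$, which by the definition of the quotient topology is equivalent to showing that $\varphi^{-1}[\varphi[U]]$ is open in $G$. A direct computation gives
\[
\varphi^{-1}[\varphi[U]] = \{g' \in G \mid g'H = gH \text{ for some } g \in U\} = UH = \bigcup_{h \in H} Uh.
\]
So it suffices to check that each set $Uh$ is open.

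For this, I will exploit the hypothesis that for each fixed $g_0 \in G$, the map $\rho_{g_0}\colon g \mapsto g g_0$ is continuous. In particular, both $\rho_h$ and $\rho_{h^{-1}}$ are continuous, and since they are mutually inverse bijections of $G$, each $\rho_h$ is a homeomorphism of $G$ onto itself. Hence $Uh = \rho_h[U]$ is open, and $UH = \bigcup_{h \in H} Uh$ is a union of open sets, hence open.

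There is no real obstacle here — the whole argument is a one-line application of the fact that a continuous bijection with continuous inverse is a homeomorphism, applied to the right-translation maps. The only thing to be careful about is matching the paper's (slightly unconventional) convention that ``left topological'' refers to continuity of $g \mapsto g g_0$; under that convention the argument above goes through verbatim. Having shown $\varphi^{-1}[\varphi[U]]$ is open for every open $U$, we conclude $\varphi$ is an open map, as desired.
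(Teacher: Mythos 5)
Your proof is correct and follows essentially the same route as the paper: observe that each $g \mapsto gh$ is a homeomorphism (its inverse being $g \mapsto gh^{-1}$), so $\varphi^{-1}[\varphi[U]] = UH = \bigcup_{h\in H} Uh$ is a union of open sets, hence open. You spell out a couple of the steps (the unwinding of the quotient topology, and the invertibility argument) a bit more explicitly than the paper, but there is no difference in substance.
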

	\begin{proof}
		Note that the assumption implies immediately that for every $g_0$, the left multiplication by $g_0$ is a homeomorphism $G\to G$, and in particular, it is open.
		
		Let $U\subseteq G$ be open. Then $UH=\bigcup_{h\in H} Uh$ is also open (as a union of open sets). Since clearly $\varphi^{-1}[\varphi[U]]=UH$, it follows that $\varphi[U]$ is open.
	\end{proof}
	
	\begin{prop}
		\label{prop:action_factor_is_continuous}
		Suppose we have:
		\begin{itemize}
			\item
			left topological semigroups $S,T$,
			\item
			topological spaces $A,B$,
			\item
			a continuous semigroup action $\mu\colon S\times A\to A$,
			\item
			a topological quotient homomorphism $q_S\colon S\to T$ and a topological quotient map $q_A\colon A\to B$ such that $\mu$ induces an action $\mu_T\colon T\times B\to B$ (satisfying the natural commutativity conditions; see the diagram in the proof).
		\end{itemize}
		Then if either:
		\begin{enumerate}
			\item
			$T$ and $B$ are $R_1$, or
			\item
			$S$ and $T$ are groups and $q_A$ is an open mapping (e.g.\ $B=A$ and $q_A=\id_A$),
		\end{enumerate}
		then $\mu_T$ is continuous.
	\end{prop}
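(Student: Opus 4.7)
The plan is to reduce both cases to a single statement: that the product map $q_S \times q_A \colon S \times A \to T \times B$ is itself a topological quotient map. Once this is in hand, the commutative square $q_A \circ \mu = \mu_T \circ (q_S \times q_A)$ has continuous left-hand side (as a composition of continuous maps), so the universal property of quotient maps (cf.\ Remark~\ref{rem:commu_quot} applied with the quotient $q_S \times q_A$ on top and the map into $B$) forces $\mu_T$ to be continuous.

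For case (1), I would appeal to Proposition~\ref{prop:product_quotient} directly: since $T$ and $B$ are $R_1$ and $q_S, q_A$ are quotient maps, that result yields that $q_S \times q_A$ is a quotient map, provided $S$ and $A$ are compact, which I will take to be implicit from the ambient dynamical setup of this chapter (where $S$ is an Ellis semigroup and $A$ a compact ambit, both compact). For case (2), I would instead prove the stronger statement that $q_S \times q_A$ is an open mapping, from which the quotient-map property follows automatically. The key step is to verify that $q_S$ is open: being a quotient homomorphism, $q_S$ factors as the canonical projection $S \to S/\ker(q_S)$ followed by a homeomorphism onto $T$, and Proposition~\ref{prop:quotient_is_open} gives openness of the canonical projection (valid for left topological groups), so $q_S$ is open. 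Combining this with the openness of $q_A$ (hypothesis) and the elementary fact that a product of open surjections is open (since open rectangles form a basis of the product topology), we conclude.

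The only real wrinkle I anticipate is hypothesis bookkeeping in case (1): Proposition~\ref{prop:product_quotient} requires compact source spaces, so one must verify or tacitly assume compactness of $S$ and $A$ for the argument to apply. Case (2) has no such caveat and is essentially a direct assembly of earlier tools: the combination of Proposition~\ref{prop:quotient_is_open} with the obvious fact about products of open maps.
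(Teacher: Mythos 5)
Your proposal follows the same two-step structure as the paper's proof: reduce to showing $q_S \times q_A$ is a quotient map (so that Remark~\ref{rem:commu_quot} applied to the lower triangle of the commutative square gives continuity of $\mu_T$), then establish the quotient-map property via Proposition~\ref{prop:product_quotient} in case (1) and via openness of $q_S$ (Proposition~\ref{prop:quotient_is_open}) combined with openness of $q_A$ in case (2). Your bookkeeping concern is well taken and applies equally to the paper's own proof: Proposition~\ref{prop:product_quotient} requires compactness of the spaces involved, a hypothesis not explicitly present in the statement of this proposition, though it does hold in every application the paper actually makes (Ellis semigroups, compact ambits, and their Hausdorff quotients), and compactness of $S$ and $A$ alone suffices since $T$ and $B$ would then be compact as continuous surjective images. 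Your slightly more explicit handling of case (2) --- factoring $q_S$ through $S/\ker(q_S)$ followed by a homeomorphism before invoking Proposition~\ref{prop:quotient_is_open} --- correctly fills in a small step the paper leaves implicit.
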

	\begin{proof}
		Note that by the assumption, we have a commutative diagram:
		\begin{center}
			\begin{tikzcd}
				S\times A\ar[d,swap,"q_S\times q_A", two heads]\ar[r,"\mu", two heads]\ar[dr, two heads] & A\ar[d,"q_A", two heads]\\
				T\times B\ar[r,swap,"\mu_T", two heads] & B.
			\end{tikzcd}
		\end{center}
		Since $\mu$ and $q_A$ are continuous, it follows that the diagonal arrow is continuous. Thus, by Remark~\ref{rem:commu_quot} (applied to the lower left triangle), it is enough to show that $q_S\times q_A$ is a topological quotient map.
		
		If we assume (1), this follows from Proposition~\ref{prop:product_quotient}. Under (2), it follows from Proposition~\ref{prop:quotient_is_open} that $q_S$ is open, so $q_S\times q_A$ is open (as a product of open maps). Since it is trivially a continuous surjection, it follows that it is a quotient map.
	\end{proof}

	\begin{prop}
		\label{prop:grouplike_cont_action}
		If $F$ dominates $E$, then the action of $Z/F$ on $X/E$ is (jointly) continuous.
	\end{prop}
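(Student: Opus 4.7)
The plan is to apply Proposition~\ref{prop:action_factor_is_continuous} in order to reduce the continuity of the action of $Z/F$ on $X/E$ to the (already known) continuity of left multiplication on the topological group $Z/F$. Let $\varphi\colon Z\to X$ be the $G$-ambit morphism witnessing that $F$ dominates $E$, and let $q_A\colon Z/F\to X/E$ be the induced map. By Remark~\ref{rem:domin_orbit}, $q_A$ is a topological quotient map which coincides with the orbit map of the action of $Z/F$ on $X/E$ at $[x_0]_E$; in particular its fibres are the left cosets of the stabiliser $H\leq Z/F$ of $[x_0]_E$, so as $Z/F$-spaces we have a canonical identification $X/E\cong (Z/F)/H$.

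With this in hand, I would instantiate Proposition~\ref{prop:action_factor_is_continuous} as follows: take $S=T=Z/F$, $A=Z/F$, $B=X/E$, let $q_S=\id_{Z/F}$ and let $q_A$ be as above, and let $\mu\colon Z/F\times Z/F\to Z/F$ be left multiplication (which is continuous since $Z/F$ is a topological group, by the definition of group-likeness of $F$). With $q_S=\id$, the required commutativity $q_A(\mu(s,a))=\mu_T(q_S(s),q_A(a))$ just reduces to $q_A(sa)=s\cdot q_A(a)$, which is nothing but the $Z/F$-equivariance of $q_A$ built into the definition of domination. Hence the setup of Proposition~\ref{prop:action_factor_is_continuous} is verified, apart from its alternative condition (1) or (2).

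To discharge one of these conditions, I would verify (1): the space $Z/F$ is a topological group and therefore $R_1$ (by Proposition~\ref{prop:top_gp_R1} applied with the trivial subgroup), while $X/E$ is $R_1$ by Proposition~\ref{prop:top_props_of_wglike}, since $E$ is weakly group-like. (Alternatively, (2) is equally convenient: $S=T=Z/F$ is a group, and since $q_A$ is a topological quotient of the topological group $Z/F$ by a subgroup, it is open by Proposition~\ref{prop:quotient_is_open}.) Either way, Proposition~\ref{prop:action_factor_is_continuous} produces the continuity of $\mu_T\colon Z/F\times X/E\to X/E$, which is the statement to be proved. There is no serious obstacle here; the only point requiring care is to recognise that the action in question is a genuine factor, through a quotient homomorphism and a quotient map, of left multiplication on $Z/F$, so that the preparatory Proposition~\ref{prop:action_factor_is_continuous} applies verbatim.
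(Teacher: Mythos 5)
Your proof is correct and takes essentially the same approach as the paper's: both instantiate Proposition~\ref{prop:action_factor_is_continuous} with $S=T=Z/F$, $A=Z/F$, $B=X/E$, and discharge the hypothesis via either condition (1) or (2). You supply a bit more detail (the identification $X/E\cong (Z/F)/H$, the verification of the commutativity condition via equivariance), but the key ingredients and the logical skeleton are identical.
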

	\begin{proof}
		Since $F$ is group-like, the multiplication $Z/F\times Z/F\to Z/F$ is continuous and $Z/F$ is a topological group (so in particular, it is a left topological group).
		
		The conclusion follows immediately by Proposition~\ref{prop:action_factor_is_continuous} to $S=T=Z/F$, $A=Z/F$ and $B=X/E$. In fact, both (1) and (2) apply ((1) by Proposition~\ref{prop:top_props_of_wglike}, and (2) by Proposition~\ref{prop:quotient_is_open}).
	\end{proof}

	\begin{prop}
		\label{prop:epim_ideals_idempotents}
		Suppose $S$ and $T$ are compact Hausdorff left topological semigroups and $\varphi\colon S\to T$ is a continuous epimorphism.
		
		Then for any minimal (left) ideal $\cN\unlhd S$ and idempotent $v\in \cN$, $\cM:=\varphi[\cN]$ is a minimal left ideal in $T$ and $u:=\varphi(v)$ is an idempotent in $\cM$.
		
		Conversely, given a minimal (left) ideal $\cM\unlhd T$ and an idempotent $u\in \cM$, we can find a minimal ideal $\cN\unlhd S$ and an idempotent $v\in \cN$ such that $\varphi(v)=u$ and $\varphi[\cN]=\cM$.
	\end{prop}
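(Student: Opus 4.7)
The plan is as follows.

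For the forward direction, I would first check that $\cM := \varphi[\cN]$ is a closed left ideal of $T$. Closedness follows from compactness of $\cN$, continuity of $\varphi$, and Hausdorffness of $T$. The left-ideal property uses surjectivity: writing an arbitrary $t \in T$ as $\varphi(s)$ for some $s \in S$, I get $t \cdot \varphi(n) = \varphi(sn) \in \varphi[\cN]$ for every $n \in \cN$. Minimality then follows by a standard squeeze: given any non-empty left ideal $\cM' \subseteq \cM$ and any $m' = \varphi(n') \in \cM'$ with $n' \in \cN$, Fact~\ref{fct:ideals_ellis_pre}(2) applied to $\cN$ yields
\[
    Tm' = \varphi(S)\varphi(n') = \varphi(Sn') = \varphi[\cN] = \cM,
\]
and since $Tm' \subseteq \cM'$, we must have $\cM' = \cM$. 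That $u = \varphi(v)$ is idempotent is immediate from $\varphi$ being a semigroup homomorphism.

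For the converse my strategy would be to transport rather than search. I would fix any minimal left ideal $\cN_0 \unlhd S$ (which exists by Fact~\ref{fct:ideals_ellis_pre}) together with any preimage $s \in \varphi^{-1}(u)$, and set $\cN := \cN_0 s$. This $\cN$ is a left ideal of $S$ since $\cN_0$ is, and for every $a = n_0 s \in \cN$ one has
\[
    Sa = (S n_0) s = \cN_0 s = \cN
\]
by Fact~\ref{fct:ideals_ellis_pre}(2) applied to $\cN_0$. Hence any non-empty left ideal $J \subseteq \cN$ contains $Sa = \cN$ for any $a \in J$, forcing $J = \cN$ and showing $\cN$ is minimal. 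On the image side, $\varphi[\cN] = \varphi[\cN_0] \cdot u$ is a non-empty left ideal of $T$ contained in $Tu = \cM$, hence equal to $\cM$ by minimality of $\cM$. To extract an idempotent lying exactly over $u$, I would pass to $C := \cN \cap \varphi^{-1}(u)$: this set is non-empty because $u \in \varphi[\cN]$, closed as the intersection of two closed sets, closed under multiplication because $\cN$ is a left ideal of $S$ and $u^2 = u$, and left topological as a subspace of $S$. The Ellis--Numakura lemma (equivalently, Fact~\ref{fct:ideals_ellis_pre}(3) applied to $C$) then produces an idempotent $v \in C$, which satisfies $v \in \cN$ and $\varphi(v) = u$.

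The main subtlety lies in the converse. A naive attempt would be to take any minimal left ideal $\cN_0$ of $S$ and hope that $\varphi[\cN_0] = \cM$, but this fails in general because $T$ may carry many distinct minimal left ideals and $\varphi[\cN_0]$ need not be the prescribed $\cM$. The right-multiplication trick $\cN := \cN_0 s$ is exactly what forces the image to be $Tu = \cM$ while simultaneously guaranteeing minimality of $\cN$ inside $S$. The second delicate point is ensuring that the chosen idempotent lies over $u$ itself, not merely over some idempotent of $\cM$ supplied by general structure theory; the key observation there is that $\varphi^{-1}(u)$ is multiplicatively closed (because $u$ is idempotent), so its intersection with $\cN$ inherits enough structure for Ellis--Numakura to apply.
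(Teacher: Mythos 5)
Your proof is correct, but it differs noticeably from the paper's, especially in the converse direction. For the forward direction, the paper shows that $\varphi[\cN]$ is a closed left ideal, takes a minimal left ideal $\cM\subseteq \varphi[\cN]$ supplied by general theory, then observes that $\varphi^{-1}[\cM]\cap\cN$ is a nonempty ideal of $S$ contained in $\cN$, forcing $\cN\subseteq\varphi^{-1}[\cM]$ and hence $\varphi[\cN]\subseteq\cM$; you instead verify minimality of $\varphi[\cN]$ directly via the identity $Tm'=\varphi[\cN]$ for any $m'\in\varphi[\cN]$, which is slightly more self-contained. For the converse, the paper pulls $\cM$ back, selects a minimal ideal $\cN\unlhd S$ inside $\varphi^{-1}[\cM]$, applies the forward direction to see $\varphi[\cN]=\cM$, and then uses the decomposition $\cN=\bigsqcup_v v\cN$ together with the fact that $u$ is the unique idempotent in $u\cM$ to locate $v$ with $\varphi(v)=u$; you instead construct $\cN$ by right-translating an arbitrary minimal ideal $\cN_0$ by a preimage of $u$, and then extract the idempotent by applying Ellis--Numakura to the compact subsemigroup $\cN\cap\varphi^{-1}(u)$. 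Both converses are valid; yours is more explicitly constructive and does not invoke the full group decomposition of $\cN$, whereas the paper's is shorter because it recycles the forward direction and the structure theory already at hand. One small caution: when invoking ``Fact~\ref{fct:ideals_ellis_pre}(3) applied to $C$'' to get an idempotent, note that (3) is stated for minimal left ideals; what you actually need is the Ellis--Numakura consequence that any compact Hausdorff left topological semigroup contains an idempotent, which does follow (by first taking a minimal left ideal of $C$), but the citation should be phrased accordingly.
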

	\begin{proof}
		The fact that $\varphi(v)$ is an idempotent is immediate by the fact that $\varphi$ is a homomorphism. By continuity and compactness, $\varphi[\cN]$ is a closed subset of $T$, and because $\varphi$ is an epimorphism, it is a left ideal. In particular, it contains a minimal left ideal $\cM$ of $T$. Similarly, $\varphi^{-1}[\cM]$ is a closed ideal in $S$ containing $\cN$. In particular, $\cM=\varphi[\cN]$.
		
		For the ``conversely" part, notice that $\varphi^{-1}[\cM]$ is an ideal in $S$, so it contains a minimal ideal $\cN$. By the preceding paragraph, $\varphi[\cN]$ is a minimal ideal contained in $\cM$, so it must be equal to $\cM$. It follows that there is some idempotent $v\in \cN$ such that $u\in \varphi[v\cN]=\varphi(v)\cM$. Since $\varphi(v)$ is an idempotent, and since $u$ is the only idempotent in $u\cM$, it follows that $\varphi(v)=u$.
	\end{proof}
	
	\begin{prop}
		\label{prop:preimage_of_derived}
		If $G_1,G_2$ are compact $T_1$ semitopological groups and $\varphi\colon G_1\to G_2$ is both a homomorphism and a topological quotient map, then $\varphi^{-1}[H(G_2)]=H(G_1)\ker \varphi$ (where $H(G_1)$ and $H(G_2)$ are the derived subgroups, see Fact~\ref{fct:semitop_T2_quot}).
	\end{prop}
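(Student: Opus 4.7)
The inclusion $H(G_1)\ker\varphi\subseteq\varphi^{-1}[H(G_2)]$ is the easy half. Clearly $\ker\varphi\subseteq\varphi^{-1}[H(G_2)]$. For $H(G_1)\subseteq\varphi^{-1}[H(G_2)]$, fix $h\in H(G_1)$ and any open neighbourhood $V$ of $e_{G_2}$; continuity of $\varphi$ gives $\varphi^{-1}[V]$ an open neighbourhood of $e_{G_1}$, so $h\in\overline{\varphi^{-1}[V]}$, and continuity of $\varphi$ then yields $\varphi(h)\in\overline{V}$. As $V$ was arbitrary, $\varphi(h)\in H(G_2)$.

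For the reverse inclusion, set $K=\ker\varphi$. First I would verify that $\varphi$ is actually an \emph{open} map: since $\varphi$ is a quotient map with fibres the left cosets of $K$, it suffices to show $UK$ is open for every open $U\subseteq G_1$; but $UK=\bigcup_{k\in K}Uk$ and each $Uk$ is open by separate continuity (right translation is a homeomorphism), so $UK$ is open, hence $\varphi[U]$ is open. From openness of $\varphi$ and continuity one deduces (as in the proof of Proposition~\ref{prop:quotient_is_open}/standard topology) that $\varphi^{-1}[\overline{V}]=\overline{\varphi^{-1}[V]}$ for every open $V\ni e_{G_2}$. Consequently
\[
\varphi^{-1}[H(G_2)]=\bigcap_{V}\overline{\varphi^{-1}[V]}=\bigcap_{U}\overline{UK},
\]
where $U$ ranges over open neighbourhoods of $e_{G_1}$ (noting that as $U$ varies, $UK=\varphi^{-1}[\varphi[U]]$ realises all $K$-saturated open neighbourhoods of~$e_{G_1}$).

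Next I would pass to the quotient $\pi_1\colon G_1\to G_1/H(G_1)$, which is a continuous open homomorphism (by Proposition~\ref{prop:quotient_is_open}, as $G_1$ is a left topological group) onto the compact Hausdorff topological group $G_1/H(G_1)$ (Fact~\ref{fct:tau_top_pre}(8)). The image $\pi_1[K]$ is compact (continuous image of a compact set) and hence closed in this Hausdorff group, so $H(G_1)K=\pi_1^{-1}[\pi_1[K]]$ is closed in $G_1$. The key auxiliary fact is then: \emph{in a compact Hausdorff topological group $G$, for any closed $C\subseteq G$, $\bigcap_{W}\overline{WC}=C$, where $W$ ranges over open neighbourhoods of the identity}. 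The nontrivial direction: given $g\notin C$, the set $gC^{-1}$ is closed and avoids $e$, so some open neighbourhood $W_0$ of $e$ satisfies $W_0\cap gC^{-1}=\emptyset$; choose a symmetric open $W\ni e$ with $W^2\subseteq W_0$, and then $Wg$ is an open neighbourhood of $g$ disjoint from $WC$ (for otherwise $w_1g=w_2c$ gives $w_1^{-1}w_2\in gC^{-1}\cap W^2$, a contradiction); hence $g\notin\overline{WC}$.

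To finish, take any $g\in\varphi^{-1}[H(G_2)]=\bigcap_U\overline{UK}$. Continuity of $\pi_1$ gives $\pi_1(g)\in\bigcap_U\overline{\pi_1[U]\cdot\pi_1[K]}$, and by openness of $\pi_1$, $\pi_1[U]$ runs over all open neighbourhoods of the identity in $G_1/H(G_1)$ as $U$ does. Applying the auxiliary fact with $C=\pi_1[K]$ yields $\pi_1(g)\in\pi_1[K]$, so $g\in\pi_1^{-1}[\pi_1[K]]=H(G_1)K$, as required. The main obstacle is really just the verification that $\varphi$ is open (so that preimages of closures behave correctly) and the auxiliary lemma on compact Hausdorff topological groups; both are standard once one notices that one must work in $G_1/H(G_1)$ (where one has the full strength of a compact Hausdorff topological group) rather than directly in $G_1$ (which is only semitopological and not Hausdorff).
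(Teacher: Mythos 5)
Your proof is correct, but it takes a genuinely different route from the paper's. The paper's argument is short and entirely conceptual: it invokes Corollary~\ref{cor:H(G)_universal} (the universal property of the Hausdorff quotient $G/H(G)$) twice. For the easy inclusion it applies Corollary~\ref{cor:H(G)_universal} to $\varphi'\colon G_1\to G_2/H(G_2)$ to see the kernel contains $H(G_1)$; for the hard inclusion it observes that $G_1/KH(G_1)$ is a Hausdorff topological group (here $K=\ker\varphi$) and applies Corollary~\ref{cor:H(G)_universal} to the induced map $G_2\to G_1/KH(G_1)$ to conclude $\varphi^{-1}[H(G_2)]\subseteq KH(G_1)$. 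Your argument replaces the second application of the universal property with explicit point-set topology: you first verify that $\varphi$ is an open map (via separate continuity, the key observation that $UK=\bigcup_k Uk$ is open), deduce the compatibility of closures $\varphi^{-1}[\overline V]=\overline{\varphi^{-1}[V]}$, and then reduce to an auxiliary lemma in the compact Hausdorff topological group $G_1/H(G_1)$, namely that $\bigcap_W\overline{WC}=C$ for closed $C$. This is more self-contained and illuminates the topology that the universal property packages up; the trade-off is that it is somewhat longer and requires establishing openness of $\varphi$, which the paper avoids entirely. One minor citation note: for $G_1/H(G_1)$ being a compact Hausdorff topological group, the appropriate reference in this general setting is Fact~\ref{fct:semitop_T2_quot}, not Fact~\ref{fct:tau_top_pre}(8) (the latter is the specialisation to Ellis groups). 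Also note that your direct verification of $H(G_1)\subseteq\varphi^{-1}[H(G_2)]$ essentially reproves the content of Corollary~\ref{cor:T2_quotient}.
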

	\begin{proof}
		Write $K$ for $\ker\varphi$. Consider the composed map $\varphi'\colon G_1\to G_2/H(G_2)$. Clearly, its kernel is $\varphi^{-1}[H(G_2)]$, and it contains $K$. Furthermore, $G_2/H(G_2)$ is Hausdorff (by Fact~\ref{fct:semitop_T2_quot}), so by Corollary~\ref{cor:H(G)_universal}, it also contains $H(G_1)$.
		
		On the other hand, since $G_2$ is $T_1$, $K$ is closed in $G_1$. Since $G_1/H(G_1)$ is Hausdorff and $G_1$ is compact, $KH(G_1)$ is a closed normal subgroup of $G/H(G_1)$, and by Fact~\ref{fct:quotient_by_closed_subgroup} it follows that $G_1/KH(G_1)$ is a Hausdorff topological group. But the map $G_1\to G_1/KH(G_1)$ factors through $G_2=G_1/K$, so by Corollary~\ref{cor:H(G)_universal}, the factor map $G_2\to G_1/KH(G_1)$ factors through $G_2/H(G_2)$, so $\varphi^{-1}[H(G_2)]$ is contained in $KH(G_1)$.
	\end{proof}
	
	The following proposition will allow us to translate the properties of group-like equivalence relations (mainly those from Lemma~\ref{lem:main_abstract_grouplike}) to the weakly group-like quotient.
	\begin{prop}
		\label{prop:induced_epimorphism}
		If $\varphi\colon (Z,z_0)\to (X,x_0)$ is a $G$-ambit morphism, then $\phi$ induces a continuous epimorphism $\varphi_*\colon E(G,Z)\to E(G,X)$ (by the formula $\varphi_*(f)(\varphi(z))=\varphi(f(z))$).
		
		Moreover, for any minimal ideal $\cN$ in $E(G,Z)$, with idempotent $v\in \cN$, the restriction $\varphi_*\restr_{v\cN}$ is a topological quotient map (with respect to $\tau$ topologies), and it induces a topological group quotient map $v\cN/H(v\cN)\to u\cM/H(u\cM)$, where $u=\varphi_*(v)$ and $\cM=\varphi_*[\cN]$ (note that $u\cM$ is an Ellis group in $E(G,X)$ by Proposition~\ref{prop:epim_ideals_idempotents}).
	\end{prop}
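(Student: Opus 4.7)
First I would verify that $\varphi$ itself is surjective: its image is closed (continuous image of a compact into a Hausdorff space, by Remark~\ref{rem: continuous surjection is closed}) and contains the dense set $G\cdot x_0$. Using this, I would show that the formula $\varphi_*(f)(\varphi(z)):=\varphi(f(z))$ produces a well-defined function on $X$. The key point is that for any $f\in E(G,Z)$, if $\varphi(z_1)=\varphi(z_2)$, then writing $f$ as a limit of $\pi_{Z,g_i}$ and using $G$-equivariance and continuity of $\varphi$, one gets $\varphi(f(z_j))=\lim g_i\varphi(z_j)$, so both values agree. The same argument shows $\varphi_*(f)=\lim \pi_{X,g_i}\in E(G,X)$. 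Continuity of $\varphi_*$ follows from pointwise continuity; it is a homomorphism by a direct computation using the defining formula; and it is surjective because its image is closed (compact-to-Hausdorff continuous) and contains $\{\pi_{X,g}:g\in G\}$. The ``minimal ideal/idempotent'' claims $\cM=\varphi_*[\cN]$ and $u=\varphi_*(v)$ are exactly the content of Proposition~\ref{prop:epim_ideals_idempotents}.

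The main work is to prove that $\varphi_*\restr_{v\cN}\colon v\cN\to u\cM$ is a topological quotient map for the $\tau$-topologies. Surjectivity is immediate: for $f\in u\cM=u\varphi_*[\cN]$, pick $f'\in\cN$ with $\varphi_*(f')=f$; then $vf'\in v\cN$ maps to $uf=f$. For the quotient property, I would prove the following central computation: for any $A\subseteq u\cM$, setting $B:=\varphi_*^{-1}[A]\cap v\cN$, one has
\[
\varphi_*[\cl_\tau B]=\cl_\tau A.
\]
The inclusion $\subseteq$ uses that if $b=vb'$ with $b'=\lim g_ib_i$, $g_i\to v$, $b_i\in B$, then continuity of $\varphi_*$ and $\varphi_*(v)=u$ give $\varphi_*(b)=u\lim g_i\varphi_*(b_i)\in u(u\circ A)=\cl_\tau A$. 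For $\supseteq$, given $a=ua'$ with $a'=\lim g_ia_i$, lift each $a_i$ to some $b_i\in B$ (using surjectivity), then pass to a subnet so that $g_ib_i$ converges in $E(G,Z)$; the limit $b''$ satisfies $\varphi_*(b'')=a'$, hence $vb''\in\cl_\tau B$ with $\varphi_*(vb'')=ua'=a$. Given this identity, the quotient map property is formal: if $A$ is $\tau$-closed, then $\cl_\tau B$ maps onto $A$, forcing $\cl_\tau B\subseteq B$; conversely, if $B$ is $\tau$-closed, then $A=\varphi_*[B]=\varphi_*[\cl_\tau B]=\cl_\tau A$.

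Finally, to obtain the map $v\cN/H(v\cN)\to u\cM/H(u\cM)$, I would use that both Ellis groups are compact $T_1$ semitopological, so by Fact~\ref{fct:tau_top_pre}(8) their quotients by $H(\cdot)$ are compact Hausdorff topological groups. The composition $v\cN\to u\cM\to u\cM/H(u\cM)$ is a continuous group homomorphism into a Hausdorff group, so by the universal property of the derived subgroup (Corollary~\ref{cor:H(G)_universal}, i.e.\ Proposition~\ref{prop:preimage_of_derived} in this paper's formalism) it factors through $v\cN/H(v\cN)$, giving the induced homomorphism $\bar\varphi_*$. Since both the quotient map $v\cN\to v\cN/H(v\cN)$ and the composition $v\cN\to u\cM/H(u\cM)$ are topological quotient maps (the latter as a composition of two quotient maps, by the preceding paragraph and Fact~\ref{fct:tau_top_pre}(8)), Remark~\ref{rem:commu_quot} applied to the triangle yields that $\bar\varphi_*$ is a topological quotient map as well, completing the proof.

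\paragraph{Expected obstacle.} The delicate step is the identity $\varphi_*[\cl_\tau B]=\cl_\tau A$; the direction $\supseteq$ requires a compactness/subnet argument to lift convergent nets from $E(G,X)$ back into $E(G,Z)$, exploiting that $\varphi_*$ is a continuous \emph{semigroup} epimorphism, so that limits and the left action by $G$ transport correctly. Everything else is either standard ambit-morphism bookkeeping or a direct invocation of results already established in the excerpt.
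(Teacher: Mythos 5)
The overall architecture of your argument matches the paper's: prove well-definedness, surjectivity, and pointwise continuity of $\varphi_*$; then handle the $\tau$-topological quotient property by a $\circ$-computation; then pass through the derived subgroup $H(\cdot)$. The first and last parts are fine. The serious issue lies in the step you yourself flag as delicate.

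In the $\supseteq$ direction of your identity $\varphi_*[\cl_\tau B]=\cl_\tau A$, you start from $a=ua'$ with $a'=\lim g_i a_i$ and $\pi_{X,g_i}\to u$, lift each $a_i$ to $b_i\in B$, pass to a subnet so that $g_ib_i\to b''$, and then conclude ``hence $vb''\in\cl_\tau B$''. This last step does not follow. After passing to a subnet, $(\pi_{Z,g_i})_i$ converges to \emph{some} idempotent preimage $v'\in E(G,Z)$ of $u$, but there is no reason for $v'$ to equal $v$. What you actually obtain is $b''\in v'\circ B$, hence $vb''\in v(v'\circ B)$, which is the $\tau$-closure ``taken with respect to $v'$'', not $\cl_\tau B = v(v\circ B)$. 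Since $v'$ is generally a different idempotent of $E(G,Z)$ (only its image under $\varphi_*$ is controlled), the inclusion $v(v'\circ B)\subseteq\cl_\tau B$ is precisely what is at stake and cannot simply be asserted. The paper resolves this by introducing $v'':=vv'v$, noting $v''\in v\cN$ with $\varphi_*(v'')=u$, and then running an explicit chain of $\circ$-inclusions (using Fact~\ref{fct:tau_top_pre}(2)) to reach $v(v'\circ B)\subseteq v''\cl_\tau B$; applying $\varphi_*$ then kills the extra factor $v''$ because it lies in the kernel. Your proof needs this algebraic detour — or an equivalent one — to be complete; the ``compactness/subnet argument'' you anticipate gives you the lift, but not the identification of the limiting idempotent, which is the real obstruction. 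Once this is patched, the deduction of the quotient property from the identity, and the final passage to $v\cN/H(v\cN)\to u\cM/H(u\cM)$ via the universal property of $H(\cdot)$ and Remark~\ref{rem:commu_quot}, are both correct and essentially equivalent to what the paper does.
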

	\begin{proof}
		To see that $\varphi_*$ is well-defined, take any $f\in E(G,Z)$ and $z_1,z_2\in Z$ such that $\varphi(z_1)=\varphi(z_2)$. We need to show that $\varphi(f(z_1))=\varphi(f(z_2))$.
		
		Recall that for $g\in G$, by $\pi_{Z,g}$ we denote the function $Z\to Z$ given by $z\mapsto gz$, and by $\pi_{X,g}$ we denote the analogous function $X\to X$.
		
		Take any net $(g_i)_i$ such that $\pi_{Z,g_i}\to f$. Then for all $i$ we have $\varphi(g_i(z_1))=g_i(\varphi(z_1))=g_i(\varphi(z_2))=\varphi(g_i(z_2))$. Since $\varphi$ is continuous, it follows that $\varphi(f(z_1))=\lim_i g_i(\varphi(z_1))=\varphi(f(z_2))$ (which also shows that $\varphi_*(f)=\lim_i \pi_{X,g_i}\in E(G,X)$).
		
		To see that $\varphi_*$ is onto, note that for every $f\in E(G,X)$ we can find some $(g_i)_i$ such that if $\pi_{X,g_i}\to f$. Then by compactness, we can assume without loss of generality that $(\pi_{Z,g_i})_i$ is also convergent to some $f'\in E(G,Z)$. By the parenthetical remark in the last paragraph, $\varphi_*(f')=f$.
		
		To see that $\varphi_*$ is continuous, it is enough to show that the preimage of a subbasic open set of the form $B_{x,U}=\{f\in E(G,X) \mid f(x)\in U \}$ (where $x\in X$ an $U\subseteq X$ is open) is open. But $\varphi_*(f)(x)\in U$ if and only if for some $z$ such that $\varphi(z)=x$ we have that $f(z)\in \varphi^{-1}[U]$, which is an open condition about $f$.
		
		To see that $\varphi_*\restr_{v\cN}$ is continuous, take any $\tau$-closed set $F\subseteq u\cM$, i.e.\ such that $F=u(u\circ F)$, put $F'=\varphi_*^{-1}[F]\cap v\cN$ and take any $f\in (v\cN)\cap (v\circ F')$. We need to show that $f\in F'$. Take any nets $(g_i)_i$ and $(f_i)_i$ such that $g_if_i\to f$, $\pi_{Z,g_i}\to v$ and $f_i\in F'$. Then by continuity of $\varphi_*$, we have $\pi_{X,g_i}\to u$ and $g_i\varphi_*(f_i)=\varphi_*(g_if_i)\to \varphi_*(f)$, and thus $\varphi_*(f)\in u\circ F$. Since $f=vf$ and $\varphi_*(v)=u$, we have $\varphi_*(f)=\varphi_*(vf)=u\varphi_*(f)\in u(u\circ F)$, whence $f\in F'$.
		
		To see that $\varphi_*\restr_{v\cN}$ is a quotient map, take any $F\subseteq u\cM$ such that $F':=\varphi_*^{-1}[F]\cap v\cN$ is $\tau$-closed, and take any $f\in u\circ F$, along with nets $(g_i),(f_i)$ witnessing it, i.e.\ such that $\pi_{X,g_i}\to u$, $f_i\in F$ and $g_if_i\to f$.
		
		We need to show that $uf\in F$. For each $i$, fix some $f_i'\in F'$ such that $\varphi_*(f_i')=f_i$. By compactness, can assume without loss of generality that $\pi_{Z,g_i}\to v'$ and $g_if_i'\to f'$ for some $v',f'\in E(G,Z)$. Note that by continuity of $\varphi_*$, $\varphi_*(v')=u$ and $\varphi_*(vf')=\varphi_*(v)\varphi_*(f')=uf$, so it is enough to show that $\varphi_*(vf')\in F$.
		
		We certainly have $vf'\in v(v'\circ F)$. But using Fact~\ref{fct:tau_top_pre}(2), we have:
		\[
			v(v'\circ F')= vv(v'\circ (vF'))\subseteq v(v\circ (v'\circ(v\circ F')))\subseteq v((vv'v)\circ F').
		\]
		Note that $v''=vv'v\in v\cN$. Thus we have
		\[
			v(v''\circ F')=vv''(v'')^{-1}(v''\circ F')\subseteq vv''(((v'')^{-1}v'')\circ F')=v''v(v\circ F')=v''F'.
		\]
		Since $\varphi_*(v'')=u$, it follows that $\varphi_*[v(v'\circ F')]\subseteq uF=F$, so $\varphi_*(vf')\in F$ and we are done.

		The fact that $\varphi_*\restr_{v\cN}$ is continuous implies (immediately by definition, or by Proposition~\ref{prop:preimage_of_derived}) that $\varphi_*^{-1}[H(u\cM)]\supseteq H(v\cN)$, which gives us an induced mapping $v\cN/H(v\cN)\to u\cM/H(u\cM)$. Since $\varphi_*\restr_{v\cN}$ is a quotient map, so is the induced map.
	\end{proof}
	
	\begin{rem}
		\label{rem:induced_hom_by_D}
		In Proposition~\ref{prop:induced_epimorphism}, if we take $D_Z:=\{f\in v\cN\mid f(z_0)=v(z_0) \}$ and $D=D_X=\{f\in u\cM\mid f(x_0)=u(x_0) \}$, then clearly $\varphi_*^{-1}[D]\supseteq D_Z$, so $\varphi_*$ induces also (among others) a quotient mapping $v\cN/H(v\cN)D_Z\to u\cM/H(u\cM)D$, as well as a topological group quotient mapping from $v\cN/\Core(H(v\cN)D_Z)$ to $u\cM/\Core(H(u\cM)D)$.\xqed{\lozenge}
	\end{rem}
	
	The following Lemma is crucial, and will be one of the most important ingredients in the proofs of all main results in this chapter (and by extension, the main results of the next chapter).
	
	\begin{lem}
		\label{lem:weakly_grouplike}
		Suppose $E$ is weakly group-like. If we let $r\colon EL\to X/E$ be $r(f):=[R(f)]_E(=[f(x_0)]_E)$, then for every minimal ideal $\cM\unlhd E(G,X)$ and idempotent $u\in \cM$:
		\begin{enumerate}
			\item
			$r\restr_{u\cM}$ is continuous (with $u\cM$ equipped with the $\tau$ topology);
			\item
			if $E$ is weakly closed or weakly properly group-like, then:
			\begin{itemize}
				\item
				the action of $G$ on $X/E$ extends to a (jointly) continuous action of $E(G,X)$ by homeomorphisms (given by $f([x]_E)=[f(x)]_E$), and $r$ is its orbit map (at $[x_0]_E$),
				\item
				that action, restricted to $u\cM$, is also jointly continuous (and a group action, i.e.\ with $u$ acting as identity), and $r\restr_{u\cM}$ is its orbit map, and also a topological quotient map;
			\end{itemize}
			\item
			if $E$ is weakly closed or weakly uniformly properly group-like, then the action of ${u\cM}$ on $X/E$ factors through a continuous action of ${u\cM}/H(u\cM)$. Furthermore, $r\restr_{u\cM}$ factors through $u\cM/H(u\cM)$, yielding an orbit map of this action, which is also a topological quotient map.
		\end{enumerate}
	\end{lem}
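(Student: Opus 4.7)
The plan is to reduce to the already-established (properly) group-like case (Lemma~\ref{lem:main_abstract_grouplike}) via the domination, lifting everything through the induced map on Ellis semigroups from Proposition~\ref{prop:induced_epimorphism}. Fix a group-like $F$ on a $G$-ambit $(Z,z_0)$ dominating $E$, via an ambit morphism $\varphi\colon Z\to X$; if $E$ is weakly closed/properly/uniformly properly group-like we may take $F$ with the corresponding property. Let $\psi\colon Z/F\to X/E$ be the induced (continuous, surjective, hence quotient) map, and let $r_F\colon E(G,Z)\to Z/F$ be $r_F(f')=[f'(z_0)]_F$. By Proposition~\ref{prop:induced_epimorphism}, $\varphi$ induces a continuous semigroup epimorphism $\varphi_*\colon E(G,Z)\to E(G,X)$, and we can lift $\cM,u$ to a minimal ideal $\cN$ and idempotent $v\in \cN$ with $\varphi_*(v)=u$, $\varphi_*[\cN]=\cM$, and so that $\varphi_*\restr_{v\cN}\colon v\cN\to u\cM$ is a topological quotient map in the $\tau$-topologies. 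A direct check shows the following square commutes:
\[
\begin{tikzcd}
v\cN \ar[r, "\varphi_*"] \ar[d, "r_F"'] & u\cM \ar[d, "r"] \\
Z/F \ar[r, "\psi"] & X/E.
\end{tikzcd}
\]

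For (1), $r_F\restr_{v\cN}$ is continuous by Lemma~\ref{lem:main_abstract_grouplike}(1) and $\psi$ is continuous, so $r\circ \varphi_*\restr_{v\cN}$ is continuous, and Remark~\ref{rem:commu_quot} (applied with the quotient map $\varphi_*\restr_{v\cN}$) gives continuity of $r\restr_{u\cM}$. For (2), assume $F$ is closed or properly group-like. To see $E$ is $E(G,X)$-invariant, lift $f\in E(G,X)$ to $f'\in E(G,Z)$ and lift related points $x_1\Er x_2$ to $z_1,z_2\in Z$; then $[z_1]_F\mathrel{E|_{Z/F}}[z_2]_F$, and since the $Z/F$-action on $X/E$ preserves $E|_{Z/F}$, multiplying by $r_F(f')$ together with the $E(G,Z)$-invariance of $F$ from Corollary~\ref{cor:prop_glike_ellis_invariant} yields $[f'(z_1)]_F\mathrel{E|_{Z/F}}[f'(z_2)]_F$, i.e.\ $f(x_1)\Er f(x_2)$. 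Thus $f\cdot [x]_E:=[f(x)]_E$ defines an action of $E(G,X)$ on $X/E$, $r$ is its orbit map, and the commutative diagram plus the fact that $r_F$ is a homomorphism (Lemma~\ref{lem:main_abstract_grouplike}(2)) makes $r$ a homomorphism too. That $r\restr_{u\cM}$ is a topological quotient map follows again from Remark~\ref{rem:commu_quot}, since $\psi\circ r_F\restr_{v\cN}$ is a composition of quotient maps. Joint continuity of the $E(G,X)$-action is obtained by factoring the (continuous) $E(G,Z)$-action on $X/E$ (continuous because $r_F$ is continuous on all of $E(G,Z)$ and $Z/F$ acts continuously on $X/E$ by Proposition~\ref{prop:grouplike_cont_action}) through $\varphi_*\times \id_{X/E}$, which is a quotient map by Proposition~\ref{prop:product_quotient} (using that $E(G,X)$ is compact Hausdorff and $X/E$ is $R_1$ by Proposition~\ref{prop:top_props_of_wglike}). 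The analogous continuity statement for the $u\cM$-action (with $\tau$-topology) follows from Proposition~\ref{prop:action_factor_is_continuous}(2), since $v\cN$ and $u\cM$ are both groups. Finally $u$ acts trivially because $r(u)=\psi(r_F(v))$ is the identity of $X/E$.

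For (3), assume $F$ is closed or uniformly properly group-like, so Lemma~\ref{lem:main_abstract_grouplike}(3) gives $H(v\cN)\leq \ker r_F$. To show $H(u\cM)\leq \ker r$, apply Proposition~\ref{prop:preimage_of_derived} to the topological group quotient $\varphi_*\restr_{v\cN}$: any $h\in H(u\cM)$ lifts to $h'=h''k$ with $h''\in H(v\cN)$ and $k\in\ker(\varphi_*\restr_{v\cN})$; then $r_F(h'')$ is the identity, and $\psi(r_F(k))=r(\varphi_*(k))=r(u)=[x_0]_E$, so $r(h)=\psi(r_F(h'))=[x_0]_E$. Since $r\restr_{u\cM}$ is a quotient map by (2) and $u\cM\to u\cM/H(u\cM)$ is one too, the induced $\bar r\colon u\cM/H(u\cM)\to X/E$ is a topological quotient map. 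The $u\cM$-action on $X/E$ factors through $u\cM/H(u\cM)$ because elements of $H(u\cM)$ act as $r(h)=[x_0]_E$, i.e.\ trivially; continuity of this factored action now follows from Proposition~\ref{prop:action_factor_is_continuous}(1), as $u\cM/H(u\cM)$ is compact Hausdorff and $X/E$ is $R_1$.

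The main obstacle is the bookkeeping of topologies: one must track the $\tau$-topology on $u\cM$ versus the subspace one, Hausdorffness versus $R_1$ of the various quotients, and at each descent step verify that the relevant maps are honest topological quotient maps (not merely continuous surjections), so that Remark~\ref{rem:commu_quot} and Proposition~\ref{prop:action_factor_is_continuous} can be applied. The key technical input making this possible is Proposition~\ref{prop:induced_epimorphism}, which guarantees that $\varphi_*\restr_{v\cN}$ is itself a $\tau$-topological quotient map and descends to a topological group quotient on $H$-quotients; combined with the $R_1$-ness of $X/E$ from Proposition~\ref{prop:top_props_of_wglike}, this lets us transfer each conclusion of Lemma~\ref{lem:main_abstract_grouplike} from the $(Z,F)$-side to the $(X,E)$-side.
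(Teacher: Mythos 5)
Your proof takes essentially the same route as the paper: fix a dominating group-like $F$ on $(Z,z_0)$, pass through Propositions~\ref{prop:epim_ideals_idempotents} and \ref{prop:induced_epimorphism} to lift $u\cM$ to an Ellis group $v\cN$ of $(G,Z)$ with $\varphi_*\restr_{v\cN}$ a $\tau$-topological quotient, invoke Lemma~\ref{lem:main_abstract_grouplike} on the $(Z,F)$-side, and transfer each conclusion via Remark~\ref{rem:commu_quot}, Proposition~\ref{prop:action_factor_is_continuous}, Proposition~\ref{prop:top_props_of_wglike}, and Proposition~\ref{prop:preimage_of_derived}. The commutative square, the well-definedness argument for the $E(G,X)$-action, and the factorisation through $H(u\cM)$ all match the paper's proof, only phrased a bit differently.

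One small slip: in part (2) you write that ``$r$ is a homomorphism too.'' That statement is not well-formed here, since $X/E$ is only a (homogeneous) $Z/F$-space, not a group, in the general weakly group-like setting; the paper only asserts $r$ is a homomorphism in the more restrictive situation of Proposition~\ref{prop:wgl_homom}, where $E$ itself is group-like. Fortunately nothing else in your argument relies on this claim — what you actually need and correctly establish is that $r\restr_{u\cM}$ is a topological quotient map (from $\psi\circ r_F\restr_{v\cN}$ being a composition of quotient maps, then descending through $\varphi_*\restr_{v\cN}$) — so the error is cosmetic.
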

	\begin{proof}
		The main idea of the proof is to combine Propositions~\ref{prop:epim_ideals_idempotents} and \ref{prop:induced_epimorphism} to translate Lemma~\ref{lem:main_abstract_grouplike} into the weakly group-like context.
		
		Let $\varphi\colon (Z,z_0)\to (X,x_0)$ be the $G$-ambit morphism witnessing that $F$ dominates $E$, where $F$ is group-like (and also closed, properly group-like, or uniformly properly group-like, if possible).
		
		By Propositions~\ref{prop:epim_ideals_idempotents} and \ref{prop:induced_epimorphism}, we have a minimal left ideal $\cN\unlhd E(G,Z)$ and an idempotent $v\in \cN$ such that $\varphi_*(v)=u$ and $\varphi_*[\cN]=\cM$, so that $\varphi_*\restr_{v\cN}$ is an epimorphism and a topological quotient $v\cN\to u\cM$.
		
		Let $r_F\colon E(G,Z)\to Z/F$ be the map $r_F(f)=[f(z_0)]_F$, and let $r_Z:= r\circ \varphi_*$. Then we have a commutative diagram
		\begin{center}
			\begin{tikzcd}
				E(G,Z)\ar[d,"\varphi_*"] \ar[r,"r_F"] \ar[dr,"r_Z"] & Z/F \ar[d,"\varphi_F"] \\
				E(G,X)\ar[r,"r"] & X/E
			\end{tikzcd}
		\end{center}
		(the arrow $\varphi_F$ on the right exists because $\varphi$ witnesses that $F$ dominates $E$, and it is a quotient map, because $\varphi$ is a quotient map).
		
		Now, $r_F\restr_{v\cN}$ is continuous (with respect to the $\tau$ topology on $v\cN$) by Lemma~\ref{lem:main_abstract_grouplike}, and hence so is $r_Z\restr_{v\cN}$. Since $\varphi_*\restr_{v\cN}$ is a quotient map onto $u\cM$, it follows (by Remark~\ref{rem:commu_quot} with $A=v\cN$, $B=u\cM$ and $C=X/E$) that $r\restr_{u\cM}$ is also continuous, which gives us (1).
		
		For (2), note that if $F$ is closed or properly group-like, then by Lemma~\ref{lem:main_abstract_grouplike}, $r_F$ and $r_F\restr_{v\cN}$ are semigroup epimorphisms and topological quotients and $F$ is $E(G,Z)$-invariant. Since $\varphi_F$ is the orbit map of a jointly continuous action (see Proposition~\ref{prop:grouplike_cont_action}), it follows that so are $r_Z$ and $r_Z\restr_{v\cN}$ (where the action factors is the composition of action of $Z/F$ with epimorphisms $r_F$, $r_F\restr_{v\cN}$). Then, notice that not only does $r_Z$ factor through $\varphi_*$, but so do the actions on $X/E$: indeed, we have that for every $f\in E(G,Z)$ and $x\in X$, there is some $z\in Z$ such that $\varphi(z)=x$, and then, by commutativity of the above diagram and Corollary~\ref{cor:prop_glike_ellis_invariant} (applied to $F$), we have, for every $f\in E(G,Z)$ (having in mind the identification of $X/E$, $Z/E|_Z$ and $(Z/F)/(E|_{Z/F})$):
		\begin{multline*}
			f[x]_E=r_F(f)[x]_E=[f(z_0)]_F[x]_E=[f(z_0)]_F[z]_{E|_Z}=[[f(z_0)]_F[z]_F]_{E|_{Z/F}}=\\ =[f[z_0]_F[z]_F]_{E|_{Z/F}}=[[f(z)]_F]_{E|_{Z/F}}=[f(z)]_{E|_Z}=[\varphi_*(f)(\varphi(z))]_E=[\varphi_*(f)(x)]_E.
		\end{multline*}
		Since $\varphi_*$ an epimorphism, this means that $f[x]_E=[f(x)]_E$ describes a well-defined semigroup action of $E(G,X)$ on $X/E$. Furthermore, since $r_F(v)$ is the identity in $Z/F$, $v$ acts as identity, and hence so does $u=\varphi_*(v)$. We need to show that the action is jointly continuous.
		
		First, the action of $Z/F$ on $X/E$ is jointly continuous by Proposition~\ref{prop:grouplike_cont_action}, which immediately implies that the action of $E(G,Z)$ on $X/E$ is also jointly continuous. Then, since $E(G,X)$ is $R_1$ as a Hausdorff space, and $X/E$ is $R_1$ by Proposition~\ref{prop:top_props_of_wglike}, we can apply Proposition~\ref{prop:action_factor_is_continuous}(1) (with $S=E(G,Z)$, $T=E(G,X)$, $A=B=X/E$) to conclude the joint continuity of the action of $E(G,X)$ on $X/E$.

		Since $\varphi_*[v\cN]=u\cM$, it follows that the action of $v\cN$ on $X/E$ (induced by the quotient map $r_F\restr_{v\cN}$) factors through an action of $u\cM$ on $X/E$. As in the preceding paragraph, we easily conclude that $v\cN$ acts continuously on $X/E$. Since $v\cN$ and $u\cM$ are (semitopological) groups, we can then apply Proposition~\ref{prop:action_factor_is_continuous}(2) (with $S=v\cN$, $T=u\cM$ and $A=B=X/E$) to conclude that the factor action of $u\cM$ on $X/E$ is jointly continuous.

		Likewise, since $\varphi_*\restr_{v\cN}$ and $r_Z\restr_{v\cN}=r\restr_{u\cM}\circ\varphi_*\restr_{v\cN}$ are quotient maps, by Remark~\ref{rem:commu_quot} (applied to $A=v\cN$, $B=u\cM$ and $C=X/E$), it follows that $r\restr_{u\cM}$ is also quotient map.
		
		For (3), note that since (by (2)) $u$ acts on $X/E$ as identity, so does $\ker \varphi_*\restr_{v\cN}$. Under the hypotheses of (3), $H(v\cN)$ also acts trivially (by Lemma~\ref{lem:main_abstract_grouplike}(3)) Since --- by Proposition~\ref{prop:preimage_of_derived} --- $\varphi_*\restr_{v\cN}^{-1}[H(u\cM)]=H(v\cN)\ker \varphi_*\restr_{v\cN}$, we conclude that $H(u\cM)$ acts trivially as well. Therefore, both $r\restr_{u\cM}$ and the action of $u\cM$ on $X/E$ factor through $u\cM/H(u\cM)$. The action of $u\cM/H(u\cM)$ is continuous by another application of Proposition~\ref{prop:action_factor_is_continuous} (with $S=u\cM$, $T=u\cM/H(u\cM)$ and $A=B=X/E$). Also as before, the map $u\cM/H(u\cM)\to X/E$ induced by $r\restr_{u\cM}$ is a quotient map by Remark~\ref{rem:commu_quot}. This completes the proof.
	\end{proof}
	If $E$ is group-like, we can ask whether the map $r$ is a homomorphism. The following proposition establishes reasonable sufficient condition for that. Note that in general, this need not be true, not even if $E$ is dominated by an equivalence relation which is both closed and uniformly properly group-like, as Example~\ref{ex:noteventhen} shows.
	\begin{prop}
		\label{prop:wgl_homom}
		In Lemma~\ref{lem:weakly_grouplike}, if $E$ itself is group-like, and $F$ is a properly group-like or closed group-like relation dominating $E$, such that the induced map $Z/F\to X/E$ (denoted by $\varphi_F$ in the proof of Lemma~\ref{lem:weakly_grouplike}) is a homomorphism, then $r$ is also a homomorphism (and thus so is $r\restr_{u\cM}$).
	\end{prop}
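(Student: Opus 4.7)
The plan is to leverage the commutative diagram from the proof of Lemma~\ref{lem:weakly_grouplike}, namely
\[
\begin{tikzcd}
E(G,Z)\ar[d,swap,"\varphi_*"] \ar[r,"r_F"] \ar[dr,"r_Z"] & Z/F \ar[d,"\varphi_F"] \\
E(G,X)\ar[r,swap,"r"] & X/E,
\end{tikzcd}
\]
where $\varphi_*$ is a continuous semigroup epimorphism (by Proposition~\ref{prop:induced_epimorphism}) and $r_Z=r\circ \varphi_*=\varphi_F\circ r_F$. The key observation is that both $r_F$ and $\varphi_F$ are semigroup homomorphisms under our hypotheses: $r_F$ by Lemma~\ref{lem:main_abstract_grouplike}(2) (since $F$ is closed group-like or properly group-like), and $\varphi_F$ by the additional assumption of the proposition. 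Hence $r_Z$ is a homomorphism as a composition of homomorphisms.

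Now I would deduce that $r$ is a homomorphism by a diagram chase. Given any $f_1,f_2\in E(G,X)$, use surjectivity of $\varphi_*$ to pick $f_1',f_2'\in E(G,Z)$ with $\varphi_*(f_i')=f_i$; since $\varphi_*$ is a semigroup homomorphism, $\varphi_*(f_1'f_2')=f_1f_2$, and therefore
\[
r(f_1f_2)=r_Z(f_1'f_2')=r_Z(f_1')r_Z(f_2')=r(f_1)r(f_2).
\]
The remaining claim, that $r\restr_{u\cM}$ is a (group) homomorphism, is then immediate: restriction of a semigroup homomorphism to a subsemigroup is a homomorphism, and since $u\cM$ and $X/E$ are both groups (with the unit of $u\cM$ mapping to the unit of $X/E$, because $r(u)=u\cdot[x_0]_E=[x_0]_E$), this restriction automatically respects inverses.

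There is essentially no obstacle here beyond unwinding the diagram: all the hard work is already contained in Lemma~\ref{lem:main_abstract_grouplike} (which requires proper or closed group-likeness of $F$ in order to make $r_F$ a homomorphism) and in the hypothesis that $\varphi_F$ is a homomorphism. The role of the latter hypothesis is precisely to rule out the pathology of Example~\ref{ex:noteventhen}, where the group structure witnessing weak group-likeness of $E$ via $F$ need not match the group structure witnessing group-likeness of $E$ itself.
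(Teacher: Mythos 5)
Your proof is correct and follows exactly the same route as the paper: both use the factorisation $r\circ\varphi_* = \varphi_F\circ r_F$, deduce that $r\circ\varphi_*$ is a homomorphism since $r_F$ and $\varphi_F$ are, and then conclude for $r$ by surjectivity of $\varphi_*$. The only difference is that you spell out the "it follows easily" diagram chase explicitly, which is fine.
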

	\begin{proof}
		Consider the diagram in the fourth paragraph of the proof of Lemma~\ref{lem:weakly_grouplike}.
		
		Note that $r\circ \varphi_*=\varphi_F\circ r_F$. Since $F$ is properly group-like or closed group-like, $r_F$ is a homomorphism, and by assumption, $\varphi_F$ is also a homomorphism, so $r \circ \varphi_*$ is a homomorphism.
		
		But because $\varphi_*$ is an epimorphism, it follows easily that $r$ must be a homomorphism.
	\end{proof}
	Note that the hypotheses of Proposition~\ref{prop:wgl_homom} are trivially satisfied if $E$ itself is closed group-like or properly group-like, so it extends Lemma~\ref{lem:r_is_homomorphism} and Lemma~\ref{lem:closed_group_like}(2).
	
	We have the following simple property of abstract group actions.
	\begin{prop}
		\label{prop:action_factorization}
		If $G$ is a group acting transitively on a set $X$, and $D\leq G$ stabilises some point $x_0\in X$, then the action factors through $G/\Core(D)$, where $\Core(D)$ is the normal core of $D$, i.e.\ the intersection of all conjugates of $D$.
	\end{prop}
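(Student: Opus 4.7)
The plan is to show that $\Core(D)$ acts trivially on $X$; since $\Core(D)$ is by definition normal in $G$, the action of $G$ on $X$ will then descend to $G/\Core(D)$. So the only content is to verify the triviality of the $\Core(D)$-action.

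The key observation is the following reformulation of membership in the normal core: $h \in \Core(D)$ if and only if for every $g \in G$ we have $g^{-1}hg \in D$. I will use this together with the hypothesis $D \leq \Stab(x_0)$ and transitivity of the action.

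Concretely, fix any $h \in \Core(D)$ and any $x \in X$. By transitivity, choose $g \in G$ with $g \cdot x_0 = x$. Then $g^{-1}hg \in D$, and since $D$ fixes $x_0$, we get $g^{-1}hg \cdot x_0 = x_0$. Applying $g$ to both sides yields $h \cdot x = h g \cdot x_0 = g \cdot x_0 = x$. Hence $h$ fixes $x$; since $x$ was arbitrary, $h$ acts as the identity on $X$. Therefore every element of $\Core(D)$ acts trivially, so the action of $G$ factors through the quotient $G / \Core(D)$, as claimed.

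No step presents any real obstacle; this is a short, purely algebraic verification, essentially the standard fact that the kernel of a transitive action equals the normal core of any point stabiliser (and the version stated here, with $D$ merely contained in a stabiliser, follows immediately because $\Core(D) \leq \Core(\Stab(x_0))$).
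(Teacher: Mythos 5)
Your proof is correct and is essentially the same argument as the paper's: the paper observes that $gDg^{-1}$ stabilises $gx_0$ for every $g$, whence $\Core(D) = \bigcap_g gDg^{-1}$ fixes every point by transitivity; you unfold the same computation at the level of individual elements $h \in \Core(D)$. Both proofs are fine.
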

	\begin{proof}
		Since $D$ stabilises $x_0$, for any $g\in G$, it is easy to see that $gDg^{-1}$ stabilises $gx_0$. Since $G$ acts transitively on $X$, it follows that $\Core(D)$ stabilises every point of $X$. The conclusion follows.
	\end{proof}
	
	In the last case described in Lemma~\ref{lem:weakly_grouplike}, we have some additional factorisations. Recall that by $D\leq u\cM$, we denoted the $\tau$-closed group of all $f\in u\cM$ such that $f(x_0)=u(x_0)$ (cf.\ Lemma~\ref{lem:D_closed}).
	\begin{lem}
		\label{lem:factorisation_lemma}
		Suppose $E$ satisfies the conclusion of Lemma~\ref{lem:weakly_grouplike}(3), i.e.\ we have a natural continuous group action of $u\cM/H(u\cM)$ on $X/E$, and its orbit map at $[x_0]_E$ (induced by $r$) is a topological quotient map.
		
		Then $\hat r_1$ factors through a map $\hat r_2\colon u\cM/H(u\cM)D\to X/E$, and also through $\hat r_3\colon u\cM/\Core(H(u\cM)D)$, and both factors are topological quotient maps.
		
		Furthermore, the action of $u\cM/H(u\cM)$ on $X/E$, factors through a continuous action of $u\cM/\Core(H(u\cM)D)$ (consequently, $\hat r_3$ is the orbit map of the factor action, also at $[x_0]_E$).
	\end{lem}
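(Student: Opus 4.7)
The plan is to identify the relevant subgroups of the stabiliser of $[x_0]_E$ inside $u\cM$ and then assemble the statement from three nested factorisations, exploiting the fact that under hypothesis (3) we already have a continuous action of $u\cM/H(u\cM)$ on $X/E$ whose orbit map $\hat r_1$ at $[x_0]_E$ is a topological quotient map.

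First I would observe that $D$ is contained in the stabiliser of $[x_0]_E$ inside $u\cM$: for $f \in D$ we compute $f \cdot [x_0]_E = [f(x_0)]_E = [u(x_0)]_E = u \cdot [x_0]_E = [x_0]_E$, using that $u$ acts as the identity (by Lemma~\ref{lem:weakly_grouplike}(3), which is exactly the standing hypothesis). Since the same hypothesis forces $H(u\cM)$ to act trivially on all of $X/E$, the full subgroup $H(u\cM)D \leq u\cM$ sits inside the stabiliser of $[x_0]_E$. This yields a well-defined factor $\hat r_2 \colon u\cM/H(u\cM)D \to X/E$ of $\hat r_1$. Because $u\cM/H(u\cM)$ is a (compact Hausdorff) topological group, Proposition~\ref{prop:quotient_is_open} makes the coset projection $u\cM/H(u\cM) \to u\cM/H(u\cM)D$ open, hence a topological quotient map; since $\hat r_1$ is a quotient map by hypothesis, Remark~\ref{rem:commu_quot} forces $\hat r_2$ to be a quotient map as well.

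Next, I would apply Proposition~\ref{prop:action_factorization} to the transitive action of $u\cM/H(u\cM)$ on $X/E$ with the stabiliser $H(u\cM)D/H(u\cM)$, so that the action factors through $(u\cM/H(u\cM))/\Core(H(u\cM)D/H(u\cM))$. The routine group-theoretic identity $\pi^{-1}(\Core(K)) = \Core(\pi^{-1}(K))$, applied to the projection $\pi \colon u\cM \to u\cM/H(u\cM)$, identifies this quotient canonically with $u\cM/\Core(H(u\cM)D)$, providing the factorisation through $\hat r_3$. Continuity of the induced action of $u\cM/\Core(H(u\cM)D)$ on $X/E$ would then follow from Proposition~\ref{prop:action_factor_is_continuous}(2), applied to the open quotient homomorphism $u\cM/H(u\cM) \to u\cM/\Core(H(u\cM)D)$ (using Proposition~\ref{prop:quotient_is_open} once more) together with $q_A = \id_{X/E}$. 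That $\hat r_3$ is a topological quotient map follows from the same Remark~\ref{rem:commu_quot} argument used for $\hat r_2$ applied to the chain $u\cM/H(u\cM) \to u\cM/\Core(H(u\cM)D) \to X/E$, and its description as the orbit map of the factored action at $[x_0]_E$ is then immediate from the construction.

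The only delicate point is the commutation $\pi^{-1}(\Core(K)) = \Core(\pi^{-1}(K))$, but this is formal and requires only that one unwind the definition of the normal core as an intersection of conjugates. Everything else reduces to standard topological-group manipulations and to repeated invocations of Remark~\ref{rem:commu_quot} and Proposition~\ref{prop:action_factor_is_continuous}.
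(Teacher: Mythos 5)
Your proposal is correct and tracks the paper's argument closely: the crux in both cases is that $H(u\cM)D$ stabilises $[x_0]_E$, whence the factorisations exist, and that $\hat r_2, \hat r_3$ inherit quotient-hood from $\hat r_1$, with the ``furthermore'' part coming from Proposition~\ref{prop:action_factorization} followed by Proposition~\ref{prop:action_factor_is_continuous}. The one cosmetic difference is at the first step: the paper detects the factorisation through $u\cM/H(u\cM)D$ via Lemma~\ref{lem:D_kernel_equiv} (decomposing a coset equality $f_1H(u\cM)D = f_2H(u\cM)D$ into an $H(u\cM)$-part and a $D$-part, and tracking values at $x_0$), whereas you argue directly that $D$ lies in the stabiliser of $[x_0]_E$ and combine with triviality of the $H(u\cM)$-action; these are logically interchangeable, and your version is arguably the more transparent reading. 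Your spelling out of $\pi^{-1}(\Core(K)) = \Core(\pi^{-1}(K))$ for surjective $\pi$ is also a point the paper leaves implicit when it writes $(u\cM/H(u\cM))/\Core(H(u\cM)D) = u\cM/\Core(H(u\cM)D)$.
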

	\begin{proof}
		To obtain the first factorisation, note that if the two cosets $f_1H(u\cM)D$ and $f_2H(u\cM)D$ are equal, then for some $f_1'\in f_1H(u\cM)$ and $f_2'\in f_2H(u\cM)$ we have that the cosets $f_1'D$ and $f_2'D$ are equal as well. Therefore, by Lemma~\ref{lem:D_kernel_equiv}, $f_1'(x_0)=f_2'(x_0)$, so in particular,
		\[
			\hat r_1(f_1)=[f_1(x_0)]_E=[f_1'(x_0)]_E=[f_2'(x_0)]_E=[f_2(x_0)]_E=\hat r_1(f_2).
		\]
		Factoring through $u\cM/\Core(H(u\cM)D)$ follows immediately, as $\Core(H(u\cM)D)\leq H(u\cM)D$. (Note that because $\hat r_1$ is a topological quotient map, so are $\hat r_2$ and $\hat r_3$.)
		
		For the ``furthermore'' part, note that by the first paragraph, $H(u\cM)D$ stabilises $[x_0]_E\in X/E$. On the other hand, since $\hat r_1$ is a surjective orbit map, $u\cM/H(u\cM)$ acts transitively on $X/E$. Thus, by Proposition~\ref{prop:action_factorization}, that action factors through $(u\cM/H(u\cM))/\Core(H(u\cM)D)=u\cM/\Core(H(u\cM)D)$. Continuity of the factor action is an easy consequence of the continuity of action of $u\cM/H(u\cM)$ and Proposition~\ref{prop:action_factor_is_continuous}.
	\end{proof}

	Lemma~\ref{lem:factorisation_lemma} can be further extended with the following ``niceness preservation'' properties, which will be very important for the proofs of the main theorems.
	\begin{lem}
		\label{lem:new_preservation_E_to_H}
		In Lemma~\ref{lem:factorisation_lemma}, let $H_1\subseteq u\cM/H(u\cM)$, $H_2\subseteq u\cM/H(u\cM)D$ and $H_3\subseteq u\cM/\Core(H(u\cM)D)$ be the preimages of $\{[x_0]_E\}$ by the respective $\hat r_1$, $\hat r_2$ or $\hat r_3$. Then:
		\begin{itemize}
			\item
			$E$ is clopen or closed if and only if some (equivalently, all) $H_i$ are such,
			\item
			if $E$ is $F_\sigma$, Borel or analytic, then so is each $H_i$.
		\end{itemize}
	\end{lem}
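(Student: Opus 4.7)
The plan is to propagate descriptive-set-theoretic properties between $E$ and the three sets $H_i$ using the chain of continuous surjections
\[
\overline{u\cM} \to u\cM/H(u\cM) \to u\cM/\Core(H(u\cM)D) \to u\cM/H(u\cM)D
\]
between compact Hausdorff spaces, together with Proposition~\ref{prop:preservation_properties}. Here, the first arrow is the map $\pi$ from Proposition~\ref{prop:strange_cont_pre} given by $f \mapsto uf/H(u\cM)$, while the latter two come from the inclusions $H(u\cM) \subseteq \Core(H(u\cM)D) \subseteq H(u\cM)D$; the codomains are compact Hausdorff by Fact~\ref{fct:quotient_by_closed_subgroup}, since $H(u\cM)D/H(u\cM)$ is a closed subgroup of the compact Hausdorff topological group $u\cM/H(u\cM)$.

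I will first observe that $H_1$ is the preimage of $H_2$ under the composite $u\cM/H(u\cM) \to u\cM/H(u\cM)D$, and $H_3$ is the preimage of $H_2$ under $u\cM/\Core(H(u\cM)D) \to u\cM/H(u\cM)D$. By the ``furthermore'' part of Proposition~\ref{prop:preservation_properties}, each of the properties ``closed'', ``open'', ``$F_\sigma$'', ``analytic'', and ``Borel'' is then simultaneously enjoyed by all three $H_i$. So it suffices to verify each bullet for $H_1$.

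For the first bullet, I argue as follows. By Proposition~\ref{prop:top_props_of_wglike}(2), $E$ is closed iff its class $[x_0]_E\subseteq X$ is closed; since $X\to X/E$ is a quotient map, this is equivalent to $\{[x_0]_E\}$ being closed in $X/E$; and since $\hat r_1$ is a topological quotient map, this in turn is equivalent to $H_1 = \hat r_1^{-1}[\{[x_0]_E\}]$ being closed. For openness, the (transitive) action of the dominating quotient $Z/F$ on $X/E$ from Remark~\ref{rem:domin_orbit} shifts $[x_0]_E$ to every other class, so $E$ is open iff $[x_0]_E$ is open in $X$, iff $\{[x_0]_E\}$ is open in $X/E$, iff $H_1$ is open in $u\cM/H(u\cM)$. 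Combining these gives the ``clopen'' equivalence.

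For the second bullet, the key calculation is the identity $\pi^{-1}[H_1] = R\restr_{\overline{u\cM}}^{-1}\bigl[[x_0]_E\bigr]$, where $R\colon EL\to X$ is evaluation at $x_0$. Indeed, by Lemma~\ref{lem:weakly_grouplike}(3) the element $u$ (and all of $H(u\cM)$) acts as the identity on $X/E$, so for every $f\in\overline{u\cM}$,
\[
\hat r_1(\pi(f)) \;=\; (uf/H(u\cM))\cdot[x_0]_E \;=\; u\cdot[f(x_0)]_E \;=\; [f(x_0)]_E.
\]
Now, if $E$ is $F_\sigma$, Borel, or analytic as a subset of $X^2$, then its fibre $[x_0]_E\subseteq X$ inherits the same property (as the preimage of $E$ under the continuous map $x\mapsto(x_0,x)$). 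By continuity of $R$, $\pi^{-1}[H_1]$ inherits the property in the compact Hausdorff space $\overline{u\cM}$, and then Proposition~\ref{prop:preservation_properties}, applied to the continuous surjection $\pi$ between compact Hausdorff spaces, transfers it to $H_1$ (in the Borel case, invoking the clause ``sets with Borel preimage are Borel''). The only point requiring genuine care is this last identification of $\pi^{-1}[H_1]$, which is precisely where Lemma~\ref{lem:weakly_grouplike}(3) (the trivial action of $u$ and $H(u\cM)$) enters.
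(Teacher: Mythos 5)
Your proof is correct and takes essentially the same approach as the paper: the second bullet is the same argument, just transporting the property through $\overline{u\cM}\to u\cM/H(u\cM)$ (and $H_1$) rather than through $R[\overline{u\cM}]\to u\cM/H(u\cM)D$ (and $H_2$) as in Proposition~\ref{prop:from_cluM}, and for the first bullet the paper uses the homeomorphism $X/E\cong(u\cM/H(u\cM))/H_1$ together with Facts~\ref{fct:quot_T2_iff_closed} and~\ref{fct:quotient_by_closed_subgroup} where you use Proposition~\ref{prop:top_props_of_wglike}(2) plus transitivity of the $Z/F$-action, which comes to the same thing. One small imprecision: the ``furthermore'' clause of Proposition~\ref{prop:preservation_properties} only covers open and closed sets, so for $F_\sigma$, Borel and analytic the transfer among the $H_i$ should instead invoke the earlier bullets of that proposition (preimages and images preserve $F_\sigma$ and analytic, and sets with Borel preimage are Borel).
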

	\begin{proof}
		For the first bullet, note that since $\hat r_1$ is a quotient map, $X/E$ is homeomorphic to $(u\cM/H(u\cM))/H_1$. Hence, by Fact~\ref{fct:quot_T2_iff_closed} and Fact~\ref{fct:quotient_by_closed_subgroup}, $E$ is closed or clopen if and only if $H_1$ is (respectively). The fact that $\hat r_1$ factors through $\hat r_2,\hat r_3$ easily implies that if one of $H_1,H_2,H_3$ is closed or clopen, so are the other two (e.g.\ by Proposition~\ref{prop:preservation_properties}).
		
		If $E$ is $F_\sigma$, Borel or analytic, then so is $[x_0]_E$, and thus also $[x_0]_E\cap R[\overline{u\cM}]$. But, by the hypotheses, $[x_0]_E\cap R[\overline{u\cM}]$ is the preimage of $H_2$ via the continuous map $R[\overline{u\cM}]\to u\cM/H(u\cM)D$ from Proposition~\ref{prop:from_cluM}. Since $H_1$ and $H_3$ are also continuous preimages of $H_2$, the second bullet follows by several applications of Proposition~\ref{prop:preservation_properties}.
	\end{proof}
	
	\begin{rem}
		\label{rem:homom_factor}
		Note that under the assumptions of of Proposition~\ref{prop:wgl_homom}, if we have $\hat r_1$ as in Lemma~\ref{lem:factorisation_lemma} (e.g.\ $E$ is weakly uniformly properly group like or weakly closed group-like), then $\hat r_1$ is  homomorphism, and thus so is $\hat r_3$. It follows that both are topological group quotient maps, and the subgroups $H_1$ and $H_3$ in Lemma~\ref{lem:new_preservation_E_to_H} are both normal.
		\xqed{\lozenge}
	\end{rem}

	\begin{rem}
		\label{rem:tame_preservation}
		In fact, we can extend Lemma~\ref{lem:new_preservation_E_to_H} to also say that if $(G,X)$ is tame and metrisable, then if one of $E, H_1,H_2,H_3$ is Borel or analytic, then so are the other three. The proof is mostly straightforward, but somewhat technical. (It uses Proposition~\ref{prop:NIP gives metrizability}, Fact~\ref{fct:borel_section}, Corollary~\ref{cor:borel_map}, and the fact that the preimage of an analytic set by a Borel map between Polish spaces is analytic.)
		\xqed{\lozenge}
	\end{rem}
	
	\section{Cardinality dichotomies}
	\label{sec:dichot_cardinality}
	
	In this section, we apply the results of the preceding sections, along with properties of compact Hausdorff groups, to deduce two dichotomies related to weakly group-like equivalence relations. In contrast to the next section, we \emph{do not} assume metrisability. Theorems of this section, along with Lemma~\ref{lem:new_preservation_E_to_H}, and Lemma~\ref{lem:weakly_grouplike}, yield Main~Theorem~\ref{mainthm:abstract_card}.
	
	\begin{thm}
		\label{thm:general_cardinality_intransitive}
		Suppose $E$ is analytic and either weakly uniformly properly group-like or weakly closed group-like. Then either $E$ is closed, or for every $Y\subseteq X$ which is closed and $E$-saturated, we have $\lvert Y/E\rvert\geq 2^{\aleph_0}$.
	\end{thm}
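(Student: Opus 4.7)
The plan is to apply Lemma~\ref{lem:weakly_grouplike}(3), which under either of the two standing hypotheses produces a compact Hausdorff topological group $u\cM/H(u\cM)$ acting continuously and transitively on $X/E$, with induced orbit map $\hat r_1\colon u\cM/H(u\cM)\to X/E$ that is a topological quotient. Let $H_1$ denote the stabiliser of $[x_0]_E$, i.e.\ the $\hat r_1$-preimage of $\{[x_0]_E\}$. By Lemma~\ref{lem:new_preservation_E_to_H}, $H_1$ is analytic (since $E$ is), and $E$ is closed if and only if $H_1$ is closed. So we may assume $E$ is not closed and aim to show that every closed, $E$-saturated, nonempty $Y\subseteq X$ satisfies $\lvert Y/E\rvert\geq 2^{\aleph_0}$, with $H_1$ now a proper analytic, non-closed subgroup of the compact Hausdorff group $u\cM/H(u\cM)$.

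The key algebraic observation is that the closure $\bar H_1$ of $H_1$ in $u\cM/H(u\cM)$ is a closed subgroup strictly larger than $H_1$, so $H_1$ is analytic, dense, but not equal to $\bar H_1$; in particular $H_1$ cannot be open in the compact Hausdorff group $\bar H_1$ (open subgroups there are closed). Fact~\ref{fct:from_mycielski} applied to the pair $H_1\leq \bar H_1$ therefore gives $[\bar H_1:H_1]\geq 2^{\aleph_0}$.

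Now fix $Y\subseteq X$ closed, $E$-saturated and nonempty. Since $Y$ is closed and $E$-saturated, $Y/E$ is closed in $X/E$, so $\tilde Y:=\hat r_1^{-1}[Y/E]$ is a closed, nonempty subset of $u\cM/H(u\cM)$. The fibres of $\hat r_1$ are left cosets of $H_1$, so $\tilde Y$ is a disjoint union of precisely $\lvert Y/E\rvert$-many left cosets of $H_1$. Because $\tilde Y$ is closed and left translation is a homeomorphism, for every left coset $gH_1\subseteq \tilde Y$ we also have $g\bar H_1=\overline{gH_1}\subseteq\tilde Y$. Consequently $\tilde Y$ is a union of left cosets of $\bar H_1$, and fixing any $g_0\in\tilde Y$, the single $\bar H_1$-coset $g_0\bar H_1\subseteq\tilde Y$ already splits into $[\bar H_1:H_1]\geq 2^{\aleph_0}$ left cosets of $H_1$. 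Hence $\lvert Y/E\rvert\geq 2^{\aleph_0}$.

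The hardest work is already buried in the preceding sections of this chapter: constructing the compact Hausdorff topological group $u\cM/H(u\cM)$ together with its jointly continuous transitive action on $X/E$ (Lemma~\ref{lem:weakly_grouplike}) and establishing the niceness-transfer between $E$ and its stabiliser $H_1$ (Lemma~\ref{lem:new_preservation_E_to_H}). Once this dictionary is in place, the conclusion reduces to the classical Pettis--Pickard/Mycielski apparatus packaged in Fact~\ref{fct:from_mycielski}, applied not to the ambient group but to the closed subgroup $\bar H_1$; the only genuinely new ingredient is the elementary topological remark that a closed union of left $H_1$-cosets in a topological group is automatically a union of left $\bar H_1$-cosets.
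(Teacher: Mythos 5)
Your proposal is correct and follows essentially the same route as the paper: apply Lemma~\ref{lem:weakly_grouplike}(3) and Lemma~\ref{lem:new_preservation_E_to_H} to reduce to a compact Hausdorff group with an analytic, non-closed stabiliser subgroup $H_1$, then invoke Fact~\ref{fct:from_mycielski} on $H_1$ inside its own closure. The only cosmetic difference is how you get the full coset of $\bar H_1$ inside the preimage of $Y/E$: the paper translates so that the identity lies in $Y'$ and then observes $\overline{\ker\hat r}\subseteq Y'$, whereas you directly note that a closed union of left $H_1$-cosets is automatically a union of left $\bar H_1$-cosets; these are the same observation phrased differently.
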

	\begin{proof}
		Suppose $E$ is not closed.
		
		Let $\hat G=u\cM/H(u\cM)$ and $\hat r\colon \hat G\to X/E$ be the induced quotient map we have by Lemma~\ref{lem:weakly_grouplike}(3) (so that in particular, $\hat r$ is an orbit mapping and $\hat r(e_{\hat G})=[x_0]_E$). Because $E$ is not closed, by Lemma~\ref{lem:new_preservation_E_to_H}, $\ker \hat r:=\hat r^{-1}\{[x_0]_E\}$ is not closed, so it is not open (note that it is a subgroup of $\hat G$, as it is just the stabiliser of $[x_0]_E\in X/E$).
		
		Since $Y$ is closed and $E$-saturated, $Y/E\subseteq X/E$ is closed and so is $Y':=\hat r^{-1}[Y/E]$.
		
		We can assume without loss of generality that $e_{\hat G}\in Y'$. Otherwise, for any $g_0\in Y'$, we have $e_{\hat G}\in Y'':=g_0^{-1}Y'$, and then $Y''$ is a closed right $\ker \hat r$-invariant set and we have a bijection between $Y/E=Y'/{\ker \hat r}$ and $Y''/{\ker \hat r}$ (which can be identified with $\hat r[Y'']\subseteq X/E$).
		
		Under this assumption, we also have that $\ker \hat r\subseteq Y'$, and because $Y'$ is closed, $\overline{\ker \hat r}\subseteq Y'$. Since $\ker \hat r$ is not closed, it is not open in $\overline {\ker \hat r}$. On the other hand ${\ker \hat r}$ is analytic --- by Lemma~\ref{lem:new_preservation_E_to_H} --- so it has the Baire property. By Fact~\ref{fct:from_mycielski}, it follows that $\lvert \overline{{\ker \hat r}}/{\ker \hat r}\rvert\geq 2^{\aleph_0}$. It follows that $Y'/{\ker \hat r}=Y/E$ has cardinality at least $2^{\aleph_0}$.
	\end{proof}

	\begin{thm}
		\label{thm:general_cardinality_transitive}
		If $E$ is analytic and weakly closed group-like or weakly uniformly properly group like, then $E$ is clopen or $E$ has at least $2^{\aleph_0}$ many classes.
		
		More generally, suppose $Y\subseteq X$ is closed and $E$-saturated, and suppose that $G^Y$ (the setwise stabiliser of $Y$) has a dense orbit in $Y/E$. Then either $E\restr_{Y}$ is clopen in $Y^2$ (and $Y/E$ is finite) or $\lvert Y/E\rvert\geq 2^{\aleph_0}$.
	\end{thm}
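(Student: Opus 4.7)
The strategy is to split into two cases according to whether $E$ is closed. If $E$ is not closed, Theorem~\ref{thm:general_cardinality_intransitive} applied to $Y$ directly gives $\lvert Y/E\rvert \geq 2^{\aleph_0}$, and we are done. So the substantive case is when $E$ is closed: then $X/E$ (and hence its subspace $Y/E$) is Hausdorff by Proposition~\ref{prop:top_props_of_wglike}, and we aim to imitate, within $Y/E$, the Pettis-type argument used for the absolute case.

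By Lemma~\ref{lem:weakly_grouplike}(3), the compact Hausdorff topological group $\hat G := u\cM/H(u\cM)$ acts continuously and transitively on $X/E$ with orbit map $\hat r(\hat g) = \hat g \cdot [x_0]_E$ a topological quotient map, and the $G$-action on $X/E$ factors through $\hat G$ via a homomorphism $\rho\colon G\to \hat G$. Let $\hat G_Y\leq \hat G$ be the setwise stabiliser of $Y/E$; this is a closed subsemigroup of $\hat G$ (intersection, over $p\in Y/E$, of the closed sets $\{\hat g : \hat g p \in Y/E\}$), hence a closed subgroup (since a closed subsemigroup of a compact group is a subgroup). Clearly $\rho(G^Y)\subseteq \hat G_Y$. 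Pick $y_0\in Y$ such that $G^Y\cdot [y_0]_E$ is dense in $Y/E$, and $\hat g_0\in \hat G$ with $\hat g_0\cdot[x_0]_E=[y_0]_E$. Then $\hat G_Y\cdot[y_0]_E$ is compact, hence closed in the Hausdorff space $X/E$, and it contains the dense (in $Y/E$) set $\rho(G^Y)\cdot[y_0]_E = G^Y\cdot[y_0]_E$; therefore $\hat G_Y\cdot[y_0]_E = Y/E$, i.e.\ $\hat G_Y$ acts transitively on $Y/E$.

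Consequently, the orbit map $\hat G_Y\to Y/E$, $\hat g\mapsto \hat g\cdot[y_0]_E$, is a continuous surjection from a compact space to a Hausdorff one, hence a topological quotient map (Remark~\ref{rem: continuous surjection is closed}), identifying $Y/E$ with $\hat G_Y/K_Y$, where $K_Y$ is the stabiliser of $[y_0]_E$ in $\hat G_Y$. A direct computation gives $K_Y = \hat G_Y\cap \hat g_0 H_1\hat g_0^{-1}$, where $H_1 := \hat r^{-1}(\{[x_0]_E\})$. Since $E$ is analytic, Lemma~\ref{lem:new_preservation_E_to_H} gives that $H_1$ is analytic in $\hat G$, hence so is its conjugate, and thus $K_Y$ is analytic in the closed subspace $\hat G_Y$; in particular, $K_Y$ has the Baire property in the compact Hausdorff group $\hat G_Y$.

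Finally, apply Proposition~\ref{prop:trichotomy_for_groups} (or directly Facts~\ref{fct:quotient_by_closed_subgroup}, \ref{fct:pettis} and \ref{fct:from_mycielski}) to $K_Y\leq \hat G_Y$: if $K_Y$ is open then $\hat G_Y/K_Y$ is discrete and finite (by compactness), so $Y/E$ is finite, which (as $Y/E$ is Hausdorff) is equivalent to $E\restr_Y$ being clopen in $Y^2$; otherwise $K_Y$ is Baire and not open, so Fact~\ref{fct:from_mycielski} yields $[\hat G_Y:K_Y]\geq 2^{\aleph_0}$, i.e.\ $\lvert Y/E\rvert \geq 2^{\aleph_0}$. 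The main obstacle is the transitivity step: without the closedness of $E$ we have no Hausdorffness to conclude that the compact orbit $\hat G_Y\cdot[y_0]_E$ is closed (and thus absorbs the dense set $G^Y\cdot[y_0]_E$), which is precisely why the non-closed case has to be handled separately via Theorem~\ref{thm:general_cardinality_intransitive}.
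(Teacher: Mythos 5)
Your proof is correct and follows the same core strategy as the paper's: obtain a compact Hausdorff group ($\hat G_Y$) acting on $Y/E$ with a Baire point stabiliser, and invoke Pettis and Mycielski via Fact~\ref{fct:from_mycielski}. Where the paper supposes $\lvert Y/E\rvert<2^{\aleph_0}$, deduces closedness of $E$ from Theorem~\ref{thm:general_cardinality_intransitive}, conjugates so that $[x_0]_E\in Y/E$ has an infinite $\hat G^Y$-orbit, and derives a contradiction without ever establishing transitivity, you split on closedness upfront and prove outright that $\hat G_Y$ acts transitively on $Y/E$ via the compact-orbit-absorbs-dense-set argument; this is a clean reorganisation and the two arguments are interchangeable. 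Two small cautions about your write-up. First, the map $\rho(g)=uguH(u\cM)$ need not be a group homomorphism in general (the product $ug_1u\cdot ug_2u$ need not equal $ug_1g_2u$ in $u\cM$); what you actually use, and all you need, is that $\rho(g)$ and $g$ act identically on $X/E$, which does hold because $u$ acts there as the identity. Second, since $X$ is not assumed metrisable here, $\hat G_Y$ need not be Polish, so Proposition~\ref{prop:trichotomy_for_groups} does not apply directly; your parenthetical fallback via Facts~\ref{fct:quotient_by_closed_subgroup}, \ref{fct:pettis} and \ref{fct:from_mycielski} (the latter stated for locally compact Hausdorff groups) is the correct route, and is the one the paper takes.
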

	\begin{proof}
		We will treat the general case. Suppose $\lvert Y/E\rvert <2^{\aleph_0}$. Then by Theorem~\ref{thm:general_cardinality_intransitive}, $E$ is closed (note that this implies that $\ker \hat r$ is closed and $Y/E$ is Hausdorff). If $Y/E$ is finite, it follows easily that $E\restr_Y$ is clopen, so suppose towards contradiction that it is infinite.
		
		Consider $\hat G=u\cM/H(u\cM)$ acting continuously on $X/E$ as in Lemma~\ref{lem:weakly_grouplike}(3), and let $\hat G^Y$ be the setwise stabiliser of $Y/E$. Since $Y$ is closed an $E$-saturated, so is $Y/E$, and thus (by continuity) also $\hat G^Y$, which is therefore a compact group (as a closed subgroup of $\hat G$). Note that for every $g\in G^Y$, we have $uguH(u\cM)\in \hat G^Y$. It follows that $\hat G^Y$ has a dense, and therefore (because $Y/E$ is Hausdorff) infinite orbit in $Y/E$. We may assume without loss of generality that $[x_0]_E\in Y/E$ and $[x_0]_E$ has an infinite $\hat G^Y$-orbit (otherwise, we can replace $Y/E$ by $g^{-1}(Y/E)$ and $\hat G^Y$ by $g^{-1}\hat G^Yg$, for some $g$ such that $g[x_0]_E\in Y$ has infinite $\hat G^Y$-orbit).
		
		Under this assumption, we have a bijection between $\hat G^Y\cdot [x_0]_E\subseteq Y/E$ and $\hat G^Y/(\ker \hat r\cap \hat G^Y)$. Since $E$ is closed, $\ker \hat r$ is a closed subgroup of $\hat G$, so $H^Y:=\ker \hat r\cap \hat G^Y$ is a Baire subgroup of $\hat G^Y$. Using the aforementioned bijection between $\hat G^Y/H^Y$ and the orbit $\hat G^Y\cdot [x_0]_E$, we conclude  that $[\hat G^Y:H^Y]$ is infinite, so by compactness of $\hat G^Y$, $H^Y$ is not open. But then by Fact~\ref{fct:from_mycielski}, it follows that $[\hat G^Y:H^Y]\geq 2^{\aleph_0}$, and thus $\hat G^Y\cdot [x_0]_E\subseteq Y/E$ has cardinality at least $2^{\aleph_0}$, which is a contradiction.
	\end{proof}
	In the case when $Y=X$ and $X$ is metrisable, we can refine the dichotomy from Theorem~\ref{thm:general_cardinality_transitive} by another dividing line, as we will see in Corollary~\ref{cor:metr_smt_cls}. Later, in Chapter~\ref{chap:nonmetrisable_card}, we will discuss a possible variant of this refinement which would apply in the non-metrisable case.

	\section[Group-like quotients and Polish groups and Borel cardinality]{Group-like quotients and Polish groups and Borel cardinality\sectionmark{Polish groups}}
	\sectionmark{Polish groups}
	In this section, we study the consequences of Lemma~\ref{lem:main_abstract_grouplike} for metrisable ambits. In particular, we present the class space of a (weakly uniformly properly or weakly closed) group-like equivalence relation as the quotient of a Polish group by a subgroup, which will later be used to prove Main~Theorem~\ref{mainthm_group_types}.
	
	The following theorem can be considered the principal result of the thesis in the general abstract context. The main results in Chapter~\ref{chap:applications} (in particular, Theorems~\ref{thm:main_aut}, \ref{thm:main_galois} and \ref{thm:main_tdf}) are essentially its specialisations. In Chapter~\ref{chap:nonmetrisable_card}, we discuss possible extensions of this Theorem to the case when $X$ is not metrisable.
	\begin{thm}
		\label{thm:main_abstract}
		Suppose $X$ is metrisable, while $E$ is weakly uniformly properly group-like or weakly closed group-like. Write $D'$ for $\Core(H(u\cM)D)\unlhd u\cM$ and $\hat G$ for the Polish group $u\cM/D'$ (cf.\ Corollary~\ref{cor:Polish_quotient_Core(D)}).
		
		Then $\hat G$ acts continuously and transitively on $X/E$ (as $(fD')\cdot [x]_E=[f(x)]_E$), and the orbit map $\hat r\colon \hat g\mapsto \hat g\cdot [x_0]_E$, the induced equivalence relation $E|_{\hat G}$, and $H\leq \hat G$, defined as the stabiliser of $[x_0]_E$, have the following properties:
		\begin{enumerate}
			\item
			$H\leq \hat G$ and fibres of $\hat r$ are exactly the left cosets of $H$ (so $\hat G/E|_{\hat G}=\hat G/H$),
			\item
			$\hat r$ is a topological quotient map (so it induces a homeomorphism of $\hat G/H$ and $X/E$),
			\item
			$E$ is clopen or closed if and only if $H$ is (respectively)
			\item
			if $E$ is $F_\sigma$, Borel, or analytic (respectively), then so is $H$,
			\item
			$E|_{\hat G}\leq_B E$
		\end{enumerate}
		
		Furthermore:
		\begin{enumerate}
			\setcounter{enumi}{5}
			\item
			if $(G,X)$ is tame, then $E|_{\hat G}\sim_B E$, and
			\item
			if $E$ itself is closed group-like or properly group-like (or, more generally, satisfies the assumptions of Proposition~\ref{prop:wgl_homom}), then $H\unlhd \hat G$ and $\hat r$ is a homomorphism (and hence, by (2), a topological group quotient map).
		\end{enumerate}
	\end{thm}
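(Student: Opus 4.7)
The plan is to assemble parts (1)--(4) from already-proved lemmas and then construct explicit Borel reductions for (5) and (6). For (1)--(4): $\hat G$ is compact Polish by Corollary~\ref{cor:Polish_quotient_Core(D)}; Lemmas~\ref{lem:weakly_grouplike}(3) and \ref{lem:factorisation_lemma} (applied with $\hat r=\hat r_3$) give the continuous transitive action $(fD')\cdot [x]_E=[f(x)]_E$ and identify $\hat r$ with the topological quotient $\hat r_3$. Since $H$ is the stabiliser of $[x_0]_E$, fibres of the orbit map are left cosets of $H$, yielding (1) and (2); (3) and (4) are precisely the content of Lemma~\ref{lem:new_preservation_E_to_H} for $H=H_3$.

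For (5), I plan to use the standard Borel section technique. Note that $H(u\cM)\leq D'$: since $H(u\cM)$ is normal in $u\cM$ by Fact~\ref{fct:tau_top_pre}(8), it is contained in $gH(u\cM)Dg^{-1}$ for every $g\in u\cM$, hence in the core. Post-composing Proposition~\ref{prop:strange_cont_pre}'s continuous map $\overline{u\cM}\to u\cM/H(u\cM)$ with the quotient onto $\hat G=u\cM/D'$ therefore yields a continuous surjection $\pi\colon\overline{u\cM}\to \hat G$ between compact Polish spaces (both Polish by the argument in the proof of Corollary~\ref{cor:uM/HuMD_Polish}). Fact~\ref{fct:borel_section} gives a Borel section $s\colon \hat G\to \overline{u\cM}$, and $\phi:=R\circ s$ is then a Borel map $\hat G\to X$. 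A brief calculation, using that $u$ acts as the identity on $X/E$ (Lemma~\ref{lem:weakly_grouplike}(2)), shows $\hat r(\hat g)=[\phi(\hat g)]_E$, so $\phi$ reduces $E|_{\hat G}$ to $E$.

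For (6), the tameness assumption enters via Fact~\ref{fct:tame_borel}: Proposition~\ref{prop:last_borel} then makes $f\mapsto uf$ induce a Borel function $X=EL/{\equiv}\to u\cM/{\equiv}$, and composing with the continuous map $\hat j$ of Lemma~\ref{lem:jhat_cont} yields a Borel $\psi\colon X\to u\cM/H(u\cM)D$ which, once more by triviality of $u$ on $X/E$, is a reduction of $E$ to $E|_{u\cM/H(u\cM)D}$. Applying Fact~\ref{fct:borel_section} to the quotient $\hat G\to u\cM/H(u\cM)D$ produces a Borel section $t$, and $t\circ\psi$ is then a Borel reduction $E\leq_B E|_{\hat G}$, which combined with (5) gives $E\sim_B E|_{\hat G}$. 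Finally, (7) is immediate from Remark~\ref{rem:homom_factor}: under the hypotheses of Proposition~\ref{prop:wgl_homom}, $\hat r=\hat r_3$ is a topological group homomorphism, so $H=\ker\hat r$ is a normal subgroup of $\hat G$. The only real difficulty is organisational --- juggling the intermediate quotients ($u\cM/H(u\cM)$, $u\cM/H(u\cM)D$, $\hat G$, $R[\overline{u\cM}]$, and so on) and checking at each step that the constructed Borel map genuinely reduces the equivalence relations; in every case this reduces to the identity $\hat r(uf/D')=[f(x_0)]_E$ together with triviality of $u$ on $X/E$.
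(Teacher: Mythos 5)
Your treatment of (1)--(4) and (7) matches the paper (and is correct); the issues are in (5) and (6).

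For (5), the problematic step is the parenthetical claim that $\overline{u\cM}$ is Polish ``by the argument in the proof of Corollary~\ref{cor:uM/HuMD_Polish}.'' That argument shows that $R[\overline{u\cM}]=\overline{u\cM}/{\equiv}$ is Polish (it is a closed subspace of the metrisable $X$) and that $u\cM/H(u\cM)D$ is Polish (as a continuous image of a compact Polish space). It says nothing about $\overline{u\cM}$ itself, and indeed $\overline{u\cM}\subseteq EL\subseteq X^X$ need not be metrisable when $X$ is --- the theorem does not even assume tameness, and already for $(G,\beta G)$ the Ellis semigroup is far from metrisable. Without that, Fact~\ref{fct:borel_section} does not produce the section $s$ you rely on. Your map $\pi$ also does not factor through $R\colon \overline{u\cM}\to R[\overline{u\cM}]$, because that would require $D\subseteq D'=\Core(H(u\cM)D)$, which is false in general; so one cannot simply replace $\overline{u\cM}$ by $R[\overline{u\cM}]$ in your argument. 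The paper avoids this by splitting the reduction: $E|_{\hat G}\sim_B E|_{\hat G'}$ via the continuous surjection $\hat G\to\hat G'=u\cM/H(u\cM)D$ (both Polish), then $E|_{\hat G'}\sim_B E\restr_{R[\overline{u\cM}]}$ via Proposition~\ref{prop:from_cluM} (both Polish), and finally $E\restr_{R[\overline{u\cM}]}\leq_B E$ trivially.

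For (6), the claim that $f\mapsto uf$ induces a Borel map $EL/{\equiv}\to u\cM/{\equiv}$ also fails: $\cM$ is only a \emph{left} ideal, so for $f\in EL$ one has $uf\in\cM EL$, which need not lie in $\cM$ (let alone $u\cM$). The ``in particular'' clause of Proposition~\ref{prop:last_borel} only asserts a Borel map $\cM/{\equiv}\to u\cM/{\equiv}$, and one cannot even salvage this by choosing representatives, since $R\restr_{\cM}$ is not onto $X$ in general. This is precisely why the paper introduces the finer quotient $EL/{\equiv'}$ and the map $[f]_{\equiv'}\mapsto ufu/H(u\cM)D$ (note the extra $u$ on the right, which lands $fu$ in $\cM=ELu$): see Corollary~\ref{cor:borel_map} and Proposition~\ref{prop: quotients of EL are Polish}. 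With that replacement the rest of your argument for (6) --- composing with the continuous surjection $\hat G\to\hat G'$ and applying Fact~\ref{fct:borel_section} --- goes through.
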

	\begin{proof}
		Recall that by Corollary~\ref{cor:uM/HuMD_Polish} and Corollary~\ref{cor:Polish_quotient_Core(D)}, we know that the group $\hat G$ and the space $\hat G':=u\cM/H(u\cM)D$ are both compact Polish.
		
		Lemma~\ref{lem:weakly_grouplike} applies, and so do Lemmas~\ref{lem:factorisation_lemma} and \ref{lem:new_preservation_E_to_H}. This gives us the continuous action and (1)-(4) (with $\hat r=\hat r_3$ from Lemma~\ref{lem:factorisation_lemma}).
		
		Note that we have a continuous surjection $\hat G\to \hat G'$, and again by Lemma~\ref{lem:new_preservation_E_to_H}, $\hat r$ factors through it. As both $\hat G$ and $\hat G'$ are compact Polish, Fact~\ref{fct:borel_section} applies, and we have $E|_{\hat G}\sim_B E|_{\hat G'}$.
		
		For (5), note that trivially $E\geq_B E\restr _{\overline{u\cM}/{\equiv}}$ (where we identify $\overline{u\cM}/{\equiv}$ with $R[\overline{u\cM}]\subseteq R[EL/{\equiv}]=X$). On the other hand, we have a commutative diagram:
		\begin{center}
			\begin{tikzcd}
				\overline{u\cM}/{\equiv}\ar[d]\ar[r, two heads] & \hat G'= u\cM/H(u\cM)D \ar[d, two heads] \\
				X\ar[r, two heads] & X/E
			\end{tikzcd}
		\end{center}
		where the map $\overline{u\cM}/{\equiv}\to u\cM/H(u\cM)D$ is the function $[f]_{\equiv}\mapsto ufH(u\cM)D$ given by Proposition~\ref{prop:from_cluM}. Commutativity follows in a straightforward manner from the definitions  (and the fact that $u$ and the whole $H(u\cM)$ act trivially on $X/E$, because the action of $u\cM$ on $X/E$ factors through $u\cM/H(u\cM)$), and it implies that $\overline{u\cM}/{\equiv}\to u\cM/H(u\cM)D$ is a reduction of $E\restr _{\overline{u\cM}/{\equiv}}$ to $E|_{\hat G'}$. By Fact~\ref{fct:borel_section}, we have $E\restr _{\overline{u\cM}/{\equiv}}\sim_B E|_{\hat G'}$, so
		\[
			E|_{\hat G}\sim_BE|_{\hat G'}\sim_B E\restr _{\overline{u\cM}/{\equiv}}\leq_B E.
		\]
		For (6), note that if $(G,X)$ is tame and metrisable, we can apply Corollary~\ref{cor:borel_map} to obtain a commutative diagram:
		\begin{center}
			\begin{tikzcd}
				EL/{\equiv'} \ar[r,"Borel", two heads]\ar[d, two heads] & \hat G'= u\cM/H(u\cM)D\ar[d, two heads]\\
				X \ar[r, two heads] & X/E.
			\end{tikzcd}
		\end{center}
		$EL/{\equiv'}$ is Polish (by Proposition~\ref{prop: quotients of EL are Polish}) and the function $EL/{\equiv'}\to X$ is a continuous surjection, so by Fact~\ref{fct:borel_section}, $E\sim_B E|_{EL/{\equiv'}}$. On the other hand, the Borel function $EL/{\equiv'}\to \hat G'$ is clearly a reduction of $E|_{EL/{\equiv'}}$ to $E|_{\hat G'}$, so
		\[
			E\sim_B E|_{EL/{\equiv'}}\leq_B E|_{\hat G'}\sim_B E|_{\hat G}.
		\]
		
		Finally, for (7), just apply Remark~\ref{rem:homom_factor}.
	\end{proof}
	
	\begin{rem}
		By Proposition~\ref{prop:NIP gives metrizability}, it follows that if $(G,X)$ is tame and metrisable, then $u\cM/H(u\cM)$ is Polish, so in this case, in Theorem~\ref{thm:main_abstract}, we can take $\hat G$ to be $u\cM/H(u\cM)$ instead of $u\cM/D'$ and (with the obvious modifications) the conclusion still holds, with essentially the same proof.
		\xqed{\lozenge}
	\end{rem}

	\begin{cor}
		\label{cor:tame_dominated}
		In Theorem~\ref{thm:main_abstract}, if we have a $G$-ambit morphism $(X,x_0)\to (Z,z_0)$ with $(G,Z)$ tame, and an equivalence relation $F$ on $Z$ such that $E=F|_X$, then $E\sim_B E|_{\hat G}$ (even if $(G,X)$ is untame).
	\end{cor}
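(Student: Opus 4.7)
The approach is to reduce to Theorem~\ref{thm:main_abstract}(6) applied to the tame ambit $(G,Z,z_0)$ and then to transfer the resulting Borel equivalence back to the $X$ side via $\varphi$ and $\varphi_*$. Writing $\hat G_X:=\hat G$ and $\hat G_Z$ for the analogous compact Polish group built from $(G,Z,z_0)$ and $F$, our target is $E\sim_B E|_{\hat G_X}$. First, note that $\varphi\colon X\to Z$ is surjective (its image is closed by compactness and contains the dense orbit $G\cdot z_0$), so $Z$ is compact metrisable by Fact~\ref{fct: preservation of metrizability}, and the set-theoretic map $X/E\to Z/F$, $[x]_E\mapsto[\varphi(x)]_F$, is a bijection (injectivity follows from $E=F|_X$, surjectivity from that of $\varphi$).

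The main preparatory step, and the only genuinely delicate part of the argument, is to verify that $F$ itself is weakly group-like on $(Z,z_0)$ of the same flavour (weakly closed or weakly uniformly properly) as $E$. If $E'$ is a group-like relation on some ambit $(W,w_0)$ witnessing weak group-likeness of $E$ via a morphism $(W,w_0)\to(X,x_0)$, then the composed morphism $(W,w_0)\to(X,x_0)\to(Z,z_0)$ together with the same $E'$ witnesses the analogous property for $F$: the bijection $X/E\leftrightarrow Z/F$ transports the $W/E'$-action on $X/E$ to an action on $Z/F$ making the composed quotient $W/E'\to Z/F$ equivariant, while the group-likeness of $E'$ is intrinsic to $E'$ and hence is preserved. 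Theorem~\ref{thm:main_abstract}(6), applied to $(G,Z,F)$ using the hypothesised tameness of $(G,Z)$, then yields $F\sim_B F|_{\hat G_Z}$.

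Two further applications of Fact~\ref{fct:borel_section} chain everything together. The map $\varphi\colon X\to Z$ is a continuous surjection between compact Polish spaces and, by the very definition of $E=F|_X$, a reduction of $E$ to $F$, whence $E\sim_B F$. Similarly, Proposition~\ref{prop:induced_epimorphism} together with Remark~\ref{rem:induced_hom_by_D} provides a continuous surjective homomorphism $\hat G_X\to\hat G_Z$ of compact Polish groups, which fits into a commutative square with the orbit maps $\hat G_X\to X/E$ and $\hat G_Z\to Z/F$ (identified via the bijection of the first paragraph) and is therefore a reduction of $E|_{\hat G_X}$ to $F|_{\hat G_Z}$; this gives $E|_{\hat G_X}\sim_B F|_{\hat G_Z}$. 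Concatenating $E\sim_B F\sim_B F|_{\hat G_Z}\sim_B E|_{\hat G_X}$ delivers the corollary.
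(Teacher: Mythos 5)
Your proof is correct and follows essentially the same route as the paper's: relating $E|_{\hat G}$ and $F|_{\hat G_Z}$ via the continuous surjective homomorphism $\hat G\to\hat G_Z$ supplied by Proposition~\ref{prop:induced_epimorphism} and Remark~\ref{rem:induced_hom_by_D}, applying Theorem~\ref{thm:main_abstract}(6) to $(G,Z,F)$ to obtain $F\sim_B F|_{\hat G_Z}$, and using $\varphi$ to get $E\sim_B F$, then chaining by Fact~\ref{fct:borel_section}. You go slightly further than the paper in explicitly verifying --- by transporting the dominating group-like relation $E'$ along the composed morphism $(W,w_0)\to(X,x_0)\to(Z,z_0)$ and using the bijection $X/E\to Z/F$ --- that $F$ inherits the weak group-like property from $E$, a step the paper leaves implicit when it invokes Theorem~\ref{thm:main_abstract} for $F$ on $Z$.
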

	\begin{proof}
		Note that by assumption, $Z$ is metrisable (cf.\ Fact~\ref{fct: preservation of metrizability}).
		
		Choose an Ellis group $u\cM\leq E(G,X)$ and take $D':=\Core(H(u\cM)D)$, so that $\hat G=u\cM/D'$ is as in Theorem~\ref{thm:main_abstract}. Denote by $\varphi$ the morphism $(X,x_0)\to (Z,z_0)$. Then by Proposition~\ref{prop:induced_epimorphism}, $\varphi_*[u\cM]=v\cN$ is an Ellis group in $E(G,Z)$, and if we take $\hat G_Z$ to be $v\cN/D'_Z$ (where $D'_Z=\Core(H(v\cN)D_Z)$ for the naturally defined $D_Z$), then by Remark~\ref{rem:induced_hom_by_D}, $\varphi_*$ induces a topological group quotient mapping $\hat G\to \hat G_Z$.
		
		This quotient map fits into the following commutative diagram.
		\begin{center}
			\begin{tikzcd}
				\hat G\ar[d, two heads]\ar[r, two heads] & X/E\ar[d, two heads] \\
				\hat G_Z\ar[r, two heads] & Z/F.
			\end{tikzcd}
		\end{center}
		The vertical arrows are the quotient map mentioned before and the bijection induced by $\varphi$ (by the assumption that $E=F|_X$). The horizontal arrows are given by $fD'\mapsto [f(x_0)]_E$ and $fD'_Z\mapsto [f(z_0)]_F$ (they are well-defined by Theorem~\ref{thm:main_abstract}). Since for every $f\in E(G,X)$ we have $\varphi_*(f)(z_0)=\varphi(f(x_0))$, the diagram commutes.
		
		It follows immediately that the quotient map $\hat G\to \hat G_Z$ is a continuous reduction of $F|_{\hat G}=E|_{\hat G}$ to $F|_{\hat G_Z}$. Thus, by Fact~\ref{fct:borel_section}, $E|_{\hat G}\sim_B F|_{\hat G_Z}\sim_B F$. But the morphism $(X,x_0)\to (Z,z_0)$ induces a reduction of $E=F|_X$ to $F$, so again by Fact~\ref{fct:borel_section}, $F\sim E$, so $E|_{\hat G}\sim_B E$.
	\end{proof}

	\begin{rem}
		\label{rem:strengthening}
		Similarly, one can show that if either $E$ is weakly closed group-like or $(G,X)$ is tame (or, more generally, it satisfies the assumptions of Corollary~\ref{cor:tame_dominated}), then in Theorem~\ref{thm:main_abstract}, $E|_{\hat G}\sim_B E$ (also in the first case) and $E$ is Borel or analytic if and only if $H$ is. Briefly, in the first case, if we take a closed group-like $F$ dominating $E$, then we can consider $E|_{Z/F}$ as a $Z/F$-invariant equivalence relation on $Z/F$, and then we can apply Proposition~\ref{prop:toy_main} with $G=X=Z/F$ and $E=E|_{Z/F}$, and conclude by successive applications of Fact~\ref{fct:borel_section} and Proposition~\ref{prop:preservation_properties}. In the second case, it can be shown via Remark~\ref{rem:tame_preservation} and Proposition~\ref{prop:preservation_properties}. \xqed{\lozenge}
	\end{rem}
	The following corollary is a part of Main~Theorem~\ref{mainthm:abstract_smt}.
	\begin{cor}
		\label{cor:metr_smt_cls}
		For $E$ as in Theorem~\ref{thm:main_abstract}, $E$ is smooth (according to Definition~\ref{dfn:smt}) if and only if $E$ is closed (as a subset of $X^2$).
		
		Moreover, exactly one of the following holds:
		\begin{enumerate}
			\item
			$E$ is clopen and has finitely many classes,
			\item
			$E$ is closed and has exactly $2^{\aleph_0}$ classes,
			\item
			$E$ is not closed and not smooth. In this case, if $E$ is analytic, then $E$ has exactly $2^{\aleph_0}$ classes.
		\end{enumerate}
	\end{cor}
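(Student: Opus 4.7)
The plan is to reduce the statement directly to Proposition~\ref{prop:trichotomy_for_groups} applied to the pair $H \leq \hat G$ provided by Theorem~\ref{thm:main_abstract}. Since $\hat G$ is a compact Polish group (by Corollary~\ref{cor:Polish_quotient_Core(D)}) and $X/E$ is homeomorphic to $\hat G/H$ (Theorem~\ref{thm:main_abstract}(2)), the trichotomy for $H \leq \hat G$ translates to a trichotomy for $E$ via the dictionary in Theorem~\ref{thm:main_abstract}(3)--(5).

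First I would verify the biconditional ``$E$ smooth $\iff$ $E$ closed.'' One direction is free: if $E$ is closed, then $X$ is a compact metric space (hence Polish), and Fact~\ref{fct:clsd_smth} gives that $E$ is smooth. For the converse, suppose $E$ is smooth. By Theorem~\ref{thm:main_abstract}(5) we have $E|_{\hat G} \leq_B E$, so $E|_{\hat G}$ (which is the left coset equivalence relation of $H$ in $\hat G$) is smooth. Applying Proposition~\ref{prop:trichotomy_for_groups}, smoothness forces $H$ to be closed in $\hat G$, and then Theorem~\ref{thm:main_abstract}(3) gives that $E$ is closed.

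For the trichotomy, I would run through the four cases of Proposition~\ref{prop:trichotomy_for_groups} and collate them to three. If $H$ is open, then it has finite index in $\hat G$ (by compactness), so $|X/E| = [\hat G:H]$ is finite and Theorem~\ref{thm:main_abstract}(3) gives case~(1). If $H$ is closed with $[\hat G:H] = 2^{\aleph_0}$, then Theorem~\ref{thm:main_abstract}(3) yields that $E$ is closed, and $|X/E| = 2^{\aleph_0}$ is given, which is case~(2). In the remaining two cases of Proposition~\ref{prop:trichotomy_for_groups} the quotient $\hat G/H$ is non-smooth; via $E|_{\hat G} \leq_B E$ this transfers to non-smoothness of $E$, and $H$ is not closed so Theorem~\ref{thm:main_abstract}(3) tells us $E$ is not closed, giving case~(3). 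Finally, in case~(3), if we additionally assume $E$ is analytic, then Theorem~\ref{thm:main_abstract}(4) implies $H$ is analytic and hence has the Baire property; this rules out the last branch of Proposition~\ref{prop:trichotomy_for_groups} and forces $[\hat G:H] = 2^{\aleph_0}$, whence $|X/E| = 2^{\aleph_0}$.

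There is no real obstacle here: all the substantive work has already been carried out in Theorem~\ref{thm:main_abstract} (construction of $\hat G$, the Borel reduction, the transfer of topological and descriptive-set-theoretic properties between $E$ and $H$) and in Proposition~\ref{prop:trichotomy_for_groups} (the group-theoretic trichotomy). The only thing to be slightly careful about is that the three listed alternatives are mutually exclusive and jointly exhaustive, which is immediate once one observes that they correspond under the above translation to the four cases of Proposition~\ref{prop:trichotomy_for_groups} (with the first two combining into case~(3) of the corollary).
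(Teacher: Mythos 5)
Your argument is correct and is essentially the paper's own, modulo one level of indirection: the paper routes Theorem~\ref{thm:main_abstract} through Lemma~\ref{lem:abstract_trich}, which is itself just Proposition~\ref{prop:trichotomy_for_groups} repackaged for this situation, while you apply Proposition~\ref{prop:trichotomy_for_groups} to $H\leq\hat G$ directly and redo the Lemma's case analysis inline. All the transfers you use (closed/clopen via Theorem~\ref{thm:main_abstract}(3), analyticity via (4), non-smoothness via (5) and the fact that $A\leq_B B$ with $A$ non-smooth forces $B$ non-smooth) match what the Lemma packages up, so there is no substantive difference.
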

	\begin{proof}
		Immediate by Theorem~\ref{thm:main_abstract} and Lemma~\ref{lem:abstract_trich}.
	\end{proof}

	\chapter{(Weakly) group-like equivalence relations in model theory and beyond}
	\chaptermark{Group-like equivalence relations in particular contexts}
	\label{chap:applications}
	In this chapter, we apply the main results of Chapter~\ref{chap:grouplike} in specific contexts, mostly model-theoretic. In particular, we prove Main~Theorems~\ref{mainthm_group_types}, \ref{mainthm:smt} and \ref{mainthm:nwg}.
	\section{Compact group actions}
	\begin{ex}
		\label{ex:cpct_glike}
		\begin{figure}[H]
			\begin{tikzcd}
				G \arrow[r]\arrow[dr]& EL=G\arrow[d,"R"]\arrow[dr,"r"]& \\
				\tilde G=G \arrow[r] & X=G \arrow[r]& G/N
			\end{tikzcd}
		\end{figure}
		Suppose $G$ is a compact Hausdorff group and consider $X=G$ with $G$ acting by left translations, with $x_0=e\in G$. Then for any normal $N\unlhd G$ the relation $E=E_N$ of lying in the same $N$-coset is properly group-like on $G$, with ${\equiv}$ on $\tilde G$ being just the equality relation.
		
		Pseudocompleteness means just that whenever $g_i\to x_1$, $h_i\to x_2$ and $g_ih_i\to x_3$ for some nets $(g_i)_i,(h_i)_i$ in $G$ and $x_0,x_1,x_2\in G$, then $x_1x_2=x_3$. But this is clear by joint continuity. Other axioms are easy to verify.
		
		Furthermore, $E_N$ is also uniformly properly group-like: indeed, we can take for $\mathcal E$ to be just the family of sets of the form $D_A:=\{(g,hg)\mid g\in G, h\in A \}$, where $A$ ranges over finite symmetric subsets of $N$ containing the identity. Each $D_A$ is closed (as the continuous image of $G\times A$ by multiplication, which is closed by compactness) and $(D_A)'=D_{A\cdot A}$ has the properties required by Definition~\ref{dfn:unif_prop_glike}.
		
		The conclusion of Lemma~\ref{lem:main_abstract_grouplike} is trivial in this case: the map $G\to G^G$ ($g\mapsto \pi_{G,g}$) is continuous and injective, and therefore an embedding into a closed subset, so $EL=G$, and the only idempotent is the identity in $G$, and thus $u\cM=G$, while $H(u\cM)=\{e\}$.
		
		Likewise, the conclusion of Corollary~\ref{cor:metr_smt_cls} reduces to Proposition~\ref{prop:trichotomy_for_groups}.
		\xqed{\lozenge}
	\end{ex}
	
	\begin{ex}
		Consider a transitive action of a compact Hausdorff group $G$ on a compact Hausdorff space $X$, and let $E$ be any $G$-invariant equivalence relation on $X$. Then $E$ is dominated by equality on $G$, which is clearly closed group-like (and also properly uniformly group-like), so $E$ is weakly closed group-like. By applying Theorem~\ref{thm:main_abstract}, we recover most of Proposition~\ref{prop:toy_main} (the rest can be essentially recovered via Remark~\ref{rem:strengthening}, although this reasoning is circular), and by applying Corollary~\ref{cor:metr_smt_cls}, we recover Corollary~\ref{cor:toy_trich}.
		\xqed{\lozenge}
	\end{ex}
	
	\section{Automorphism group actions}
	In this section, we will be looking at dynamical systems stemming from automorphism group actions, of the form $(\Aut(M),S_m(M),\tp(m/M))$ (and other similar ones). In this context, we will find the naturally occurring (weakly) group-like equivalence relations and then apply to them what we learned from Chapter~\ref{chap:grouplike}. Thus we will recover (with some improvements) the main results of the papers \cite{KPR15} (joint with Krzysztof Krupiński and Anand Pillay) and \cite{KR18} (joint with Krzysztof Krupiński).
	
	\subsection*{Lemmas}
	We intend to apply results of Chapter~\ref{chap:grouplike}. In the following lemmas, we will show that their hypotheses are satisfied in the case of actions of automorphism groups on certain type spaces.
	
	\begin{lem}[pseudocompleteness for automorphism groups]
		\label{lem:pseudocompleteness_for_aut}
		Suppose $M$ is a model and $a$ is an arbitrary tuple in $M$ (e.g.\ an enumeration of $M$), while $N\succeq M$ is an $\lvert M\rvert^+$-saturated and $\lvert M\rvert^+$-strongly homogeneous model (for example, $N=\fC$ and $M$ is small in $\fC$).
		
		Then whenever $(\sigma_i)_i$ and $(p_i)_i$ are nets in $\Aut(M)$ and $S_a(M)$ (respectively) such that $\tp(\sigma_i(a)/M)\to q_1$, $p_i\to q_2$ and $\sigma_i(p_i)\to q_3$ for some $q_1,q_2,q_3\in S_a(M)$, there are $\sigma'_1,\sigma'_2\in \Aut(N)$ such that $\tp(\sigma'_1(a)/M)=q_1$, $\tp(\sigma'_2(a)/M)=q_2$ and $\tp(\sigma'_1\sigma'_2(a)/M)=q_3$. (This is pseudocompleteness for $\tilde G=\Aut(N)$, $X=S_a(M)$ and the map $\tilde G\to S_a(M)$ given by $\sigma\mapsto \tp(\sigma(a)/M)$, see Definition~\ref{dfn:prop_glike}.)
	\end{lem}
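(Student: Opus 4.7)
The plan is to fix $b_2 \in N$ realising $q_2$ and then to produce realisations $b_1, b_3 \in N$ of $q_1, q_3$ such that the map $ab_2 \mapsto b_1 b_3$ is elementary; a strongly homogeneous extension of this map to $N$ will furnish $\sigma'_1$, while $\sigma'_2$ is obtained by extending the elementary map $a \mapsto b_2$ (which is elementary because $b_2 \models q_2 \in S_a(M)$). By saturation of $N$, a $b_2$ exists, and by strong $\lvert M\rvert^+$-homogeneity, both extensions to automorphisms of $N$ exist, provided the desired pair $(b_1,b_3)$ has been found. Moreover, the existence of $(b_1,b_3)$ reduces, by saturation of $N$, to showing that the partial type
\[
\Phi(x,y) \;=\; q_1(x)\,\cup\, q_3(y)\,\cup\,\bigl\{\psi(x,y)^{\varepsilon}:\varepsilon\in\{0,1\},\ \models\psi(a,b_2)^{\varepsilon}\bigr\}
\]
over $M \cup \{a,b_2\}$ is consistent.

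To verify this, I would use compactness and the three convergence hypotheses. Given a finite fragment $\Phi_0 \subseteq \Phi$, pick the finitely many formulas $\varphi_1(x,\bar m_1) \in q_1$, $\varphi_3(y,\bar m_3) \in q_3$, and $\psi_1,\dots,\psi_k$ (with chosen truth values $\varepsilon_j$) that appear. Since $\tp(\sigma_i(a)/M) \to q_1$, eventually $\models \varphi_1(\sigma_i(a),\bar m_1)$; since $p_i \to q_2$ and each $\psi_j(a,y)^{\varepsilon_j}$ belongs to $q_2$, for every realisation $c_i \models p_i$ we have eventually $\models \psi_j(a,c_i)^{\varepsilon_j}$; and since $\sigma_i(p_i) \to q_3$, $\sigma_i(p_i)$ eventually contains $\varphi_3(y,\bar m_3)$. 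Pick $i$ in the intersection of these eventual regions. By $\lvert M\rvert^+$-saturation of $N$ there is $c_i \in N$ with $\tp(c_i/M) = p_i$, and by strong $\lvert M\rvert^+$-homogeneity of $N$ the automorphism $\sigma_i$ of $M$ extends to some $\tilde\sigma_i \in \Aut(N)$. Then $\tilde\sigma_i(c_i)$ realises $\sigma_i(p_i)$, so $\models \varphi_3(\tilde\sigma_i(c_i),\bar m_3)$, while applying $\tilde\sigma_i$ to the elementary truths $\psi_j(a,c_i)^{\varepsilon_j}$ gives $\psi_j(\sigma_i(a),\tilde\sigma_i(c_i))^{\varepsilon_j}$. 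Thus the pair $(\sigma_i(a), \tilde\sigma_i(c_i))$ in $N$ realises $\Phi_0$, so $\Phi$ is consistent; saturation of $N$ then produces $b_1, b_3 \in N$ realising $\Phi$.

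Finally, from $(a,b_2) \equiv (b_1,b_3)$ in $N$, strong $\lvert M\rvert^+$-homogeneity yields $\sigma'_1 \in \Aut(N)$ with $\sigma'_1(a) = b_1$ and $\sigma'_1(b_2) = b_3$; from $a \equiv b_2$ it yields $\sigma'_2 \in \Aut(N)$ with $\sigma'_2(a) = b_2$. Then $\tp(\sigma'_1(a)/M) = \tp(b_1/M) = q_1$, $\tp(\sigma'_2(a)/M) = \tp(b_2/M) = q_2$, and $\tp(\sigma'_1\sigma'_2(a)/M) = \tp(\sigma'_1(b_2)/M) = \tp(b_3/M) = q_3$, as required.

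The main subtlety — and the step I expect to need the most care — is the bookkeeping around the action of $\sigma_i$ on types: one must keep straight that $\sigma_i$ only acts on $M$-parameters, that $\sigma_i(p_i) = \tp(\tilde\sigma_i(c_i)/M)$ for any extension $\tilde\sigma_i \in \Aut(N)$ and $c_i \models p_i$ in $N$, and that the truth values $\varepsilon_j = \psi_j(a,b_2)$ depend only on $q_2$ (since $a \subseteq M$ and $b_2 \models q_2$), which is what allows the finitary witnesses found along the net to transfer to the desired pair $(b_1,b_3)$.
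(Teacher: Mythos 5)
Your proof is correct and follows essentially the same strategy as the paper's: approximate the desired configuration along the net by the tuple $(\sigma_i(a),\tilde\sigma_i(c_i))$ for a suitable extension $\tilde\sigma_i\in\Aut(N)$ of $\sigma_i$ and realisation $c_i\models p_i$ in $N$, pass to a realisation of the limiting partial type by saturation, and conclude via strong homogeneity. The only cosmetic difference is that you fix $b_2\models q_2$ first and then run the compactness argument in the two variables $(b_1,b_3)$, while the paper runs one compactness argument in three variables to find $(a_1,a_2,a_3)$ all at once; the two formulations are interchangeable precisely because, as you observe at the end, $\tp(ab_2/\emptyset)$ depends only on $q_2$ (so your $\Phi$ is really a type over $M$, and the phrase ``over $M\cup\{a,b_2\}$'' is best read with $\psi$ ranging over $\emptyset$-formulas).
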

	\begin{proof}
		Let for each $i$, choose $b_i\in N$ such that $b_i\models p_i$ (so in particular, $b_i\equiv a$), and extend $\sigma_i$ to $\bar\sigma_i\in \Aut(N)$. Then by the assumptions, for every $\varphi_1(x),\varphi_2(x),\varphi_3(x)$ in $q_1,q_2$ and $q_3$ (resp.) we have, for sufficiently large $i$, $N\models \varphi_1(\bar \sigma_i(a))\land \varphi_2(b_i)\land \varphi_3(\bar\sigma_i(b_i))\land a\equiv b_i\land ab_i\equiv \bar\sigma_i(a)\bar\sigma_i(b_i)$. Now by compactness, we have $a_1,a_2,a_3\in N$ such that $N\models q_1(a_1)\land q_2(a_2)\land q_3(a_3)\land a\equiv a_2\land aa_2\equiv a_1a_3$. Any $\sigma'_1, \sigma'_2\in \Aut(N)$ such that $\sigma'_2(a)=a_2$, $\sigma'_1(aa_2)=a_1a_3$ satisfy the conclusion of the lemma.
	\end{proof}
	
	\begin{lem}
		\label{lem:F_0_for_aut}
		Suppose $A\subseteq \fC$ is small, enumerated by the tuple $a$, while $Y$ is type-definable over $A$.
		
		Then the set $F_0=\{\tp((\sigma'_1)^{-1}\sigma'_2(a)/A) \mid \sigma'_1,\sigma_2'\in \Aut(\fC)\land \sigma'_1(a)\equiv_A \sigma'_2(a)\in Y_A \}$ is closed in $S_a(A)$.
	\end{lem}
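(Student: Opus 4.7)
The strategy is to present $F_0$ as the image of a type-definable subset of a suitable type space under the continuous restriction-of-variables map, and then invoke the fact that continuous images of compact spaces in Hausdorff spaces are closed (Remark~\ref{rem: continuous surjection is closed}).

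First, I would reformulate the defining condition in terms of tuples rather than automorphisms. Writing $p=\tp(c/A)\in S_a(A)$ (so in particular $c\equiv a$), I claim that $p\in F_0$ if and only if there exist tuples $b_1,b_2$ in $\fC$ (of the length of $a$) satisfying (i) $b_1\equiv_A b_2$, (ii) $b_2\in Y$, and (iii) $(b_1,b_2)\equiv (a,c)$. For the forward direction, setting $b_i:=\sigma'_i(a)$ translates $(\sigma'_1)^{-1}\sigma'_2(a)=c$ into $\sigma'_1(a,c)=(b_1,b_2)$, which gives (iii); conditions (i) and (ii) are immediate. Conversely, given such $b_1,b_2$, strong $\kappa$-homogeneity of $\fC$ produces some $\sigma'_1\in \Aut(\fC)$ with $\sigma'_1(a,c)=(b_1,b_2)$; picking $\tau\in \Aut(\fC)$ with $\tau(a)=c$ (possible since $c\equiv a$) and setting $\sigma'_2:=\sigma'_1\tau$ gives $\sigma'_2(a)=\sigma'_1(c)=b_2$ and $(\sigma'_1)^{-1}\sigma'_2(a)=c$, as required.

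Next, let $x_1,x_2,y$ be tuples of variables of the length of $a$, and let $\pi(x_1,x_2,y)$ be the partial type over $A$ consisting of the schema $x_1\equiv_A x_2$, the partial type over $A$ defining $Y$ applied to $x_2$, and the schema expressing $(x_1,x_2)\equiv (a,y)$; note that the last condition, combined with the first, forces $y\equiv a$, so realisations of $y$ have type in $S_a(A)$. Let $\Pi\subseteq S_{x_1 x_2 y}(A)$ denote the closed set of complete types extending $\pi$, and let $\rho\colon S_{x_1 x_2 y}(A)\to S_a(A)$ be the continuous restriction map $\tp(b_1 b_2 c/A)\mapsto \tp(c/A)$. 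The reformulation of the previous paragraph says precisely that $F_0=\rho[\Pi]$; since $\Pi$ is closed in the compact space $S_{x_1 x_2 y}(A)$ and $S_a(A)$ is Hausdorff, $\rho[\Pi]$ is closed, which finishes the proof. The only mildly delicate point is the automorphism construction in the first step, but this is a routine application of strong homogeneity and I do not expect any real obstacle.
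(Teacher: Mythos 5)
Your proposal is correct and follows essentially the same route as the paper's proof: both reformulate membership in $F_0$ as the existence of auxiliary tuples $b_1, b_2$ (the paper calls them $a_1, a_2$) satisfying the type-definable conditions $b_1 \equiv_A b_2$, $b_2 \in Y$, and $(b_1, b_2) \equiv (a, c)$, and then conclude that $F_0$ is closed as the continuous image of a closed subset of a compact type space. The verifications of the two inclusions via strong homogeneity match the paper's argument step for step.
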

	\begin{proof}
		Let $F_1$ be the set of $\tp(b/A)$ such that $\fC\models(\exists a_1,a_2) aa_1\equiv aa_2\land a\equiv a_2\land ab\equiv a_1a_2\land a_1\in Y$. Clearly, $F_1$ is a closed subset of $S_a(A)$. We will show that $F_0=F_1$.
		
		For $\subseteq$, take some $p\in F_0$, as witnessed by $\sigma'_1,\sigma'_2\in \Aut(\fC)$ and take $b=(\sigma'_1)^{-1}\sigma'_2(a)$, $a_1=\sigma'_1(a)$ and $a_2=\sigma'_2(a)$. It is easy to check that they witness that $\tp(b/A)\in F_1$.
		
		For $\supseteq$, take some $p\in F_1$, witnessed by $b,a_1,a_2$. Take $\sigma_1',\sigma'_2\in \Aut(\fC)$, such that $\sigma'_1(ab)=a_1a_2$ and $\sigma'_2(a)=a_2$. It is easy to check that these $\sigma'_1,\sigma'_2$ witness that $p=\tp(b/A)\in F_0$.
	\end{proof}
	
	When reading the proof of Lemma~\ref{lem:lascar_grouplike}, it may be useful to compare the diagram below to the diagram in Definition~\ref{dfn:prop_glike}.
	\begin{center}
		\begin{tikzcd}
			\Aut(M) \ar[r] \ar[dr] & E(\Aut(M), S_m(M)) \ar[d,"R"]\ar[dr,"r"]\\
			\Aut(\fC)\ar[r] & S_m(M)\ar[r] & S_m(M)/{\equiv_\Lasc^M}=\Gal(T)
		\end{tikzcd}
	\end{center}
	\begin{lem}
		\label{lem:lascar_grouplike}
		Let $M$ be a small ambitious model (see Definition~\ref{dfn:ambitious_model}), enumerated by $m$. Consider the $\Aut(M)$-ambit $(\Aut(M),S_m(M),\tp(m/M))$. Then $E={\equiv_\Lasc^M}$ is a uniformly properly group-like equivalence relation on $S_m(M)$.
		
		More generally, if $G^Y\leq \Gal(T)$ is closed, and $M$ is ambitious relative to $G^Y$ (see Definition~\ref{dfn:ambitious_model}), then $(G^Y(M),Y'_M,\tp(m/M))$ (where $Y'$ is as there) is an ambit and ${\equiv_\Lasc^M}\restr_{Y'_M}$ is a uniformly properly group-like equivalence relation on it.
	\end{lem}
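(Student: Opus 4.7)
The plan is to verify, in turn, group-likeness, proper group-likeness, and finally the uniform refinement (Definitions~\ref{dfn:glike}, \ref{dfn:prop_glike}, \ref{dfn:unif_prop_glike}) for the ambit $(\Aut(M), S_m(M), \tp(m/M))$ with $E=\equiv_\Lasc^M$, and then adapt each step to the relative setting. For the witnessing group in Definition~\ref{dfn:prop_glike}, I would take $\tilde G = \Aut(\fC)$ with $\equiv$ given by $\sigma \equiv \sigma'$ iff $\tp(\sigma(m)/M) = \tp(\sigma'(m)/M)$, so that $\kappa$-saturation of $\fC$ gives $\tilde G/{\equiv} = S_m(M)$ as sets. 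The identification $S_m(M)/{\equiv_\Lasc^M}\cong \Gal(T)$ from Fact~\ref{fct:sm_to_gal} endows the quotient with a topological group structure by Fact~\ref{fct:gal_top}; under this identification the partial operation of Definition~\ref{dfn:glike} is just the group multiplication in $\Gal(T)$, and $\tilde r\colon \sigma \mapsto \sigma\Autf(\fC)$ is the canonical quotient homomorphism, so it is in particular a group homomorphism as required.

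Next I would dispatch the two remaining axioms of proper group-likeness. Pseudocompleteness is exactly the content of Lemma~\ref{lem:pseudocompleteness_for_aut} applied with $N = \fC$. Closedness of $F_0 = \{[\tilde g_1^{-1}\tilde g_2]_\equiv : \tilde g_1\equiv \tilde g_2\}$ in $S_m(M)$ is Lemma~\ref{lem:F_0_for_aut} applied with $A=M$, $a=m$, and $Y = \fC$.

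The uniform refinement is the step that takes actual work. I would take
\[
 D_n := \{(p,q)\in S_m(M)^2 : \exists a\models p,\, b\models q,\ d_\Lasc(a,b)\leq n\},
\]
which is closed by Fact~\ref{fct:distance_tdf} (the set $\pi_n(x,y)$ defining $d_\Lasc \leq n$ is type-definable, and $D_n$ is the image of its locus in $S_{xy}(M)$ under the continuous projection $S_{xy}(M) \to S_x(M)\times S_y(M)$ between compact Hausdorff spaces). Each $D_n$ is symmetric, contains the diagonal, and $\bigcup_n D_n = \equiv_\Lasc^M$. Given $D = D_n$, I would set $D'=D_{\max(2n+1,n+3)}$. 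The inclusion $D_n\circ D_n \subseteq D_{2n+1}$ follows from the triangle inequality for $d_\Lasc$ combined with the observation that two realisations over $M$ of the same type differ by an element of $\Aut(\fC/M)\subseteq \Autf(\fC)$, hence are at Lascar distance $\leq 1$. The shift-invariance --- if $(\tp(m/M),\tp(\sigma(m)/M))\in D_n$ then $(\tp(\tau(m)/M),\tp(\sigma\tau(m)/M))\in D_{n+3}$ for every $\tau\in \Aut(\fC)$ --- is the main obstacle, and here I would invoke Fact~\ref{fct:diameter_witnessed_by_model}: after absorbing the $\Autf(\fC)$-ambiguity into $\sigma$ to produce a representative $\sigma'$ of the same coset with $d_\Lasc(m,\sigma'(m))\leq n$ and Lascar diameter $\leq n+1$, I apply the diameter bound to the tuple $\tau(m)$, and patch in two additional Lascar-distance-$1$ steps coming from the $\Aut(\fC/M)$-moves that fix $M$ pointwise.

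For the more general statement I would replace $\tilde G$ by $\tilde G^Y := \mu^{-1}[G^Y]$ (where $\mu\colon \Aut(\fC)\to\Gal(T)$ is the quotient), note that $Y'_M$ is closed in $S_m(M)$ (as the preimage of the closed set $G^Y$), and check that the ambit property for $(G^Y(M),Y'_M,\tp(m/M))$ is precisely ambitiousness of $M$ relative to $G^Y$. Since $G^Y$ is a subgroup of $\Gal(T)$, it contains $\Autf(\fC)$ hence $Y'_M/E = G^Y$ as topological groups, and every step of the argument goes through restricted to $\tilde G^Y$ and $Y'_M$: the pseudocompleteness from Lemma~\ref{lem:pseudocompleteness_for_aut} produces $\sigma'_1,\sigma'_2\in\Aut(\fC)$ whose images land in $\tilde G^Y$ automatically because their types over $M$ lie in $Y'_M$; $F_0$ is closed by the closed-set version of the proof of Lemma~\ref{lem:F_0_for_aut} (replacing the first-order predicate ``$a_1\in Y$'' by the closed condition $\tp(a_1/M)\in Y'_M$ and projecting from the compact space $S_{ba_1a_2}(M)$); and the covering family is $\mathcal{E}^Y = \{D_n\cap (Y'_M)^2 : n\in\omega\}$, with the same bounds as before, since the only use of the ambient space in the two uniformity clauses is that the shifted pair $(\tau(m),\sigma\tau(m))$ still has both coordinates in $Y'_M$, which follows from $\tau\Autf(\fC), \sigma\tau\Autf(\fC) \in G^Y$.
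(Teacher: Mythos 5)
Your proposal is correct and follows essentially the same route as the paper's proof: same choice of $\tilde G$ (the preimage of $G^Y$ in $\Aut(\fC)$, or all of $\Aut(\fC)$ in the base case), same map $\sigma\mapsto\tp(\sigma(m)/M)$, same invocations of Lemma~\ref{lem:pseudocompleteness_for_aut} and Lemma~\ref{lem:F_0_for_aut}, and the same Lascar-distance sets $D_n=F_n$ for the uniform refinement. The only difference is cosmetic — the paper sets $D'=F_{2n+2}$ after computing the shift bound $d_\Lasc(\sigma)\leq n+2$, whereas you take $D'=D_{\max(2n+1,n+3)}$ with a slightly weaker (but still valid) shift estimate of $n+3$.
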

	\begin{proof}
		Notice that the ``base" case follows from the ``moreover" case simply by taking $G^Y=\Gal(T)$, so we will treat the second case.
		
		Note that $Y'$ is type-definable over $M$ immediately by Fact~\ref{fct: characterization of topology on Gal_L(T)}. The fact that $(G^Y(M),Y'_M,\tp(m/M))$ is an ambit follows immediately by definition of a relatively ambitious model
		
		Note that almost immediately by our assumptions, $Y'_M$ is the preimage of $G^Y$ by the quotient map $S_m(M)\to \Gal(T)$ from Fact~\ref{fct:sm_to_gal}, so we may identify $G^Y$ and $Y'_M/{\equiv_\Lasc^M}$, and ${\equiv_\Lasc^M}\restr_{Y'_M}$ is evidently group-like.
		
		Let $\tilde G=\{\sigma'\in \Aut(\fC)\mid \sigma'\Autf(\fC)\in G^Y \} $. Immediately by the definitions, $\tilde G$ maps onto $Y'_M$ via $\sigma'\mapsto \tp(\sigma'(m)/M)$, and the induced map $\tilde G\to G^Y$ is just $\sigma'\mapsto \sigma'\Autf(\fC)$, which is of course a homomorphism. By applying Lemma~\ref{lem:pseudocompleteness_for_aut}, we obtain pseudocompleteness (in the sense of Definition~\ref{dfn:prop_glike}) --- to see that, just notice that if $\sigma_i\in G^Y(M)$ and $p_i\in Y'_M$, then (since $Y'_M$ is closed), the $q_1,q_2,q_3$ as in Lemma~\ref{lem:pseudocompleteness_for_aut} are in $Y'_M$, so $\sigma_1',\sigma_2'\in \tilde G$.
		
		By Lemma~\ref{lem:F_0_for_aut} (applied to $Y=Y'$ and $A=M$), we conclude that ${\equiv_\Lasc^M}\restr_{Y'_M}$ is properly group-like.
		
		To see that it is uniformly properly group-like, let $\mathcal E=\{F_n\mid n\in\bN\}$, where $F_n$ is the set of pairs of types $p_1,p_2\in Y'_M$ such that there exist some $m_1,m_2$ satisfying $p_1$ and $p_2$ (respectively) and such that $d_\Lasc(m_1,m_2)\leq n$ (cf.\ Definition~\ref{dfn:Lascar distance}). Clearly, each $F_n$ is symmetric, reflexive and by Fact~\ref{fct:distance_tdf}, they are all closed in $S_m(M)^2$. We will show that $(F_n)':=F_{2n+2}$ has the properties postulated in Definition~\ref{dfn:unif_prop_glike}.
		
		Indeed, if $d_\Lasc(a,b_1)\leq n$ and $d_\Lasc(b_2,c)\leq n$ and $b_1\equiv_M b_2$, then $d_\Lasc(b_1,b_2)\leq 1$, so by triangle inequality $d_\Lasc(a,c)\leq 2n+1$, so $F_n\circ F_n\subseteq F_{2n+1}\subseteq F_{2n+2}$. On the other hand, and if $(\tp(m/M),\tp(\sigma(m)/M))\in F_n$, then there are some $\sigma_1,\sigma_2\in \Aut(\fC/M)$ such that $d_\Lasc(\sigma_1(m),\sigma_2\sigma(m))\leq n$, so $d_\Lasc(m,\sigma_1^{-1}\sigma_2\sigma(m))\leq n$, so by Fact~\ref{fct:diameter_witnessed_by_model}, $d_\Lasc(\sigma_1^{-1}\sigma_2\sigma(m))\leq n+1$, and hence (because $\sigma_1^{-1}\sigma_2$ fixes $M$ pointwise) $d_\Lasc(\sigma)\leq n+2$, i.e.\ for every $a$ we have $d_\Lasc(a,\sigma(a))\leq n+2$, in particular, for every $\sigma'\in \Aut(\fC)$ we have $(\tp(\sigma'(m)/M),\tp(\sigma\sigma'(m)/M))\in F_{n+2}\subseteq F_{2n+2}$. Finally, from Fact~\ref{fct:Lascar_equivalent}, it follows easily that ${\equiv_\Lasc^M}=\bigcup_n F_n$, which completes the proof.
	\end{proof}

	\begin{lem}
		\label{lem:lascar_dominates}
		Given any strong type $E$ on $p(\fC)$, where $p\in S(\emptyset)$, and a small model $M$ enumerated by $m$, such that some $a\subseteq m$ realises $p$, the relation $\equiv_\Lasc^M$ on $S_m(M)$ dominates the relation $E^M$ on $S_a(M)$ (via the restriction $S_m(M)\to S_a(M)$).
		
		More generally, if $Y$ is any $\equiv_\Lasc$-invariant, type-definable set containing $a$, such that $\Aut(\fC/\{Y\})$ acts transitively on $Y$, if $M$ is a small model enumerated by $m\supseteq a$, ambitious relative to $G^Y=\Aut(\fC/\{Y\})/\Autf(\fC)$ (cf.\ Definition~\ref{dfn:ambitious_model}), then $(G^Y(M),Y_M,\tp(a/M))$ is an ambit, and ${\equiv_\Lasc^M}\restr_{Y'_M}$ dominates $E^M\restr_{Y_M}$, where $Y'=\Aut(\fC/\{Y\})\cdot m$.
		
		In particular (by Lemma~\ref{lem:lascar_grouplike}), $E^M$ and $E^M\restr_{Y_M}$ are weakly uniformly properly group-like.
	\end{lem}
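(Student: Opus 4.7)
The first (particular) statement is the $Y = p(\fC)$ instance of the second one, since then $Y$ is $\emptyset$-type-definable, $\Aut(\fC/\{Y\}) = \Aut(\fC)$ acts transitively on realisations of $p$, $G^Y = \Gal(T)$, $Y' = [m]_\equiv$, $Y'_M = S_m(M)$ and $Y_M = S_a(M)$; the relevant ambitiousness relative to $G^Y$ reduces to ambitiousness, and if $M$ happens to not be ambitious one can replace it by an ambitious extension given by Proposition~\ref{prop:amb_exist} and pull the conclusion back via the restriction map. So I will focus on the general case.

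By Lemma~\ref{lem:lascar_grouplike}, $(G^Y(M), Y'_M, \tp(m/M))$ is already an ambit and $F := {\equiv_\Lasc^M}\restr_{Y'_M}$ is uniformly properly group-like. It therefore suffices to exhibit a $G^Y(M)$-ambit morphism $\rho \colon (Y'_M, \tp(m/M)) \to (Y_M, \tp(a/M))$ such that $F$ refines the pullback of $E^M\restr_{Y_M}$ and the induced map $Y'_M/F \to Y_M/{E^M\restr_{Y_M}}$ is equivariant with respect to some left action of $Y'_M/F$ on $Y_M/{E^M\restr_{Y_M}}$. The natural candidate for $\rho$ is the restriction map $S_m(M) \to S_a(M)$ cut down to $Y'_M$. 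Continuity and $G^Y(M)$-equivariance are immediate; to see that its image is $Y_M$ (and hence that $(G^Y(M), Y_M, \tp(a/M))$ is an ambit, because dense orbits map to dense orbits under continuous surjections), I would use the transitivity hypothesis: given $b \in Y$, pick $\sigma \in \Aut(\fC/\{Y\})$ with $\sigma(a) = b$; then $\sigma(m) \in Y'$ and $\rho(\tp(\sigma(m)/M)) = \tp(b/M)$.

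For refinement, if $p_1, p_2 \in Y'_M$ with $p_1 \mathrel{F} p_2$, pick $n_i \models p_i$: then $n_1 \equiv_\Lasc n_2$, so the $a$-subtuples satisfy $a_1 \equiv_\Lasc a_2$, and since $E$ is a bounded invariant equivalence relation it is refined by $\equiv_\Lasc$ (Fact~\ref{fct:lascar_finest}), giving $\rho(p_1) \mathrel{E^M} \rho(p_2)$. For equivariance, I would identify $Y'_M/F$ with $G^Y \leq \Gal(T)$ via Fact~\ref{fct:sm_to_gal}; the induced map then becomes $\sigma\Autf(\fC) \mapsto [\sigma(a)]_{E^M}$, which is exactly the restriction to $G^Y$ of the orbit map of $[a]_E$ under the natural $\Gal(T)$-action on $Y/E$ from Proposition~\ref{prop:gal_action}. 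Equivariance under that action is then automatic, and domination is established.

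The ``in particular'' statement follows immediately from the definition of weak uniform proper group-likeness (Definition~\ref{dfn:wkglike}) applied to the domination just shown. The whole argument is an axiom-by-axiom verification; the only point requiring some care is matching the induced quotient map $Y'_M/F \to Y_M/{E^M\restr_{Y_M}}$ with the $\Gal(T)$-orbit map from Proposition~\ref{prop:gal_action}, which is what ultimately forces the needed equivariance. The real conceptual work, namely singling out ${\equiv_\Lasc^M}\restr_{Y'_M}$ as the dominating relation and proving its proper group-likeness, was already carried out in Lemma~\ref{lem:lascar_grouplike}.
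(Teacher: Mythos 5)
Your proposal is correct and follows essentially the same route as the paper: reduce everything to Lemma~\ref{lem:lascar_grouplike}, use the restriction map $S_m(M)\to S_a(M)$ as the ambit morphism, observe that subtuples of $\equiv_\Lasc$-equivalent tuples are $\equiv_\Lasc$-equivalent to obtain the refinement, and invoke the $\Gal(T)$-action from Proposition~\ref{prop:gal_action} (restricted to $G^Y$) for the equivariance. The only difference is organizational: you prove the general (second) statement first and derive the first as the case $Y=p(\fC)$, whereas the paper proves the first directly and then notes the second is analogous; both are fine, though your remark about passing to an ambitious extension of $M$ in the first case is somewhat beside the point, since the lemma's applications always take $M$ to be (relatively) ambitious anyway, which is the standing assumption under which the domination statement is meaningful (as the domain must be an ambit for ``dominates'' to apply).
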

	\begin{proof}
		For the first part, just note that the relation $\equiv_\Lasc^M$ on $S_a(M)$ is a refinement of $E^M$, and $\equiv_\Lasc^M$ on $S_m(M)$ dominates $\equiv_\Lasc^M$ on $S_a(M)$ via the restriction mapping $S_m(M)\to S_a(M)$, which follows from the trivial observation that if two tuples are $\equiv_\Lasc$-equivalent, then their subtuples (chosen from corresponding coordinates) are also $\equiv_\Lasc$-equivalent. Since $\Gal(T)=S_m(M)/{\equiv_\Lasc^M}$ acts on $p(\fC)/E$ (and the map $S_m(M)/{\equiv_\Lasc^M}\to S_a(M)/E^M$ is equivariant), this shows that $\equiv_\Lasc^M$ dominates $E^M$.
		
		For the second part, apply Fact~\ref{fct: characterization of topology on Gal_L(T)} and Lemma~\ref{lem:lascar_grouplike} to conclude that $(G^Y(M),Y'_M,\tp(m/M))$ is an ambit and ${\equiv_\Lasc^M}\restr_{Y'_M}$ is uniformly properly group-like. Now, note that the restriction map $S_m(M)\to S_a(M)$, induces an ambit morphism $(\Aut(M),Y'_M,\tp(m/M))\to (\Aut(M),Y_M,\tp(a/M))$. The domination of $E\restr_{Y_M}$ by ${\equiv_\Lasc^M}\restr_{Y'_M}$ follows the same as in the preceding paragraph.
	\end{proof}
	
	\subsection*{Results for automorphism groups}
	\begin{rem}
		\label{rem:wglike_closedness_to_typdef}
		Immediately by Lemma~\ref{lem:lascar_dominates}, we can use Proposition~\ref{prop:top_props_of_wglike} to recover Proposition~\ref{prop:type-definability_of_relations}. We also deduce that if $p\in S(\emptyset)$ and $E$ is an arbitrary strong type on $X=p(\fC)$, then $X/E$ is an $R_1$ space (which was \cite[Proposition 1.12]{KPR15}, but only for $R_0$).\xqed{\lozenge}
	\end{rem}

	The following theorem is Main~Theorem~\ref{mainthm:nwg} and it generalises Fact~\ref{fct:newelski}. It appeared as \cite[Theorem 5.1]{KPR15} (joint with Krzysztof Krupiński and Anand Pillay). Here, we deduce it from the abstract Theorems~\ref{thm:general_cardinality_intransitive} and \ref{thm:general_cardinality_transitive}. Note that --- in contrast to Theorem~\ref{thm:main_aut} below --- we do not require the language to be countable.
	\begin{thm}
		\label{thm:nwg}
		Suppose $E$ is an analytic strong type defined on $X=p(\fC)$ for some $p\in S(\emptyset)$, and $Y\subseteq X$ is type-definable and $E$-saturated. Suppose $\lvert Y/E\rvert<2^{\aleph_0}$.
		
		Then $E$ is type-definable (note that by Remark~\ref{rem:tdf_iff_restr}, this is equivalent to $E\restr_Y$ being type-definable), and if, in addition, $\Aut(\fC/\{Y\})$ acts transitively on $Y/E$, then $E\restr_Y$ is relatively-definable (as a subset of $Y^2$).
	\end{thm}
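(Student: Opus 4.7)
The plan is to prove the two conclusions separately, reducing each to a result already established, and using the weakly uniformly properly group-like structure on certain type spaces provided by Lemma~\ref{lem:lascar_dominates}.

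\medskip

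\emph{First conclusion (type-definability of $E$).} I will fix a small ambitious model $M$ (existing by Proposition~\ref{prop:amb_exist}) which contains the parameters over which $Y$ is type-definable, enumerated by $m$ with some subtuple $a \subseteq m$ realising $p$. Then by Lemma~\ref{lem:lascar_dominates}, the relation $\equiv_\Lasc^M$ on $S_m(M)$ dominates $E^M$ on $S_a(M)$ via the restriction map, making $E^M$ weakly uniformly properly group-like on the ambit $(\Aut(M), S_a(M), \tp(a/M))$. Since $E$ is analytic, so is $E^M$ by Fact~\ref{fct: Borel in various senses}. Moreover, $Y_M \subseteq S_a(M)$ is closed and $E^M$-saturated ($Y$ being type-definable over $M$ and $E$-saturated), and $|Y_M/E^M| = |Y/E| < 2^{\aleph_0}$ by Fact~\ref{fct:logic_by_type_space}. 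Applying Theorem~\ref{thm:general_cardinality_intransitive} therefore forces $E^M$ to be closed, and hence $E$ to be type-definable (Fact~\ref{fct: Borel in various senses}).

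\medskip

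\emph{Second conclusion ($E\restr_Y$ relatively definable).} The plan here is to exploit the extra Galois-theoretic structure made available by the first conclusion. Since $E$ is type-definable and $\emptyset$-invariant, it is in fact $\emptyset$-type-definable and therefore refined by $\equiv_\KP$; consequently, by Proposition~\ref{prop:gal_action}, the Kim--Pillay Galois group $\Gal_\KP(T)$ acts continuously on $p(\fC)/E$. Let $\hat H \le \Gal_\KP(T)$ be the image of $\Aut(\fC/\{Y\})$. I claim $\hat H$ is closed: by Proposition~\ref{prop:lem_closed}, the image $G^Y$ of $\Aut(\fC/\{Y\})$ in $\Gal(T)$ is closed, and the natural epimorphism $\Gal(T) \twoheadrightarrow \Gal_\KP(T)$ is a closed map since $\Gal(T)$ is compact and $\Gal_\KP(T)$ is Hausdorff (Fact~\ref{fct:gal_top}, Remark~\ref{rem: continuous surjection is closed}). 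Thus $\hat H$ is a compact Hausdorff group acting continuously and, by hypothesis, transitively on the Hausdorff space $Y/E$. Fixing any $a \in Y$, the orbit map induces a homeomorphism $Y/E \cong \hat H/\Stab_{\hat H}([a]_E)$, with $\Stab_{\hat H}([a]_E)$ a closed (hence Baire) subgroup of index $|Y/E| < 2^{\aleph_0}$.

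\medskip

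By Fact~\ref{fct:from_mycielski} applied to $\hat H$, the stabiliser must be open, so $Y/E$ is finite; being also Hausdorff, it is discrete, so $E\restr_Y$ is clopen in $Y^2$, hence relatively definable (Fact~\ref{fct: Borel in various senses}). The main conceptual step is recognising that, \emph{after} type-definability has been secured, the second statement admits a direct argument through the compact Hausdorff group $\Gal_\KP(T)$, bypassing a second invocation of the abstract dichotomy; an alternative route would pass through Theorem~\ref{thm:general_cardinality_transitive} applied to the pullback of $E^M$ to the relatively ambitious ambit $(G^Y(M), Y'_M, \tp(m/M))$ of Lemma~\ref{lem:lascar_grouplike}, but this costs more bookkeeping and in the end rests on the same totally-non-meagre topological-group input (Fact~\ref{fct:from_mycielski}).
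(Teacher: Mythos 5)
Your first conclusion follows the paper's proof exactly: fix an ambitious model $M$ containing the parameters of $Y$, invoke Lemma~\ref{lem:lascar_dominates} to see $E^M$ is weakly uniformly properly group-like on $(\Aut(M),S_a(M),\tp(a/M))$, and apply Theorem~\ref{thm:general_cardinality_intransitive} to the closed $E^M$-saturated $Y_M$. Your second conclusion, however, takes a genuinely different route, and it is a correct and rather elegant one. The paper at this point chooses a fresh model ambitious \emph{relative} to $G^Y=\Aut(\fC/\{Y\})/\Autf(\fC)$, observes $G^Y$ is closed by Fact~\ref{fct: characterization of topology on Gal_L(T)}, and reruns the abstract dichotomy Theorem~\ref{thm:general_cardinality_transitive} on the restricted ambit. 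You instead exploit the first conclusion: once $E$ is known to be type-definable (hence $\emptyset$-type-definable by invariance, hence refined by $\equiv_\KP$), the action factors through the honest compact Hausdorff group $\Gal_\KP(T)$ via Proposition~\ref{prop:gal_action}, and the whole Ellis-group apparatus drops out. Your argument is then essentially that of Section~\ref{section: relations coarser than the Kim-Pillay strong type}: the closed subgroup $\hat H \le \Gal_\KP(T)$ (its closedness following from Proposition~\ref{prop:lem_closed} and the fact that $\Gal(T)\to\Gal_\KP(T)$ is closed) acts with continuous orbit map onto the Hausdorff space $Y/E$, the stabiliser $\Stab_{\hat H}([a]_E)$ is closed with index $<2^{\aleph_0}$, and Fact~\ref{fct:from_mycielski} forces it to be open, whence $Y/E$ is finite and discrete, hence $E^M\restr_{Y_M}$ is clopen and $E\restr_Y$ is relatively definable. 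What your route buys is economy: it reveals that the second conclusion is not a second, independent application of the difficult abstract machinery, but a corollary of the first conclusion plus elementary compact-Hausdorff-group facts. What the paper's route buys is uniformity of method (one framework handles both clauses) and the fact that it does not depend logically on having already secured the first clause. Both, as you note, bottom out in the same Pettis/Mycielski input.

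One small point of care you glossed over: the fact that the action of $\Aut(\fC/\{Y\})$ on $Y/E$ genuinely factors through $\hat H\le\Gal_\KP(T)$ uses that $\Autf_\KP(\fC)$ acts trivially on $Y/E$ (because $E$ is coarser than $\equiv_\KP$), so that transitivity of $\Aut(\fC/\{Y\})$ on $Y/E$ really does transfer to transitivity of $\hat H$. Similarly, the identification $Y/E \cong \hat H/\Stab_{\hat H}([a]_E)$ is a homeomorphism because the orbit map $\hat H\to Y/E$ is a continuous surjection from a compact space to a Hausdorff space (cf.\ Remark~\ref{rem: continuous surjection is closed}); ``acting continuously'' in the sense of joint continuity is neither needed nor asserted by Proposition~\ref{prop:gal_action}. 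These are easy to fill, but worth stating.
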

	\begin{proof}
		Recall from Fact~\ref{fct: Borel in various senses} that $E\restr_Y$ is relatively definable or type-definable if and only if $E^M\restr_{Y_M}$ is clopen or closed (respectively) for some (equivalently, every) small model $M$.
		
		Now, if $M$ is any ambitious model (which exists by Proposition~\ref{prop:amb_exist}), then we can just apply Lemma~\ref{lem:lascar_dominates} and then Theorem~\ref{thm:general_cardinality_intransitive} to conclude that if $Y/E=Y_M/{E^M}$ has cardinality less than $2^{\aleph_0}$, then $E^M\restr_{Y_M}$ is closed, so $E$ is type-definable.
		
		If $\Aut(\fC/\{Y\})$ acts transitively on $Y/E$, then again by Proposition~\ref{prop:amb_exist}, we can choose $M$ to be ambitious relative to $G^Y=\Aut(\fC/\{Y\})/\Autf(\fC)$. Since $Y$ is type-definable, it follows from Fact~\ref{fct: characterization of topology on Gal_L(T)} that $G^Y$ is closed. Thus, we can just apply Lemma~\ref{lem:lascar_dominates} followed by Theorem~\ref{thm:general_cardinality_transitive}.
	\end{proof}

	The following is one of the main results of the thesis, and is a part of Main~Theorem~\ref{mainthm_group_types}. Most of it is \cite[Theorem 8.1]{KR18} (joint with Krzysztof Krupiński). Compared to it, we relax the global NIP assumption for the ``furthermore'' part: we assume only that $Y$ has NIP.
	\begin{thm}
		\label{thm:main_aut}
		Suppose that the theory is countable.
		
		Suppose $E$ is a strong type defined on a type-definable set $X$ (in a countable product of sorts), and $Y\subseteq X$ is type-definable, $E$-saturated and such that $\Aut(\fC/\{Y\})$ acts transitively on $Y$ (e.g.\ $Y=[a]_{\equiv}$ or $Y=[a]_{\equiv_\KP}$ for some tuple $a$). Choose $a\in Y$.
		
		Then there is a compact Polish group $\hat G^Y$ acting continuously on $Y/E$, and such that the stabiliser $H$ of $[a]_E$, and the orbit map $\hat r_Y\colon \hat G^Y\to Y/E$, $\hat g\mapsto \hat g\cdot [a]_E$ have the following properties:
		\begin{enumerate}
			\item
			$H\leq \hat G^Y$ and fibres of $\hat r_Y$ are exactly the left cosets of $H$ (so $\hat G^Y/E|_{\hat G^Y}=\hat G^Y/H$),
			\item
			$\hat r_Y$ is a topological quotient map (so it induces a homeomorphism of $\hat G^Y/H$ and $X/E$),
			\item
			$E\restr_Y$ is relatively definable or type-definable if and only if $H$ is clopen or closed (respectively)
			\item
			if $E$ is $F_\sigma$, Borel, or analytic (respectively), then so is $H$,
			\item
			$\hat G^Y/H\leq_B E$.
		\end{enumerate}
		Furthermore, if $Y$ has NIP (in particular, if $T$ has NIP), then $\hat G^Y/H\sim_B E$.
	\end{thm}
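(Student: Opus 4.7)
The plan is to reduce this to the abstract Theorem~\ref{thm:main_abstract} via the weak proper group-likeness of $\equiv_\Lasc^M$-type restrictions established in Lemmas~\ref{lem:lascar_grouplike} and \ref{lem:lascar_dominates}. First I would note that since $E$ is a strong type and $Y$ is $E$-saturated, $Y$ is automatically $\equiv_\Lasc$-saturated, so by Proposition~\ref{prop:lem_closed} the subgroup $G^Y:=\Aut(\fC/\{Y\})/\Autf(\fC)\leq \Gal(T)$ is closed. By Proposition~\ref{prop:amb_exist}, I can find a countable model $M\supseteq a$ enumerated by $m\supseteq a$ which is ambitious relative to $G^Y$. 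Then Lemma~\ref{lem:lascar_dominates} shows that $(G^Y(M),Y_M,\tp(a/M))$ is an ambit and that $E^M\restr_{Y_M}$ on it is weakly uniformly properly group-like (dominated by $\equiv_\Lasc^M\restr_{Y'_M}$). Since $T$ is countable and $X$ lives in a countable product of sorts, $Y_M$ is a compact Polish space.

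Next I would apply Theorem~\ref{thm:main_abstract} to the ambit $(G^Y(M),Y_M,\tp(a/M))$ and the equivalence relation $E^M\restr_{Y_M}$. Setting $\hat G^Y:=u\cM/\Core(H(u\cM)D)$ for an Ellis group $u\cM$ of $E(G^Y(M),Y_M)$, this provides a compact Polish group acting continuously and transitively on $Y_M/E^M$, which we identify with $Y/E$ via Fact~\ref{fct:logic_by_type_space}. Items (1), (2) of our theorem come directly from items (1), (2) of Theorem~\ref{thm:main_abstract}. Items (3) and (4) follow by combining Theorem~\ref{thm:main_abstract}(3)(4) with Fact~\ref{fct: Borel in various senses}, which translates clopen/closed/$F_\sigma$/Borel/analytic properties of $E^M\restr_{Y_M}$ into the corresponding relative-definability/type-definability/$F_\sigma$/Borel/analytic properties of $E\restr_Y$ (and then of $E$, for the last three; relative definability and type-definability of $E\restr_Y$ transfer to $E$ via Remark~\ref{rem:tdf_iff_restr} when needed). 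Item (5) is immediate from Theorem~\ref{thm:main_abstract}(5), combined with Definition~\ref{dfn:bier_borelcard} identifying the Borel cardinality of $E\restr_Y$ with that of $E^M\restr_{Y_M}$.

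For the ``furthermore'' clause under the NIP hypothesis on $Y$, I would invoke Theorem~\ref{thm:main_abstract}(6), which demands tameness of the underlying dynamical system. By Corollary~\ref{cor:NIP_implies_tame}, $Y$ NIP implies that $(\Aut(M/\{Y\}),Y_M)$ is tame. I would then verify that $G^Y(M)\leq \Aut(M/\{Y\})$: by definition, any $\sigma\in G^Y(M)$ extends to some $\bar\sigma\in \Aut(\fC/\{Y\})$, so $\bar\sigma$ fixes $Y$ setwise, and hence $\sigma$ fixes $Y_M$ setwise, meaning $\sigma\in\Aut(M/\{Y\})$. By the first bullet of Fact~\ref{fct:tame_preserved}, $(G^Y(M),Y_M)$ is then also tame, so Theorem~\ref{thm:main_abstract}(6) yields $E|_{\hat G^Y}\sim_B E^M\restr_{Y_M}\sim_B E\restr_Y$, which is the desired Borel equivalence (and $E|_{\hat G^Y}\sim_B \hat G^Y/H$ since $H$ is the stabiliser of $[a]_E$).

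The main obstacle, in my view, is essentially bookkeeping rather than conceptual: one must carefully track the identifications between (i) the ambit space $Y_M$ and the quotient $Y/E$, (ii) the abstract relation $E|_{\hat G^Y}$ arising in Theorem~\ref{thm:main_abstract} and the coset equivalence of $H$, and (iii) the various notions of Borel cardinality (that of $E$ on the full model vs.\ that of $E\restr_Y$). The genuinely subtle point is the relaxation of the NIP hypothesis from the whole theory to the set $Y$: this requires the relative tameness statement of Corollary~\ref{cor:NIP_implies_tame} (for type-definable NIP sets, not just NIP theories) together with the observation $G^Y(M)\leq \Aut(M/\{Y\})$, which is what makes Fact~\ref{fct:tame_preserved} applicable in the restricted setting and thereby allows the last clause of Theorem~\ref{thm:main_abstract} to be invoked.
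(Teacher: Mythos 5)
Your proposal is correct and takes essentially the same route as the paper's proof: find a countable model ambitious relative to $G^Y$, invoke Lemma~\ref{lem:lascar_dominates} to obtain weak uniform proper group-likeness of $E^M\restr_{Y_M}$, apply Theorem~\ref{thm:main_abstract}, and translate via Fact~\ref{fct: Borel in various senses} and Definition~\ref{dfn:bier_borelcard}. You actually fill in two details the paper elides: (i) that $G^Y$ is closed (you use Proposition~\ref{prop:lem_closed}; the paper delegates this to Fact~\ref{fct: characterization of topology on Gal_L(T)} inside Lemma~\ref{lem:lascar_dominates}), and (ii) the justification that $(G^Y(M),Y_M)$ is tame under ``$Y$ NIP'' --- Corollary~\ref{cor:NIP_implies_tame} literally gives tameness of $(\Aut(M/\{Y\}),Y_M)$, and your observation that $G^Y(M)\leq\Aut(M/\{Y\})$ together with Fact~\ref{fct:tame_preserved} (restriction to subgroups) is exactly the missing glue; the paper simply asserts tameness of $(G^Y(M),Y_M)$ directly.

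One small blemish: the parenthetical invoking Remark~\ref{rem:tdf_iff_restr} is both misplaced and unnecessary. That remark is stated only for $Y\subseteq[a]_{\equiv}$, which is not assumed here, and in any case item (3) of the theorem concerns $E\restr_Y$ (not $E$), while for item (4) you only need the forward direction $E\ F_\sigma/\text{Borel}/\text{analytic}\ \Rightarrow\ E\restr_Y$ same --- which follows from type-definability of $Y$ and Fact~\ref{fct: Borel in various senses} alone, with no transfer from $E\restr_Y$ back to $E$ required. None of this affects the validity of the rest of the argument.
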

	\begin{proof}
		By Proposition~\ref{prop:amb_exist}, we can find some $M$ which is ambitious relative to $G^Y=\Aut(\fC/\{Y\})/\Autf(\fC)$. Then by Lemma~\ref{lem:lascar_dominates}, $E^M\restr_{Y_M}$ is weakly uniformly properly group-like (in the ambit $(G^Y(M),Y_M,\tp(a/M))$), so Theorem~\ref{thm:main_abstract} applies to $E^M\restr_{Y_M}$.
		
		Now, if $Y$ has NIP, then by Corollary~\ref{cor:NIP_implies_tame}, $(G^Y(M), Y_M)$ is tame, so in particular,  so we also have (6) of Theorem~\ref{thm:main_abstract}.
		
		To complete the proof, just note that:
		\begin{itemize}
			\item
			by definition, the Borel cardinality of $E\restr_Y$ is the Borel cardinality of $E^M\restr_{Y_M}$,
			\item
			we identify $Y/E$ and $Y_M/{E^M}$ (via the natural homeomorphism),
			\item
			by Fact~\ref{fct: Borel in various senses}, $E\restr_Y$ is relatively definable, type-definable, $F_\sigma$, Borel, analytic if and only if $E^M\restr_{Y_M}$ is clopen, closed, $F_\sigma$, Borel or analytic (respectively).
		\end{itemize}
		These observations allow us to translate the conclusion of Theorem~\ref{thm:main_abstract} into the conclusion of the theorem, and thus we are done.
	\end{proof}
	
	\begin{rem}
		By referring back to the statement of Theorem~\ref{thm:main_abstract}, we can see that in Theorem~\ref{thm:main_aut}, the group $\hat G^Y$ is $u\cM/\Core(H(u\cM)D)$ for the ambit $(G(M),Y_M,y_0)$, and the action is induced by the action of $E(G(M),Y_M)$ on $Y_M$.\xqed{\lozenge}
	\end{rem}
	
	\begin{prop}
		\label{prop:orbital_in_main_aut}
		In Theorem~\ref{thm:main_aut}, if the stabiliser of $[a]_E$ is a normal subgroup of $\Aut(\fC/\{Y\})$, then $H$ is a normal subgroup of $\hat G^Y$, and $\hat r_Y$ is a topological group quotient mapping onto then $Y/E$ (equipped with the group structure obtained by identification with $\Aut(\fC/\{Y\})/\Stab_{\Aut(\fC/\{Y\})}\{[a]_E\}$).
	\end{prop}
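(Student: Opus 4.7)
The plan is to reduce the claim to Proposition~\ref{prop:wgl_homom} applied in the ambit $(G^Y(M),Y_M,\tp(a/M))$ used in the proof of Theorem~\ref{thm:main_aut}, and then transfer the homomorphism conclusion to $\hat G^Y$ via Remark~\ref{rem:homom_factor}. Write $H_0:=\Stab_{\Aut(\fC/\{Y\})}([a]_E)$ throughout.

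First I would record the basic orbital structure. Since $E$ is a strong type, ${\equiv_\Lasc}\subseteq E$, so $Y$ is $\equiv_\Lasc$-saturated and $\Autf(\fC)\leq H_0\leq\Aut(\fC/\{Y\})$. Combined with the transitivity of $\Aut(\fC/\{Y\})$ on $Y$, normality of $H_0$ yields $\Stab_{\Aut(\fC/\{Y\})}([b]_E)=H_0$ for every $b\in Y$, so $E\restr_Y$ is the orbit equivalence relation of $H_0$ on $Y$, and $\sigma H_0\mapsto [\sigma(a)]_E$ is a bijection $\Aut(\fC/\{Y\})/H_0\to Y/E$. Passing modulo $\Autf(\fC)$ identifies $Y/E$ with $G^Y/(H_0/\Autf(\fC))$. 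Since $G^Y$ is a closed subgroup of $\Gal(T)$ by Proposition~\ref{prop:lem_closed} (hence compact by Fact~\ref{fct:gal_top}) and $H_0/\Autf(\fC)$ is normal in $G^Y$, this quotient is a topological group, and an argument analogous to Proposition~\ref{prop:gal_action} shows that the quotient topology coincides with the logic topology on $Y/E$.

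Next I would verify that $E^M\restr_{Y_M}$ is group-like in the ambit $(G^Y(M),Y_M,\tp(a/M))$ with the group structure above transported to $Y_M/E^M=Y/E$. If $[\sigma_1\tp(a/M)]_{E^M}=[\sigma_2\tp(a/M)]_{E^M}$, then any lifts $\bar\sigma_i\in\Aut(\fC/\{Y\})$ of $\sigma_i$ satisfy $\bar\sigma_1^{-1}\bar\sigma_2\in H_0$, so by normality $(\bar\sigma_1\bar\tau)^{-1}(\bar\sigma_2\bar\tau)\in H_0$ for every $\bar\tau\in\Aut(\fC/\{Y\})$; this shows that $[\sigma_1 p]_{E^M}=[\sigma_2 p]_{E^M}$ for every $p\in Y_M$, so the partial operation $[\sigma\tp(a/M)]_{E^M}\cdot[p]_{E^M}:=[\sigma p]_{E^M}$ is well-defined and coincides with left multiplication in $\Aut(\fC/\{Y\})/H_0$. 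Continuity of this operation is inherited from the topological group structure on $Y/E$ established above.

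By Lemma~\ref{lem:lascar_dominates}, $E^M\restr_{Y_M}$ is dominated by the uniformly properly group-like relation ${\equiv_\Lasc^M}\restr_{Y'_M}$ from Lemma~\ref{lem:lascar_grouplike}. Under the identifications $Y'_M/{\equiv_\Lasc^M}=G^Y$ (by Fact~\ref{fct:sm_to_gal}) and $Y_M/E^M=G^Y/(H_0/\Autf(\fC))$, the induced map is the canonical projection, which is a group epimorphism. Proposition~\ref{prop:wgl_homom} now yields that $r\colon E(G^Y(M),Y_M)\to Y_M/E^M$ is a semigroup homomorphism. Invoking Lemma~\ref{lem:weakly_grouplike}(3), Lemma~\ref{lem:factorisation_lemma} and Remark~\ref{rem:homom_factor}, $\hat r_Y\colon\hat G^Y\to Y/E$ is a topological group quotient map, so $H=\ker\hat r_Y$ is normal in $\hat G^Y$; the resulting group structure on $Y/E$ coincides with the intended one, since both realise the same left action of $G^Y(M)$. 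The main technical hurdle is the continuity verification in the second paragraph, which boils down to matching the logic topology on $Y/E$ with the quotient topology from $G^Y$ and ultimately rests on the general fact that a normal subgroup of a topological group gives a topological group quotient.
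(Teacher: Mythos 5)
Your proposal is correct and follows essentially the same route as the paper: establish that $G^Y\to Y/E$ is a group homomorphism (with the group structure on $Y/E$ coming from $\Aut(\fC/\{Y\})/H_0$), verify that this makes $E^M\restr_{Y_M}$ itself group-like so that Proposition~\ref{prop:wgl_homom} applies, and then push the homomorphism conclusion through to $\hat G^Y$ via Remark~\ref{rem:homom_factor}/Theorem~\ref{thm:main_abstract}(7). The paper compresses your first two paragraphs into "It is not hard to see that under the hypotheses, the map $G^Y\to Y/E$ ... is a homomorphism," whereas you spell out the verification that $E\restr_Y$ is the $H_0$-orbit relation, the identification $Y/E\cong G^Y/(H_0/\Autf(\fC))$, the compatibility of topologies, and the well-definedness of the induced operation on $Y_M/E^M$; this is exactly the content implicit in the paper's one-sentence claim and its appeal to Proposition~\ref{prop:wgl_homom}, whose hypothesis "$E$ itself is group-like" the paper leaves to the reader. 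Your conclusion via Lemma~\ref{lem:weakly_grouplike}(3), Lemma~\ref{lem:factorisation_lemma} and Remark~\ref{rem:homom_factor} is literally how Theorem~\ref{thm:main_abstract}(7) is proved, so citing that theorem (as the paper does) and invoking the component lemmas (as you do) amount to the same thing.
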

	\begin{proof}
		It is not hard to see that under the hypotheses, the map $G^Y\to Y/E$ given by $\sigma\Autf(\fC)\mapsto \sigma \Stab_{\Aut(\fC/\{Y\})}\{[a]_E\}$ is a homomorphism. But $G^Y$ is naturally identified with $Y'_M/{\equiv_\Lasc^M}$ (with $Y'$  chosen as in the proof of Lemma~\ref{lem:lascar_dominates}, so that ${\equiv_\Lasc^M}$ is uniformly properly group-like on $Y'_M$ and dominates $E^M$). Hence, the assumptions of Proposition~\ref{prop:wgl_homom} are satisfied. Thus Theorem~\ref{thm:main_abstract}(7) applies, and we are done.
	\end{proof}
	
	\begin{cor}
		\label{cor:galois_quotient}
		The Galois group $\Gal(T)$ of any countable first order theory $T$ is isomorphic as a topological group to the quotient of a compact Polish group by an $F_\sigma$ subgroup.
		
		If the theory is NIP, it also has the same Borel cardinality as that quotient.
				
		The same is true for $\Gal_0(T)$.
	\end{cor}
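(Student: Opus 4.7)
My plan is to obtain both statements as direct specialisations of Theorem~\ref{thm:main_aut} (combined with Proposition~\ref{prop:orbital_in_main_aut}) applied to $E=\equiv_\Lasc$ on suitably chosen $Y$. The key observation making this work is the identification, furnished by Fact~\ref{fct:sm_to_gal} and Proposition~\ref{prop:gal_action}, of $\Gal(T)$ with the orbit space $[m]_\equiv/{\equiv_\Lasc}$ and of $\Gal_0(T)$ with the orbit space $[m]_{\equiv_\KP}/{\equiv_\Lasc}$, where $m$ is an enumeration of a small (countable) model $M$; note $\Gal_0(T)$ is a set of $\equiv_\Lasc$-classes because $\Autf_\KP(\fC)/\Autf(\fC)=\Gal_0(T)$ acts transitively on $[m]_{\equiv_\KP}/{\equiv_\Lasc}$ by Fact~\ref{fct:galkpsh_orbital}.

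For $\Gal(T)$, I would take $Y:=[m]_\equiv=p(\fC)$ for $p=\tp(m/\emptyset)$, so $Y$ is $\emptyset$-type-definable and $\Aut(\fC)=\Aut(\fC/\{Y\})$ acts transitively on $Y$. Theorem~\ref{thm:main_aut} then produces a compact Polish group $\hat G^Y$ acting continuously on $Y/{\equiv_\Lasc}=\Gal(T)$, with stabiliser $H$ of $[m]_{\equiv_\Lasc}$ satisfying $\hat G^Y/H\cong Y/{\equiv_\Lasc}=\Gal(T)$ as topological spaces. Since $\equiv_\Lasc$ is an orbit equivalence relation of $\Autf(\fC)$ (Fact~\ref{fct:lst_witn_by_aut}), and $\Autf(\fC)\unlhd \Aut(\fC)=\Aut(\fC/\{Y\})$, the stabiliser of $[m]_{\equiv_\Lasc}$ inside $\Aut(\fC/\{Y\})$ is normal, so Proposition~\ref{prop:orbital_in_main_aut} upgrades the homeomorphism to an isomorphism of topological groups. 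By the remark following Fact~\ref{fct:diameter_witnessed_by_model}, $\equiv_\Lasc$ is $F_\sigma$, so Theorem~\ref{thm:main_aut}(4) gives that $H$ is $F_\sigma$. Under NIP, the ``furthermore'' part of Theorem~\ref{thm:main_aut} gives $\hat G^Y/H\sim_B E$, which by Fact~\ref{fct:cartdf} and Definition~\ref{dfn:bcard_galois} is precisely the Borel cardinality of $\Gal(T)$.

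For $\Gal_0(T)$, I would repeat the argument verbatim with $Y:=[m]_{\equiv_\KP}$, which is type-definable (as a KP class). The group $\Autf_\KP(\fC)$ acts transitively on $Y$ by Fact~\ref{fct:galkpsh_orbital}, and is contained in $\Aut(\fC/\{Y\})$ (it fixes all KP classes, in particular $Y$), so $\Aut(\fC/\{Y\})$ acts transitively on $Y$, and the hypotheses of Theorem~\ref{thm:main_aut} hold. The stabiliser of $[m]_{\equiv_\Lasc}$ in $\Aut(\fC/\{Y\})$ is $\Autf(\fC)\cap\Aut(\fC/\{Y\})=\Autf(\fC)$ (using $\Autf(\fC)\subseteq \Autf_\KP(\fC)\subseteq\Aut(\fC/\{Y\})$), which is again normal in the ambient group, so Proposition~\ref{prop:orbital_in_main_aut} gives the topological group quotient structure, with $Y/{\equiv_\Lasc}$ identified canonically with $\Gal_0(T)$ as a topological group (this identification is a standard check via Fact~\ref{fct:sm_to_gal} and Fact~\ref{fct:gal_top}). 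The $F_\sigma$ and NIP statements then go through exactly as before, using that $\equiv_\Lasc\restr_Y$ is $F_\sigma$ and NIP of $Y$ follows from NIP of $T$.

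The main obstacle, really the only non-routine point, is verifying that the topological group structure on $Y/{\equiv_\Lasc}$ coming from Proposition~\ref{prop:orbital_in_main_aut} agrees with the intrinsic group structure on $\Gal(T)$ (resp.\ $\Gal_0(T)$) from Definition~\ref{definition: Galois groups} and Fact~\ref{fct:gal_top}. The topologies match by Proposition~\ref{prop:gal_action} (the orbit map $S_m(M)\to Y/{\equiv_\Lasc}$ factoring through $\Gal(T)$ is a topological quotient on both sides), so the identification is automatically a continuous bijection between compact groups, but checking that it is a \emph{group} homomorphism requires tracing through the action on cosets: the action of $\Aut(\fC/\{Y\})$ on $Y$ descends to the stabiliser quotient, and this descent agrees with the natural action of $\Gal(T)$ (resp.\ $\Gal_0(T)$) on itself by left translation, so the orbit map is the quotient homomorphism by the stabiliser. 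This verification, together with the straightforward compatibility check between $\hat G^Y/H$ and $\Gal(T)$, gives the corollary.
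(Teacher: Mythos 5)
Your proof takes essentially the same route as the paper: apply Theorem~\ref{thm:main_aut} with $E=\equiv_\Lasc$ and $Y=[m]_\equiv$ (for $\Gal(T)$) or $Y=[m]_{\equiv_\KP}$ (for $\Gal_0(T)$), then invoke Proposition~\ref{prop:orbital_in_main_aut}, observing (via Fact~\ref{fct:diameter_witnessed_by_model}) that the relevant stabiliser is exactly $\Autf(\fC)$ and hence normal. The only cosmetic difference is that where you argue normality of the stabiliser from the fact that $\equiv_\Lasc$ is the $\Autf(\fC)$-orbit relation, the paper goes more directly through Fact~\ref{fct:diameter_witnessed_by_model} to identify the stabiliser as $\Autf(\fC)$ itself; and your closing worry about matching the two group structures is already packaged into the statement of Proposition~\ref{prop:orbital_in_main_aut}, so it does not require a separate verification.
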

	\begin{proof}
		Choose any tuple $m$ enumerating a model and apply Theorem~\ref{thm:main_aut} to $Y=[m]_{\equiv}$ (for $\Gal(T)$) and $Y=[m]_{\equiv_\KP}$ (for $\Gal_0(T)$), $a=m$ and $E={\equiv_\Lasc}$, noting that Proposition~\ref{prop:orbital_in_main_aut} applies (e.g.\ because by Fact~\ref{fct:diameter_witnessed_by_model}, the relevant stabiliser is just $\Autf(\fC)$).
	\end{proof}
	
	The following trichotomy appeared (essentially) as \cite[Corollary 6.1]{KPR15} (joint with Krzysztof Krupiński and Anand Pillay). It constitutes most of Main~Theorem~\ref{mainthm:smt} (completed by Corollary~\ref{cor:smt_type}).
	\begin{cor}
		\label{cor:trich_plus}
		Suppose that the theory is countable, while $E$ is a strong type (on a set of countable tuples), and $Y$ is type-definable, $E$-saturated, and such that $\Aut(\fC/\{Y\})$ acts transitively on $Y$ (e.g.\ $Y\in \{[a]_\equiv,[a]_{\equiv_{\textrm{Sh}}},[a]_{\equiv_\KP} \}$ for some countable tuple $a$). Then exactly one of the following is true:
		\begin{enumerate}
			\item
			$E\restr_Y$ is relatively definable (as a subset of $Y^2$) and has finitely many classes,
			\item
			$E\restr_Y$ is type-definable and has exactly $2^{\aleph_0}$ classes,
			\item
			$E\restr_Y$ is not type-definable and not smooth. In this case, if $E\restr_Y$ is analytic, then $E\restr_Y$ has exactly $2^{\aleph_0}$ classes.
		\end{enumerate}
	\end{cor}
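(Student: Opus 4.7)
The plan is to apply the abstract trichotomy in Corollary~\ref{cor:metr_smt_cls} to a suitable model-theoretic instance of a weakly uniformly properly group-like equivalence relation, and then translate the conclusions back through Fact~\ref{fct: Borel in various senses} and Definition~\ref{dfn:bier_borelcard}. Concretely, first invoke Proposition~\ref{prop:amb_exist} to pick a countable model $M$ enumerated by a tuple $m \supseteq a$ (for some $a\in Y$) that is ambitious relative to $G^Y := \Aut(\fC/\{Y\})/\Autf(\fC)$. Because the theory is countable and both $Y$ and $E$ live on a countable product of sorts, $Y_M$ is a compact Polish space, so the ambit $(G^Y(M), Y_M, \tp(a/M))$ is metrisable.

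Next, by Lemma~\ref{lem:lascar_dominates} applied to this ambitious choice of $M$ (using that $\Aut(\fC/\{Y\})$ acts transitively on $Y$, so the hypotheses of the ``more generally'' clause are satisfied), the relation ${\equiv_\Lasc^M}\restr_{Y'_M}$ is uniformly properly group-like and dominates $E^M\restr_{Y_M}$; in particular $E^M\restr_{Y_M}$ is weakly uniformly properly group-like on the ambit above. We are now exactly in the situation of Corollary~\ref{cor:metr_smt_cls}, which yields that exactly one of the following holds: (a) $E^M\restr_{Y_M}$ is clopen with finitely many classes, (b) $E^M\restr_{Y_M}$ is closed with $2^{\aleph_0}$ classes, or (c) $E^M\restr_{Y_M}$ is not closed and not smooth, and if it is analytic it has $2^{\aleph_0}$ classes.

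It then remains to transfer this conclusion to $E\restr_Y$. For the topological content this is immediate from Fact~\ref{fct: Borel in various senses}: $E\restr_Y$ is relatively definable in $Y^2$, type-definable, or analytic, if and only if $E^M\restr_{Y_M}$ is clopen, closed, or analytic respectively. For cardinality, $Y/E$ is in natural bijection with $Y_M/E^M\restr_{Y_M}$, so the cardinality dichotomies translate literally. Finally, smoothness of $E\restr_Y$ is defined (Definition~\ref{dfn:bier_borelcard}) as smoothness of $E^M\restr_{Y_M}$ for a countable $M$, which is well-defined by Fact~\ref{fct:cartdf}, so case (c) transfers as well.

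The only real subtlety, and the step I expect to require the most care, is verifying that the ambitious model $M$ furnished by Proposition~\ref{prop:amb_exist} can be taken countable while preserving ambition relative to $G^Y$, and that the resulting ambit $(G^Y(M), Y_M, \tp(a/M))$ fits the hypotheses of Lemma~\ref{lem:lascar_dominates} so that $Y_M$ (rather than just $Y'_M$) is the space to which Corollary~\ref{cor:metr_smt_cls} is applied. Once this bookkeeping is handled — essentially by noting that $G^Y$ is closed in $\Gal(T)$ by Proposition~\ref{prop:lem_closed}, so Proposition~\ref{prop:amb_exist} really does produce such an $M$ of cardinality $\aleph_0$ — the rest is a direct translation and mutual exclusivity of the three cases follows from the disjointness of ``clopen and finite'', ``closed with $2^{\aleph_0}$ classes'', and ``not closed''.
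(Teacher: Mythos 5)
Your proof is correct and follows essentially the same route as the paper. The paper's official proof simply cites Theorem~\ref{thm:main_aut} and Lemma~\ref{lem:abstract_trich} (plus Fact~\ref{fct: Borel in various senses}); what you have done is inline the relevant part of the proof of Theorem~\ref{thm:main_aut} (Proposition~\ref{prop:amb_exist} for an ambitious countable $M$ relative to $G^Y$, Lemma~\ref{lem:lascar_dominates} for weak uniformly proper group-likeness of $E^M\restr_{Y_M}$) and then apply Corollary~\ref{cor:metr_smt_cls} directly, which is itself just Theorem~\ref{thm:main_abstract} combined with Lemma~\ref{lem:abstract_trich}, so the essential dependencies coincide. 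One small remark: the paper's proofs of Lemma~\ref{lem:lascar_dominates} and Theorem~\ref{thm:nwg} obtain closedness of $G^Y$ from Fact~\ref{fct: characterization of topology on Gal_L(T)} rather than from Proposition~\ref{prop:lem_closed}, but your alternative citation is also valid (since $E$-saturation of $Y$ implies $\equiv_\Lasc$-saturation, putting you in the hypothesis of Proposition~\ref{prop:lem_closed}).
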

	\begin{proof}
		By Theorem~\ref{thm:main_aut}, we can apply Lemma~\ref{lem:abstract_trich}, which (by Fact~\ref{fct: Borel in various senses}) completes the proof.
	\end{proof}
	
	The following corollary of Theorem~\ref{thm:main_aut} gives a partial answer to the question of possible Borel cardinalities of Lascar strong types (and strong types in general), as raised in \cite{KPS13} (for example, it implies that in NIP theories, the Borel cardinality of $[a]_{\equiv_\KP}/{\equiv_\Lasc}$ is the Borel cardinality of the quotient of a compact Polish group by an $F_\sigma$ subgroup).
	\begin{cor}
		Suppose $E$ is a strong type, while $Y$ is a type-definable and $E$-saturated set such that $\Aut(\fC/\{Y\})$ acts transitively on $Y$ (e.g.\ $E$ refines $\equiv_\KP$ and $Y=[a]_{\equiv_\KP}$ for some $a$). Suppose in addition that $Y$ has NIP. Then $E\restr_Y$ is Borel equivalent to the quotient of a compact Polish group by a subgroup (which is $F_\sigma$, Borel or analytic, respectively, whenever $E$ is such).
	\end{cor}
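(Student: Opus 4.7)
The plan is to obtain this corollary as a direct application of Theorem~\ref{thm:main_aut}, which was proved earlier in the section. The hypotheses on $E$, $Y$, and the transitive action of $\Aut(\fC/\{Y\})$ on $Y$ match exactly those of Theorem~\ref{thm:main_aut}, and the additional NIP assumption on $Y$ is precisely what activates the ``furthermore'' clause of that theorem. So there is essentially no new content — all the heavy lifting (constructing the compact Polish group $\hat G^Y$ from $u\cM/\Core(H(u\cM)D)$, applying the tameness/NIP link from Corollary~\ref{cor:NIP_implies_tame}, and deducing Borel equivalence via the Ellis-semigroup machinery) has already been done.

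Concretely, I would fix any $a\in Y$ and invoke Theorem~\ref{thm:main_aut}. This produces a compact Polish group $\hat G^Y$ acting continuously on $Y/E$ with the stabiliser $H\leq \hat G^Y$ of $[a]_E$. By clauses (1)--(2) of the theorem, the orbit map $\hat r_Y\colon \hat G^Y\to Y/E$ is a topological quotient map whose fibres are the left cosets of $H$, which already presents $Y/E$ as $\hat G^Y/H$ topologically. Clause (4) transfers the descriptive-set-theoretic properties: whenever $E$ is $F_\sigma$, Borel, or analytic, so is $H$ (this uses Fact~\ref{fct: Borel in various senses} implicitly, to pass between $E\restr_Y$ and $E^M\restr_{Y_M}$ for a countable model $M$). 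Finally, the ``furthermore'' clause of Theorem~\ref{thm:main_aut}, whose hypothesis is exactly NIP of $Y$, upgrades clause (5) from $\hat G^Y/H\leq_B E\restr_Y$ to a Borel equivalence $\hat G^Y/H\sim_B E\restr_Y$, which is the content of the corollary.

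For the ``e.g.'' clause, I would verify that when $E$ refines $\equiv_\KP$ and $Y=[a]_{\equiv_\KP}$, the group $\Aut(\fC/\{Y\})$ acts transitively on $Y$: by Fact~\ref{fct:galkpsh_orbital}, $\Autf_\KP(\fC)$ already acts transitively on the $\equiv_\KP$-class $Y$, and $\Autf_\KP(\fC)\subseteq \Aut(\fC/\{Y\})$, so the hypothesis of Theorem~\ref{thm:main_aut} is satisfied. There is no genuine obstacle here — the real difficulty was absorbed into the proofs of Theorem~\ref{thm:main_aut} and, beneath it, Theorem~\ref{thm:main_abstract} together with the tameness/NIP translation in Section~\ref{section: independence, tameness and ambition}; this corollary merely records the Borel-cardinality payoff in a form directly addressing the question from \cite{KPS13}.
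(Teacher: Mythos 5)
Your proposal is correct and takes the same route as the paper, which proves the corollary by simply citing Theorem~\ref{thm:main_aut} (the NIP hypothesis on $Y$ activating the ``furthermore'' clause). Your elaboration of the ``e.g.'' clause via Fact~\ref{fct:galkpsh_orbital} and the inclusion $\Autf_\KP(\fC)\subseteq\Aut(\fC/\{Y\})$ is also accurate.
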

	\begin{proof}
		This is immediate by Theorem~\ref{thm:main_aut}.
	\end{proof}
	
	\begin{prop}
		\label{prop:closure_has_transitive_action}
		If $E$ is a strong type on a ($\emptyset$-)type-definable set $X$, then for any $a_0\in X$, the set $Y_{a_0}$ of $a\in X$ such that $[a]_E\in \overline{\{[a_0]_E\}}\subseteq X/E$ is type-definable and $E$-saturated. Moreover, $\Aut(\fC/\{Y_{a_0}\})$ is equal to $\{\sigma\in \Aut(\fC)\mid \sigma(a_0)\in Y_{a_0} \}$, and it acts transitively on $Y_{a_0}$.
	\end{prop}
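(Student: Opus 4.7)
The proof breaks into three observations, and I expect no serious obstacle beyond checking that we can actually invoke the $R_1$ property of the quotient space. First, $Y_{a_0}$ is by definition the preimage of $\overline{\{[a_0]_E\}}$ under the quotient map $X \to X/E$. It is therefore automatically $E$-saturated, and it is type-definable directly from Definition~\ref{dfn:logic_topology}, since $\overline{\{[a_0]_E\}}$ is closed.

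Next, I would establish the characterization of $\Aut(\fC/\{Y_{a_0}\})$. The inclusion ``$\subseteq$'' is trivial because $[a_0]_E \in \overline{\{[a_0]_E\}}$, so $a_0 \in Y_{a_0}$. For ``$\supseteq$'', let $\sigma \in \Aut(\fC)$ with $\sigma(a_0) \in Y_{a_0}$. Note that $\sigma$ preserves every $\emptyset$-invariant, $E$-saturated, type-definable set setwise, so the formula $[x]_E \mapsto [\sigma(x)]_E$ defines a homeomorphism of $X/E$ onto itself. Letting $p:=\tp(a_0/\emptyset)$, the set $p(\fC)$ is $\emptyset$-type-definable and $E$-saturated (because $E\subseteq{\equiv}$), so $p(\fC)/E$ is closed in $X/E$, and hence $\overline{\{[a_0]_E\}}\subseteq p(\fC)/E$. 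By Remark~\ref{rem:wglike_closedness_to_typdef} (which in turn uses Proposition~\ref{prop:top_props_of_wglike}), $p(\fC)/E$ is an $R_1$-space. The assumption $[\sigma(a_0)]_E \in \overline{\{[a_0]_E\}}$ therefore implies $\overline{\{[\sigma(a_0)]_E\}} = \overline{\{[a_0]_E\}}$. Applying the induced homeomorphism gives
\[
\sigma\bigl[\overline{\{[a_0]_E\}}\bigr] = \overline{\{[\sigma(a_0)]_E\}} = \overline{\{[a_0]_E\}},
\]
so $\sigma$ stabilizes $Y_{a_0}$ setwise, i.e.\ $\sigma \in \Aut(\fC/\{Y_{a_0}\})$.

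Finally, for transitivity, given any $a,b \in Y_{a_0}$, the same $R_1$ argument yields $\overline{\{[a]_E\}} = \overline{\{[a_0]_E\}} = \overline{\{[b]_E\}}$, so $Y_a = Y_{a_0} = Y_b$. Since $a,b \in p(\fC)$, we have $a \equiv b$, hence some $\sigma \in \Aut(\fC)$ sends $a$ to $b$. Because $\sigma(a) = b \in Y_a$, the characterization from the previous paragraph (applied with $a$ in place of $a_0$) gives $\sigma \in \Aut(\fC/\{Y_a\}) = \Aut(\fC/\{Y_{a_0}\})$, completing the proof.

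The only delicate point is that the $R_1$-ness used in the argument is stated in the excerpt for $p(\fC)/E$ with $p\in S(\emptyset)$, not for arbitrary $X/E$; this is precisely why the reduction $\overline{\{[a_0]_E\}}\subseteq p(\fC)/E$ is made explicit before invoking the symmetry of the specialization order.
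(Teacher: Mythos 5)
Your proof is correct and follows essentially the same route as the paper's: reduce to the single complete type $p(\fC)=[a_0]_{\equiv}$, observe that each $\sigma\in\Aut(\fC)$ induces a homeomorphism of $X/E$, and invoke the $R_0$/$R_1$ property of the quotient from Remark~\ref{rem:wglike_closedness_to_typdef} to upgrade the containment of singleton closures to an equality. The only (cosmetic) difference is that you establish the characterization of $\Aut(\fC/\{Y_{a_0}\})$ first and deduce transitivity from it, whereas the paper directly shows that any $\sigma$ with $\sigma(a_0)\in Y_{a_0}$ preserves $Y_{a_0}$ setwise; your explicit note that the cited $R_1$-ness is stated for $p(\fC)/E$ and that $\overline{\{[a_0]_E\}}\subseteq p(\fC)/E$ is exactly the WLOG reduction the paper performs at the outset.
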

	\begin{proof}
		We may assume without loss of generality that $X=p(\fC)=[a_0]_{\equiv}$ (since $E$ is a strong type, we have $[a_0]_E\subseteq [a_0]_{\equiv}$, which implies that $Y_{a_0}\subseteq [a_0]_{\equiv}$).
		
		Type-definability of $Y_{a_0}$ is straightforward by the definition of the logic topology, $E$-invariance is trivial.
		
		Since $Y_{a_0}\subseteq [a_0]_{\equiv}$, we have that for each $a\in Y_{a_0}$, there is some $\sigma\in \Aut(\fC)$ such that $\sigma(a_0)=a$. It is enough to show that $\sigma\in \Aut(\fC/\{Y_{a_0}\})$. But since $\sigma$ is an automorphism, it acts on $X/E$ by homeomorphisms, so $\sigma(\overline{\{[a_0]_E\}})=\overline{\{[a]_E\}}$. Since $[a]_E\in \overline{\{[a_0]_E\}}$, it follows that $\overline{\{[a]_E\}}\subseteq \overline{\{[a_0]_E\}}$.
		It follows that $\sigma[Y_{a_0}/E]\subseteq Y_{a_0}/E$, so $\sigma[Y_{a_0}]\subseteq Y_{a_0}$.

		On the other hand, by Remark~\ref{rem:wglike_closedness_to_typdef}, $X/E$ is an $R_0$ space, whence $[a_0]_E\in \overline{\{[a]_E\}}$, so $a_0\in Y_a$. Arguing as in the preceding paragraph, we conclude that $Y_{a_0}=\sigma^{-1}[Y_a]\subseteq Y_a=\sigma[Y_{a_0}]$, so $\sigma[Y_{a_0}]=Y_{a_0}$.
	\end{proof}
	
	The following corollary is the last part of Main~Theorem~\ref{mainthm:smt}. Essentially, it is a generalisation of the main results of \cite{KMS14} and \cite{KM14}/\cite{KR16} (Facts~\ref{fct:KMS_theorem} and \ref{fct:mainA}). It appeared as \cite[Theorem 4.1]{KPR15} (in the paper joint with Krzysztof Krupiński and Anand Pillay). Loosely speaking, it can be seen as a strengthening of Theorem~\ref{thm:nwg} in the ``countable language case'' (as in that case, if $E$ is Borel, then by Fact~\ref{fct:silver}, non-smoothness implies having $2^{\aleph_0}$ classes).
	\begin{cor}
		\label{cor:smt_type}
		Suppose $T$ is countable. If $E$ is a strong type defined on $X=p(\fC)$ for some $p\in S(\emptyset)$ (in countably many variables) and $Y\subseteq X$ is nonempty, type-definable and $E$-saturated, then either $E$ is type-definable, or $E\restr_Y$ is non-smooth.
	\end{cor}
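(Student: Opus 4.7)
The plan is to prove the contrapositive: assuming $E\restr_Y$ is smooth for some nonempty type-definable $E$-saturated $Y$, I will deduce that $E$ is type-definable. The strategy is to use Proposition~\ref{prop:closure_has_transitive_action} to pass to a smaller set on which the transitivity hypothesis of Corollary~\ref{cor:trich_plus} is available, then use Proposition~\ref{prop:type-definability_of_relations} to globalise the resulting local type-definability.

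First, I would pick any $a_0 \in Y$ (using nonemptiness) and form the set $Y_{a_0}\subseteq X$ given by Proposition~\ref{prop:closure_has_transitive_action}: it is type-definable, $E$-saturated, and $\Aut(\fC/\{Y_{a_0}\})$ acts transitively on it. Since $Y$ is type-definable and $E$-saturated, $Y/E$ is closed in $X/E$ (logic topology), and because $[a_0]_E\in Y/E$, the closure $\overline{\{[a_0]_E\}}$ also lies in $Y/E$; this containment is exactly the statement $Y_{a_0}\subseteq Y$. Consequently $E\restr_{Y_{a_0}}$ is the restriction of $E\restr_Y$ to the $E$-saturated type-definable subset $Y_{a_0}$, and by the translation to countable models provided by Fact~\ref{fct:cartdf} together with the routine fact that Borel reductions to equality on $2^{\bN}$ restrict to Borel subsets, smoothness of $E\restr_Y$ yields smoothness of $E\restr_{Y_{a_0}}$.

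Now I would apply Corollary~\ref{cor:trich_plus} to $E$ and $Y_{a_0}$: the trichotomy lists exactly one option (case (3)) where $E\restr_{Y_{a_0}}$ is non-smooth, so smoothness forces cases (1) or (2), both of which entail that $E\restr_{Y_{a_0}}$ is type-definable. Because $[a_0]_E\in \overline{\{[a_0]_E\}}$, we have $[a_0]_E\subseteq Y_{a_0}$, so $[a_0]_E=[a_0]_{E\restr_{Y_{a_0}}}$ is type-definable (as a class of a type-definable equivalence relation on a type-definable set). Finally, since $X=p(\fC)=[a_0]_{\equiv}$ is a single complete type over $\emptyset$, Proposition~\ref{prop:type-definability_of_relations} upgrades type-definability of one class to type-definability of $E$ itself. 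The hard part is really just the bookkeeping at the reduction step: verifying that $Y_{a_0}\subseteq Y$ (so that $E\restr_Y$ smooth $\Rightarrow$ $E\restr_{Y_{a_0}}$ smooth) and noting that type-definability of $E\restr_{Y_{a_0}}$ transfers to type-definability of the single class $[a_0]_E$; after that, the result is a clean combination of Corollary~\ref{cor:trich_plus} with Propositions~\ref{prop:closure_has_transitive_action} and \ref{prop:type-definability_of_relations}.
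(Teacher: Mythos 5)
Your proposal is correct and follows essentially the same route as the paper's proof: pass to $Y_{a_0}$ via Proposition~\ref{prop:closure_has_transitive_action}, apply Corollary~\ref{cor:trich_plus}, and then globalise through Proposition~\ref{prop:type-definability_of_relations}. You supply some details the paper leaves implicit (the verification that $Y_{a_0}\subseteq Y$, and the restriction argument for smoothness), and you invoke Proposition~\ref{prop:type-definability_of_relations} directly on the class $[a_0]_E$ where the paper cites Remark~\ref{rem:tdf_iff_restr}, but these are the same argument in substance.
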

	\begin{proof}
		Suppose $E\restr_Y$ is smooth. We need to show that $E$ is type-definable, which by Proposition~\ref{prop:type-definability_of_relations} is equivalent to $E$ having a type-definable class. Since $E\restr_{Y}$ is smooth, it follows that for every $a\in Y$, $E\restr_{Y_a}$ is also smooth, where $Y_a$ is as in Proposition~\ref{prop:closure_has_transitive_action}. Clearly, it is enough to show that $E\restr_{Y_a}$ has a type-definable class, and thus we may assume without loss of generality that $Y=Y_a$. But then by Proposition~\ref{prop:closure_has_transitive_action}, $\Aut(\fC/\{Y\})$ acts transitively on $Y$, so by Corollary~\ref{cor:trich_plus}, $E\restr_{Y}$ is type-definable, and by Remark~\ref{rem:tdf_iff_restr}, $E$ is type-definable as well.
	\end{proof}
	(See also Corollary~\ref{cor:smt_aut} for a further generalisation (to sets larger than $p(\fC)$).)
	
	Theorem~\ref{thm:main_aut} shows that every quotient of a sufficiently symmetric type-definable set $Y$ by a strong type $E$ is essentially the quotient of a compact Polish group by a subgroup. It is not hard to see that the compact group does not depend on $E$, only on $Y$.
	
	We can actually show more: essentially, we can find one $\hat G$ witnessing Theorem~\ref{thm:main_aut} for all $Y=p(\fC)$, where $p\in S(\emptyset)$, but first, we need an additional lemma.
	
	\begin{lem}
		\label{lem:every_stype_on_m}
		Suppose $T$ is countable.
		Assume that $E$ is a strong type defined on $p(\fC)$ for $p=\tp(a/\emptyset)$ for some countable tuple $a$, while $M$ is an arbitrary countable model, enumerated by $m$.
		
		Then there is a strong type $E'$ on $[m]_{\equiv}$ such that:
		\begin{itemize}
			\item
			$E$ is type-definable [resp. Borel, or analytic, or $F_\sigma$, or relatively definable] if and only if $E'$ is,
			\item
			there are Borel maps $r_1\colon S_m(M)\to S_a(M)$ and $r_2 \colon S_a(M)\to S_m(M)$ such that $r_1$ and $r_2$ are Borel reductions between $(E')^M$ and $E^M$ (in particular, $E'\sim_B E$), satisfying $r_1(\tp(m/M))=\tp(a/M)$ and $r_2(\tp(a/M))=\tp(m/M)$, and
			\item
			the induced maps $r_1'\colon [m]_{\equiv}/E' \to p(\fC)/E$ and $r_2'\colon p(\fC)/E \to [m]_{\equiv}/E'$ are $\Gal(T)$-equivariant homeomorphisms, and $r_2'$ is the inverse of $r_1'$.
		\end{itemize}
		The maps $r_1'$ and $r_2'$ are uniquely determined by $r_1'([\sigma(m)]_{E'})=[\sigma(a)]_{E}$ and $r_2'([\sigma(a)]_{E})=[\sigma(m)]_{E'}$ for all $\sigma \in \Aut(\fC)$.
	\end{lem}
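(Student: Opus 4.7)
The plan is to define $E'$ directly as the pullback of $E$ along a natural definable projection, and then transfer all relevant properties via the restriction map between type spaces. Concretely, since $a$ is a subtuple of $m$, there is a coordinate projection $\pi \colon [m]_\equiv \to p(\fC) = [a]_\equiv$, which is $\Aut(\fC)$-equivariant (in particular, $\pi(\sigma(m)) = \sigma(a)$), and is surjective (since any $a'\equiv a$ lifts via some $\sigma \in \Aut(\fC)$ to $\sigma(m) \in [m]_\equiv$). Define $E'$ on $[m]_\equiv$ by $m_1 \mathrel{E'} m_2$ iff $\pi(m_1) \mathrel{E} \pi(m_2)$. Then $E'$ is invariant (pullback of an invariant relation by an equivariant map), refines $\equiv$ on $[m]_\equiv$, and the induced map $[m]_\equiv/E' \to p(\fC)/E$ is a bijection, so $E'$ is a bounded invariant equivalence relation, i.e.\ a strong type.

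For the first bullet, I would invoke Fact~\ref{fct: Borel in various senses}: $E$ and $E'$ are relatively definable, type-definable, $F_\sigma$, Borel, or analytic iff the corresponding relations $E^M$ on $S_a(M)^2$ and $(E')^M$ on $S_m(M)^2$ are clopen, closed, $F_\sigma$, Borel, or analytic. The restriction map $r_1 \colon S_m(M) \to S_a(M)$ (which exists because $a \subseteq m$) is a continuous surjection between compact Polish spaces with $r_1(\tp(m/M)) = \tp(a/M)$, and by construction $(E')^M = (r_1 \times r_1)^{-1}[E^M]$. Thus Proposition~\ref{prop:preservation_properties} transfers each property in both directions.

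For the second bullet, note that $r_1$ is continuous (hence Borel) and is itself a reduction of $(E')^M$ to $E^M$ with the desired base-point property. For the reduction in the opposite direction, Fact~\ref{fct:borel_section} provides a Borel section $s$ of $r_1$; such a section is automatically a Borel reduction of $E^M$ to $(E')^M$ (since $r_1 \circ s = \id$, the pullback of $(E')^M$ via $s$ equals the pullback of $E^M$ via $\id$). If $s(\tp(a/M)) \neq \tp(m/M)$, redefine $s$ at the single point $\tp(a/M)$ to send it to $\tp(m/M)$; this preserves Borelness and the reduction property, yielding the desired $r_2$.

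For the third bullet, consider the commutative diagram with $r_1$ on top and the quotient maps $S_m(M) \twoheadrightarrow S_m(M)/(E')^M = [m]_\equiv/E'$ and $S_a(M) \twoheadrightarrow S_a(M)/E^M = p(\fC)/E$ on the sides; the bottom induced map is exactly $r_1'$. Since $r_1$ is a continuous surjection from a compact space to a Hausdorff space, it is a quotient map (Remark~\ref{rem: continuous surjection is closed}), and the right vertical is also a quotient map, so their composition is a quotient map. The left vertical is a quotient map as well, hence by Remark~\ref{rem:commu_quot} applied to the left triangle, $r_1'$ is a quotient map; being also a continuous bijection, it is a homeomorphism. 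Its inverse $r_2'$ is then automatically a homeomorphism. Galois-equivariance follows because $\pi$ is $\Aut(\fC)$-equivariant and $\Gal(T)$ acts through $\Aut(\fC)$ on both quotients. Finally, the displayed formulas hold by direct computation: $r_1'([\sigma(m)]_{E'}) = [\pi(\sigma(m))]_E = [\sigma(a)]_E$, and since $\Aut(\fC)$ acts transitively on $[m]_\equiv$ and on $p(\fC)$, these formulas determine $r_1'$ and $r_2'$ uniquely.

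The only mildly delicate step is arranging the base-point condition $r_2(\tp(a/M)) = \tp(m/M)$ while preserving Borelness, but this is handled by the single-point modification above; everything else is a direct application of the preservation lemmas from Chapter~\ref{chap:prelims} to the restriction map $r_1$.
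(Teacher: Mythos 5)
The proposal's very first move is unsupported: you write ``since $a$ is a subtuple of $m$'' and build the entire argument on a coordinate projection $\pi\colon [m]_\equiv \to p(\fC)$. But the lemma does not assume $a\in M$; the hypotheses are that $a$ is any countable tuple and $M$ is an \emph{arbitrary} countable model (they are chosen independently of one another). This is not an artefact of loose phrasing --- in the intended application (Theorem~\ref{thm:main_galois}) the model $M$ is fixed in advance as a countable ambitious model, and $a$ is then an arbitrary realisation of an arbitrary countable type $p$, so in general $a\not\subseteq m$. Consequently there is no restriction map $S_m(M)\to S_a(M)$ and no definable projection $[m]_\equiv\to p(\fC)$, and the relation $E'$ you propose is simply not defined.

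Because of this, your $r_1$ cannot be continuous in general, and the pullback description $(E')^M=(r_1\times r_1)^{-1}[E^M]$ has no meaning. The paper gets around exactly this issue by picking a countable $N\succeq M$ containing $a$, enumerated by $n\supseteq am$, and using the two restriction maps $S_n(N)\to S_m(M)$ and $S_n(N)\to S_a(M)$: the relation $E'$ is defined so that $(E')^M$ is the relation on $S_m(M)$ induced by $E$ via the composition $S_m(M)\to \Gal(T)\to p(\fC)/E$, the complexity transfer is done by pulling both $(E')^M$ and $E^M$ back to $S_n(N)$ and applying Proposition~\ref{prop:preservation_properties}, and the Borel maps $r_1,r_2$ are obtained by composing Borel sections of the restrictions from $S_n(N)$ with the other restriction --- so both are genuinely Borel, not continuous. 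Your argument would be a correct (and slightly simpler) proof of the special case $a\subseteq m$, and indeed in that case your $E'$ agrees with the paper's, but it does not establish the lemma as stated.
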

	\begin{proof}
		Let $N\succeq M$ be a countable model containing $a$, and enumerate it by $n\supseteq am$.
		
		Then we have the restriction maps $S_n(N)\to S_a(M)$, $S_n(N)\to S_m(M)$, which fit in the commutative diagram:
		\begin{center}
			\begin{tikzcd}
			S_m(M) \ar[dr, two heads] & S_n(N) \ar[l, two heads] \ar[r, two heads] \ar[d, two heads] & S_a(M) \ar[d, two heads] \\
			&\Gal(T) \ar[r, two heads] & p(\fC)/E.
			\end{tikzcd}
		\end{center}
		In this diagram, the maps to $\Gal(T)$ are given by Fact~\ref{fct:sm_to_gal}, while the map $\Gal(T)\to p(\fC)/E$ is the orbit map $\sigma\Autf(\fC)\mapsto [\sigma(a)]_E$ (cf. Proposition~\ref{prop:gal_action}).
		
		Recall from Fact~\ref{fct: Borel in various senses} that $E$ is type-definable [resp. Borel, analytic, $F_\sigma$, relatively definable] if and only if the induced relation $E^M$ on $S_a(M)$ is closed [resp. Borel, analytic, $F_\sigma$, clopen]. Note also that $E^M=E|_{S_{a}(M)}$ (using Definition~\ref{dfn:induced_relation}).
		
		Note that $E|_{S_m(M)}$ and $E|_{S_n(N)}$ are both induced by the same left invariant equivalence relation $E|_{\Gal(T)}$ on $\Gal(T)$ (left invariance holds because the map $\Gal(T)\to p(\fC)/E$ is left $\Gal(T)$-equivariant, as the orbit map of a left action).

		Let $E'$ be the $\Aut(\fC/M)$-invariant equivalence relation on $[m]_{\equiv}$ such that $(E')^M$ is $E|_{S_m(M)}$. It is $\Aut(\fC)$-invariant by construction (e.g.\ because $E|_{\Gal(T)}$ is left invariant), and it is clearly bounded by the size of $p(\fC)/E$. We will show that it satisfies the conclusion.
		
		Since $E|_{S_n(N)}$ is the pullback of both $E|_{S_m(M)}$ and $E|_{S_a(M)}$ (and so, a preimage of each by a continuous surjection), the first part follows by Proposition~\ref{prop:preservation_properties}.
		
		Fact~\ref{fct:borel_section} gives us Borel sections $S_m(M)\to S_n(N)$ and $S_a(M)\to S_n(N)$ of the restriction maps, and we can assume without loss of generality that each section maps $\tp(m/M)$ or $\tp(a/M)$ (respectively) to $\tp(n/N)$ (possibly by changing the value of the section at one point). Those sections, composed with the appropriate restrictions from $S_n(N)$, yield Borel maps $r_1\colon S_m(M)\to S_a(M)$ and $r_2\colon S_a(M)\to S_m(M)$, which (by the last sentence) take $\tp(m/M)$ to $\tp(a/M)$ and vice versa. These maps are clearly Borel reductions between $E|_{S_m(M)}$ and $E^M$ (passing via $E|_{S_n(N)}$). Denote by $r_1',r_2'$ the induced maps between the class spaces (as in the statement of the lemma), where we freely identify various homeomorphic quotient spaces (e.g.\ $p(\fC)/E$ and $S_a(M)/E^M$).
		
		Now, note that given any $\sigma\in \Aut(\fC)$, the restriction of $\tp(\sigma(n)/N)\in S_n(N)$ to $S_m(M)$ is $\tp(\sigma(m)/M)$ and likewise, the restriction to $S_a(M)$ is $\tp(\sigma(a)/M)$. It follows easily that for every $\sigma\in \Aut(\fC)$, we have $r_1'([\tp(\sigma(m)/M]_{E|_{S_m(M)}})=[\tp(\sigma(a)/M)]_{E^M}$ and likewise, $r_2'([\tp(\sigma(a)/M)]_{E^M})=[\tp(\sigma(m)/M)]_{E|_{S_m(M)}}$. In particular, $r_1'$ and $r_2'$ are bijections with $r_2'$ being the inverse of $r_1'$, and they are $\Gal(T)$-equivariant.
		
		Note that all the maps in the diagram are quotient maps, so in particular, the composed map $S_m(M)\to p(\fC)/E$ is a quotient map. It is easy to see that this map is the composition of the bijection $r_1'$ and the quotient map $S_m(M)\to [m]_{\equiv}/E'$, which implies that $r_1'$ is a homeomorphism, and hence, so is $r_2'$.
		
		Finally note that the conditions $r_1(\tp(m/M))=\tp(a/M)$ and $r_2(\tp(a/M))=\tp(m/M)$, together with $\Gal(T)$-equivariance of $r_1'$ and $r_2'$, imply that $r_1'$ and $r_2'$ are determined by $r_1'([\sigma(m)]_{E'})=[\sigma(a)]_{E}$ and $r_2'([\sigma(a)]_{E})=[\sigma(m)]_{E'}$, for all $\sigma \in \Aut(\fC)$.
	\end{proof}
	The next theorem (along with Theorem~\ref{thm:main_aut}) completes Main~Theorem~\ref{mainthm_group_types}. It is \cite[Theorem 7.13]{KR18}, and is the main result of that paper (joint with Krzysztof Krupiński).
	\begin{thm}
		\label{thm:main_galois}
		Let $T$ be an arbitrary countable theory, and let $M$ be any countable ambitious model of $T$ (such a model exists by Proposition~\ref{prop:amb_exist}).
		
		Consider the ambit $(\Aut(M),S_m(M),\tp(m/M))$, and let $\hat G$ be the compact Polish group $u\cM/\Core(H(u\cM)D)$ (as in Corollary~\ref{cor:Polish_quotient_Core(D)}). Then the orbit map $E(\Aut(M),S_m(M))\to S_m(M)$, $f\mapsto f(\tp(m/M))$ induces a topological group quotient mapping $\hat r\colon \hat G\to \Gal(T)$ (identified with $S_m(M)/{\equiv_\Lasc^M}$ via Fact~\ref{fct:sm_to_gal}) with the following property.
		
		Suppose $E$ is a strong type defined on $p(\fC)$ for some $p\in S(\emptyset)$ (in countably many variables). Fix any $a\models p$.
		
		Denote by $r_{[a]_E}$ the orbit map $\Gal(T)\to p(\fC)/E$ given by $\sigma \Autf(\fC)\mapsto[\sigma(a)]_E$ (i.e. the orbit map of the natural action of $\Gal(T)$ on $p(\fC)/E$ from Proposition~\ref{prop:gal_action}).
		
		Then for $\hat r_{[a]_E}:=r_{[a]_E}\circ \hat r$ and $H=\ker \hat r_{[a]_E}:=\hat r_{[a]_E}^{-1}[[a]_E]$ we have that:
		\begin{enumerate}
			\item
			$H\leq \hat G$ and the fibres of $\hat r_{[a]_E}$ are the left cosets of $H$,
			\item
			$\hat r_{[a]_E}$ is a topological quotient map, and so $p(\fC)/E$ is homeomorphic to $\hat G/H$,
			\item
			$E$ is type-definable if and only if $H$ is closed,
			\item
			$E$ is relatively definable on $p(\fC) \times p(\fC)$ if and only if $H$ is clopen,
			\item
			if $E$ is Borel [resp. analytic, or $F_\sigma$], then $H$ is Borel [resp. analytic, or $F_\sigma$],
			\item
			$E_H\leq_B E$, where $E_H$ is the relation of lying in the same left coset of $H$.
		\end{enumerate}
		
		Moreover, if $T$ has NIP (or, more generally, if $M$ is a tame ambitious model --- cf.\ Definition~\ref{dfn:tame_model}), then $E_H\sim_B E$.
	\end{thm}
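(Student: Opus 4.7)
The plan is to derive this as a double application of Theorem~\ref{thm:main_abstract}, first to the ambit $(\Aut(M), S_m(M), \tp(m/M))$ with the relation ${\equiv_\Lasc^M}$, and then again to obtain the action on individual strong type spaces. First, by Proposition~\ref{prop:amb_exist}, ambitious countable models exist, so the ambit structure is well-defined. By Lemma~\ref{lem:lascar_grouplike}, the relation $F:={\equiv_\Lasc^M}$ on $S_m(M)$ is uniformly properly group-like (in particular, properly group-like), and by Fact~\ref{fct:sm_to_gal} its quotient is canonically $\Gal(T)$. Applying Theorem~\ref{thm:main_abstract} to this setup yields the compact Polish group $\hat G = u\cM/\Core(H(u\cM)D)$, a continuous action of $\hat G$ on $S_m(M)/F = \Gal(T)$, and an orbit map $\hat r : \hat G \to \Gal(T)$ that is a topological quotient map. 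Since $F$ itself is properly group-like, Proposition~\ref{prop:wgl_homom} (invoked via part (7) of Theorem~\ref{thm:main_abstract}) upgrades $\hat r$ to a topological group quotient homomorphism.

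Next, for a strong type $E$ on $p(\fC)$, I would invoke Lemma~\ref{lem:every_stype_on_m} to transfer $E$ to a strong type $E'$ on $[m]_{\equiv}$ enjoying the same descriptive-set-theoretic invariants, a Borel-bireduction $E\sim_B E'$, and a $\Gal(T)$-equivariant homeomorphism $\varphi : [m]_{\equiv}/E' \to p(\fC)/E$ determined by $[\sigma(m)]_{E'} \mapsto [\sigma(a)]_E$. By Lemma~\ref{lem:lascar_dominates} (applied to $Y = [m]_{\equiv}$), the relation ${\equiv_\Lasc^M}$ on $S_m(M)$ dominates $(E')^M$, so $(E')^M$ is weakly uniformly properly group-like in the same ambit. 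Applying Theorem~\ref{thm:main_abstract} once more to $(E')^M$, using the same Ellis group data, produces the action of $\hat G$ on $[m]_{\equiv}/E'$ along with all the preservation properties (1)--(5) and the reduction $E_H \leq_B (E')^M$. Composing with $\varphi$ provides the desired action on $p(\fC)/E$, and the key compatibility, namely that this action factors as $\hat r_{[a]_E} = r_{[a]_E} \circ \hat r$, follows because both $\varphi$ and the comparison map $\Gal(T) \to p(\fC)/E$ from Proposition~\ref{prop:gal_action} are $\Gal(T)$-equivariant orbit maps starting from the class of $m$ and $a$ respectively.

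The properties (1)--(6) of the theorem then follow routinely: $H$ is the fibre of $\hat r_{[a]_E}$ over $[a]_E$ and hence a subgroup because $\hat r$ is a homomorphism; quotient map status and the homeomorphism $\hat G/H \cong p(\fC)/E$ come directly from Theorem~\ref{thm:main_abstract}(2); the correspondence between descriptive complexity of $E$ and $H$ is the combination of Theorem~\ref{thm:main_abstract}(3)--(4), Fact~\ref{fct: Borel in various senses} (relating type-definability, Borelness etc.\ of $E$ with the corresponding properties of $E^M$), and the preservation under the Lemma~\ref{lem:every_stype_on_m} translation; and the reduction $E_H \leq_B E$ is Theorem~\ref{thm:main_abstract}(5) composed with $E' \sim_B E$. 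For the ``moreover'' clause, if $M$ is a tame ambitious model (which holds whenever $T$ is NIP by Corollary~\ref{cor:NIP_implies_tame}), then the dynamical system $(\Aut(M), S_m(M))$ is tame, so part (6) of Theorem~\ref{thm:main_abstract} applies to give $E_H \sim_B (E')^M$, and another appeal to Lemma~\ref{lem:every_stype_on_m} finishes $E_H \sim_B E$.

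The principal technical obstacle is the bookkeeping of identifications: one must verify that the $\hat G$-action produced by the \emph{second} invocation of Theorem~\ref{thm:main_abstract} (on $(E')^M$) really is the action obtained by composing the first map $\hat r : \hat G \to \Gal(T)$ with the $\Gal(T)$-action on $p(\fC)/E$. This amounts to checking that the canonical map $\overline{u\cM}/{\equiv} \to \Gal(T) \to p(\fC)/E$ and the map $\overline{u\cM}/{\equiv} \to [m]_{\equiv}/E' \to p(\fC)/E$ coincide, which is ultimately a consequence of the fact that every element of $\hat G$ is (modulo $H(u\cM)D$) a limit of $\Aut(M)$-translates of the identity and that both routes send such a translate $\sigma$ to $[\sigma(a)]_E$. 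Everything else is a matter of assembling the abstract machinery already available.
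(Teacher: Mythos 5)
Your proof is correct and follows essentially the same route as the paper: apply Theorem~\ref{thm:main_abstract} to the ambit via Lemma~\ref{lem:lascar_grouplike} to build $\hat r$, use Lemma~\ref{lem:every_stype_on_m} to transfer $E$ to a strong type $E'$ on $[m]_{\equiv}$, observe that $\equiv_\Lasc^M$ dominates $(E')^M$, and apply Theorem~\ref{thm:main_abstract} a second time, then transport everything through the $\Gal(T)$-equivariant homeomorphism. The only cosmetic difference is that the paper uses the Lemma~\ref{lem:every_stype_on_m} translation to reduce WLOG to the case $a=m$ rather than carrying the composition with $\varphi$ explicitly; the two presentations are equivalent.
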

	\begin{proof}
		By Lemma~\ref{lem:lascar_grouplike}, $\equiv_\Lasc^M$ is uniformly properly group-like (in the ambit $(\Aut(M),S_m(M),\tp(m/M))$), so by Theorem~\ref{thm:main_abstract}, we obtain the epimorphism $\hat r \colon \hat G\to [m]_{\equiv}/{\equiv_\Lasc}$ (identified with $\Gal(T)$ and $S_m(M)/{\equiv_\Lasc}$ via Fact~\ref{fct:sm_to_gal} and Fact~\ref{fct:logic_by_type_space}).
		
		Now, fix an $p$, $a\models p$ and $E$. Note that Lemma~\ref{lem:every_stype_on_m} yields a strong type $E'$ on $[m]_\equiv$ and a map $r_1'\colon [m]_\equiv/E' \to p(\fC)/E$ satisfying all the conclusions of that lemma, in particular, $r_1'([\sigma(m)]_{E'})= [\sigma(a)]_E$ for any $\sigma \in \Aut(\fC)$. Therefore, $r_{[a]_E}=r_1'\circ r_{[m]_{E'}}$, and so $\hat r_{[a]_E}=r_1'\circ \hat r_{[m]_{E'}}$. This, together with the conclusions of Lemma~\ref{lem:every_stype_on_m}, shows that we can assume without loss of generality that that $m=a$.
		
		Note that the Borel cardinality of $E$ is by definition the Borel cardinality of $E^M$, $p(\fC)/E$ is homeomorphic to $S_m(M)/E^M$, and by Fact~\ref{fct: Borel in various senses}, we can translate topological and descriptive properties of $E$ to those of $E^M$. Recall also that by definition, if $M$ is a tame ambitious model (which is true for any ambitious model under NIP, see Corollary~\ref{cor:NIP_implies_tame}), then the dynamical system $(\Aut(M),S_m(M))$ is tame.
		
		Now, if $E$ is a strong type defined on $[m]_{\equiv}$, then it is refined by $\equiv_\Lasc$. Therefore, $E^M$ is refined by $\equiv_\Lasc^M$; of course, $\Gal(T)=S_m(M)/{\equiv_\Lasc^M}$ acts on itself preserving $E|_{\Gal(T)}$ so $E^M$ is dominated by $\equiv_\Lasc^M$, and as such it is weakly uniformly properly group-like in the ambit $(\Aut(M),S_m(M),\tp(m/M))$. Thus Theorem~\ref{thm:main_abstract} applies to $E^M$, and the function in its conclusion is just $\hat r_{[m]_{E}}\colon \hat G\to [m]_{\equiv}/E=S_m(M)/E^M$. By the preceding paragraph, the properties of $\hat r_{[m]_{E}}$ given by Theorem~\ref{thm:main_abstract} give us the properties postulated by this theorem, which completes the proof.
	\end{proof}

	\begin{cor}
		In Theorem~\ref{thm:main_galois}, we also have $E_H\sim_B E$ if $E$ is coarser than $\equiv_\KP$.
	\end{cor}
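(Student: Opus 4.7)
The plan is to route the argument through the Kim--Pillay Galois group, where Borel-equivalence for the associated coset relation is already known by Theorem~\ref{thm:main_over_KP}. Write $q_1\colon \hat G\to \Gal(T)$ for the topological group quotient map $\hat r$ produced by Theorem~\ref{thm:main_galois}, and $q_2\colon \Gal(T)\to \Gal_\KP(T)$ for the canonical projection (continuous and surjective by Fact~\ref{fct:gal_top}); let $q=q_2\circ q_1\colon \hat G\to\Gal_\KP(T)$. Both $\hat G$ (by Corollary~\ref{cor:Polish_quotient_Core(D)}) and $\Gal_\KP(T)$ (by Remark~\ref{rem: GalKP is Polish}, since $T$ is countable) are compact Polish groups, and $q$ is a continuous epimorphism between them.

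Next I would identify the subgroup $H$ coming from Theorem~\ref{thm:main_galois} with the $q$-preimage of the corresponding Kim--Pillay subgroup. Indeed, since $E$ is coarser than $\equiv_\KP$, the orbit map $r_{[a]_E}\colon \Gal(T)\to p(\fC)/E$ factors, by Proposition~\ref{prop:gal_action}, through an orbit map $r^\KP_{[a]_E}\colon \Gal_\KP(T)\to p(\fC)/E$ of the natural $\Gal_\KP(T)$-action on $p(\fC)/E$. Let $H_\KP:=(r^\KP_{[a]_E})^{-1}[\{[a]_E\}]\leq\Gal_\KP(T)$ be the stabiliser from Theorem~\ref{thm:main_over_KP}. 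By construction $\hat r_{[a]_E}=r^\KP_{[a]_E}\circ q$, so $H=q^{-1}[H_\KP]$, and therefore the coset equivalence relation $E_H$ on $\hat G$ is precisely the $(q\times q)$-preimage of $E_{H_\KP}$ on $\Gal_\KP(T)$.

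The key step is then to deduce $E_H\sim_B E_{H_\KP}$: the map $q$ is itself a continuous (hence Borel) reduction of $E_H$ to $E_{H_\KP}$, which yields $E_H\leq_B E_{H_\KP}$; conversely, since $q$ is a continuous surjection between compact Polish spaces, Fact~\ref{fct:borel_section} provides a Borel section $s\colon \Gal_\KP(T)\to \hat G$ of $q$, and $s$ reduces $E_{H_\KP}$ to $E_H$. Finally, Theorem~\ref{thm:main_over_KP}(4) (applied to $E$, which is coarser than $\equiv_\KP$) gives $E_{H_\KP}\sim_B E$, and chaining these equivalences produces $E_H\sim_B E$, as required.

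I do not anticipate a substantial obstacle here: everything reduces to a bookkeeping exercise verifying that the subgroup $H$ produced in Theorem~\ref{thm:main_galois} really is the $q$-preimage of the Kim--Pillay stabiliser $H_\KP$, after which the Borel equivalence follows mechanically from Fact~\ref{fct:borel_section} and Theorem~\ref{thm:main_over_KP}. The only point to be slightly careful about is the identification of the orbit maps, i.e.\ that $\hat r_{[a]_E}=r^\KP_{[a]_E}\circ q$ --- this is a direct consequence of the fact that both maps are $\Gal(T)$-equivariant orbit maps sending the identity to $[a]_E$, combined with the factorisation of the $\Gal(T)$-action through $\Gal_\KP(T)$ supplied by Proposition~\ref{prop:gal_action}.
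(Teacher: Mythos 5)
Your proposal is correct and follows essentially the same route as the paper: both factor $\hat r_{[a]_E}$ through a continuous epimorphism $\hat G\to\Gal_\KP(T)$, invoke Fact~\ref{fct:borel_section} to transfer Borel cardinality across it, and then chain with Theorem~\ref{thm:main_over_KP}(4). The only cosmetic difference is that you build the epimorphism as $q_2\circ\hat r$ (composing with the canonical projection $\Gal(T)\to\Gal_\KP(T)$), whereas the paper presents it as $\hat r_{\KP}$, obtained by re-running Theorem~\ref{thm:main_galois} with $E=\equiv_\KP$; these are the same map, since $r_{[m]_{\equiv_\KP}}$ is exactly the canonical projection.
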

	\begin{proof}
		If $E$ is coarser than $\equiv_\KP$, then by Theorem~\ref{thm:main_over_KP}, the orbit map $r_{[a]_E,KP}$ from $\Gal_\KP(T)$ into $p(\fC)/E$ gives us $E\sim_B E|_{\Gal_\KP(T)}$
		
		On the other hand, if we denote by $\hat r_{KP}$ the function $\hat G\to S_m(M)/{\equiv_\KP^M}=\Gal_\KP(T)$ which we get by applying Theorem~\ref{thm:main_galois} to $\equiv_\KP$ on $[m]_{\equiv}$, then $\hat r_{KP}$ is a continuous epimorphism of compact Polish groups. Because $\hat r_{[a]_E}=r_{[a]_E,KP}\circ \hat r_{KP}$, it follows by Fact~\ref{fct:borel_section} that $E|_{\hat G}\sim_B E|_{\Gal_\KP(T)}\sim_B E$.
		
		(Alternatively, this follows from Remark~\ref{rem:strengthening}.)
	\end{proof}
	
	\begin{rem}
		In Theorem~\ref{thm:main_galois}, it seems plausible that we may also have $E_H\sim_B E$ for all $E$ on a tame $p(\fC)$. If $p$ is realised in the ambitious model $M$ chosen in the proof, then it follows from Corollary~\ref{cor:tame_dominated}, but in general (e.g.\ if $a$ enumerates a larger model), there seems to be no obvious argument. The main obstacle is that there is no clear connection between the Ellis groups of flows $(\Aut(M),S_m(M))$ as we vary the ambitious model $M$. It is possible that one can use methods similar to those introduced in \cite{KNS17} to show this stronger result.\xqed{\lozenge}
	\end{rem}
	
	\begin{rem}
		Theorem~\ref{thm:main_galois} can be extended in the following way: given a subgroup $G_0\leq \Aut(\fC)$ containing $\Autf(\fC)$, and such that $G_0/\Autf(\fC)$ is closed in $\Gal(T)$, we can find a group $\hat G_0$ which witnesses Theorem~\ref{thm:main_aut} for all sets of the form $Y_a:=G_0\cdot a$ (for some countable tuple $a$) in such a way that the action of $\hat G_0$ on $Y/E$ factors through $G_0/\Autf(\fC)$. The proof is analogous, except we have to choose a countable model ambitious relative to $G_0/{\Autf(\fC)}$, and prove a variant of Lemma~\ref{lem:every_stype_on_m} for $Y_a$ in place of $p(\fC)$.
	\end{rem}

	\section{Actions of type-definable groups}
	In this section, we consider a group $G$ acting on a set $X$, such that $G,X$ and the action are type-definable (over $\emptyset$, unless specified otherwise).
	
	We also consider, for a small model $M$, the group $G(M)$ acting on $S_G(M)=G_M$ (the space of types over $M$ concentrated on $G$) and on $X_M$ (the space of $M$-types of elements of $X$). Note that for every $g\in G$, the map $x\mapsto gx$ is type-definable over $g$. It follows immediately that $G(M)$ acts on $X_M$ by homeomorphisms.
	
	\index{EG000@$E_{G^{000}_\emptyset}$}
	Throughout the section, we denote by $E_{G^{000}_\emptyset}$ the coset equivalence relation of $G^{000}_\emptyset$ on $G$.
	
	\begin{rem}
		Note that if $G$ is a group invariant over a small model $M$, then it is easy to see that the set of elements of $G$ invariant over $M$ is a group.
		
		On the other hand, if $g\in G$ is invariant over $M$, then every coordinate of $g$ is definable over $M$ (because it definable and fixed by $\Aut(\fC/M)$), and as such, it is an element of $G$. It follows that $g$ is a tuple of elements of $G$, so $g\in G(M)$, so $G(M)$ is always a subgroup of $G$.\xqed{\lozenge}
	\end{rem}

	\subsection*{Lemmas}
	We intend to apply results of Chapter~\ref{chap:grouplike}. In the following lemmas, we will show that their hypotheses are satisfied in the case of transitive type-definable group actions.
	
	\begin{lem}
		\label{lem:quot_by_normal_grouplike}
		Suppose $G$ is a $\emptyset$-type-definable group and $N\unlhd G$ is an invariant normal subgroup of bounded index. Suppose $M$ is a model such that $G(M)=G(M)\cdot \tp(e/M)$ is dense in $S_G(M)$.
		
		Then for the relation $E_N$ of lying in the same coset of $N$, $E_N^M$ is group-like on $(G(M),S_G(M))$.
	\end{lem}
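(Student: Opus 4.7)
The plan is to identify $S_G(M)/E_N^M$ with the coset space $G/N$ (equipped with the logic topology) and to transfer the topological group structure of $G/N$ across this identification, after which group-likeness in the ambit $(G(M), S_G(M), \tp(e/M))$ becomes a direct verification of the axioms of Definition~\ref{dfn:glike}. Note first that since $N$ is invariant (hence $M$-invariant), the relation $E_N^M$ is well-defined on $S_G(M)$ (per Definition~\ref{dfn:EM}). The induced map $S_G(M)/E_N^M \to G/N$, $[\tp(g/M)]_{E_N^M} \mapsto gN$, is then a bijection, and by Fact~\ref{fct:logic_by_type_space} applied to the invariant equivalence relation $E_N$ on $G$, the map $S_G(M) \to G/N$ is a topological quotient map, so this bijection must be a homeomorphism between $S_G(M)/E_N^M$ (with its quotient topology) and $G/N$ (with the logic topology). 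By Fact~\ref{fct:quotient_by_bounded_subgroup} (applied with $A = \emptyset$), $G/N$ with the logic topology is a compact (possibly non-Hausdorff) topological group.

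Next, I would check $G(M)$-invariance of $E_N^M$. If $\tp(h_1/M) \mathrel{E_N^M} \tp(h_2/M)$, then for any choices of realizations we have $h_1 h_2^{-1} \in N$, and for every $g \in G(M)$ normality of $N$ gives
\[
(g h_1)(g h_2)^{-1} = g(h_1 h_2^{-1}) g^{-1} \in N,
\]
so that $\tp(g h_1/M) \mathrel{E_N^M} \tp(g h_2/M)$. This is the only place where normality of $N$ is used in an essential way.

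Finally, with $x_0 = \tp(e/M)$, the partial operation from Definition~\ref{dfn:glike} reads, for $g \in G(M)$ and $x = \tp(h/M)$,
\[
[g \cdot x_0]_{E_N^M} \cdot [x]_{E_N^M} \;=\; (gN)(hN) \;=\; ghN \;=\; [\tp(gh/M)]_{E_N^M} \;=\; g \cdot [x]_{E_N^M},
\]
so it agrees with the group operation inherited from $G/N$. The density hypothesis $\overline{G(M) \cdot \tp(e/M)} = S_G(M)$ ensures that $(G(M), S_G(M), \tp(e/M))$ is a genuine $G(M)$-ambit, so the statement makes sense. Putting these observations together will complete the verification that $E_N^M$ is group-like.

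I do not anticipate a serious obstacle: the only point requiring minor care is the identification of the two natural topologies on $S_G(M)/E_N^M$ — the one coming as the quotient of $S_G(M)$ and the one transferred from $G/N$ — but these agree immediately because $S_G(M) \to S_G(M)/E_N^M$ and $S_G(M) \to G/N$ are both topological quotient maps with identical fibres.
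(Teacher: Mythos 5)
Your proof is correct and takes essentially the same route as the paper's: identify $S_G(M)/E_N^M$ topologically with $G/N$ via Fact~\ref{fct:logic_by_type_space}, invoke Fact~\ref{fct:quotient_by_bounded_subgroup} for the topological group structure on $G/N$, and verify that this group structure restricts to the required partial operation (equivalently, that $g\mapsto gN$ is a homomorphism $G(M)\to G/N$). The paper's proof is terser, but the underlying argument is the same; your explicit check of $G(M)$-invariance is just an unfolding of the fact that the identification $S_G(M)/E_N^M\cong G/N$ is $G(M)$-equivariant.
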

	\begin{proof}
		Note that $G/E_N=G/N$ and so, by Fact~\ref{fct:logic_by_type_space}, topologically $S_G(M)/E_N^M=G/N$. On the other hand, $G/N$ is a topological group by Fact~\ref{fct:quotient_by_bounded_subgroup}. The fact that $G(M)\to G/N$ is a group homomorphism is trivial.
	\end{proof}

	\begin{lem}[pseudocompleteness for type-definable groups]
		\label{lem:pseudocompleteness_for_groups}
		Let $M$ be a model, and suppose $G$ is a group type-definable over $M$. Let $N\succeq M$ be $\kappa^+$-saturated, where $\kappa$ is the maximum of $\lvert M\rvert$ and the length of an element of $G$, considered as a tuple (e.g.\ $N=\fC$ and $M$ is small in $\fC$).
		
		Then whenever $(g_i)_i$ and $(p_i)_i$ are nets in $G(M)$ and $S_G(M)$ (respectively) such that $\tp(g_i/M)\to q_1$, $p_i\to q_2$ and $g_i(p_i)\to q_3$ for some $q_1,q_2,q_3\in S_G(M)$, there are $g'_1,g'_2\in G(N)$ such that $\tp(g'_1/M)=q_1$, $\tp(g'_2/M)=q_2$ and $\tp(g'_1g'_2/M)=q_3$. (This is pseudocompleteness for $\tilde G=G(N)$, $X=S_G(M)$ and the map $\tilde G\to X$ given by $g\mapsto \tp(g/M)$, see Definition~\ref{dfn:prop_glike}.)
	\end{lem}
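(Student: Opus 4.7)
The plan is to imitate the proof of Lemma~\ref{lem:pseudocompleteness_for_aut}, although the argument is actually simpler here: we need to produce only two group elements in $N$ (rather than automorphisms of $N$), so the additional ``matching'' condition of the form $aa_2 \equiv a_1 a_3$ that appears there is not needed --- the conclusion will follow from a single compactness/saturation step.

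First I would use the saturation of $N$ to realise each $p_i$ inside $N$, picking some $h_i \in G(N)$ with $\tp(h_i/M) = p_i$. Since $g_i \in G(M)$, the action of $g_i$ on $S_G(M)$ is induced by left multiplication by the $M$-definable element $g_i$, so for any realisation $b$ of $p_i$ in $N$ we have $\tp(g_i b/M) = g_i(p_i)$. Consequently, $\tp(g_i h_i/M) = g_i(p_i) \to q_3$. Combining this with the other two hypotheses, for any finite collection of formulas $\varphi_1(x) \in q_1$, $\varphi_2(y) \in q_2$, and $\varphi_3(z) \in q_3$, for all sufficiently large indices $i$ we have $N \models \varphi_1(g_i) \land \varphi_2(h_i) \land \varphi_3(g_i h_i)$.

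Then I would extract the limit by saturation. Let $\mu(x, y, z)$ be a partial type over $\emptyset$ defining the graph of multiplication on $G$ (so $\mu(x, y, z)$ holds iff $x, y \in G$ and $z = x \cdot y$). Consider the partial type over $M$ in the variables $x, y, z$:
\[
\Pi(x, y, z) := q_1(x) \cup q_2(y) \cup \mu(x, y, z) \cup q_3(z).
\]
The previous paragraph shows that $\Pi$ is finitely satisfiable in $N$, witnessed by the triples $(g_i, h_i, g_i h_i) \in N^3$ for sufficiently large $i$. The parameters of $\Pi$ lie in $M$ with $|M| \leq \kappa$, and the combined length of $(x,y,z)$ is at most three times the length of a tuple in $G$, which is still $\leq \kappa$; hence $\kappa^+$-saturation of $N$ delivers a realisation $(g'_1, g'_2, g'_3) \in N$ of $\Pi$. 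Then $g'_1, g'_2 \in G(N)$ realise $q_1$ and $q_2$ respectively, and $g'_1 g'_2 = g'_3$ realises $q_3$, as required.

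There is no substantial obstacle in this argument --- it is essentially a direct packaging of compactness with the saturation hypothesis, and the only place where one has to be careful is to ensure that the parameter set and the tuple length of $\Pi$ both fit under the $\kappa^+$-saturation, which holds immediately from the definition of $\kappa$.
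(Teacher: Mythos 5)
Your proof is correct and follows essentially the same route as the paper's: realise each $p_i$ in $N$, observe that for sufficiently large $i$ the triple $(g_i, h_i, g_i h_i)$ satisfies any finite fragment of the relevant types, and conclude by $\kappa^+$-saturation. The paper's version is just more terse; your explicit packaging into the partial type $\Pi(x,y,z)$ and the cardinality check are exactly the details the paper leaves implicit.
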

	\begin{proof}
		Take any net $(a_i)_i$ in $N$ such that for all $i$, $a_i\models p_i$. Then for each $\varphi_1,\varphi_2,\varphi_3$ in $q_1,q_2,q_3$ (respectively), we have for sufficiently large $i$ that $N\models \varphi_1(g_i)\land \varphi_2(a_i)\land \varphi_3(g_ia_i)$. The conclusion follows easily by compactness.
	\end{proof}
	
	\begin{lem}
		If $M$ is any model, and If $G$ is an $M$-type-definable group, then the set $F_0=\{\tp(g_1^{-1}g_2/M)\mid g_1\equiv_M g_2\in G \}$ is closed in $S_G(M)$.
	\end{lem}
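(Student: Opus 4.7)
The plan is to mirror the proof of Lemma~\ref{lem:F_0_for_aut}, realising $F_0$ as the image of an obviously closed set under a continuous map induced by a type-definable function between type spaces.

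First, I would observe that the map $\mu\colon G\times G\to G$, $(g_1,g_2)\mapsto g_1^{-1}g_2$, is $M$-type-definable, since both multiplication and inversion on $G$ are part of the type-definable structure of $G$. By the standard principle that type-definable functions between type-definable sets induce continuous maps on type spaces (a direct consequence of Fact~\ref{fct:dtafb} and Proposition~\ref{prop:preservation_properties}, since the preimage of a basic clopen $[\varphi]\subseteq S_G(M)$ is both type-definable, using $\varphi$, and has type-definable complement, using $\neg\varphi$, hence is clopen), this produces a continuous map
\[
\mu^{*}\colon S_{G^2}(M)\to S_G(M),\qquad \tp(g_1,g_2/M)\mapsto \tp(g_1^{-1}g_2/M).
\]

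Second, I would introduce the set
\[
C:=\{p(x_1,x_2)\in S_{G^2}(M)\mid p\restr_{x_1}=p\restr_{x_2}\}=\{\tp(g_1,g_2/M)\mid g_1,g_2\in G,\; g_1\equiv_M g_2\}.
\]
For each $M$-formula $\varphi(x)$, the agreement condition $\varphi(x_1)\liff\varphi(x_2)$ defines a clopen subset of $S_{G^2}(M)$, and $C$ is the intersection of these clopen sets as $\varphi$ ranges over all $M$-formulas. Therefore $C$ is closed.

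Third, directly from the definitions, $F_0=\mu^{*}[C]$. Since $C$ is closed in the compact space $S_{G^2}(M)$, $\mu^{*}$ is continuous, and $S_G(M)$ is Hausdorff, Remark~\ref{rem: continuous surjection is closed} yields that $\mu^{*}[C]=F_0$ is closed in $S_G(M)$.

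The only nontrivial point is confirming continuity of $\mu^{*}$, which is the step where the \emph{type}-definability (as opposed to definability) of multiplication matters. But this is handled uniformly by noting that both the condition $\varphi(g_1^{-1}g_2)$ and its negation are type-definable in $(g_1,g_2)$ over $M$, so the preimage of each basic clopen in $S_G(M)$ is clopen in $S_{G^2}(M)$; the rest of the argument is purely formal.
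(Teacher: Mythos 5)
Your proof is correct. The paper itself declares this lemma ``Straightforward'' and gives no argument, so there is no paper proof to compare against; your expansion is a natural and valid way to fill in the details. Each step checks out: $\mu^{*}$ is well defined because $\mu$ is $M$-type-definable (hence $\Aut(\fC/M)$-equivariant); continuity holds because the preimage of a basic clopen $[\varphi]\cap G_M$ corresponds to the $M$-type-definable set $\{(g_1,g_2)\in G^2\mid \varphi(g_1^{-1}g_2)\}$ (obtained by projecting out the witness $g_3\in G$ with $g_3=g_1^{-1}g_2$), whose complement in $G^2$ is likewise type-definable using $\neg\varphi$, so by Fact~\ref{fct:dtafb} both sides are closed, i.e.\ the preimage is clopen; $C$ is a small intersection of clopen sets since $M$ is small; and $F_0=\mu^{*}[C]$ by direct inspection. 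One remark: an even shorter phrasing stays entirely on the semantic side: the set $D=\{(g_1,g_2)\in G^2\mid g_1\equiv_M g_2\}$ is $M$-type-definable (a small conjunction of $M$-formulas), its image $\mu[D]$ is $M$-type-definable as a projection of a type-definable set, and $F_0=(\mu[D])_M$ is closed in $S_G(M)$ by Fact~\ref{fct:dtafb}. This is essentially the same argument, just without explicitly constructing $\mu^{*}$ and appealing to Remark~\ref{rem: continuous surjection is closed}; both versions are fine.
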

	\begin{proof}
		Straightforward.
	\end{proof}
	
	When reading the proof of Lemma~\ref{lem:coset_rel_is_glike}, it may be helpful to compare the diagram below to the one in Definition~\ref{dfn:prop_glike}.
	
	\begin{center}
		\begin{tikzcd}
			G(M)\ar[r]\ar[dr] & E(G(M),S_G(M))\ar[d,"R"]\ar[dr,"r"] \\
			G(\fC)\ar[r] & S_G(M)\ar[r] & G/G^{000}_{\emptyset}
		\end{tikzcd}
	\end{center}
	\begin{lem}
		\label{lem:coset_rel_is_glike}
		Given a type-definable group $G$ and a small model $M$ such that $G(M)\cdot \tp(m/M)$ is dense in $S_G(M)$, we have that the coset relation $E_{G^{000}_{\emptyset}}^M$ on $S_G(M)$ is uniformly properly group-like (according to Definition~\ref{dfn:unif_prop_glike}).
	\end{lem}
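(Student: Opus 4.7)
The plan is to parallel the proof of Lemma~\ref{lem:lascar_grouplike}, building a Lascar-distance filtration of $G^{000}_\emptyset$ whose levels are type-definable. First, because $G^{000}_\emptyset$ is a $\emptyset$-invariant normal subgroup of $G$ of bounded index, Lemma~\ref{lem:quot_by_normal_grouplike} shows that $E_{G^{000}_\emptyset}^M$ is group-like on the ambit $(G(M), S_G(M), \tp(e/M))$. For proper group-likeness, I take $\tilde G = G(\fC)$ with the map $\tilde G \to S_G(M)$ given by $g \mapsto \tp(g/M)$; the induced map into $S_G(M)/E_{G^{000}_\emptyset}^M = G/G^{000}_\emptyset$ is the restriction of the quotient homomorphism, pseudocompleteness comes from Lemma~\ref{lem:pseudocompleteness_for_groups}, and closedness of the relevant $F_0$ is the preceding (unnamed) lemma.

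For uniform proper group-likeness, define
\[
\Theta_n := \{h \in G : h = h_1 \cdots h_k \text{ for some } k \leq n \text{ and } h_i \in G \text{ with } d_\Lasc(h_i, e) \leq n\}.
\]
By Fact~\ref{fct:distance_tdf}, together with $\kappa$-saturation and strong homogeneity of $\fC$ (which allow us to absorb the existential quantifier over the $h_i$ into a type-definable condition on $h$), each $\Theta_n$ is $\emptyset$-type-definable; it is symmetric because automorphisms commute with inversion and fix $e$, so $d_\Lasc(h^{-1}, e) = d_\Lasc(h, e)$, and it contains $e$. Moreover $G^{000}_\emptyset = \bigcup_n \Theta_n$: the set $\{h \in G : h \equiv_\Lasc e\}$ is $\Aut(\fC)$-invariant and normal in $G$, and the subgroup it generates has index in $G$ at most the bounded number of $\equiv_\Lasc$-classes of $G$ (since $a \equiv_\Lasc b$ in $G$ forces $ab^{-1}$ into this subgroup, because $\Autf(\fC)$ preserves each coset of $G^{000}_\emptyset$), so by minimality of $G^{000}_\emptyset$ the two coincide. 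Let $F_n \subseteq S_G(M)^2$ be the image of the closed set $\{\tp((g_1, g_2)/M) \in (G^2)_M : g_1 g_2^{-1} \in \Theta_n\}$ under the restriction map to $S_G(M)^2$; each $F_n$ is closed, symmetric, contains the diagonal, and $\bigcup_n F_n = E_{G^{000}_\emptyset}^M$.

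The compatibility conditions of Definition~\ref{dfn:unif_prop_glike} are then witnessed by $(F_n)' := F_{2n}$. For composition, if $g_1 g_2^{-1} \in \Theta_n$ and $g_2' g_3'^{-1} \in \Theta_n$ with $g_2 \equiv_M g_2'$, pick $\tau \in \Aut(\fC/M)$ with $\tau(g_2') = g_2$ and replace $g_3'$ by $\tau(g_3')$; then $g_1 g_3^{-1} \in \Theta_n \cdot \Theta_n \subseteq \Theta_{2n}$. For the translation condition, a pair $(\tp(e/M), \tp(\tilde g/M)) \in F_n$ gives some $g_2 \equiv_M \tilde g$ with $g_2 \in \Theta_n$ (using that $\tp(e/M)$ is realised only by $e$). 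Pick $\tau \in \Aut(\fC/M)$ with $\tau(\tilde g) = g_2$; for any $\tilde g' \in G(\fC)$, set $a := \tau(\tilde g')$ and $b := \tau(\tilde g \tilde g') = g_2 \tau(\tilde g')$. Then $a \equiv_M \tilde g'$, $b \equiv_M \tilde g \tilde g'$, and
\[
a b^{-1} = \tau(\tilde g') \cdot \bigl(g_2 \tau(\tilde g')\bigr)^{-1} = \tau(\tilde g') \tau(\tilde g')^{-1} g_2^{-1} = g_2^{-1} \in \Theta_n \subseteq \Theta_{2n},
\]
so $(\tp(\tilde g'/M), \tp(\tilde g \tilde g'/M)) \in F_{2n}$, as required.

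The main technical subtlety is choosing the filtration so that each $\Theta_n$ is genuinely type-definable (forcing the double bound on both the number of factors and the Lascar distance of each factor) and that the union still captures all of $G^{000}_\emptyset$. Once $\tau$ is applied to \emph{both} $\tilde g'$ and $\tilde g \tilde g'$ in the translation step the witness $g_2$ is carried along intact, which is the observation that makes the uniformity constants trivial; the remainder of the argument is a direct adaptation of Lemma~\ref{lem:lascar_grouplike}.
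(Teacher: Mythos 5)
Your overall strategy --- mirroring the proof of Lemma~\ref{lem:lascar_grouplike}, building a type-definable, symmetric, increasing filtration $(\Theta_n)_n$ of $G^{000}_\emptyset$, passing to the corresponding closed sets $F_n\subseteq S_G(M)^2$, and verifying the two conditions of Definition~\ref{dfn:unif_prop_glike} by conjugating with $\tau\in\Aut(\fC/M)$ --- matches the paper's proof in broad outline, and your composition and translation checks are structurally sound. However, the filtration you choose does not cover $G^{000}_\emptyset$, so the requirement $E=\bigcup\mathcal E$ from Definition~\ref{dfn:unif_prop_glike} fails. The identity $e\in G$ is the unique solution of the $\emptyset$-invariant condition ``$x\in G$ and $xg=g$ for all $g\in G$'', so each of its coordinates is $\Aut(\fC)$-invariant; hence $e\in\dcl(\emptyset)\subseteq M$ for every small model $M$. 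Consequently $\tp(e/M)$ contains the formula $x=e$, so $h\equiv_M e$ forces $h=e$, and $d_\Lasc(h,e)$ is $0$ when $h=e$ and $\infty$ otherwise. Your $\Theta_n$ therefore equals $\{e\}$ for every $n$, and $\bigcup_n\Theta_n=\{e\}\neq G^{000}_\emptyset$ in general. Relatedly, your justification that $\{h:h\equiv_\Lasc e\}$ generates $G^{000}_\emptyset$ is circular: to show $a\equiv_\Lasc b$ forces $ab^{-1}$ into that subgroup, you invoke that $\Autf(\fC)$ preserves cosets of $G^{000}_\emptyset$, which only yields $ab^{-1}\in G^{000}_\emptyset$, precisely the containment you want.

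The fix is to put the ``Lascar-distance'' condition on the \emph{pair} $(g,g')$ rather than on $g^{-1}g'$ measured against $e$. The paper takes $A:=\{g^{-1}g'\mid g\equiv_M g'\}$ --- a symmetric, $M$-type-definable subset of $G$ whose generated subgroup is $G^{000}_M$, the relevant invariant component here --- and sets $F_n:=\{(\tp(g/M),\tp(g'g/M))\mid g\in G,\ g'\in A^n\}$. The condition $g\equiv_M g'$ is a statement about the $2$-tuple and does \emph{not} collapse, whereas requiring $g^{-1}g'\equiv_\Lasc e$ always does. With $A^n$ in place of your $\Theta_n$, your remaining verifications carry over essentially unchanged, since they only use that two elements equivalent over $M$ contribute one further factor of $A$.
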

	\begin{proof}
		From the preceding lemmas it follows easily that $\tilde G=G=G(\fC)$ witnesses that $E_{G^{000}_{\emptyset}}^M$ is properly group-like, with $[\tilde g]_{\equiv}=\tp(\tilde g/M)$ (in the sense of Definition~\ref{dfn:prop_glike}).
		
		To see that it is uniformly properly group-like, denote by $A$ the symmetric subset of $G$ consisting of products $g^{-1}g'$, where $g\equiv_M g'$, and let $\mathcal E$ be the family of sets of the form $F_n=\{
		(\tp(g/M),\tp(g'g/M))\mid g \in G\land g'\in A^n \}$ (where $A^n$ is the set of all products of $n$ elements of $A$). They are clearly closed, symmetric and contain the diagonal in $S_G(M)^2$.
		
		We have that $F_{2n+1}\supseteq F_{n}\circ F_n$. Indeed, suppose we have two pairs in $F_n$: $(\tp(g/M),\tp(g'g/M))$ and $(\tp(h/M), \tp(h'h/M))$ (i.e.\ $g',h'\in A^n$) and $\tp(h/M)=\tp(g'g/M)$, then for $h''=g'g$ we have $h\equiv_M h''$, so $h(h'')^{-1}\in A$, and
		\[
			h'h=h'(h(h'')^{-1})h''=h'(h(h'')^{-1})g'g,
		\]
		so we have $(\tp(g/M),\tp(h'h/M))\in F_{2n+1}$.
		
		On the other hand, suppose $(\tp(e/M),\tp(g/M))\in F_n$. Then for some $g'\in A^n$ we have $g\equiv_M g'$. Since $A^n$ is clearly invariant over $M$, it follows that $g\in A^n$, so for any $g''\in G$ we have $(\tp(g''/M),\tp(g''g/M))\in F_n\subseteq F_{2n+1}$, which completes the proof.
	\end{proof}
	
	\begin{prop}
		\label{prop:bdd_iff_invariant}
		Suppose $A\subseteq \fC$ is a small set, $G$ is a group acting transitively on a set $X$, and $E$ is a $G$-invariant equivalence relation on $X$. Suppose in addition that $G$, $X$, $E$ and the action are all $\Aut(\fC/A)$-invariant.
		
		Then $E$ is bounded if and only if its classes are setwise $G^{000}_A$-invariant.
	\end{prop}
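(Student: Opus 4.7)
The plan is to analyse both directions via the natural action of $G$ on the quotient $X/E$, using the observation that transitivity of $G$ on $X$ turns $X/E$ into a homogeneous $G$-space.

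For the easier $(\Leftarrow)$ direction, first I would suppose that every $E$-class is setwise $G^{000}_A$-invariant. Then the action of $G$ on $X/E$ factors through $G/G^{000}_A$, and since $G$ acts transitively on $X$ it also acts transitively on $X/E$. Hence $\lvert X/E\rvert\leq [G:G^{000}_A]$, which is small by the very definition of $G^{000}_A$; so $E$ is bounded.

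For the main $(\Rightarrow)$ direction, suppose $E$ is bounded. Fix any $x\in X$ and put $H:=\Stab_G([x]_E)=\{g\in G\mid gx\Er x\}$. Transitivity gives a bijection $G/H\to X/E$, so $[G:H]$ is small. Now let $K$ be the kernel of the action of $G$ on $X/E$, i.e.\ $K=\bigcap_{x'\in X}\Stab_G([x']_E)$. Since the stabilizers of classes are all conjugate (again by transitivity), $K$ is the normal core of $H$ in $G$, and $G/K$ embeds into $\Sym(X/E)$; because the degree of saturation is a strong limit cardinal and $\lvert X/E\rvert$ is small, $\lvert \Sym(X/E)\rvert$ is small, hence $[G:K]$ is small. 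The key additional observation is that $K$ is $\Aut(\fC/A)$-invariant: given $\sigma\in\Aut(\fC/A)$ and $g\in K$, for any $x'\in X$ we have $g[\sigma^{-1}(x')]_E=[\sigma^{-1}(x')]_E$, and applying $\sigma$ (which preserves $G$, $X$, the action and $E$) yields $\sigma(g)[x']_E=[x']_E$; thus $\sigma(g)\in K$. Therefore $K$ is an $A$-invariant subgroup of $G$ of bounded index, so by minimality $G^{000}_A\subseteq K\subseteq H$, i.e.\ $[x]_E$ is setwise $G^{000}_A$-invariant; as $x$ was arbitrary, we are done.

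No step is genuinely difficult here; the only subtlety is the bookkeeping to ensure that $K$ really is $\Aut(\fC/A)$-invariant, which relies on the standing hypothesis that $G$, $X$, $E$ and the action are all preserved by $\Aut(\fC/A)$, and that $[G:K]$ is indeed small, which uses that the monster's degree of saturation was chosen to be a strong limit cardinal.
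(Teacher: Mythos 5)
Your proof is correct and follows essentially the same strategy as the paper: both directions hinge on the kernel $K$ of the induced $G$-action on $X/E$, which is an $A$-invariant normal subgroup of small index, forcing $G^{000}_A\leq K$. You supply a bit more detail than the paper does (in particular, deriving smallness of $[G:K]$ from the embedding $G/K\hookrightarrow\Sym(X/E)$ and the strong-limit choice of $\kappa$, and spelling out the $A$-invariance of $K$); the detour through $H$ and its normal core is harmless but not needed.
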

	\begin{proof}
		Note that since $E$ is $G$-invariant, $G$ acts on $X/E$.
		
		If $E$ is bounded, then $X/E$ is small, so the kernel of this action has small index. By the assumptions, the kernel is also invariant over $A$, so it contains $G^{000}_A$, which implies that the classes are $G^{000}_A$-invariant.
		
		In the other direction, if all classes of $E$ are setwise $G^{000}_A$-invariant, then for any $x_0\in X$, the assignment $gG^{000}_A\mapsto [g\cdot x_0]_E$ yields a well-defined function $G/G^{000}_A\to X/E$. Because $G$ acts transitively on $X$, this function is surjective. In particular, $\lvert X/E\rvert\leq [G:G^{000}_A]$, so $E$ is bounded.
	\end{proof}

	\begin{lem}
		\label{lem:weakly_grouplike_tdf}
		Suppose $G$ is a type-definable group acting type-definably and transitively on a type-definable set $X$ (all without parameters).
		
		Suppose in addition that $E$ is a $G$-invariant bounded invariant equivalence relation on $X$.
		
		Let $M$ be a small model such that $X(M)$ is nonempty, while $G(M)$ is dense in $S_G(M)$, and choose some $x_0\in X(M)$.
		
		Then the relation $E_{G^{000}_\emptyset}^M$ on the ambit $(S_G(M),\tp(e/M))$ dominates $E^M$ on the ambit $(X_M,\tp(x_0/M))$, and in particular (by Lemma~\ref{lem:coset_rel_is_glike}), $E^M$ is weakly uniformly properly group-like on the ambit $(G(M),X_M,\tp(x_0/M))$.
	\end{lem}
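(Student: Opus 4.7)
The plan is to produce a concrete ambit morphism witnessing the domination, then verify the three clauses in the definition of domination (refinement, equivariance of the induced quotient, being a morphism of ambits) largely as bookkeeping exercises, invoking Proposition~\ref{prop:bdd_iff_invariant} as the one genuinely nontrivial input.

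First I would define the candidate morphism $\varphi\colon S_G(M)\to X_M$ by $\tp(g/M)\mapsto \tp(g\cdot x_0/M)$. Since $x_0\in X(M)$ is $M$-definable and the action $G\times X\to X$ is type-definable over $\emptyset$, the map $g\mapsto g\cdot x_0$ is an $M$-type-definable function $G\to X$, and so it induces a continuous map between the corresponding Stone spaces; this gives both well-definedness and continuity of $\varphi$. Since $e\cdot x_0=x_0$, we have $\varphi(\tp(e/M))=\tp(x_0/M)$, so basepoints are preserved. For any $h\in G(M)$ and $g\in G$, we have $h\cdot(g\cdot x_0)=(hg)\cdot x_0$, so $\varphi(h\cdot\tp(g/M))=\varphi(\tp(hg/M))=\tp(hg\cdot x_0/M)=h\cdot\varphi(\tp(g/M))$, establishing $G(M)$-equivariance. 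Hence $\varphi$ is a $G(M)$-ambit morphism from $(S_G(M),\tp(e/M))$ to $(X_M,\tp(x_0/M))$.

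Next I would check that $E_{G^{000}_\emptyset}^M$ refines the pullback $E^M|_{S_G(M)}$ (the pullback being understood in the sense of Definition~\ref{dfn:induced_relation} applied to the composition of $\varphi$ with the quotient $X_M\to X_M/E^M$). Suppose $\tp(g_1/M)\mathrel{E_{G^{000}_\emptyset}^M}\tp(g_2/M)$. Choosing realisations (and using strong $|M|^+$-homogeneity to place them in the same $M$-orbit as $g_1,g_2$) we may assume $g_1,g_2\in G$ lie in the same coset of $G^{000}_\emptyset$, i.e.\ $g_1^{-1}g_2\in G^{000}_\emptyset$. By Proposition~\ref{prop:bdd_iff_invariant}, applied to the invariant bounded $G$-invariant relation $E$ (with $A=\emptyset$), classes of $E$ are setwise $G^{000}_\emptyset$-invariant, so $g_1^{-1}g_2\cdot x_0\in [x_0]_E$, and hence $g_2\cdot x_0\in [g_1\cdot x_0]_E$. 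Translating back to types, this gives $\varphi(\tp(g_1/M))\mathrel{E^M}\varphi(\tp(g_2/M))$, as required.

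Finally I would handle equivariance of the induced surjection $S_G(M)/E_{G^{000}_\emptyset}^M\to X_M/E^M$. By Fact~\ref{fct:logic_by_type_space} the left-hand side is canonically identified with $G/G^{000}_\emptyset$, and the right-hand side with $X/E$; under these identifications the induced map is exactly $[g]_{G^{000}_\emptyset}\mapsto [g\cdot x_0]_E$. This is the orbit map at $[x_0]_E$ for the factor action of $G/G^{000}_\emptyset$ on $X/E$ (well-defined precisely because $G^{000}_\emptyset$ stabilises $[x_0]_E$, again by Proposition~\ref{prop:bdd_iff_invariant}), so it is tautologically equivariant. This verifies all three conditions in the definition of domination, so $E_{G^{000}_\emptyset}^M$ dominates $E^M$ in the ambit $(G(M),X_M,\tp(x_0/M))$. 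The ``in particular'' clause is then immediate from Lemma~\ref{lem:coset_rel_is_glike}, since domination by a uniformly properly group-like relation is exactly weak uniform proper group-likeness. There is no serious obstacle; the only care required is the passage between realisations in $\fC$ and their types over $M$ when invoking Proposition~\ref{prop:bdd_iff_invariant}, which is handled by choosing witnesses via $M$-homogeneity of $\fC$.
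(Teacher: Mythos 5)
Your proposal is correct and follows the paper's proof essentially verbatim: define the morphism $S_G(M)\to X_M$ via the $M$-type-definable orbit map $g\mapsto g\cdot x_0$, invoke Proposition~\ref{prop:bdd_iff_invariant} for the refinement of $E^M|_{S_G(M)}$ by $E_{G^{000}_\emptyset}^M$, and observe that $G$-invariance of $E$ together with the same proposition makes $G/G^{000}_\emptyset$ act on $X/E$, witnessing domination. The one small thing you leave implicit is that transitivity of the $G$-action is needed to make $\varphi$ surjective (and hence $(G(M),X_M,\tp(x_0/M))$ an ambit and the induced map on quotients onto), which the paper records with a single clause; and the parenthetical about well-definedness should really say that $G^{000}_\emptyset$ stabilises \emph{every} $E$-class (which is exactly what Proposition~\ref{prop:bdd_iff_invariant} gives), not just $[x_0]_E$.
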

	\begin{proof}
		Since $G$ acts type-definably on $X$, it follows that the orbit map $g\mapsto g\cdot x_0$ is type-definable over $M$, so it induces a continuous map $S_G(M)\to X_M$. Because the action of $G$ on $X$ is transitive, this map is onto. By Proposition~\ref{prop:bdd_iff_invariant}, $E$-classes are setwise $G^{000}_\emptyset$-invariant, which implies that $E^M|_{S_G(M)}$ (see Definition~\ref{dfn:induced_relation}) is refined by $E_{G^{000}_\emptyset}^M$. Since $E$ is $G$-invariant, it follows that $S_G(M)/E_{G^{000}_\emptyset}^M=G/G^{000}_\emptyset$ acts on $X/E=X_M/E^M$, witnessing the domination.
	\end{proof}
	
	\begin{prop}
		\label{prop:amb_for_groups}
		Let $x_0\in \fC$ be an arbitrary tuple, and let $G$ be a $\emptyset$-type-definable group, consisting of tuples of length at most $\lambda$.
		
		Then there is a model $M$ of cardinality at most $\lvert T\rvert+\lvert x_0\rvert+\lambda$ containing $x_0$, such that $G(M)$ is dense in $S_G(M)$.
	\end{prop}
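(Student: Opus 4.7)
The plan is to build $M$ as the union of an elementary chain $(M_n)_{n<\omega}$ in which, at each stage, we throw in realisations of every consistent $M_n$-formula restricting the type-defining partial type $\pi_G$ of $G$. This is the direct analogue of the construction used in Proposition~\ref{prop:amb_exist}, specialised from the automorphism-group case to the type-definable-group case.

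Set $\kappa := |T|+|x_0|+\lambda$ (absorbing $\aleph_0$ into this if necessary). First, by downward Löwenheim-Skolem and the saturation of $\fC$, I would obtain a model $M_0\preceq \fC$ of cardinality $\kappa$ containing $x_0$. For the recursive step, given $M_n\preceq \fC$ of size $\kappa$, consider the collection of formulas $\varphi(x)$ with parameters in $M_n$ (and variables matching the sort-tuple of $G$) such that $\pi_G(x)\cup \{\varphi(x)\}$ is consistent; there are at most $\kappa$ such formulas. For each, pick by saturation some $g_\varphi \in G(\fC)$ realising $\varphi$. Since each $g_\varphi$ is a tuple of length at most $\lambda$, the set $M_n\cup \bigcup_\varphi g_\varphi$ has cardinality at most $\kappa+\kappa\cdot \lambda = \kappa$. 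Apply downward Löwenheim-Skolem to find $M_{n+1}\preceq \fC$ of size $\kappa$ containing this set. Finally, set $M:=\bigcup_n M_n$, which has size $\kappa$, contains $x_0$, and is an elementary substructure of $\fC$.

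To verify density of $G(M)$ in $S_G(M)$, observe that every nonempty basic open subset of $S_G(M)$ has the form $[\varphi(x)]\cap [\pi_G]$ for some $M$-formula $\varphi$ which is consistent with $\pi_G$; the parameters of $\varphi$ lie in some $M_n$, and so at step $n+1$ we included a realisation $g_\varphi\in G(\fC)$ of $\varphi$ inside $M_{n+1}$. Since $g_\varphi$ is a tuple of elements of $M_{n+1}$ satisfying each formula of $\pi_G$, and since $M_{n+1}\preceq \fC$, we have $g_\varphi\in G(M_{n+1})\subseteq G(M)$, and $\tp(g_\varphi/M)$ lies in the chosen basic open set.

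There is no real obstacle here: the proof is a straightforward Tarski-Vaught/Löwenheim-Skolem-style construction. The only subtle point is to confirm that realisations drawn from $G(\fC)$ and placed into an elementary substructure $M_{n+1}$ are genuinely in $G(M_{n+1})$ as opposed to merely in $G(\fC)$, which works precisely because $G$ is $\emptyset$-type-definable (so membership in $G$ is tested formula-by-formula and hence preserved by elementary substructures containing the tuple).
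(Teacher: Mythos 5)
Your proof is correct and follows essentially the same approach as the paper: build $M$ as the union of an $\omega$-length elementary chain, at each stage adding to $M_{n+1}$ realisations from $G(\fC)$ of all consistent formulas over $M_n$, and use the chain structure to verify density. The only cosmetic difference is that the paper's sketch picks a small \emph{subgroup} $G_0\leq G$ realising a dense set of types over $M_n$, whereas you pick an arbitrary set of realisations; this makes no difference, since $G(M)$ is automatically a subgroup of $G$ and only density is required.
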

	\begin{proof}
		The proof is analogous to that of Proposition~\ref{prop:amb_exist}
		
		Roughly, start with any model $M_0$ containing $x_0$, and find a small group $G_0\leq G$ such that $\{\tp(g_0/M_0)\mid g_0\in G_0 \}$ is dense in $S_G(M_0)$, and expand $M_0$ to a small model $M_1$ containing $G_0$, and continue. After $\omega$ steps, take the union of the resulting elementary chain.
		
		(Note that if $G$ is a \emph{definable} group, then $G(M)$ is always dense in $S_G(M)$.)
	\end{proof}
	
	We have the following proposition, analogous to Corollary~\ref{cor:NIP_implies_tame}.
	\begin{prop}
		\label{prop:nip_tame_group}
		Suppose $G$ is a type-definable group acting type-definably on a type-definable set $X$. Let $M$ be a model over which $G,X$ and the action are type-definable. Then if $X$ has NIP, then the dynamical system $(G(M),X_M)$ is tame (cf.\ Definition~\ref{dfn:NIP_set} and Definition~\ref{dfn:tame_function_system}).
	\end{prop}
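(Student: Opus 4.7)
The plan is to prove the contrapositive: assume the system $(G(M), X_M)$ is untame and derive that some formula has IP on $X$ (times a product of sorts), contradicting the NIP assumption on $X$. The whole argument will hinge on translating the type-definable action into a single relatively-definable relation on $G \times X$.

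First, by Corollary~\ref{cor:tame_dense} it suffices to consider characteristic functions of clopens: if $(G(M), X_M)$ is untame, there is a clopen $V = [\varphi(x, \bar b)] \cap X_M$ with $\bar b \in M$ whose characteristic function is untame. Then by the dynamical BFT dichotomy (Proposition~\ref{prop:dyn_BFT}, via Fact~\ref{fct:bft}) I can extract a sequence $(g_n)_n$ in $G(M)$ such that the clopens $\{g_n^{-1}(V) : n \in \omega\}$ form an independent family in $X_M$; unpacking, $g_n^{-1}(V) = \{\tp(a/M) \in X_M : {} \models \varphi(g_n \cdot a, \bar b)\}$.

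The key step is to produce a single formula $\theta(y, x, \bar c)$ over $M$, with $y$ in the sort of $G$ and $x$ in the sort of $X$, such that for every $g \in G$ and $a \in X$,
\[
\models \varphi(g \cdot a, \bar b) \quad\liff\quad {} \models \theta(g, a, \bar c).
\]
To produce $\theta$, I would use that the action map $\alpha \colon G \times X \to X$, $(g, a) \mapsto g \cdot a$, is type-definable over $M$, hence induces a continuous map $\alpha^{*} \colon (G \times X)_M \to X_M$ given by $\tp((g,a)/M) \mapsto \tp(g \cdot a/M)$. Then $\alpha^{*-1}(V)$ is clopen in $(G \times X)_M$; since the latter is a closed subspace of the Stone space $S_{y,x}(M)$, and in a Stone space every clopen of a closed subspace extends to a clopen of the ambient space (by separation of disjoint closed sets by clopens), this preimage is of the form $[\theta(y,x,\bar c)] \cap (G \times X)_M$ for some formula $\theta$ over $M$, which gives the required equivalence.

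Once $\theta$ is found, the conclusion is routine: the independence of $\{g_n^{-1}(V)\}_n$ translates, via the equivalence, into independence of the family $\{\{a \in X : {} \models \theta(g_n, a, \bar c)\}\}_n$ as subsets of $X$, so the formula $\Theta(x; y, z) := \theta(y, x, z)$ has IP on $X$ witnessed by the parameter sequence $((g_n, \bar c))_n$. This contradicts the NIP of $X$ in the sense of Definition~\ref{dfn:NIP_set}. The main obstacle is the production of the uniform formula $\theta$: here the type-definability (rather than plain definability) of $G$ and of the action forces us to work through continuity of $\alpha^{*}$ on type spaces rather than via direct syntactic substitution, after which everything else is formal.
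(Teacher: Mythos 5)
Correct, and essentially the same approach as the paper: both argue by contraposition, invoke Proposition~\ref{prop:dyn_BFT} to obtain a clopen in $X_M$ with independent $G(M)$-translates, and then use type-definability of the action to realize the condition ``$g\cdot a \models \varphi$'' by a single formula $\theta(y,x,\bar c)$ over $M$ in the variables $(y,x)$ ranging over $G\times X$, yielding IP on $X$. Your Stone-space route to $\theta$ (pull back the clopen along the continuous $\alpha^{*}\colon (G\times X)_M\to X_M$, then extend the resulting clopen of the closed subspace to a clopen of $S_{y,x}(M)$) is just the topological rephrasing of the paper's syntactic step (``$\mu^{-1}[\varphi(\fC)\cap X]$ is type-definable with type-definable complement in $G\times X$, hence relatively definable''), so the two arguments coincide up to the standard duality between clopen sets in Stone spaces and (relatively) definable sets.
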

	\begin{proof}
		The proof is by contraposition. Suppose $(G(M),X_M)$ is untame. We will show that $X$ has IP (i.e.\ does not have NIP).
		
		Since $X_M$ is totally disconnected, by Proposition~\ref{prop:dyn_BFT}, there is a clopen subset $U\subseteq X_M$ and a sequence $(g_n)_{n\in \bN}$ in $G(M)$ such that the sets $g_nU$ are independent. Fix a formula $\varphi(x)$ with parameters in $M$ giving $U$ (i.e.\ such that $[\varphi(x)]\cap X_M=U$).
		
		Write $\mu$ for the multiplication $G\times X\to X$. Note that since $\mu$ is type-definable, the preimage $\mu^{-1}[\varphi(\fC)]$ is relatively definable over $M$ in $G\times X$ (because it is type-definable and so is its preimage, $\mu^{-1}[\neg \varphi(\fC)]$). Let $\psi(y,x)$ be a formula (with parameters from $M$) such that $\psi(G,X)=\mu^{-1}[\varphi(\fC)\cap X]$. Then by the assumption, $\psi(g_n,\fC)\cap X=g_n(\varphi(\fC)\cap X)$ are independent subsets of $X$, so $\psi$ does not have NIP on $G\times X$, so $X$ does not have NIP.
	\end{proof}
	
	The following proposition shows that NIP assumption on $G$ implies NIP for all transitive (type-definable) $G$-spaces.
	\begin{prop}
		The image of an NIP set by a type-definable surjection is NIP. In particular, if $G$ is a type-definable group with NIP, acting transitively and type-definably on $X$, then $X$ also has NIP.
	\end{prop}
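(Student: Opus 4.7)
The plan is to prove the contrapositive of the image claim: assuming $X$ has IP, I will produce a formula with IP on $Y$, contradicting that $Y$ is NIP. After renaming parameters, I may assume that $X$, $Y$, and the partial type $\pi(y,x)$ defining the graph of $f$ are all type-definable over a small model $M_0$. Let $\varphi(x,z)$ have IP on $X \times Z$, witnessed (via Remark~\ref{rem:NIP_indiscernible}) by a sequence $(c_n)_{n \in \omega}$ in $Z$ that is indiscernible over $M_0$ and makes $\varphi(\fC, c_n) \cap X$ independent. I then take a small model $M \subseteq \fC$ containing $M_0$ and $\{c_n : n \in \omega\}$ which is strongly $|M_0|^+$-homogeneous, so that any two elements of $M$ realising the same type over $M_0$ are conjugate by an element of $\Aut(M/M_0)$.

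The central object is the induced map $f_M \colon Y_M \to X_M$, $\tp(y/M) \mapsto \tp(f(y)/M)$. It is well-defined because $f$ is $\Aut(\fC/M_0)$-invariant, surjective because $f$ is, and $\Aut(M/M_0)$-equivariant (each such automorphism of $M$ extends to $\fC$ and commutes with $f$). For continuity, given any formula $\psi(x)$, the condition ``$\psi(f(y))$'' is equivalent by compactness to the partial type $\bigwedge_{\pi_0 \subseteq \pi \text{ finite}} \exists x\,(\pi_0(y,x) \land \psi(x))$, so $f_M^{-1}([\psi] \cap X_M)$ is closed in $Y_M$; applying this to both $\psi$ and $\neg \psi$ shows that preimages of basic clopens are clopen, hence $f_M$ is continuous.

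The clopens $U_n := [\varphi(x, c_n)] \cap X_M$ are independent in $X_M$, so $V_n := f_M^{-1}(U_n)$ are independent clopens in $Y_M$ (continuity gives clopenness, and surjectivity of $f_M$ preserves independence). Write $V_0 = [\theta(y, d)] \cap Y_M$ for some formula $\theta(y, z')$ and tuple $d \in M$. By indiscernibility of $(c_n)$ over $M_0$ and strong homogeneity of $M$, for each $n$ there is $\sigma_n \in \Aut(M/M_0)$ with $\sigma_n(c_0) = c_n$; extending $\sigma_n$ to $\tilde\sigma_n \in \Aut(\fC/M_0)$, the equivariance of $f_M$ together with the identity $\tilde\sigma_n \cdot [\varphi(x, c_0)] = [\varphi(x, c_n)]$ give $V_n = [\theta(y, \tilde\sigma_n(d))] \cap Y_M$. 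Setting $d_n := \tilde\sigma_n(d)$, the independence of the $V_n$ translates into independence of $(\theta(\fC, d_n) \cap Y)_n$, so $\theta$ has IP on $Y$ — contradiction.

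For the ``in particular'' part, given a type-definable NIP group $G$ acting transitively and type-definably on $X$, fix any $x_0 \in X$; then the orbit map $g \mapsto g \cdot x_0$ is a type-definable surjection $G \to X$, so the image claim gives $X$ NIP. The main technical point is the continuity argument for $f_M$, in which the type-definable condition ``$\psi(f(y))$'' is rewritten as a conjunction of formula conditions via compactness. Once $f_M$ is established as a continuous, surjective, $\Aut(M/M_0)$-equivariant map of type spaces, the independence pulls back mechanically, and strong homogeneity upgrades the single clopen $V_0$ to a family of ``translates'' defined by one formula $\theta(y, z')$ with varying parameters $d_n$.
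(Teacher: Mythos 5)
Your proof is correct and takes essentially the same approach as the paper: arguing by contrapositive, pulling back the independent family, and using indiscernibility of the witness sequence over the base together with invariance of $f$ to produce automorphisms that uniformize the (relatively definable) preimages into a single formula $\theta$ with varying parameters. The paper carries this out directly in $\fC$ --- observing that $f^{-1}[\varphi(\fC,c_0)\cap X]$ is relatively definable in the domain because both it and its complement there are type-definable, then conjugating that one definable set --- whereas you route the same mechanism through the type spaces $Y_M$, $X_M$ and a strongly homogeneous model $M$; the extra continuity and homogeneity bookkeeping is not needed but does no harm.
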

	\begin{proof}
		The proof is by contraposition. We will show that if the range of a type-definable function does not have NIP, then neither does the domain.
		
		Fix a small set of parameters $A$, a set $X$, and a surjection $f\colon Z\to X$, all type-definable over $A$.
		
		Suppose $X$ has IP. Then there is a formula $\varphi(x,y)$ witnessing it; we may assume without loss of generality that all parameters in $\varphi$ are from $A$ (making it larger if necessary). In particular, by Remark~\ref{rem:NIP_indiscernible}, we can find a sequence $(b_n)_{n\in\bN}$, indiscernible over $A$, such that the sets $\varphi(\fC,b_n)\cap X$ are independent in $X$. Then clearly the sets $f^{-1}[\varphi(\fC,b_n)\cap X]$ are independent in $Z$, and we only need to show that they are uniformly definable.
		
		First, note that the set $f^{-1}[\varphi(\fC,b_0)]$ is relatively definable in $Z$ (it is obviously type-definable, and the same is true about its complement in $Z$), so there is some definable set $W$ such that $W\cap Z=f^{-1}[\varphi(\fC,b_0)]$. Now, since the sequence $(b_n)_{n\in \bN}$ is indiscernible over $A$, we can find, for each $n$, some automorphism $\sigma_n\in \Aut(\fC/A)$ such that $\sigma_n(b_0)=b_n$. But since $f$ and $X$ are invariant over $A$ and $\varphi(x,y)$ has parameters only from $A$, it follows that $\sigma_n(W)\cap Z=f^{-1}[\varphi(\fC,\sigma_n(b_0))\cap X]=f^{-1}[\varphi(\fC,b_n)\cap X]$. Since $\sigma_n(W)$ are clearly uniformly definable, we are done.
	\end{proof}
	\begin{rem}
		Note also that it is not hard to see that if $(G(M),G_M)$ is a tame dynamical system, then for every $M$-type-definable transitive $G$-space $X$ with nonempty $X(M)$, the system $(G(M),X_M)$ is also tame: under those hypotheses, we can have a $G(M)$-ambit morphism $(G_M,\tp(e/M))\to (X_M,\tp(x_0/M))$ (where $x_0\in X(M)$ is arbitrary), and apply Fact~\ref{fct:tame_preserved}.\xqed{\lozenge}
	\end{rem}
	
	\subsection*{Results for type-definable group actions}
	
	Now, Lemma~\ref{lem:weakly_grouplike_tdf} allows us to apply preceding results, including Theorem~\ref{thm:general_cardinality_intransitive}, Theorem~\ref{thm:general_cardinality_transitive} and Theorem~\ref{thm:main_abstract}. In particular, we have the following theorem (originally, \cite[Theorem 8.4]{KR18}, joint with Krzysztof Krupiński).

	\begin{thm}
		\label{thm:main_tdf}
		Suppose that the theory is countable, and $A\subseteq \fC$ is a countable set of parameters.
		
		Let $G$ be an type-definable group (of countable tuples), acting type-definably and transitively on a type-definable set $X$ (of countable tuples), all with parameters in $A$. Let $E$ be a bounded, $G$-invariant and $\Aut(\fC/A)$-invariant equivalence relation on $X$.
		
		Then there is a compact Polish group $\hat G$ acting continuously on $X/E$, and such that for any $x_0\in X$, the stabiliser $H$ of $[x_0]_E$, and the orbit map $\hat r\colon \hat G\to X/E$, $\hat g\mapsto \hat g\cdot [x_0]_E$, have the following properties:
		\begin{enumerate}
			\item
			$H\leq \hat G$ and fibres of $\hat r$ are exactly the left cosets of $H$ (so $\hat G/E|_{\hat G}=\hat G/H$),
			\item
			$\hat r$ is a topological quotient map (so it induces a homeomorphism of $\hat G/H$ and $X/E$),
			\item
			$E$ is relatively definable (as a subset of $X^2$) or type-definable if and only if $H$ is clopen or closed (respectively)
			\item
			if $E$ is $F_\sigma$, Borel, or analytic (respectively), then so is $H$,
			\item
			$\hat G/H\leq_B E$.
		\end{enumerate}
		Furthermore, if $X$ has NIP (in particular, if $G$ has NIP or, yet more generally, if $T$ has NIP), then $\hat G/H\sim_B E$.
	\end{thm}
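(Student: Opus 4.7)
The plan is to reduce this theorem to the abstract Theorem~\ref{thm:main_abstract} by choosing a good countable model and working with the induced dynamical system on the type space, exactly parallel to the strategy used for Theorem~\ref{thm:main_aut}. First I would invoke Proposition~\ref{prop:amb_for_groups} to obtain a countable model $M \supseteq A \cup \{x_0\}$ such that $G(M)$ is dense in $S_G(M)$; this is the analogue of choosing an ambitious model in the automorphism-group setting. Since everything is invariant over $A \subseteq M$, the group $G(M)$ acts naturally on the compact Polish space $X_M$, and $(G(M), X_M, \tp(x_0/M))$ is a (metrisable) $G(M)$-ambit because $G(M) \cdot x_0$ has image dense in $X_M$ (using transitivity of $G$ on $X$ and density of $G(M)$ in $S_G(M)$, together with the type-definable orbit map $S_G(M) \to X_M$).

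Next I would apply Lemma~\ref{lem:weakly_grouplike_tdf}: the relation $E_{G^{000}_\emptyset}^M$ on $S_G(M)$ is uniformly properly group-like (by Lemma~\ref{lem:coset_rel_is_glike}) and dominates $E^M$ on $X_M$, so $E^M$ is weakly uniformly properly group-like on the ambit $(G(M), X_M, \tp(x_0/M))$. Theorem~\ref{thm:main_abstract} therefore applies, producing the compact Polish group $\hat G := u\cM / \Core(H(u\cM)D)$ (where $u\cM$ is an Ellis group of the flow $(G(M), X_M)$) acting continuously on $X_M/E^M$, together with an orbit map $\hat r \colon \hat G \to X_M/E^M$ at $[\tp(x_0/M)]_{E^M}$ which is a topological quotient map onto $\hat G/H$, where $H$ is the stabiliser of that point. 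This gives items (1)--(2) of the conclusion directly, and item (5) ($\hat G/H \leq_B E^M$) from Theorem~\ref{thm:main_abstract}(5).

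To finish, I would translate between $E$ on $X$ and $E^M$ on $X_M$ using Fact~\ref{fct: Borel in various senses}: $E$ is relatively definable, type-definable, $F_\sigma$, Borel, or analytic if and only if $E^M$ is clopen, closed, $F_\sigma$, Borel, or analytic respectively (as a subset of $X_M^2$), and the homeomorphism $X/E \cong X_M/E^M$ is canonical; this yields items (3) and (4). Finally, the Borel cardinality of $E$ is by Definition~\ref{dfn:bier_borelcard} the Borel cardinality of $E^M$, so $\hat G/H \leq_B E$ is just (5) translated. For the ``furthermore'' clause under NIP of $X$, Proposition~\ref{prop:nip_tame_group} tells us that $(G(M), X_M)$ is a tame dynamical system, so Theorem~\ref{thm:main_abstract}(6) upgrades the reduction to Borel bi-reducibility $\hat G/H \sim_B E^M \sim_B E$. (The case of NIP $G$ or NIP $T$ reduces to NIP of $X$ via the preceding proposition, since $X$ is the image of $G$ under the type-definable orbit map.)

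The main obstacle is essentially conceptual rather than technical: one must verify carefully that the hypotheses of Lemma~\ref{lem:weakly_grouplike_tdf} are met, namely that a model $M$ with $G(M)$ dense in $S_G(M)$ exists in the countable setting while still containing both $A$ and a chosen $x_0$; Proposition~\ref{prop:amb_for_groups} handles this, but some care is required because the density of $G(M)$ must be preserved through the elementary-chain construction. Everything else is bookkeeping: the heavy lifting (construction of $\hat G$, continuity of the action, closedness/openness dichotomies, tameness $\Rightarrow$ Borel bi-reducibility) has already been done at the abstract level in Chapter~\ref{chap:grouplike} and needs only to be specialised.
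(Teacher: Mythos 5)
Your proposal is correct and follows essentially the same route as the paper: choose a countable model $M$ via Proposition~\ref{prop:amb_for_groups}, invoke Lemma~\ref{lem:weakly_grouplike_tdf} to make $E^M$ weakly uniformly properly group-like on the ambit $(G(M),X_M,\tp(x_0/M))$, apply Theorem~\ref{thm:main_abstract}, and translate back via Fact~\ref{fct: Borel in various senses}, with Proposition~\ref{prop:nip_tame_group} handling the NIP clause. The only cosmetic omission is that the paper begins by absorbing the countable parameter set $A$ into the language (so that $G$, $X$, $E$ become $\emptyset$-(type-)definable/invariant, matching the hypotheses of Lemma~\ref{lem:weakly_grouplike_tdf} verbatim), whereas you work with $A$ implicit inside $M$; this is a routine normalisation and does not affect correctness.
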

	\begin{proof}
		Note first that we may assume without loss of generality that $A=\emptyset$ (if necessary, we may add some countably many parameters to the language).
		
		Fix $x_0$ and find a countable model $M$ as in Proposition~\ref{prop:amb_for_groups}. Then Lemma~\ref{lem:weakly_grouplike_tdf} applies, and we can apply Theorem~\ref{thm:main_abstract}, arguing as in the proof of Theorem~\ref{thm:main_aut}.
		
		More precisely, by Proposition~\ref{prop:amb_for_groups}, we can fix a model $M$ satisfying the hypotheses of Lemma~\ref{lem:weakly_grouplike_tdf}, and then for any $x_0\in X(M)$, $(G(M),X_M,\tp(x_0/M))$ is an ambit and $E^M$ is weakly uniformly properly group-like. Furthermore, by Proposition~\ref{prop:nip_tame_group}, if $X$ has NIP, then $(G(M),X_M)$ is tame.
		
		Recall that we identify $X/E$ and $X_M/E^M$ (and the identification is homeomorphic), the Borel cardinality of $E$ is by definition the Borel cardinality of $E^M$, and by Fact~\ref{fct: Borel in various senses}, we have that $E$ is relatively definable in $X^2$, type-definable, $F_\sigma$, Borel, or analytic if and only if $E^M$ is clopen, closed, $F_\sigma$, Borel or analytic (respectively).
		
		Thus, the by the third paragraph, the assumptions of Theorem~\ref{thm:main_abstract} are satisfied, and by the fourth paragraph, its conclusion gives us the desired $\hat G$, action and $\hat r$.
	\end{proof}
	
	\begin{rem}
		By going back to Theorem~\ref{thm:main_abstract}, we see that the group $\hat G$ in Theorem~\ref{thm:main_tdf} is actually the quotient $u\cM/\Core(H(u\cM)D)$ calculated for the ambit $(G(M),S_X(M),\tp(x_0/M))$.
		\xqed{\lozenge}
	\end{rem}
	
	\begin{rem}
		In Theorem~\ref{thm:main_tdf}, if the stabiliser of $[x_0]_E$ is normal in $G$, then so is the stabiliser in $G/G^{000}_A$, which gives $X/E$ a topological group structure such that the orbit map $G/G^{000}_A\to X/E$ (at $[x_0]_E$) is a homomorphism.
		
		It is not hard to see that this implies that that $E^M$ satisfies the assumptions of Proposition~\ref{prop:wgl_homom}, and so by Theorem~\ref{thm:main_abstract}(7), $H$ is normal.\xqed{\lozenge}
	\end{rem}

	We can also apply Corollary~\ref{cor:metr_smt_cls}, yielding the following. See also Corollary~\ref{cor:smt_def} for related statement which applies to intransitive actions.
	\begin{cor}
		\label{cor:trich+_tdf}
		Suppose $T$ is countable. Let $A\subseteq \fC$ be countable.
		
		Suppose in addition that $G$ is a type-definable group, and $X$ is an type-definable set of countable tuples on which $G$ acts transitively and type-definably (all with parameters in $A$), while $E$ is a bounded $G$-invariant and $\Aut(\fC/A)$-invariant equivalence relation on $X$. Then exactly one of the following holds:
		\begin{enumerate}
			\item
			$E$ is relatively definable (as a subset of $X^2$) and has finitely many classes,
			\item
			$E$ is type-definable and has exactly $2^{\aleph_0}$ classes,
			\item
			$E$ is not type-definable and not smooth. In this case, if $E$ is analytic, then it has exactly $2^{\aleph_0}$ classes.
		\end{enumerate}
		In particular, $E$ is smooth if and only if it is type-definable.
		
		Furthermore, if $X$ has NIP, then the Borel cardinality of $E$ is the Borel cardinality of the coset equivalence relation of a subgroup of a compact Polish group (which is $F_\sigma$, Borel or analytic, respectively, whenever $E$ is such).
	\end{cor}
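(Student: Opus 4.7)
The plan is to derive the corollary as an essentially formal consequence of Theorem~\ref{thm:main_tdf}, combined with the general ``trichotomy for subgroups of compact Polish groups'' given by Proposition~\ref{prop:trichotomy_for_groups}. The work has already been done; what remains is bookkeeping and translation between the topological properties of $E$ on $X$ (relative definability / type-definability) and the corresponding properties of a subgroup of a compact Polish group (clopen / closed), using Fact~\ref{fct: Borel in various senses} and the identification of $X/E$ with the orbit space of the $\hat G$-action.

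First, I would fix any $x_0 \in X$ and apply Theorem~\ref{thm:main_tdf} to produce a compact Polish group $\hat G$ acting continuously on $X/E$, a stabiliser $H \leq \hat G$ of $[x_0]_E$, and a topological quotient map $\hat r\colon \hat G \to X/E$ whose fibres are the left cosets of $H$, so that $X/E \cong \hat G/H$ as topological spaces. The theorem also records that $H$ is clopen iff $E$ is relatively definable in $X^2$, $H$ is closed iff $E$ is type-definable, and that $H$ is analytic whenever $E$ is analytic; finally, $\hat G/H \leq_B E$.

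Next, I would apply Proposition~\ref{prop:trichotomy_for_groups} to $H \leq \hat G$. This splits into exactly the four cases of that proposition, which I would collapse into the three cases of the corollary as follows: if $H$ is open, then $[\hat G:H]$ is finite, so $X/E$ is finite and $E$ is relatively definable, giving case~(1); if $H$ is closed (but not open), then $[\hat G:H] = 2^{\aleph_0}$, so $|X/E| = 2^{\aleph_0}$ and $E$ is type-definable, giving case~(2); if $H$ is not closed, then $\hat G/H$ is non-smooth, and since $\hat G/H \leq_B E$, the relation $E$ is non-smooth, and under the additional hypothesis that $E$ (equivalently $H$) is analytic we again obtain $|X/E| = 2^{\aleph_0}$ from Proposition~\ref{prop:trichotomy_for_groups}. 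The ``in particular'' statement follows because type-definability forces closedness of $E^M$ on the type space, hence smoothness by Fact~\ref{fct:clsd_smth} and Remark~\ref{rem:tdf_implies_smooth}, while the trichotomy rules out every other way of being smooth.

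For the ``Furthermore'' part under the NIP assumption on $X$, I would invoke the final clause of Theorem~\ref{thm:main_tdf}, which upgrades $\hat G/H \leq_B E$ to $\hat G/H \sim_B E$; since the preservation properties recorded in Theorem~\ref{thm:main_tdf} also show that $H$ inherits being $F_\sigma$, Borel, or analytic from $E$, this yields the desired description of the Borel cardinality. I do not expect any real obstacle: Theorem~\ref{thm:main_tdf} was designed precisely so that an abstract group-theoretic trichotomy like Proposition~\ref{prop:trichotomy_for_groups} can be transferred to $E$, and the only subtlety is making the translation between ``clopen / closed subgroup of $\hat G$'' and ``relatively definable / type-definable $E$'' via the stated bijections, which is essentially automatic once $\hat r$ is known to be a topological quotient map.
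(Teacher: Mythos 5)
Your proposal is correct and follows essentially the same path as the paper: the paper's own proof simply cites Lemma~\ref{lem:abstract_trich} (which is exactly the combination of Proposition~\ref{prop:trichotomy_for_groups} with the stated translation between properties of $H$ and properties of $E$), whereas you unfold that lemma and invoke Proposition~\ref{prop:trichotomy_for_groups} directly, together with Theorem~\ref{thm:main_tdf}(3) for the clopen/closed $\leftrightarrow$ relatively definable/type-definable dictionary. The reduction direction is handled correctly ($\hat G/H \leq_B E$ and $\hat G/H$ non-smooth force $E$ non-smooth, by composing reductions), and the NIP upgrade and the \enquote{in particular} clause are dispatched exactly as in the paper.
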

	\begin{proof}
		By Theorem~\ref{thm:main_tdf}, we can apply Lemma~\ref{lem:abstract_trich}, which (by Fact~\ref{fct: Borel in various senses}) completes the proof, apart from the ``furthermore'' part, which follows directly from Theorem~\ref{thm:main_tdf}.
	\end{proof}
	
	The following corollary (Main~Theorem~\ref{mainthm:tdgroup}) is a strengthening of Fact~\ref{fct:KM_about_groups} from \cite{KM14}; it partially appeared in \cite{KPR15} and in \cite{KR18} (cf.\ the comments preceding Main~Theorem~\ref{mainthm:tdgroup}).
	\begin{cor}
		\label{cor:trich_tdgroups}
		Suppose $G$ is a type-definable group, while $H\leq G$ is an analytic subgroup, invariant over a small set. Then exactly one of the following holds:
		\begin{itemize}
			\item
			$[G:H]$ is finite and $H$ is relatively definable,
			\item
			$[G:H]\geq 2^{\aleph_0}$, but is bounded, and $H$ is not relatively definable.
			\item
			$[G:H]$ is unbounded (i.e.\ not small).
		\end{itemize}
		In particular, $[G:H]$ cannot be infinite and smaller than $2^{\aleph_0}$.
		
		Moreover, if the language is countable, $G$ consists of countable tuples, and $G$ and $H$ are invariant over a countable set, then we can divide the second case further: either $H$ is type-definable, or $G/H$ is not smooth.
	\end{cor}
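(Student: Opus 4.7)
The plan is to reduce everything to the transitive left action of $G$ on itself, with equivalence relation $E_H$ given by lying in the same left coset of $H$, and then apply the machinery already developed for transitive type-definable group actions. First I would observe that $E_H$ is the preimage of $H$ under the type-definable map $(g_1,g_2)\mapsto g_1^{-1}g_2$ (Remark~\ref{rem:group_to_cosets}), so $E_H$ inherits the descriptive properties of $H$: it is analytic (respectively relatively definable, type-definable, Borel, $F_\sigma$) iff $H$ is; moreover $E_H$ is bounded iff $[G:H]$ is small. It is trivially $G$-invariant under left translation (which is transitive and type-definable), and invariant over the same small set as $H$.

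If $[G:H]$ is unbounded we are in the third bullet, so assume $[G:H]$ is small. Extending the parameter set if necessary, apply Proposition~\ref{prop:amb_for_groups} to find a small model $M$ containing the parameters over which $G$ and $H$ are invariant, and such that $G(M)$ is dense in $S_G(M)$; picking $x_0:=e\in G(M)$ makes $(G(M),G_M,\tp(e/M))$ into an ambit. By Lemma~\ref{lem:weakly_grouplike_tdf} the induced relation $E_H^M$ is weakly uniformly properly group-like on this ambit, and by Fact~\ref{fct: Borel in various senses} it is analytic. Theorem~\ref{thm:general_cardinality_transitive} then yields a dichotomy: either $E_H^M$ is clopen in $G_M^2$ (so by Fact~\ref{fct: Borel in various senses}, $E_H$ is relatively definable in $G^2$, equivalently $H$ is relatively definable in $G$, and by compactness $[G:H]$ is finite), or $\lvert G/H\rvert=\lvert G_M/E_H^M\rvert\geq 2^{\aleph_0}$. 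This gives the main trichotomy; in the second case $H$ cannot be relatively definable since, by compactness, a relatively definable subgroup of bounded index in a type-definable group has finite index.

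For the refinement under the countability hypothesis, the hypotheses of Corollary~\ref{cor:trich+_tdf} are satisfied with $X=G$ and $E=E_H$, so the second bullet splits into the subcases $E_H$ type-definable (equivalently $H$ type-definable, by Fact~\ref{fct: Borel in various senses} again) versus $E_H$ non-smooth; by the definition of Borel cardinality of a bounded invariant equivalence relation (Definition~\ref{dfn:bier_borelcard}) together with Fact~\ref{fct:cartdf}, non-smoothness of $E_H$ coincides with non-smoothness of $G/H$ in the sense of the statement. The only real work is checking the hypotheses of Lemma~\ref{lem:weakly_grouplike_tdf} and Corollary~\ref{cor:trich+_tdf}, which is routine; the main conceptual step — that coset quotients of type-definable groups by analytic subgroups satisfy the strong ``closed-or-$2^{\aleph_0}$'' dichotomy — is precisely what has already been done abstractly in Chapter~\ref{chap:grouplike}, so no genuinely new difficulty should arise here.
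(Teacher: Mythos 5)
Your proposal is correct and follows essentially the same route as the paper's proof: pass to the left-coset relation $E_H$ on $G$, find a model $M$ via Proposition~\ref{prop:amb_for_groups}, invoke Lemma~\ref{lem:weakly_grouplike_tdf} to make $E_H^M$ weakly uniformly properly group-like, apply Theorem~\ref{thm:general_cardinality_transitive} for the trichotomy, and Corollary~\ref{cor:trich+_tdf} for the refinement under countability. The only cosmetic difference is that you spell out the step (relatively definable + bounded index $\Rightarrow$ finite index, by compactness) that the paper leaves as ``easy to see''.
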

	\begin{proof}
		Choose $M$ as in Proposition~\ref{prop:amb_for_groups} for $x_0=e_G$.
		
		If $[G:H]$ is unbounded, there is nothing to prove. Otherwise, the left coset equivalence relation $E_H$ on $G$ is bounded, $G$-invariant, and invariant over the set over which $G$ and $H$ are invariant.
		Furthermore, $G(M)$ is dense in $S_G(M)$, so it has a dense orbit in $G/H$, i.e.\ $G(M)\cdot e_{G/H}=G(M)/H$ is dense in $G/H=S_G(M)/E_H^M$.
		
		Therefore, we can apply Lemma~\ref{lem:weakly_grouplike_tdf} to $E_H$, and it follows that Theorem~\ref{thm:general_cardinality_transitive} applies with $X=Y=S_G(M)$ and $E_H^M$. It is easy to see that $E_H^M$ is clopen if and only if both $E_H$ and $H$ are relatively definable, which completes the proof of the trichotomy.
		
		Under the countability assumptions, we can apply Corollary~\ref{cor:trich+_tdf} with $X=G$ and $E=E_H$, noting that $E_H$ is type-definable if and only if $H$ is type-definable. This gives us the ``moreover'' part.
	\end{proof}
	
	\begin{rem}
		It is possible to have $[G:H]=2$ for an invariant and not relatively definable $H$ (see \cite[Example 3.39]{KM14}), but then $H$ is necessarily non-analytic. (Indeed, in the cited example, the $H$ is only obtained existentially, and is a kind of ``Vitali set".)\xqed{\lozenge}
	\end{rem}

	\begin{rem}
		In Theorem~\ref{thm:main_tdf}, in the ``Furthermore" part, one can weaken the assumption that $X$ has NIP to say only that there is no $\varphi(x)$ with parameters in $M$ such that $\{g\cdot [\varphi(x)]\mid g\in G(M) \}$ contains an independent family, as that is enough to guarantee that $(G(M),S_G(M))$ is tame.
		\xqed{\lozenge}
	\end{rem}
	
	\begin{rem}
		One may also show that we have the analogue of Lemma~\ref{lem:every_stype_on_m}, and using that, obtain an analogue of Theorem~\ref{thm:main_galois}. Roughly speaking, given a fixed $G$ and $A$, there is a single $\hat G$ witnessing Theorem~\ref{thm:main_tdf} for all $X$ and $E$.\xqed{\lozenge}
	\end{rem}
	
	\section{Other applications in model theory}
	\label{sec:other_apps}
	As mentioned in the introduction, Theorem~\ref{thm:main_abstract} (as well as Theorems~\ref{thm:general_cardinality_intransitive} and \ref{thm:general_cardinality_transitive}) may be used to deduce virtually all the similar results in the model-theoretic contexts, either by directly showing that some equivalence relation is (weakly) uniformly properly group-like, or by some reduction to Theorem~\ref{thm:main_aut} or Theorem~\ref{thm:main_tdf}.
	
	Before, we have seen how we can recover and even improve the results from the papers \cite{Ne03}, \cite{KMS14}, \cite{KM14}, \cite{KP17}, \cite{KR16}, \cite{KPR15} and \cite{KR18}. Below, we briefly describe a couple of other examples.
	
	\subsection*{Definable components in classical topological dynamics}
	
	In \cite{KP16}, the authors consider a topological group $G=G(M)$, definable in a structure $M$ with predicates for all open subsets of $G$, denoting $G(\fC)$ by $G^*$. They denote by $\mu$ the subgroup of $G^*$ of infinitesimal elements, that is, $\bigcap_U U(\fC)$, where $U$ ranges over all neighbourhoods of the identity in $G$. Using $\mu$, they define the group $G^{*000}_{\topo}$ as the smallest $M$-invariant normal subgroup of $G^*$ which contains $\mu$ and has bounded index. Then $G^*/G^{*000}_\topo$ is a new invariant of the topological group $G$ (as one can show that it does not depend on the choice of the model $M$, as long as it defines $G$ and has predicates for all its open subsets).
	
	They also define the space $S_{G^*}^\mu(M)$ as the quotient of $S_{G^*}(M)$ by $\mu$ (i.e.\ two types $p,q\in S_{G^*}(M)$ are identified if $\mu p(\fC)=\mu q(\fC)$).
	
	Then, since $G^*$ is definable, $G(M)$ is dense in $S_{G^*}(M)$, and so it is also dense in $S_{G^*}^\mu(M)$, so $(G(M),S_{G^*}^\mu(M),\tp(e/M))$ is an ambit. In fact, it is exactly the classical universal (topological) $G$-ambit.  It turns out that $S_{G^*}^\mu(M)$ has a natural semigroup structure which makes it isomorphic to $E(G(M),S_{G^*}^\mu(M))$, so in particular, we can find inside the Ellis group $u\cM$ and the quotient $u\cM/H(u\cM)$, which is exactly the generalized Bohr compactification of $G$, as introduced by Glasner in \cite[Chapter VIII]{Gl76}. They turn to state, in \cite[Theorem 2.24, Theorem 2.25]{KP16} (without proof, beyond very broad description how one can adapt \cite{KP17}) that we have a well-defined topological quotient map $u\cM/H(u\cM)\to G^*/G^{*000}_\topo$, and that $G^{*00}_\topo/G^{*000}_\topo$ is the quotient of a compact Hausdorff group by a dense subgroup, for $G^{*00}_\topo$ defined analogously.
	
	It is not hard to show that, in their context, the coset equivalence relation $E_{G^{*000}_\topo}$ of $G^{*000}_\topo$ induces a uniformly properly group-like $F_\sigma$ equivalence relation on the ambit $(G(M),S_{G^*}^\mu(M),\tp(e/M))$, and the quotient of $S_{G^*}^\mu(M)$ by this relation can be naturally identified with $G^*/G^{*000}_\topo$. Thus, by Lemma~\ref{lem:main_abstract_grouplike}, we recover the quotient map $u\cM/H(u\cM)\to G^*/G^{*000}_\topo$, concluding (using Lemma~\ref{lem:new_preservation_E_to_H}) that $G/G^{*000}_\topo$ is the quotient of the compact group $u\cM/H(u\cM)$ by an $F_\sigma$ normal subgroup. Since $G^{*00}_\topo/G^{*000}_\topo$ is the closure of the identity in $G^*/G^{*000}_\topo$, it follows that it is the quotient of a compact Hausdorff group by a dense subgroup.
	\subsection*{Relative Galois groups}
	\newcommand{\res}{{\mathrm{res}}}
	\newcommand{\fix}{{\mathrm{fix}}}
	In \cite{DKL17}, the authors study several variants of the Galois group. For each partial type $\Sigma$ over $\emptyset$, they put:
	\begin{itemize}
		\item
		$\Aut(\Sigma(\fC))=\{\sigma\restr_{\Sigma(\fC)}\mid \sigma\in \Aut(\fC) \}$,
		\item
		$\Autf_\res(\Sigma(\fC))=\{\sigma\restr_{\Sigma(\fC)}\mid \sigma\in \Autf(\fC) \}$,
		\item
		$\Autf_\fix(\Sigma(\fC))=\{\sigma\in \Aut(\Sigma(\fC))\mid \sigma(a)\equiv_\Lasc a\}$, where $a$ is a tuple enumerating $\Sigma(\fC)$.
	\end{itemize}
	Using these, they define the relative Galois groups in the following way.
	\begin{itemize}
		\item
		$\Gal^\res(\Sigma(\fC))=\Aut(\Sigma(\fC))/\Autf_\res(\Sigma(\fC))$
		\item
		$\Gal^\fix(\Sigma(\fC))=\Aut(\Sigma(\fC))/\Autf_\fix(\Sigma(\fC))$
	\end{itemize}
	It is easy to see that $\Autf_\res(\Sigma(\fC))$ and $\Autf_\fix(\Sigma(\fC))$ are normal subgroups of $\Aut(\Sigma(\fC))$ and $\Autf_\res(\Sigma(\fC))\leq \Autf_\fix(\Sigma(\fC))$, so $\Gal^\res(\Sigma(\fC))$ and $\Gal^\fix(\Sigma(\fC))$ are groups and we have a natural epimorphism $\Gal^\res(\Sigma(\fC))\to \Gal^\fix(\Sigma(\fC))$. Furthermore, the restriction epimorphism $\Aut(\fC)\to \Aut(\Sigma(\fC))$ induces an epimorphism $\Gal(T)\to \Gal^\res(\Sigma(\fC))$, which turns $\Gal^\res(\Sigma(\fC))$ and $\Gal^\fix(\Sigma(\fC))$ into topological groups. Furthermore, by considering the compositions of the epimorphisms with the function $S_m(M)\to \Gal(T)$ from Fact~\ref{fct:sm_to_gal}, we can also conclude that each relative Galois group also has a well-defined Borel cardinality (provided the theory is countable and $\Sigma$ has countably many free variables).
	
	In both cases, we can show that the Galois groups are actually quotients of the space $S_m(M)$ by a uniformly properly group-like equivalence relation. Thus, we can apply Lemma~\ref{lem:weakly_grouplike} to present them as quotients of compact Hausdorff groups, and if the language is countable, we may also apply Theorem~\ref{thm:main_abstract} to present them as quotients of compact Polish groups, and Corollary~\ref{cor:metr_smt_cls} to see that they are smooth if and only if they are Hausdorff  (i.e.\ they coincide with the appropriately defined relative Kim-Pillay Galois groups). Similarly to Theorem~\ref{thm:main_galois}, we also recover the full Borel cardinality under NIP, although in this case, it is enough to assume NIP on $[a]_{\equiv}$ for a suitable tuple of realisations of $\Sigma$.

	\section{Examples}
	\label{sec:examples}
	
	In this section, we analyse examples of non-G-compact theories $T$ from \cite{CLPZ01} and \cite{KPS13} and see how Theorem~\ref{thm:main_galois} can be applied to them. Namely, we describe the compact group $\hat G$ (which turns out to be the Ellis group) and the kernel of $\hat r\colon \hat G\to \Gal(T)$ in those cases. In order to do that, we compute the Ellis groups of the appropriate dynamical systems. This allows us to describe the group $\Gal(T)$ in each of these examples. Further, because the examples have NIP, this description also yields the Borel cardinality of the Galois group.
	
	The contents of this section are based on the appendix of \cite{KR18} (joint with Krzysztof Krupiński), expanded with more details of the proofs.
	
	(The topological group structure in the first example (Example~\ref{ex:CLPZ}) was described in \cite{Zie02}, by a more direct method. In \cite{KPS13}, the authors describe the topological group structure the second example (Example~\ref{ex:KPS}) and the Borel cardinality in both cases, but use completely different methods and give very few details.)
	
	\subsection*{Lemmas}
	First, we prove some auxiliary lemmas.
	
	\begin{rem}
		\label{rem:action_factors}
		If $(G,X)$ is a dynamical system and the action of $G$ on $X$ factors through another group $G'$, then it is easy to see that $E(G,X)=E(G',X)$, and the $\tau$ topologies on the ideal groups coincide.\xqed{\lozenge}
	\end{rem}

	\begin{lem}
		\label{lem:projlim_ellis}
		Consider a projective system of dynamical systems $(G_i,X_i)$ for $i\in I$ (where $i$ is some downwards directed set), i.e.\ for each pair $i<j$ we have an epimorphism $\pi_{i,j}\colon G_i\to G_j$, and a $G_i$-equivariant continuous surjection $\pi_{i,j}\colon X_i\to X_j$. Let $G:=\varprojlim_i G_i$ act naturally on $X:=\varprojlim_i X_i$, and for each $i$, let $\pi_i$ denote the projection $\pi_i\colon X\to X_i$ and abusing the notation, also the projection $\pi_i\colon G\to G_i$.
		
		Then we have a natural isomorphism $E(G,X)\cong \varprojlim_i E(G_i,X_i)$ (as semitopological semigroups and as a $G$-flows), consistent with the maps $\pi_i$ given in the preceding paragraph. Let us abuse the notation further, and write $\pi_i$ for the epimorphism $E(G,X)\to E(G_i,X_i)$.
		
		Then, for every minimal left ideal $\cM \unlhd E(G,X)$, each $\cM_i=\pi_i[\cM]$ is a minimal left ideal in $E(G_i,X_i)$ and $\cM=\varprojlim_i \pi_i[\cM]$. If $u\in \cM$ is an idempotent, then each $u_i=\pi_i(u)$ is an idempotent in $\cM_i=\pi_i[\cM]$.
		
		In particular, $u\cM=\varprojlim_i u_i\cM_i$. Furthermore, the $\tau$ topology on $u\cM$ is the projective limit topology, with each $u_i\cM_i$ equipped with its $\tau$ topology.
		
		Conversely, if $(\cM_i)_i$ is a consistent system of minimal left ideals in $E(G_i,X_i)$ and for each $i$, $u_i$ is an idempotent in $\cM_i$, then $\varprojlim_i \cM_i$ is a minimal left ideal in $E(G,X)$ and $u=(u_i)_i$ is an idempotent in $\cM$.
	\end{lem}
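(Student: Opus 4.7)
The plan is to first establish the semigroup/flow isomorphism $E(G,X)\cong \varprojlim_i E(G_i,X_i)$ by constructing a continuous bijection between them, and then deduce the remaining statements about minimal left ideals, idempotents, and the $\tau$ topology by applying Propositions~\ref{prop:induced_epimorphism} and \ref{prop:epim_ideals_idempotents} coordinatewise.

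For the isomorphism, I would first observe that each $\pi_i\colon X\to X_i$ is a $G$-ambit morphism (the $G$-action on $X_i$ factors through $G_i$, so by Remark~\ref{rem:action_factors} we may identify $E(G,X_i)$ with $E(G_i,X_i)$). Proposition~\ref{prop:induced_epimorphism} then gives continuous epimorphisms $\pi_{i,*}\colon E(G,X)\to E(G_i,X_i)$, which assemble into a continuous semigroup homomorphism $\Phi\colon E(G,X)\to \varprojlim_i E(G_i,X_i)$, equivariant in the obvious sense. Injectivity of $\Phi$ is immediate from the fact that the $\pi_i$'s jointly separate points of $X=\varprojlim X_i$. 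For surjectivity, given a compatible tuple $(f_i)_i$ in the inverse limit, I would define $f(x):=(f_i(\pi_i(x)))_i$ (which lies in $X$ by compatibility of $(f_i)$) and then show $f\in E(G,X)$ by a net/compactness argument: a basic neighbourhood of $f$ in $X^X$ is specified by a finite set $F\subseteq X$ and finitely many ``coordinates'' $i_1,\dots,i_n$, which by the downward directedness of $I$ can be subsumed by a single common index; one then approximates $f_i$ on $\pi_i[F]$ by some $\pi_{g_i}$ with $g_i\in G_i$, and lifts $g_i$ to $g\in G$ via the surjectivity of $\pi_i\colon G\to G_i$ that follows from the epimorphism assumption on the transition maps. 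Since $E(G,X)$ is compact and $\varprojlim_i E(G_i,X_i)$ is Hausdorff, $\Phi$ is then automatically a homeomorphism.

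For the statements about minimal ideals and idempotents, Proposition~\ref{prop:epim_ideals_idempotents} applied to each $\pi_{i,*}$ shows that for any minimal left ideal $\cM\unlhd E(G,X)$ and idempotent $u\in \cM$, the sets $\cM_i:=\pi_i[\cM]$ are minimal left ideals in $E(G_i,X_i)$ and $u_i:=\pi_i(u)$ are idempotents. The inclusion $\cM\subseteq \varprojlim_i \cM_i$ is automatic under the identification above; for the reverse, given $(m_i)_i$ in the limit, the sets $\pi_i^{-1}(m_i)\cap \cM$ are nonempty closed subsets of the compact $\cM$ with the finite intersection property (using directedness of $I$), so have nonempty intersection. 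The same argument gives $u\cM=\varprojlim u_i\cM_i$. The converse is proved similarly: given a compatible system $(\cM_i,u_i)_i$, the set $\cM:=\varprojlim \cM_i$ is closed, a left ideal coordinatewise, and contains the idempotent $u=(u_i)_i$; minimality follows because any closed left ideal contained in $\cM$ projects onto each $\cM_i$ by Proposition~\ref{prop:epim_ideals_idempotents} and minimality of $\cM_i$, hence equals $\cM$ by the lifting argument above.

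The main obstacle is identifying the $\tau$ topology on $u\cM$ with the projective limit of the $\tau$ topologies on the $u_i\cM_i$. Proposition~\ref{prop:induced_epimorphism} already gives that each $\pi_{i,*}\restr_{u\cM}\colon u\cM\to u_i\cM_i$ is a topological quotient map with respect to the $\tau$ topologies, so the projective limit topology on $\varprojlim u_i\cM_i$ is \emph{a priori} coarser than $\tau$ on $u\cM$. The nontrivial direction is the reverse, which I would approach by proving the coordinatewise formula $\cl_\tau(A)=\{f\in u\cM : \pi_i(f)\in \cl_{\tau_i}(\pi_i[A])\text{ for all }i\}$ for arbitrary $A\subseteq u\cM$. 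The inclusion ``$\subseteq$'' is routine from the description $\cl_\tau(A)=u(u\circ A)$ in Fact~\ref{fct:tau_top_pre}, together with continuity of $\pi_{i,*}$ and the compatibility of the operation $\circ$ with projections. For ``$\supseteq$'', given $f$ in the right hand side and witnessing nets $(g^\alpha_i)\subseteq G$, $(a^\alpha_i)\subseteq A$ for each $i$, one again exploits directedness of $I$ and compactness of $E(G,X)$ to combine them into a single pair of nets witnessing $f\in u(u\circ A)$. Once this formula is established, the $\tau$-closed subsets of $u\cM$ are exactly the projective limits of $\tau_i$-closed subsets of the $u_i\cM_i$'s, yielding the topological identification.
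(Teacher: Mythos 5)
Your proposal is correct and follows essentially the same route as the paper's proof: reduce to $G_i=G$ via Remark~\ref{rem:action_factors}, construct the isomorphism $E(G,X)\cong\varprojlim_i E(G_i,X_i)$ through the coordinate epimorphisms from Proposition~\ref{prop:induced_epimorphism}, transfer minimal ideals and idempotents coordinatewise (the paper reproves what you cite from Proposition~\ref{prop:epim_ideals_idempotents}, but the content is the same), and identify the $\tau$ topology with the inverse-limit topology by comparing closure operators, with the nontrivial inclusion established one coordinate at a time via Proposition~\ref{prop:circ_description}. One small inaccuracy worth flagging: you assert that $\pi_i\colon G\to G_i$ is surjective "follows from the epimorphism assumption on the transition maps," which is false in general for inverse limits; this step is unnecessary anyway, since after the identification $E(G,X_i)=E(G_i,X_i)$ from Remark~\ref{rem:action_factors} you can approximate each $f_i$ directly by $\pi_{X_i,g}$ with $g\in G$, with no lifting required.
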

	\begin{proof}
		Note that immediately by the assumptions, $G$ acts on each $X_i$ via $G_i$, and $\pi_i\colon X\to X_i$ is $G$-equivariant. By Remark~\ref{rem:action_factors}, we may assume without loss of generality that $G=G_i$ for all $i$.
		
		Now, using Proposition~\ref{prop:induced_epimorphism}, we obtain the epimorphisms $\pi_i\colon E(G,X)\to E(G,X_i)$, and they obviously commute with the covering maps in the projective system, whence $E(G,X)=\varprojlim_i E(G,X_i)$.
		
		Since each $\pi_i$ is an epimorphism, preimages and images of ideals by $\pi_i$ are ideals. This easily implies that if $\cM$ is a minimal ideal, then each $\pi_i[\cM]$ is also minimal. Since $\cM$ is closed (as a minimal ideal), it is the inverse limit of $\pi_i[\cM]$. Conversely, if $(\cM_i)_i$ is a consistent system of minimal ideals, then $\varprojlim_i \cM_i=\bigcap_i \pi_i^{-1}[\cM_i]$, so it is an ideal (as an intersection of ideals, which is nonempty by compactness, because the system is consistent). Thus, it contains a minimal ideal $\cM$. If $\cM\subsetneq \varprojlim_i \cM_i$, then for some $i$ we have $\pi_i[\cM]\subsetneq \cM_i$, which contradicts minimality of $\cM_i$, so $\cM= \varprojlim_i \cM_i$, and the latter is a minimal ideal.
		
		It is clear that $u\in E(G,X)$ is an idempotent if and only if each $\pi_i(u)$ is an idempotent (because multiplication in the inverse limit is coordinatewise). Therefore, if $u=(u_i)_i\in \cM=\varprojlim_i \cM_i$ is an idempotent, then for each $f\in u\cM$ and each $i$ we have $\pi_i(f)=\pi_i(uf)=\pi_i(u)\pi_i(f)\in u_i \cM_i$, and conversely, if for each $i$ we have $\pi_i(f)\in u_i\cM_i$, then $uf=(u_i\pi_i(f))_i=(\pi_i(f))_i=f$, so as sets, $u\cM=\varprojlim_i u_i\cM_i$.
		
		What is left is to show that the $\tau$-topology on an Ellis group $u\cM$ is the limit of the $\tau$ topologies on projections. Let us denote the limit topology by $\pi$.
		
		In one direction, this is trivial: a subbasic $\pi$-closed set is clearly $\tau$-closed, so $\tau$ refines $\pi$.
		
		In the other direction, let $A$ be a $\tau$-closed set in $u\cM$. Take any $f$ which is in $\pi$-closure of $A$, any open $U\ni u$ and $V\ni f$, with the aim to apply Proposition~\ref{prop:circ_description} to show that $f\in u\circ A$. Then for some $i\in I$ and open $U',V'\subseteq EL_i=E(G,X_i)$, we have $U=\pi_i^{-1}[U'],V=\pi_i^{-1}[V']$, so $u_i\in U'$ and $f_i:=\pi_i(f)\in V'$. But then by the assumption and Proposition~\ref{prop:circ_description}, there is some $g_i\in G$ and $a_i\in \pi[A]$ such that $\pi_{X_i,g_i}\in U'$ and $g_ia_i\in V'$. But then for any $a\in A$ such that $\pi_i(a)=a_i$ we have $\pi_{X,g_i}\in U$ and $g_ia\in V$. Since $U,V$ were arbitrary, by Proposition~\ref{prop:circ_description}, $f\in u\circ A$, and since $f\in u\cM$ and $A$ is $\tau$-closed, we have $f\in A$.
	\end{proof}
	
	\begin{lem}
		\label{lem:tau_accumulation}
		Fix arbitrary dynamical system $(G,X)$, and consider its Ellis group $u\cM$.
		
		Given a net $(f_i)_i$ in $u\cM$ and $f\in u\cM$, the following are equivalent:
		\begin{itemize}
			\item
			$f$ is a $\tau$-accumulation point of $(f_i)_i$ (i.e.\ for every $i_0$, $f$ is in the $\tau$-closure of $(f_i)_{i>i_0}$),
			\item
			there is a subnet $(f'_{j'})_{j'}$ of $(f_j)_j$ such that for some net $(g_{j'})_{j'}$ in $G$ such that $g_{j'}\to u$ we have $g_{j'}f'_{j'}\to f$.
		\end{itemize}
		In particular, $(f_i)_i$ $\tau$-converges to $f$ if and only if every subnet of $(f_i)_i$ has a further subnet with the second property.
	\end{lem}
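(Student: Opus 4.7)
The plan is to unwind the definition of the $\tau$-closure, namely $\cl_\tau(A) = u\cM \cap (u\circ A)$, where $u\circ A$ is the set of limits $\lim_k g_k b_k$ for nets $(g_k)$ in $G$ with $\pi_{g_k}\to u$ and $(b_k)$ in $A$ (Fact~\ref{fct:tau_top_pre}). It will be convenient to use the ``test-neighbourhood'' description of $u\circ A$ (Proposition~\ref{prop:circ_description}): a point $p$ is in $u\circ A$ iff for every open $U\ni u$ and open $V\ni p$ in $EL$ there exist $g\in G$ and $a\in A$ with $\pi_g\in U$ and $ga\in V$.

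For the backward direction, suppose $(f'_{j'})_{j'}$ is a subnet of $(f_j)_j$ and $(g_{j'})_{j'}$ is a net in $G$ with $g_{j'}\to u$ and $g_{j'}f'_{j'}\to f$. Fix any index $i_0$. By the definition of a subnet, there is some $j'_0$ such that for every $j'>j'_0$ the index in $(f_j)_j$ corresponding to $f'_{j'}$ exceeds $i_0$, so $(f'_{j'})_{j'>j'_0}$ is a net in $\{f_i : i>i_0\}$. Thus $f$ lies in $u\circ \{f_i: i>i_0\}$, and since $f\in u\cM$ it lies in $\cl_\tau(\{f_i:i>i_0\})$, as required.

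For the forward direction, I would use a standard diagonal/subnet construction. Assume $f$ is a $\tau$-accumulation point of $(f_i)_i$. Let $\mathcal U$ be the family of open neighbourhoods of $u$ and $\mathcal V$ the family of open neighbourhoods of $f$ in $EL$; order the set of triples $(i_0,U,V)$ (with $i_0$ in the index set and $U\in\mathcal U$, $V\in\mathcal V$) componentwise, using reverse inclusion on $\mathcal U,\mathcal V$ and the original order on indices. For any such triple, $f$ is in $\cl_\tau(\{f_i:i>i_0\})\subseteq u\circ\{f_i:i>i_0\}$, so by Proposition~\ref{prop:circ_description} there exist $i(i_0,U,V)>i_0$ and $g(i_0,U,V)\in G$ with $\pi_{g(i_0,U,V)}\in U$ and $g(i_0,U,V)\,f_{i(i_0,U,V)}\in V$. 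These choices define a subnet $(f'_{(i_0,U,V)})_{(i_0,U,V)}:=(f_{i(i_0,U,V)})$ of $(f_j)_j$ (the map $(i_0,U,V)\mapsto i(i_0,U,V)$ is cofinal because $i(i_0,U,V)>i_0$), together with a net $(g_{(i_0,U,V)})$ in $G$; by construction $\pi_{g_{(i_0,U,V)}}\to u$ and $g_{(i_0,U,V)}f'_{(i_0,U,V)}\to f$.

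For the ``in particular'' clause, I would invoke the general topological fact that in any topological space a net converges to a point $f$ iff every subnet has a further subnet converging to $f$. Since $\tau$-convergence of a subnet to $f$ clearly implies that $f$ is a $\tau$-accumulation point of that subnet, and since by the main equivalence each such accumulation statement is witnessed by a further subnet and net in $G$ of the required form, the two conditions match. The only delicate point to watch is that a subnet of a subnet is again a subnet of the original net, so when we pass to the further subnet the property transfers cleanly; no real obstacle arises. The main (mild) technical step is the cofinality/directedness bookkeeping in the diagonal construction of step~2, which is routine but needs to be written carefully enough to see that the triple-indexed choices really yield a subnet of $(f_j)_j$.
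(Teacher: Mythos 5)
Your proof is correct and takes essentially the same approach as the paper's: tail extraction for the easy direction and a diagonal subnet construction for the hard one, with the paper being more terse (it invokes Proposition~\ref{prop:circ_description} without explicitly spelling out the triple-indexed directed set or the cofinality check that you carry out). The slightly different topological fact you use for the ``in particular'' clause --- a net converges iff every subnet has a converging further subnet, rather than the paper's ``iff the point is a cluster point of every subnet'' --- is an equivalent formulation and works just as well.
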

	\begin{proof}
		It is clear that the second condition implies the first. For the converse, by Proposition~\ref{prop:circ_description}, it is enough to show that for every $i_0$, for every open $U\ni u$ and $V\ni f$, there is some $i>i_0$ and $g_i\in G$ such that $g_i\in U$ and $g_if_i\in V$. But by the assumption, we can find some net $(g'_j)_j$ and a net $(f'_j)_j$, where each $f'_j\in f_{>{i_0}}$, such that $g'_j\to u$ and $g'_jf'_j\to f$. But then for sufficiently large $j$ we have $g'_j\in U$ and $g'_jf'_j\in V$, so we can just take any $i>i_0$ such that $f_i=f'_j$ and $g_i=g'_j$.
		
		The ``in particular" follows easily, as $(f_i)_i$ converges to $f$ exactly when $f$ is the accumulation point of every subnet of $(f_i)_i$.
	\end{proof}

	\begin{lem}
		\label{lem:product_ellis}
		Consider dynamical systems $(G_i,X_i)$ for $i\in I$ (where $I$ is some index set). Put $G=\prod_i G_i$ acting naturally on $X=\prod_i X_i$, and for each $i$, let $\pi_i$ be the projection $X\to X_i$ and, abusing the notation, $G\to G_i$.
		
		Then $E(G,X)\cong \prod_i E(G_i,X_i)$ (as a semitopological semigroup and as a $G$-flow).
		
		Furthermore, if $\cM$ is a minimal left ideal in $E(G,X)$, then each $\cM_i=\pi_i[\cM]$ is a minimal left ideal in $E(G_i,X_i)$ and $\cM=\prod_i \pi_i[\cM]$. If $u\in \cM$ is an idempotent, then each $u_i=\pi_i(u)$ is an idempotent in $\cM_i=\pi_i[\cM]$.
		
		In particular, $u\cM=\prod_i u_i\cM_i$. Furthermore, the $\tau$ topology on $u_i\cM_i$ is the product topology.
		
		Conversely, if $\cM_i$ is a minimal left ideal in $E(G_i,X_i)$ and $u_i$ is an idempotent in $\cM_i$, then $\prod_i \cM_i$ is a minimal left ideal in $E(G,X)$ and $u=(u_i)_i$ is an idempotent in $\cM$.
	\end{lem}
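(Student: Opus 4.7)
The plan is to mirror the proof of Lemma~\ref{lem:projlim_ellis}, treating products analogously to inverse limits. Products actually simplify some matters, since every product of left ideals is automatically a left ideal.

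\textbf{Step 1: the semigroup isomorphism.} Since the $G$-action on each $X_i$ factors through $G_i$, Remark~\ref{rem:action_factors} identifies $E(G, X_i)$ with $E(G_i, X_i)$, and Proposition~\ref{prop:induced_epimorphism} gives continuous semigroup epimorphisms $\pi_i^* \colon E(G,X) \to E(G_i, X_i)$, which assemble into a continuous $G$-equivariant semigroup homomorphism $\Psi \colon E(G,X) \to \prod_i E(G_i, X_i)$. Injectivity is immediate from $\pi_i(f(x)) = \pi_i^*(f)(\pi_i(x))$ for all $x \in X$. For surjectivity, given $(f_i)_i$, set $f((x_i)_i) := (f_i(x_i))_i$; this lies in $E(G,X) \subseteq X^X$ because any basic pointwise-open neighbourhood of $f$ constrains only finitely many points and only finitely many coordinates (say with indices in a finite set $I_0$), so choosing $g_i \in G_i$ approximating $f_i$ on the relevant points for $i \in I_0$ and arbitrary $g_i$ elsewhere yields $g = (g_i)_i \in G$ with $\pi_{X,g}$ inside the chosen neighbourhood. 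Continuity plus compactness then upgrades $\Psi$ to a homeomorphism.

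\textbf{Step 2: minimal ideals and idempotents.} For any minimal left ideals $\cM_i \unlhd E(G_i,X_i)$, the product $\prod_i \cM_i$ is clearly a left ideal under coordinatewise multiplication, and its minimality follows from Fact~\ref{fct:ideals_ellis_pre}(2) coordinatewise: for any $(f_i)_i$ in the product, $E(G,X) \cdot (f_i)_i = \prod_i E(G_i,X_i) \cdot f_i = \prod_i \cM_i$. Conversely, Proposition~\ref{prop:epim_ideals_idempotents} makes each $\pi_i[\cM]$ a minimal ideal; since $\cM \subseteq \prod_i \pi_i[\cM]$ and both are minimal left ideals, they coincide. Idempotents in a product semigroup are coordinatewise tuples of idempotents, which together with $\cM = \prod_i \cM_i$ yields $u\cM = \prod_i u_i \cM_i$ directly.

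\textbf{Step 3: the $\tau$ topology.} That the $\tau$ topology on $u\cM$ refines the product of the coordinate $\tau$ topologies is immediate from $\tau$-continuity of each $\pi_i^* \restr_{u\cM}$ (which comes from Proposition~\ref{prop:induced_epimorphism}). The reverse inclusion is the main obstacle and follows the projective-limit argument nearly verbatim: given a $\tau$-closed $A \subseteq u\cM$ and a point $f$ in its product-topology closure, one combines the net-based characterisation of $u \circ A$ (cf.\ the proof of Lemma~\ref{lem:projlim_ellis}) with the fact that a basic open neighbourhood in the product constrains only finitely many coordinates $I_0$. Coordinatewise witnesses produced in each $u_i \cM_i$ for $i \in I_0$ are then lifted to a single witness in $G$ by choosing arbitrary values outside $I_0$, and the corresponding element of $A$ is obtained because the product topology only tests membership on finite data. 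The main subtlety, as in the inverse-limit case, is the net-subnet manipulation required to combine finitely many coordinate witnesses into a genuine element of $G$, but the independence of coordinates in the product makes this strictly easier than the inverse-limit version.
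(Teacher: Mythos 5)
Steps 1 and 2 of your proposal are essentially correct, and if anything more direct than the paper's treatment (the paper reduces to binary products first via the inverse-limit lemma and only then handles these points, dismissing them as ``straightforward''). The real problem is Step 3, where you have both a genuine gap and a misdiagnosis of where the difficulty lies.

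Your claim that ``the independence of coordinates in the product makes this strictly easier than the inverse-limit version'' is backwards. In the inverse-limit case, it is the \emph{directedness} of the index set that makes the Proposition~\ref{prop:circ_description} argument close up: a basic open neighbourhood of $u$ (or of $f$) in $\varprojlim_i E(G_i,X_i)$ is the preimage $\pi_i^{-1}[U']$ under a \emph{single} projection, so the argument only needs a witness in one coordinate factor and then lifts it to $A$ by picking any preimage. In a product, a basic open neighbourhood constrains a finite subset $I_0$ of coordinates, and you now need a \emph{single} $a \in A$ whose coordinates $a_i$ for all $i \in I_0$ simultaneously act as witnesses. You justify this by appealing to ``the product topology only tests membership on finite data'', but that does not give you what you need: the condition a coordinate witness $a_i$ must satisfy is membership in $\bigcup_{g\in U_i\cap G_i} g^{-1}V_i$, which is open in $E(G_i,X_i)$ but in general \emph{not} $\tau_i$-open, whereas the hypothesis that $f$ lies in the $\pi$-closure of $A$ (product of the $\tau_i$-topologies) only controls $\tau$-open coordinate conditions. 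So the lift from coordinatewise witnesses to a single element of $A$ does not follow.

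The paper sidesteps this entirely. It first applies Lemma~\ref{lem:projlim_ellis} itself (a product is the inverse limit of its finite subproducts), reducing to finite $I$, then inducts down to $|I| = 2$. For the binary case it uses a different tool, Lemma~\ref{lem:tau_accumulation}, which your proposal never invokes: take a net $(a_j)$ in $A$ that $\pi$-converges to $f$, pass to a subnet admitting a witness net $(g_{1,j}) \to u_1$ for coordinate one, then to a further subnet admitting $(g_{2,j}) \to u_2$ for coordinate two, and combine to get $(g_{1,j},g_{2,j}) \to u$, showing $f$ is in the $\tau$-closure of $A$. This uses two subnet extractions; the reason the reduction to finite products is essential is that one cannot iterate subnet extraction through an arbitrary index set. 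As written, Step 3 of your proposal does not go through.
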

	\begin{proof}
		Since every product is the inverse limit of its finite subproducts, by Lemma~\ref{lem:projlim_ellis}, it is enough to consider the case when $I$ is finite. Moreover, a straightforward inductive argument shows that the case of finite products follows from the case of products of two elements.
		
		Thus, we may assume that $G=G_1\times G_2$ and $X=X_1\times X_2$. The fact that $E(G,X)$ is the product $E(G_1,X_1)\times E(G_2,X_2)$ is straightforward, as is the fact that minimal ideals in $E(G,X)$ are exactly the products of minimal ideals in $E(G_i,X_i)$, and that idempotents are those elements which have idempotents on both coordinates.
		
		The only nontrivial statement is about the $\tau$ topology being equal to the product topology. As in the case of inverse limit, let us call the latter topology $\pi$. Also as there, we see immediately that subbasic $\pi$-closed sets are $\tau$-closed, so $\tau$ refines $\pi$.
		
		In the other direction, consider any $A\subseteq u\cM=u_1\cM_1\times u_2\cM_2$ and let $f$ be a point in the $\pi$-closure of $A$. We will show that $f$ is also in the $\tau$-closure of $A$. We have a net $(a_i)_i$ in $A$ which is $\pi$-convergent to $f$, i.e.\ for $j=1,2$ we have $(a_{j,i})\xrightarrow{\tau} f_j$, where $f=(f_1,f_2)$ and each $a_i=(a_{1,i},a_{2,i})$.
		
		By applying Lemma~\ref{lem:tau_accumulation} to $(a_{1,i})_i$, we may assume without loss of generality that there is a net $(g_{1,i})_i$ in $G_1$ such that $g_{1,i}\to u_1$ and $g_{1,i}a_{1,i}\to f_1$. By applying it again, we may assume without loss of generality that there is also a net $(g_{2,i})_i$ in $G_2$ such that $g_{2,i}\to u_2$ and $g_{2,i}a_{2,i}\to f_2$. But then $(g_{1,i},g_{2,i})\to (u_1,u_2)=u$ and $(g_{1,i},g_{2,i})(a_{1,i},a_{2,i})\to (f_1,f_2)=f$, so $f$ is in $\tau$-closure of $A$, and we are done.
	\end{proof}

	\begin{prop}\label{prop:product of Ellis groups}
		Suppose we have a multi-sorted structure $M=(M_n)_n$, where the sorts $M_n$ are arbitrary, without any functions or relations between them. Enumerate each $M_n$ by $m_n$ and put $m=(m_n)_n$. Then $E(\Aut(M),S_m(M))\cong \prod_n E(\Aut(M_n),S_{m_n}(M))$, and similarly, the minimal left ideals and the Ellis groups (equipped with the $\tau$-topology) are the products of minimal left ideals and Ellis groups, respectively.
	\end{prop}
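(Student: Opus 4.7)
The plan is to reduce this proposition directly to Lemma~\ref{lem:product_ellis} by showing that the dynamical system $(\Aut(M), S_m(M))$ literally decomposes as the product of the systems $(\Aut(M_n), S_{m_n}(M_n))$.

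First I would verify the group-theoretic decomposition $\Aut(M) \cong \prod_n \Aut(M_n)$. Each sort $M_n$ is $\emptyset$-definable (as the universe of a sort) and therefore setwise invariant under $\Aut(M)$, which gives a restriction homomorphism $\Aut(M) \to \prod_n \Aut(M_n)$. Since no functions or relations connect distinct sorts, any tuple of automorphisms $(\sigma_n)_n$ assembles into a bona fide automorphism of $M$, so this homomorphism is an isomorphism.

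Next I would verify the decomposition at the level of type spaces: $S_m(M) \cong \prod_n S_{m_n}(M_n)$ as compact Hausdorff spaces (with the continuous map induced by the restrictions $S_m(M) \to S_{m_n}(M_n)$). The key step --- and really the only substantive point --- is that every formula $\varphi(x_1, \ldots, x_k)$ with parameters in $M$ and free variables drawn from various sorts is equivalent (modulo $\Th(M)$) to a Boolean combination of formulas each involving variables from only a single sort. This is proved by a straightforward induction on formula complexity: atomic formulas already have this shape (since there are no cross-sort relations and no cross-sort function symbols), Boolean combinations preserve the property, and an existential quantifier over a variable in sort $n$ can be pushed inside the disjunctive normal form and only affects the conjuncts in sort $n$. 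Once this is known, a complete type over $M$ in variables $m$ is determined by and can be prescribed independently from its restrictions to each sort, which yields the homeomorphism (the map is a continuous bijection between compact Hausdorff spaces, hence a homeomorphism by Remark~\ref{rem: continuous surjection is closed}).

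Then I would check that under these two identifications, the action of $\Aut(M)$ on $S_m(M)$ becomes the product of the actions of $\Aut(M_n)$ on $S_{m_n}(M_n)$; this is immediate from how both decompositions are defined (both are given by restriction to the individual sorts, and these restrictions commute in the obvious way). Having done so, the dynamical system $(\Aut(M), S_m(M))$ is (isomorphic to) the product of the systems $(\Aut(M_n), S_{m_n}(M_n))$, and Lemma~\ref{lem:product_ellis} applied directly gives the isomorphism of Ellis semigroups, the correspondence of minimal left ideals as products, the fact that idempotents in the product minimal ideal are the tuples of idempotents, and that the $\tau$-topology on the Ellis group is the product topology. The only place that required any real work is the factorization of formulas, and even that is routine given the hypothesis that the sorts are completely disjoint as first-order structures.
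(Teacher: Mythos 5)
Your proposal is correct and follows exactly the paper's approach: establish $\Aut(M)\cong\prod_n\Aut(M_n)$ and $S_m(M)\cong\prod_n S_{m_n}(M_n)$, check compatibility of the actions, then invoke Lemma~\ref{lem:product_ellis}. The paper dismisses the two decompositions with an ``easy to see,'' while you supply the underlying formula-factorization argument, but the route is the same.
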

	\begin{proof}
		Under the given assumptions, it is easy to see that $\Aut(M)=\prod_n \Aut(M_n)$ and $S_m(M)=\prod_n S_{m_n}(M_n)$. The proposition follows from Lemma~\ref{lem:product_ellis}
	\end{proof}

	\subsection*{Examples}
	
	In this section, unless otherwise stated, $M_n$ denotes the countable structure $(M_n,R_n,C_n)$, where $n>1$ is a fixed natural number, the underlying set is ${\bQ}/{\bZ}$, $R_n$ is the unary function $x\mapsto x+1/n$, and $C_n$ is the ternary predicate for the natural (dense, strict) circular order. Let a tuple $m_n$ enumerate $M_n$. It is easy to show (see \cite[Proposition 4.2]{CLPZ01}) that $\Th(M_n)$ has quantifier elimination and the real circle $S^1_n=\bR/\bZ$ equipped with the rotation by the angle $2\pi/n$ and the circular order is an elementary extension of $M_n$. As usual, $\fC \succ S^1_n$ is a monster model.
	
	Given any $c' \in \fC$, by $\st(c')$ we denote the standard part of $c'$ computed in the circle $S^1=\bR/\bZ$.
	As $\st(c')$ depends only on $\tp(c'/M_n)$, this extends to a standard part mapping on the space of 1-types $S_1(M_n)$.
	
	\begin{prop}
		\label{prop:group_onetypes}
		If $u$ is an idempotent in a minimal left ideal $\cM$ of  the Ellis semigroup $E(\Aut(M_n),S_1(M_n))$, then $u\cM$ is generated by $R_nu$ and cyclic of order $n$. In particular, it is isomorphic to ${\bZ}/n{\bZ}$.
	\end{prop}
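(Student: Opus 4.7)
The plan is to split the argument into two parts: showing that $R_nu$ is an element of $u\cM$ of order exactly $n$, and then showing that $u\cM$ has no further elements.

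First I would establish that $R_n$ is central in $\Aut(M_n)$, and hence in $E(\Aut(M_n),S_1(M_n))$. Centrality in $\Aut(M_n)$ is immediate from $R_n$ being a function symbol of the language, giving $\sigma\circ R_n=R_n\circ\sigma$ for all $\sigma\in\Aut(M_n)$; centrality in the Ellis semigroup follows by passing to pointwise limits, which commute with the fixed continuous map $\pi_{R_n}$ on either side. In particular $uR_n=R_nu$, so $u(R_nu)=R_nu^2=R_nu$, which places $R_nu$ in $u\cM$; and $(R_nu)^k=R_n^ku$, so $(R_nu)^n=R_n^nu=u$, giving an order dividing $n$. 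It is exactly $n$ because an equality $R_n^ku=u$ with $0<k<n$ would force $R_n^k$ to fix every element of $\mathrm{Im}(u)\subseteq S_1(M_n)$; but $R_n^k$ acts on types by shifting the standard part by $k/n\neq 0\in S^1$, so has no fixed points. Hence $|u\cM|\geq n$.

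For the reverse bound, take $f\in u\cM$. By minimality $\cM=Eu$, so $f=gu$ for some $g\in E$ and $fu=f$, whence $f=ufu$. Thus $f$ factors through $u$ (i.e.\ $f(p)=f(u(p))$), has image in $\mathrm{Im}(u)$, and is determined by its restriction $f|_{\mathrm{Im}(u)}$; invertibility of $f$ in the group $u\cM$ makes this restriction a bijection, and centrality of $R_n$ makes it $R_n$-equivariant. Since $R_n$ acts freely on $S_1(M_n)$, $\mathrm{Im}(u)$ is a disjoint union of $R_n$-orbits, each of size $n$.

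The main step, and the principal obstacle, is to show that $\mathrm{Im}(u)$ consists of exactly one such orbit. Granting this, the group of $R_n$-equivariant bijections of a single free $\bZ/n\bZ$-orbit is just $\bZ/n\bZ$ itself, acting by $R_n$-translation, giving $|u\cM|\leq n$ and so $u\cM=\langle R_nu\rangle\cong\bZ/n\bZ$. I would argue single-orbitness by contradiction: if $O_1,O_2\subseteq\mathrm{Im}(u)$ were distinct $R_n$-orbits, a back-and-forth construction on the homogeneous structure $M_n$ (using density of $\bQ/\bZ$ in $S^1$ within a fundamental domain of $R_n$) would produce a net in $\Aut(M_n)$ pointwise converging to some $h\in E$ which fixes $O_1$ elementwise while sending $O_2$ bijectively onto $O_1$ in an $R_n$-equivariant way. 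Then $hu\in\cM$; if some $f'\in E$ satisfied $f'\cdot hu=u$, applying the equation to $p$ with $u(p)\in O_1$ (where $h$ is the identity) would force $f'$ to act as the identity on $O_1$, while applying it to $p$ with $u(p)\in O_2$ would force $f'$ to send $O_1$ into $O_2$, a contradiction. Hence $E(hu)\subsetneq Eu=\cM$, contradicting minimality of $\cM$. The technical heart of the proof therefore lies in the explicit back-and-forth construction of $h$, ensuring both that $h$ genuinely lies in the Ellis semigroup and that it realises the prescribed $R_n$-equivariant collapse.
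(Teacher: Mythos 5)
Your overall strategy is sound and genuinely different from the paper's. You work with an arbitrary idempotent $u$ in an arbitrary minimal ideal $\cM$ directly: the centrality of $R_n$, the lower bound $\lvert u\cM\rvert\geq n$ via the free action of $R_n$ on $\mathrm{Im}(u)$, and the embedding $f\mapsto f\restr_{\mathrm{Im}(u)}$ of $u\cM$ into the group of $R_n$-equivariant bijections of $\mathrm{Im}(u)$ (using $f=ufu$) are all correct. The reduction to ``$\mathrm{Im}(u)$ is a single $R_n$-orbit'' is the right place to concentrate the effort, and your plan to obtain this by exhibiting an orbit-collapsing $h\in EL$ and contradicting minimality of $Eu$ is logically coherent.

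The genuine gap is precisely the one you flag: the existence of the collapsing element $h$. You assert that a back-and-forth construction produces a net $(\sigma_i)$ in $\Aut(M_n)$ with $\sigma_i(p)\to p$ for $p\in O_1$ while $\sigma_i(q)\to$ (a matching point of $O_1$) for $q\in O_2$; but this is not routine bookkeeping, it \emph{is} the proof. One has to check that, on the relevant cuts of the dense circular order $\bQ/\bZ$, one can simultaneously keep a small neighbourhood of $p_1$ roughly fixed and squeeze a short interval around $p_2$ into an arbitrarily small neighbourhood of $p_1$, while staying $R_n$-equivariant, and then invoke compactness of $X^X$ to pass to a pointwise limit. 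Without this the upper bound $\lvert u\cM\rvert\leq n$ is not established, and the order of $\mathrm{Bij}_{R_n}(\mathrm{Im}(u))$ could be as large as $k!\,n^k$ if $\mathrm{Im}(u)$ had $k$ orbits.

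For comparison: the paper avoids proving the single-orbit property for an \emph{arbitrary} idempotent. It instead fixes a fundamental domain $J=\{p\mid\st(p)\in[0,1/n)+\bZ\}$, takes a non-isolated $p_0\in J$, and constructs by hand (via a sequence of short intervals $I_k$, $J_k$ and the transitivity of $\Aut(M_n)$ on short intervals) a \emph{specific} $u=f_{p_0}\in EL$ which is constant equal to $p_0$ on $J$. For this $u$ the single-orbit property is immediate by inspection, $ELu$ is shown to be a minimal ideal, and $u\cM\cong\bZ/n\bZ$ follows easily; the general statement (arbitrary idempotent in arbitrary minimal ideal) is then obtained by the isomorphism of Ellis groups $f\mapsto vfv$ from Remark~\ref{rem:explicit_ellisgroup_isomorphism}, together with the centrality of $R_n$. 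The geometric content of the paper's construction of $f_{p_0}$ is essentially the same as what your back-and-forth for $h$ would require, so your route does not actually save work; but it does cleanly isolate the abstract group-theoretic bookkeeping from the geometry, which is a merit. To complete your proof you would need to carry out the interval-squeezing construction of $h$ explicitly.
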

	
	\begin{proof}
		Note that $R_n$ is a $\emptyset$-definable automorphism of $M_n$, and as such, it is in the centre of $\Aut(M_n)$, and so it is also central in the Ellis semigroup.
		
		Now, since for any two Ellis groups $u\cM,v\cN$, the map $f\mapsto vfv$ is an isomorphism $u\cM\to v\cN$ (cf.\ Remark~\ref{rem:explicit_ellisgroup_isomorphism}), and since $R_n$ is central, we have $vR_n^juv=R_n^jvuv=R_n^jv$. Thus if the conclusion holds for $u\in \cM$, then it also holds for $v
		\in \cN$. Hence, it is enough to show that it holds for \emph{some} idempotent in \emph{some} minimal ideal.
		
		In the rest of this proof, by \emph{short} interval we mean an interval of length less than $1/n$. We also identify $\Aut(M_n)$ with its image in the Ellis semigroup.
		
		From quantifier elimination, it follows easily that $M_n$ is $\omega$-categorical, and $\Aut(M_n)$ acts transitively on the set of short open intervals in $M_n$.
		
		Denote by $J$ the set of $p\in S_1(M_n)$ with $\st(p)\in [0,1/n) + \bZ \subseteq \bR/\bZ$.
		
		\begin{clm*}
			For any non-isolated type $p\in S_1(M_n)$, there is a unique $f_{p}\in EL:=E(\Aut(M_n),S_1(M_n))$ such that for all $q\in J$ we have $f_p(q)=p$.
		\end{clm*}
		\begin{clmproof}
			Enumerate $M_n$ as $(a_k)_{k\in \bN}$.
			
			Since $p$ is non-isolated, for each $k\in \bN$ there is a short open interval $I_k$ such that $p$ is concentrated on $I_k$ and $a_0,\ldots,a_k\notin I_k$. By quantifier elimination, it is easy to see that $p$ is the only type in $S_1(M_n)$ concentrated on all $I_k$'s.
			
			Now, let $J_k:=(\frac{-1}{2kn},\frac{1}{n}-\frac{1}{kn})$. Notice that if $q\in J$, then $q$ is concentrated on all but finitely many $J_k$'s.
			
			Since each $I_k$ and $J_k$ is a short open interval, we can find for each $k$ some $\sigma_k\in \Aut(M_n)$ such that $\sigma_k[J_k]=I_k$. It follows that for any $q\in J$ we have $\lim_k\sigma_k(q)=p$. Thus, if we take any $f_p\in EL$ which is an accumulation point of $(\sigma_k)_k$, we will have $f_p(q)=p$ for all $q \in J$.
			
			To see that $f_p$ is unique, note that for each integer $j$ and $q\in R_n^j[J]$, $f_p(q)\in f_p[R_n^j[J]]=f_pR_n^j[J]=R_n^j f_p[J]=\{R_n^j(p)\}$. Since $J\cup R_n[J]\cup\ldots\cup R_n^{n-1}[J]=S_1(M_n)$, uniqueness follows.
		\end{clmproof}
		
		Take any non-isolated $p_0\in J$, and let $u=f_{p_0}$ (as in the claim). By uniqueness in the claim, $u$ is an idempotent. Denote by $\mathcal O$ the $R_n$-orbit of $p_0$.
		
		Note that every $f \in ELu$ is constant on $J$. As in the above proof of uniqueness, since $u$ and $uf$ commute with $R_n$, we easily see that the image of $uf$ equals $\mathcal O$.
		
		Now, we show that $\cM:=ELu$ is a minimal left ideal. Consider any $f \in \cM$.
		By the last paragraph, $uf(p_0)=R_n^j(p_0)$ for some $j$. Then $R_n^{-j}uf(p_0)=p_0$ and $R_n^{-j}uf$ is constant on $J$, so by uniqueness in the claim, $R_n^{-j}uf=u$. It follows that $ELf=ELu=\cM$, so $\cM$ is a minimal left ideal.

		By the preceding paragraph, we see also that for any $uf\in u\cM$, there is some $j$ such that $uf=R_n^ju$. Conversely, since $R_n$ is central, $R_nu=uR_nu\in u\cM$, so $u\cM$ is cyclic, generated by $R_nu$. As $R_n^ju(p_0)=R_n^j(p_0)$, $R_nu$ has order $n$ in $u\cM$, so $u\cM\cong \bZ/n\bZ$.
	\end{proof}
	
	\begin{lem}
		\label{lem:ellis_group_onesort}
		Suppose $n>1$.
		
		The restriction $S_{m_n}(M_n)\to S_1(M_n)$ to the first variable induces an isomorphism of Ellis semigroups $E(\Aut(M_n),S_{m_n}(M_n))\cong E(\Aut(M_n),S_1(M_n))$
		
		In particular, every Ellis group $u\cM$ of $(\Aut(M_n),S_{m_n}(M_n))$ is generated by $R_nu$ and isomorphic to $\bZ/n\bZ$.
	\end{lem}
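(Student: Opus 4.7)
The plan is to apply Proposition~\ref{prop:induced_epimorphism} to the $\Aut(M_n)$-equivariant continuous surjection $\varphi \colon S_{m_n}(M_n) \to S_1(M_n)$ given by restriction to the first variable, obtaining a continuous epimorphism $\varphi_* \colon E(\Aut(M_n), S_{m_n}(M_n)) \to E(\Aut(M_n), S_1(M_n))$. Since both Ellis semigroups are compact Hausdorff, it then suffices to show that $\varphi_*$ is injective, from which the ``in particular'' statement will follow by combining the isomorphism with Proposition~\ref{prop:group_onetypes}.

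The first step is to set up the Ellis group picture. Fix a minimal left ideal $\cN \unlhd E(\Aut(M_n), S_{m_n}(M_n))$ with idempotent $v$. By Proposition~\ref{prop:epim_ideals_idempotents}, $\cM := \varphi_*[\cN]$ is a minimal left ideal in $E(\Aut(M_n), S_1(M_n))$ with idempotent $u := \varphi_*(v)$, and Proposition~\ref{prop:group_onetypes} gives $u\cM = \langle R_n u\rangle \cong \bZ/n\bZ$. The same reasoning as in Proposition~\ref{prop:group_onetypes} shows that $R_n$ is central in $E(\Aut(M_n), S_{m_n}(M_n))$ (since it is a $\emptyset$-definable automorphism, hence central in $\Aut(M_n)$), so $R_n v \in v\cN$; using centrality and $R_n^n = \id$, we get $(R_n v)^n = v$, while $\varphi_*(R_n v) = R_n u$ has order exactly $n$, forcing $R_n v$ to have order $n$ in $v\cN$. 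Thus $\langle R_n v \rangle$ is a cyclic subgroup of $v\cN$ of order $n$ that maps isomorphically under $\varphi_*\restr_{v\cN}$ onto $u\cM$.

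The crux is then to show the reverse inclusion $v\cN \subseteq \langle R_n v \rangle$, equivalently the injectivity of $\varphi_*\restr_{v\cN}$: any $f \in v\cN$ with $\varphi_*(f) = u$ must satisfy $f = v$. For this I would exploit quantifier elimination in $\Th(M_n)$: the type of a tuple $b \equiv m_n$ over $M_n$ is determined by the 1-types $\tp(b_j/M_n)$ together with circular-order information, but circular orders \emph{among the $b_j$'s themselves} are preserved verbatim from $m_n$ (because every $\sigma \in \Aut(M_n)$ respects $C_n$), and $R_n$-translates $b_j = R_n^k(b_i)$ are determined by $b_i$. Writing $f = \lim \sigma_i$ and computing at $p_0 := \tp(m_n/M_n)$, the $j$-th coordinate of $f(p_0)$ is $\lim \tp(\sigma_i(a_j)/M_n) = \varphi_*(f)(\tp(a_j/M_n))$; if $\varphi_*(f) = u = \varphi_*(v)$, every coordinate of $f(p_0)$ matches that of $v(p_0)$. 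I would then propagate this coincidence to the full joint type via the rigidity of the $\sigma_i$'s (every automorphism is simultaneously compressing/expanding in a circular-order-preserving, $R_n$-commuting manner), and via density of the $\Aut(M_n)$-orbit of $p_0$ in $S_{m_n}(M_n)$ together with the fact that $f = vf$ forces $f(p)$ to lie in the image of $v$.

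The main obstacle is precisely the last step: controlling the \emph{joint} type of $f(p_0)$ from its coordinate projections, since two tuples $b, c \equiv m_n$ with $\tp(b_j/M_n) = \tp(c_j/M_n)$ coordinatewise can a priori have different joint types when distinct coordinates share a standard part in $\bR/\bZ$. The rigidity coming from automorphisms being translates (or at least being translation-like on every $R_n$-orbit) and commuting with $R_n$ is what should rule out this ambiguity in the Ellis-group closure, and the same argument, phrased more generally, yields injectivity on all of $E(\Aut(M_n), S_{m_n}(M_n))$. Once injectivity is established, $\varphi_*$ is a continuous bijection between compact Hausdorff spaces, hence a homeomorphism and a semigroup isomorphism, and the ``in particular'' conclusion follows from Proposition~\ref{prop:group_onetypes}.
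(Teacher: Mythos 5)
You correctly set up the reduction: apply Proposition~\ref{prop:induced_epimorphism} to get the continuous epimorphism $\varphi_*$, observe that both Ellis semigroups are compact Hausdorff, and note that everything reduces to injectivity of $\varphi_*$, after which the ``in particular'' clause follows from Proposition~\ref{prop:group_onetypes}. You also correctly diagnose the crux: two types in $S_{m_n}(M_n)$ can agree coordinatewise over $M_n$ yet a priori disagree in their joint type, precisely when distinct coordinates have infinitesimally close realisations. Up to here you and the paper are in agreement.

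The gap is that your proposed resolution of that obstacle is not an argument. You invoke ``rigidity of the $\sigma_i$'s'' and ``density of the $\Aut(M_n)$-orbit of $p_0$'' to ``propagate'' coordinatewise agreement to joint agreement, but elements of an Ellis semigroup need not be continuous, so agreement on a dense set does not propagate at all — that route is a dead end, not just a detail to be filled in. Likewise, ``automorphisms being translates (or translation-like on every $R_n$-orbit)'' is false for $M_n$ (they are only circular-order-preserving, $R_n$-commuting bijections), so the ``rigidity'' invoked does not exist in the form you want. What the paper actually proves, and what you need, is a purely model-theoretic orthogonality claim: for $p,q \in S_{m_n}(M_n)$, if $p\restr_x = q\restr_x$ for every single variable $x$ then $p = q$. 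This is established by quantifier elimination plus an explicit case analysis of the atomic formulas $C_n(R_n^i(x_a),R_n^j(x_b),c)$ ($c\in M_n$) in terms of standard parts and the auxiliary linear order $<$ on each infinitesimal fibre — not by any property of the automorphisms. Once that claim is in hand, injectivity of $\varphi_*$ follows cleanly: if $f_1(p)\neq f_2(p)$, the claim produces a coordinate $x_k$ where they differ, and then $\omega$-categoricity with the unique 1-type over $\emptyset$ lets one conjugate $p$ by an automorphism $\sigma$ of the model $m'\models p$ enumerates so that $\sigma(m'_1)=m'_k$, moving the disagreement to the first coordinate and hence to $S_1(M_n)$. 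You also take an unnecessary detour through the ideal group: the paper shows injectivity on the whole Ellis semigroup at once, making the restriction to $v\cN$ superfluous.
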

	\begin{proof}
		We have the following ``orthogonality" property.
		
		\begin{clm*}
			Let $p,q\in S_{m_n}(M_n)$ satisfy the condition that for each single variable $x$, $p\restr_x=q\restr_x$. Then $p=q$.
		\end{clm*}
		\begin{clmproof}
			For $c_1',c_2'\in \fC$, write $c_1'<c_2'$ for $C_n(c_1',c_2',R_n(c_1'))$. Note that for each $r\in S^1$, this is a linear ordering on the set of all $c'$ with $\st(c')=r$. Furthermore, for any $c_1',c_2',c_3'$ we have that $C_n(c_1',c_2',c_3')$ holds if and only if one of the following holds:
			\begin{itemize}
				\item
				$\st(c_1'),\st(c_2'),\st(c_3')$ are all distinct and they are in the standard circular order on $S^1$,
				\item
				$\st(c_1')=\st(c_2')\neq \st(c_3')$ and $c_1'<c_2'$,
				\item
				$\st(c_1')\neq\st(c_2')=\st(c_3')$ and $c_2'<c_3'$,
				\item
				$\st(c_2')\neq\st(c_1')=\st(c_3')$ and $c_1'>c_3'$,
				\item
				$\st(c_1')=\st(c_2')=\st(c_3')$ and ($c_1'<c_2'<c_3'$ or $c_3'<c_1'<c_2'$ or $c_2'<c_3'<c_1'$).
			\end{itemize}
			
			We need to show that for each tuple $m'=(m'_k)_{k\in \bN}$ satisfying $\tp(m_n/\emptyset)$, we have the implication $\tp(m_n/\emptyset)\cup\bigcup_k \tp(m'_k/M_n)\vdash \tp(m'/M_n)$. By quantifier elimination, it is enough to show that the type on the left implies each atomic formula (or negation) in $\tp(m'/M_n)$. The only nontrivial cases are of the form $C_n(R_n^i(x_1),R_n^j(x_2),c)$, $C_n(R_n^i(x_1),c,R_n^j(x_2))$, $C_n(c,R_n^i(x_1),R_n^j(x_2))$ (or negations), where $i,j \in \{0,\dots,n-1\}$ and $c \in M_n$. But that follows immediately from the preceding paragraph (and the fact that the standard part is determined by the type over $M_n$).
		\end{clmproof}
		
		It follows from quantifier elimination that there is a unique 1-type over $\emptyset$, so the restriction to the first variable $S_{m_n}(M_n)\to S_1(M_n)$ is surjective, and (since it is obviously equivariant) it gives us a surjective homomorphism $E(\Aut(M_n),S_{m_n}(M_n))\to E(\Aut(M_n),S_1(M_n))$. We need to show that it is injective.
		
		Suppose $f_1,f_2\in E(\Aut(M_n),S_{m_n}(M_n))$ are distinct, so there is some $p\in S_{m_n}(M_n)$ such that $f_1(p)\neq f_2(p)$. But then, by the claim, there is a variable $x_k$ such that $f_1(p)\restr x_k\neq f_2(p)\restr x_k$. Choose $m'=(m'_k)_{k\in \bN}\models p$; then $m'$ enumerates a countable $M'\preceq \fC$. By $\omega$-categoricity and the fact that there is a unique 1-type over $\emptyset$, there is $\sigma\in \Aut(M')$ such that $\sigma(m'_1)=m'_k$. Now, if we put $p':=\tp(\sigma(m')/M_n)$, we have that $p'\restr_{x_1}=p\restr_{x_k}$. From that, we obtain $f_1(p')\restr_{x_1}=f_1(p)\restr_{x_k}\neq f_2(p)\restr_{x_k}=f_2(p')\restr_{x_1}$. It follows that the epimorphism $E(\Aut(M_n),S_{m_n}(M_n))\to E(\Aut(M_n),S_1(M_n))$ induced by the restriction to the first variable is injective.
		
		To complete the proof, notice that because restriction to $S_1(M_n)$ induces an isomorphism of $E(\Aut(M_n),S_{m_n}(M_n))$ and $E(\Aut(M_n),S_1(M_n))$, for any Ellis group $u\cM$ in the former, $u\cM\restr_{S_1(M_n)}$ is an Ellis group in the latter. Furthermore, it is easy to see that $\pi_{R_n,S_{m_n}(M_n)}\restr_{S_1(M_n)}=\pi_{R_n,S_{1}(M_n)}$, so since --- by Proposition~\ref{prop:group_onetypes} --- $u\cM\restr_{S_1(M_n)}$ is cyclic of order $n$, generated by $R_nu\restr_{S_1(M_n)}=\pi_{R_n,S_{1}(M_n)}u\restr_{S_1(M_n)}$ , it follows that $u\cM$ is generated by $\pi_{R_n,S_{m_n}(M_n)}u=R_nu$.
	\end{proof}

	\begin{prop}
		\label{prop:connecting_maps_nn'}
		If $n,n'$ are positive integers and $n'$ divides $n$, then the map $M_n\to M_{n'}$ given by multiplication by $k=n/n'$ induces an epimorphism $\varphi_1\colon \Aut(M_n)\to\Aut(M_{n'})$ and a continuous surjection $\varphi_2\colon S_{m_n}(M_n)\to S_{m_{n'}}(M_{n'})$ which is equivariant with respect to the induced action of $\Aut(M_n)$ on $S_{m_{n'}}(M_{n'})$.
	\end{prop}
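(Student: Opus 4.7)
The map $\mu_k\colon\bQ/\bZ\to\bQ/\bZ$, $x\mapsto kx$, is a $k$-to-$1$ surjection satisfying $\mu_k\circ R_n=R_{n'}\circ\mu_k$ (because $k/n=1/n'$), whose fibres are exactly the orbits of the cyclic subgroup $\langle R_n^{n'}\rangle\leq\Aut(M_n)$. Crucially, $\mu_k$ itself is \emph{not} a morphism of structures (it fails to preserve circular order globally --- for example, doubling scrambles three equally spaced points), so $\varphi_1$ and $\varphi_2$ must be constructed by a descent-and-lift argument on the universal cover $\bQ\to\bQ/\bZ$ rather than by direct functoriality.

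For $\varphi_1$: given $\sigma\in\Aut(M_n)$, commutation with $R_n^{n'}$ forces $\sigma$ to permute the fibres of $\mu_k$, so $\sigma$ descends to a well-defined bijection $\bar\sigma$ of $\bQ/\bZ$ characterised by $\mu_k\circ\sigma=\bar\sigma\circ\mu_k$, and commutation with $R_{n'}$ is immediate. To verify that $\bar\sigma$ preserves $C_{n'}$, I would lift to the universal cover: $\sigma$ admits an order-preserving lift $\tilde\sigma\colon\bQ\to\bQ$ commuting with $+1$, and the inequality $\sum_{j=1}^{n}(\tilde\sigma(x+j/n)-\tilde\sigma(x+(j-1)/n))\geq 1$ combined with $\tilde\sigma(x+1)-\tilde\sigma(x)=1$ forces each summand to equal $1/n$, so $\tilde\sigma$ commutes with $+1/n$ as well. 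Setting $\tilde{\bar\sigma}(y):=k\,\tilde\sigma(y/k)$ yields an order-preserving bijection commuting with $+1$ (since $\tilde\sigma$ commutes with $+1/k=n'\cdot(1/n)$) and descending to $\bar\sigma\in\Aut(M_{n'})$. The assignment $\varphi_1\colon\sigma\mapsto\bar\sigma$ is evidently a group homomorphism, and surjectivity follows by the symmetric construction: given $\tau\in\Aut(M_{n'})$ with lift $\tilde\tau$, the map $\tilde\sigma(x):=\tilde\tau(kx)/k$ is order-preserving, commutes with $+1$ and $+1/n$, and descends to $\sigma\in\Aut(M_n)$ with $\varphi_1(\sigma)=\tau$.

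For $\varphi_2$ I will work with $m_{n'}:=\mu_k(m_n)$ for simplicity (the general case follows by transport through the canonical homeomorphism between type spaces induced by different enumerations of $M_{n'}$), and define $\varphi_2$ on the dense $\Aut(M_n)$-orbit of $\tp(m_n/M_n)$ in $S_{m_n}(M_n)$ by $\varphi_2(\tp(\sigma(m_n)/M_n)):=\tp(\bar\sigma(m_{n'})/M_{n'})=\tp(\mu_k(\sigma(m_n))/M_{n'})$, where $\mu_k$ is applied coordinate-wise (all coordinates lie in $M_n$, so this is unambiguous). This is well-defined (the stabiliser of $\tp(m_n/M_n)$ in $\Aut(M_n)$ is trivial, since $m_n$ enumerates $M_n$), $\varphi_1$-equivariant by direct computation, and sends $\tp(m_n/M_n)$ to $\tp(m_{n'}/M_{n'})$. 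Surjectivity of the extension then follows from density of the $\Aut(M_{n'})$-orbit of $\tp(m_{n'}/M_{n'})$ in $S_{m_{n'}}(M_{n'})$ combined with compactness (the image of the continuous extension is closed).

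The main obstacle is continuity of the extension of $\varphi_2$ from the dense orbit to all of $S_{m_n}(M_n)$. By quantifier elimination in $\Th(M_{n'})$ it suffices to show that for every atomic formula $\psi(y_{l_1},\dots,y_{l_r};\bar b)$ in the language of $M_{n'}$ with $\bar b\in M_{n'}=M_n$, the truth value of $\psi(\mu_k(\sigma_i(m_n));\bar b)$ is controlled by $\tp(\sigma_i(m_n)/M_n)$. Each such formula unfolds into a finite Boolean combination (in the language of $M_n$) of atomic formulas about $\sigma_i(m_n)$ with parameters in $M_n$: the condition ``$\mu_k(x)=c$'' is equivalent to $\bigvee_{j=0}^{k-1}x=R_n^{jn'}(c_0)$ for any preimage $c_0$ of $c$, while ``$C_{n'}(\mu_k(a),\mu_k(b),\mu_k(c))$'' reduces to a finite case analysis on which of the $k$ half-open ``sheets'' $[jn'/n,(j+1)n'/n)$ each of $a,b,c$ inhabits, each case being expressible in the language of $M_n$ with the sheet boundaries (elements of $M_n$) as parameters. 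Convergence of $\tp(\sigma_i(m_n)/M_n)$ in $S_{m_n}(M_n)$ then forces convergence of $\tp(\bar\sigma_i(m_{n'})/M_{n'})$ in $S_{m_{n'}}(M_{n'})$, giving the required continuous extension.
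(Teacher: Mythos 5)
Your proof is correct, but it takes a genuinely different route from the paper's. The paper realises $M_{n'}$ as an \emph{imaginary sort} of $M_n$: it forms the quotient $M_n/H$ (where $H=\langle R_n^{n'}\rangle$), equips it with a definable circular order $C_n'$ and rotation $R_n'$, shows the resulting structure is isomorphic to $M_{n'}$, and then gets $\varphi_1$ and $\varphi_2$ for free by functoriality and type-definability of the quotient map; surjectivity of $\varphi_1$ is verified by a fairly laborious explicit construction on $\bQ/\bZ$ that occupies the second half of the paper's proof. You instead work with lifts to the universal cover $\bQ\to\bQ/\bZ$, obtaining $\varphi_1$ and its \emph{surjectivity} in one clean stroke by conjugating lifts through $x\mapsto kx$ (with the symmetric inverse $\tilde\tau\mapsto\tilde\tau(k\cdot)/k$). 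Your continuity argument for $\varphi_2$ --- the explicit syntactic reduction of $M_{n'}$-atomic formulas about $\mu_k(\bar a)$ to $M_n$-formulas about $\bar a$, with a case split on the $k$ sheets --- is in substance a direct verification that the quotient map is type-definable, which is exactly what the paper's imaginary-sort construction establishes conceptually. So the two approaches prove the same interpretability fact, one by hand and one by machinery; yours is more elementary and, for the surjectivity of $\varphi_1$, noticeably shorter.

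Two small points of phrasing worth tightening. First, the displayed sum $\sum_{j=1}^{n}(\tilde\sigma(x+j/n)-\tilde\sigma(x+(j-1)/n))$ telescopes to \emph{exactly} $1$, so writing ``$\geq 1$'' is a red herring; the content is that each individual summand is $\geq 1/n$ (since $\tilde\sigma$ is increasing and the difference is $\equiv 1/n\pmod 1$), and the conclusion follows from $n$ summands each $\geq 1/n$ having total exactly $1$. Second, in the continuity step it would be worth making explicit that because $S_{m_{n'}}(M_{n'})$ is a Stone space, exhibiting a clopen preimage (modulo the dense orbit) for every basic clopen set is sufficient to produce the continuous extension --- this is the standard ultrafilter argument, but as stated the passage from ``truth values are controlled'' to ``the map extends continuously'' leans on the reader.
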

	\begin{proof}
		
		Denote by $H$ the group generated by $R_n^{n'}$ in $\Aut(M_n)$. Then the $H$-orbit equivalence relation on $M_n$ is definable in $M_n$, and thus $M_n/H$ is an imaginary sort in $M_n$. $R_n$ and $C_n$ induce an unary function $R_n'$ and a ternary relation $C_n'$ (both $M_n$-definable) on $M_n/H$ by putting $R_n'(Hx):=HR_n(x)$ and declaring that $C_n'(Hx_1,Hx_2,Hx_3)$ if we have $C_n(x_1',x_2',x_3')$ for the representatives $x_1',x_2',x_3'$ (of the respective orbits) in $[0,1/k)+\bZ$. Then it is not hard to see that $(M_n/H,C_n')$ is a dense circular order and $R_n'$ is its automorphism of order $n/k=n'$. Furthermore, it is easy to see that the map $M_n\to M_{n'}$ given by $x\mapsto kx$ factors through $Hx\mapsto kx$, which defines an isomorphism $\varphi_0\colon (M_n/H,R_n',C_n')\to(M_{n'},R_{n'},C_{n'})$.
		
		Then the isomorphism $\varphi_0$ induces an action of $\Aut(M_n)$ on $M_{n'}$ by automorphisms (given by $\sigma(x')=\varphi_0(\sigma(\varphi_0^{-1}(x')))$), and thus gives us a homomorphism $\varphi_1\colon \Aut(M_n)\to \Aut(M_{n'})$. Note that in particular, if $x'=kx$ for some $x\in M_n$, then $\varphi_1(\sigma)(kx)=\varphi_0(\sigma(Hx))=\varphi_0(H\sigma(x))=k\sigma(x)$. Since $x\mapsto kx$ is onto $M_{n'}$, this determines $\varphi_1(\sigma)$ uniquely, and in this sense $\varphi_1$ is induced by $x\mapsto kx$.
		
		Let $m_n$ be enumerated as $(m_n^i)_{i\in \bN}$. Then the natural map $[m_n]_{\equiv}\to [(Hm_n^i)_i]_{\equiv}$ (given by taking each coordinate to its $H$-orbit) is type-definable and induces a natural continuous surjection $S_{m_n}(M_n)\to S_{(Hm_n^i)_i}(M_n)$. Via the isomorphism between $(M_n/H,C_n',R_n')$ and $(M_{n'},C_{n'},R_{n'})$, we obtain a continuous surjection $S_{(Hm_n^i)_i}(M_n)\to S_{m_{n'}}(M_{n'})$. By composing the two, we obtain a surjection $\varphi_2\colon S_{m_n}(M_n)\to S_{m_{n'}}(M_{n'})$, which is easily seen to be $\Aut(M_n)$-equivariant, furthermore, it is not hard to see that it is induced by $x\mapsto kx$ in the sense that if $m_n'\equiv m_n$ is a tuple in $M_n$, then, the type $\tp(m_n'/M_n)$ is mapped to $\tp(km_n'/M_{n'})$ (note that since $M_n$ is $\omega$-categorical, realised types are dense in $S_{m_n}(M_n)$, so by continuity, this uniquely determines $\varphi_2$).
		
		What is left is to show that $\varphi_1$ is surjective. By the description of $\varphi_1$ given before, it is enough to show that for every $\sigma'\in \Aut(M_{n'})$ there is some $\sigma\in\Aut(M_n)$ such that for all $x\in M_{n}$, $k\sigma(x)=\sigma'(kx)$.
		
		We may assume without loss of generality that $\sigma'(0)=0$: otherwise, $\sigma'-\sigma'(0)$ is also an automorphism of $\Aut(M_{n'})$, and if we find $\sigma$ such that $k\sigma(x)=\sigma'(kx)-\sigma'(0)$, then for any $\alpha$ such that $k\alpha=\sigma'(0)$, $\sigma(x)+\alpha$ is an automorphism of $M_n$ and we have $k(\sigma(x)+\alpha)=\sigma'(kx)$.
		
		Put $I_j:=([j/k,(j+1)/k)\cap \bQ)+\bZ\subseteq \bQ/\bZ$ (where $j\in \bZ$). Then we can define $\sigma(x)$ for $x\in I_j$ by letting $\sigma(x)$ be the unique element of $I_j$ such that $k\sigma(x)=\sigma'(kx)$. We need to show that $\sigma$ is an automorphism, and then we will obviously have $\varphi_1(\sigma)=\sigma'$.
		
		Note that we can also describe $\sigma(x)$ in the following way: if $x\in I_j$ and $\beta\in [0,1)$ is a representative of $\sigma'(kx)$, then $\sigma(x)=(\beta+j)/k+\bZ$.
		
		It is not hard to see that for each $j$, $\sigma$ restricts to a bijection from $I_j$ to itself. In particular, $\sigma$ is a bijection. It is also not hard to see that it preserves the circular ordering (roughly, because it preserves the circular order between the chunks $I_j$, it preserves the circular order for triples where not all elements are in a single $I_j$, while for triples lying in a single $I_j$, it follows from the description of $C_n'$ and the fact that $\varphi_0$ is an isomorphism).
		
		Note that since $\sigma'(0)=0$, it follows that for any $x'\in M_{n'}$, $x'\in [1-1/n',1)+\bZ$ if and only if $\sigma'(x')\in [1-1/n',1)+\bZ$. Next, notice that for $x\in I_j$, we have that $kx\in [1-1/n',1)+\bZ$ if and only if $x\in [(j+1)/k-1/n,(j+1)/k)$, which (since $x\in I_j$) is equivalent to $x+1/n\in I_{j+1}$. Thus, for $x\in I_j$, we have $\sigma'(kx)\in [1-1/n',1)+\bZ$ if and only if $R_n(x)\in I_{j+1}$.
		
		It follows that $\sigma$ preserves $R_n$: fix any $x\in I_j$ and let $\beta\in [0,1)$ be a representative of $\sigma'(kx)$. Note that in this case $R_n(x)\in I_j$ or $R_n(x)\in I_{j+1}$. By the preceding paragraph it easily follows that either
		\begin{itemize}
			\item
			 $R_n(x)\in I_j$ and $\beta+1/n'\in [0,1)$, or
			\item
			$R_n(x)\in I_{j+1}$ and $\beta+1/n'-1\in [0,1)$.
		\end{itemize}
		Because $\sigma'$ is an automorphism of $M_{n'}$, it commutes with addition of $1/n'$, so
		\[
			\sigma'(k(x+1/n))=\sigma'(kx+1/n')=\sigma'(kx)+1/n'=(\beta+\bZ)+1/n',
		\]
		which shows that both $\beta+1/n'$ and $\beta+1/n'-1$ are representatives of $\sigma'(kR_n(x))$. Thus if $R_n(x)\in I_j$, then
		\[
			\sigma(R_n(x))=(\beta+1/n'+j)/k+\bZ=(\beta+j)/k+1/n+\bZ=\sigma(x)+1/n=R_n\sigma(x).
		\]
		Likewise, if $R_n(x)\in I_{j+1}$, then
		\[
			\sigma(R_n(x))=((\beta+1/n'-1)+(j+1))/k+\bZ=R_n\sigma(x).
		\]
		Thus, $\sigma$ is an automorphism such that $\varphi_1(\sigma)=\sigma'$, so $\varphi_1$ is onto, and we are done.
	\end{proof}

	\begin{cor}
		\label{cor:ellis_groups_allsorts}
		For all positive integers $n$, the Ellis group of $S_{m_n}(M_n)$ is isomorphic to $\bZ/n\bZ$, generated by $R_nu_n$, where $u_n$ is the identity in the Ellis group.
	\end{cor}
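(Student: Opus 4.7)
The plan is to derive this corollary essentially as a consolidation of what has already been shown, together with a degenerate check at $n=1$. For $n>1$, I would appeal directly to Lemma~\ref{lem:ellis_group_onesort}: fix any minimal left ideal $\cM_n \unlhd E(\Aut(M_n),S_{m_n}(M_n))$ and any idempotent $u_n\in \cM_n$, and the lemma furnishes an isomorphism $u_n\cM_n\cong \bZ/n\bZ$ with generator $R_nu_n$. Since Fact~\ref{fct:tau_top_pre}(7) says all the ideal groups of a given enveloping semigroup are mutually isomorphic as semitopological groups, the statement is independent of the choices of $\cM_n$ and $u_n$, so I get a well-defined ``the'' Ellis group.

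The only remaining case is $n=1$. Here $R_1$ is the map $x\mapsto x+1$ on $\bQ/\bZ$, which is the identity, and $\bZ/1\bZ$ is trivial; thus I have to verify that the Ellis group of $(\Aut(M_1),S_{m_1}(M_1))$ is a singleton (in which case $R_1u_1=u_1$ automatically generates it). I would do this by observing that the proofs of Proposition~\ref{prop:group_onetypes} and Lemma~\ref{lem:ellis_group_onesort} actually go through unchanged at $n=1$: in Proposition~\ref{prop:group_onetypes}, when $n=1$ the set $J=[0,1/n)+\bZ$ becomes all of $\bR/\bZ$ and the ``short'' intervals are simply the proper open intervals, so the construction of the unique $f_p\in EL$ with $f_p\equiv p$ on $J$ still produces an idempotent $u=f_{p_0}$ for any non-isolated $p_0$. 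Since the $R_1$-orbit $\mathcal O$ of $p_0$ is then the singleton $\{p_0\}$, every $uf\in u\cM$ is forced to be constant equal to $p_0$, and uniqueness in the claim gives $uf=u$; thus $u\cM=\{u\}$.

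Transferring this from $S_1(M_1)$ to $S_{m_1}(M_1)$ is handled by the same argument as in Lemma~\ref{lem:ellis_group_onesort}: the orthogonality claim there involves only the atomic formulas $C_n(R_n^i(x_1),R_n^j(x_2),c)$ and friends with $i,j\in\{0,\dots,n-1\}$, which for $n=1$ collapses to the case $i=j=0$ and remains valid (in fact trivially so, since $R_1=\id$). Hence restriction to the first variable induces an isomorphism $E(\Aut(M_1),S_{m_1}(M_1))\cong E(\Aut(M_1),S_1(M_1))$, and combined with the previous paragraph the Ellis group on the left is trivial, as required.

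There is no serious obstacle; the only issue is checking that the proofs above, which were stated under the convention $n>1$, truly work at $n=1$. An inspection of each step confirms that the hypothesis $n>1$ was only used to guarantee that the ``short intervals'' and the set $J$ are proper subsets of the circle, which is irrelevant to the validity of the argument once $n=1$ is treated as the degenerate case where $J$ covers everything and the resulting cyclic group is trivial.
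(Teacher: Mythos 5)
Your treatment of the $n>1$ case and the reduction to $n=1$ is the same as the paper's, but your handling of $n=1$ contains a genuine gap: Lemma~\ref{lem:ellis_group_onesort} does \emph{not} go through unchanged at $n=1$, and the hypothesis $n>1$ is used there essentially, not merely to keep the set $J$ a proper subset of the circle. The proof of the orthogonality claim inside that lemma introduces the auxiliary relation $c_1' < c_2'$ defined as $C_n(c_1',c_2',R_n(c_1'))$, which recovers the local linear order inside a cut precisely because $R_n(c_1')\neq c_1'$; for $n=1$ we have $R_1=\id$, so $C_1(c_1',c_2',R_1(c_1'))=C_1(c_1',c_2',c_1')$ is always false and the relation degenerates to the empty one. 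This is not a cosmetic issue: the claim ``a type in $S_{m_1}(M_1)$ is determined by its one-variable restrictions together with $\tp(m_1/\emptyset)$'' is actually false. One can find a countable $M'\preceq\fC$ lying entirely inside the cut at a single irrational $r$, so that all its elements have the same $1$-type over $M_1$; the full type $\tp(m'/M_1)$ of an enumeration $m'$ then still encodes the \emph{linear} order on the cut (via formulas $C_1(m'_j,m'_k,c)$ for $c\in M_1$), which is not determined by the circular order $\tp(m'/\emptyset)$ alone. A re-enumeration of $M'$ by a circular-order automorphism that ``moves the point at infinity'' preserves $\tp(\cdot/\emptyset)$ and all $1$-types over $M_1$ while changing the full type over $M_1$. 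This obstruction is specific to $n=1$; for $n>1$ the orbit of any element under $R_n$ has $n$ distinct standard parts, so no model can sit in a single cut.

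The paper sidesteps the $n=1$ case altogether: using Proposition~\ref{prop:connecting_maps_nn'}, Remark~\ref{rem:action_factors} and Proposition~\ref{prop:induced_epimorphism}, the Ellis group of $(\Aut(M_1),S_{m_1}(M_1))$ is a topological-group quotient of the Ellis group of $(\Aut(M_n),S_{m_n}(M_n))$ for every $n$, hence (already from the cases $n=2,3$) a quotient of both $\bZ/2\bZ$ and $\bZ/3\bZ$, so it must be trivial. You should replace your $n=1$ argument with this divisibility argument rather than attempting to push Lemma~\ref{lem:ellis_group_onesort} to $n=1$.
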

	\begin{proof}
		For $n\neq 1$, this is Lemma~\ref{lem:ellis_group_onesort}, so we only need to consider $n=1$.
		
		By Proposition~\ref{prop:connecting_maps_nn'}, Remark~\ref{rem:action_factors} and Proposition~\ref{prop:induced_epimorphism}, for each $n$, we have an epimorphism from the Ellis group of $S_{m_n}(M_n)$ onto the Ellis group of $S_{m_1}(M_1)$. In particular, since the Ellis groups of $S_{m_2}(M_2)$ and $S_{m_3}(M_3)$ are isomorphic to $\bZ/2\bZ$ and $\bZ/3\bZ$ respectively, the Ellis group of $S_{m_1}(M_1)$ is cyclic of order dividing $2$ and $3$. As such, it must be trivial.
		
		Note that $R_1$ is simply the identity, so $R_1u_1=u_1$ indeed generates the trivial group $\{u_1\}$.
	\end{proof}

	\begin{ex}
		\label{ex:CLPZ}
		Consider the theory $T$ of the multi-sorted structure $M=(M_n)_{n\in \bN^+}$, where each $M_n=(M_n,R_n,C_n)$ is the countable model as described at the beginning of this section. Then, if we enumerate $M$ as $m$, then $M$ is ambitious (because it is $\omega$-categorical). For each $n$, choose an idempotent $u_n'$ as i
		
		By Corollary~\ref{cor:ellis_groups_allsorts} and Proposition~\ref{prop:product of Ellis groups}, the Ellis group $u\cM$ of  the dynamical system $(\Aut(M),S_m(M))$ is isomorphic to $\prod_n \bZ/n\bZ$ with the product topology. Moreover, each element $f\in u\cM$ can be uniquely represented as $(R_n^{b_n}u_n)_n$, where $u_n$ is the restriction $u\restr_{S_{m_n}(M_n)}$, while $b_n$ is an integer in the interval $(-n/2,n/2]$. In particular, $u\cM$ is a Hausdorff (compact and Polish) group, so $H(u\cM)$ is trivial.
		
		Moreover, the group $D$ (i.e.\ $[u]_\equiv\cap u\cM$) is trivial. Indeed, if $f\in u\cM$ is nontrivial, then for some $n$, $f\restr_{S_{m_n}(M_n)}$ and $u\restr_{S_{m_n}(M_n)}$ are distinct. Therefore, $f\restr_{S_{m_n}(M_n)}=R_n^{b_n}(u\restr_{S_{m_n}(M_n)})$ for some $b_n$ not divisible by $n$, so in particular, $f(\tp(m_n/M_n))=R_n^{b_n}u(\tp(m_n/M_n))$, which is clearly distinct from $u(\tp(m_n/M_n))$. Hence, also $f(\tp(m/M))\neq u(\tp(m/M))$, i.e.\ $f\notin D$.
		
		We have proved that $u\cM/H(u\cM)D=u\cM/H(u\cM)=u\cM \cong \prod_n \bZ/n\bZ$, so the group $\hat G$ from Theorem~\ref{thm:main_galois} is $u\cM$, which we identify with $\prod_n \bZ/n\bZ$.
		
		We claim that $g\in \ker \hat r$ if and only if the $g_n$'s are absolutely bounded.
		
		By \cite[Corollary~4.3]{CLPZ01}, for any $a\in M_n(\fC)$ and integer $k \in (-n/2,n/2]$ we have $d_\Lasc(a,R_n^k(a)) \geq k$, which easily implies (having in mind the precise identification of $u\cM$ with $\prod_n \bZ/n\bZ$) that unbounded sequences are not in the kernel.
		
		On the other hand, to show that absolutely bounded sequences are in $\ker \hat r$, it is enough to show this for sequences bounded by $1$. But then (again, having in mind the identification of $u\cM$ with $\prod_n \bZ/n\bZ$ from the second paragraph) for an element $f \in u\cM$ corresponding to such a sequence, $f\restr_{S_{m_n}(M_n)} =R_n^{\epsilon_n}u \restr_{S_{m_n}(M_n)}=u \restr_{S_{m_n}(M_n)}R_n^{\epsilon_n}$ for some $\epsilon_n \in \{-1,0,1\}$. By \cite[Lemma 3.7]{CLPZ01}, it is enough to show that $d_\Lasc(m_n,R_n(m_n))$ is bounded (when $n$ varies).
		By $\omega$-categoricity, we can replace $m_n$ by an enumeration $m_n'$ of any other countable model $M_n'$. So let $m'_n$ be an enumeration of $(\bQ\cap ([0,1/3n)+\bZ/n ))/\bZ\subseteq \bQ/\bZ$. Furthermore, put $m''_n:=m'_n+1/3n$ and $m'''_n:=m'_n+2/3n$, and write $M'_n,M''_n,M'''_n$ for the respective models they enumerate. Then $\tp(m'_n/M'''_n)=\tp(m''_n/M'''_n)$, $\tp(m''_n/M'_n)=\tp(m'''_n/M'_n)$, $\tp(m'''_n/M''_n)=\tp(R_n(m'_n)/M''_n)$, so $d_\Lasc(m_n',R_n(m_n')) \leq 3$.
		
		Note that $T$ has NIP (e.g.\ because it is interpretable in an o-minimal theory), so the full Theorem~\ref{thm:main_galois} applies, and the Galois group $\Gal(T)$ is the quotient of $\prod_n \bZ/n\bZ$ by the subgroup of bounded sequences. As a topological group, this is exactly the description given by \cite[Theorem~28]{Zie02}; note that the topology is trivial. In terms of Borel cardinality, we obtain $\ell^\infty$
		(see the paragraph following the proof of Lemma 3.10 in \cite{KPS13}).
		\xqed{\lozenge}
	\end{ex}
	
	\begin{ex}
		\label{ex:KPS}
		Consider the theory $T$ of the multi-sorted structure $M=(M_n,h_{nn'})_{n,n'}$, where $M_n$ are as before, $n$ runs over positive integers, while $n'$ rangers over divisors of $n$; for each pair $n' \mid n$, $h_{nn'}\colon M_n\to M_{n'}$ is the multiplication by $n/n'$. Enumerate each $M_n$ by $m_n$ in such a way that $h_{nn'}(m_{n})=m_{n'}$, and then enumerate $M$ by $m=(m_n)_n$.
		
		By Proposition~\ref{prop:connecting_maps_nn'}, we see that each map $h_{nn'}$ induces a natural epimorphism $\Aut(M_n)\to \Aut(M_{n'})$, and a continuous, $\Aut(M_n)$-equivariant surjection $S_{m_n}(M_n)\to S_{m_{n'}}(M_{n'})$, so we have epimorphisms of dynamical systems $(\Aut(M_n),S_{m_n}(M_n))\to (\Aut(M_{n'}),S_{m_{n'}}(M_{n'}))$. Furthermore, if $n''\mid n' \mid n$, then it is easy to see that $h_{n'n''}\circ h_{nn'}=h_{nn''}$, so these epimorphisms are compatible. Using that, it is not hard to see that $\Aut(M)=\varprojlim_n \Aut(M_n)$ (because $\Aut(M)$ acts on each $M_n$ by automorphisms and it has to be compatible with $h_{nn'}$) and $S_m(M)=\varprojlim_n S_{m_n}(M_n)$ (because the type on the $n$-th coordinate determines the type on $n'$-th coordinate, when $n'$ divides $n$). By Lemma~\ref{lem:projlim_ellis}, it follows that $E(\Aut(M),S_{m}(M))\cong \varprojlim_n E(\Aut(M_n),S_{m_n}(M_n))$.
		
		In particular, by Corollary~\ref{cor:ellis_groups_allsorts}, the Ellis group $u\cM$ of $E(\Aut(M),S_m(M))$ is isomorphic to the profinite completion of integers $\widehat \bZ=\varprojlim_n\bZ/n\bZ$. By analysis analogous to the preceding example, we see that $H(u\cM)$ and $D$ are trivial, and $\ker \hat r$ corresponds to the elements of $\widehat \bZ$ represented by bounded sequences. Those sequences are exactly the elements of $\bZ\subseteq \widehat \bZ$ (this follows from the observation that a bounded sequence representing an element of $\widehat \bZ$ has to eventually stabilize).
		
		Thus, by Theorem~\ref{thm:main_galois}, $\Gal(T)$ is the quotient $\widehat \bZ/\bZ$ (which, again, has trivial topology), and, since the theory is NIP (e.g.\ because, as it is easy to see, it is interpretable in $(\bR,+,\cdot,\leq)$), $\Gal(T)$ also has the Borel cardinality of $\widehat\bZ/\bZ$ which is $E_0$ (which can be seen as a consequence of the fact that it is hyperfinite (as an orbit equivalence relation of a $\bZ$-action) and non-smooth (as the quotient of a compact Polish group by a non-closed subgroup), see \cite[Theorem 8.1.1]{kanovei}.
		\xqed{\lozenge}
	\end{ex}

	\chapter{Group actions which are not (point-)transitive}
	\chaptermark{Intransitive actions}
	\label{chap:intransitive}
	The results obtained in previous chapters applied only in context of group actions which were transitive, or at least had dense orbits. In this chapter, we find classes of equivalence relations which, while not group-like (because they are not defined on an ambit), share some good properties that we have shown before.
	
	More precisely, the goal is to find a general context where the analogue of Proposition~\ref{prop:top_props_of_wglike}(2) holds, and to use that to extend the first part of Corollary~\ref{cor:metr_smt_cls} (equivalence of closedness and smoothness) and its analogues to wider classes of relations. To that end, we introduce the notion of an orbital and weakly orbital equivalence relation.
	
	The main results of this chapter can be found in \cite{Rz16} (my own paper).
	
	\section[Abstract orbital and weakly orbital equivalence relations]{Abstract orbital and weakly orbital equivalence relations\sectionmark{Abstract (weakly) orbital equivalence relations}}
	\sectionmark{Abstract (weakly) orbital equivalence relations}
	\label{sec:general}
	In this section, $G$ is an arbitrary group, while $X$ is a $G$-space, and neither has any additional structure. The goal of this section is to define and understand orbital and weakly orbital equivalence relations in this abstract context.
	
	\subsection*{Orbital equivalence relations}
	\label{ssec:orbital}
	To every ($G$-)invariant equivalence relation on $X$, we can attach a canonical subgroup of $G$.
	\begin{dfn}
		\label{dfn:HE}
		\index{HE@$H_E$}
		If $E$ is an invariant equivalence relation, then we define $H_E$ as the subgroup of all elements of $G$ which preserve every $E$-class setwise.\xqed{\lozenge}
	\end{dfn}
	
	A dual concept to that of $H_E$ is $E_H$.
	\begin{dfn}
		\index{EH@$E_H$}
		For any $H\leq G$, we denote by $E_H$ the equivalence relation on $X$ of lying in the same $H$-orbit.\xqed{\lozenge}
	\end{dfn}
	
	\begin{ex}
		$E_H$ need not be invariant: for example, if $G=X$ is acting on itself by left translations, then $E_H$ (whose classes are just the right cosets of $H$) is invariant if and only if $H$ is a normal subgroup of $G$.\xqed{\lozenge}
	\end{ex}
	
	\begin{prop}
		\label{prop:orb_norm}
		If $E$ is an invariant equivalence relation on $X$, then $H_E$ is normal in $G$.
		
		In the other direction, if $H\unlhd G$, then $E_H$ is an invariant equivalence relation on $X$.
	\end{prop}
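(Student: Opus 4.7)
The proposition splits into two unrelated implications, both of which should go by direct element-chasing from the definitions. There is no real obstacle; the only thing to be careful of is what ``invariant'' means for an equivalence relation: if $x_1 \Er x_2$ then $gx_1 \Er gx_2$ for every $g \in G$.

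For the first implication, I would fix $h \in H_E$ and $g \in G$ and show $ghg^{-1} \in H_E$. Given an arbitrary $x \in X$, set $y := g^{-1}x$. Since $h \in H_E$ preserves the $E$-class of $y$, we have $hy \Er y$. Applying invariance of $E$ to the automorphism $g$, we get $g(hy) \Er gy$, i.e.\ $(ghg^{-1})x \Er x$. As $x$ was arbitrary, $ghg^{-1}$ preserves every $E$-class setwise, so $ghg^{-1} \in H_E$.

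For the second implication, let $H \unlhd G$. The relation $E_H$ is automatically an equivalence relation (reflexivity from $e \in H$; symmetry from $H$ being closed under inverses; transitivity from $H$ being closed under multiplication), so only invariance needs to be checked. Suppose $x_1 \mathrel{E_H} x_2$, witnessed by $h \in H$ with $x_2 = hx_1$. For any $g \in G$, write $gx_2 = ghx_1 = (ghg^{-1})(gx_1)$; since $H$ is normal, $ghg^{-1} \in H$, so $gx_1 \mathrel{E_H} gx_2$. This completes the proof.
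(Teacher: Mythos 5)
Your proof is correct and follows essentially the same route as the paper: the first part uses the same substitution $y = g^{-1}x$ together with invariance, and the second part is the elementwise version of the paper's set identity $gHx = Hgx$. The only cosmetic difference is that you spell out the elements where the paper manipulates orbits as sets; there is nothing to add or fix.
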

	\begin{proof}
		Let $h\in H_E$ and $g\in G$. We need to show that for any $x$ we have $x \Er ghg^{-1}x$.
		
		Put $y=g^{-1}x$. Then $y\Er hy$. By invariance, $gy\Er ghy$. But $gy=x$ and $ghy=ghg^{-1}x$. This completes the proof of the first part.
		
		For the second part, just note that if $H$ is normal, then $g[x]_{E_H}=gHx=Hgx=[gx]_{E_H}$.
	\end{proof}
	
	The orbital equivalence relations -- defined below -- are extremely well-behaved among the invariant equivalence relations. (Note that a particular case are the relations orbital with respect to the action of $\Aut(\fC)$ in model theory, as per Definition~\ref{dfn:orbital_stype}.)
	
	\begin{dfn}
		\index{equivalence relation!orbital}
		An invariant equivalence relation $E$ is said to be \emph{orbital} if there is a subgroup $H$ of $G$ such that $E=E_H$ (i.e.\ $E$ is the relation of lying in the same orbit of $H$).\xqed{\lozenge}
	\end{dfn}
	
	(Note that if $E$ is orbital, then $E\subseteq E_G$, so $E$-classes are subsets of $G$-orbits.)
	
	\begin{ex}
		\label{ex:single_rotation}
		Let $G=\SO(2)$ act naturally on $X=S^1$. Then for each angle $\theta$, we have the invariant equivalence relation $E_\theta$, such that $z_1\Er_{\theta} z_2$ exactly when $z_1$ and $z_2$ differ by an integer multiple of $\theta$.
		
		This equivalence relation is orbital: if $H\leq SO(2)$ is the group generated by the rotation by $\theta$, then $E_\theta=E_H$.\xqed{\lozenge}
	\end{ex}
	\begin{prop}
		\label{prop:orb_from_group}\leavevmode
		\begin{itemize}
			\item
			If $E$ is an invariant equivalence relation on $X$, then $E_{H_E}\subseteq E$.
			\item
			If, in addition, $E$ is orbital, then $E=E_{H_E}$.
		\end{itemize}
	\end{prop}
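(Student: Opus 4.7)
The plan is to unpack definitions carefully and observe that the two inclusions are essentially tautological.

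For the first bullet, I would start from the definition of $H_E$: every $h \in H_E$ satisfies $hx \Er x$ for all $x \in X$. So if $x_1 \mathrel{E_{H_E}} x_2$, meaning $x_2 = hx_1$ for some $h \in H_E$, then applying the defining property of $H_E$ to $x = x_1$ yields $x_1 \Er hx_1 = x_2$. This gives $E_{H_E} \subseteq E$ immediately.

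For the second bullet, by the first bullet it suffices to prove $E \subseteq E_{H_E}$. Assume $E = E_H$ for some $H \leq G$. The key observation is that $H \leq H_E$: indeed, for every $h \in H$ and every $x \in X$, we have $hx \in Hx = [x]_{E_H} = [x]_E$, so $h$ preserves each $E$-class setwise, and hence $h \in H_E$ by definition. Now if $x_1 \Er x_2$, then $x_2 = hx_1$ for some $h \in H \subseteq H_E$, so $x_1 \mathrel{E_{H_E}} x_2$, which finishes the proof.

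There is no real obstacle here; the statement is essentially an unfolding of definitions, with the only non-completely-formal step being the verification that $H \subseteq H_E$ when $E = E_H$, which itself is just the observation that an element of $H$ sends each $H$-orbit to itself. I would present the two bullets as two short displayed paragraphs, each a couple of lines long.
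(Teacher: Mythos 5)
Your proof is correct and follows the same route as the paper's (which simply states both steps are obvious/immediate): you unpack $E_{H_E}\subseteq E$ directly from the definition of $H_E$, and for the second bullet you observe $H\leq H_E$ whenever $E=E_H$, yielding $E=E_H\subseteq E_{H_E}$.
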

	\begin{proof}
		The first part is obvious.
		
		The second is immediate from the assumption that $E$ is orbital: if $E=E_H$, then clearly $H\leq H_E$, so $E_H\subseteq E_{H_E}$ and the proposition follows.
	\end{proof}

	\begin{prop}
		\label{prop:orb_to_group}
		Let $H\leq G$.
		\begin{itemize}
			\item
			If $E_H$ is an invariant equivalence relation, we have $H\leq H_{E_H}$.
			\item
			If, in addition, the action of $G$ on $X$ is free, then $H=H_{E_H}$ and $H\unlhd G$.
		\end{itemize}
	\end{prop}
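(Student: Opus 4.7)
The plan is to prove both bullets by essentially unwinding the definitions, with freeness of the action used in a single key step.

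For the first bullet, I would argue directly: given $h \in H$ and any $x \in X$, note that $hx \in Hx = [x]_{E_H}$, so $h$ maps the $E_H$-class of $x$ into itself; applying the same reasoning to $h^{-1}$ (or using that $h$ acts by a bijection on $X$ that permutes $E_H$-classes) shows that $h[x]_{E_H} = [x]_{E_H}$. Hence $h \in H_{E_H}$, and the inclusion $H \le H_{E_H}$ follows.

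For the second bullet, the inclusion $H \ge H_{E_H}$ is where freeness enters. Fix any $x \in X$ (the statement is vacuous if $X = \emptyset$), and take $g \in H_{E_H}$. Then $gx \in [x]_{E_H} = Hx$, so there is some $h \in H$ with $gx = hx$. Thus $h^{-1}g$ fixes $x$, and freeness of the action of $G$ on $X$ forces $h^{-1}g = e_G$, i.e.\ $g = h \in H$. Combined with the first bullet, this gives $H = H_{E_H}$.

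For normality, I would simply invoke Proposition~\ref{prop:orb_norm}: since $E_H$ is by assumption an invariant equivalence relation, its associated subgroup $H_{E_H}$ is normal in $G$, and we have just shown this subgroup equals $H$. The only genuine content is the use of freeness to upgrade the pointwise equality $gx = hx$ to a group-theoretic equality $g = h$; there is no real obstacle beyond this, and the proof is essentially a direct verification of the definitions.
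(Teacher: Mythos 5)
Your proof is correct and follows essentially the same route as the paper: the first bullet is a direct unwinding of definitions (the paper just calls it "obvious"), and the second bullet uses exactly the same freeness argument — from $gx \in Hx$ deduce $gx = hx$ for some $h \in H$, then conclude $g = h$ by freeness — and then invokes Proposition~\ref{prop:orb_norm} for normality. Your remark about the vacuous case $X = \emptyset$ is a reasonable hygiene note but not part of the paper's argument.
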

	\begin{proof}
		The first part is obvious.
		
		For the second part, let us fix some $g\in H_{E_H}$, i.e.\ $g\in G$ which preserves all $E_H$-classes. Then for any $x\in X$ we have $x \Er_H g\cdot x$, so $g\cdot x= h\cdot x$ for some $h\in H$. But then by freeness $g=h$, so we have $g=h\in H$, and hence $H_E\leq H$. Finally, $H=H_{E_H}$ is normal by Proposition~\ref{prop:orb_norm}.
	\end{proof}

	\begin{cor}
		\label{cor:orb_bijection}
		Every orbital equivalence relation is of the form $E_H$ for some $H\unlhd G$. If the action is free, then the correspondence is bijective: every $N\unlhd G$ is of the form $H_E$ for some orbital $E$.
		
		In particular, if $G$ is simple, then the only orbital equivalence relations on $X$ are the equality and $E_G$.
	\end{cor}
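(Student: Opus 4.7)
The statement is essentially a direct consequence of the three preceding propositions (Propositions~\ref{prop:orb_norm}, \ref{prop:orb_from_group} and \ref{prop:orb_to_group}), so the plan is really just an assembly.

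For the first sentence, I would take an orbital $E$ and let $H := H_E$. By Proposition~\ref{prop:orb_from_group}(2), since $E$ is orbital, $E = E_{H_E} = E_H$; by Proposition~\ref{prop:orb_norm}, $H = H_E \unlhd G$. This already exhibits $E$ in the required form.

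For the bijection under freeness, I would spell out the two inverse assignments. On one side, given $N \unlhd G$, by Proposition~\ref{prop:orb_norm} the relation $E_N$ is an invariant equivalence relation, and it is orbital by definition; so $N \mapsto E_N$ is a well-defined map from normal subgroups of $G$ to orbital equivalence relations on $X$. On the other side, the assignment $E \mapsto H_E$ goes from orbital equivalence relations back to normal subgroups (normality by Proposition~\ref{prop:orb_norm}). To check these are mutually inverse, one direction is Proposition~\ref{prop:orb_from_group}(2), which gives $E_{H_E} = E$ for every orbital $E$; the other direction is the freeness statement in Proposition~\ref{prop:orb_to_group}(2), which gives $H_{E_N} = N$ for every $N \unlhd G$. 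Hence the correspondence is bijective.

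For the ``In particular'' clause about simple $G$, freeness is not needed: by the first sentence every orbital equivalence relation has the form $E_H$ for some $H \unlhd G$, and since the only normal subgroups of a simple group are $\{e\}$ and $G$, the only orbital equivalence relations on $X$ are $E_{\{e\}}$, which is the equality relation $\Delta_X$, and $E_G$.

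There is no real obstacle here — the content of the corollary is already contained in the three propositions; the only thing to be careful about is that in the bijection statement, freeness of the action is needed exclusively to invoke Proposition~\ref{prop:orb_to_group}(2) (to recover $N$ from $E_N$), while the opposite direction $E \mapsto E_{H_E} = E$ does not require freeness, and neither does the simple-group corollary.
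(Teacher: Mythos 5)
Your proposal is correct and follows exactly the paper's intended route: the paper's own proof is simply ``Immediate by Propositions~\ref{prop:orb_norm}, \ref{prop:orb_from_group}, and \ref{prop:orb_to_group}'', and you have carried out precisely that assembly, including the correct observation that freeness is used only to invoke Proposition~\ref{prop:orb_to_group}(2).
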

	\begin{proof}
		Immediate by Propositions~\ref{prop:orb_norm}, \ref{prop:orb_from_group}, and \ref{prop:orb_to_group}.
	\end{proof}
	
	\begin{prop}
		\label{prop:comm+trans}
		If $G$ is commutative and the action of $G$ on $X$ is transitive, then all invariant equivalence relations on $X$ are orbital. In particular, they all correspond to subgroups of $G$.
	\end{prop}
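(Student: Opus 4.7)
The plan is to show that any invariant equivalence relation $E$ on $X$ coincides with $E_{H_E}$, which by definition makes it orbital. The inclusion $E_{H_E} \subseteq E$ is already provided by Proposition~\ref{prop:orb_from_group}, so I only need to establish the reverse inclusion $E \subseteq E_{H_E}$.

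To do this, I would fix $x_1, x_2 \in X$ with $x_1 \Er x_2$ and, using transitivity, pick $g \in G$ with $g \cdot x_1 = x_2$. The goal then becomes showing that $g \in H_E$, i.e.\ that $g$ preserves every $E$-class setwise; this will immediately give $x_1 \mathrel{E_{H_E}} x_2$. For an arbitrary $y \in X$, transitivity again yields $h \in G$ with $h \cdot x_1 = y$. The key computation is
\[
g \cdot y = g h \cdot x_1 = h g \cdot x_1 = h \cdot x_2,
\]
where the middle equality uses commutativity of $G$. Since $x_1 \Er x_2$ and $E$ is $G$-invariant, $h \cdot x_1 \Er h \cdot x_2$, i.e.\ $y \Er g \cdot y$, so $g \in H_E$ as required.

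The ``in particular'' statement will then follow from Corollary~\ref{cor:orb_bijection}: every orbital equivalence relation has the form $E_H$ for some $H \leq G$ (automatically normal since $G$ is abelian). There is no serious obstacle here; the argument is essentially a one-line computation exploiting commutativity to move $g$ past $h$. The only thing worth noting is that freeness is \emph{not} required, so the correspondence between subgroups and invariant equivalence relations given by $H \mapsto E_H$ is surjective but need not be injective in general (since distinct subgroups can have the same orbits on $X$ when stabilisers are nontrivial).
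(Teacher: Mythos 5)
Your proof is correct and takes essentially the same approach as the paper: both use commutativity to move $g$ past $h$ (or, equivalently, to observe that the stabilisers of all $E$-classes are conjugate and hence equal) and conclude via transitivity. The paper works directly with $H := \Stab_G\{[x]_E\}$ and shows $[gx]_E = Hgx$ for all $g$, while you work with $H_E$ and show the representative $g$ sending $x_1$ to $x_2$ lies in $H_E$; under the hypotheses these are the same group, so the two phrasings amount to the same computation.
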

	\begin{proof}
		Let $E$ be an invariant equivalence relation on $X$. Fix an $x\in X$ and let $H$ be the stabiliser of $[x]_E$. Then $Hx=[x]_E$ (because the action is transitive) and for any $g\in G$, the stabiliser of $[gx]_E$ is $g^{-1}Hg$. But since $G$ is commutative, $gHg^{-1}=H$, so $[gx]_E=Hgx$. Since the action is transitive, it follows that $E=E_H$.
	\end{proof}
	
	\begin{ex}
		The action of $\SO(2)$ on $S^1$ is free, and the group is commutative. This implies that the orbital equivalence relations correspond exactly to subgroups of $\SO(2)$ (in fact, because the action is transitive, those are all the invariant equivalence relations).\xqed{\lozenge}
	\end{ex}
	
	\begin{ex}
		\label{ex:3drotations}
		Consider the natural action of $\SO(3)$ on $S^2$. Certainly, the trivial and total relations are both invariant equivalence relations. Moreover, it is not hard to see that the equivalence relation identifying antipodal points is also invariant.
		
		In fact, those three are the only invariant equivalence relations: if a point $x\in S^2$ is $E$-equivalent to some $y\in S^2\setminus \{x,-x\}$ for some invariant equivalence relation $E$, then by applying rotations around axis containing $x$, we deduce that $x$ is equivalent to every point in a whole circle containing $y$, and in particular, all those points are in $[x]_E$.
		
		But then any rotation close to $\id\in \SO(3)$ takes the circle to another circle which intersects it -- and thus, by transitivity, the ``rotated" circle will still be a subset of $[x]_E$. It is easy to see that we can then ``polish" the whole sphere with (compositions of) small rotations applied to the initial circle, so in fact $[x]_E$ is the whole $\SO(3)$.
		
		The total and trivial equivalence relations are orbital, but the antipodism is not -- it is not hard to verify directly, but it also follows from Corollary~\ref{cor:orb_bijection}, as $\SO(3)$ is a simple group.\xqed{\lozenge}
	\end{ex}
	
	\subsection*{Weakly orbital equivalence relations}
	We want to find a generalisation of orbitality which includes equivalence relations invariant under transitive group actions. Here we define such a notion, and in later on, we will see that it does indeed include both cases.
	
	\begin{dfn}
		\index{equivalence relation!orbital!weakly}
		\index{X@$\tilde X$}
		\label{dfn:worb}
		We say that $E$ is a \emph{weakly orbital} equivalence relation if there is some $\tilde X\subseteq X$ and a subgroup $H\leq G$ such that
		\[
		x_1\Er x_2 \iff \exists g\in G \; \exists h\in H \ \ gx_1=hgx_2\in \tilde X.
		\]
		or equivalently,
		\[
		x_1\Er x_2\iff \exists g_1,g_2\in G\ \ g_1x_1=g_2x_2\in\tilde X\land g_2g_1^{-1}\in H,
		\]
		(In other words, two points are equivalent if we can find some $g\in G$ which takes the first point to some $\tilde x\in \tilde X$, and the second point to something $E_H$-related to $\tilde x$.)\xqed{\lozenge}
	\end{dfn}
	
	Note that, as in the orbital case, a weakly orbital equivalence relation is always a refinement of $E_G$, and it is always $G$-invariant. We will soon see (in Proposition~\ref{prop:orb_is_worb}) that orbital equivalence relations are, as expected, weakly orbital.
	
	\begin{ex}
		The antipodism equivalence relation from Example~\ref{ex:3drotations} is weakly orbital: choose any point $\tilde x\in S^2$, and then choose a single rotation $\theta\in \SO(3)$ which takes $\tilde x$ to $-\tilde x$. Put $\tilde X:=\{\tilde x\}$ and $H:=\langle \theta\rangle$. Then $H$ and $\tilde X$ witness that the antipodism is weakly orbital. (We will see in Proposition~\ref{prop:single_orbit} that this is no coincidence: transitivity of the group action implies that every invariant equivalence relation is weakly orbital.)\xqed{\lozenge}
	\end{ex}
	
	Further examples of weakly orbital equivalence relations will be examined at the end of this section.

	We can justify the name ``weakly orbital": suppose $E$ is weakly orbital on $X$, as witnessed by $\tilde X$ and $H$.
	
	We can attempt to define a right action of $G$ on $X$ thus: for any $x\in X$, pick some $\tilde x\in \tilde X$ such that $x\in G\cdot\tilde x$, and let $g_0\in G$ be such that $g_0\tilde x=x$. Then it seems natural to define $x\cdot g:=g_0\cdot g\cdot \tilde x$.
	
	The problem with this is that it is not, in general, well-defined: the value on the right hand side may depend on the choice of $\tilde x$, and even if we do fix $\tilde x$, it may nonetheless depend on the choice of $g_0$.
	
	If, however, we are somehow in a situation where this is a well-defined right group action, $E$ would be the orbit equivalence relation of $H$ acting on the right.
	
	To sum up, one can say that a weakly orbital equivalence relation is the relation of lying in the same orbit of an ``imaginary group action".
	
	We will see later, in Remark~\ref{rem:worb_interpret}, another interpretation of weak orbitality, which is more closely tied to the applications in later sections.

	The following notation (and its alternative definition in Remark~\ref{rem:alt_rsub}) is very convenient, and we will use it frequently in the rest of this chapter.
	
	\begin{dfn}
		\index{RHX@$R_{H,\tilde X}$}
		\label{dfn:rsub}
		For arbitrary $H\leq G$ and $\tilde X\subseteq X$, let us denote by $R_{H,\tilde X}$ the relation on $X$ (which may not be an equivalence relation) defined by
		\[
		x_1\Rr_{H,\tilde X} x_2\iff \exists g\in G \; \exists h\in H \ \ gx_1=hgx_2\in \tilde X.\xqed{\lozenge}
		\]
	\end{dfn}
	
	\begin{rem}
		\label{rem:alt_rsub}
		Note that $R_{H,\tilde X}$ may also be defined as the smallest relation $R$ such that:
		\begin{itemize}
			\item
			$R$ is invariant,
			\item
			for each $\tilde x\in \tilde X$ and $h\in H$ we have $\tilde x\Rr h\tilde x$. \xqed{\lozenge}
		\end{itemize}
	\end{rem}
	
	\begin{rem}\leavevmode
		\begin{enumerate}
			\item
			$R_{?,?}$ is monotone, i.e.\ if $H_1\subseteq H_2$ and $\tilde X_1\subseteq \tilde X_2$, then $R_{H_1,\tilde X_1}\subseteq R_{H_2,\tilde X_2}$.
			\item
			If $R_{H,\tilde X}$ is an equivalence relation (or even merely a reflexive one), then for any $x\in X$ we have $(G\cdot x)\cap \tilde X\neq \emptyset$ (or, equivalently, $G\cdot \tilde X=X$).
			\item
			an equivalence relation $E$ on $X$ is weakly orbital if and only if $E=R_{H,\tilde X}$ for some $H$, $\tilde X$.
			\item
			If for every $\tilde x\in \tilde X$ we have $H_1\tilde x=H_2\tilde x$, then $R_{H_1,\tilde X}=R_{H_2,\tilde X}$.\xqed{\lozenge}
		\end{enumerate}
	\end{rem}
	
	We can give an explicit description of ``classes" of $R_{H,\tilde X}$.
	
	\begin{lem}
		\label{lem:worb_class_description}
		Let $R=R_{H,\tilde X}$. For every $x_0\in X$, we have
		\[
		\{x\mid x_0\mathrel R x \}=\bigcup_{g} g^{-1} H g\cdot x_0,
		\]
		where the union runs over $g\in G$ such that $g\cdot x_0\in \tilde X$.
	\end{lem}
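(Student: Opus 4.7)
The plan is to prove both inclusions by directly unpacking Definition~\ref{dfn:rsub}. The statement is essentially a reformulation of the definition of $R_{H,\tilde X}$, solved for the second coordinate: once one uses the witnessing $g \in G$ to move $x_0$ into $\tilde X$, the elements $R$-related to $x_0$ are forced to lie in the orbit of $x_0$ under the conjugate subgroup $g^{-1} H g$.

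For the inclusion $\subseteq$, I would suppose $x_0 \mathrel R x$, take witnesses $g \in G$ and $h \in H$ with $g x_0 = h g x \in \tilde X$, and observe that $g x_0 \in \tilde X$ places $g$ among those indexing the right-hand union. Rearranging $g x_0 = h g x$ as $x = g^{-1} h^{-1} g \cdot x_0$ then exhibits $x$ as an element of $g^{-1} H g \cdot x_0$.

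For the inclusion $\supseteq$, I would reverse this calculation: given $g \in G$ with $g x_0 \in \tilde X$ and any $h \in H$, set $x := g^{-1} h^{-1} g \cdot x_0$. Then $g x = h^{-1} g x_0$, so $h g x = g x_0 \in \tilde X$, which is precisely the witness required by Definition~\ref{dfn:rsub} (with $h$ in place of the $h$ there) for $x_0 \mathrel R x$.

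There is no real obstacle here; the lemma is a purely formal unpacking of the definition of $R_{H,\tilde X}$, with the only subtlety being bookkeeping of which element plays the role of $h$ and which of $h^{-1}$ when solving the defining equation for $x$. The explicit description will then be reused in later sections, where it is convenient to have $R$-classes written as unions of conjugate-subgroup orbits indexed by the ``choices of representative in $\tilde X$''.
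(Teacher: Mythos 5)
Your proof is correct and follows essentially the same approach as the paper's: unpack the definition of $R_{H,\tilde X}$ in both directions, solving $gx_0 = hgx$ for $x$ to get $x = g^{-1}h^{-1}g\cdot x_0$ and reversing the computation for the other inclusion. The only difference (using $h^{-1}$ in place of $h$ in the $\supseteq$ direction) is, as you note, immaterial since $H$ is a group.
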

	\begin{proof}
		If $x_0\mathrel R x $, then we have $gx_0=hgx\in \tilde X$ for some $g\in G$ and $h\in H$. But then $x= g^{-1}h^{-1}gx_0$. On the other hand, if $x=g^{-1}hgx_0$ for some $h\in H$ and $g\in G$ such that $g x_0\in \tilde X$, then $h^{-1}g x=g x_0\in \tilde X$.
	\end{proof}

	\begin{dfn}
		\index{maximal witness (of orbitality)}
		\label{dfn:max_witness}
		We say that $H$ is a \emph{maximal witness} for weak orbitality of $E$ if there is some $\tilde X\subseteq X$ such that $E=R_{H,\tilde X}$ and for any $H'\gneq H$ we have $E\neq R_{H',\tilde X}$.
		
		Similarly, we say that $\tilde X$ is a \emph{maximal witness} for weak orbitality of $E$ if there is some $H\leq G$ such that $E=R_{H,\tilde X}$ and for any $\tilde X'\supsetneq \tilde X$ we have $E\neq R_{H,\tilde X'}$.
		
		We say that a pair $(H,\tilde X)$ is a \emph{maximal pair of witnesses} for weak orbitality of $E$ if $E=R_{H,\tilde X}$ and for any $H'\geq H$, $\tilde X\supseteq X$ we have that $E=R_{H',\tilde X'}$ if and only if $H=H'$ and $\tilde X=\tilde X'$\xqed{\lozenge}
	\end{dfn}
	
	The following proposition is, in part, an analogue of Proposition~\ref{prop:orb_from_group} (only for weakly orbital equivalence relations, instead of orbital).
	
	\begin{lem}
		\label{lem:worb_maximal}
		Consider $R=R_{H,\tilde X}$. Then:
		\begin{itemize}
			\item
			$R=R_{H,\tilde X'}$, where $\tilde{X'}:=\{x\in X \mid \forall h\in H\ \ x\Rr hx\}$, and
			\item
			$R=R_{H',\tilde X}$, where $H':=\{g\in G \mid \forall \tilde x\in \tilde X \ \ \tilde x \Rr g\tilde x \}$.
		\end{itemize}
		
		Moreover, if $R=E$ is an equivalence relation, then:
		\begin{itemize}
			\item
			each of $\tilde X'$ and $H'$ is a maximal witness in the sense of Definition~\ref{dfn:max_witness},
			\item
			applying the two operations, in either order, yields a maximal pair of witnesses,
			\item
			every maximal witness $\tilde X$ is a union of $E$-classes.
		\end{itemize}
	\end{lem}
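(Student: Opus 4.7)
The plan is to handle the two bullets of the first part by monotonicity together with the alternative characterisation of $R_{H,\tilde X}$ given in Remark~\ref{rem:alt_rsub}, and then deduce the three ``moreover'' clauses by elementary arguments using the definitions.

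For the first bullet, I would note that $\tilde X \subseteq \tilde X'$ directly from the definition (every $\tilde x \in \tilde X$ satisfies $\tilde x \Rr h\tilde x$ for $h \in H$), so monotonicity gives $R \subseteq R_{H,\tilde X'}$. For the reverse inclusion I would invoke Remark~\ref{rem:alt_rsub}: $R_{H,\tilde X'}$ is by definition the smallest invariant relation containing the pairs $(\tilde x', h\tilde x')$ for $\tilde x' \in \tilde X'$, $h \in H$; but $R$ is invariant and contains all such pairs (that is precisely what ``$\tilde x' \in \tilde X'$'' means), so $R_{H,\tilde X'} \subseteq R$. The second bullet is symmetric: $H \subseteq H'$ trivially, and $R$ contains all pairs $(\tilde x, g\tilde x)$ for $g \in H'$, $\tilde x \in \tilde X$. (A small aside: when $R$ is an equivalence relation, $H'$ is automatically a subgroup --- closure under products uses invariance plus transitivity, and closure under inversion uses symmetry combined with invariance --- so interpreting $R_{H',\tilde X}$ is unproblematic.)

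For the ``moreover'' part, assume $R = E$. Maximality of $\tilde X'$: if $\tilde X'' \supsetneq \tilde X'$ with $E = R_{H,\tilde X''}$, then every $x \in \tilde X''$ satisfies $x E hx$ for all $h \in H$, so $x \in \tilde X'$, contradicting $\tilde X'' \supsetneq \tilde X'$. Maximality of $H'$ is analogous. For the pair maximality: starting from $(H,\tilde X)$, apply the first operation to get $(H,\tilde X')$, then the second to get $(H'',\tilde X')$ where $H'' = \{g : \forall \tilde x \in \tilde X',\ \tilde x E g\tilde x\}$. Suppose $(H''',\tilde X''') \geq (H'',\tilde X')$ with $E = R_{H''',\tilde X'''}$. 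Then every $g \in H'''$ satisfies $\tilde x E g\tilde x$ for all $\tilde x \in \tilde X''' \supseteq \tilde X'$, so $g \in H''$; hence $H''' = H''$. For $\tilde X''' = \tilde X'$: for every $x \in \tilde X'''$ and $h \in H'' \supseteq H$, $x E hx$, so $x \in \tilde X'$ (using $H'' \supseteq H$ to shrink the defining conjunction). The opposite order is entirely symmetric.

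Finally, for the last bullet, suppose $\tilde X$ is a maximal witness (paired with some $H$) and fix $\tilde x \in \tilde X$ and $x$ with $x E \tilde x$. I will show $E = R_{H,\tilde X \cup \{x\}}$; maximality then forces $x \in \tilde X$. The inclusion $E \subseteq R_{H,\tilde X \cup \{x\}}$ is monotonicity. For the reverse, by Remark~\ref{rem:alt_rsub} it suffices that $E$ is invariant and satisfies $y E hy$ for all $y \in \tilde X \cup \{x\}$, $h \in H$. For $y \in \tilde X$ this is part of $E = R_{H,\tilde X}$. For $y = x$: $\tilde x E h\tilde x$ holds, $G$-invariance of $E$ (which follows from weak orbitality) applied to $x E \tilde x$ gives $hx E h\tilde x$, and transitivity yields $x E hx$. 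This finishes the argument. The ``main obstacle'' is minor and essentially bookkeeping: keeping straight which of $H, H', H''$ (and analogously for $\tilde X$) appears in which defining conjunction when checking pair maximality; nothing here requires any deep input beyond the definitions and Remark~\ref{rem:alt_rsub}.
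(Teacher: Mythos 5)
Your proof is correct and follows essentially the same route as the paper: the first two bullets are handled exactly as in the paper via monotonicity and Remark~\ref{rem:alt_rsub}, and the ``moreover'' bullets are elementary consequences of the definitions (the paper calls the first two ``clear'' and gives only the third a one-line argument). The one place you deviate in style is the last bullet: the paper observes that a maximal $\tilde X$ equals the $\tilde X'$ constructed from it and then checks directly that $\tilde X'$ is $E$-saturated, while you add a single $E$-related point and invoke maximality; both are fine. Your aside that $H'$ is a subgroup only when $R$ is an equivalence relation is a useful clarification the paper leaves implicit.
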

	\begin{proof}
		For the first bullet, $R$ is an invariant relation such that for all $\tilde x\in \tilde X'$ and $h\in H$ we have $\tilde x \Rr h\tilde x$. Since $R_{H,\tilde X'}$ is, by Remark~\ref{rem:alt_rsub}, the finest such relation, it follows that $R_{H,\tilde X'}\subseteq R$. On the other hand, $\tilde X\subseteq \tilde X'$, so $R=R_{H,\tilde X}\subseteq R_{H,\tilde X'}$. The second bullet is analogous.
		
		The first two bullets of the ``moreover" part are clear. For the third, we only need to see that $\tilde X'$ is a union of $E$-classes. For that, just notice that if $x\Er hx$ and $y\Er x$, then $y\Er hx$ and (by invariance of $E$) $hx\Er hy$, so in fact $y\Er hy$.
	\end{proof}
	
	Note that, in contrast to the orbital case, where we have a canonical maximal witness (namely, $H_E$), in the weakly orbital case, the maximal pairs of witnesses are, in general, far from canonical, for instance because for any $g\in G$ we have $R_{H,\tilde X}=R_{gHg^{-1},g\cdot \tilde X}$. But even up to this kind of conjugation, the choice may not be canonical, as we see in the following example.
	
	\begin{ex}
		Let $F$ be any field, and consider the action of the affine group $F^3\rtimes \GL_3(F)$ on itself by left translations. Put:
		\begin{itemize}
			\item
			$H_1=(F^2\times \{0\})\times \{I\}$,
			\item
			$H_2=(F\times \{0\}^2)\times\{I\}$,
			\item
			$\tilde X_1=F^3\times \{g\in \GL_3(F)\mid g^{-1}H_1g=H_1 \}$, and
			\item
			$\tilde X_2=F^3\times \{g\in \GL_3(F)\mid g^{-1}H_2g\subseteq H_1\}$.
		\end{itemize}
		Then, using Lemma~\ref{lem:worb_class_description}, we deduce that $E=R_{H_1,\tilde X_1}=R_{H_2,\tilde X_2}$ is an orbital equivalence relation (which on $F^3\times \{I\}$ is just lying in the same plane parallel to $F^2\times \{0\}$). On the other hand, both pairs $H_1, \tilde X_1$ and $H_2,\tilde X_2$ are maximal, and simultaneously, $\tilde X_1\subsetneq \tilde X_2$ and $H_2\subsetneq H_1$. If $F$ is a finite field, the sets and groups don't even have the same cardinality, and they are certainly not conjugate even if $F$ is infinite.\xqed{\lozenge}
	\end{ex}
	
	\subsection*{Orbitality and weak orbitality; transitive actions}
	\label{ssec:orb+trans_as_worb}
	Now, we proceed to show that weakly orbital equivalence relations do indeed include all orbital equivalence relations (as well as \emph{all} invariant equivalence relations when the action is transitive), and to investigate what makes a weakly orbital equivalence relation actually orbital.
	\begin{prop}
		\label{prop:orb_is_worb}
		Every orbital equivalence relation is weakly orbital. In fact, if $E=E_H$ is an invariant equivalence relation, then $E=R_{H,X}(=R_{H_E,X})$ (i.e.\ we have $\tilde X=X$).
		
		Conversely, if $E=R_{H,\tilde X}$ is an invariant equivalence relation and $H\unlhd G$, then $E=E_H$, so a weakly orbital equivalence relation is orbital precisely when there is a normal group witnessing the weak orbitality.
	\end{prop}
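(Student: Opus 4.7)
The statement has three parts: (a) $E=E_H$ invariant implies $E=R_{H,X}$; (b) $E=R_{H,\tilde X}$ invariant with $H\unlhd G$ implies $E=E_H$; (c) the equivalence of orbitality and admitting a normal witness. The plan is to verify (a) by direct computation, derive (b) from the explicit class description in Lemma~\ref{lem:worb_class_description}, and then assemble (c) from (a), (b) and Proposition~\ref{prop:orb_norm}.

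For (a), I would unfold the definitions: $x_1\Rr_{H,X} x_2$ means there exist $g\in G$ and $h\in H$ with $gx_1=hgx_2$, since the condition of lying in $\tilde X=X$ is vacuous. The inclusion $E_H\subseteq R_{H,X}$ is witnessed by taking $g=e$: if $x_1=hx_2$, then trivially $gx_1=hgx_2$. For the reverse inclusion, if $gx_1=hgx_2$ with $h\in H$, then $gx_1 \mathrel{E_H} gx_2$ immediately from the definition of $E_H$; applying $G$-invariance of $E=E_H$ yields $x_1\Er x_2$.

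For (b), suppose $E=R_{H,\tilde X}$ is an equivalence relation with $H\unlhd G$. By reflexivity of $E$ and the remark following Definition~\ref{dfn:rsub}, for every $x_0\in X$ there exists some $g\in G$ with $gx_0\in \tilde X$. Lemma~\ref{lem:worb_class_description} then gives
\[
[x_0]_E=\bigcup_{g:\,gx_0\in \tilde X} g^{-1}Hg\cdot x_0.
\]
Normality of $H$ in $G$ collapses each summand: $g^{-1}Hg=H$, so $[x_0]_E=H\cdot x_0=[x_0]_{E_H}$ for every $x_0\in X$. Hence $E=E_H$.

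For (c), combine (a) and (b): if $E$ is orbital then Proposition~\ref{prop:orb_from_group} gives $E=E_{H_E}$, Proposition~\ref{prop:orb_norm} gives $H_E\unlhd G$, and (a) produces the normal witness $E=R_{H_E,X}$; conversely, (b) directly converts any normal witness into orbitality. There is no real obstacle here — the only subtle point is making sure in (b) that we may invoke Lemma~\ref{lem:worb_class_description} for \emph{every} $x_0$, which is why I flag reflexivity of $E$ explicitly to guarantee that the indexing set of $g$'s is nonempty.
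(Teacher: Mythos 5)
Your proof is correct and follows essentially the same route as the paper: part (b) is deduced from Lemma~\ref{lem:worb_class_description} together with normality (and you rightly flag that reflexivity guarantees the indexing set is nonempty), and part (c) is assembled from Propositions~\ref{prop:orb_from_group} and \ref{prop:orb_norm}. For part (a) you unfold the definition directly instead of invoking Remark~\ref{rem:alt_rsub}'s characterisation of $R_{H,X}$ as the finest invariant relation, but the two arguments are interchangeable.
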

	\begin{proof}
		For the first half, $E_H$ is by definition the finest relation such that for each $x\in X$ and $h\in H$ we have $x\Er_H hx$. If it is also invariant, we have by Remark~\ref{rem:alt_rsub} that $R_{H,X}=E_H$.
		
		The second half is an immediate consequence of Lemma~\ref{lem:worb_class_description}.
	\end{proof}
	
	\begin{cor}
		\label{cor:comm_worb}
		If $G$ is a commutative group, then every weakly orbital equivalence relation is orbital.
	\end{cor}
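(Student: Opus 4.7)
The plan is to invoke Proposition~\ref{prop:orb_is_worb} directly. Given a weakly orbital equivalence relation $E$, by definition we may write $E = R_{H, \tilde X}$ for some subgroup $H \leq G$ and some subset $\tilde X \subseteq X$. The second half of Proposition~\ref{prop:orb_is_worb} says that if $E = R_{H, \tilde X}$ is an invariant equivalence relation with $H \unlhd G$, then $E = E_H$, i.e.\ $E$ is orbital.

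Thus the only thing to check is that $H$ is normal in $G$. Since $G$ is commutative, every subgroup of $G$ is normal, and in particular $H \unlhd G$. Applying Proposition~\ref{prop:orb_is_worb} then yields $E = E_H$, which is orbital by definition.

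There is no real obstacle here; the corollary is essentially a one-line consequence of the characterisation of which weakly orbital equivalence relations are orbital (given in Proposition~\ref{prop:orb_is_worb}) combined with the triviality that every subgroup of an abelian group is normal. It is worth noting that this result is consistent with (and strengthens) Proposition~\ref{prop:comm+trans}, which handled the case of commutative groups acting transitively; the present corollary drops the transitivity assumption by working with the more general notion of weak orbitality.
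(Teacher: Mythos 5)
Your proof is correct and follows essentially the same route as the paper's: write $E = R_{H,\tilde X}$, use commutativity of $G$ to get $H \unlhd G$, and invoke the second part of Proposition~\ref{prop:orb_is_worb} to conclude $E = E_H$. (Your version is in fact slightly more careful than the paper's one-line proof, which misprints $R_{H,X}$ where $R_{H,\tilde X}$ is meant.)
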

	\begin{proof}
		If $G$ is commutative and $E=R_{H,X}$, then $H\unlhd G$, and thus $E=E_H$ by Proposition~\ref{prop:orb_is_worb}.
	\end{proof}
	
	The next corollary shows that a proper inclusion $\tilde X\subsetneq X$ is another obstruction of orbitality (other than non-normality of $H$). This shows that $\tilde X$ is necessary in the definition of weak orbitality.
	
	\begin{cor}
		\label{cor:orb_gap2}
		A weakly orbital equivalence $E$ relation is orbital precisely when we can choose as the witness $\tilde X$ the whole domain $X$, i.e.\ $E=R_{H,X}$ for some $H\leq G$.
	\end{cor}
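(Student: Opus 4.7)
The plan is to prove the two directions separately, both essentially by assembling results that have already been established.

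For the forward direction, suppose $E$ is orbital, so by definition $E = E_H$ for some $H \leq G$. Since $E$ is invariant (orbital equivalence relations are invariant by convention here), the first half of Proposition~\ref{prop:orb_is_worb} immediately gives us $E = R_{H,X}$, so we can indeed take $\tilde X = X$.

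For the reverse direction, suppose $E = R_{H, X}$ for some $H \leq G$. The first step is to enlarge $H$ to its maximal witness: by Lemma~\ref{lem:worb_maximal} applied with $\tilde X = X$, we have $E = R_{H', X}$, where
\[
H' = \{g \in G \mid \forall x \in X \;\; x \Er gx\}.
\]
But by Definition~\ref{dfn:HE}, this $H'$ is exactly $H_E$. So $E = R_{H_E, X}$.

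The crucial observation is now that $H_E$ is normal in $G$ by Proposition~\ref{prop:orb_norm}. Hence we may apply the second half of Proposition~\ref{prop:orb_is_worb} to conclude $E = E_{H_E}$, which exhibits $E$ as orbital. There is no real obstacle here; the statement is essentially a bookkeeping consequence of the earlier lemmas, with the content of the corollary being that taking $\tilde X = X$ forces the canonical witness $H_E$ to be normal, via the symmetry/invariance built into the definition of $R_{H,\tilde X}$ when $\tilde X$ is the whole space.
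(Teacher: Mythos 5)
Your proof is correct and follows the same route as the paper: the forward direction is exactly Proposition~\ref{prop:orb_is_worb}, and for the converse you pass to the maximal witness $H'$ from Lemma~\ref{lem:worb_maximal}, identify it with $H_E$, invoke normality from Proposition~\ref{prop:orb_norm}, and then apply the second half of Proposition~\ref{prop:orb_is_worb}. No differences worth noting.
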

	\begin{proof}
		That weak orbitality of an orbital equivalence relation is witnessed by $X=\tilde X$ is a part of Proposition~\ref{prop:orb_is_worb}.
		
		In the other direction, if $E=R_{H,X}$, then we can define the maximal witnessing group $H'$ as in Lemma~\ref{lem:worb_maximal}. Since $X=\tilde X$, this $H'$ coincides with $H_E$, so it is normal by Proposition~\ref{prop:orb_norm}. But then by Lemma~\ref{lem:worb_maximal} and Proposition~\ref{prop:orb_is_worb} we have $R_{H,X}=R_{H',X}=E_{H'}$, so $E$ is orbital.
	\end{proof}
	
	Orbital equivalence relations are weakly orbital as witnessed by $\tilde X=X$. Equivalence relations invariant under transitive actions are, in a sense, an orthogonal class: in their case, we can choose a singleton $\tilde X$.
	
	\begin{prop}
		\label{prop:single_orbit}
		Suppose the action of $G$ is transitive, and $\tilde x \in X$ is arbitrary. Then for any invariant equivalence relation $E$ we have $E=R_{\Stab_G\{[\tilde x ]_E\},\{\tilde x \}}$ (where $\Stab_G\{[\tilde x ]_E\}$ is the setwise stabiliser of $[\tilde x ]_E$, i.e.\ $\{g\in G\mid \tilde x\Er g\tilde x \}$).
	\end{prop}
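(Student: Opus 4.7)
\emph{Proposal.} Set $H := \Stab_G\{[\tilde x]_E\} = \{g \in G \mid \tilde x \Er g\tilde x\}$ and $\tilde X := \{\tilde x\}$. Unwinding Definition~\ref{dfn:rsub}, the relation $R_{H,\{\tilde x\}}$ holds for $(x_1,x_2)$ exactly when there exist $g \in G$ and $h \in H$ with $gx_1 = \tilde x$ and $hgx_2 = \tilde x$, since requiring the common value to lie in $\tilde X = \{\tilde x\}$ forces it to equal $\tilde x$. The plan is to verify the two inclusions by hand, using only transitivity of the $G$-action and $G$-invariance of $E$.

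For $R_{H,\{\tilde x\}} \subseteq E$, suppose $g,h$ witness $x_1 R x_2$ as above. Then $gx_1 = \tilde x$ and $gx_2 = h^{-1}\tilde x$. Since $H$ is a subgroup, $h^{-1} \in H$, so $\tilde x \Er h^{-1}\tilde x$ by definition of $H$; that is, $gx_1 \Er gx_2$. Applying $G$-invariance of $E$ with $g^{-1}$ yields $x_1 \Er x_2$.

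For $E \subseteq R_{H,\{\tilde x\}}$, suppose $x_1 \Er x_2$. By transitivity, pick $g \in G$ with $gx_1 = \tilde x$, and then pick $h \in G$ with $h(gx_2) = \tilde x$. The only step that is not essentially bookkeeping is to check that this $h$ lies in $H$, and this is where we use the hypothesis $x_1 \Er x_2$: by $G$-invariance, $hgx_1 \Er hgx_2 = \tilde x$, and since $hgx_1 = h\tilde x$, we get $h\tilde x \Er \tilde x$, i.e.\ $h \in H$. Thus $(g,h)$ witness $x_1 \mathrel R x_2$.

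There is no real obstacle here; the statement is essentially a two-line unwinding of the definitions once one notices that the singleton $\tilde X$ collapses the existential ``$gx_1 = hgx_2 \in \tilde X$'' to two explicit equalities, and that the stabiliser condition $h \in H$ translates directly into a relation between $\tilde x$ and $h\tilde x$ under $E$. The only place transitivity is genuinely used is in the ``$\subseteq$'' direction, to produce the witnesses $g$ and $h$.
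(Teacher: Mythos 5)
Your proof is correct and follows essentially the same route as the paper's. The paper works in the ``$g_1,g_2$ with $g_2g_1^{-1}\in H$'' coordinates of Definition~\ref{dfn:worb}, whereas you work in the ``$g,h$'' coordinates of Definition~\ref{dfn:rsub}, but the underlying manipulation (translate everything to $\tilde x$, reduce the stabiliser condition $h\in H$ to $\tilde x\Er h\tilde x$, and apply $G$-invariance of $E$) is identical.
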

	\begin{proof}
		Choose any $x_1,x_2$ and let $g_1,g_2$ be such that $g_1x_1=g_2x_2=\tilde x $ (those exist by the transitivity). To complete the proof, it is enough to show that $h=g_2g_1^{-1}\in \Stab_G\{[\tilde x ]_E\}$ if and only if $x_1\Er x_2$ (because $hg_1x_2=g_2x_2$).
		
		Note that $g_2g_1^{-1}\in \Stab_G\{[\tilde x ]_E\}$ if and only if $\tilde x \Er g_2g_1^{-1}(\tilde x )$. But, since $E$ is invariant, this is equivalent to $g_2^{-1}\tilde x \Er g_1^{-1}\tilde x $. But $g_1^{-1}\tilde x =x_1$ and $g_2^{-1}\tilde x =x_2$, so we are done.
	\end{proof}
	
	Note that Corollary~\ref{cor:comm_worb} and Proposition~\ref{prop:single_orbit} provide an alternative proof of Proposition~\ref{prop:comm+trans}.
	
	Note also that even for transitive actions, $R_{H,\{\tilde x\}}$ is not in general an equivalence relation, as it may fail to be transitive.
	
	\begin{ex}
		Let $G$ be the free group of rank $2$, and consider its free generators $a$ and $b$. Let $H=\langle b\rangle$ and $K=\langle a\rangle$. Put $X=G/K$ with $G$ acting on $X$ by left translations, and finally, put $\tilde x=eK\in X$ and $R=R_{H,\{\tilde x \}}$. Then $eK\Rr bK$, and also $eK=a\cdot (eK)\Rr a\cdot (bK)=abK$. Therefore, $b\cdot (eK)=bK\Rr b\cdot(abK)=babK$. But it is not true that $eK\Rr babK$ (indeed, by Lemma~\ref{lem:worb_class_description}, $eK\Rr gK$ if and only if $gK=a^nb^mK$ for some integers $n,m$), so $R$ is not transitive.\xqed{\lozenge}
	\end{ex}

	\begin{rem}
		\label{rem:worb_interpret}
		Given an arbitrary invariant equivalence relation $E$ on $X$, we can attach to each $G$-orbit $G\cdot \tilde x$ with a fixed ``base point" $\tilde x$ a subgroup $H_{\tilde x}:=\Stab_G\{[\tilde x ]_E\}$, such that $G\cdot {\tilde x}/E$ is isomorphic (as a $G$-space) with $G/H_{\tilde x}$.
		
		Along with Lemma~\ref{lem:worb_class_description}, this gives an intuitive description of weakly orbital equivalence relations (among the invariant equivalence relations) as those for which we have a set $\tilde X$ which restricts choice of ``base points", and at the same time ``uniformly" limits the manner in which the group $H_{\tilde x}$ changes between various orbits: we can only take a union of conjugates of a fixed subgroup.
		
		In the later sections, we will consider the behaviour of $E$ when $\tilde X$ is somehow well-behaved, and we can think of that as somehow ``smoothing" the manner in which the group changes between the $G$-orbits.\xqed{\lozenge}
	\end{rem}

	\subsection*{Further examples of weakly orbital equivalence relations}
	\label{ssec:worb_ex}
	In the first example, we define a class of examples of weakly orbital equivalence relations which are not orbital, on spaces $X$ such that $\lvert X/G\rvert$ is large (so the action is far from transitive).
	\begin{ex}
		Consider the action of $G$ on $X=G^2$ by left translation in the first coordinate only. Let $\tilde X\subseteq G^2$ be the diagonal, and $H$ be any subgroup of $G$. Then $R_{H,\tilde X}$ is a weakly orbital equivalence relation on $X$ whose classes are sets of the form $(g_1g_2^{-1}Hg_2)\times \{g_2\}$. The relation is orbital if and only if $H$ is normal (because the action is free), while $\lvert X/G\rvert=\lvert G\rvert$.\xqed{\lozenge}
	\end{ex}
	
	The second example is a vast generalisation of its predecessor.
	
	\begin{ex}
		\label{ex:separate_orbits}
		Let $G$ be any group, while $H\leq G$ is a subgroup. Suppose $(X_i)_{i \in I}$ are disjoint $G$-spaces with subsets $\tilde X_i$, such that $R_{H,\tilde X_i}$ is a weakly orbital equivalence relation on $X_i$. Let $X$ be the disjoint union $\coprod_{i\in I} X_i$. Put $\tilde X=\bigcup_{i\in I}\tilde X_i$. Then $E=R_{H,\tilde X}$ is a weakly orbital equivalence relation (which is just the union $\coprod_{i\in I} R_{H,\tilde X_i}$).\xqed{\lozenge}
	\end{ex}
	
	The third example shows how we can, in a way, join weakly orbital equivalence relations on different $G$-spaces, for varying $G$.
	
	\begin{ex}
		Suppose we have a family of groups $(G_i)_{i\in I}$ acting on spaces $(X_i)_{i\in I}$ (respectively), and that on each $X_i$ we have a [weakly] orbital equivalence relation $E_i$. Then the product $G=\prod_{i\in I}G_i$ acts on the disjoint union $X=\coprod_{i\in I}X_i$ naturally (i.e\ $(g_i)_i\cdot x=g_j\cdot x$ when $x\in X_j$) and $E=\coprod_{i\in I} E_i$ is [weakly] orbital.\xqed{\lozenge}
	\end{ex}
	
	The final example shows us that we cannot, in general, choose $\tilde X$ as a transversal of $X/G$ (i.e.\ a set intersecting each orbit at precisely one point).
	
	\begin{ex}
		\label{ex:worb_nontrivial}
		Let $F$ be an arbitrary field. Consider the affine group $G=F^3\rtimes \GL_3(F)$, and let $G'$ be a copy of $G$, disjoint from it.
		
		Let $\ell\subseteq F^3$ be a line containing the origin. Choose a plane $\pi\subseteq F^3$ containing $\ell$. Let $E$ be the invariant equivalence relation on $X=G\sqcup G'$ (on which $G$ acts by left translations) which is:
		\begin{itemize}
			\item
			on $G$: $(x_1,g_1)\Er (x_2,g_2)$ whenever $g_1=g_2$ and $x_1-x_2\in g_2\cdot \pi$,
			\item
			on $G'$: $(x_1',g_1')\Er (x_2',g_2')$, whenever $g_1'=g_2'$ and $x_1'-x_2'\in g_2'\cdot \ell$ (slightly abusing the notation).
		\end{itemize}
		
		Put $H=\ell\times \{I\}\leq G$, let $A\subseteq \GL_3(F)$ be such that $A^{-1}\cdot l=\pi$ and $I\in A$, and finally let $\tilde X=(\{0'\}\times \{I'\})\cup (\{0\}\times A)$ (where $0'$ is the neutral element in the vector space component of $G'$).
		
		Then $E$ is weakly orbital, as witnessed by $\tilde X$ and $H$ (to see this, recall Lemma~\ref{lem:worb_class_description} and notice that the $E$-class and $R_{\tilde X,H}$-``class" of $I$ both are the union of $I+a^{-1}\cdot \ell$ over $a\in A$, while the class of $I'$ is just $I'+\ell$, and then use the fact that both $E$ and $R_{\tilde X,H}$ are invariant).
		
		We will show that $E$ does not have any $\tilde X_1$ witnessing weak orbitality which intersects each of $G$ and $G'$ at exactly one point. Suppose that $E=R_{H_1,\tilde X_1}$ and $\tilde X_1\cap G=\{g\}$, while $\tilde X_1\cap G'=\{g'\}$. Since the action of $G$ on $X$ is free, it follows from Lemma~\ref{lem:worb_class_description} that $[g]_E=H_1g$ and $[g']_E=H_1g'$. This implies that in fact $H_1\leq F^3$, and the first equality implies that $H_1$ is a plane, while the second one implies that it is a line, which is a contradiction.\xqed{\lozenge}
	\end{ex}

	\section{Abstract structured equivalence relations}
	\label{sec:structured}
	In this section, we consider an action of a group $G$ on a set $X$, but we also put on them some additional structure: namely, we have on each finite product of $G$ and $X$ (as sets) a lattice of sets (i.e.\ a family closed under finite unions and intersections) which we call pseudo-closed, such that the empty set and the whole space is always pseudo-closed. In the remainder of this section, the lattices are implicitly present and fixed.
	
	Note that the lattices may not be closed under arbitrary intersection (so they need not be the lattices of closed sets in the topological sense) and we do not necessarily assume that the lattice on a product is the product lattice (so even if, say, the pseudo-closed sets on $X$ are actually closed sets in a topology, there might be a pseudo-closed set in $X^2$ which is not closed in the product topology).
	
	Naturally, we need to impose some compatibility conditions on the group action and the lattices of pseudo-closed sets.
	
	\begin{dfn}
		\index{agreeable group action}
		\index{pseudo-closed set}
		\label{dfn:agree}
		We say that the lattices of pseudo-closed sets \emph{agree} with the group action of $G$ on $X$ (or, leaving the lattice implicit, $G$ acts \emph{agreeably} on $X$) if we have the following:
		\begin{enumerate}
			\item
			sections of pseudo-closed sets are pseudo-closed,
			\item
			products of pseudo-closed sets are pseudo-closed,
			\item
			the map $G\times X\to X$ defined by the formula $(g,x)\mapsto g\cdot x$ is pseudo-continuous (i.e.\, the preimages of pseudo-closed sets are pseudo-closed),
			\item
			for each $g\in G$, the map $X\to X\times X$ defined by the formula $x\mapsto (x,g\cdot x)$ is pseudo-continuous,
			\item
			\label{it:dfn:agree:proj}
			$\pi\restr_{E_G}$, the restriction to $E_G\subseteq (X^2)^2$ of the projection onto the first two coordinates $\pi\colon (X^2)^2\to X^2$, is a pseudo-closed mapping (i.e.\ images of relatively pseudo-closed sets are pseudo-closed), where $(x_1,x_2)\Er_G (x_1',x_2')$ when there is some $g\in G$ such that $x_1'=gx_1$ and $x_2'=gx_2$,
			\item
			the map $X\times G\to X\times X$ defined by the formula $(x,g)\mapsto(x,g\cdot x)$ is pseudo-closed.\xqed{\lozenge}
		\end{enumerate}
	\end{dfn}
	
	\begin{ex}
		\label{ex:agree_cpct}
		A prototypical example of an agreeable group action is any continuous action of a compact Hausdorff group on a compact Hausdorff space, where pseudo-closed means simply closed. Definition~\ref{dfn:agree} is easy to verify there, as all the functions under consideration are continuous, and hence closed (as continuous functions between compact spaces and Hausdorff spaces). Section~\ref{sec:cpct} is dedicated to the generalisation of this example where we only assume that the group is compact, not the space it acts on.\xqed{\lozenge}
	\end{ex}
	
	\begin{lem}
		\label{lem:psclsd}
		If $G$ acts agreeably on $X$ and $E$ is an invariant equivalence relation, then for each $h\in G$ and $\tilde x\in X$:
		\begin{itemize}
			\item
			$\Stab_G\{[\tilde x]_E\}=\{g\in G\mid \tilde x\Er g\tilde x \}$ is pseudo-closed whenever $[x]_E$ is pseudo-closed,
			\item
			$\{x\in X\mid x\Er hx \}$ is pseudo-closed whenever $E$ is pseudo-closed.
		\end{itemize}
	\end{lem}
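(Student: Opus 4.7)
Both bullets reduce, in a straightforward fashion, to the axioms of agreeable action in Definition~\ref{dfn:agree}. The only mild subtlety is matching up which of the axioms (1)--(6) is invoked where. I anticipate no real obstacle; the whole argument is essentially two diagram chases.

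For the second bullet, the plan is to exhibit $\{x \in X \mid x \mathrel E hx\}$ as a pseudo-continuous preimage of $E$. Namely, fix $h\in G$ and consider the map $\psi_h\colon X\to X\times X$ defined by $x\mapsto (x,h\cdot x)$. By item (4) of Definition~\ref{dfn:agree}, $\psi_h$ is pseudo-continuous. Since $\{x\in X \mid x\mathrel E hx\}=\psi_h^{-1}[E]$, pseudo-closedness of $E$ is transferred to this set.

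For the first bullet, the plan is analogous, using the action map and a section. Let $\mu\colon G\times X\to X$ be the action, $\mu(g,x)=g\cdot x$, which is pseudo-continuous by item (3) of Definition~\ref{dfn:agree}. Assuming $[\tilde x]_E$ is pseudo-closed, $\mu^{-1}[[\tilde x]_E]\subseteq G\times X$ is pseudo-closed. The section of this set at $\tilde x\in X$ is exactly $\{g\in G \mid g\tilde x\in [\tilde x]_E\}=\Stab_G\{[\tilde x]_E\}$, and this section is pseudo-closed by item (1). That concludes the argument.

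The main (very minor) thing to be careful about is the direction of the section: item~(1) guarantees sections of pseudo-closed subsets of $G\times X$ (taken at a point of $X$, hence lying in $G$) are pseudo-closed, which is precisely what we need above. No use is made of items (2), (5) or (6), which are reserved for later results where images of pseudo-closed sets, rather than preimages, are needed.
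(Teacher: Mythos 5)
Your proof is correct and follows essentially the same approach as the paper's: the second bullet is the preimage of $E$ under $x\mapsto(x,hx)$ using item~(4) of Definition~\ref{dfn:agree}, and the first bullet is the section at $\tilde x$ of the preimage of $[\tilde x]_E$ under the action map, using items~(3) and~(1).
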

	\begin{proof}
		For the first bullet, the set in question is a section at $\tilde x$ of the preimage of $[\tilde x]_E$ via the map $(g,x)\mapsto g\cdot x$, so it is pseudo-closed by agreeability.
		
		For the second bullet, the set is the preimage of $E$ via $x\mapsto (x,hx)$, so it is pseudo-closed by agreeability.
	\end{proof}

	\begin{thm}
		\label{thm:orb}
		If $G$ acts agreeably on $X$, while $E$ is an orbital equivalence relation, and the lattice of pseudo-closed sets in $G$ is downwards $[G:H_E]$-complete (i.e.\ closed under intersections of at most $[G:H_E]$ sets), then the following are equivalent:
		\begin{enumerate}
			\item
			\label{it:thm:orb:clsd}
			$E$ is pseudo-closed,
			\item
			\label{it:thm:orb:clclsd}
			each $E$-class is pseudo-closed,
			\item
			\label{it:thm:orb:HEclsd}
			$H_E$ is pseudo-closed,
			\item
			\label{it:thm:orb:Hclsd}
			$E=E_H$ for some pseudo-closed $H\leq G$.
		\end{enumerate}
	\end{thm}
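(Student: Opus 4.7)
The plan is to establish the cyclic chain of implications $(1)\Rightarrow(2)\Rightarrow(3)\Rightarrow(4)\Rightarrow(1)$. Three of the four arrows follow almost immediately from the axioms of agreeability together with Proposition~\ref{prop:orb_from_group}. For $(1)\Rightarrow(2)$, every $E$-class $[x]_E$ is a section of $E\subseteq X^2$ at $x$, hence pseudo-closed by axiom~(1) of Definition~\ref{dfn:agree}. For $(3)\Rightarrow(4)$ one simply takes $H:=H_E$, and uses Proposition~\ref{prop:orb_from_group} together with orbitality to get $E=E_{H_E}$. For $(4)\Rightarrow(1)$: if $E=E_H$ with $H$ pseudo-closed in $G$, then $X\times H\subseteq X\times G$ is pseudo-closed by axiom~(2), and its image under the map $(x,g)\mapsto (x,gx)$ is $E_H=E$, which is pseudo-closed because that map is pseudo-closed by axiom~(6).

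All of the substance of the theorem therefore lies in $(2)\Rightarrow(3)$, and this is the only step where the downward $[G:H_E]$-completeness of the lattice of pseudo-closed subsets of $G$ is used. Assume each $E$-class is pseudo-closed. By the first bullet of Lemma~\ref{lem:psclsd}, the setwise stabiliser $\Stab_G\{[x]_E\}$ is pseudo-closed in $G$ for every $x\in X$. Writing naively $H_E=\bigcap_{x\in X}\Stab_G\{[x]_E\}$ expresses $H_E$ as an intersection of pseudo-closed sets, but indexed by $X$, which may be arbitrarily large. The key observation is that, essentially by the definition of $H_E$, the action of $G$ on $X/E$ factors through a \emph{faithful} action of $G/H_E$ on $X/E$. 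So for every non-identity coset $gH_E\in G/H_E$ one can pick a class $C_g\in X/E$ with $gC_g\neq C_g$. Put $\mathcal S:=\{C_g\mid gH_E\neq H_E\}$; then $|\mathcal S|\leq[G:H_E]$ and I claim $H_E=\bigcap_{C\in\mathcal S}\Stab_G\{C\}$. The inclusion $\subseteq$ is clear, and conversely if some $h\in G$ lies in $\Stab_G\{C\}$ for every $C\in\mathcal S$ but $h\notin H_E$, then $hH_E$ is a non-identity coset, so $C_h\in\mathcal S$ was chosen with $hC_h\neq C_h$, contradicting $h\in\Stab_G\{C_h\}$. The intersection has at most $[G:H_E]$ pseudo-closed terms, so by the completeness hypothesis $H_E$ is itself pseudo-closed.

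The only genuinely delicate step is thus $(2)\Rightarrow(3)$, and the obstacle to surmount is precisely the reduction from an $X$-indexed intersection to a $[G:H_E]$-indexed one; the ``faithful quotient action'' argument above is the cleanest way I see to perform this reduction without imposing any transitivity assumption on the action of $G$ on $X$. A minor point that must be handled is the case $[G:H_E]=1$, in which $E$ is trivial and the statement is vacuous, and the case of infinite $[G:H_E]$, for which the bound $|\mathcal S|\leq[G:H_E]$ is still valid (non-identity cosets have the same cardinality as $G/H_E$).
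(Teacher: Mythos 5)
Your proof is correct and follows essentially the same route as the paper's. The only step with any substance is $(2)\Rightarrow(3)$, and there your reduction of the $X$-indexed intersection $\bigcap_{\tilde x\in X}\Stab_G\{[\tilde x]_E\}$ to a $[G:H_E]$-indexed one — by observing that $G/H_E$ acts faithfully on $X/E$ and picking for each non-identity coset a class it moves — is the same observation the paper makes, merely phrased in terms of the faithful quotient action rather than directly in terms of $H_E$-cosets of the stabilisers. (One trivial inaccuracy: when $[G:H_E]=1$ the relation $E=E_G$ need not be \emph{trivial} and the claim is not vacuous; but the argument still goes through, since the empty intersection is $G$, which is pseudo-closed by convention.)
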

	\begin{proof}
		If $E$ is pseudo-closed, then each $E$-class is pseudo-closed (as a section of $E$), so we have \ref{it:thm:orb:clsd}$\Rightarrow$\ref{it:thm:orb:clclsd}.
		
		To see that \ref{it:thm:orb:clclsd}$\Rightarrow$\ref{it:thm:orb:HEclsd}, note that
		\[
		H_E=\bigcap_{\tilde x\in X}\Stab_G\{[\tilde x]_E\}.
		\]
		By Lemma~\ref{lem:psclsd}, the stabilisers are pseudo-closed. Ostensibly, there are $\lvert X\rvert$ factors in the intersection, so completeness does not apply directly. However, each of the stabilisers is a group containing $H_E$, and as such, it is some union of cosets of $H_E$ in $G$. It follows that to calculate the intersection, we only need to see which cosets are excluded from it, and since there are only $[G:H_E]$ many cosets, the intersection can be realised as the intersection of at most $[G:H_E]$ factors (one for each coset excluded from the intersection). Therefore, by completeness, $H_E$ is pseudo-closed.
		
		\ref{it:thm:orb:HEclsd}$\Rightarrow$\ref{it:thm:orb:Hclsd} is a weakening (by Proposition~\ref{prop:orb_from_group}).
		
		For \ref{it:thm:orb:Hclsd}$\Rightarrow$\ref{it:thm:orb:clsd}, notice that $E_H$ is the image of $X\times H$ by the map $(x,g)\mapsto (x,g\cdot x)$, which is pseudo-closed by agreeability.
	\end{proof}
	
	It makes sense to consider the question about when the closedness of classes implies the closedness of the whole equivalence relation. The following example shows that if simply we drop the orbitality assumption in Theorem~\ref{thm:orb}, the implication no longer holds.
	\begin{ex}
		\label{ex:not_orbital}
		Consider the action of $G={\bf Z}/2{\bf Z}$ on $X=\{0,1\}\times \{0,\frac{1}{n}\mid n\in {\bf N}^+ \}$ by changing the first coordinate. This is an agreeable action, as a special case of Example~\ref{ex:agree_cpct} (so pseudo-closed = closed).
		
		Consider the equivalence relation $E$ on $X$ such that its classes are $\{(0,0)\}$, $\{(1,0)\}$, and $\{(0,\frac{1}{n}),(1,\frac{1}{n})\}$, where $n\in {\bf N}^+$. Clearly, $E$ is invariant and all its classes are closed, but it is not itself closed.\xqed{\lozenge}
	\end{ex}
	
	The following definition makes for more elegant statements of the remaining results.
	
	\begin{dfn}
		\index{weakly orbital by pseudo-closed}
		We say that an invariant equivalence relation $E$ is \emph{weakly orbital by pseudo-closed} if there is a pseudo-closed set $\tilde X\subseteq X$ and a (not necessarily pseudo-closed) group $H\leq G$ such that $E=R_{H,\tilde X}$.\xqed{\lozenge}
	\end{dfn}
	
	(We will also replace the epithet ``pseudo-closed" in the above definition by others in the more concrete applications, so e.g.\ in the context of Example~\ref{ex:agree_cpct}, we would talk about ``weakly orbital by closed" equivalence relations.)

	\begin{thm}
		\label{thm:worb}
		If in Theorem~\ref{thm:orb} we assume that $E$ is only weakly orbital (instead of orbital), and add the assumption that the lattice of pseudo-closed sets in $X$ is also downwards $[G:H_E]$-complete, then the following are equivalent:
		\begin{enumerate}
			\item
			\label{it:thm:worb_1}
			$E$ is pseudo-closed,
			\item
			\label{it:thm:worb_2}
			each $E$-class is pseudo-closed and $E$ is weakly orbital by pseudo-closed,
			\item
			\label{it:thm:worb_3}
			$E=R_{H,\tilde X}$ for some pseudo-closed $H$ and $\tilde X$,
			\item
			\label{it:thm:worb_4}
			for every $H\leq G$ and $\tilde X\subseteq X$, if either of $H$ or $\tilde X$ is a \emph{maximal} witness to weak orbitality of $E$, then it is also pseudo-closed.
		\end{enumerate}
	\end{thm}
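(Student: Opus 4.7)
The plan is to prove the cycle (1)$\Rightarrow$(4)$\Rightarrow$(3)$\Rightarrow$(1), and then deduce the equivalence with (2). Most implications are fairly straightforward; the main work is in (1)$\Rightarrow$(4), where we must carefully exploit the $[G:H_E]$-completeness of the lattice, and in (3)$\Rightarrow$(1), where we must assemble the agreeability axioms.

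For (1)$\Rightarrow$(4), suppose $E$ is pseudo-closed, so in particular every $E$-class is pseudo-closed as a section. Take any maximal witness $\tilde X$, so that by Lemma~\ref{lem:worb_maximal} there exists some $H$ with $E = R_{H,\tilde X}$ and $\tilde X = \{x \in X \mid \forall h \in H,\ x \Er hx\}$. The key observation is that if $h_1 \in hH_E$ then $h_1 x = h(h'x) \Er hx$ for the appropriate $h' \in H_E$, so the set $\{x \mid x \Er hx\}$ depends only on the left coset $hH_E$. Hence $\tilde X$ is an intersection of at most $[G:H_E]$ such sets, each pseudo-closed by Lemma~\ref{lem:psclsd}, and pseudo-closedness follows by completeness of the lattice on $X$. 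For a maximal $H$, by Lemma~\ref{lem:worb_maximal}, $H = \bigcap_{\tilde x \in \tilde X} \Stab_G\{[\tilde x]_E\}$. Each stabiliser contains $H_E$ and is thus a union of $H_E$-cosets, so $H$ itself is a union of at most $[G:H_E]$ cosets. For each coset $C$ of $H_E$ not in $H$ we can pick some $\tilde x_C \in \tilde X$ whose stabiliser misses $C$, and $H = \bigcap_{C \notin H} \Stab_G\{[\tilde x_C]_E\}$ realises $H$ as the intersection of at most $[G:H_E]$ pseudo-closed sets in $G$, whence pseudo-closed by completeness.

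The implication (4)$\Rightarrow$(3) is immediate: applying Lemma~\ref{lem:worb_maximal} twice yields a maximal pair of witnesses $(H,\tilde X)$, both pseudo-closed by (4). For (3)$\Rightarrow$(1), I would rewrite the weakly orbital condition as $x_1 \Er x_2 \iff \exists g \in G: (gx_1, gx_2) \in B$, where $B := E_H \cap \tilde X^2$. Pseudo-closedness of $B$ follows because $\tilde X^2$ is pseudo-closed by the product axiom, and $E_H$ is the image of the pseudo-closed set $X \times H$ under the pseudo-closed map $(x,g) \mapsto (x,gx)$ of axiom (6) of Definition~\ref{dfn:agree}. Then $X^2 \times B$ is pseudo-closed in $(X^2)^2$, and its intersection with the relation $E_G$ is relatively pseudo-closed there. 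Under axiom (5) of Definition~\ref{dfn:agree}, the image of this intersection under the projection $\pi\restr_{E_G}$ is pseudo-closed in $X^2$, and one checks that this image is precisely $E$.

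Finally, (2)$\Rightarrow$(3) is trivial, and (1)$\Rightarrow$(2) follows because under (1) the classes (as sections) are pseudo-closed, and (1) yields (3) via the cycle, providing the required pseudo-closed witnesses. The main obstacle I anticipate is the coset-counting argument in (1)$\Rightarrow$(4): the potentially large intersections defining the maximal witnesses need to be rewritten as intersections indexed by cosets of $H_E$, which requires verifying carefully that each of the relevant sets only depends on the appropriate coset. Once this is established, the agreeability axioms — particularly the somewhat technical axiom (5) — do all the heavy lifting in (3)$\Rightarrow$(1).
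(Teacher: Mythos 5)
Your overall strategy coincides with the paper's: you establish a cycle, deduce pseudo-closedness of the maximal witnesses via the same coset-counting device used for $H_E$ in Theorem~\ref{thm:orb}, and obtain $(3)\Rightarrow(1)$ by realising $E$ as the $\pi\restr_{E_G}$-image of a relatively pseudo-closed subset of $E_G$ (axiom~(5) of Definition~\ref{dfn:agree}). The paper proves $(1)\Rightarrow(2)\Rightarrow(3)\Rightarrow(1)$ and notes that $(4)$ is obtained along the way; you reorganise this as $(1)\Rightarrow(4)\Rightarrow(3)\Rightarrow(1)$ and then attach $(2)$. This repackaging is fine, and your argument for $(1)\Rightarrow(4)$ --- reducing the intersections defining the maximal $\tilde X$ and $H$ to at most $[G:H_E]$ factors via $H_E$-coset invariance, invoking Lemma~\ref{lem:psclsd} and both completeness hypotheses --- is correct and essentially identical to the paper's.

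There is, however, a genuine error in $(3)\Rightarrow(1)$. You set $B:=E_H\cap\tilde X^2$ and assert $x_1\Er x_2 \iff \exists g\colon (gx_1,gx_2)\in B$. But unwinding Definition~\ref{dfn:rsub}, $x_1\Rr_{H,\tilde X} x_2$ means $\exists g\,\exists h\colon gx_1=hgx_2\in\tilde X$: only \emph{one} of $gx_1,gx_2$ is required to land in $\tilde X$, the other lands in $h^{-1}\tilde X$, and there is no reason this should meet $\tilde X$ unless $\tilde X$ is $H$-invariant. So your formula gives only the inclusion $\{(x_1,x_2)\mid \exists g\ (gx_1,gx_2)\in B\}\subseteq E$, which can be strict. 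Concretely, take the antipodism on $S^2$ from Example~\ref{ex:3drotations}, with $H=\langle\theta\rangle$ and the witness $\tilde X=\{\tilde x\}$ (a singleton, with $\theta\tilde x=-\tilde x$) used in the paper to exhibit its weak orbitality: then $E_H\cap\tilde X^2=\{(\tilde x,\tilde x)\}$, and your formula returns equality on $S^2$, not antipodism. Passing first to the maximal $\tilde X$ does not rescue this: inside $(3)\Rightarrow(1)$ you do not yet know $E$ is pseudo-closed, so you cannot show the enlarged $\tilde X$ is pseudo-closed, and in any case even the maximal $\tilde X$ need not be $H$-invariant when $H$ is not normal. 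The fix is to take $B:=E_H\cap(\tilde X\times X)$ --- this is what the paper does, phrased as ``$x_1'\in\tilde X$ and $x_1'\Er_H x_2'$'' --- and this $B$ is still pseudo-closed since $X$ itself is pseudo-closed and the product axiom applies. With that change, the remainder of your argument (axiom~(6) for pseudo-closedness of $E_H$, axiom~(5) for the projection) goes through unchanged, and the proposal matches the paper's proof.
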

	\begin{proof}
		We will show the implications \ref{it:thm:worb_1}$\Rightarrow$\ref{it:thm:worb_2}$\Rightarrow$ \ref{it:thm:worb_3} $\Rightarrow$ \ref{it:thm:worb_1}, and on the way, that the three conditions imply \ref{it:thm:worb_4} (which implies \ref{it:thm:worb_3} by Lemma~\ref{lem:worb_maximal}).
		
		If we assume \ref{it:thm:worb_1}, then clearly all the classes are pseudo-closed (as sections of $E$), and we have $H_1$, $\tilde X_1$ such that $E=R_{H_1,\tilde X_1}$ (because we have assumed that $E$ is weakly orbital). Then we can put
		\[
		\tilde X:=\bigcap_{h\in H_1} \{x\in X\mid x\Er hx \}.
		\]
		The sets we intersect are then pseudo-closed by Lemma~\ref{lem:psclsd}. As before, the intersection may have more than $[G:H_E]$ factors, but there are at most $[G:H_E]$-many distinct sets of the form $\{x\mid x\Er hx \}$, because every such set depends only on the left $H_E$-coset of $h$. To see this, note that for every $x\in X$ and $h_0\in H_E$, we have $x\Er h_0 x$, and therefore --- by invariance --- also $hx\Er hh_0x$. Thus, $x \Er hx$ implies that $x\Er hh_0x$.
		
		It follows that we have the following equality:
		\[
		\tilde X=\bigcap_{hH_E\in H_1/H_E} \{x\in X\mid x\Er hx \},
		\]
		and therefore, by completeness, $\tilde X$ is pseudo-closed, and by Lemma~\ref{lem:worb_maximal}, we have $E=R_{H_1,\tilde X}$, and hence \ref{it:thm:worb_2}. This also gives us the part of \ref{it:thm:worb_4} pertaining to $\tilde X$: if $\tilde X_1$ was already maximal, as witnessed by $H_1$, then we would have $\tilde X=\tilde X_1$.
		
		The implication \ref{it:thm:worb_2}$\Rightarrow$\ref{it:thm:worb_3} is showed the same way as the one in Theorem~\ref{thm:orb} (using Lemma~\ref{lem:worb_maximal}), only we take the intersection over the pseudo-closed set $\tilde X$ (which we have by definition of weakly orbital by pseudo-closed) instead of the whole $X$. The same reasoning shows the remaining part of \ref{it:thm:worb_4}.
		
		For \ref{it:thm:worb_3}$\Rightarrow$\ref{it:thm:worb_1}, notice that $x_1 \Rr_{H,\tilde X} x_2$ if and only if there are $x_1'$ and $x_2'$ such that $(x_1,x_2)\Er_G (x_1',x_2')$ (where $E_G$ is defined as in Definition~\ref{dfn:agree}\ref{it:dfn:agree:proj}), $x_1'\in \tilde X$ and $x_1' \Er_H x_2'$. Since $H$ is pseudo-closed, we also have that $E_H$ is pseudo-closed (just as in the final paragraph of the proof of Theorem~\ref{thm:orb}), so overall, this is a condition about $(x_1,x_2,x_1',x_2')$ which is relatively pseudo-closed in $E_G$, and the projection onto the first two coordinates (which is just $E=R_{H,\tilde X}$) is also pseudo-closed (by Definition~\ref{dfn:agree}\ref{it:dfn:agree:proj}).
	\end{proof}
	
	\begin{rem}
		Since $X$ itself is its own pseudo-closed subset, one can use Proposition~\ref{prop:orb_is_worb} and Corollary~\ref{cor:orb_gap2} to show that for orbital $E$, the conclusion of Theorem~\ref{thm:worb} implies the conclusion of Theorem~\ref{thm:orb}, so, if we ignore the completeness assumptions, Theorem~\ref{thm:worb} implies Theorem~\ref{thm:orb}.\xqed{\lozenge}
	\end{rem}
	
	One might ask whether in Theorem~\ref{thm:worb}, we could have weakened the condition \ref{it:thm:worb_2} to say only that each class is pseudo-closed (or, equivalently, the condition \ref{it:thm:worb_3} to say only that $H$ is pseudo-closed). But this is not the case -- as explained in Remark~\ref{rem:worb_interpret}, we need the $\tilde X$ to control the way $E$ changes between $G$-orbits. This is shown in the following example.
	
	\begin{ex}
		Let $G=S_3$ act naturally on $S_3\times \{0,\frac{1}{n}\mid n\in {\bf N}^+ \}$. This is another special case of Example~\ref{ex:agree_cpct}, so this action is agreeable. Let $H=\{{\id},(1,2) \}$, and let $\tilde X=\{((1,2,3),0),({\id},\frac{1}{n})\mid n\in {\bf N}^+\}$. Then $E=R_{H,\tilde X}$ is weakly orbital, $H$ is pseudo-closed, as are all the $E$-classes, but $E$ is not (because $({\id},0)$ and $((1,2),0)$ are not related whereas each $({\id},\frac{1}{n})$ is related to $((1,2),\frac{1}{n})$).\xqed{\lozenge}
	\end{ex}

	\section[Compact group actions]{(Weakly) orbital equivalence relations for compact group actions\sectionmark{Compact group actions}}
	\sectionmark{Compact group actions}
	\label{sec:cpct}
	
	In this section, $X$ is a (Hausdorff) $G$-space for a compact Hausdorff group $G$ (and the action is continuous). The pseudo-closed sets are just the closed sets in respective spaces. Then pseudo-continuity and pseudo-closedness of functions are just the usual topological continuity and closedness.
	
	\subsection*{Preparatory lemmas in the case of compact group actions}
	
	We have seen in Example~\ref{ex:agree_cpct} that a continuous action of a compact Hausdorff group $G$ on a compact Hausdorff space $X$ is agreeable. It turns out that compactness of $X$ is not necessary.
	\begin{lem}
		\label{lem:agree_cpct}
		Actions of compact groups are agreeable (with respect to the standard closed sets, according to Definition~\ref{dfn:agree}).
	\end{lem}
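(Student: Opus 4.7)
The six conditions in Definition~\ref{dfn:agree} split into two groups: the first four are purely general topology (or amount to assumptions already in the hypothesis), while items (5) and (6) genuinely use compactness of $G$. I will dispose of (1)--(4) quickly and then attack (5) and (6), which are the only points requiring real work.

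For (1) and (2), sections and finite products of closed sets in arbitrary topological spaces are closed. For (3), this is the hypothesis that the action is continuous. For (4), the map $x \mapsto (x, g \cdot x)$ is continuous because its components $\id_X$ and $x \mapsto g \cdot x$ are continuous.

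For (6), I would invoke Fact~\ref{fct:cpct_proper}: since $G$ is a compact Hausdorff group acting continuously on the Hausdorff space $X$, the map $\phi_0 \colon G \times X \to X \times X$ defined by $(g,x) \mapsto (x, g \cdot x)$ is proper, and in particular (by Fact~\ref{fct:proper}) closed. The map in (6) is $\phi_0$ precomposed with the coordinate swap $X \times G \to G \times X$, which is a homeomorphism, so it is also closed.

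For (5), which I expect to be the main obstacle, the key observation is that $E_G \subseteq (X^2)^2$ is itself closed: it is the image of the proper (hence closed) map $\phi \colon G \times X^2 \to X^2 \times X^2$ given by $(g,(x_1,x_2)) \mapsto ((x_1,x_2),(gx_1,gx_2))$, which is proper by another application of Fact~\ref{fct:cpct_proper} (applied to $G$ acting diagonally on the Hausdorff space $X^2$). Hence any relatively closed $C \subseteq E_G$ is already closed in $(X^2)^2$, so $\phi^{-1}[C] \subseteq G \times X^2$ is closed. A direct check using the definition of $E_G$ shows
\[
\pi\restr_{E_G}[C] = \operatorname{pr}_{X^2}[\phi^{-1}[C]],
\]
where $\operatorname{pr}_{X^2} \colon G \times X^2 \to X^2$ is the second projection. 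Since $G$ is compact, projection along the $G$-factor is a closed map, and therefore $\pi\restr_{E_G}[C]$ is closed in $X^2$, which is exactly condition (5). Assembling these verifications completes the proof.
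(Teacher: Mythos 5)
Your proof is correct, and for condition (5) it takes a genuinely different route from the paper. The paper handles (5) by forming the quotient $q\colon E_G\to E_G/G$ (with $G$ acting on the last two coordinates), observing via Fact~\ref{fct:cpct_action} that $q$ is closed, and then arguing that the diagonal embedding $\Delta\colon X^2\to E_G$ induces a homeomorphism $X^2\to E_G/G$ through which $\pi\restr_{E_G}$ factors; closedness of $\pi\restr_{E_G}$ is extracted from this diagram. You instead work entirely ``upstairs'': you parametrise $E_G$ as the image of the proper map $\phi\colon G\times X^2\to (X^2)^2$, pull the relatively closed $C\subseteq E_G$ back to a closed subset of $G\times X^2$, and then project out the compact factor $G$, identifying $\pi\restr_{E_G}[C]$ with $\operatorname{pr}_{X^2}[\phi^{-1}[C]]$ by a short set-theoretic check. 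This avoids forming the quotient space and the diagram chase, at the cost of needing the Kuratowski fact that projection along a compact factor is closed (which is elementary, but is not something the paper invokes). Both arguments ultimately rest on the same two inputs — Fact~\ref{fct:cpct_proper} and compactness of $G$ — so the content is essentially equivalent; your version is somewhat more self-contained, while the paper's version is better suited to a reader who already thinks in terms of quotients of compact group actions (and it makes the appearance of Fact~\ref{fct:cpct_action} transparent). One small remark: your step ``$C$ relatively closed in $E_G$ is already closed in $(X^2)^2$'' does require $E_G$ to be closed, which you correctly establish; alternatively you could have written $C=C'\cap E_G$ for $C'$ closed and noted $\phi^{-1}[C]=\phi^{-1}[C']$ since $\phi$ lands in $E_G$, bypassing the closedness of $E_G$ altogether. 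Your treatment of (6) is exactly the paper's.
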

	\begin{proof}
		Recall from Fact~\ref{fct:cpct_action} that if $G$ is a compact Hausdorff group acting continuously on a Hausdorff space $X$, then the multiplication $X\times G\to X$ ($(x,g)\mapsto(g\cdot x)$) and the quotient $X\to X/G$ are both closed.
		
		It is enough to demonstrate the last two points of Definition~\ref{dfn:agree}: that the projection mapping from $E_G$ onto $X^2$ and the mapping $(x,g)\mapsto (x,g\cdot x)$ are both closed. The rest is straightforward (and does not rely on compactness of $G$).
		\begin{figure}[H]
			\begin{tikzcd}
				\Delta(X^2)\arrow[r,hook,"\subseteq"] & E_G\arrow[r,hook,"\subseteq"] \arrow[dl,two heads,"\pi\restr _{E_G}"] \arrow[d, two heads, "q"] &(X^2)^2\\
				X^2\arrow{u}{\approx}[swap]{\Delta} \arrow[r, "\approx"]& E_G/G \arrow[l] &
			\end{tikzcd}
			\caption{The commutative diagram of the functions discussed in the proof. Each of them is continuous and closed.}
		\end{figure}
		For the first one, consider the diagonal embedding $\Delta\colon X^2\to \Delta(X^2)$ (as a subset of $X^4$) composed with the quotient map $q\colon E_G\to E_G/G\subseteq X^2\times (X^2/G)$, where the quotient is with respect to the action defined by the formula $g\cdot (x_1,x_2,x_1',x_2')=(x_1,x_2,g\cdot x_1',g\cdot x_2')$. The first function is a homeomorphic embedding with closed image, and the second is a closed map (by Fact~\ref{fct:cpct_action}, as a quotient map with respect to a compact group action).
		
		Note that both $q$ and $\pi\restr_{E_G}$ are onto, and they glue together exactly those points which share the first two coordinates. It follows that we have an induced bijection between $X^2$ and $E_G/G$. But this bijection is just $q\circ \Delta$, which is continuous and closed, and therefore a homeomorphism.
		
		This implies that $\pi\restr_{E_G}$ must be closed (as the composition of $q$ -- which is closed -- and $(q\circ \Delta)^{-1}$ -- which is a homeomorphism).
		
		The second one is immediate by Fact~\ref{fct:cpct_proper}.
	\end{proof}

	\subsection*{Results in the case of compact group actions}

	\begin{thm}
		\label{thm:orb_cpct}
		Suppose $G$ is a compact Hausdorff group acting continuously on a Hausdorff space $X$.
		
		The following are equivalent for an orbital invariant equivalence relation $E$ on $X$:
		\begin{enumerate}
			\item
			\label{it:thm:orb_cpct_1}
			$E$ is closed
			\item
			each $E$-class is closed,
			\item
			\label{it:thm:orb_cpct_3}
			$H_E$ is closed,
			\item
			\label{it:thm:orb_cpct_4}
			$E=E_H$ for a closed subgroup $H\leq G$,
			\item
			\label{it:thm:orb_cpct_5}
			$X/E$ is Hausdorff.
		\end{enumerate}
	\end{thm}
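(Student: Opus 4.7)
The plan is to derive this from the abstract Theorem~\ref{thm:orb}, using Lemma~\ref{lem:agree_cpct} to verify agreeability, and then to close the loop with the Hausdorffness condition (5) by a separate short argument.

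First I would invoke Lemma~\ref{lem:agree_cpct} to conclude that the continuous action of $G$ on $X$ is agreeable with respect to the standard lattice of closed sets. Next, since closed sets in a topological space are trivially closed under arbitrary intersections, the lattice of closed subsets of $G$ is downwards $\kappa$-complete for every cardinal $\kappa$, in particular for $\kappa=[G:H_E]$. Therefore all hypotheses of Theorem~\ref{thm:orb} are met, and that theorem gives the equivalence of conditions \ref{it:thm:orb_cpct_1}--\ref{it:thm:orb_cpct_4}.

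It remains to tie in condition \ref{it:thm:orb_cpct_5}. For \ref{it:thm:orb_cpct_5}$\Rightarrow$\ref{it:thm:orb_cpct_1}, I would argue directly: $E$ is the preimage of the diagonal $\Delta_{X/E}\subseteq (X/E)^2$ under the continuous map $X^2\to (X/E)^2$, and if $X/E$ is Hausdorff then this diagonal is closed, so $E$ is closed. For the converse implication \ref{it:thm:orb_cpct_4}$\Rightarrow$\ref{it:thm:orb_cpct_5}, I would use that a closed subgroup $H$ of the compact Hausdorff group $G$ is itself compact Hausdorff, and then apply Fact~\ref{fct:cpct_action} to the continuous action of $H$ on the Hausdorff space $X$: this gives that $X/H$ is Hausdorff, and since $E=E_H$ the quotient $X/E$ coincides with $X/H$.

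I do not expect any serious obstacle, since the heavy lifting has already been packaged into Theorem~\ref{thm:orb} and Lemma~\ref{lem:agree_cpct}. The only mild subtlety is to keep track of the distinction between ``$X$ compact Hausdorff'' (which we do \emph{not} assume) and ``$G$ compact Hausdorff'': the standard fact that closed equivalence relations on compact Hausdorff spaces have Hausdorff quotients (Fact~\ref{fct:quot_T2_iff_closed}) is not available here for the implication \ref{it:thm:orb_cpct_1}$\Rightarrow$\ref{it:thm:orb_cpct_5}, which is why the argument instead passes through \ref{it:thm:orb_cpct_4} and uses compactness of $H$ together with Fact~\ref{fct:cpct_action}.
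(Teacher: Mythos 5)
Your proposal is correct and follows essentially the same route as the paper: reduce equivalence of (1)--(4) to Theorem~\ref{thm:orb} via Lemma~\ref{lem:agree_cpct} and downwards completeness of the closed-set lattice, then handle (5) separately via the preimage-of-diagonal observation and Fact~\ref{fct:cpct_action}. Your remark that one cannot invoke Fact~\ref{fct:quot_T2_iff_closed} directly because $X$ is not assumed compact is a correct reading of why the paper routes (1)$\Rightarrow$(5) through (4).
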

	\begin{proof}
		\ref{it:thm:orb_cpct_5} clearly implies \ref{it:thm:orb_cpct_1} (because $E$ is the preimage of the diagonal by the quotient map $X^2\to (X/E)^2$), while the implication from \ref{it:thm:orb_cpct_4} to \ref{it:thm:orb_cpct_5} is a consequence of Fact~\ref{fct:cpct_action}.
		
		Notice that the lattice of closed sets is simply downwards complete, so the rest follows immediately from Theorem~\ref{thm:orb} and Lemma~\ref{lem:agree_cpct}.
	\end{proof}
	
	The following examples show that we cannot drop the assumption that $G$ is compact in Theorem~\ref{thm:orb_cpct}, even if $G$ is otherwise very tame.
	\begin{ex}
		Consider the action of $G={\bf R}$ on a two-dimensional torus $X={\bf R}^2/{\bf Z}^2$ by translations along a line with an irrational slope (e.g.\ $t\cdot [x_1,x_2]=[x_1+t,x_2+ t\sqrt 2]$). Then for $H=G$ the relation $E_G$ has dense (and not closed) orbits, so in particular, $X/E_G$ has trivial topology.\xqed{\lozenge}
	\end{ex}
	
	\begin{ex}
		Consider the action of $G={\bf R}$ on $X={\bf R}^2$ defined by the formula $t\cdot (x,y)= (x+ty,y)$. Then for $H=G$, the classes of $E_G$ are the singletons along the line $y=0$ and horizontal lines at $y\neq 0$, so they are closed, but $E_G$ is not closed.\xqed{\lozenge}
	\end{ex}

	\begin{cor}
		\label{cor:orb_closed}
		Suppose $G$ is a compact Hausdorff group acting continuously on a Hausdorff space $X$.
		
		Then every closed orbital equivalence relation on $X$ is of the form $E_H$ for some closed $H\unlhd G$. If the action is free, the correspondence is bijective: every closed $N\unlhd G$ is of the form $H_E$ for some closed orbital $E$.
		
		In particular, if $G$ is topologically simple, then the only closed orbital equivalence relations on $X$ are the equality and $E_G$.
		
		On the other hand, if $G$ is commutative and the action is transitive, then all the closed invariant equivalence relations on $X$ are of the form $E_H$ for closed $H\leq G$.
	\end{cor}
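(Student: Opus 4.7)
The plan is to derive this entirely from Theorem~\ref{thm:orb_cpct} combined with the abstract bijection results of Section~\ref{sec:general}. For the first assertion, I would start with a closed orbital $E$ and apply Proposition~\ref{prop:orb_from_group} to write $E = E_{H_E}$; since $E$ is an invariant equivalence relation, Proposition~\ref{prop:orb_norm} guarantees $H_E \unlhd G$, and the equivalence of (1) and (3) in Theorem~\ref{thm:orb_cpct} promotes $H_E$ to a closed (normal) subgroup, producing the required representation $E = E_H$ with $H := H_E$.

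For the free action case, I would combine the first part with Proposition~\ref{prop:orb_to_group}. In one direction, every closed $N \unlhd G$ yields the invariant equivalence relation $E_N$ (by Proposition~\ref{prop:orb_norm}), which is closed by the equivalence of (4) and (1) in Theorem~\ref{thm:orb_cpct}; conversely, a closed orbital $E$ produces the closed normal group $H_E$ via the first part. Freeness is precisely what is needed to make these two assignments mutually inverse, since Proposition~\ref{prop:orb_to_group} gives $H_{E_N} = N$ in that setting.

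The topologically simple case then follows immediately from the first assertion: the only closed normal subgroups of $G$ are $\{e\}$ and $G$, and these give rise respectively to the equality relation (as $\{e\}$-orbits are singletons) and to $E_G$. For the commutative transitive case, the additional input is Proposition~\ref{prop:comm+trans}, which ensures that under those hypotheses every invariant equivalence relation on $X$ is already orbital; the first part of the corollary then supplies $E = E_H$ with $H$ closed, and normality of $H$ is automatic since $G$ is commutative (so the wording ``$H \leq G$'' in the statement is justified rather than ``$H \unlhd G$''). There is no substantive obstacle here: all the ingredients have been established in the preceding material, and the argument is essentially a bookkeeping exercise assembling Theorem~\ref{thm:orb_cpct}, Propositions~\ref{prop:orb_norm}, \ref{prop:orb_from_group}, \ref{prop:orb_to_group}, and \ref{prop:comm+trans} in the correct order.
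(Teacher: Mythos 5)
Your proposal is correct and follows essentially the same route as the paper. The paper's proof is a one-line citation of Corollary~\ref{cor:orb_bijection}, Proposition~\ref{prop:comm+trans}, and Theorem~\ref{thm:orb_cpct}; you have simply unpacked Corollary~\ref{cor:orb_bijection} into its underlying ingredients (Propositions~\ref{prop:orb_norm}, \ref{prop:orb_from_group}, \ref{prop:orb_to_group}) and spelled out how Theorem~\ref{thm:orb_cpct} supplies closedness of $H_E$ and $E_N$ in the respective directions, which is exactly the intended argument.
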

	\begin{proof}
		Immediate from Corollary~\ref{cor:orb_bijection}, Proposition~\ref{prop:comm+trans} and Theorem~\ref{thm:orb_cpct}.
	\end{proof}

	Recall that an invariant equivalence relation $E$ is ``weakly orbital by closed" if there is a closed $\tilde X\subseteq X$ and any $H\leq G$ such that $E=R_{H,\tilde X}$.
	
	\begin{thm}
		\label{thm:worb_cpct}
		Suppose $G$ is a compact Hausdorff group acting continuously on a Hausdorff space $X$.
		Then the following are equivalent for a weakly orbital equivalence relation $E$:
		\begin{enumerate}
			\item
			$E$ is closed,
			\item
			each $E$-class is closed and $E$ is weakly orbital by closed,
			\item
			$E=R_{H,\tilde X}$ for some closed $H$ and $\tilde X$,
			\item
			for every $H\leq G$ and $\tilde X\subseteq X$, if either of $H$ or $\tilde X$ is a \emph{maximal} witness to weak orbitality of $E$, then it is also closed.
		\end{enumerate}
	\end{thm}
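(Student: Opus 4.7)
The plan is to observe that this theorem is essentially an immediate specialisation of the abstract Theorem~\ref{thm:worb} to the topological setting, where the lattices of pseudo-closed sets are just the lattices of topologically closed sets in each product space (and hence pseudo-continuous = continuous, pseudo-closed map = closed map).

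Concretely, I would verify that the two hypotheses of Theorem~\ref{thm:worb} are satisfied in this context. The first hypothesis is that $G$ acts agreeably on $X$. This is exactly the content of Lemma~\ref{lem:agree_cpct}, which established that continuous actions of compact Hausdorff groups are agreeable with respect to the usual closed sets (this is the step where compactness of $G$ really matters, through Facts~\ref{fct:cpct_action} and \ref{fct:cpct_proper}). The second hypothesis requires downwards $[G:H_E]$-completeness of the lattices of pseudo-closed sets in $G$ and in $X$; but since the lattice of closed sets in any topological space is closed under \emph{arbitrary} intersections, this completeness condition is automatic regardless of the cardinality of $[G:H_E]$.

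With both hypotheses in hand, Theorem~\ref{thm:worb} applies and its conclusion, read with ``pseudo-closed'' replaced by ``closed,'' is precisely the equivalence of the four conditions in the present statement. There is no real obstacle to speak of: all the heavy lifting has already been done in Theorem~\ref{thm:worb} (the non-trivial implications use the section, completeness, and product-closedness arguments there) and in Lemma~\ref{lem:agree_cpct} (which packages the closedness properties of the orbit map and the multiplication map for compact group actions). Thus the proof reduces to one sentence invoking these two previous results.
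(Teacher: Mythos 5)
Your proposal is correct and takes the same approach as the paper, which also derives the theorem as an immediate consequence of Theorem~\ref{thm:worb} together with Lemma~\ref{lem:agree_cpct}. You correctly note the one point the paper leaves implicit, namely that the completeness hypothesis of Theorem~\ref{thm:worb} is automatic since closed sets are stable under arbitrary intersections.
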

	\begin{proof}
		Immediate from Theorem~\ref{thm:worb} and Lemma~\ref{lem:agree_cpct}.
	\end{proof}
	
	Notice that if $X$ is compact, then by Fact~\ref{fct:quot_T2_iff_closed}, the conditions in Theorem~\ref{thm:worb_cpct} imply that $X/E$ is Hausdorff, but for arbitrary $X$ (in contrast to Theorem~\ref{thm:orb_cpct}), we do not know whether this is true.

	\begin{cor}
		\label{cor:smt_cpct}
		Suppose that $G$ is a compact Hausdorff group acting on a Polish space $X$. Suppose that $E$ is an invariant equivalence relation on $X$ which is orbital or, more generally, weakly orbital by closed.
		Then the following are equivalent:
		\begin{enumerate}
			\item
			\label{it:cor:smt_cpct:clsd}
			$E$ is closed,
			\item
			\label{it:cor:smt_cpct:clses}
			each $E$-class is closed,
			\item
			\label{it:cor:smt_cpct:smt}
			$E$ is smooth,
			\item
			\label{it:cor:smt_cpct:rest}
			for each $x\in X$, the restriction $E\restr_{G\cdot x}$ is closed.
		\end{enumerate}
	\end{cor}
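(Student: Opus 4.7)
The plan is to establish the four equivalences by a short cycle: (1) trivially implies each of (2), (3), (4), so it remains to close the loop. Indeed, (1)$\Rightarrow$(2) holds because classes of $E$ are sections of $E\subseteq X^2$; (1)$\Rightarrow$(3) is Fact~\ref{fct:clsd_smth} (any closed equivalence relation on a Polish space is smooth); and (1)$\Rightarrow$(4) is immediate since the intersection of a closed set with $(G\cdot x)^2$ is closed.

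For (2)$\Rightarrow$(1), I would simply invoke Theorem~\ref{thm:worb_cpct}: the hypotheses of condition (2) there (``each class closed and $E$ weakly orbital by closed'') are met by assumption, so $E$ is closed. In the orbital special case, Theorem~\ref{thm:orb_cpct} already suffices. For (4)$\Rightarrow$(2), note that weak orbitality of $E$ gives $E\subseteq E_G$, so every $E$-class is contained in some single orbit $G\cdot x$. By Fact~\ref{fct:cpct_action}, the quotient map $X\to X/G$ is closed with Hausdorff target, hence each orbit $G\cdot x$ is closed in $X$. Condition (4) says $E\restr_{G\cdot x}$ is closed in $(G\cdot x)^2$, so the class $[x]_E$ is closed in $G\cdot x$, and consequently in $X$.

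The only non-routine step is (3)$\Rightarrow$(4), and it proceeds by reducing to the transitive case already handled in Chapter~\ref{chap:toy}. Fix $x\in X$. Because $G\cdot x$ is closed in the Polish space $X$, it is itself Polish, and in particular Borel; so any Borel reduction $X\to 2^{\bN}$ witnessing smoothness of $E$ restricts to a Borel reduction witnessing smoothness of $E\restr_{G\cdot x}$. The compact Hausdorff group $G$ acts transitively and continuously on the Polish space $G\cdot x$, and $E\restr_{G\cdot x}$ is $G$-invariant, so Corollary~\ref{cor:toy_trich} applies and yields that $E\restr_{G\cdot x}$, being smooth, must be closed. This gives (4), completing the cycle.

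I do not anticipate a serious obstacle: the only nontrivial ingredient is the orbit-by-orbit reduction to Corollary~\ref{cor:toy_trich}, and the rest is bookkeeping using Theorem~\ref{thm:worb_cpct} (or Theorem~\ref{thm:orb_cpct} in the orbital case), Fact~\ref{fct:cpct_action}, and Fact~\ref{fct:clsd_smth}.
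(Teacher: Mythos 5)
Your proof is correct and takes essentially the same approach as the paper: both close the cycle by using Theorem~\ref{thm:worb_cpct} (or Theorem~\ref{thm:orb_cpct}) for (2)$\Rightarrow$(1), Fact~\ref{fct:clsd_smth} for (1)$\Rightarrow$(3), and the orbit-by-orbit reduction to Corollary~\ref{cor:toy_trich} for going from smoothness to closedness of classes. The only difference is cosmetic bookkeeping in how the implications are chained.
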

	\begin{proof}
		Clearly, \ref{it:cor:smt_cpct:clsd} implies \ref{it:cor:smt_cpct:rest}, which implies \ref{it:cor:smt_cpct:clses}.
		
		By Theorem~\ref{thm:orb_cpct} or \ref{thm:worb_cpct}, \ref{it:cor:smt_cpct:clses} implies \ref{it:cor:smt_cpct:clsd}.
		
		By Fact~\ref{fct:clsd_smth}, \ref{it:cor:smt_cpct:clsd} implies \ref{it:cor:smt_cpct:smt}.
		
		Finally, \ref{it:cor:smt_cpct:smt} implies that each restriction $E\restr_{G\cdot x}$ is smooth, and as such --- by Corollary~\ref{cor:toy_trich} --- it is closed. Therefore, its classes are closed in $G\cdot x$, which --- by compactness of $G$ --- is closed in $X$, so we have \ref{it:cor:smt_cpct:clses}.
	\end{proof}
	
	(Note that, since every orbital equivalence relation is weakly orbital by closed, the ``orbital or" part of Corollary~\ref{cor:smt_cpct} is redundant.)

	\section[Type-definable group actions]{(Weakly) orbital equivalence relations for type-definable group actions
		\sectionmark{Type-definable group actions}}
	\sectionmark{Type-definable group actions}
	\label{sec:def}
	
	\subsection*{Preparatory lemmas in the case of type-definable group actions}
	In this section, $G$ is a type-definable group, $X$ is a type-definable set, while the action of $G$ on $X$ is also type-definable (in the sense that it has a type-definable graph), all in the monster model $\fC$.
	
	\begin{lem}
		\label{lem:agree_def}
		If $X$ is a type-definable set, $G$ is a type-definable group acting in a type-definable way on $X$, then the action is agreeable (with respect to type-definable sets as pseudo-closed sets), according to Definition~\ref{dfn:agree}.
	\end{lem}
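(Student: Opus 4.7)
The plan is to verify the six axioms of Definition~\ref{dfn:agree} in order, taking pseudo-closed to mean type-definable throughout. Axioms (1) and (2), concerning sections and finite products, are formal manipulations of partial types and present no difficulty. For axiom (3), pseudo-continuity of the action map $G \times X \to X$, I will use the general fact that if $f$ is a function between type-definable sets whose graph is type-definable, then $f^{-1}[Z]$ is type-definable for each type-definable $Z$: the defining condition ``$f(a) \in Z$'' is equivalent to ``$\forall b,\ (a, b) \in \mathrm{graph}(f) \to b \in Z$'', which is a conjunction of first-order implications between partial types. Axiom (4) follows from the same fact applied to the twist $x \mapsto (x, gx)$, whose graph is type-definable because the action is.

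The essential content lies in axioms (5) and (6), where images (not preimages) of type-definable sets under projection-like maps must be shown to be type-definable. Both reduce to the following key lemma, which I intend to establish by a compactness argument using $\kappa$-saturation of $\fC$: \emph{if $G \leq \fC$ is a type-definable group, $Z$ is any type-definable set, and $Y \subseteq G \times Z$ is type-definable, then the projection $\pi_Z[Y] \subseteq Z$ is type-definable}. To prove this I write $G = \bigcap_i G_i$ and $Y = \bigcap_j Y_j$ as small intersections of definable sets; for $z_0 \notin \pi_Z[Y]$, the partial type $\{g \in G : (g, z_0) \in Y\}$ is inconsistent, so by saturation some finite sub-intersection $\bigcap_{i \in I_0} G_i \cap \bigcap_{j \in J_0} (Y_j)_{z_0}$ is already empty, and this emptiness is expressible as a first-order formula in $z_0$. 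Hence the complement of $\pi_Z[Y]$ is a union of definable sets, so $\pi_Z[Y]$ is type-definable.

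Axiom (6) then follows by expressing the image of a type-definable $B \subseteq X \times G$ under $(x, g) \mapsto (x, gx)$ as the projection onto $X \times X$ of the type-definable set $\{(g, x, y) \in G \times X \times X : (x, g) \in B,\ y = gx\}$; the lemma applies with the ``group coordinate'' being $g$ and with $Z = X \times X$. Axiom (5) is analogous: given a type-definable $A \subseteq (X^2)^2$, the set $\pi[A \cap E_G]$ equals the projection onto $X^2$ of the type-definable set $\{(g, x_1, x_2) \in G \times X^2 : (x_1, x_2, gx_1, gx_2) \in A\}$. The main obstacle I foresee is the compactness step for the projection lemma: care is needed to ensure that both $G$ and $Y$ are defined by fewer than $\kappa$ formulas so that saturation of $\fC$ actually applies, but since all relevant objects are small by standing convention this is automatic. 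Once this lemma is in hand, the rest of the verification is routine and broadly parallels the proof of Lemma~\ref{lem:agree_cpct} in the compact topological setting, with type-definability playing the role of closedness and saturation of $\fC$ replacing topological compactness of the group.
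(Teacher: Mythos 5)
Your approach is essentially the paper's: the paper's own proof is a one-liner resting on exactly the fact you isolate, namely that the projection of a type-definable subset of a product of type-definable sets is type-definable (proved by compactness/saturation of $\fC$, and parallel to closedness of continuous maps between compact Hausdorff spaces in Lemma~\ref{lem:agree_cpct}). Your treatment of axioms (1), (2), (5), (6) via that projection lemma is correct and fills in the details the paper omits.

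There is, however, a genuine misstep in your justification of axiom (3). You argue that $f^{-1}[Z]$ is type-definable by rewriting the membership condition as $\forall b\,\bigl((a,b)\in\mathrm{graph}(f)\to b\in Z\bigr)$ and calling this ``a conjunction of first-order implications between partial types.'' That universally quantified condition is not, in general, type-definable: unfolding it, for each definable piece $\psi_j$ of $Z$ you obtain the set of $a$ such that $\mathrm{graph}(f)(a,\cdot)\vdash\psi_j$, and by compactness this is a small \emph{union} of definable conditions on $a$ (one for each finite subtype witnessing the entailment), hence co-type-definable rather than type-definable; think of the topological analogue, where ``every $b$ near $a$ lies in $K$'' carves out something open, not closed. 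The reason the conclusion is nonetheless correct is that $f$ is a total, single-valued function with type-definable graph, so $f^{-1}[Z]=\pi\bigl[\mathrm{graph}(f)\cap(\mathrm{dom}(f)\times Z)\bigr]$, and this existential description is directly an instance of your own projection lemma. The same applies to axiom (4). So the gap is minor and self-healing --- replace the $\forall$ argument with the $\exists$/projection argument you already have --- but as written the justification does not stand on its own.
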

	\begin{proof}
		Sections of (relatively) type-definable are clearly type-definable, as are products. Since the projection of a type-definable set is type-definable, the remaining points are straightforward as well (analogously to Example~\ref{ex:agree_cpct}).
	\end{proof}

	\subsection*{Results in the case of type-definable group actions}

	\begin{thm}
		\label{thm:orb_def}
		Let $G$ be a type-definable group acting type-definably on a type-definable set $X$.
		
		Suppose $E$ is an orbital, $G$-invariant equivalence relation on $X$ with $G^{000}_A$-invariant classes (for some small set $A$). Then the following are equivalent:
		\begin{enumerate}
			\item
			$E$ is type-definable,
			\item
			each $E$-class is type-definable,
			\item
			$H_E$ is type-definable,
			\item
			there is a type-definable subgroup $H\leq G$ such that $E=E_H$.
		\end{enumerate}
		In addition, if $E$ is bounded (equivalently, if $X/G$ is small), then the conditions are equivalent to the statement that $X/E$ is Hausdorff with the logic topology.
	\end{thm}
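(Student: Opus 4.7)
The plan is to deduce this essentially as a direct application of the abstract Theorem~\ref{thm:orb} together with Lemma~\ref{lem:agree_def}, which already guarantees that the action is agreeable when pseudo-closed is interpreted as type-definable. So the main work is to check that the cardinality/completeness hypothesis in Theorem~\ref{thm:orb} is satisfied, and to upgrade the last equivalence to a statement about the logic topology.

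First I would observe that the lattice of type-definable subsets of any $\emptyset$-type-definable set (or, more generally, of $G$, which is $A$-type-definable after we absorb $A$ into the language as in Remark~\ref{rem:logic_top_larger_sets}) is closed under small intersections. So to apply Theorem~\ref{thm:orb}, it suffices to argue that $[G:H_E]$ is small. This is exactly where the $G^{000}_A$-invariance hypothesis enters: by definition, $G^{000}_A$ is contained in the setwise stabilizer of every $E$-class in $X$, so $G^{000}_A\leq H_E$, whence $[G:H_E]\leq [G:G^{000}_A]$, which is bounded. (Note that the same argument shows that $E$ is automatically bounded under this hypothesis, since classes of orbital $E$ are orbits of $H_E$ and hence cosets modulo $H_E$ act on them; see also Proposition~\ref{prop:bdd_iff_invariant}.) With boundedness of $[G:H_E]$ in hand, Theorem~\ref{thm:orb} (applied via Lemma~\ref{lem:agree_def}) immediately gives the equivalence of the first four conditions, working over a small base $A'\supseteq A$ over which $G$, $X$ and the action are type-definable.

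For the ``in addition'' part, assume $E$ is bounded. The implication from type-definability of $E$ to Hausdorffness of $X/E$ in the logic topology is Fact~\ref{fct:logic_top_cpct_T2} (combined with Remark~\ref{rem:logic_top_larger_sets} to handle parameters). For the converse, if $X/E$ is Hausdorff then the diagonal in $(X/E)^2$ is closed, so its preimage in $X^2$ under the continuous quotient map is type-definable; but this preimage is exactly $E$.

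The only mildly subtle point, and what I expect to be the main obstacle if one wanted to be fully rigorous, is the completeness check: one has to verify that ``intersection of $\leq \kappa$ type-definable sets indexed in a possibly non-small way, but taking only $\leq \kappa$ distinct values'' reduces to a small intersection, exactly as in the proof of Theorem~\ref{thm:orb}. Here this is painless because the relevant intersection $H_E = \bigcap_{\tilde x\in X}\Stab_G\{[\tilde x]_E\}$ is a union of at most $[G:H_E]$ cosets of $H_E$, and $[G:H_E]$ is small by the previous paragraph; hence the intersection is realized as a small intersection of type-definable sets and is therefore type-definable. Everything else is a routine translation of Theorem~\ref{thm:orb} through Lemma~\ref{lem:agree_def}.
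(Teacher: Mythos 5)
Your proof is correct and follows the paper's own (terse) argument: apply Theorem~\ref{thm:orb} via Lemma~\ref{lem:agree_def}, check the needed completeness of the lattice by observing $G^{000}_A\leq H_E$ so $[G:H_E]$ is small, and quote Fact~\ref{fct:logic_top_cpct_T2} for the final Hausdorffness equivalence. One caveat worth flagging, though it sits in a parenthetical aside and does not touch the main line of reasoning: your claim that ``$E$ is automatically bounded under this hypothesis'' is false. Smallness of $[G:H_E]$ only bounds the number of $E$-classes \emph{inside each $G$-orbit}; if $X/G$ is not small, $X/E$ will not be small either, and indeed the paper explicitly adds boundedness of $E$ (equivalently, smallness of $X/G$) as a hypothesis in the last equivalence. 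The Proposition~\ref{prop:bdd_iff_invariant} you cite requires the action to be transitive precisely for this reason, and Theorem~\ref{thm:orb_def} makes no transitivity assumption.
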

	\begin{proof}
		Immediate from Theorem~\ref{thm:orb}, Lemma~\ref{lem:agree_def} and Fact~\ref{fct:logic_top_cpct_T2}. Note that the completeness needed for Theorem~\ref{thm:orb} follows from the fact that $G^{000}_A\leq H_E$, so by definition $[G:H_E]\leq [G:G^{000}_A]$ is small.
	\end{proof}
	
	Recall that an invariant equivalence relation $E$ is ``weakly orbital by type-definable" if there is a type-definable $\tilde X\subseteq X$ and any $H\leq G$ such that $E=R_{H,\tilde X}$
	
	\begin{thm}
		\label{thm:worb_def}
		In context of Theorem~\ref{thm:orb_def}, if we assume instead that $E$ is only weakly orbital, then the following are equivalent:
		\begin{enumerate}
			\item
			$E$ is type-definable,
			\item
			each $E$-class is type-definable and $E$ is weakly orbital by type-definable,
			\item
			$E=R_{H,\tilde X}$ for some type-definable $H$ and $\tilde X$,
			\item
			for every $H\leq G$ and $\tilde X\subseteq X$, if either of $H$ or $\tilde X$ is a \emph{maximal} witness for weak orbitality of $E$, then it is also type-definable.
		\end{enumerate}
		In addition, if $E$ is bounded (equivalently, $X/G$ is bounded), then the conditions are equivalent to statement that $X/E$ is Hausdorff with the logic topology.
	\end{thm}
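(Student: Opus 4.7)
The plan is to deduce the theorem directly from the abstract Theorem~\ref{thm:worb}, exactly as Theorem~\ref{thm:orb_def} was deduced from Theorem~\ref{thm:orb}. By Lemma~\ref{lem:agree_def}, the action of $G$ on $X$ is agreeable when we declare the pseudo-closed sets to be the (relatively) type-definable sets in the various finite products of $G$ and $X$. Thus the only nontrivial hypothesis of Theorem~\ref{thm:worb} that needs to be checked is that the lattices of pseudo-closed sets in $G$ and in $X$ are downwards $[G:H_E]$-complete.

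For this, I would first observe that the assumption that every $E$-class is $G^{000}_A$-invariant gives $G^{000}_A \leq H_E$, so
\[
[G:H_E] \;\leq\; [G:G^{000}_A],
\]
which is bounded (hence small). Since type-definable subsets of any product of sorts are closed under intersections of small families, both ambient lattices are downwards $[G:H_E]$-complete. Therefore the hypotheses of Theorem~\ref{thm:worb} are satisfied, and its conclusion yields the equivalence of the four listed conditions (with ``pseudo-closed'' replaced by ``type-definable'').

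For the ``in addition'' clause, assume that $E$ is bounded. Then the logic topology on $X/E$ is defined (cf.\ Definition~\ref{dfn:logic_topology} and Remark~\ref{rem:logic_top_larger_sets}, after possibly adding parameters to the language so that $E$ and $X$ become $\emptyset$-invariant), and by Fact~\ref{fct:logic_top_cpct_T2} it is a compact space which is Hausdorff if and only if $E$ is type-definable. Combined with the previous equivalences, this gives the final equivalence with Hausdorffness of $X/E$.

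The only step that requires a moment's thought is the completeness bound: one must recognise that weak orbitality does not break the link $G^{000}_A \leq H_E$, because $H_E$ depends only on the $G$-invariant equivalence relation $E$ and not on any chosen witnesses $(H,\tilde X)$. Once this is in place, the rest is a direct translation of the abstract statement into the model-theoretic setting, so I do not anticipate any substantive obstacle beyond invoking the already-developed machinery.
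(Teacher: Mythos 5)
Your proposal is correct and follows essentially the same route as the paper: deduce the theorem from the abstract Theorem~\ref{thm:worb} via Lemma~\ref{lem:agree_def}, with completeness secured by $G^{000}_A \leq H_E$ (as already noted in the proof of Theorem~\ref{thm:orb_def}), and use Fact~\ref{fct:logic_top_cpct_T2} for the Hausdorffness clause. The only difference is that you spell out the completeness check more explicitly, which the paper leaves implicit as ``immediate.''
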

	\begin{proof}
		Immediate from Theorem~\ref{thm:worb}, Lemma~\ref{lem:agree_def} and Fact~\ref{fct:logic_top_cpct_T2}.
	\end{proof}
	
	\begin{cor}
		\label{cor:smt_def}
		Assume that the theory is countable, and fix a countable set $A$ of parameters. Suppose $G$ is a type-definable group acting type-definably on $X$ (with both $G$ and $X$ consisting of countable tuples, all over $A$), while $E$ is a bounded, $G$-invariant and $\Aut(\fC/A)$-invariant equivalence relation on $X$. Assume in addition that $E$ is orbital or, more generally, weakly orbital by type-definable. Then the following are equivalent:
		\begin{enumerate}
			\item
			\label{it:cor:smt_def:clsd}
			$E$ is type-definable,
			\item
			\label{it:cor:smt_def:clses}
			each $E$-class is type-definable,
			\item
			\label{it:cor:smt_def:smt}
			$E$ is smooth,
			\item
			\label{it:cor:smt_def:T2}
			$X/E$ is Hausdorff,
			\item
			\label{it:cor:smt_def:rest}
			for each $x\in X$, the restriction $E\restr_{G\cdot x}$ is type-definable.
		\end{enumerate}
	\end{cor}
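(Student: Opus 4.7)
I start with the easy implications. The equivalence $(1)\Leftrightarrow(4)$ is Fact~\ref{fct:logic_top_cpct_T2} (applied after expanding the language by constants for $A$, as in Remark~\ref{rem:logic_top_larger_sets}); $(1)\Rightarrow(3)$ is Remark~\ref{rem:tdf_implies_smooth}; and $(1)\Rightarrow(2)$ together with $(1)\Rightarrow(5)$ follow because sections and restrictions of a type-definable set are type-definable. The implication $(2)\Rightarrow(1)$ is precisely Theorem~\ref{thm:worb_def} applied to $E$, which by hypothesis is weakly orbital by type-definable. For $(5)\Rightarrow(2)$, note that each class $[x]_E$ is a section of $E\restr_{G\cdot x}$ at $x$. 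This leaves $(3)\Rightarrow(2)$ as the only substantial step.

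To establish $(3)\Rightarrow(2)$, fix $x\in X$ and set $H_x:=\{g\in G\mid g\cdot x\mathrel{E}x\}$, the setwise stabiliser of $[x]_E$; it is $\Aut(\fC/Ax)$-invariant, contains the pointwise stabiliser of $x$, and satisfies $[x]_E=H_x\cdot x$. The orbit map $\phi_x\colon G\to X$, $g\mapsto g\cdot x$, is type-definable over $A\cup\{x\}$, and $\phi_x\times\phi_x$ pulls $E$ back to the coset equivalence relation $E_{H_x}$ on $G$. Passing to a countable model $M\supseteq A\cup\{x\}$, the induced continuous map $\overline{\phi_x}\colon G_M\to X_M$ is a Borel reduction of $E_{H_x}^M$ to $E^M$, so smoothness of $E$ transfers to smoothness of $E_{H_x}$. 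Since $G$ acts transitively and type-definably on itself by left translation and $E_{H_x}$ is a bounded, $G$-invariant, $\Aut(\fC/Ax)$-invariant equivalence relation, Corollary~\ref{cor:trich+_tdf} applies and yields that $E_{H_x}$ is type-definable, that is, $H_x$ is type-definable.

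The main obstacle is then to pass from type-definability of $H_x\leq G$ to type-definability of $[x]_E=H_x\cdot x\subseteq X$, since images of type-definable sets under type-definable maps need not be type-definable in general. The trick is to argue on the type-space side: after enlarging $M$ to contain the parameters defining $H_x$, the set $(H_x)_M$ is closed in $G_M$, hence compact. Its image $\overline{\phi_x}[(H_x)_M]$ is then a compact, hence closed, subset of the Hausdorff space $X_M$. Unwinding definitions, this image equals $([x]_E)_M$, so $[x]_E$ is $M$-type-definable, which, combined with $(2)\Rightarrow(1)$, closes the cycle of implications.
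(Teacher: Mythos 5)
Your proof is correct, and the overall architecture — the easy implications, $(2)\Rightarrow(1)$ via Theorem~\ref{thm:worb_def}, and a trichotomy-based argument for the step from smoothness — is the same as in the paper. The one place where you diverge is in the substantial step: the paper proves $(3)\Rightarrow(5)$ directly, by noting that $E\restr_{G\cdot x}$ is smooth (a continuous reduction from $(E\restr_{G\cdot x})^M$ to $E^M$) and then applying Corollary~\ref{cor:trich+_tdf} to the transitive action of $G$ on the type-definable orbit $G\cdot x$. You instead prove $(3)\Rightarrow(2)$ by first pulling $E$ back along the orbit map to the coset relation $E_{H_x}$ on $G$, applying Corollary~\ref{cor:trich+_tdf} to $G$ acting on itself, and then pushing the type-definability of $H_x$ forward to $[x]_E$. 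These are equivalent up to identifying $G\cdot x$ with $G/\Stab_G(x)$, but yours carries an extra transfer step; moreover, the step you flag as the ``main obstacle'' (type-definability of the image $H_x\cdot x$) is not really an obstacle — images of type-definable sets under type-definable maps are always type-definable, and the compactness argument you give is precisely the standard proof of that fact. Two small points you elide that the paper makes explicit: (i) to invoke Theorem~\ref{thm:worb_def} one needs the $E$-classes to be $G^{000}_A$-invariant, which follows from boundedness via (the easy direction of) Proposition~\ref{prop:bdd_iff_invariant}; and (ii) you should note that $E_{H_x}$ is bounded because $[G:H_x]\leq\lvert X/E\rvert$, and that $E\subseteq E_G$ (so that $[x]_E=H_x\cdot x$) follows from weak orbitality. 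Neither is a real gap, but both deserve a sentence.
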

	\begin{proof}
		Clearly, \ref{it:cor:smt_def:clsd} implies \ref{it:cor:smt_def:rest}, which implies \ref{it:cor:smt_def:clses}.
		
		By Theorem~\ref{thm:orb_def} or \ref{thm:worb_def}, \ref{it:cor:smt_def:clses} implies \ref{it:cor:smt_def:clsd} (note that the assumptions that $E$ is bounded and $A$-invariant imply together that all $E$-classes are $G^{000}_A$-invariant, cf.\ Proposition~\ref{prop:bdd_iff_invariant} --- but note that here, the action need not be transitive, so we only have one implication).
		
		\ref{it:cor:smt_def:clsd} implies \ref{it:cor:smt_def:smt} by Remark~\ref{rem:tdf_implies_smooth}, and it is equivalent to \ref{it:cor:smt_def:T2} by Fact~\ref{fct:logic_top_cpct_T2}.
		
		Finally, \ref{it:cor:smt_def:smt} implies that each restriction $E\restr_{G\cdot x}$ is smooth, which -- by Corollary~\ref{cor:trich+_tdf} -- implies \ref{it:cor:smt_def:rest}.
	\end{proof}
	(Note that any orbital equivalence relation is also weakly orbital by type-definable, so the ``orbital or" part is redundant.)

	\section[Automorphism group actions]{(Weakly) orbital equivalence relations for automorphism groups\sectionmark{Automorphism group actions}}
	\sectionmark{Automorphism group actions}
	\label{sec:aut}
	In this section, $X$ is a $\emptyset$-type-definable subset of a small product of sorts in $\fC$, while $G$ is just $\Aut(\fC)$. (In particular, in this case, orbitality coincides with Definition~\ref{dfn:orbital_stype}, at least for bounded invariant equivalence relations.) We use the letters $\Gamma$ and $\gamma$ where we would use $H$ and $h$ in the rest of this chapter, following the notation of \cite{KR16} in that respect; for example, we write $\Gamma_E$ instead of $H_E$ (cf.\ Definition~\ref{dfn:HE}) and we typically denote the group witnessing weak orbitality by $\Gamma$ instead of $H$ as before.
	
	It is worth noting that in contrast to Sections~\ref{sec:cpct} and \ref{sec:def}, we will not apply Theorems~\ref{thm:orb} and \ref{thm:worb} directly. Instead, we will apply them to the action of $\Gal(T)$ on $X/{\equiv_\Lasc}$, and the preparatory lemmas will provide us with tools to translate the result back to $\Aut(\fC)$ and $X$.
	
	More precisely, we identify $\Gal(T)$ with $[m]_\equiv/{\equiv_\Lasc}$ for a tuple $m$ enumerating a small model (cf.\ Fact~\ref{fct:sm_to_gal}), and the pseudo-closed sets are the sets closed in the logic topology: for example, a pseudo-closed set in $\Gal(T)\times X/{\equiv_\Lasc}=([m]_\equiv\times X)/({\equiv_\Lasc}\times {\equiv_\Lasc})$ is a set whose preimage in $[m]_\equiv\times X$ is type-definable. Note that it is \emph{not} a priori the same as being closed in the product of logic topologies on $\Gal(T)$ and $X/{\equiv_\Lasc}$! (More precisely, the product topology might be coarser.) Similarly, the product relation ${\equiv_\Lasc}\times{\equiv_\Lasc}$ on a product of two invariant sets is usually not the finest bounded invariant equivalence relation on it (so it coarser than $\equiv_\Lasc$ on the product).
	
	As a side result, we will show that orbitality and weak orbitality are well-defined for bounded invariant equivalence relations, see Corollary~\ref{cor:mtprop}.

	\subsection*{Preparatory lemmas in the case of automorphism group action}
	
	\begin{lem}
		\label{lem:agree_aut}
		If $X$ is a type-definable set, then the action of $\Gal(T)$ on $X/{\equiv_\Lasc}$ is agreeable (with respect to sets closed in logic topology, according to Definition~\ref{dfn:agree}).
	\end{lem}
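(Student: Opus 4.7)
The proof proceeds by checking each of the six axioms of Definition~\ref{dfn:agree} in turn. Fix once and for all a small model $M \preceq \fC$ enumerated by $m$, and identify $\Gal(T) = [m]_\equiv/{\equiv_\Lasc}$ and $X/{\equiv_\Lasc}$ via Fact~\ref{fct:sm_to_gal} and the remarks preceding the lemma, so that pseudo-closed sets in products of $\Gal(T)$ and $X/{\equiv_\Lasc}$ correspond by definition to the type-definable (and automatically $\equiv_\Lasc$-saturated on each coordinate) subsets of the corresponding products of $[m]_\equiv$ and $X$. Axioms (1) and (2) are then immediate: sections, products and preimages of type-definable sets are type-definable, and $\equiv_\Lasc$-saturation is preserved under these operations. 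So I would focus on the remaining four.

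The two key technical ingredients will be the following. First, projections of type-definable sets onto products of sorts in $\fC$ are type-definable: the condition ``$\exists y\,\bigwedge_i \phi_i(x,y)$'' is, by compactness and $\kappa$-saturation of $\fC$, equivalent to the conjunction over finite $F$ of the first-order formulas ``$\exists y\,\bigwedge_{i \in F}\phi_i(x,y)$'', which is a partial type in $x$. Second, whenever $C' \subseteq X^n$ is $\equiv_\Lasc^n$-saturated and $\sigma_1,\sigma_2 \in \Aut(\fC)$ induce the same element of $\Gal(T)$ (equivalently $\sigma_1\sigma_2^{-1} \in \Autf(\fC)$), then $\sigma_1(a) \equiv_\Lasc \sigma_2(a)$ by Definition~\ref{dfn:autf_L}, so $(a,\sigma_1(a)) \in C'$ iff $(a,\sigma_2(a)) \in C'$; more generally, saturation of $C'$ will let me absorb an $\equiv_\Lasc$-witness into a Lascar strong automorphism and reduce ``$\exists c \equiv_\Lasc b$ with $(m,a) \equiv (m',c)$ and $(a,b) \in C'$'' to ``$(m,a) \equiv (m',b)$ and $(a,b) \in C'$''.

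With these tools, Axioms (3) and (4) reduce to projection arguments: for Axiom~(3), the preimage of a pseudo-closed $C \subseteq X/{\equiv_\Lasc}$ (with lift $C'$) lifts to $\{(m',a) : \exists b \in C',\ (m,a) \equiv (m',b)\}$, which is a projection of a type-definable set, hence type-definable. For Axiom~(4), fixing a representative $\sigma$ of $g$ and a pseudo-closed $C \subseteq (X/{\equiv_\Lasc})^2$ with lift $C'$, the preimage lifts to $\{a : (a,\sigma(a)) \in C'\}$, which by the second observation equals $\{a : \exists b,\ (m,a) \equiv (\sigma(m),b),\ (a,b) \in C'\}$ --- again a projection of a type-definable set. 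For Axiom~(5), given a relatively pseudo-closed $E_G \cap C$ with $\equiv_\Lasc^4$-saturated type-definable $C' \subseteq X^4$, the lifted preimage of $\pi(E_G \cap C)$ equals $\{(x_1,x_2) : \exists \sigma,\ (x_1,x_2,\sigma(x_1),\sigma(x_2)) \in C'\}$, i.e.\ the projection of $\{(x_1,x_2,z_1,z_2) : (x_1,x_2) \equiv (z_1,z_2) \wedge (x_1,x_2,z_1,z_2) \in C'\}$; Axiom~(6) is parallel.

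The main obstacle is in Axioms (5) and (6), where one has to recognise that the $\equiv_\Lasc$-saturation of the type-definable lift is exactly what is needed to ``absorb'' existentials over $\equiv_\Lasc$-classes (which are only $F_\sigma$, not type-definable) into ordinary existentials over complete types, leaving a set that is manifestly the projection of a type-definable set. Once this observation is spotted, the verification in each case becomes a short type-counting argument.
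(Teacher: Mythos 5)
Your proof is correct and follows essentially the same approach as the paper's: the paper also encodes the action via the partial type $\Phi(n,x,y)=(mx\equiv ny \land x\in X)$ for a model-enumerating tuple $m$, verifies each axiom by rewriting the relevant lifted set as the projection of a type-definable set, and uses $\equiv_\Lasc$-saturation of the type-definable lifts to absorb $\equiv_\Lasc$-witnesses (exactly your "second observation") — in particular, for axiom (5) the paper notes that thanks to this saturation one can replace $E_G$ by the plain relation $\equiv$ on $X^2$, which is the same manipulation you spell out.
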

	\begin{proof}
		For brevity, let us write $\bar x$, for any $[x]_{\equiv_\Lasc} \in X/{\equiv_\Lasc}$, as well as $\bar \sigma$ for $\sigma\Autf(\fC)\in \Gal(T)$, and $\bar n$ for $[n]_{\equiv_\Lasc}\in [m]_\equiv/{\equiv_\Lasc}$ (which, by Fact~\ref{fct:sm_to_gal}, we identify with the sole $\bar{\sigma}\in \Gal(T)$ such that $\bar{\sigma}(\bar m)=\bar n$, where $\bar m=[m]_{\equiv_\Lasc}$).
		
		Consider the partial type $\Phi(n,x,y)= (mx\equiv ny\land x\in X)$.
		\begin{clm*}
			For any $n\in [m]_\equiv$ and $x,y\in X$, the following are equivalent:
			\begin{itemize}
				\item
				$\bar n\cdot \bar x=\bar y$,
				\item
				$\models (\exists y')\Phi(n,x,y')\land y\equiv_\Lasc y'$, and
				\item
				$\models (\exists n')\Phi(n',x,y)\land n\equiv_\Lasc n'$.
			\end{itemize}
		\end{clm*}
		\begin{clmproof}
			Suppose $\bar n\cdot \bar x=\bar y$. Then we have some $\sigma\in \Aut(\fC)$ such that $\sigma(\bar m)=\bar n$ and $\sigma(\bar x)=\bar y$. This means that we have some $\tau\in \Autf(\fC)$ such that $\tau(\sigma(m))=n$. But $\overline{\tau\circ \sigma}=\bar \sigma$, and taking $y'=\tau\circ\sigma(x)$ gives us the second bullet. For the reverse implication, if $\sigma$ witnesses that $mx\equiv ny'$, then in particular $\bar{\sigma}(\bar m)=\bar{n}$, so by definition $\bar n\cdot \bar x=\bar{\sigma}(\bar x)=\overline{\sigma(x)}=\bar {y'}=\bar y$. The proof that the third bullet is equivalent to the first is analogous.
		\end{clmproof}
		
		It follows that for any $A\subseteq X/{\equiv_\Lasc}$ we have $\bar n\cdot \bar x\in A$ if and only if $\models (\exists y)\,\, \bar y\in A\land \Phi(n,x,y)$ (because ``$\bar y\in A$" is a $\equiv_\Lasc$-invariant condition), which is a type-definable condition about $n$ and $x$, if $A$ is closed. This gives us the third point from Definition~\ref{dfn:agree} (continuity of $(\bar n,\bar x)\mapsto \bar n\cdot \bar x$).
		
		To obtain the fourth point (continuity of $\bar x\mapsto (\bar x,\bar n\cdot \bar x)$ for all $\bar n$), note that if we fix any $\bar n\in \Gal(T)$ and some $A\subseteq (X\times X)/({\equiv_\Lasc}\times {\equiv_\Lasc})$, then likewise $(\bar x,\bar n\cdot \bar x)\in A$ exactly when $\models(\exists y)\,\, (\bar x,\bar y)\in A\land \Phi(n,x,y)$, which is again a type-definable condition about $x$ whenever $A$ is closed.
		
		For the fifth point, note that the $E_G$ from Definition~\ref{dfn:agree} is just the relation $\equiv$ on $X^2$, which is of course type-definable as a subset of $(X^2)^2$. Thus, any relatively type-definable subset of it is actually type-definable, and thus so is its projection onto $X^2$.
		
		Similarly, for the sixth point (closedness of $(\bar n,\bar x)\mapsto (\bar x,\bar n\cdot \bar x)$), note that $(\bar x,\bar y)$ is in the image of $A\subseteq ([m]_{\equiv}\times X)/({\equiv_\Lasc}\times {\equiv_\Lasc})$ exactly when $\models (\exists n)\,\, (\bar n,\bar x)\in A\land \Phi(n,x,y)$, which is a type-definable condition about $x$ and $y$, as long as $A$ is closed.
		
		The remaining parts of Definition~\ref{dfn:agree} are easy to verify.
	\end{proof}
	
	\index{E@${\bar E}$}
	From now on, given a bounded invariant equivalence relation on an invariant set $X$, denote by $\bar E$ the induced equivalence relation on $X/{\equiv_\Lasc}$.
	\begin{prop}
		\label{prop:orb_to_gal}
		Suppose $E$ is a bounded invariant equivalence relation on an invariant set $X$. Suppose also that $\Gamma\leq \Aut(\fC)$ contains $\Autf(\fC)$ [and suppose $\tilde X\subseteq X$]. Write $\bar{\Gamma}:=\Gamma/\Autf(\fC)$ [and $\bar{\tilde X}:=\tilde X/{\equiv_\Lasc}$]. Then the following are equivalent.
		\begin{enumerate}
			\item
			\label{it:prop:orb_to_gal:1}
			$E=E_\Gamma$ [$E=R_{\Gamma,\tilde X}$]
			\item
			\label{it:prop:orb_to_gal:2}
			$\bar E=E_{\bar \Gamma}$ [$\bar E=R_{\bar \Gamma,\bar {\tilde X}}$].
		\end{enumerate}
		In particular, $E$ is [weakly] orbital if and only if $\bar E$ is (because we can always assume that $\Autf(\fC)$ is contained in the group witnessing [weak] orbitality, as $\Autf(\fC)$ fixes each class setwise).
	\end{prop}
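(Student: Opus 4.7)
The plan is to show that, under the assumption $\Autf(\fC)\leq \Gamma$, both relations appearing in condition~(1) are exactly the $(\pi\times\pi)$-preimages of the corresponding relations in (2), where $\pi\colon X\to X/{\equiv_\Lasc}$ is the quotient map. Since $E$ is bounded invariant, it is refined by $\equiv_\Lasc$ (by Fact~\ref{fct:model_to_indiscernible}), so $E$ itself is already the preimage of $\bar E$. The equivalence then follows from the trivial fact that two $(\equiv_\Lasc\times\equiv_\Lasc)$-invariant relations on $X^2$ agree if and only if their projections onto $(X/{\equiv_\Lasc})^2$ agree.

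First I would treat the orbital case. The containment ``image of $E_\Gamma$ is contained in $E_{\bar\Gamma}$'' is automatic: if $x_2=\gamma x_1$ then $\bar x_2=\bar\gamma\bar x_1$. For the reverse, suppose $\bar x_2=\bar\gamma\bar x_1$ for some $\bar\gamma\in\bar\Gamma$; lift to $\gamma\in\Gamma$, so $\gamma x_1\equiv_\Lasc x_2$, and by Fact~\ref{fct:lst_witn_by_aut} pick $\tau\in\Autf(\fC)$ with $\tau(\gamma x_1)=x_2$. Since $\Autf(\fC)\leq\Gamma$, the element $\tau\gamma$ lies in $\Gamma$ and witnesses $x_1\mathrel{E_\Gamma}x_2$. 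This shows $E_\Gamma=(\pi\times\pi)^{-1}[E_{\bar\Gamma}]$, and combined with $E=(\pi\times\pi)^{-1}[\bar E]$ it gives the desired equivalence.

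Next I would run the analogous argument for $R_{\Gamma,\tilde X}$. The ``downward'' direction is straightforward: witnesses $g_1,g_2\in\Aut(\fC)$ with $g_1x_1=g_2x_2\in\tilde X$ and $g_2g_1^{-1}\in\Gamma$ project to witnesses $\bar g_1,\bar g_2\in\Gal(T)$ with $\bar g_1\bar x_1=\bar g_2\bar x_2\in\bar{\tilde X}$ and $\bar g_2\bar g_1^{-1}\in\bar\Gamma$. The ``upward'' direction is the main point: given $\bar g_1,\bar g_2$ witnessing $\bar x_1\mathrel{R_{\bar\Gamma,\bar{\tilde X}}}\bar x_2$, lift to $g_1,g_2\in\Aut(\fC)$. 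Pick $\tilde x\in\tilde X$ with $\bar{\tilde x}=\bar g_1\bar x_1$; then $g_1x_1\equiv_\Lasc\tilde x$, so by Fact~\ref{fct:lst_witn_by_aut} there is $\sigma\in\Autf(\fC)$ with $\sigma(g_1x_1)=\tilde x$. Replacing $g_1$ by $\sigma g_1$ (and correspondingly adjusting $g_2$ by left-multiplying by $\sigma$, which leaves the coset $g_2g_1^{-1}$ unchanged modulo $\Autf(\fC)\leq\Gamma$), we may assume $g_1x_1=\tilde x\in\tilde X$. Then $\overline{g_2x_2}=\bar g_2\bar x_2=\bar{\tilde x}$, so a second application of Fact~\ref{fct:lst_witn_by_aut} gives $\tau\in\Autf(\fC)$ with $\tau(g_2x_2)=\tilde x$; replacing $g_2$ by $\tau g_2$ gives $g_2x_2=g_1x_1=\tilde x\in\tilde X$ and still $g_2g_1^{-1}\in\Gamma\cdot\Autf(\fC)=\Gamma$ (normality of $\Autf(\fC)$). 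This shows $R_{\Gamma,\tilde X}=(\pi\times\pi)^{-1}[R_{\bar\Gamma,\bar{\tilde X}}]$, and the same preimage argument as before yields the equivalence.

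The only mildly delicate step is the lifting in the weakly orbital case, where one has to perform two successive $\Autf(\fC)$-adjustments without disturbing the condition $g_2g_1^{-1}\in\Gamma$; this is precisely where the assumption $\Autf(\fC)\leq\Gamma$ (together with its normality in $\Aut(\fC)$) is used. The ``in particular'' clause is immediate, since replacing a witnessing group $\Gamma$ by $\Gamma\cdot\Autf(\fC)$ does not change $E_\Gamma$ or $R_{\Gamma,\tilde X}$ (as $\Autf(\fC)$-translates of any $x$ are $\equiv_\Lasc$-related to $x$ and hence, by Fact~\ref{fct:model_to_indiscernible}, $E$-related to $x$).
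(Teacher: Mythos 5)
Your proof is correct and, in spirit, matches the paper's: both pass to the quotient $q\colon X\to X/{\equiv_\Lasc}$ and compare the two relations there. For the orbital case the arguments essentially coincide. For the weakly orbital case the execution differs: the paper first replaces $\tilde X$ by its $\equiv_\Lasc$-saturation $\Autf(\fC)\cdot\tilde X$ (using Lemma~\ref{lem:worb_maximal}) and then transfers the class descriptions of Lemma~\ref{lem:worb_class_description} through the identities $q[\Gamma\cdot x]=\bar\Gamma\cdot\bar x$ and $\Gamma\cdot x=q^{-1}[\bar\Gamma\cdot\bar x]$, whereas you unwind the second form of the definition of $R_{\Gamma,\tilde X}$ directly and fix up the lifts $g_1,g_2$ by hand with elements of $\Autf(\fC)$. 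Your route avoids both the saturation step and the class-description lemma, which makes it slightly more self-contained; the paper's reuses machinery already in play elsewhere. One small cleanup: in the upward lifting you do not actually need to adjust $g_2$ simultaneously with $g_1$ by left-multiplication by $\sigma$ (which is what drags in normality of $\Autf(\fC)$) --- simply replace $g_1$ by $\sigma g_1$ and, separately, $g_2$ by $\tau g_2$; the resulting coset $\tau g_2 g_1^{-1}\sigma^{-1}$ lies in $\Gamma$ directly because $\tau,\sigma^{-1}\in\Autf(\fC)\leq\Gamma$ and $g_2g_1^{-1}\in\Gamma$, with no appeal to normality.
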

	\begin{proof}
		In this proof, for brevity, whenever $x\in X$ and $\sigma \in \Aut(\fC)$, we will write $\bar x$ and $\bar \sigma$ as shorthands for $[x]_{\equiv_\Lasc}\in X/{\equiv_\Lasc}$ and $\sigma\cdot \Autf(\fC)\in \Gal(T)$, respectively.
		
		Consider the quotient map $q\colon X\to X/{\equiv_\Lasc}$. Then we have:
		\begin{equation}
		\label{eq:prop:orb_to_gal:1}
		\tag{$\dagger$}
		q[\Gamma\cdot x]=\bar{\Gamma}\cdot \bar x,
		\end{equation}
		and, since $\Gamma\cdot x$ is an $\equiv_\Lasc$-saturated set, conversely:
		\begin{equation}
		\label{eq:prop:orb_to_gal:2}
		\tag{$\dagger\dagger$}
		\Gamma\cdot x=q^{-1}[\bar{\Gamma}\cdot \bar x].
		\end{equation}

		For the orbital case, the implication \ref{it:prop:orb_to_gal:1}$\Rightarrow$\ref{it:prop:orb_to_gal:2} is an immediate consequence of \eqref{eq:prop:orb_to_gal:1} -- $\bar E$-classes are the $q$-images of $E$-classes, while $\bar{\Gamma}$-orbits are the $q$-images of $\Gamma$-orbits. The converse is analogous, as by \eqref{eq:prop:orb_to_gal:2}, $\Gamma$-orbits are the $q$-preimages of $\bar \Gamma$-orbits and of course $E$-classes are $q$-preimages of $\bar E$-classes.
		
		The weakly orbital case can be proved similarly: by Lemma~\ref{lem:worb_maximal}, we can assume that
		\begin{equation}
		\label{eq:prop:orb_to_gal_3}
		\tag{$*$}
		\tilde X=\Autf(\fC)\cdot\tilde X,
		\end{equation}
		Then we can just apply Lemma~\ref{lem:worb_class_description}: if
		\[
		[\bar x]_{\bar E}=\bigcup_{\bar \sigma}\bar \sigma^{-1}[\bar \Gamma\cdot \bar\sigma(\bar x)]
		\]
		(where the union runs over $\bar \sigma$ such that $\bar \sigma(\bar x)\in \bar{\tilde X}$), then also
		\[
		[x]_E=q^{-1}[[\bar x]_{\bar E}]=\bigcup_{\bar \sigma}q^{-1}[\bar \sigma^{-1}[\bar \Gamma\cdot \bar\sigma(\bar x)]]=\bigcup_{\sigma}\sigma^{-1}[\Gamma\cdot \sigma(x)],
		\]
		where the last union runs over $\sigma$ such that $\sigma(x)\in \tilde X$. To see the last equality, just note that (by \eqref{eq:prop:orb_to_gal_3}) $\sigma(x)\in \tilde X$ if and only if $\bar\sigma(\bar x)\in \bar{\tilde X}$. This yields \ref{it:prop:orb_to_gal:2}$\Rightarrow$\ref{it:prop:orb_to_gal:1}, and the opposite implication is analogous.
	\end{proof}

	\begin{cor}
		\label{cor:mtprop}
		{}[Weak] orbitality of a bounded invariant equivalence relation is a model-theoretic property, i.e.\ it does not depend on the choice of the monster model.
	\end{cor}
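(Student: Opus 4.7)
The plan is to reduce the statement to an invariant of the theory by passing through the quotient by $\equiv_\Lasc$, and then observe that the ambient data becomes independent of the monster model.

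First, I would invoke Proposition~\ref{prop:orb_to_gal}, which tells us that a bounded invariant equivalence relation $E$ on an invariant set $X$ is [weakly] orbital if and only if the induced equivalence relation $\bar E$ on $X/{\equiv_\Lasc}$ is [weakly] orbital with respect to the natural action of $\Gal(T)$. This reduction is crucial because it transports the question from the monster-dependent $\Aut(\fC)$-action to an action of a group that is an intrinsic invariant of the theory.

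Next, I would verify that all the ingredients on the ``bar'' side are independent of the choice of monster model. By Fact~\ref{fct:gal_top}, $\Gal(T)$ does not depend on $\fC$ (as a topological group, though here we only need it as an abstract group). For any small model $M\preceq \fC$ over which $X$ is invariant, Fact~\ref{fct:logic_by_type_space} gives a canonical bijection between $X/{\equiv_\Lasc}$ and $X_M/{\equiv_\Lasc^M}$, and analogously $\bar E$ corresponds to the relation induced on $X_M/{\equiv_\Lasc^M}$ by $E^M$ (cf.\ Definition~\ref{dfn:EM} and Fact~\ref{fct: Borel in various senses}). Since $X_M$, $\equiv_\Lasc^M$, and $E^M$ depend only on $M$ and $T$, not on the ambient $\fC\succeq M$, the triple $(\Gal(T), X/{\equiv_\Lasc}, \bar E)$ together with the action is canonically the same whether computed in $\fC$ or in any other monster $\fC'\succeq M$.

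Therefore, combining the two observations: given two monster models $\fC_1,\fC_2$ of $T$, choose a common small submodel $M$ elementarily embedded in both; the corresponding pair $(X/{\equiv_\Lasc},\Gal(T))$ together with $\bar E$ and the action is canonically the same object (up to the canonical identifications mentioned above), so by Proposition~\ref{prop:orb_to_gal}, $E$ is [weakly] orbital computed in $\fC_1$ iff $\bar E$ is [weakly] orbital iff $E$ is [weakly] orbital computed in $\fC_2$. The only mildly delicate point is to observe that the correspondence $\Gamma\leftrightarrow \bar\Gamma=\Gamma/\Autf(\fC)$ and $\tilde X\leftrightarrow \bar{\tilde X}=\tilde X/{\equiv_\Lasc}$ used in Proposition~\ref{prop:orb_to_gal} goes through any monster model equally well, so the subgroups of $\Gal(T)$ and subsets of $X/{\equiv_\Lasc}$ witnessing [weak] orbitality are the same intrinsic objects regardless of which $\fC$ one uses to realise them; this is immediate since both $\Gamma/\Autf(\fC)\leq \Gal(T)$ and $\tilde X/{\equiv_\Lasc}\subseteq X/{\equiv_\Lasc}$ make sense purely in terms of $T$ and $M$. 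I do not anticipate any real obstacle here; the argument is essentially a bookkeeping exercise once Proposition~\ref{prop:orb_to_gal} is in hand.
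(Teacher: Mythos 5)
Your proof is correct and follows exactly the same route as the paper: reduce to the quotient $X/{\equiv_\Lasc}$ via Proposition~\ref{prop:orb_to_gal} and observe that $\Gal(T)$, $X/{\equiv_\Lasc}$, $\bar E$, and the $\Gal(T)$-action are all independent of the monster model. Your write-up is merely more detailed about the bookkeeping via a common small submodel $M$, which the paper takes for granted.
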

	\begin{proof}
		$X/{\equiv_\Lasc}$, $\Gal(T)$, $\bar E$ and the action of $\Gal(T)$ on $X/{\equiv_\Lasc}$ do not depend on the monster model, so the result is immediate from Proposition~\ref{prop:orb_to_gal}.
	\end{proof}
	(It is not clear whether orbitality or weak orbitality is a model-theoretic property for an unbounded invariant equivalence relation.)

	\begin{lem}
		\label{lem:closed_cl_to_closed_witn}
		If $E=R_{\Gamma,\tilde X}$ is bounded invariant and either:
		\begin{itemize}
			\item
			for each $\tilde x\in \tilde X$, $[\tilde x]_E$ is type-definable, or
			\item
			$\Autf_\KP(\fC)\leq \Gamma$
		\end{itemize}
		then $\tilde X':=\{x\in X\mid \exists \tilde x\in \tilde X \ \ x \equiv_\KP \tilde x \}$ satisfies $E=R_{\Gamma,\tilde X'}$. (Note that if $\tilde X$ is type-definable, so is $\tilde X'$, and $\Autf(\fC)\cdot \tilde X'=\tilde X'$.)
	\end{lem}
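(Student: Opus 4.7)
Monotonicity of $R_{?,?}$ immediately gives $E = R_{\Gamma,\tilde X} \subseteq R_{\Gamma,\tilde X'}$ since $\tilde X \subseteq \tilde X'$. For the reverse inclusion, by Remark~\ref{rem:alt_rsub}, $R_{\Gamma,\tilde X'}$ is the smallest invariant relation $R$ such that $\tilde x' R \gamma \tilde x'$ for all $\tilde x' \in \tilde X'$ and $\gamma \in \Gamma$; since $E$ is already invariant, it suffices to prove $\tilde x' E \gamma \tilde x'$ for all such $\tilde x'$ and $\gamma$.

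Fix $\tilde x' \in \tilde X'$ and pick $\tilde x \in \tilde X$ with $\tilde x \equiv_\KP \tilde x'$. The heart of the argument is to show $\tilde x E \tilde x'$ under either hypothesis. In the first case, since $E$ is bounded invariant, Fact~\ref{fct:lascar_finest} gives ${\equiv_\Lasc} \subseteq E$, so $[\tilde x]_E$ is an $\equiv_\Lasc$-saturated type-definable set, and thus by Proposition~\ref{prop:lem_closed}(3) it is a union of $\equiv_\KP$-classes; hence $\tilde x' \in [\tilde x]_E$. In the second case, Fact~\ref{fct:galkpsh_orbital} yields some $\sigma \in \Autf_\KP(\fC) \leq \Gamma$ with $\sigma(\tilde x) = \tilde x'$; since $\tilde x \in \tilde X$ and $\sigma \in \Gamma$, the definition of $R_{\Gamma,\tilde X}$ gives $\tilde x E \sigma \tilde x = \tilde x'$ directly.

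Now for arbitrary $\gamma \in \Gamma$ we have $\tilde x E \gamma \tilde x$ (again from $E = R_{\Gamma,\tilde X}$ and $\tilde x \in \tilde X$). Combining this with $\tilde x' E \tilde x$ from the previous step and $\gamma \tilde x E \gamma \tilde x'$ (by $\Aut(\fC)$-invariance of $E$, applied to $\tilde x' E \tilde x$), transitivity yields $\tilde x' E \gamma \tilde x'$, as required.

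The parenthetical remark is immediate: $\tilde X'$ is the image of $\tilde X \times \{*\}$ under the type-definable relation $\equiv_\KP$ (more precisely, $\tilde X' = \{x \in X : \exists y\, (y \in \tilde X \land x \equiv_\KP y)\}$), which is type-definable whenever $\tilde X$ is; and $\Autf(\fC) \leq \Autf_\KP(\fC)$ preserves every $\equiv_\KP$-class setwise, so it preserves the $\equiv_\KP$-saturation $\tilde X'$. The only genuine content lies in Step~2 (producing $\tilde x E \tilde x'$); I expect no real obstacle, as both hypotheses are tailor-made to force precisely this implication, with Proposition~\ref{prop:lem_closed}(3) doing the work in the first case and the orbital description of $\equiv_\KP$ doing it in the second.
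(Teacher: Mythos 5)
Your proof is correct. The key step — that under either hypothesis one has $\tilde x \Er \tilde x'$ whenever $\tilde x \in \tilde X$ and $\tilde x' \equiv_\KP \tilde x$ — is exactly the heart of the matter, and your case distinction handles it cleanly (the $\equiv_\Lasc$-saturation plus Proposition~\ref{prop:lem_closed}(3) in the first case, and Fact~\ref{fct:galkpsh_orbital} plus $\Autf_\KP(\fC)\leq\Gamma$ in the second).

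The paper takes a slightly different route. It first uses Lemma~\ref{lem:worb_maximal} to replace $\Gamma$ by its maximal enlargement $\Gamma'$, and observes that the first bullet forces $\Autf_\KP(\fC)\leq \Gamma'$ (so the first bullet reduces to the second). Then, still invoking Lemma~\ref{lem:worb_maximal}, it sandwiches $\tilde X$ between $\tilde X'$ and the maximal witness $\tilde X''$, noting $\tilde X''$ is $\equiv_\KP$-saturated when $\Autf_\KP(\fC)\leq\Gamma$, and concludes by monotonicity of $R_{?,?}$. Your argument instead verifies the characterization of $R_{\Gamma,\tilde X'}$ from Remark~\ref{rem:alt_rsub} directly, handling the two hypotheses in parallel rather than reducing one to the other. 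The substance is the same; the paper's presentation is shorter because it leans on the maximal-witness machinery already in place, whereas yours is more self-contained — both are fine, and your transitivity computation at the end is exactly what the sandwich argument hides.
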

	\begin{proof}
		By Lemma~\ref{lem:worb_maximal}, the first bullet implies that we can assume the second one: each $[\tilde x]_E$ is $\equiv_\Lasc$-saturated, so if it is type-definable, it is also $\equiv_\KP$-saturated (see Proposition~\ref{prop:lem_closed}), i.e.\ $\Autf_\KP(\fC)$-invariant.
		
		Now, assuming the second bullet: the $\tilde X'$ considered here contains $\tilde X$ and it is contained in the maximal one defined as in Lemma~\ref{lem:worb_maximal} (because the maximal one is $\Gamma$-invariant, and hence $\Autf_\KP(\fC)$-invariant), so $E=R_{\Gamma,\tilde X'}$.
	\end{proof}

	\subsection*{Results in the case of automorphism group action}

	\begin{thm}
		\label{thm:orb_aut}
		Suppose $E$ is a bounded invariant, orbital equivalence relation on $X$. Then the following are equivalent:
		\begin{enumerate}
			\item
			$E$ is type-definable,
			\item
			each $E$-class is type-definable,
			\item
			$\Gamma_E$ is the preimage of a closed subgroup of $\Gal(T)$,
			\item
			$E=E_\Gamma$ for some $\Gamma$ which is the preimage of a closed subgroup of $\Gal(T)$,
			\item
			$X/E$ is Hausdorff.
		\end{enumerate}
	\end{thm}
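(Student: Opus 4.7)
The plan is to transfer the whole problem to the action of $\Gal(T)$ on $X/{\equiv_\Lasc}$ (with the logic topology) and apply Theorem~\ref{thm:orb}. The key input is Lemma~\ref{lem:agree_aut}, which says that this action is agreeable with respect to the lattice of logic-closed sets. Since $\Gal(T)$ is compact, the lattice of closed subsets of $\Gal(T)$ is closed under arbitrary intersections, so the downward $[G : H_E]$-completeness hypothesis of Theorem~\ref{thm:orb} is automatic.

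First I would observe that because $E$ is bounded and invariant, $\equiv_\Lasc$ refines $E$ (by Fact~\ref{fct:lascar_finest}), so $\Autf(\fC) \leq \Gamma_E$; similarly, in condition~(4) we may replace $\Gamma$ by $\Gamma\Autf(\fC)$ and assume $\Autf(\fC) \leq \Gamma$. Writing $\bar E$ for the induced equivalence relation on $X/{\equiv_\Lasc}$, Proposition~\ref{prop:orb_to_gal} then shows that $\bar E$ is an orbital $\Gal(T)$-invariant equivalence relation, with $\Gamma_E/\Autf(\fC) = H_{\bar E}$ and, more generally, $\bar\Gamma = \Gamma/\Autf(\fC)$ witnessing orbitality of $\bar E$ whenever $\Gamma \supseteq \Autf(\fC)$ witnesses it for $E$.

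Next I would translate the four conditions between the two settings. Since the preimage map from subsets of $X/{\equiv_\Lasc}$ to $\equiv_\Lasc$-invariant subsets of $X$ is a bijection which (by the definition of the logic topology, cf.\ Fact~\ref{fct:logic_by_type_space}) identifies logic-closed sets with type-definable sets, and likewise for $X^2$ and $(X/{\equiv_\Lasc})^2$, conditions~(1) and~(2) for $E$ correspond to closedness of $\bar E$ and of its classes, respectively. For condition~(3), $\Gamma_E$ is the preimage of a closed subgroup of $\Gal(T)$ precisely when $H_{\bar E} = \Gamma_E/\Autf(\fC)$ is logic-closed, and similarly for~(4). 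Thus Theorem~\ref{thm:orb} applied to $\bar E$ yields the equivalence of (1)--(4). The equivalence of (1) and (5) is immediate from Fact~\ref{fct:logic_top_cpct_T2}.

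I do not anticipate any serious obstacle: everything is a careful assembly of already-proved transfer lemmas. The most delicate point to verify is that the completeness hypothesis of Theorem~\ref{thm:orb} really is trivial here, and that condition~(3) transfers cleanly --- both reduce to the observation that the quotient $\Aut(\fC) \to \Gal(T)$ is surjective with kernel $\Autf(\fC)$, so that subgroups of $\Aut(\fC)$ containing $\Autf(\fC)$ correspond bijectively to subgroups of $\Gal(T)$, with preimages of closed subgroups matching those whose image in $\Gal(T)$ is logic-closed.
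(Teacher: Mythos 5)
Your proposal is correct and follows essentially the same route as the paper: transfer to the $\Gal(T)$-action on $X/{\equiv_\Lasc}$ via Proposition~\ref{prop:orb_to_gal} and Lemma~\ref{lem:agree_aut}, invoke Theorem~\ref{thm:orb}, and get $(1)\Leftrightarrow(5)$ from Fact~\ref{fct:logic_top_cpct_T2}. The paper's actual proof is terser (it merely cites these ingredients and illustrates one implication), whereas you have spelled out the dictionary between conditions and the triviality of the completeness hypothesis — a clean and complete version of the same argument.
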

	\begin{proof}
		$(1)$ and $(5)$ are equivalent by Fact~\ref{fct:logic_top_cpct_T2}.
		
		The rest follows readily from Theorem~\ref{thm:orb}, Lemma~\ref{lem:agree_aut} and Proposition~\ref{prop:orb_to_gal}. For example, if $E=E_{\Gamma}$ for some $\Gamma$ which is the preimage of a closed subgroup of $\Gal(T)$, then $\bar \Gamma$ is closed and by Proposition~\ref{prop:orb_to_gal}, $\bar E=E_{\bar \Gamma}$, so (by Theorem~\ref{thm:orb} and Lemma~\ref{lem:agree_aut}) $\bar E$ is closed, and hence $E$ is type-definable.
	\end{proof}
	
	Recall that an invariant equivalence relation $E$ is ``weakly orbital by type-definable" if there is a type-definable $\tilde X\subseteq X$ and any $\Gamma\leq \Aut(\fC)$ such that $E=R_{\Gamma,\tilde X}$.
	
	\begin{thm}
		\label{thm:worb_aut}
		Suppose $E$ is bounded invariant, weakly orbital equivalence relation on $X$. Then the following are equivalent:
		\begin{enumerate}
			\item
			\label{it:thm:worb_aut:closed}
			$E$ is type-definable,
			\item
			\label{it:thm:worb_aut:closedcl}
			each $E$-class is type-definable and $E$ is weakly orbital by type-definable,
			\item
			\label{it:thm:worb_aut:closedgp}
			$E=R_{\Gamma,\tilde X}$ for some type-definable $\tilde X$ and a group $\Gamma\leq \Aut(\fC)$ which is the preimage of a closed subgroup of $\Gal(T)$,
			\item
			for every $\Gamma\leq \Aut(\fC)$ and $\tilde X\subseteq X$, if $\Gamma$ or $\tilde X$ is a \emph{maximal} witness for weak orbitality of $E$, then it is also the preimage of a closed subgroup of $\Gal(T)$ (in the case of $\Gamma$) or type-definable (in the case of $\tilde X$),
			\item
			\label{it:thm:worb_aut:T2}
			$X/E$ is Hausdorff.
		\end{enumerate}
	\end{thm}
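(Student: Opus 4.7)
My plan is to deduce this theorem from the abstract Theorem~\ref{thm:worb} applied to the agreeable action of $\Gal(T)$ on $X/{\equiv_\Lasc}$ (agreeability is Lemma~\ref{lem:agree_aut}), passing through the translation Proposition~\ref{prop:orb_to_gal}. The equivalence \ref{it:thm:worb_aut:closed}$\Leftrightarrow$\ref{it:thm:worb_aut:T2} is immediate from Fact~\ref{fct:logic_top_cpct_T2}, so the heart of the proof is the equivalence of the remaining conditions with type-definability of $E$.

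First I would observe that $E$ is type-definable if and only if $\bar E$ is closed in the logic topology on $(X/{\equiv_\Lasc})^2$ (this is essentially built into the definitions, since the preimage of $\bar E$ under the quotient $X^2 \to (X/{\equiv_\Lasc})^2$ equals $E$, and both squares of the quotient factor through the type-space presentation). Under Proposition~\ref{prop:orb_to_gal}, weak orbitality of $E$ via $(\Gamma,\tilde X)$ with $\Autf(\fC) \leq \Gamma$ and $\tilde X = \Autf(\fC) \cdot \tilde X$ corresponds exactly to weak orbitality of $\bar E$ via $(\bar\Gamma,\bar{\tilde X})$; and such $\tilde X$ is type-definable iff $\bar{\tilde X}$ is closed, while $\Gamma$ is the preimage of a closed subgroup of $\Gal(T)$ iff $\bar\Gamma$ is closed. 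Thus the whole statement translates to the abstract setting, where it becomes Theorem~\ref{thm:worb}, once we verify that the completeness hypothesis is satisfied: since $E$ is bounded, $[\Gal(T):\bar{\Gamma}_{\bar E}]$ is small, so the lattices of closed sets in $\Gal(T)$ and in $X/{\equiv_\Lasc}$ are downwards $[\Gal(T):\bar{\Gamma}_{\bar E}]$-complete (type-definable sets are closed under small intersections).

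The one nontrivial point in the translation is the implication \ref{it:thm:worb_aut:closedcl}$\Rightarrow$\ref{it:thm:worb_aut:closedgp}: knowing only that all $E$-classes are type-definable and that some weakly orbital presentation $E = R_{\Gamma_0,\tilde X_0}$ exists, we need a presentation with type-definable $\tilde X$ and $\Gamma$ being the preimage of a closed subgroup. Here I would first apply Lemma~\ref{lem:closed_cl_to_closed_witn} to replace $\tilde X_0$ by its $\equiv_\KP$-saturation, so that we may assume $\Autf_\KP(\fC) \leq \Gamma_0$ (equivalently, $\bar{\Gamma}_0 \supseteq \Gal_0(T)$, so the closure $\overline{\bar{\Gamma}_0}$ is still a group by Proposition~\ref{prop:lem_closed}); then transfer to $\bar E$ and apply Theorem~\ref{thm:worb} in the $\Gal(T)$-picture, reading back through Proposition~\ref{prop:orb_to_gal}.

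The main obstacle will be bookkeeping around maximal witnesses in condition (4): Lemma~\ref{lem:worb_maximal} tells us maximal witnesses exist and gives a canonical construction, but I need to check that the maximality operation ($\tilde X \mapsto \tilde X'$ and $\Gamma \mapsto \Gamma'$) commutes appropriately with the quotient by $\equiv_\Lasc$, so that maximal witnesses on the $\fC$-side correspond to maximal witnesses on the $\Gal(T)$-side (after noting that a maximal $\tilde X$ is automatically $E$-saturated, hence $\Autf(\fC)$-invariant). Once this is set up, condition (4) for $E$ translates to the analogous maximality condition for $\bar E$, which is part of Theorem~\ref{thm:worb}, closing the circle of implications \ref{it:thm:worb_aut:closed}$\Rightarrow$(4)$\Rightarrow$\ref{it:thm:worb_aut:closedgp}$\Rightarrow$\ref{it:thm:worb_aut:closedcl}$\Rightarrow$\ref{it:thm:worb_aut:closed}.
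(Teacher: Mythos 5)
Your proposal is correct and follows essentially the same route as the paper: translate to the agreeable action of $\Gal(T)$ on $X/{\equiv_\Lasc}$ via Lemma~\ref{lem:agree_aut} and Proposition~\ref{prop:orb_to_gal}, use Lemma~\ref{lem:closed_cl_to_closed_witn} to make $\tilde X$ an $\equiv_\Lasc$-saturated (indeed $\equiv_\KP$-saturated) type-definable witness, invoke Theorem~\ref{thm:worb} on the $\Gal(T)$-side, and note (1)$\Leftrightarrow$(5) via Fact~\ref{fct:logic_top_cpct_T2}. (A minor note: Lemma~\ref{lem:closed_cl_to_closed_witn} replaces $\tilde X$ rather than $\Gamma$, and the fact that the closure of a subgroup of $\Gal(T)$ is a subgroup is elementary topological-group theory rather than Proposition~\ref{prop:lem_closed}, but these slips do not affect the argument.)
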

	\begin{proof}
		Points \ref{it:thm:worb_aut:closed} and \ref{it:thm:worb_aut:T2} are equivalent by Fact~\ref{fct:logic_top_cpct_T2}.
		
		As for the rest, the only added difficulty compared to Theorem~\ref{thm:orb_aut} comes from the fact that the type-definable $\tilde X$ we have by the assumptions of \ref{it:thm:worb_aut:closedcl} and \ref{it:thm:worb_aut:closedgp} may not be $\equiv_\Lasc$-saturated, but thanks to Lemma~\ref{lem:closed_cl_to_closed_witn}, we can assume that without loss of generality. Once we have that, we finish as before, using Theorem~\ref{thm:worb}, Lemma~\ref{lem:agree_aut} and Proposition~\ref{prop:orb_to_gal}.
		
		For example, if we have \ref{it:thm:worb_aut:closedgp}, then -- by Lemma~\ref{lem:closed_cl_to_closed_witn} -- we can assume without loss of generality that $\tilde X=\Autf(\fC)\cdot \tilde X$. This implies that $\bar{\tilde X}=\tilde X/{\equiv_\Lasc}$ is closed. Moreover, $E=R_{\Gamma,\tilde X}$, so by Proposition~\ref{prop:orb_to_gal}, we also have $\bar E=R_{\bar \Gamma,\bar{\tilde X}}$, so by Lemma~\ref{lem:agree_aut} and Theorem~\ref{thm:worb}, $\bar E$ is closed, which immediately gives us \ref{it:thm:worb_aut:closed}.
	\end{proof}
	
	The following corollary is Main~Theorem~\ref{mainthm_worb}.
	
	\begin{cor}
		\label{cor:smt_aut}
		Assume that the theory is countable. Suppose that $E$ is a bounded, invariant, countably supported equivalence relation on $X$. Assume in addition that $E$ is orbital or, more generally, weakly orbital by type-definable. Then the following are equivalent:
		\begin{enumerate}
			\item
			\label{it:cor:smt_aut:clsd}
			$E$ is type-definable,
			\item
			\label{it:cor:smt_aut:clses}
			each $E$-class is type-definable,
			\item
			\label{it:cor:smt_aut:smt}
			$E$ is smooth,
			\item
			\label{it:cor:smt_aut:T2}
			$X/E$ is Hausdorff,
			\item
			\label{it:cor:smt_aut:rest}
			for each complete $\emptyset$-type $p\vdash X$, the restriction $E\restr_{p(\fC)}$ is type-definable.
		\end{enumerate}
	\end{cor}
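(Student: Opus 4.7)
The plan is to establish $(1) \Leftrightarrow (4)$ directly, together with the cycle $(1) \Rightarrow (2) \Rightarrow (1)$ and the detour $(1) \Rightarrow (3) \Rightarrow (5) \Rightarrow (2)$, invoking two already-proved pillars: Theorem~\ref{thm:worb_aut} (the local-to-global principle for weakly orbital by type-definable relations) and Corollary~\ref{cor:smt_type} (the smoothness-implies-type-definability statement on a single complete type). First I would dispatch the trivialities: $(1) \Leftrightarrow (4)$ is Fact~\ref{fct:logic_top_cpct_T2} (noting that $X/E$ carries the logic topology because $E$ is bounded invariant on a $\emptyset$-type-definable $X$); $(1) \Rightarrow (2)$ because every class is a section of $E$; $(1) \Rightarrow (5)$ because $E \restr_{p(\fC)} = E \cap (p(\fC) \times p(\fC))$ is the intersection of two type-definable sets; and $(1) \Rightarrow (3)$ is Remark~\ref{rem:tdf_implies_smooth} (type-definable relations are closed, hence smooth in the sense of Definition~\ref{dfn:smt}).

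The real content is $(2) \Rightarrow (1)$, which asserts that type-definability of all classes forces type-definability of $E$ itself. This is exactly what Theorem~\ref{thm:orb_aut} supplies in the orbital case and Theorem~\ref{thm:worb_aut} supplies in the weakly-orbital-by-type-definable case; both hypotheses of the corollary fit one of these frameworks, so the implication is a direct citation. It is worth remarking here that this is the step where the ``orbital or weakly orbital by type-definable'' assumption is essential: without such a structural constraint, type-definability of classes need not imply type-definability of the relation, as already witnessed by Example~\ref{ex:not_orbital} in the abstract set-up.

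To close the loop it remains to show $(3) \Rightarrow (5) \Rightarrow (2)$. For $(3) \Rightarrow (5)$, I would observe that if $E$ is smooth then, for any complete $\emptyset$-type $p \vdash X$, restricting a Borel reduction of $E$ to $p(\fC)$ witnesses smoothness of $E \restr_{p(\fC)}$; now $E \restr_{p(\fC)}$ is a strong type on $p(\fC)$ (in countably many variables, by the countably-supported hypothesis and countability of $T$), so Corollary~\ref{cor:smt_type} applied with $Y = X = p(\fC)$ upgrades smoothness to type-definability. For $(5) \Rightarrow (2)$, using that $E$ is a strong type, each class $[x]_E$ lies inside a single $\equiv$-class $p(\fC)$ with $p = \tp(x/\emptyset)$, so $[x]_E = [x]_{E\restr_{p(\fC)}}$ is a section of the type-definable relation $E\restr_{p(\fC)}$, hence type-definable.

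There is no genuine obstacle left in the proof itself: the corollary is essentially the assembly of Theorem~\ref{thm:worb_aut} (whose proof hinges on the abstract axioms of Section~\ref{sec:structured} and the orbit-to-Galois translation of Section~\ref{sec:aut}) with Corollary~\ref{cor:smt_type} (whose proof relies on the topological-dynamical machinery of Chapters~\ref{chap:grouplike} and~\ref{chap:applications} via Main~Theorem~\ref{mainthm:abstract_smt}). The only conceptual point that requires care is recognising that $(5) \Rightarrow (2)$ uses \emph{only} the strong-type property to identify $[x]_E$ with a class of the restriction, and that the ``weakly orbital by type-definable'' hypothesis is what makes the hard direction $(2) \Rightarrow (1)$ available; everything else is bookkeeping.
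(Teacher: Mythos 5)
Your proposal is correct and follows essentially the same route as the paper: $(1)\Leftrightarrow(4)$ by Fact~\ref{fct:logic_top_cpct_T2}, $(2)\Rightarrow(1)$ by Theorem~\ref{thm:orb_aut} or \ref{thm:worb_aut}, $(1)\Rightarrow(3)$ by Remark~\ref{rem:tdf_implies_smooth}, $(3)\Rightarrow(5)$ via Corollary~\ref{cor:smt_type}, and $(1)\Rightarrow(5)\Rightarrow(2)$ as bookkeeping. One small caveat on presentation: in your $(5)\Rightarrow(2)$ step you invoke the ``strong-type property'' as if it were an explicit hypothesis, whereas the corollary's hypotheses only assert boundedness, invariance, and (weak) orbitality; the fact that $E$ refines $\equiv$ (and hence $E\restr_{p(\fC)}$ is a strong type on $p(\fC)$) is a consequence of weak orbitality (since $E = R_{\Gamma,\tilde X}$ always refines $E_{\Aut(\fC)} = {\equiv}$), so you should derive it rather than assume it.
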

	\begin{proof}
		Clearly, \ref{it:cor:smt_aut:clsd} implies \ref{it:cor:smt_aut:rest}, which implies \ref{it:cor:smt_aut:clses}.
		
		By Theorem~\ref{thm:orb_aut} or \ref{thm:worb_aut}, \ref{it:cor:smt_aut:clses} implies \ref{it:cor:smt_aut:clsd}.
		
		\ref{it:cor:smt_aut:clsd} implies \ref{it:cor:smt_aut:smt} by Remark~\ref{rem:tdf_implies_smooth}, and it is equivalent to \ref{it:cor:smt_aut:T2} by Fact~\ref{fct:logic_top_cpct_T2}.
		
		Finally, \ref{it:cor:smt_aut:smt} implies that each restriction $E\restr_{p(\fC)}$ is smooth, which -- by Corollary~\ref{cor:smt_type} -- implies \ref{it:cor:smt_aut:rest}.
	\end{proof}
	(Note that every orbital equivalence relation is also weakly orbital by type-definable, so the ``orbital or" part is redundant.)
	
	\begin{rem}
		Note also that, in the context of Corollary~\ref{cor:smt_aut}, Corollary~\ref{cor:smt_type} implies that if $Y\subseteq X$ is a type-definable and $E$-invariant subset of $X$ such that $\Aut(\fC)\cdot Y=X$ and $E\restr_Y$ is smooth, then the condition \ref{it:cor:smt_aut:rest} from Corollary~\ref{cor:smt_aut} is satisfied (and hence also all the others).
		
		Moreover, if $X=p(\fC)$, then it is a single $\Aut(\fC)$ orbit, so by Proposition~\ref{prop:single_orbit}, every invariant equivalence relation is weakly orbital by type-definable (as singletons are certainly type-definable). Thus, Corollary~\ref{cor:smt_aut} extends Corollary~\ref{cor:smt_type}.\xqed{\lozenge}
	\end{rem}

	\chapter{``Borel cardinality" in the non-metrisable case}
	\chaptermark{Non-metrisable case}
	\label{chap:nonmetrisable_card}
	In this chapter, we discuss a possible variant of Corollary~\ref{cor:metr_smt_cls} applicable when $X$ is not metrisable, but more subtle than simple cardinality estimates given by Theorem~\ref{thm:general_cardinality_intransitive} and Theorem~\ref{thm:general_cardinality_transitive}.
	The content of this chapter is heavily based on the Section~6.2 of \cite{KPR15} (joint with Krzysztof Krupiński and Anand Pillay).
	
	Recall that Corollary~\ref{cor:metr_smt_cls} tells us that for a wide class of weakly group-like equivalence relations, smoothness and closedness are equivalent conditions. Note tht for a non-metrisable compact Hausdorff space, a closed equivalence relation need not be smooth in the naïve sense that we have a reduction to equality on a Polish space, because that would imply having no more than $2^{\aleph_0}$ classes, which rule out, for example, equality on any non-metrisable compact Hausdorff group (which is trivially closed group-like). Instead, we could try to conceive generalisations of smoothness given by studying reductions to relations on ``higher reals'', such as $2^\kappa$ for uncountable cardinals $\kappa$. Unfortunately, if we go in that direction, many of the properties used in study of Borel cardinalities are no longer true (e.g.\ the analogues of the Silver dichotomy and Harrington-Kechris-Louveau may not hold, see \cite{THK14} for examples).
	
	For those reasons, we use a weak form of non-smoothness of an equivalence relation, which is essentially due to \cite{KMS14}.

	Recall that if $E$ is a non-smooth Borel equivalence relation on a Polish space $X$, then $E$ is non-smooth if and only if $\EZ\leq_B E$, and in fact, on this case, the reduction can be chosen as a homeomorphic embedding of $2^{\bN}$ into $X$ (see Fact~\ref{fct:Harrington-Kechris-Louveau dichotomy}).
	
	The idea that non-smoothness corresponds to some embedding of $\EZ$ can be used to generalise the notion of non-smoothness. Recall the notion of the sub-Vietoris topology, a coarsening of the Vietoris topology, introduced in \cite{KR16}.
	
	\begin{dfn}
		\index{topology!sub-Vietoris}
		Suppose $X$ is a topological space. Then by the {\em sub-Vietoris topology} we mean the topology on $\powerset(X)$ (i.e.\ on the family of all subsets of $X$), or on any subfamily of $\powerset(X)$, generated by subbasis of open sets of the form $\{A\subseteq X\mid A\cap F=\emptyset\}$ for $F\subseteq X$ closed.
		\xqed{\lozenge}
	\end{dfn}
	
	\begin{dfn}
		\index{weakly nonsmooth}
		\label{dfn:weak_nonsmt}
		Suppose $X$ is a topological space, while $E$ is an equivalence relation on $X$. We say that $E$ is \emph{weakly non-smooth} if there is a homeomorphic embedding $\psi\colon 2^{\omega}\to \powerset(X)$ (where $\powerset(X)$ is the power set of $X$, equipped with the sub-Vietoris topology) such that for any $\eta,\eta'\in 2^{\omega}$:
		\begin{enumerate}
			\item
			$\psi(\eta)$ is a nonempty closed set,
			\item
			if $\eta,\eta'$ are $\EZ$-related, then $[\psi(\eta)]_{E}=[\psi(\eta')]_{E}$,
			\item
			if $\eta,\eta'$ are distinct, then $\psi(\eta)\cap\psi(\eta')=\emptyset$,
			\item
			if $\eta,\eta'$ are not $\EZ$-related, then $(\psi(\eta)\times \psi(\eta'))\cap E=\emptyset$.\xqed{\lozenge}
		\end{enumerate}
	\end{dfn}
	
	\begin{rem}
		Note that if $E$ is a non-smooth Borel equivalence relation on a Polish space, then it is also weakly non-smooth: if $\psi'$ is a homeomorphic reduction of $\EZ$ to $E$ (given by Fact~\ref{fct:Harrington-Kechris-Louveau dichotomy}), then the formula $\psi(\eta)=\{\psi'(\eta)\}$ clearly satisfies the properties listed in Definition~\ref{dfn:weak_nonsmt}.\xqed{\lozenge}
	\end{rem}

	\begin{qu}
		\label{qu:broad_nonmetrisable}
		Suppose $(G, X, x_0)$ is an ambit, and $E$ is an equivalence relation on $X$ which is analytic and either weakly uniformly properly group-like or weakly closed group-like.
		
		Is $E$ closed if and only if $E$ is not weakly non-smooth?
	\end{qu}
	Note that weak non-smoothness immediately implies having at least $2^{\aleph_0}$ classes, so a positive answer to the question would imply Theorem~\ref{thm:general_cardinality_intransitive} for $Y=X$. It would also allow us to obtain a trichotomy similar to Corollary~\ref{cor:metr_smt_cls}, for non-metrisable $X$.
	
	Furthermore, applied in model-theoretic context, it would be (essentially) a generalization of \cite[Theorem 3.18]{KR16} -- which, in turn, is a generalization of \cite[Theorem 5.1]{KMS14} (see also \cite[Theorems 2.19, 3.19]{KM14}), a variant of Fact~\ref{fct:KMS_theorem} for uncountable languages. See Corollary~\ref{cor:nwg2} for an example of such application.
	
	As we will see in Proposition~\ref{prop:conj_converse}, a weakly non-smooth equivalence relation $E$ is \emph{not} closed, which gives us one direction.
	
	To show this, we first prove the following topological lemma.
	
	\begin{lem}
		\label{lem:subv_closed_relation}
		Let $X$ be a compact, Hausdorff space. Suppose $E$ is a binary relation on $X$. Write $\overline E$ for the relation on $2^X$ (the hyperspace of closed subsets of $X$) defined by
		\[
		K_1 \mathrel{\overline E} K_2 \iff \exists k_1\in K_1\exists k_2\in K_2\quad k_1\Er k_2
		\]
		Then, if $E$ is a closed relation, so is $\mathrel{\overline E}$ (on $2^X$ with the sub-Vietoris topology).
	\end{lem}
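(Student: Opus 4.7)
The plan is to argue by a direct net-based argument exploiting the explicit description of convergence in the sub-Vietoris topology, which meshes nicely with the closedness hypothesis on $E$. Recall that $K_i \to K$ in sub-Vietoris means precisely that for every closed $F \subseteq X$ with $K \cap F = \emptyset$, one has $K_i \cap F = \emptyset$ eventually. So I would take a net $(K_1^\alpha, K_2^\alpha)_\alpha$ in $\overline E$ converging in the product sub-Vietoris topology to some $(K_1, K_2) \in 2^X \times 2^X$, together with witnesses $k_1^\alpha \in K_1^\alpha$ and $k_2^\alpha \in K_2^\alpha$ satisfying $k_1^\alpha \Er k_2^\alpha$. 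The aim is to produce a pair $(k_1,k_2)\in (K_1\times K_2)\cap E$.

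First I would use compactness of $X \times X$ to pass to a subnet along which $(k_1^\alpha, k_2^\alpha) \to (k_1, k_2)$ for some $(k_1, k_2) \in X^2$. Since $E$ is closed in $X^2$ by hypothesis, the relation $k_1 \Er k_2$ passes to the limit. The remaining and main step is to show that $k_1 \in K_1$ and, symmetrically, $k_2 \in K_2$.

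The key step — and the one where the precise definition of sub-Vietoris is used — goes as follows. Suppose towards contradiction $k_1 \notin K_1$. Since $X$ is compact Hausdorff, hence normal, there exist disjoint open sets $U, V \subseteq X$ with $k_1 \in U$ and $K_1 \subseteq V$. Set $F := X \setminus V$; then $F$ is closed, $K_1 \cap F = \emptyset$, and $U \subseteq F$. By the sub-Vietoris convergence $K_1^\alpha \to K_1$, eventually $K_1^\alpha \cap F = \emptyset$. On the other hand, by convergence of the witnesses, $k_1^\alpha \in U \subseteq F$ eventually, and $k_1^\alpha \in K_1^\alpha$, contradiction. Thus $k_1 \in K_1$; the same argument gives $k_2 \in K_2$, and so $K_1 \mathrel{\overline E} K_2$.

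The main obstacle I anticipate is purely notational: making sure the coarseness of sub-Vietoris (compared to Vietoris) is handled correctly, since sub-Vietoris convergence does not a priori guarantee that every point of the limit set is approximated by points of the converging sets — however, we only need the converse direction, that limits of points stay inside the limit set, and this is exactly what the normality-plus-sub-Vietoris argument above delivers. No further machinery (e.g.\ from earlier parts of the paper) seems to be needed beyond compactness, Hausdorffness, and closedness of $E$.
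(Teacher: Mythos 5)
Your proof is correct and follows essentially the same route as the paper's: pass to a subnet where the witnesses converge (by compactness of $X^2$), use closedness of $E$ to retain the relation at the limit, and then use regularity/normality of $X$ together with the sub-Vietoris convergence to show the limit witnesses must lie in the limit sets. The only differences are cosmetic (you unwind sub-Vietoris convergence directly in terms of ``eventually,'' rather than phrasing the contradiction as exhibiting a subbasic open set separating $K$ from all $K_i$).
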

	\begin{proof}
		Choose an arbitrary net $(K_i,K'_i)_{i\in I}$ in $\mathrel{\overline E}$ converging to some $(K,K')$ in $2^X$. We need to show that $(K,K')\in \overline E$.
		
		Let $k_i\in K_i, k_i'\in K_i'$ be such that $k_i \Er k_i'$. By compactness, we can assume without loss of generality that $(k_i,k_i')$ converges to some $(k,k') \in E$ (as $E$ is closed). If $k\in K$ and $k'\in K'$, we are done.
		
		Let us assume towards contradiction that $k\notin K$. Then, since $K$ is closed, and $X$ is compact, Hausdorff (and thus regular), we can find disjoint open sets $U,V$ such that $K\subseteq U$ and $k\in V$. Then we can assume without loss of generality that all $k_i$ are in $V$ (passing to a subnet if necessary). We see that $F:=X\setminus U$ is a closed set such that $F\cap K=\emptyset$. But for all $i$ we have $k_i\in F\cap K_i$, which gives us a (sub-Vietoris) basic open set separating $K$ from all $K_i$, a contradiction; therefore, we must have $k\in K$.
		
		Similarly, it cannot be that $k'\notin K'$, which completes the proof.
	\end{proof}
	(In fact, the converse is also true, because the map $x\mapsto \{x\}$ is a homeomorphic embedding of $X$ into $2^X$ with the sub-Vietoris topology.)
	
	Without further ado, we can prove the aforementioned proposition.
	\begin{prop}
		\label{prop:conj_converse}
		If $E$ is a closed equivalence relation, then it is not weakly nonsmooth.
	\end{prop}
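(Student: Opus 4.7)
The plan is to derive a contradiction by showing that the embedding $\psi$ would force $\EZ$ to be closed on $2^\omega$, which is classically false. So assume $E$ is closed and $\psi\colon 2^\omega\to \powerset(X)$ witnesses that $E$ is weakly non-smooth. By property (1) of Definition~\ref{dfn:weak_nonsmt}, $\psi$ factors through the hyperspace $2^X$ of nonempty closed subsets of $X$, equipped with the sub-Vietoris topology. Since $\psi$ is a homeomorphic embedding into $\powerset(X)$ (with the sub-Vietoris topology) and $2^X$ carries the subspace topology, the corestricted map $\psi\colon 2^\omega\to 2^X$ is in particular continuous.

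Next I would invoke Lemma~\ref{lem:subv_closed_relation} (using that $X$ is compact Hausdorff, as in the ambit context of Question~\ref{qu:broad_nonmetrisable}) to deduce that the relation $\overline E$ on $2^X$ is closed. Since $\psi\times\psi\colon 2^\omega\times 2^\omega\to 2^X\times 2^X$ is continuous, the preimage $F:=(\psi\times\psi)^{-1}[\overline E]$ is closed in $2^\omega\times 2^\omega$.

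The key step is to verify that $F$ coincides exactly with $\EZ$. If $\eta\EZ\eta'$, then by property (2), $[\psi(\eta)]_E=[\psi(\eta')]_E$; picking any point $x\in \psi(\eta)$ (nonempty by (1)) and using that $x\in [\psi(\eta')]_E$ yields an $x'\in\psi(\eta')$ with $x\Er x'$, so $\psi(\eta)\mathrel{\overline E}\psi(\eta')$ and hence $(\eta,\eta')\in F$. Conversely, if $\eta\not\EZ\eta'$, then property (4) gives $(\psi(\eta)\times\psi(\eta'))\cap E=\emptyset$, i.e.\ $\neg(\psi(\eta)\mathrel{\overline E}\psi(\eta'))$, so $(\eta,\eta')\notin F$. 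Thus $F=\EZ$.

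The contradiction is then immediate: $\EZ$ is well known \emph{not} to be closed on $2^\omega$ (e.g.\ the sequence of characteristic functions of $\{1,\dots,n\}$ is pointwise $\EZ$-related to the zero sequence but converges to the all-ones sequence, which is not). Since $F=\EZ$ is supposed to be closed, this is the desired contradiction, completing the proof. The only subtle point is verifying the equality $F=\EZ$ carefully from the four axioms of weak non-smoothness, but all of it reduces to unpacking definitions; property (3) is not needed here (it only enters in the stronger statement of weak non-smoothness implying $|X/E|\geq 2^{\aleph_0}$).
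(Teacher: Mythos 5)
Your proof is correct and follows essentially the same route as the paper: invoke Lemma~\ref{lem:subv_closed_relation} to make $\overline{E}$ closed on the hyperspace, pull back along $\psi$ (the paper uses that $\psi$ is a homeomorphism onto its range; you observe that mere continuity suffices), check via properties (2) and (4) that the pullback is exactly $\EZ$, and contradict that $\EZ$ is not closed. The side remark that property (3) and the full embedding hypothesis are unused is accurate but does not change the argument in any essential way.
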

	\begin{proof}
		Suppose towards contradiction that $E$ is closed and weakly non-smooth, which is witnessed by some $\psi\colon 2^\omega\to \mathcal P(X)$. Denote by $\mathcal F$ the range of $\psi$.
		
		Since $\mathcal F$ consists of closed sets, by Lemma~\ref{lem:subv_closed_relation}, the restriction $\overline{E}\restr_{\mathcal F}$ is a closed relation. On the other hand, by the properties of $\psi$, for any $\eta_1,\eta_2 \in2^{\omega}$, $\eta_1 \EZ \eta_2 \iff \psi(\eta_1) \mathrel{\overline{E}} \psi(\eta_2)$. Since $\psi$ is a homeomorphism from $2^{\omega}$ to ${\mathcal F}$, we conclude that $\EZ$ is a closed relation which is a contradiction.
	\end{proof}
	
	Proposition~\ref{prop:conj_weak} below (along with Proposition~\ref{prop:conj_converse}) gives a positive answer to Question~\ref{qu:broad_nonmetrisable} in a weakened form, namely, we assume that $E$ is $F_\sigma$, we only require that $\psi$ is continuous (and not a homeomorphism), and we drop the property that $\psi$ takes distinct points to disjoint sets (which would imply that it is a homeomorphism, by Fact~\ref{fct:subVt} below).
	
	\begin{prop}
		\label{prop:conj_weak}
		Suppose $(G,X,x_0)$ is an ambit, while $E$ is an equivalence relation on $X$ which $F_\sigma$ and either weakly uniformly properly group-like or weakly closed group-like. Assume that $E$ is not closed.
		
		Then there is a continuous function $\phi\colon 2^\omega\to \powerset(X)$ (equipped with the sub-Vietoris topology), such that:
		\begin{itemize}
			\item
			$\phi(\eta)$ is a nonempty closed set,
			\item
			if $\eta,\eta'$ are $\EZ$-related, then $[\phi(\eta)]_{E}=[\phi(\eta')]_{E}$,
			\item
			if $\eta,\eta'$ are not $\EZ$-related, then $(\phi(\eta)\times \phi(\eta'))\cap E=\emptyset$.
		\end{itemize}
	\end{prop}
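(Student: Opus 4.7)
The plan is to construct $\phi$ via a Mycielski-style Cantor scheme combining the group structure from Chapter~\ref{chap:grouplike} with the $F_\sigma$ decomposition of $E$. To set up, apply Lemma~\ref{lem:weakly_grouplike}(3) to obtain the compact Hausdorff group $\hat G := u\cM/H(u\cM)$ acting continuously on $X/E$ via an orbit map $\hat r\colon \hat G \to X/E$ which is a topological quotient. Let $H$ be the stabiliser of $[x_0]_E$. By Lemma~\ref{lem:new_preservation_E_to_H}, $H$ is $F_\sigma$ and not closed in $\hat G$, so by Fact~\ref{fct:pettis}, $H$ is meagre in its closure $K := \overline H$, itself a compact Hausdorff subgroup of $\hat G$. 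Fix $F_\sigma$ decompositions $H = \bigcup_n H_n$ and $E = \bigcup_n E_n$ in which each $H_n$ (respectively $E_n$) is closed, symmetric, increasing, contains the identity (the diagonal), and satisfies $H_n H_n \subseteq H_{n+1}$ (respectively $E_n \circ E_n \subseteq E_{n+1}$); each $H_n$ is then nowhere dense in $K$.

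Next, I would construct, by induction on $n$, nonempty open sets $W_s \subseteq X$ for $s \in 2^{<\omega}$, elements $k_s \in K$ with lifts $f_s \in u\cM$, and ``flip'' elements $t_n \in H$ for $n \in \omega$, satisfying: (a) $\overline{W_{s\frown 0}}$ and $\overline{W_{s\frown 1}}$ are disjoint subsets of $W_s$; (b) $f_s(x_0) \in W_s$, and $W_s$ lies in the preimage under $X \to X/E$ of a prescribed small neighbourhood of $k_s[x_0]_E$; (c) $k_{s\frown 0} = k_s$ and $k_{s\frown 1} = k_s\, t_{|s|}$; (d) whenever $s, s' \in 2^{n+1}$ satisfy $k_s^{-1}k_{s'} \notin H_n$, $\overline{W_s}\times \overline{W_{s'}} \cap E_n = \emptyset$. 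Each $t_n$ must lie in $H$ (so finite products of $t_j$'s stay in $H$, ensuring $\EZ$-alignment) but must be chosen ``generically'': for any non-$\EZ$-related $\eta, \eta' \in 2^\omega$ and any $N$, eventually $k_{\eta\restr_n}^{-1} k_{\eta'\restr_n} \notin H_N$. Such choices are available inductively because $H$ is dense in $K$ while each $H_N$ is nowhere dense, allowing us to pick $t_n \in H$ avoiding countably many $H_N$-cosets prescribed by earlier choices.

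Setting $\phi(\eta) := \bigcap_n \overline{W_{\eta\restr_n}}$, this is nonempty and closed by compactness. Continuity of $\phi$ into $\powerset(X)$ with the sub-Vietoris topology follows because for any closed $F \subseteq X$, $\phi(\eta) \cap F = \emptyset$ if and only if $\overline{W_{\eta\restr_n}} \cap F = \emptyset$ for some $n$ (by compactness of $X$), an open condition on $\eta$. If $\eta \EZ \eta'$, eventually $k_{\eta\restr_n}^{-1}k_{\eta'\restr_n}$ is a finite product of $t_j$'s from $H$, so $\phi(\eta)$ and $\phi(\eta')$ lie in the preimage of the same $E$-class, giving $[\phi(\eta)]_E = [\phi(\eta')]_E$. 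If $\eta \not\EZ \eta'$, the genericity guarantees that for every $n$ and sufficiently large $N$, $k_{\eta\restr_N}^{-1}k_{\eta'\restr_N} \notin H_n$, so by (d), $\overline{W_{\eta\restr_N}}\times\overline{W_{\eta'\restr_N}}\cap E_n = \emptyset$, and therefore $\phi(\eta) \times \phi(\eta')$ avoids every $E_n$ and hence $E$.

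The hard part will be coordinating the algebraic construction in $K$ with the topological construction in $X$ without metrisability, specifically the inductive verification of condition (d). We must ensure that algebraic separation $k_s^{-1}k_{s'} \notin H_n$ in $K$ translates faithfully into topological separation $\overline{W_s}\times\overline{W_{s'}}\cap E_n = \emptyset$ in $X^2$; this demands choosing the $F_\sigma$ decompositions of $H$ and $E$ compatibly (via Proposition~\ref{prop:from_cluM} and Lemma~\ref{lem:new_preservation_E_to_H}) and shrinking the $W_s$ at each step using the continuity of the $u\cM$-action on $X$ to maintain (d) simultaneously with (a)--(c).
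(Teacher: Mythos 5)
Your high-level structure is right: identify $\hat G = u\cM/H(u\cM)$ and the non-closed $F_\sigma$ stabiliser $H$, then exploit the meagre/dense dichotomy in $\overline H$ to build a Cantor scheme. But the way you propose to carry this out — building the open sets $W_s$ directly in $X$ — contains a genuine gap, precisely at the point you flag as ``the hard part.''

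The issue is that there is no well-behaved map from $X$ (or from open subsets of $X$) to $\hat G$ that would let you coordinate condition~(d) with the choice of $k_s$. The relationship between $\hat G$ and $X$ is mediated entirely by the span $\hat G \xleftarrow{\,\zeta_1\,} \overline{u\cM} \xrightarrow{\,R\,} X$, where $\zeta_1(f)=ufH(u\cM)$ is continuous by Proposition~\ref{prop:strange_cont_pre} and $R(f)=f(x_0)$. You anchor each $W_s$ at $R(f_s)$ for a lift $f_s\in u\cM$, but the $\tau$-topology on $u\cM$ is only \emph{coarser} than the $EL$-subspace topology (Fact~\ref{fct:tau_coarser}), and $R$ does not carry open sets to open sets; so ``shrinking $W_s$ using continuity of the $u\cM$-action'' cannot track $\tau$-neighbourhoods of $k_s$ in $\hat G$. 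Worse, condition~(b) requires $W_s$ to sit in the preimage of ``a prescribed small neighbourhood of $k_s[x_0]_E$'' in $X/E$ — but $X/E$ is non-Hausdorff when $E$ is not closed, so there need not be any useful small neighbourhood of $k_s[x_0]_E$ to prescribe. You therefore cannot verify (d) inductively by shrinking in $X$, and without (d) the non-$\EZ$ separation in the third bullet fails.

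The fix is the one the paper takes: do the Cantor-scheme construction entirely inside the group $\hat G$ (this is Corollary~\ref{cor:Vietoris_embed}, which is the abstract content of your induction with $H_n$, the strong Choquet game, and the generic flips $t_n$ — your conditions (c) and the $H_n$-genericity are exactly what Fact~\ref{fct:dtmtoolu} encapsulates). Then transport the result to $X$ only at the very end: set $\phi(\eta) := R\bigl[\zeta_1^{-1}[\varphi'(\eta)]\bigr]$. Because $\zeta_1^{-1}$ takes closed sets to closed (hence compact) sets in $\overline{u\cM}$, and $R$ then takes these to closed sets in $X$, each $\phi(\eta)$ is nonempty and closed; continuity of $\phi$ in the sub-Vietoris topology is then a formal consequence of Proposition~\ref{prop:subVcont}. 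The commutative square from the proof of Proposition~\ref{prop:from_cluM} (with $R$ on the left and the orbit map $\hat G\to X/E$ on the right) converts the group statements in Corollary~\ref{cor:Vietoris_embed} into the two $E$-reduction bullets. This cleanly avoids ever needing the $F_\sigma$ decomposition of $E$ itself — only $H = \bigcup_n H_n$ is decomposed, and all set-shrinking happens in the compact Hausdorff group $\overline H$, where the topology behaves.
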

	
	Before the proof we need to recall a few facts and make some observations. The descriptive set theoretic tools which we use to prove the proposition are similar to those from \cite{KMS14} and \cite{KR16}.
	
	\begin{dfn}
		The {\em strong Choquet game} on a topological space $X$ is the following two-player game in $\omega$-rounds. In round $n$, player A chooses an open set
		$U_n \subseteq V_{n-1}$ and $x_n \in U_n$, and player B responds by choosing an open set $V_n \subseteq U_n$
		containing $x_n$. Player B wins when the intersection $\bigcap \{V_n \mid n < \omega\}$ is nonempty.
		
		A topological space X is a {\em strong Choquet space} if player B has a winning strategy in the strong Choquet game on $X$. For more details see Sections 8.C and 8.D of \cite[Chapter I]{Kec95}.
		
		Given a subset $C$ of $X$, we say that $X$ is {\em strong Choquet over $C$} to mean that the points that player A chooses are taken from $C$ (and player $B$ has a winning strategy in the modified game).
	\end{dfn}
	
	\begin{fct}
		A compact Hausdorff space is strong Choquet.
	\end{fct}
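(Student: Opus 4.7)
The plan is to describe an explicit winning strategy for player B using only compactness and the Hausdorff (hence regular) separation axiom. First I would recall that a compact Hausdorff space is automatically normal and in particular regular, so for every open neighbourhood $U$ of a point $x$ there is an open $V \ni x$ with $\overline{V} \subseteq U$.

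The strategy is then as follows: in round $n$, when player A has produced an open set $U_n$ and a point $x_n \in U_n$ (with $U_n \subseteq V_{n-1}$), player B uses regularity to pick an open $V_n$ such that $x_n \in V_n$ and $\overline{V_n} \subseteq U_n$. This is a legal move since $V_n \subseteq \overline{V_n} \subseteq U_n$ and $x_n \in V_n$.

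To verify that this strategy wins, I would note that the sequence $(\overline{V_n})_{n<\omega}$ is a decreasing sequence of nonempty closed subsets of the compact space $X$: it is decreasing because $\overline{V_{n+1}} \subseteq U_{n+1} \subseteq V_n \subseteq \overline{V_n}$, and each $\overline{V_n}$ contains $x_n$. By compactness of $X$, this family has the finite intersection property, and so $\bigcap_n \overline{V_n} \neq \emptyset$. Since $\bigcap_n V_n \supseteq \bigcap_n \overline{V_n}$ (in fact the two intersections coincide up to boundary behavior, but only one inclusion is needed), we conclude that $\bigcap_n V_n \neq \emptyset$, which is precisely B's winning condition.

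There is essentially no obstacle here; the only thing to check is that regularity actually gives us an open $V_n$ with closure inside $U_n$ containing the specified point $x_n$, which is the standard formulation of regularity available in any compact Hausdorff space. The argument does not use any countability or metrizability hypothesis, so it works for arbitrary compact Hausdorff $X$, and in fact adapts immediately to show that $X$ is strong Choquet over any dense subset (by choosing the same $V_n$'s, regardless of where the $x_n$'s live).
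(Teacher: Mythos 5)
Your proof is correct and takes exactly the same approach as the paper: player B uses normality/regularity to pick $V_n$ with $\overline{V_n} \subseteq U_n$, and compactness (finite intersection property) then guarantees a nonempty intersection. You fill in more detail than the paper's two-sentence proof; one small point worth making explicit is that the inclusion $\bigcap_n \overline{V_n} \subseteq \bigcap_n V_n$ follows from the chain $\overline{V_{n+1}} \subseteq U_{n+1} \subseteq V_n$ (it is not the trivial direction), but this is exactly the nesting you already established, so the argument goes through.
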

	\begin{proof}
		By compactness, if player $B$ chooses at step $n$ a $V_n$ such that $\overline{V_n}\subseteq U_n$, then he wins. A compact Hausdorff space is normal, so he can always do that.
	\end{proof}
	
	\begin{rem}
		Note that a strong Choquet space is trivially strong Choquet over each of its subsets.\xqed{\lozenge}
	\end{rem}

	As usual, given a set $X$ and a relation $R \subseteq X \times X$, and $x \in X$, by $R_x$ we denote the section of $R$ at $x$, i.e.\ $\{y \in X \mid x \mathrel{R} y \}$.

	\begin{fct}
		\label{fct:dtmtoolu}
		Suppose that $X$ is a regular topological space, $\langle R_n\mid n\in \omega\rangle$ is a sequence of $F_\sigma$ subsets of $X^2$, $\Sigma$ is a group of homeomorphisms of $X$, and $\mathcal O\subseteq X$ is an orbit of $\Sigma$ with the property that for all $n\in \omega$ and open sets $U\subseteq X$ intersecting $\mathcal O$, there are distinct $x,y\in\mathcal O\cap U$ with $\mathcal O\cap (R_n)_x\cap (R_n)_y=\emptyset$. If $X$ is strong Choquet over $\mathcal O$, then there is a function $\tilde\phi\colon 2^{<\omega}\to \powerset(X)$ such that for any $\eta\in 2^\omega$ and any $n\in \omega$:
		\begin{itemize}
			\item
			$\tilde\phi(\eta\restr n)$ is a nonempty open set,
			\item
			$\overline{\tilde\phi(\eta\restr{(n+1)})}\subseteq \tilde\phi(\eta\restr{n})$
		\end{itemize}
		Moreover, $\phi(\eta)=\bigcap_n \tilde\phi(\eta\restr n)=\bigcap_n \overline{\tilde\phi(\eta\restr n)}$ is a nonempty closed $G_\delta$ set such that for any $\eta,\eta'\in 2^\omega$ and $n\in\omega$:
		\begin{itemize}
			\item
			if $\eta \EZ \eta'$, then there is some $\sigma\in\Sigma$ such that $\sigma\cdot \phi(\eta)=\phi(\eta')$,
			\item
			if $\eta(n)\neq \eta'(n)$, then $(\phi(\eta)\times \phi(\eta'))\cap R_n=\emptyset$, and if $\eta,\eta'$ are not $\EZ$-related, then $(\phi(\eta)\times \phi(\eta'))\cap \bigcup R_n=\emptyset$.
		\end{itemize}
	\end{fct}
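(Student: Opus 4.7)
The plan is to mimic the Cantor scheme construction underlying Mycielski's theorem (Proposition~\ref{prop:mycielski}) and its descriptive-set-theoretic refinements from \cite{KMS14} and \cite{KR16}, with strong Choquet over $\mathcal O$ playing the role of metric completeness. Concretely, I would write each $F_\sigma$ relation as an increasing union $R_n=\bigcup_k F_{n,k}$ with $F_{n,k}$ closed, fix a ``bookkeeping'' enumeration of the countably many conditions to be met (separation by some $F_{n,k}$, shrinking by player B's strong Choquet strategy, distinguishing siblings in $\mathcal O$), and build recursively on $s$, for every $\sigma\in 2^s$, a nonempty open set $\tilde\phi(\sigma)$, a distinguished point $x_\sigma\in\mathcal O\cap\tilde\phi(\sigma)$, and homeomorphisms $g_\sigma\in\Sigma$ encoding the ``flip last bit'' symmetry. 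Stage $s$ will be responsible for ensuring $(\tilde\phi(\sigma)\times\tilde\phi(\tau))\cap F_{n,k}=\emptyset$ whenever $\sigma,\tau\in 2^{s+1}$ differ in some coordinate $n\le s$ and $k\le s$, and $\overline{\tilde\phi(\sigma i)}\subseteq\tilde\phi(\sigma)$.

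The inductive step proceeds as follows. Given $\tilde\phi(\sigma)$ and $x_\sigma$, regard $(x_\sigma,\tilde\phi(\sigma))$ as player A's move in the strong Choquet game over $\mathcal O$; apply player B's strategy to obtain $W_\sigma\subseteq\tilde\phi(\sigma)$ containing $x_\sigma$. Inside $W_\sigma\cap\mathcal O$, invoke the hypothesis to produce distinct $y_0,y_1\in\mathcal O$ with $\mathcal O\cap (R_s)_{y_0}\cap(R_s)_{y_1}=\emptyset$ (and iterate, using the hypothesis finitely many more times, to arrange the analogous disjointness for all lower-indexed $R_n$ whose separations are still pending). Since $y_i\notin(F_{n,k})_{y_{1-i}}$ for the relevant $(n,k)$ (which is a closed set in $X$), regularity provides disjoint open neighborhoods $\tilde\phi(\sigma 0),\tilde\phi(\sigma 1)\subseteq W_\sigma$, each with closure in $W_\sigma$, such that $\tilde\phi(\sigma 0)\times\tilde\phi(\sigma 1)$ is disjoint from the finitely many $F_{n,k}$ currently under consideration; set $x_{\sigma i}:=y_i$. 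Once this Cantor scheme is in hand, the two bullets for $\tilde\phi$ are immediate, and for any $\eta$, player B's strategy forces $\phi(\eta)=\bigcap_n\tilde\phi(\eta\restr n)$ to be nonempty closed $G_\delta$; the $R_n$-disjointness bullet then follows directly from the way each $F_{n,k}$ was avoided at the appropriate stage.

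The main obstacle will be the $\EZ$-equivariance clause, which requires that flipping finitely many coordinates of $\eta$ corresponds to applying an element of $\Sigma$ to $\phi(\eta)$. The trick, as in \cite{KMS14}, is that since $y_0,y_1$ both lie in the single orbit $\mathcal O$, one may choose $g_\sigma\in\Sigma$ with $g_\sigma(y_0)=y_1$, then (by continuity of $g_\sigma$ and by passing to a smaller $\tilde\phi(\sigma 0)$) demand $\tilde\phi(\sigma 1)=g_\sigma[\tilde\phi(\sigma 0)]$. Propagating this symmetry: for every $\tau\in 2^{<\omega}$ one commits to $\tilde\phi(\sigma 1\tau)=g_\sigma[\tilde\phi(\sigma 0\tau)]$ and $x_{\sigma 1\tau}=g_\sigma(x_{\sigma 0\tau})$, so all further construction is performed only on the ``$\sigma 0$ side'' and transported via $g_\sigma$. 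For $\eta\EZ\eta'$, the finitely many disagreement positions $n_1<\dots<n_r$ yield a composition $g_{\eta\restr n_r}\cdots g_{\eta\restr n_1}\in\Sigma$ (with suitable conjugations accounting for earlier flips) that carries $\phi(\eta)$ onto $\phi(\eta')$.

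The delicate point is that this propagation must be consistent with the separation requirements imposed at every later stage: when at stage $s'>s$ we split $\tilde\phi(\sigma 0\tau)$, the split of $\tilde\phi(\sigma 1\tau)$ is forced by $g_\sigma$, and one must verify that the needed $R_{n'}$-separation (for pairs of descendants straddling the flip at position $s$, or at position $s'$) is still achievable. This reduces to checking that we still have enough freedom inside $\mathcal O\cap W_{\sigma 0\tau}$ to apply the hypothesis, which is fine because the hypothesis is about arbitrary open subsets of $X$ intersecting $\mathcal O$ and is preserved under homeomorphisms in $\Sigma$; the only care needed is to do the bookkeeping so that at stage $s'$ one simultaneously satisfies separation for both the $\sigma 0$-branch and its $g_\sigma$-image in the $\sigma 1$-branch, which amounts to applying the hypothesis once on the $\sigma 0$ side and transporting the result. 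This is exactly the combinatorial heart of the argument, and is where one's notation must be chosen carefully to remain readable.
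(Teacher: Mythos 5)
Your outline has the correct architecture and matches the template of \cite[Theorem~3.14]{KR16}, to which the paper defers for the proof, but there is a genuine gap in the nonemptiness claim. You apply player B's strong Choquet strategy at a node $\sigma$ to get $W_\sigma$, but then commit to doing all further construction only on the ``$\sigma 0$ side'' and to transporting the $\sigma 1$ side via $g_\sigma$. This forces $\tilde\phi(\sigma 1\tau i)=g_\sigma[\tilde\phi(\sigma 0\tau i)]\subseteq g_\sigma[W_{\sigma 0\tau}]$, and there is no reason for that set to lie inside B's actual response $W_{\sigma 1\tau}$ at the position $\sigma 1\tau$: B's winning strategy is not $g_\sigma$-equivariant, and you have not constrained it to be. Thus along any branch $\eta$ that is not eventually zero the sequence $\bigl(\tilde\phi(\eta\restr n)\bigr)_n$ is not a legal play against B's strategy, and the assertion that ``for any $\eta$, player B's strategy forces $\phi(\eta)$ to be nonempty'' is unjustified -- transport by the $g_\sigma$'s hands nonemptiness from the all-zeros branch only to its single $\EZ$-class, not to all of $2^\omega$. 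In a general regular space the nesting $\overline{\tilde\phi(\eta\restr(n+1))}\subseteq\tilde\phi(\eta\restr n)$ by itself does not give a nonempty intersection, so this is a genuine loss, not a cosmetic one.

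The missing step is to make the free choice on the canonical side compatible in advance with the game responses at all the siblings. When splitting level $m$, play B at every node $\rho$ of length $m$ to obtain $W_\rho$, and then choose the canonical pieces at level $m+1$ not merely inside $W_{0^m}$ but inside the finite intersection $\bigcap_{\rho,i}h_{0^{m+1}\to\rho i}^{-1}[W_\rho]$, where $h_{0^{m+1}\to\rho i}\in\Sigma$ is the composite transport taking the all-zeros node of level $m+1$ to $\rho i$. This intersection is open and contains the distinguished point (each transport sends it to $x_{\rho i}$, which lies in $W_\rho$), so it is nonempty and meets $\mathcal O$, and the orbit-separation hypothesis may be invoked inside it; only after this refinement do the transported sets $\tilde\phi(\rho i)=h_{0^{m+1}\to\rho i}[\tilde\phi(0^{m+1})]$ actually land inside their own $W_\rho$, making the play along \emph{every} branch legal and hence $\phi(\eta)\neq\emptyset$ for all $\eta$. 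Your remark ``applying the hypothesis once on the $\sigma 0$ side and transporting the result'' addresses the separation bookkeeping but not this strong Choquet bookkeeping, which is precisely where the argument as written breaks. (A minor secondary point: your deduction that $(y_0,y_1)\notin R_n$ from $\mathcal O\cap(R_n)_{y_0}\cap(R_n)_{y_1}=\emptyset$ tacitly uses reflexivity of $R_n$, which the statement does not assume; this is harmless in the intended applications but should be made explicit or avoided.)
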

	\begin{proof}
		This is \cite[Theorem 3.14]{KR16}.
	\end{proof}

	\begin{fct}
		\label{fct:subVt}
		Suppose $X$ is a normal topological space (e.g.\ a compact, Hausdorff space) and $\mathcal A$ is any family of pairwise disjoint, nonempty closed subsets of $X$. Then $\mathcal A$ is Hausdorff with the sub-Vietoris topology.$\qed$
	\end{fct}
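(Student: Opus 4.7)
Here is how I plan to prove Fact~\ref{fct:subVt}. Let $A, B \in \mathcal{A}$ be two distinct elements. Since $\mathcal{A}$ is a family of pairwise disjoint, nonempty closed sets, $A$ and $B$ are disjoint nonempty closed subsets of $X$. The strategy is to use normality of $X$ to construct two disjoint subbasic open sets in the sub-Vietoris topology that separate $A$ from $B$; the crucial observation is that nonemptiness of the members of $\mathcal{A}$ lets us promote a covering relation in $X$ into a disjointness relation in the sub-Vietoris topology.

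Concretely, by normality I can find disjoint open sets $V_A, V_B \subseteq X$ with $A \subseteq V_A$ and $B \subseteq V_B$. Setting $F_A := X \setminus V_A$ and $F_B := X \setminus V_B$, I obtain two closed subsets of $X$ with $A \cap F_A = \emptyset$, $B \cap F_B = \emptyset$, and $F_A \cup F_B = X$ (since $V_A \cap V_B = \emptyset$). The corresponding subbasic sub-Vietoris open sets
\[
U_A := \{C \subseteq X \mid C \cap F_A = \emptyset\}, \qquad U_B := \{C \subseteq X \mid C \cap F_B = \emptyset\}
\]
then satisfy $A \in U_A$ and $B \in U_B$.

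To conclude, I will check that $U_A \cap U_B \cap \mathcal{A} = \emptyset$. Indeed, any $C \in U_A \cap U_B$ is disjoint from both $F_A$ and $F_B$, hence from $F_A \cup F_B = X$, so $C = \emptyset$; but every element of $\mathcal{A}$ is nonempty by hypothesis. Hence $U_A \cap \mathcal{A}$ and $U_B \cap \mathcal{A}$ are disjoint open neighbourhoods of $A$ and $B$ in the sub-Vietoris subspace topology on $\mathcal{A}$, establishing the Hausdorff property.

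There is no real obstacle here; the main ``trick'' is simply recognising that a subbasic open set in the sub-Vietoris topology is defined via disjointness from a \emph{closed} set (which matches well with normality), and that the requirement of nonemptiness of members of $\mathcal{A}$ is precisely what converts the condition $F_A \cup F_B = X$ into emptiness of the intersection $U_A \cap U_B$ inside $\mathcal{A}$. Note that the proof does not require disjointness of distinct members of $\mathcal{A}$ in full generality, only for the specific pair $A, B$ being separated, so the hypothesis of pairwise disjointness is used in a uniform but minimal way.
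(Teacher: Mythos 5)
Your proof is correct. The paper does not give its own argument for this fact (it just cites \cite[Proposition 3.16]{KR16}), but what you wrote is precisely the standard argument: use normality to get disjoint open $V_A \supseteq A$, $V_B \supseteq B$, pass to the closed complements $F_A, F_B$ to produce subbasic sub-Vietoris neighbourhoods, and observe that $F_A \cup F_B = X$ together with nonemptiness of the members of $\mathcal A$ forces the two neighbourhoods to be disjoint on $\mathcal A$. Your closing remark that pairwise disjointness is only used for the single pair being separated is also accurate (and in fact the same hypothesis could be weakened to the family being $T_1$ in the sense that distinct members differ by more than a meagre nothing — but disjointness is what is available and is exactly what the argument consumes). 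Nothing is missing.
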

	\begin{proof}
		This is \cite[Proposition 3.16]{KR16}.
	\end{proof}
	
	Using the last two facts, we obtain a corollary reminiscent of \cite[Theorem 3.18]{KR16} (albeit topological group theoretic, and not model theoretic in nature), which will be used in the proof of Proposition~\ref{prop:conj_weak}.
	
	\begin{cor}
		\label{cor:Vietoris_embed}
		Suppose $G$ is a compact, Hausdorff group, while $H\leq G$ is $F_\sigma$ and not closed. Then there is a homeomorphic embedding $\phi\colon 2^\omega\to \powerset(G)$ (with the sub-Vietoris topology) such that for any $\eta,\eta'\in 2^\omega$:
		\begin{itemize}
			\item
			$\phi(\eta)$ is a nonempty closed set,
			\item
			if $\eta \EZ \eta'$, then there is some $h\in H$ such that $\phi(\eta)h=\phi(\eta')$,
			\item
			if $\eta\neq \eta'$, then $\phi(\eta)\cap \phi(\eta')=\emptyset$,
			\item
			if $\eta,\eta'$ are not $\EZ$-related, then $\phi(\eta)H\cap \phi(\eta')H=\emptyset$.
		\end{itemize}
		In particular, $[G:H]\geq 2^{\aleph_0}$.
	\end{cor}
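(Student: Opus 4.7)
The plan is to apply Fact~\ref{fct:dtmtoolu} to a carefully chosen system of closed relations encoding the left-coset structure of $H$, after first replacing $G$ by the compact Hausdorff group $\overline{H}$ so that the acting group has a dense orbit. Concretely, I would set $X:=\overline{H}$ and let $H$ act on $X$ as a group of homeomorphisms via the left action $h\cdot g := gh^{-1}$, so that the orbit $\mathcal{O}$ of the identity $e$ is exactly $H$. Since $H$ is $F_\sigma$, write $H=\bigcup_n F_n$ with each $F_n$ closed; passing to $\bigcup_{k\le n}(F_k\cup F_k^{-1}\cup\{e\})$ I may assume the $F_n$ are symmetric, increasing, and contain $e$. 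Define the closed relations $R_n\subseteq X\times X$ by $R_n:=\{(g_1,g_2)\mid g_1^{-1}g_2\in F_n\}$; each contains the diagonal, and $\bigcup_n R_n$ is exactly the relation of lying in a common left coset of $H$.

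The main technical point is the richness hypothesis of Fact~\ref{fct:dtmtoolu}: for every $n$ and every open $U\subseteq X$ with $U\cap H\ne\emptyset$, I need distinct $x,y\in H\cap U$ with $H\cap (R_n)_x\cap (R_n)_y=\emptyset$. Since $(R_n)_x=xF_n$ and $xF_n\cap yF_n=\emptyset$ is equivalent to $x^{-1}y\notin F_nF_n$ (using that $F_n$ is symmetric), the task reduces to finding $y\in H\cap U$ with $y\ne x$ and $y\notin x(F_nF_n)$. This is where the key input enters: because $H$ is a dense, proper, Baire subgroup of $X$, Pettis-Pickard (Fact~\ref{fct:pettis}) forces $H$ to be meagre in $X$, so the compact set $F_nF_n\subseteq H$ is nowhere dense in $X$. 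Hence $U\setminus(\{x\}\cup x(F_nF_n))$ is a nonempty open subset of $X$, and density of $H$ in $X$ supplies the required $y$. This is the step I expect to be the main obstacle --- everything else is essentially bookkeeping --- and the reduction from $G$ to $\overline{H}$ is what makes it go through.

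With the hypothesis verified, Fact~\ref{fct:dtmtoolu} yields a function $\phi\colon 2^\omega\to\mathcal{P}(X)$ with closed nonempty values, together with approximating open sets $\tilde\phi(\eta\restr n)$ with $\overline{\tilde\phi(\eta\restr{n{+}1})}\subseteq\tilde\phi(\eta\restr n)$. The conclusions translate directly: if $\eta\EZ \eta'$, some $\sigma\in H$ satisfies $\sigma\cdot\phi(\eta)=\phi(\eta)\sigma^{-1}=\phi(\eta')$, so taking $h:=\sigma^{-1}$ gives $\phi(\eta)h=\phi(\eta')$; if $\eta\ne\eta'$ differ at coordinate $n$, then $(\phi(\eta)\times\phi(\eta'))\cap R_n=\emptyset$, and since $R_n$ contains the diagonal this forces $\phi(\eta)\cap\phi(\eta')=\emptyset$; if $\eta,\eta'$ are not $\EZ$-related, then $(\phi(\eta)\times\phi(\eta'))\cap\bigcup_n R_n=\emptyset$, which is exactly $\phi(\eta)H\cap\phi(\eta')H=\emptyset$ since $\bigcup_n R_n$ is the left-coset relation of $H$.

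It remains to show that $\phi$ is a homeomorphic embedding into $\mathcal{P}(G)$ with the sub-Vietoris topology. Continuity of $\phi$ follows from the nested structure: given a closed $F\subseteq X$ disjoint from $\phi(\eta_0)=\bigcap_n\overline{\tilde\phi(\eta_0\restr n)}$, compactness of $X$ gives some $n$ with $\overline{\tilde\phi(\eta_0\restr n)}\cap F=\emptyset$, and then $\phi(\eta)\cap F=\emptyset$ for every $\eta$ extending $\eta_0\restr n$. The image of $\phi$ consists of pairwise disjoint nonempty closed subsets of $X$, hence is Hausdorff with the sub-Vietoris topology by Fact~\ref{fct:subVt}, and a continuous bijection from the compact space $2^\omega$ onto a Hausdorff space is a homeomorphism. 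The ``in particular'' follows at once: each $\phi(\eta)H$ is a nonempty union of left cosets of $H$, and for pairwise non-$\EZ$-related $\eta$ these unions are disjoint, producing at least $2^{\aleph_0}$ distinct cosets, so $[G:H]\ge 2^{\aleph_0}$.
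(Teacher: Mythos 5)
Your proof is correct and follows essentially the same strategy as the paper: reduce to $\overline H$ via Pettis, let $H$ act by right inverse translations, take $R_n$ induced by a symmetric increasing exhaustion $H=\bigcup_n F_n$, verify the richness hypothesis of Fact~\ref{fct:dtmtoolu}, and finish with Fact~\ref{fct:subVt} plus compactness for the homeomorphism claim. The only (cosmetic) difference is in verifying the richness hypothesis: the paper first arranges $F_nF_m\subseteq F_{n+m}$ so it can argue a potential intersection lands in $F_{2n+N}$ and then picks $h'$ outside $F_{2n+N}$, whereas you observe directly that $F_nF_n$ is a compact (hence closed) subset of the meagre $H$ and so nowhere dense, then choose $y$ in $U\setminus xF_nF_n$; your variant avoids having to impose the multiplicative closure property on the $F_n$'s, but the underlying idea is the same.
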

	\begin{proof}
		We can assume without loss of generality that $H$ is dense in $G$ (by replacing $G$ with $\overline H$). Since $H$ has the Baire property (as an $F_\sigma$ subset of a compact space), by the Pettis theorem (i.e.\ Fact~\ref{fct:pettis}) it follows that $H$ is meagre in $G$ (because $H$ is not closed, and so not open). Therefore, since $H$ is $F_\sigma$ and closed meagre sets are nowhere dense, there are nonempty closed, nowhere dense sets $F_n\subseteq G$, $n \in \omega$, such that $H = \bigcup_n F_n$. We can assume without loss of generality that the $F_n$'s are symmetric (i.e.\ $F_n=F_n^{-1}$ and $e \in F_n$), increasing, and satisfy $F_nF_m\subseteq F_{n+m}$.
		
		$H$ acts by homeomorphisms on $G$ (by right translations by inverses). Let us denote by $R_n$ the preimage of $F_n$ by $(g_1,g_2)\mapsto g_1^{-1}g_2$. We intend to show that the assumptions of Fact~\ref{fct:dtmtoolu} are satisfied, with $X:=G$, $\mathcal O=\Sigma:=H$ and $R_n$ just defined.
		
		Since $G$ is compact Hausdorff, it is strong Choquet over $\mathcal O$ (even over itself) and regular. Fix any open set $U$ and any $n\in \omega$. Then pick any $h\in H\cap U$ (which exists by density). Then $h\in F_N$ for some $N\in \omega$.
		
		From the fact that $H$ is dense and the $F_m$'s are closed nowhere dense, it follows that for each $m$, $H\setminus F_m$ is dense, so we can find some $h'\in U\cap (H\setminus F_{2n+N})$. Since the $F_n$'s are increasing, we see that $h \ne h'$.
		Moreover, we have
		\[
		H\cap (R_n)_h\cap (R_n)_{h'}=H\cap hF_n \cap h'F_n \subseteq F_N F_n \cap h' F_n .
		\]
		But if this last set was nonempty, we would have $h'\in F_NF_nF_n^{-1}\subseteq F_{2n+N}$ -- which would contradict the choice of $h'$ -- so $H\cap (R_n)_h\cap(R_n)_{h'}=\emptyset$, and the assumptions of Fact~\ref{fct:dtmtoolu} are satisfied. This gives us the map $\phi$, which satisfies all the bullets, as well as the auxiliary map $\tilde{\phi}$. What is left is to show that $\phi$ is a homeomorphic embedding.
		
		$\phi$ is clearly injective by the third bullet, and by the preceding fact, the range of $\phi$ is a Hausdorff space, so we only need to show that it is continuous. To do that, consider a subbasic open set $U=\{F\mid F\cap K= \emptyset\}$, and notice that by compactness, $\phi(\eta)\in U$ if and only if $\overline{\tilde{\phi}(\eta\restr n)}\cap K= \emptyset$ for some $n$, which is an open condition about $\eta$.
	\end{proof}
	
	\begin{prop}
		\label{prop:subVcont}
		Consider a map $f\colon X\to Y$ between topological spaces and the induced image and preimage maps $\mathcal F\colon \powerset(X)\to \powerset(Y)$ and $\mathcal G\colon \powerset(Y)\to \powerset(X)$ (where $\powerset(X)$ and $\powerset(Y)$ are equipped with the sub-Vietoris topology). Then:
		\begin{itemize}
			\item
			If $f$ is continuous, so is $\mathcal F$.
			\item
			If $f$ is closed, $\mathcal G$ is continuous.
		\end{itemize}
		In particular, if $f$ is continuous, $Y$ is Hausdorff and $X$ is compact, then both $\mathcal F$ and $\mathcal G$ are continuous.\xqed{\lozenge}
	\end{prop}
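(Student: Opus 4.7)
The plan is to verify continuity of each map by examining preimages of subbasic open sets, exploiting the fact that the sub-Vietoris topology has a very simple subbasis: sets of the form $\{A \mid A \cap K = \emptyset\}$ for closed $K$. So it suffices to show that the preimage of each such subbasic set under $\mathcal F$ (respectively $\mathcal G$) is again a subbasic sub-Vietoris open set in the source.

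For the first bullet, I would fix an arbitrary closed $F \subseteq Y$ and compute $\mathcal F^{-1}[\{B \subseteq Y \mid B \cap F = \emptyset\}]$. Using the elementary identity $f[A] \cap F = \emptyset \iff A \cap f^{-1}[F] = \emptyset$, this preimage equals $\{A \subseteq X \mid A \cap f^{-1}[F] = \emptyset\}$. Continuity of $f$ ensures $f^{-1}[F]$ is closed in $X$, so this is itself a subbasic sub-Vietoris open set, giving continuity of $\mathcal F$.

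For the second bullet, I would perform the analogous computation for an arbitrary closed $K \subseteq X$. Here the relevant set-theoretic identity is $f^{-1}[B] \cap K = \emptyset \iff B \cap f[K] = \emptyset$ (the direction $(\Rightarrow)$ is immediate, while $(\Leftarrow)$ uses that any $k \in f^{-1}[B] \cap K$ would produce $f(k) \in B \cap f[K]$). Thus $\mathcal G^{-1}[\{A \mid A \cap K = \emptyset\}] = \{B \subseteq Y \mid B \cap f[K] = \emptyset\}$, and closedness of $f$ forces $f[K]$ to be closed in $Y$, so again the preimage is subbasic open.

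The ``in particular'' clause is then immediate from Remark~\ref{rem: continuous surjection is closed}, since a continuous map from a compact space to a Hausdorff space is automatically closed, so both hypotheses are simultaneously satisfied. There is no serious obstacle here: the argument is essentially a matching of the natural algebraic identities for direct and inverse images against the definition of the sub-Vietoris subbasis, and the only point requiring any care is the set-theoretic identity used in the preimage case.
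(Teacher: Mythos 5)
Your proof is correct and takes essentially the same approach as the paper: both check continuity on subbasic sub-Vietoris open sets by converting the image (resp.\ preimage) condition into a preimage (resp.\ image) intersection condition using the elementary set-theoretic identities, and both invoke continuity (resp.\ closedness) of $f$ to conclude that the resulting witness set is closed. The only difference is that the paper leaves the second bullet and the ``in particular'' clause as analogous/immediate, while you spell them out.
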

	\begin{proof}
		For the first point, consider a subbasic open set $B=\{A \mid A\cap F=\emptyset \}\subseteq \powerset(Y)$. Then $\mathcal F^{-1}[B]=\{A \mid f[A]\cap F=\emptyset\}=\{A \mid A\cap f^{-1}[F]=\emptyset\}$ (this is because any $a\in A$ witnessing that $A$ is not in one of the sets will witness the same for the other). The third set is clearly open in $\powerset(X)$. The second point is analogous.
	\end{proof}

	\begin{proof}[Proof of Proposition~\ref{prop:conj_weak}]
		By Proposition~\ref{prop:strange_cont_pre}, we have a continuous function $\zeta_1\colon \overline{u\cM}\to u\cM/H(u\cM)$, given by $f\mapsto ufH(u\cM)$. Furthermore, since $E$ is weakly closed group-like or weakly uniformly properly group-like, by Lemma~\ref{lem:weakly_grouplike}(3), we have an action of $u\cM/H(u\cM)$ on $X/E$ and an orbit map $u\cM/H(u\cM)\to X/E$ which completes the following commutative diagram (similar to the diagram from the proof of Proposition~\ref{prop:from_cluM}):
		\begin{center}
			\begin{tikzcd}
			\overline{u\cM}\ar[r,"\zeta_1",two heads]\ar[d,"R"] & u\cM/H(u\cM)\ar[d,two heads] \\
			X\ar[r,two heads] & X/E.
			\end{tikzcd}
		\end{center}
		Commutativity is clear by the definition of all the maps involved (and the fact that they are well-defined): the action of $u\cM/H(u\cM)$ on $X/E$ is induced by the action of $E(G,X)$ (given by $f([x]_E)=[f(x)]_E$, cf.\ Lemma~\ref{lem:weakly_grouplike}(2)), so it is given by $fH(u\cM)([x]_E)=[f(x)]_E$. In particular, for every $f\in \overline{u\cM}$, we have $\zeta_1(f)[x_0]_E=[f(x_0)]_E=[R(f)]_E$, which means exactly that the diagram commutes.

		Let $H$ be the preimage of $[x_0]_E$ in $u\cM/H(u\cM)$. By Lemma~\ref{lem:new_preservation_E_to_H}, since $E$ is $F_\sigma$ and not closed, the same is true about $H$. Hence, Corollary~\ref{cor:Vietoris_embed} applies, giving us a function $\varphi'\colon 2^\omega\to \mathcal P(u\cM/H(u\cM))$ as there. Note that it witnesses weak non-smoothness of $E|_{u\cM/H(u\cM)}=E_H$ (the orbit equivalence relation of $H$). Now, using Proposition~\ref{prop:subVcont}, it is straightforward to check that $\varphi\colon 2^\omega\to \mathcal P(X)$ defined as $\varphi(\eta)=R[\zeta_1^{-1}[\varphi'(\eta)]]$ is continuous, has only closed sets in its image. Furthermore, since $E_H=E|_{u\cM/H(u\cM)}$, it is not hard to see that it satisfies the two ``reduction'' properties postulated in Proposition~\ref{prop:conj_weak} as well, which completes the proof.
	\end{proof}
	
	\begin{rem}
		It is not hard to see that if we were able, in Corollary~\ref{cor:Vietoris_embed}, to weaken the assumption that $H$ is $F_\sigma$ to say only that $H$ is analytic, then the same thing could be done in Proposition~\ref{prop:conj_weak}. However, it would still fall short of a positive answer to Question~\ref{qu:broad_nonmetrisable}, as we would not have the property of mapping distinct points onto disjoint sets.\xqed{\lozenge}
	\end{rem}
	
	The following corollary of Proposition~\ref{prop:conj_weak} is \cite[Proposition 5.12]{KPR15} (joint with Krzysztof Krupiński and Anand Pillay).
	
	\begin{cor}
		\label{cor:nwg2}
		Suppose we have $E$ is an $F_\sigma$ strong type on $X=p(\fC)$ for some $p\in S(\emptyset)$, while $Y\subseteq X$ is type-definable and $E$-saturated (i.e.\ it is a union of classes of $E$). Suppose moreover that $E$ is not type-definable.
		
		Then for every model $M$, there is a continuous function $\varphi\colon 2^{\omega}\to \powerset(Y_M)$ (where $\powerset(Y_M)$ is equipped with the sub-Vietoris topology) such that for any $\eta,\eta'\in 2^{\omega}$:
		\begin{itemize}
			\item
			$\varphi(\eta)$ is a nonempty closed set,
			\item
			if $\eta,\eta'$ are $\EZ$-related, then $[\varphi(\eta)]_{E^M}=[\varphi(\eta')]_{E^M}$,
			\item
			if $\eta,\eta'$ are not $\EZ$-related, then $(\varphi(\eta)\times \varphi(\eta'))\cap E^M=\emptyset$.
		\end{itemize}
	\end{cor}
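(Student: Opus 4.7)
The approach is to reduce to a situation where $\Aut(\fC/\{Y\})$ acts transitively on the relevant set, extend $M$ to an ambitious model, apply Proposition~\ref{prop:conj_weak}, and finally push down to $\powerset(Y_M)$ via the restriction map of type spaces. Since $E$ lives on $X=p(\fC)$ and is not type-definable, Proposition~\ref{prop:type-definability_of_relations} tells us that \emph{no} $E$-class is type-definable. Fix any $a\in Y$, and let $Y_a\subseteq X$ be the set from Proposition~\ref{prop:closure_has_transitive_action}. Since $Y$ is $E$-saturated and closed in $X$, the set $Y/E$ is closed in $X/E$, so $\overline{\{[a]_E\}}\subseteq Y/E$ and hence $Y_a\subseteq Y$. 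By Proposition~\ref{prop:closure_has_transitive_action}, $Y_a$ is type-definable, $E$-saturated, and $\Aut(\fC/\{Y_a\})$ acts transitively on it.

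Next I would use Proposition~\ref{prop:amb_exist} to extend $M$ to a model $N\succeq M$ containing $a$ and ambitious relative to $G^{Y_a}:=\Aut(\fC/\{Y_a\})/\Autf(\fC)$. By the ``more general'' part of Lemma~\ref{lem:lascar_dominates}, the relation $E^N\restr_{(Y_a)_N}$ is weakly uniformly properly group-like on the ambit $(G^{Y_a}(N),(Y_a)_N,\tp(a/N))$, and it is $F_\sigma$ by Fact~\ref{fct: Borel in various senses}. To verify it is not closed: since $[a]_E$ is not type-definable and $Y_a$ is closed in $X$, $[\tp(a/N)]_{E^N}$ fails to be closed in $(Y_a)_N$; by the weakly group-like version of Proposition~\ref{prop:top_props_of_wglike}(2) (cf.\ Remark~\ref{rem:wglike_closedness_to_typdef}), this forces $E^N\restr_{(Y_a)_N}$ itself to be not closed. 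Thus Proposition~\ref{prop:conj_weak} applies and yields a continuous $\varphi_N\colon 2^\omega\to \powerset((Y_a)_N)$ satisfying the three bullets with respect to $E^N\restr_{(Y_a)_N}$.

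Finally, I would transfer to $M$ via the restriction map $r\colon (Y_a)_N\to (Y_a)_M$, which is a continuous surjection between compact Hausdorff spaces (so closed by Remark~\ref{rem: continuous surjection is closed}) and, by definition of $E^N$ and $E^M$, a reduction of $E^N\restr_{(Y_a)_N}$ to $E^M\restr_{(Y_a)_M}$. I define $\varphi(\eta):=r[\varphi_N(\eta)]\subseteq (Y_a)_M\subseteq Y_M$. Each $\varphi(\eta)$ is nonempty and closed (image of a nonempty closed set under a closed map). For $\eta\EZ\eta'$, surjectivity of $r$ combined with the reduction property gives $r[[\varphi_N(\eta)]_{E^N}]=[\varphi(\eta)]_{E^M}$, so the saturation condition transfers. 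For $\eta,\eta'$ not $\EZ$-related, a point in $(\varphi(\eta)\times\varphi(\eta'))\cap E^M$ would lift to a point in $(\varphi_N(\eta)\times\varphi_N(\eta'))\cap E^N$ by the reduction property, contradicting the corresponding bullet for $\varphi_N$. Continuity of $\varphi$ follows by Proposition~\ref{prop:subVcont} (applied to the continuous closed map $r$) together with the obvious continuity of the inclusion $\powerset((Y_a)_M)\hookrightarrow\powerset(Y_M)$.

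The only nontrivial step is showing that $E^N\restr_{(Y_a)_N}$ fails to be closed, which is the exact hypothesis required by Proposition~\ref{prop:conj_weak}; this hinges on the fact that \emph{no} $E$-class is type-definable (via Proposition~\ref{prop:type-definability_of_relations} applied on $X=p(\fC)$), which prevents the restriction to $Y_a$ from accidentally becoming type-definable. The rest of the argument is essentially bookkeeping around the reduction map $r$ and the behaviour of the sub-Vietoris topology under continuous closed maps.
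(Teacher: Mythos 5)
Your proposal is correct and takes essentially the same route as the paper's proof: shrink $Y$ to a transitive $Y_a$ via Proposition~\ref{prop:closure_has_transitive_action}, pass to a model ambitious relative to $G^{Y_a}$, invoke Lemma~\ref{lem:lascar_dominates} to get weak uniform proper group-likeness, apply Proposition~\ref{prop:conj_weak}, and transfer between models via the behaviour of images/preimages under the sub-Vietoris topology (Proposition~\ref{prop:subVcont}). The only organizational difference is that the paper first establishes model-independence abstractly (in both directions) and then constructs $\varphi$ at a single convenient model, whereas you extend the given $M$ to an ambitious $N\supseteq M$ containing $a$ and push forward by the image of the restriction map; the two are interchangeable, and your one-directional transfer is marginally tidier. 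You also make the non-closedness hypothesis of Proposition~\ref{prop:conj_weak} explicit via Proposition~\ref{prop:type-definability_of_relations}, Remark~\ref{rem:tdf_iff_restr} (or Proposition~\ref{prop:top_props_of_wglike}(2)), and Fact~\ref{fct: Borel in various senses}, which the paper treats as implicit; that is a genuine if small improvement in rigour. The detailed checks you give for the two bullet conditions under the image map $r$ and for continuity of $\varphi$ are correct.
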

	\begin{proof}
		Note that if $\varphi_M$ is as in the conclusion for a given model $M$, while $N$ is another model, then $\varphi_N$, defined as $\varphi_N(\eta)=\{\tp(a/N)\mid \tp(a/M)\in \varphi_M(\eta) \}$, witnesses the conclusion for $N$. Briefly, we have $\tp(a/M)\Er^M\tp(b/M)$ if and only if $\tp(a/N)\Er^N \tp(b/N)$; using that and Proposition~\ref{prop:subVcont}, it follows immediately that when $N\subseteq M$ or $N\subseteq M$, then $\varphi_N$ is as prescribed, and otherwise, we can argue the same in two steps, using a model $N'\supseteq M\cup N$. Thus, it is enough to find one model $M$ for which $\varphi$ exists.
		
		By Proposition~\ref{prop:closure_has_transitive_action}, may assume without loss of generality that $\Aut(\fC/\{Y\})$ acts transitively on $Y$ (if necessary, making $Y$ smaller).
		
		Fix any $a\in Y$. Then by Proposition~\ref{prop:amb_exist}, we can find a model $M$ containing $a$, ambitious relative to $G^Y=\Aut(\fC/\{Y\})/\Autf(\fC)$. Then by Lemma~\ref{lem:lascar_dominates}, $E^M\restr_{Y_M}$ is weakly uniformly properly group-like with respect to $G^Y(M)$. The conclusion follows immediately from Proposition~\ref{prop:conj_weak} applied to the ambit $(G^Y(M),Y_M,\tp(a/M))$ and the relation $E^M\restr_{Y_M}$.
	\end{proof}
	
	Since the conclusion of Corollary~\ref{cor:nwg2} easily implies that $E\restr_Y$ has at least as many classes as $\EZ$ (that is, at least $2^{\aleph_0}$ classes), it is a strengthening of Fact~\ref{fct:newelski} (in a different direction from Theorem~\ref{thm:nwg}: instead of weakening the assumptions about $E$, we have a stronger conclusion).
	
	Note that \cite[Theorem 3.18]{KR16} is a very similar to Corollary~\ref{cor:nwg2}. The difference is that the assumption strengthened that $E$ is an \emph{orbital} (in the sense of Definition~\ref{dfn:orbital_stype}) $F_\sigma$ equivalence relation, whereas the conclusion is strengthened to say also that $\varphi$ is a homeomorphic embedding, and maps distinct points to disjoint sets, but weakened  Thus, the main advantage of Corollary~\ref{cor:nwg2} lies in dropping the ``orbital" part of the assumption. (And, more vaguely, in maybe giving some hint how to proceed in the general case.) See also \cite[Theorem 5.1]{KMS14} for a related fact for $\equiv_\Lasc$.

	\appendix
	\chapter{Basic facts in topological dynamics}
	\chaptermark{Topological dynamics}
	\label{app:topdyn}
	In this section, we build the framework for topological dynamics in the generality needed in the thesis. The majority of the facts proven here are folklore, and their proofs are, for the most part, straightforward adaptations of the proofs from \cite{Gl76}, where the main focus is on flows of the form $(G,\beta G)$. Since I have not found them in the literature in the required generality, they are included in the thesis, with complete proofs, for the convenience of the reader (and possibly future reference).
	
	The definitions also essentially the same as in \cite{Gl76}; the only nontrivial change is the definition of the operation $\circ$ --- see Definition~\ref{dfn:circ} --- as the one introduced in \cite[Section IX.1]{Gl76} does not seem generalise to arbitrary Ellis groups (but the two definitions coincide in the case of $(G,\beta G)$, see Remark~\ref{rem:circ_equivalence}).
	\subsection*{Preparatory facts}

	\begin{dfn}
		\index{ideal}
		\index{left ideal}
		A (left) ideal $I\unlhd S$ in a semigroup $S$ is a subset such that $IS\subseteq I$.\xqed{\lozenge}
	\end{dfn}

	\begin{dfn}
		\index{semitopological group}
		A group $G$ endowed with a topology is a \emph{semitopological group} if multiplication is separately continuous in $G$.
		\xqed{\lozenge}
	\end{dfn}
	
	\begin{rem}
		\label{rem:semitop_acts_on_itself}
		Note that, as an immediate consequence of the definition, semitopological group acts on itself by homeomorphisms by left and right multiplication, as well as by conjugation. In particular, inner automorphisms are homeomorphisms.\xqed{\lozenge}
	\end{rem}
	
	\begin{dfn}
		\index{left topological semigroup}
		A semigroup $S$ equipped with a topology is called a \emph{left topological semigroup} if the multiplication is continuous on the left, i.e.\ for every $s_0\in S$, the map $s\mapsto ss_0$ is continuous.
		\xqed{\lozenge}
	\end{dfn}
	
	\begin{ex}
		For every topological space $X$, the semigroup of functions $X^X$ with pointwise convergence (i.e.\ Tychonoff product) topology is a left topological semigroup.
		\xqed{\lozenge}
	\end{ex}
	
	\begin{fct}[Ellis joint continuity theorem]
		\label{fct:ellis_joint_cont}
		Suppose $G$ is a locally compact Hausdorff semitopological group. Then $G$ is a topological group (i.e.\ multiplication is jointly continuous and inversion is continuous).
	\end{fct}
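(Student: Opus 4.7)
The plan is to establish joint continuity of multiplication first, and then deduce continuity of inversion from joint continuity together with local compactness. The proof will rely essentially on the Baire category theorem, which is available because every locally compact Hausdorff space is a Baire space.

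First I would observe that separate continuity alone already forces each left translation $L_g \colon x \mapsto gx$ and right translation $R_g \colon x \mapsto xg$ to be a homeomorphism of $G$, with continuous inverse $L_{g^{-1}}$ or $R_{g^{-1}}$ respectively. Consequently $G$ is topologically homogeneous, and inner automorphisms $c_h \colon x \mapsto h^{-1} x h$ are homeomorphisms (as compositions of left and right translations). The identity
\[
\mu(g_0 x, h_0 y) \;=\; g_0 h_0 \cdot c_{h_0}(x) \cdot y
\]
then shows that continuity of $\mu$ at an arbitrary point $(g_0, h_0)$ follows from continuity at $(e, e)$, since the right-hand side is a composition of $\mu$ (at $(e,e)$) with homeomorphisms. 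So the problem reduces to proving that $\mu$ is jointly continuous at $(e, e)$.

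The heart of the argument is this reduction step: given an open neighbourhood $U$ of $e$, produce an open neighbourhood $V$ of $e$ with $V V \subseteq U$. I would fix a compact neighbourhood $K$ of $e$, and for each $y \in K$ consider the set $F_y := \{ x \in K : xy \notin U \}$, which is closed by separate continuity in the first variable. One then analyses the closed sets $\bigcap_{y \in A} F_y$ as $A$ ranges over finite subsets of some compact set, and applies the Baire category theorem to $K$ (or to a suitable quotient space) to extract an interior point. Translating this interior point back to $e$ using the homeomorphisms $L_g, R_g$ produces the desired neighbourhood $V$. This is the delicate part of Ellis's original argument and is where separate continuity, compactness, and the Baire property are all used simultaneously.

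Once joint continuity of $\mu$ is established, continuity of inversion follows. By homogeneity it suffices to prove continuity at $e$; equivalently, for every open $V$ containing $e$ one must show the set-theoretic inverse $V^{-1}$ contains an open neighbourhood of $e$. Here one fixes a compact neighbourhood $K$ of $e$ contained in $V$, and considers the continuous map $\phi \colon K \times K \to G$ given by $(x,y) \mapsto xy$. The fibre $\phi^{-1}(\{e\})$ is closed and its projection to the first coordinate is the closed subset $\{x \in K : x^{-1} \in K\}$ of $K$, which contains $e$ in its interior once one inserts a further compact neighbourhood and uses joint continuity of $\mu$ at $(k, k^{-1})$ for $k$ close to $e$. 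A compactness-plus-joint-continuity argument (if $x_i \to e$ but $x_i^{-1}$ stays outside a fixed neighbourhood of $e$, pass to a convergent subnet inside a compact set and use $x_i \cdot x_i^{-1} = e$ to force the limit to be $e$) then yields continuity of inversion.

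The main obstacle will be the Baire category step establishing joint continuity at $(e,e)$: separate continuity only provides one-sided information on fibres, and converting this into two-sided control genuinely requires combining compactness with the algebraic group structure. This is precisely the point at which the local compactness hypothesis is indispensable.
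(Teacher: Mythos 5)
The paper does not supply a proof of this fact; it simply cites \cite[Theorem~2]{ellis57}. So your sketch cannot be compared against an argument in the text, and I will assess it on its own.

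Your reduction of joint continuity of $\mu$ to joint continuity at $(e,e)$ is correct: the identity $\mu(g_0 x, h_0 y) = g_0 h_0 \cdot c_{h_0}(x)\cdot y$ does transport continuity from $(e,e)$ to $(g_0,h_0)$ via translations and the inner automorphism $c_{h_0}$, all of which are homeomorphisms by separate continuity. However, both of the remaining steps have genuine gaps.

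For the Baire step: you propose covering a compact neighbourhood $K$ by the closed sets $\bigcap_{y\in A} F_y$ and invoking Baire category to extract an interior point. But the Baire category theorem needs a \emph{countable} decomposition into closed (or nowhere dense) sets, and $G$ is not assumed second-countable or metrisable, nor is the index set of your sets countable. The actual arguments (Ellis's original one, or later Namioka-style ones) first pass to an auxiliary metrisable situation — typically via a pseudometric built from a Urysohn function, or via a metrisable quotient of $K$ — before Baire category can bite. You correctly flag this step as the heart of the matter, but the sketch does not indicate how countability enters, and without that the appeal to Baire is not yet an argument.

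The inversion step has a more serious problem. You want to derive a contradiction from a net $x_i \to e$ with $x_i^{-1}$ staying outside a fixed neighbourhood of $e$ by ``pass[ing] to a convergent subnet inside a compact set'' and using $x_i x_i^{-1}=e$. For that you need the net $(x_i^{-1})$ to eventually lie in some compact set, and in a merely locally compact group this is not automatic: $x_i^{-1}$ could escape every compact set even as $x_i \to e$. Proving that some neighbourhood of $e$ has its set of inverses contained in a compact set is essentially the content of continuity of inversion at $e$, so the argument as written assumes what it is trying to prove. The same issue afflicts the phrase ``uses joint continuity of $\mu$ at $(k,k^{-1})$ for $k$ close to $e$'': there is no control on where $k^{-1}$ lies, so joint continuity at $(k,k^{-1})$ cannot be invoked. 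In the compact case your subnet argument works immediately, and the real content of Ellis's theorem in the locally compact case is precisely to bridge this gap; a proof must supply a nontrivial compactness bound on inverses (for instance, by working inside a suitably chosen compact subsemigroup, or by a further Baire-type argument), and your sketch does not.
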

	\begin{proof}
		This is \cite[Theorem 2]{ellis57}.
	\end{proof}
	
	\begin{fct}
		\label{fct:elementary_algebra}
		If $S$ is a semigroup and $S$ has a left identity element and left inverses, then $S$ is a group.
	\end{fct}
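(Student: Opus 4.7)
The plan is to verify the two missing group axioms: that the given left identity $e$ is also a right identity, and that each left inverse is also a right inverse. The standard trick is to first promote left inverses to two-sided inverses, and then derive the right identity property from that.

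First I would fix $x \in S$ and pick a left inverse $x'$ with $x'x = e$, then pick a left inverse $x''$ of $x'$ with $x''x' = e$. The key computation is
\[
xx' = e(xx') = (x''x')(xx') = x''(x'x)x' = x''(ex') = x''x' = e,
\]
using only associativity and the left identity/left inverse properties. This shows $x'$ is a two-sided inverse of $x$.

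Next I would use this to show $e$ is a right identity: for any $x \in S$, with $x'$ as above,
\[
xe = x(x'x) = (xx')x = ex = x.
\]
With $e$ established as a two-sided identity and every element having a two-sided inverse, $S$ satisfies the group axioms. There is no real obstacle here — the whole argument is a three-line associativity manipulation, and the only thing to be careful about is not implicitly assuming right-sided properties at any step of the first computation.
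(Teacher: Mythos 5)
Your proof is correct and follows essentially the same route as the paper: pick the left inverse $x'$ of $x$ and the left inverse $x''$ of $x'$, show $xx'=e$ by an associativity computation, and then deduce $xe=x$ from $xx'=e$. The two chains of equalities are phrased slightly differently but are the same argument.
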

	\begin{proof}
		Let $e$ be a left identity in $S$, and let $a\in S$ be arbitrary. If $b$ is the left inverse of $a$, it is enough to show that $ae=a$ and $ab=e$.
		
		Let $c$ be the left inverse of $b$, so that $ba=cb=e$.
		
		Then
		\[
			e=cb=c(eb)=c(ba)b=(cb)(ab)=e(ab)=ab,
		\]
		and thus
		\[
			ae=a(ba)=(ab)a=ea=a.\qedhere
		\]
	\end{proof}
	
	\begin{fct}
		\label{fct:minimal_ideals_idempotents}
		Suppose $S$ is a semigroup with a compact $T_1$ topology such that for any $s_0\in S$, the map $s\mapsto ss_0$ is continuous and a closed mapping (the latter follows immediately from continuity and compactness if $S$ is Hausdorff).
		
		Then there is a minimal (left) ideal $\cM$ in $S$ (i.e.\ a minimal set such that $S\cM=\cM$), and every such $\cM$ satisfies the following:
		\begin{enumerate}
			\item
			every $s\in \cM$ generates it as a left ideal: $\cM=Ss$ (thus, by the assumptions, $\cM$ is closed),
			\item
			\index{idempotent}
			if $u\in \cM$ is an idempotent (i.e.\ it satisfies $uu=u$), then $u\cM$ is a group (with composition as group operation and $u$ as identity),
			\item
			each $\cM$ is the disjoint union $\bigsqcup_{u} u\cM$, where $u$ ranges over idempotents $u\in \cM$; in particular, idempotents exist,
			\item
			for every idempotent $u\in \cM$ and every $s\in \cM$, we have that $su=s$,
			\item
			given two idempotents $u,v\in \cM$, the map $s\mapsto vs$ defines an isomorphism $u\cM\to v\cM$
			\item
			every two groups of the form $v\cN$ (where $\cN$ is a minimal left ideal in $S$ and $v$ is an idempotent in $\cN$) in $S$ are isomorphic as groups (even if they are contained in distinct ideals).
		\end{enumerate}
	\end{fct}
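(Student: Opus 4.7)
I would proceed along classical lines, following Ellis and Auslander. The plan is: first existence of a minimal left ideal, then Ellis's idempotent lemma, then the within-ideal algebraic structure (1)--(5), and finally the cross-ideal isomorphism (6), which I expect to be the main obstacle.

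For existence and (1), Zorn's lemma on the family of nonempty closed left ideals (ordered by reverse inclusion) applies, since chain intersections are nonempty by compactness and remain closed left ideals. A minimal closed left ideal $\cM$ is automatically minimal among all left ideals: for any $s \in \cM$, $Ss$ is a nonempty left ideal, closed by the closed-mapping hypothesis on $s' \mapsto s's$, and contained in $\cM$, so $Ss = \cM$; this is (1). For existence of idempotents in $\cM$ (Ellis's lemma), Zorn gives a minimal closed sub-semigroup $T \subseteq \cM$; pick $u \in T$. Then $Tu \subseteq T$ is a closed sub-semigroup, so $Tu = T$, and some $v \in T$ satisfies $vu = u$; the set $V = \{v \in T : vu = u\}$ is then a closed (using $T_1$) nonempty sub-semigroup, hence $V = T$, giving $u^2 = u$.

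Granting idempotents, the remaining within-ideal claims follow by direct computation. For (2): $u$ is a left identity on $u\cM$, closure under products uses $S\cM \subseteq \cM$, and left inverses come from $\cM s = Ss = \cM$; Fact~\ref{fct:elementary_algebra} then makes $u\cM$ a group. Property (4) is immediate from (1): writing $s = \alpha u$ gives $su = \alpha u^2 = s$. For the union part of (3), apply Ellis's lemma to the closed sub-semigroup $\{t \in \cM : ts = s\}$; for disjointness, if $s \in u\cM \cap v\cM$, the group inverses of $s$ in each yield $vu = u$ and $uv = v$, while (4) simultaneously yields $uv = u$, forcing $u = v$. Property (5) is another direct calculation: $\phi(s) = vs$ is a homomorphism because (4) gives $s_1 v = s_1$ and so $v(s_1 s_2) = v s_1 v s_2 = (vs_1)(vs_2)$, with $t \mapsto ut$ as its two-sided inverse.

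The hard part is (6), since (4) is unavailable across ideals and the naive maps $s \mapsto vs$, $s \mapsto sv$, $s \mapsto vsv$ all fail to be homomorphisms in general. My plan is the classical trick of producing \emph{equivalent} idempotents $u \in \cM$, $v' \in \cN$ satisfying $v'u = u$ and $uv' = v'$. To produce $v'$: note $\cN u$ is a closed nonempty left ideal contained in $Su = \cM$, so $\cN u = \cM$, hence some $n \in \cN$ satisfies $nu = u$; applying Ellis's lemma inside the closed sub-semigroup $\{n \in \cN : nu = u\}$ of $\cN$ yields an idempotent $v'_0 \in \cN$ with $v'_0 u = u$, and then $v' := uv'_0 \in \cN$ is idempotent (via $v'_0 u = u$), satisfies $v'u = u$, and $uv' = v'$. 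With these relations, the map $\phi(s) := sv'$ lands in $v'\cN$ (because $v's = (v'u)s = s$ for $s \in u\cM$ gives $v'(sv') = sv'$), is a homomorphism (since $(s_1 v')(s_2 v') = s_1(v's_2)v' = s_1 s_2 v'$), and is a bijection (injective by $sv'u = su = s$ via (4); surjective via $s := utu \in u\cM$ for prescribed $t \in v'\cN$, since then $sv' = utuv' = ut \cdot uv' = utv' = ut = t$ using $tv'=t$, $uv' = v'$, and $t = v't'$). Combining with (5) inside $\cN$ yields $u\cM \cong v'\cN \cong v\cN$ for any prescribed idempotent $v \in \cN$.
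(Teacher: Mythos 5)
Your proof is correct and takes essentially the same classical Ellis-style route as the paper: Zorn's lemma for existence of a minimal left ideal, Ellis's idempotent lemma via a minimal closed subsemigroup, direct algebraic computations for (1)--(5), and production of a ``compatible'' idempotent in $\cN$ (one satisfying both $vu=u$ and $uv=v$) for (6). The only noteworthy variation is in step (6), where the paper first finds $v\in\cN$ with $vu=u$ and then deduces $uv=v$ by producing a second idempotent $u'\in\cM$ with $u'v=v$ and proving $u'=u$ via (4), whereas you achieve the same endpoint more directly by replacing your $v'_0$ with $v':=uv'_0$ and verifying both relations by inspection.
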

	\begin{proof}
		(The proof below is the same as in the case of compact Hausdorff left topological semigroups, which is classical.)
		
		Note that if $S'\subseteq S$ is a closed subsemigroup, then $S'$ also satisfies the hypotheses of the fact we are proving.
		
		For existence of minimal left ideals, notice that by the assumption, principal left ideals in $S$ (i.e.\ ideals of the form $Ss_0$ for some $s_0\in S$) are compact, so it is not hard to see that the family of all principal left ideals satisfies the assumptions of the Kuratowski-Zorn Lemma. This implies the existence of minimal principal left ideals. But every ideal contains a principal ideal, so the minimal principal left ideals are also minimal left ideals.
		
		Now, fix a minimal (left) ideal $\cM\unlhd S$.
		
		(1) follows easily from the minimality assumption: if $a\in \cM$, then $Sa\subseteq \cM$ is an ideal.
		
		(2): Clearly, $u\cM$ is a semigroup, and $u$ is a left identity. By Fact~\ref{fct:elementary_algebra}, it is enough to show that $u\cM$ has left inverses. But if $f\in u\cM$ is arbitrary, then by minimality of $\cM$, we have $\cM f=\cM$, so $u\cM f=u\cM$, so $u\in u\cM f$, i.e.\ for some $g\in \cM$ we have $(ug)f=u$.
		
		(3): Applying the Kuratowski's Lemma again, there is a minimal closed subsemigrup $K\leq \cM$. Pick any $u\in K$. Then $Ku\subseteq K$ is closed, and trivially, $Ku$ is a semigroup, so by minimality, $Ku=K$, so there is some $k\in K$ such that $ku=u$. But the set of all such $k$ is closed (by left continuity of multiplication and the $T_1$ assumption) and a subsemigroup of $K$, so it is just $K$ itself. In particular, $uu=u$, so $u$ is an idempotent.
		
		It follows that if $u\cM\cap v\cM\neq \emptyset$, then $u=v$ (indeed, if $f\in u\cM\cap v\cM$, then by (2) we have $f'\in \cM$ such that $ff'=u$, and it follows that $u\in v\cM f'=v\cM$; since $v\cM$ is a group, the only idempotent in it is $v$, so $u=v$).
		
		Finally, take any $m\in \cM$. We need to show that $m\in u\cM$ for some idempotent $u$. But the set of $f\in \cM$ such that $fm=m$ is a nonempty (because $\cM m=\cM$), closed subsemigroup of $\cM$, so it satisfies the assumptions of the fact we are proving, and so, by (3), it must contain an idempotent $u$. But then $m=um\in u\cM$.
		
		(4): Fix $u,s$. Then by minimality, $\cM=\cM u$, so there is some $s'\in \cM$ such that $s'u=s$. But then $su=(s'u)u=s'(uu)=s'u=s$.
		
		(5): Fix any idempotents $u,v\in \cM$. Then $uv=u$ and $vu=v$, so $s\mapsto us$, $v\cM\to u\cM$ and $s\mapsto vs$, $u\cM\to v\cM$ are inverse to one another, so they are bijections. Furthermore, it is easy to see that they are homomorphisms, as by the preceding point, for any $f,g\in \cM$ we have $ufug=ufg$ and $vfvg=vfg$.
		
		(6): By (5), it is enough to show that for every $u\in \cM$ and every minimal left ideal $\mathcal N$, there is an idempotent $v\in \mathcal N$ such that $v\mathcal N\cong u\cM$. First, note that $\mathcal N u$ is a left ideal contained in $\cM=Su$, so $\mathcal N u=\cM$, and there is some $f\in \mathcal N$ such that $fu=u$. The set of all such $f$ is a closed subsemigroup of $\mathcal N$, so it contains an idempotent $v$ by (3). Thus, we have $vu=u$. By analogous consideration, there is an idempotent $u'\in \cM$ such that $u'v=v$. But then $u=vu=(u'v)u=u'(vu)=u'u=u'$ (by (4)), so in fact $uv=v$. Similarly to (5), we conclude that $s\mapsto su$ yields a homomorphism $v\mathcal N\to u\cM$ with inverse $s\mapsto sv$.
	\end{proof}
	\index{u@$u$}
	\index{M@$\cM$}
	Throughout, we denote minimal ideals by $\cM$ or $\cN$ and idempotents in minimal ideals by $u$ or $v$.
	
	\begin{rem}
		\label{rem:explicit_ellisgroup_isomorphism}
		It is easy to see from the proof of Fact~\ref{fct:minimal_ideals_idempotents}(6), for arbitrary ideal groups $u\cM,v\cN$, we have an isomorphism $u\cM\to v\mathcal N$ given by $s\mapsto vsv$.\xqed{\lozenge}
	\end{rem}
	
	\begin{dfn}
		\index{ideal group}
		If $S$ is a semigroup as above, $\cM$ is a minimal left ideal, while $u\in \cM$ is an idempotent, we call $u\cM$ an \emph{ideal group} of $S$. By ``the" ideal group of $S$ we mean the unique isomorphism type of an ideal group of $S$.
		\xqed{\lozenge}
	\end{dfn}

	\begin{fct}
		Suppose $G$ is a semitopological group. If $A\subseteq G$, then $\overline{A}=\bigcap_V V^{-1}A$, where $V$ ranges over the neighbourhoods of the identity in $G$.
	\end{fct}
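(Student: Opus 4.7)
My plan is to reduce the statement to the basic characterisation of closure by translating neighbourhoods of an arbitrary point $g$ back to neighbourhoods of the identity via the right-translation map $x \mapsto xg$.

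First I would observe that in a semitopological group, for each fixed $g \in G$ the maps $x\mapsto xg$ and $x\mapsto gx$ are continuous bijections with continuous inverses (given by right/left multiplication by $g^{-1}$), hence homeomorphisms. This is precisely the content of Remark~\ref{rem:semitop_acts_on_itself}. As a consequence, a set $U \subseteq G$ is a neighbourhood of $g$ if and only if $U = Vg$ for some neighbourhood $V$ of $e$ (namely $V = Ug^{-1}$); openness passes between the two via the homeomorphism $x\mapsto xg$, and the same is true for the (non-open) neighbourhood relation by shrinking to an open kernel.

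Next I would unpack the set $V^{-1}A$. By definition an element $g$ lies in $V^{-1}A$ precisely when there are $v \in V$ and $a \in A$ with $g = v^{-1}a$, equivalently $vg = a$, equivalently $Vg \cap A \neq \emptyset$. Combining this with the previous paragraph, the condition $g \in \bigcap_V V^{-1}A$ (over all neighbourhoods $V$ of $e$) says exactly that $Vg \cap A \neq \emptyset$ for every neighbourhood $V$ of $e$, which in turn (by the bijection $V \leftrightarrow Vg$ between neighbourhoods of $e$ and neighbourhoods of $g$) says that $U \cap A \neq \emptyset$ for every neighbourhood $U$ of $g$. This is the standard characterisation of $g \in \overline{A}$, completing both inclusions simultaneously.

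There is no real obstacle here; the only subtlety worth flagging is that neighbourhoods need not be open, so one should verify the correspondence $V \leftrightarrow Vg$ for arbitrary (not merely open) neighbourhoods — but this is immediate from the fact that $x \mapsto xg$ is a homeomorphism, so it maps the neighbourhood filter of $e$ bijectively onto the neighbourhood filter of $g$. No use of joint continuity or continuity of inversion is needed; separate continuity alone suffices.
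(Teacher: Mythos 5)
Your proof is correct and takes essentially the same approach as the paper: both arguments turn on the fact that right translation $x\mapsto xg$ is a homeomorphism, so that $\{Vg : V \text{ a neighbourhood of } e\}$ is precisely the neighbourhood filter at $g$, together with the observation $g\in V^{-1}A \iff Vg\cap A\neq\emptyset$. The paper phrases the reverse inclusion by translating the closure ($\overline{Ax^{-1}}=\overline{A}x^{-1}$) rather than translating neighbourhoods, but this is the same idea packaged slightly differently.
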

	\begin{proof}
		Fix any $V\ni e$. Let $x\in \overline{A}$. Then $Vx$ is a neighbourhood of $x$, so $Vx\cap A\neq \emptyset$, and thus $x\in V^{-1}A$.
		
		On the other hand, if $x\in V^{-1}A$ for every neighbourhood $V$ of $e$, then also $Vx\cap A\neq\emptyset$, so $V\cap Ax^{-1}\neq\emptyset$. Since $V$ was an arbitrary neighbourhood of $e$, by Remark~\ref{rem:semitop_acts_on_itself}, it follows that $e\in \overline{Ax^{-1}}=\overline{A} x^{-1}$, so $ex=x\in \overline{A}$.
	\end{proof}
	
	\begin{fct}
		\label{fct:semitop_T2_quot}
		\index{H(G)@$H(G)$}
		Suppose $G$ is a compact $T_1$ semitopological group. Then the derived subgroup $H(G):=\bigcap_V \overline V$ (where the intersection runs over all neighbourhoods of the identity in $G$) is a closed normal subgroup of $G$
		
		Furthermore, $G/H(G)$ is a compact Hausdorff topological group.
	\end{fct}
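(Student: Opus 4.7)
The plan is to establish the result in three stages: $H(G)$ is a closed normal subgroup of $G$; the quotient $G/H(G)$ is Hausdorff; and $G/H(G)$ is a topological group. Granting the first two, the third is immediate from the Ellis joint continuity theorem (Fact~\ref{fct:ellis_joint_cont}), since $G/H(G)$ is compact as a continuous image of $G$, inherits separate continuity of multiplication from $G$, and is therefore a compact Hausdorff semitopological group, hence a topological group.

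For the first stage, $H(G)$ is closed by its definition as an intersection of closed sets, and normality is immediate from Remark~\ref{rem:semitop_acts_on_itself}: every inner automorphism of $G$ is a self-homeomorphism and therefore permutes open neighbourhoods of $e$, preserving $H(G)$. The subgroup property requires a more delicate use of separate continuity. For closure under inversion, given $h \in H(G)$, a neighbourhood $V$ of $e$, and a neighbourhood $W$ of $h^{-1}$, one observes that $hV$ is a neighbourhood of $h$ and $hW$ a neighbourhood of $e$ (via left multiplication by $h$); then $h \in \overline{hW}$ gives $hV \cap hW \neq \emptyset$, so $V \cap W \neq \emptyset$. For closure under multiplication, given $h_1, h_2 \in H(G)$, a neighbourhood $V$ of $e$, and a neighbourhood $U$ of $h_1 h_2$: right continuity produces an open $O \ni h_1$ with $Oh_2 \subseteq U$; since $h_1 \in \overline V$, pick $v \in O \cap V$; left continuity produces an open $O' \ni h_2$ with $vO' \subseteq U$; since $v \in V$ makes $v^{-1}V$ a neighbourhood of $e$ and $h_2 \in \overline{v^{-1}V}$, pick $w \in O' \cap v^{-1}V$, so $vw \in U \cap V$.

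For Hausdorffness of the quotient, the key reformulation is that $h \in H(G)$ if and only if there is a net in $G$ converging simultaneously to $e$ and to $h$; left-translating, the coset equivalence relation $E$ of $H(G)$ on $G$ coincides with the ``co-convergence relation'' $\{(x,y) : \exists \text{ net } z_\alpha \text{ with } z_\alpha \to x \text{ and } z_\alpha \to y\}$. A direct diagonal argument shows that this relation is closed in $G \times G$ (in fact in any topological space): given $(x_\alpha, y_\alpha) \to (x,y)$ with each pair co-convergent via a witnessing net $(z_{\alpha,\beta})$, the net indexed by pairs of open neighbourhoods $(A \ni x, B \ni y)$ and defined by picking $z_{A,B} \in A \cap B$ from a suitable $z_{\alpha, \beta}$ converges to both $x$ and $y$. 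Since the quotient map $\pi\colon G \to G/H(G)$ is open --- being the orbit map of $H(G)$ acting on $G$ by right translations, which are homeomorphisms by Remark~\ref{rem:semitop_acts_on_itself} --- a closed equivalence relation combined with an open quotient map yields a Hausdorff quotient via the standard argument: disjoint open rectangles separating non-equivalent pairs project to disjoint open neighbourhoods of the distinct classes.

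I expect the main obstacle to be the verification that $H(G)$ is closed under multiplication, where one must carefully chain the left- and right-continuity of multiplication to produce the required element in $U \cap V$; the other steps are either formal bookkeeping or standard applications of cited facts.
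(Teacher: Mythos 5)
Your proof is correct, but it takes a genuinely different route from the paper's. The paper's proof is organised around a single technical claim, namely that $\overline{V}H(G) \subseteq \overline{V}$ for every open neighbourhood $V$ of $e$; from this it deduces $H(G)^2 \subseteq H(G)$, then invokes Fact~\ref{fct:minimal_ideals_idempotents} on the compact subsemigroup $hH(G)$ to find an idempotent (which must be $e$) and thereby obtain inverses, and finally reuses the same claim directly to separate distinct cosets and show $G/H(G)$ is Hausdorff. You instead verify closure under inversion and multiplication by hand with careful chains of separate-continuity arguments (these check out, including the slightly delicate use of $v^{-1}V$ as a neighbourhood of $e$), and you handle Hausdorffness by the reformulation of $H(G)$ as the set of limits of nets converging to $e$, leading to the observation that the coset relation is the co-convergence relation and hence closed, which together with openness of the quotient map gives Hausdorffness. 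Your version is more elementary --- it avoids the idempotent machinery (and with it the appeal to Zorn's lemma hidden inside Fact~\ref{fct:minimal_ideals_idempotents}) --- at the cost of somewhat longer direct net manipulations; the paper's version is more compact once the auxiliary claim and the idempotent fact are in hand, and reuses the claim in both halves. Both reach the Ellis joint continuity theorem in the same way at the end. One small point worth making explicit in your write-up: you rely on Remark~\ref{rem:semitop_acts_on_itself} to know that left and right translations are homeomorphisms when forming $hV$, $hW$, $v^{-1}V$, and when showing openness of the quotient map.
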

	\begin{proof}
		The proof is essentially given in \cite{Gl76} for the special case of the Ellis groups of a certain class of dynamical systems. We present the full proof for completeness.
		
		Recall that a semitopological group acts on itself by homeomorphisms on the left and on the right (Remark~\ref{rem:semitop_acts_on_itself}). For brevity, write $H$ for $H(G)$. It is clear by definition that $H$ is closed and contains the identity $e\in G$.
		
		\begin{clm*}
			If $V$ is an open neighbourhood of the identity $e\in G$, then $\overline{V}H\subseteq \overline{V}$.
		\end{clm*}
		\begin{clmproof}
			Choose any $v\in \overline{V}$. Then for any open $W\ni e$, we have $Wv\cap V\neq \emptyset$ (because $v\in Wv\cap \overline{V}$ and $Wv$ is open), so $wv\in V$ for some $w\in W$. But then by continuity, there is some open $U\ni e$ such that $wvU\subseteq V$, so $wv\overline{U}=\overline{wvU}\subseteq\overline V$ But by definition of $H$, we have $H\subseteq \overline{U}$, so $wvH\subseteq \overline{V}$, so $vH\subseteq w^{-1}\overline{V}\subseteq W^{-1}\overline{V}$. But since $v$ and $W$ were arbitrary, by the preceding fact, $\overline{V}H\subseteq \overline{\overline{V}}=\overline{V}$.
		\end{clmproof}
		By taking the intersection over all $V$ in the claim, we have that $H^2=HH\subseteq H$, so $H$ is a subsemigroup of $G$. It follows that for every $h\in H$, $hH$ is also a subsemigroup. Because multiplication by $h$ is a homeomorphism, $hH$ is also closed (and thus compact). Furthermore, since multiplication by any element of $G$ is a homeomorphism, $hH$ satisfies the assumptions of Fact~\ref{fct:minimal_ideals_idempotents}, so it contains an idempotent. But the only idempotent in $G$ is the identity, so $hH$ contains the identity, and hence $H$ must contain the inverse of $h$. Since $h$ was arbitrary, $H$ is a group.
		
		The fact that $H$ is normal follows immediately from the fact that inner automorphisms are homeomorphisms (cf.\ Remark~\ref{rem:semitop_acts_on_itself}) and they fix the identity.
		
		Compactness of $G/H$ is immediate. Separate continuity of multiplication in $G/H$ follows immediately from the separate continuity of multiplication in $G$. By the Ellis joint continuity theorem (Fact~\ref{fct:ellis_joint_cont}), it is enough to show that $G/H$ is Hausdorff.
		
		Let $fH,gH$ be distinct elements of $G/H$. Then $fg^{-1}\notin H$, so there is a neighbourhood $V$ of $e\in G$ such that $fg^{-1}\notin \overline V$, so in particular, there is another neighbourhood $W$ of $e\in G$ such that $Wfg^{-1}\cap \overline{V}=\emptyset$. By the claim, $\overline{V} H\subseteq \overline V$,
		so it follows that $Wfg^{-1}\cap \overline{V}H=\emptyset$. But --- since $H$ is normal --- this is equivalent to $Wf\cap \overline{V}gH=\emptyset$, and because $H$ is a subgroup, this is equivalent to $WfH\cap \overline{V}gH=\emptyset$, so in particular, we have $WfH\cap VgH=\emptyset$, which completes the proof.
	\end{proof}

	\begin{fct}
		A topological space $X$ is Hausdorff if and only if for all $x\in X$ we have that
		\[
		\bigcap_{V}\overline V=\{x\},
		\]
		where the intersection runs over the neighbourhoods of $x$ in $X$.
	\end{fct}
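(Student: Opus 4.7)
My plan is to prove both directions by a direct application of the definitions of the Hausdorff property and of topological closure. Note that $\{x\} \subseteq \bigcap_V \overline V$ is automatic (every neighbourhood contains $x$), so in each direction it suffices to control the reverse inclusion.

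For the forward direction, I would fix $x \in X$ and an arbitrary $y \neq x$, and aim to exhibit a neighbourhood $V$ of $x$ with $y \notin \overline V$. Hausdorffness provides disjoint open sets $U \ni x$ and $W \ni y$. Since $W$ is then an open neighbourhood of $y$ disjoint from $U$, no net in $U$ can converge to $y$, so $y \notin \overline U$. Taking $V := U$ shows $y \notin \bigcap_V \overline V$, and since $y$ was arbitrary, we conclude $\bigcap_V \overline V = \{x\}$.

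For the reverse direction, I would assume the intersection condition and verify Hausdorffness directly. Fix distinct $x, y \in X$. By hypothesis applied to $x$, there exists a neighbourhood $V$ of $x$ with $y \notin \overline V$. Then $V$ and $X \setminus \overline V$ are disjoint open sets containing $x$ and $y$ respectively, which is exactly Hausdorffness.

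I do not expect any obstacles here; this is essentially a one-paragraph diagram chase with the definitions. The only mildly delicate point is that one needs the condition to hold at \emph{every} $x \in X$ (not just at a single point), but since the formulation already quantifies ``for all $x \in X$'', this is already built in and causes no trouble. No earlier results from the excerpt are needed beyond the elementary definitions of neighbourhood, closure, and Hausdorff.
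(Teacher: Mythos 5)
Your argument is correct and is the standard ``straightforward'' verification intended by the paper; both directions go through exactly as you describe. Two cosmetic points: the appeal to nets in the forward direction is an unnecessary detour, since $y\notin\overline U$ follows immediately from the definition of closure once you exhibit the open set $W\ni y$ disjoint from $U$; and in the reverse direction, if neighbourhoods are not assumed open, pass to the interior of $V$ (whose closure is contained in $\overline V$, hence still misses $y$) before forming the separating pair.
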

	\begin{proof}
		Straightforward.
	\end{proof}
	
	\begin{cor}
		\label{cor:T2_quotient}
		Suppose $f\colon X\to Y$ is a continuous map between topological spaces, where $Y$ is Hausdorff. Then for any $x\in X$ we have that $f$ is constant on $\bigcap_V{\overline V}$, where the intersection runs over all neighbourhoods of $x\in X$.
	\end{cor}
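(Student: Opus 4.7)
The plan is to deduce the corollary directly from the preceding fact applied to $Y$, combined with the standard observation that continuous maps take the closure of a preimage into the closure of the image. I expect no real obstacle here; the statement is essentially the characterisation of Hausdorffness pulled back through $f$.

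First I would fix $x \in X$ and an arbitrary point $x' \in \bigcap_V \overline{V}$, where $V$ runs over neighbourhoods of $x$. The goal is to show $f(x') = f(x)$. Since $Y$ is Hausdorff, the preceding fact (applied at the point $f(x)\in Y$) gives
\[
\bigcap_W \overline{W} = \{f(x)\},
\]
where $W$ runs over the neighbourhoods of $f(x)$ in $Y$. Hence it suffices to prove $f(x') \in \overline{W}$ for every such $W$.

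Now fix a neighbourhood $W$ of $f(x)$. By continuity of $f$, the preimage $U := f^{-1}[W]$ is a neighbourhood of $x$ in $X$, and therefore appears in the intersection defining $x'$, yielding $x' \in \overline{U}$. The standard consequence of continuity, $f[\overline{U}] \subseteq \overline{f[U]}$, together with $f[U] \subseteq W$ gives $f(x') \in \overline{W}$. Since $W$ was arbitrary, $f(x') \in \bigcap_W \overline{W} = \{f(x)\}$, so $f(x') = f(x)$, which completes the proof.
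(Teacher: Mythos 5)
Your proof is correct and takes essentially the same approach as the paper: both reduce the claim to the Hausdorff characterisation $\bigcap_W \overline{W} = \{f(x)\}$ in $Y$ and then use continuity (that $f^{-1}[W]$ is a neighbourhood of $x$ and $f[\overline{U}]\subseteq\overline{f[U]}$). The paper phrases the argument as a chain of set inclusions rather than tracking a single point $x'$, but the underlying steps are identical.
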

	\begin{proof}
		Since $Y$ is Hausdorff, we have that $\bigcap_W \overline W=\{f(x)\}$, where the intersection runs over all neighbourhoods of $f(x)$ in $Y$. But then
		\[
		f^{-1}[f(x)]=f^{-1}\bigg[\bigcap_W \overline W\bigg]=\bigcap_W f^{-1}[\overline W]\supseteq\bigcap_W \overline {f^{-1}[W]}\supseteq \bigcap_V{\overline V}\qedhere
		\]
	\end{proof}
	
	\begin{cor}
		\label{cor:H(G)_universal}
		If $G$ is a compact $T_1$ semitopological group, while $G'$ is a Hausdorff topological group, and $\varphi\colon G\to G'$ is a continuous homomorphism, then $\varphi$ factors through $G/H(G)$ (where $H(G)$ is the derived subgroup of $G$).
	\end{cor}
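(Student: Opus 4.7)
The plan is to show that $H(G)$ is contained in $\ker\varphi$; once this is established, the universal property of quotient homomorphisms will give the desired factorisation $\varphi = \bar\varphi \circ \pi$, where $\pi\colon G \to G/H(G)$ is the projection and $\bar\varphi\colon G/H(G)\to G'$ is the induced map (continuity of $\bar\varphi$ is automatic since $\pi$ is a topological quotient map).

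To show $H(G) \leq \ker\varphi$, I will apply Corollary~\ref{cor:T2_quotient} to the continuous map $\varphi\colon G\to G'$ at the point $x = e_G$. Since $G'$ is Hausdorff, that corollary tells us that $\varphi$ is constant on $\bigcap_V \overline{V}$, where $V$ ranges over the neighbourhoods of $e_G$ in $G$. But by definition, this intersection is exactly $H(G)$. Moreover, $e_G$ lies in $V$ (and thus in $\overline{V}$) for every such $V$, so $e_G \in H(G)$, and the constant value of $\varphi$ on $H(G)$ must therefore equal $\varphi(e_G) = e_{G'}$ (the last equality because $\varphi$ is a homomorphism). Hence $\varphi[H(G)] = \{e_{G'}\}$, i.e.\ $H(G) \leq \ker \varphi$.

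There is no serious obstacle here: the proof is essentially a direct application of Corollary~\ref{cor:T2_quotient} combined with the observation that $H(G)$ is precisely the intersection appearing in the statement of that corollary. The only mild subtlety to be careful about is that we really do want $\varphi$ to map the identity to the identity, which is guaranteed by the homomorphism hypothesis.
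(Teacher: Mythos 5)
Your proof is correct and is essentially the same argument the paper has in mind: the paper's proof just says that the result ``follows immediately from Corollary~\ref{cor:T2_quotient},'' and your write-up fills in exactly the steps that are being compressed there (identifying $\bigcap_V \overline{V}$ with $H(G)$, observing $e_G$ lies in that intersection so the constant value is the identity, concluding $H(G)\leq\ker\varphi$, and then invoking the quotient-map/universal-property argument for the factorisation).
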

	\begin{proof}
		This follows immediately from Corollary~\ref{cor:T2_quotient}.
	\end{proof}
	
	\subsection*{Dynamical systems}
	\begin{dfn}
		\label{dfn:dyn_syst}
		\index{G-flow@$G$-flow}
		\index{dynamical system}
		A {\em $G$-flow} or a \emph{dynamical system} is a pair $(G,X)$, where $G$ is a topological group acting continuously on a compact, Hausdorff space $X$.
		
		\index{G-ambit@$G$-ambit}
		A \emph{$G$-ambit} is a triple $(G,X,x_0)$ such that $(G,X)$ is a $G$-flow and $x_0\in X$ is a point with dense $G$-orbit.
		\xqed{\lozenge}
	\end{dfn}
	\begin{rem}
		In this thesis, most of the time, we do not care about the topology on $G$, so we can think of it as a discrete group acting on $X$ by homeomorphisms.
		\xqed{\lozenge}
	\end{rem}

	\begin{dfn}
		\index{Ellis!semigroup}
		\index{pig@$\pi_g$}
		\label{dfn:ellis_semigroup}
		The \emph{Ellis} or \emph{enveloping} semigroup of the flow $(G,X)$, denoted by $E(G,X)$, is the closure of the collection of functions $\{\pi_g \mid g \in G\}$ (where $\pi_g: X \to X$ is given by $\pi_g(x)=gx$) in the space $X^X$ equipped with the product topology, with composition as the semigroup operation.
		\xqed{\lozenge}
	\end{dfn}
	\begin{rem}
		When there is little risk of confusion, we sometimes abuse the notation and write $g$ instead of $\pi_g$ (and similarly, for $A\subseteq E(G,X)$, we write $A\cap G$ for the set of $g\in G$ such that $\pi_g\in A$).
		
		\index{piXg@$\pi_{X,g}$}
		If, on the contrary, we have more than one $G$-flow around, we add suitable indices, e.g.\ we have $(G,X)$ and $(G,Y)$, we write $\pi_{X,g}$ and $\pi_{Y,g}$ for the appropriate multiplication functions.
		
		When $(G,X)$ is fixed, we frequently write $EL$ instead of $E(G,X)$.
		\xqed{\lozenge}
	\end{rem}
	
	\begin{fct}
		The Ellis semigroup $E(G,X)$ is a compact Hausdorff left topological semigroup, i.e.\ given any $f_0\in E(G,X)$, the function $f\mapsto ff_0$ (the composition) is continuous.
	\end{fct}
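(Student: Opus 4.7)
The plan is to verify three things in turn: that $E(G,X)$ is compact Hausdorff, that it is closed under composition (so that it is genuinely a semigroup), and that for each fixed $f_0$, the map $f \mapsto ff_0$ is continuous in the pointwise convergence topology. None of these steps is deep; the proof is essentially bookkeeping on top of the Tychonoff theorem and the definition of pointwise convergence.

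For compactness and Hausdorffness, I would note that $X^X$ with the product (pointwise convergence) topology is a compact Hausdorff space by Tychonoff's theorem and the fact that products of Hausdorff spaces are Hausdorff. Since $E(G,X)$ is defined as the closure of $\{\pi_g \mid g \in G\}$ in $X^X$, it is a closed subspace of a compact Hausdorff space, hence itself compact Hausdorff.

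Continuity of left multiplication $f \mapsto ff_0$ is the easiest step: if $(f_i)_i$ is a net in $X^X$ converging pointwise to $f$, then for every $x \in X$ we have $(f_if_0)(x) = f_i(f_0(x)) \to f(f_0(x)) = (ff_0)(x)$, so $f_if_0 \to ff_0$ pointwise. This continuity holds on all of $X^X$, and in particular on $E(G,X)$. Note this is the ``left-continuous'' convention from the excerpt, where the semigroup operation is continuous in its left argument.

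The only step requiring a genuine (if still short) argument is that $E(G,X)$ is closed under composition. The obvious computation $\pi_g\pi_h = \pi_{gh}$ shows that $\{\pi_g \mid g \in G\}$ itself is a subsemigroup. For general $f_1,f_2 \in E(G,X)$, I would argue in two stages. First, fix $g \in G$; then right composition with $\pi_g$, i.e.\ $h \mapsto h\pi_g$, is continuous on $X^X$ (since $(h_i\pi_g)(x) = h_i(gx) \to h(gx)$ whenever $h_i \to h$ pointwise), so choosing a net $(g_j)_j$ in $G$ with $\pi_{g_j} \to f_1$ gives $\pi_{g_jg} = \pi_{g_j}\pi_g \to f_1\pi_g$, whence $f_1\pi_g \in E(G,X)$. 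Second, choose a net $(g_i)_i$ in $G$ with $\pi_{g_i} \to f_2$; by the already-established left continuity of $h \mapsto f_1 h$ (which is just the identity we proved for $f \mapsto ff_0$ applied on the other side --- more carefully, since pointwise convergence $\pi_{g_i} \to f_2$ gives $f_1(\pi_{g_i}(x)) \to f_1(f_2(x))$ only if $f_1$ is continuous, we must instead use the already-proved continuity of $f \mapsto ff_0$ with $f_0 := \pi_{g_i}$ to produce $f_1\pi_{g_i} \to f_1f_2$). Since each $f_1\pi_{g_i}$ lies in $E(G,X)$ by the first stage, and $E(G,X)$ is closed, we conclude $f_1f_2 \in E(G,X)$.

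The main subtlety --- really the only one --- is the asymmetry in continuity: pointwise evaluation makes $f \mapsto ff_0$ automatically continuous but says nothing about $f \mapsto f_0f$ (the elements of $E(G,X)$ need not be continuous functions), and this is exactly why the closure-under-composition argument requires first pushing $f_1$ past elements of $G$ and only then taking the second limit. Once this is handled properly, the proof is complete.
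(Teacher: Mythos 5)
Your compactness/Hausdorff step and your left-continuity step are both fine, and your first stage of the closure argument (that $f_1\pi_g\in E(G,X)$ for $g\in G$, via the continuity of $h\mapsto h\pi_g$) is also correct. The gap is in your second stage, and your parenthetical attempt to repair it does not actually close it.

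You approximate the \emph{right} factor: you take $\pi_{g_i}\to f_2$ and try to deduce $f_1\pi_{g_i}\to f_1f_2$. As you yourself notice, that convergence would need $f_1$ to be continuous, which it need not be. Your proposed fix --- ``use the already-proved continuity of $f\mapsto ff_0$ with $f_0:=\pi_{g_i}$'' --- does not produce the desired limit, because that continuity is in the left variable $f$, which is being held fixed at $f_1$; it says nothing as $i$ (hence $\pi_{g_i}$, sitting in the $f_0$ slot) varies. So the net $f_1\pi_{g_i}$ has no reason to converge to $f_1f_2$, and the argument stops there.

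The correct way to run this, and the one the paper uses, is to approximate the \emph{left} factor. Take $\pi_{g_j}\to f_1$; by left continuity of $h\mapsto hf_2$ (which you already proved, for arbitrary $f_2\in X^X$), we get $\pi_{g_j}f_2\to f_1f_2$. What is now needed is that each $\pi_{g_j}f_2$ lies in $E(G,X)$, i.e.\ that $G\cdot E(G,X)\subseteq E(G,X)$. Note this is not the inclusion your first stage proves: you showed $E(G,X)\cdot G\subseteq E(G,X)$, which is the wrong side. The inclusion $G\cdot E(G,X)\subseteq E(G,X)$ follows instead from the fact that each $g\in G$ acts on $X$ by a homeomorphism, so the coordinatewise map $f\mapsto\pi_g\circ f$ is a homeomorphism of $X^X$; since it carries $\{\pi_h\mid h\in G\}$ onto itself, it carries the closure $E(G,X)$ onto itself. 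With this in hand, $f_1f_2$ is a limit of elements of $E(G,X)$, which is closed, and the proof goes through. So: swap which factor you approximate, and replace your first stage (which proves the wrong one-sided inclusion) by the coordinatewise-homeomorphism argument for $G\cdot E(G,X)\subseteq E(G,X)$.
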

	\begin{proof}
		Since $X$ is compact Hausdorff, so is $X^X$. Function composition is trivially left continuous, so $X^X$ is a left topological semigroup. Since $G$ acts on $X$ (and thus also, coordinatewise, on $X^X$) by homeomorphisms, we see that $GE(G,X)=E(G,X)$, and hence (by left continuity of composition and the fact that $E(G,X)$ is the closure of $G\subseteq X^X$) also $E(G,X)E(G,X)=E(G,X)$, so $E(G,X)$ is a closed subsemigroup of $X^X$, which completes the proof.
	\end{proof}
	As an immediate consequence, we obtain the following:
	\begin{fct}
		\label{fct:idempotents_ideals_Ellis}
		There is a minimal (left) ideal $\cM$ in $E(G,X)$ (i.e.\ a minimal set such that $E(G,X)\cM=\cM$). Every such $\cM$ satisfies the following:
		\begin{enumerate}
			\item
			every $s\in \cM$ generates it as a left ideal: $\cM=E(G,X)s$,
			\item
			if $u\in \cM$ is an idempotent (i.e.\ $uu=u$), then $u\cM$ is a group (with composition as group operation and $u$ as identity),
			\item
			each $\cM$ is the disjoint union $\bigsqcup_{u} u\cM$, where $u$ ranges over idempotents $u\in \cM$; in particular, idempotents exist,
			\item
			for every idempotent $u\in \cM$ and every $s\in \cM$, we have that $su=s$,
			\item
			given two idempotents $u,v\in \cM$, the map $s\mapsto vs$ defines an isomorphism $u\cM\to v\cM$,
			\item
			every two ideal groups in $E(G,X)$ are isomorphic as groups.
		\end{enumerate}
	\end{fct}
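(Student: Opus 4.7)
The statement is an immediate specialisation of Fact~\ref{fct:minimal_ideals_idempotents} to $S = E(G,X)$, so the plan is simply to verify that the hypotheses of that fact are met and invoke it. Namely, the preceding paragraph (together with the remark that $X^X$ is a compact left topological semigroup under pointwise convergence, and $E(G,X)$ is a closed subsemigroup) shows that $E(G,X)$ is a compact Hausdorff left topological semigroup. In particular, it is $T_1$, so left multiplication $f \mapsto f f_0$ is continuous, and being a continuous map from a compact space to a Hausdorff space, it is automatically closed (by Remark~\ref{rem: continuous surjection is closed}).

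Therefore Fact~\ref{fct:minimal_ideals_idempotents} applies verbatim to $S = E(G,X)$, yielding the existence of a minimal left ideal $\cM$ together with all six enumerated properties. The only substantive matter would be noticing that item (6) of the fact we are proving refers specifically to ideal groups in $E(G,X)$, which are a special case of the final clause of Fact~\ref{fct:minimal_ideals_idempotents}.

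There is no real obstacle here, since the work has already been done in the general algebraic lemma; the only thing to check is the translation of hypotheses (compactness, $T_1$, left continuous and left closed multiplication) into the Ellis semigroup setting, all of which are immediate from the construction of $E(G,X)$ as a closed subset of $X^X$. In brief, the proof would read: ``$E(G,X)$ is a compact Hausdorff left topological semigroup, hence satisfies the hypotheses of Fact~\ref{fct:minimal_ideals_idempotents}, from which the claim follows immediately.''
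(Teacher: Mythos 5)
Your proposal is correct and is exactly the paper's argument: the paper's proof reads simply ``Immediate by Fact~\ref{fct:minimal_ideals_idempotents}.'' You spell out the verification of the hypotheses (compact, Hausdorff, left topological), which is implicit in the paper, but the route is identical.
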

	\begin{proof}
		Immediate by the Fact~\ref{fct:minimal_ideals_idempotents}.
	\end{proof}

	\begin{dfn}
		\label{dfn:circ}
		\index{o@$\circ$}
		For each $a\in EL$, $B\subseteq EL$, we write $a\circ B$ for the set of all limits of nets $(g_ib_i)_i$, where $g_i\to a$ (by which we mean, abusing the notation, that $\pi_{g_i}\to a$, where $\pi_g\colon X\to X$ is defined by $x\mapsto g\cdot x$), $g_i\in G$ and $b_i\in B$.\xqed{\lozenge}
	\end{dfn}
	\begin{rem}
		\label{rem:circ_equivalence}
		In \cite{Gl76}, the author uses a different definition of $\circ$, which, however, does not appear to work in sufficient generality. However, in the case of $(G,\beta G)$ considered there, the two definitions are equivalent, by \cite[Lemma 1.1(1), \S IX.1.]{Gl76}.
		\xqed{\lozenge}
	\end{rem}
	The following proposition gives a useful description of the $\circ$ operation, which frequently allows us to avoid cumbersome calculations with nets.
	\begin{prop}
		\label{prop:circ_description}
		For any $a,b\in E(G,X)$ and $C\subseteq E(G,X)$, we have that $b\in a\circ C$ if and only if for every open $U\ni a$ and $V\ni b$, we have some $g\in G$ and $c\in C$ such that $g\in U$ an $gc\in V$ (equivalently, $c\in g^{-1}V$).
	\end{prop}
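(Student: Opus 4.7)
The plan is to prove both implications directly from the definition of $\circ$ in terms of nets, reducing convergence to eventual membership in basic open neighbourhoods.

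For the forward direction, suppose $b \in a \circ C$, witnessed by nets $(g_i)_i$ in $G$ and $(c_i)_i$ in $C$ with $\pi_{g_i} \to a$ and $g_i c_i \to b$ in $EL$. Then for any open $U \ni a$ and $V \ni b$, by definition of convergence there is an index $i_0$ such that for all $i \geq i_0$ simultaneously $\pi_{g_i} \in U$ and $g_i c_i \in V$. Picking any such $i$ yields the desired $g = g_i$ and $c = c_i$.

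For the backward direction, I would build the required net by indexing on the directed set $I$ of pairs $(U,V)$, where $U$ is an open neighbourhood of $a$ and $V$ is an open neighbourhood of $b$, ordered by reverse inclusion in both coordinates (this is directed because the intersection of two neighbourhoods is a neighbourhood). For each $(U,V) \in I$, the hypothesis provides $g_{(U,V)} \in G$ with $\pi_{g_{(U,V)}} \in U$ and $c_{(U,V)} \in C$ with $g_{(U,V)} c_{(U,V)} \in V$. Then given any open $U_0 \ni a$ and $V_0 \ni b$, for every $(U,V) \geq (U_0, V_0)$ we have $\pi_{g_{(U,V)}} \in U \subseteq U_0$ and $g_{(U,V)} c_{(U,V)} \in V \subseteq V_0$, so the nets $(\pi_{g_{(U,V)}})_{(U,V) \in I}$ and $(g_{(U,V)} c_{(U,V)})_{(U,V) \in I}$ converge to $a$ and $b$ respectively, witnessing $b \in a \circ C$.

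The argument is essentially a straightforward diagonal/cofinal selection; there is no real obstacle beyond correctly setting up the directed index set. The parenthetical equivalence $gc \in V \iff c \in g^{-1} V$ in the statement is purely notational and needs no separate justification.
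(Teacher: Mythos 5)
Your proof is correct and takes essentially the same approach as the paper: the forward direction by evaluating the defining nets at a sufficiently large index, and the backward direction by constructing the witnessing net over the directed set of neighbourhood pairs $(U,V)$ ordered by reverse inclusion. You simply spell out a few of the routine details that the paper leaves implicit.
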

	\begin{proof}
		It is clear that if $b\in a\circ C$, then we can find the appropriate $g$ and $c$.
		
		In the other direction: take a directed set consisting of pairs $(U,V)$ of neighbourhoods of $a$ and $b$, respectively, ordered by reverse inclusion (separately on each coordinate), and for each $(U,V)$ take $g_{U,V}$ and $c_{(U,V)}$ as in the assumption. Then $g_{(U,V)}\to a$ and $g_{(U,V)}c_{(U,V)}\to b$.
	\end{proof}
	
	\begin{fct}
		\label{fct:circ_calculations}
		For any $B\subseteq EL$ and $a,b\in EL$, we have:
		\begin{enumerate}
			\item
			$(a\circ B)c=a\circ (Bc)$,
			\item
			$a\circ(b\circ B)\subseteq (ab)\circ B$,
			\item
			$aB\subseteq a\circ B$,
			\item
			$a\circ(B\cup C)=(a\circ B)\cup (a\circ C)$,
			\item
			$a\circ(bC)\subseteq (ab)\circ C$ and $a(b\circ C)\subseteq (ab)\circ C$.
		\end{enumerate}
	\end{fct}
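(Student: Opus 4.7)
The plan is to derive all five clauses from a uniform template: convert membership in $p \circ Q$ into the neighbourhood criterion of Proposition~\ref{prop:circ_description}, and then exploit two regularity properties of $EL$. The first is its left topological structure, namely continuity of $f \mapsto ff_0$ for every $f_0 \in EL$. The second---used crucially in the two nontrivial clauses---is that for $g_0 \in G$ the left multiplication $f \mapsto g_0 f$ on $EL$ is \emph{also} continuous, since it is coordinatewise application of the continuous self-map $\pi_{g_0}$ of $X$ and is therefore continuous in the product topology on $X^X$.

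I would dispatch (3), (4), and (1) first, as they are essentially cosmetic. For (3), the constant defining net $b_i = b$ gives $g_i b \to ab$, so $ab \in a\circ B$. For (4), $\supseteq$ is monotonicity, while $\subseteq$ follows by passing to a subnet of the defining net lying entirely in $B$ or entirely in $C$. For (1): if $g_i b_i \to x \in a\circ B$, then $g_i(b_i c) = (g_i b_i)c \to xc$ by continuity of right multiplication by $c$, giving $(a\circ B)c \subseteq a\circ(Bc)$; conversely, if $g_i(b_i c) \to y$ with $g_i \to a$ and $b_i \in B$, extract a subnet with $g_i b_i \to x \in a\circ B$ and conclude $y = xc \in (a\circ B)c$.

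The substance lies in (2) and (5). I would treat (2) as the template: let $y \in a\circ(b\circ B)$ and apply Proposition~\ref{prop:circ_description}, so that given open $U \ni ab$ and $V \ni y$ I must exhibit $g \in G$ and $d \in B$ with $g \in U$ and $gd \in V$. Choose $g_i \in G$ with $g_i \to a$ and $y_i \in b\circ B$ with $g_i y_i \to y$; by continuity of $f \mapsto fb$ and $f \mapsto f y_i$, we have $g_i b \to ab$ and $g_i y_i \to y$, so I fix an $i$ with $g_i b \in U$ and $g_i y_i \in V$. Unwrap $y_i \in b\circ B$: there are $h_j \in G$ with $h_j \to b$ and $d_j \in B$ with $h_j d_j \to y_i$. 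Since $g_i \in G$, left multiplication by $g_i$ on $EL$ is continuous, so $g_i h_j \to g_i b$ and, by associativity, $(g_i h_j) d_j = g_i(h_j d_j) \to g_i y_i$. For $j$ large, $g_i h_j \in U$ and $(g_i h_j) d_j \in V$, so setting $g := g_i h_j \in G$ and $d := d_j \in B$ completes the argument. Both inclusions of (5) follow the same nested-diagonal pattern: $a\circ(bC) \subseteq (ab)\circ C$ feeds the data directly into the neighbourhood criterion, and for $a(b\circ C) \subseteq (ab)\circ C$ I would write $y = ay'$ with $y' \in b\circ C$, choose $g_\alpha \in G$ with $g_\alpha \to a$ so that $g_\alpha b \to ab$ and $g_\alpha y' \to y$ can both be made to land in $U$ and $V$, and then unwrap $y' = \lim h_i c_i$.

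The main obstacle throughout is that Proposition~\ref{prop:circ_description} requires the witness $g$ to lie in $G$, not merely in $EL$. This forces the nested diagonal combining a $G$-net converging to $a$, the one-sided continuities that transfer this convergence to products like $ab$, and a second $G$-approximation of $b$. Once this manoeuvre is executed cleanly for (2), the remaining inclusions are variants of the same pattern.
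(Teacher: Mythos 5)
Your proposal is correct and follows essentially the same route as the paper: both rely on the neighbourhood criterion of Proposition~\ref{prop:circ_description} for (2) and on the key observation (which you spell out explicitly and the paper uses implicitly in forming $g_a^{-1}U_f$) that left multiplication by an element of $G$ is continuous on $EL$, since it is coordinatewise composition with the homeomorphism $\pi_{g_0}$. The only place you do more work than needed is (5): rather than rerunning the nested-net argument twice, the paper disposes of both inclusions in one line by combining (2), (3), and the evident monotonicity of $\circ$ in its second argument, namely $a\circ(bC)\subseteq a\circ(b\circ C)\subseteq (ab)\circ C$ and $a(b\circ C)\subseteq a\circ(b\circ C)\subseteq (ab)\circ C$.
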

	\begin{proof}
		(1) is easy by left continuity: if $g_i\to a$ and $b_i\in B$, then $b_ic\in Bc$ and $(\lim g_ib_i)c=\lim(g_ib_ic)$, provided the limit on the left exists, which can be forced by compactness (and passing to a subnet).
		
		For (2), take any $f\in (a\circ (b\circ B))$ and take any neighbourhoods $U_f$ of $f$ and $U_{ab}$ of $ab$. Since $ab\in U_{ab}$, by left continuity, there is a neighbourhood $U_a$ of $a$ such that for all $a'\in U_a$ we have $a'b\in U_{ab}$.
		
		Now, since $f\in a\circ (b\circ B)$, by applying Proposition~\ref{prop:circ_description} to it, we can find some $g_a\in U_a\cap G$ and $b'\in b\circ B$ such that $g_a\in U_{a}$ and $b'\in g_{a}^{-1}U_f$. Then $g_ab\in U_{ab}$, so also $b\in U_b:=g_a^{-1}U_{ab}$.
		
		Applying Proposition~\ref{prop:circ_description} for $b'\in b\circ B$, we get $g_b\in U_b\cap G$ and $b''\in B$ such that $g_bb''\in g_a^{-1}U_f$. Thus $g_ag_bb''\in U_f$ and $g_ag_b\in g_aU_b\cap G=U_{ab}\cap G$.
		
		Since $U_{ab}$ and $U_f$ were arbitrary, by Proposition~\ref{prop:circ_description} for $(ab)\circ B$ (in the opposite direction), we are done.
		
		For (3), just take constant net in $B$ and any net $(g_i)_i$ in $G$ converging to $a$.
		
		For (4), just note that of $(a_i)_i$ is a net in $B\cup C$, then we have a cofinal subnet contained in one of $B$ and $C$.
		
		For (5), just notice that by (3), $bC\subseteq b\circ C$, and by (2), $a\circ(b\circ C))\subseteq (ab)\circ C$, and likewise $a(b\circ C)\subseteq a\circ (b\circ C)\subseteq (ab)\circ C$.
	\end{proof}
	
	\begin{fct}
		\label{fct:circ_with_closure}
		$a\circ B=a\circ \overline B$.
	\end{fct}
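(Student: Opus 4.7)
The containment $a\circ B\subseteq a\circ \overline B$ is immediate from the definition (monotonicity of $\circ$ in the second argument: any net $(g_ib_i)$ with $b_i\in B$ is also a net with $b_i\in\overline B$). So the whole content is the reverse containment, and the plan is to use the neighbourhood characterisation of $\circ$ from Proposition~\ref{prop:circ_description} together with left continuity of multiplication in $EL$.

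Concretely, I would fix $f\in a\circ\overline B$ and verify the criterion of Proposition~\ref{prop:circ_description} for $f\in a\circ B$. So take arbitrary open neighbourhoods $U\ni a$ and $V\ni f$. Applying Proposition~\ref{prop:circ_description} to $f\in a\circ\overline B$ yields some $g\in U\cap G$ and $b'\in\overline B$ with $gb'\in V$. Now the key observation: by left continuity of multiplication in $EL$ (which is a left topological semigroup), the map $c\mapsto gc$ is continuous, so the preimage $W:=\{c\in EL : gc\in V\}$ is an open neighbourhood of $b'$. Since $b'\in\overline B$, we can pick $b\in B\cap W$, and then $g\in U$ and $gb\in V$, which is exactly the witness demanded by Proposition~\ref{prop:circ_description} for $f\in a\circ B$.

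There is no real obstacle here; the only subtle point is remembering to invoke left continuity at the right moment, since $EL$ is not a topological semigroup (multiplication is continuous only on the left), and we are applying continuity precisely to the map $c\mapsto gc$, which is the continuous direction. The proof is short enough that I would simply write it as one paragraph rather than breaking it into steps.
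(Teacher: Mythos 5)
Your proof follows essentially the same route as the paper's: reduce to the nontrivial inclusion, verify the criterion of Proposition~\ref{prop:circ_description}, and use that for $g\in G$ the set $g^{-1}V=\{c : gc\in V\}$ is open so that the closure $\overline B$ must meet it whenever $\overline B$ meets any neighbourhood of $b'$. However, the justification you offer for the ``key observation'' is wrong. In this paper's convention, ``$EL$ is a left topological semigroup'' means that for fixed $f_0$ the map $f\mapsto ff_0$ (moving the \emph{left} argument) is continuous; it gives nothing about $c\mapsto gc$, and indeed $c\mapsto f_0c$ is \emph{not} continuous for a general $f_0\in EL$ (take $f_0$ discontinuous as a function $X\to X$). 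The map $c\mapsto gc$ is continuous for a different reason entirely: $g\in G$, and $EL$ is a $G$-flow with $G$ acting by $g\cdot f=\pi_g f$, so $g$ acts on $EL$ by homeomorphisms (equivalently, $\pi_g$ is itself a homeomorphism of $X$, so composing it on the left preserves pointwise limits). So the ``only subtle point'' you single out is precisely the one you get wrong, and the fact that $g$ lies in $G$ rather than merely in $EL$ is essential, not incidental. With that justification repaired, the argument coincides with the paper's.
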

	\begin{proof}
		$\subseteq$ is clear. For the opposite inclusion, take any $b\in a\circ \overline{B}$ and any neighbourhoods $U_a\ni a$ and $U_b\ni b$. By the assumption and Proposition~\ref{prop:circ_description}, we have $g_a\in U_a\cap G$ and $b'\in \overline{B}$ such that $g_ab'\in U_b$, i.e.\ $b'\in g_a^{-1}U_b$. But since $b'\in \overline{B}$, we can find some $b''\in B\cap g_a^{-1}U_b$. Since $U_a,U_b$ were arbitrary, we are done.
	\end{proof}
	
	\begin{fct}
		\label{fct:circ_closed}
		$a\circ B$ is closed.
	\end{fct}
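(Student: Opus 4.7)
The plan is to use the characterization of the $\circ$-operation given by Proposition~\ref{prop:circ_description} rather than arguing directly with convergent nets, which would require a somewhat fiddly diagonal argument. Concretely, I would fix $b \in \overline{a \circ B}$ and verify that $b$ satisfies the open-set criterion of Proposition~\ref{prop:circ_description} for membership in $a \circ B$.

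So, let $U \ni a$ and $V \ni b$ be arbitrary open neighbourhoods. Since $b$ is in the closure of $a \circ B$, the open set $V$ meets $a \circ B$; pick $b' \in V \cap (a \circ B)$. Now $V$ is itself an open neighbourhood of $b'$, and $b' \in a \circ B$, so Proposition~\ref{prop:circ_description} (applied to $b'$ with the neighbourhoods $U$ of $a$ and $V$ of $b'$) furnishes some $g \in U \cap G$ and $c \in B$ with $g c \in V$. Since $U$ and $V$ were arbitrary, Proposition~\ref{prop:circ_description} (in the opposite direction) shows that $b \in a \circ B$, which proves the fact.

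There is essentially no obstacle here, as the only substantive input is the translation between the net-based definition of $\circ$ and its open-set characterization, which has already been carried out in Proposition~\ref{prop:circ_description}. A more direct approach via nets and a diagonal/product-of-directed-sets construction is possible but strictly more painful, and the argument above avoids it entirely.
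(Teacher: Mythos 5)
Your proof is correct and is precisely the argument the paper gives: the paper's proof observes that every neighbourhood of a point of $\overline{a\circ B}$ is also a neighbourhood of some point of $a\circ B$ and invokes Proposition~\ref{prop:circ_description}, which is exactly your reasoning spelled out in slightly more detail. Nothing further is needed.
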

	\begin{proof}
		Since every neighbourhood of an $f\in \overline{a\circ B}$ is also a neighbourhood of some $f'\in{a\circ B}$, the fact follows immediately by Proposition~\ref{prop:circ_description}.
	\end{proof}
	
	\begin{fct}
		\label{fct:circ_stays_in_ideal}
		For any $a\in EL$, any closed left ideal $I\unlhd EL$ (e.g.\ any minimal left ideal) and $B\subseteq I$ we have $a\circ B\subseteq I$.
	\end{fct}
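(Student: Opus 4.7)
\medskip

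The plan is to unpack the definition of $a\circ B$ and use both defining properties of $I$ (being a left ideal, and being closed) in succession. Take an arbitrary $f\in a\circ B$. By Definition~\ref{dfn:circ}, there exist nets $(g_i)_i$ in $G$ and $(b_i)_i$ in $B$ such that $\pi_{g_i}\to a$ in $EL$ and $g_ib_i\to f$.

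The first observation is that each term of the net $(g_ib_i)_i$ lies in $I$: since $b_i\in B\subseteq I$ and $\pi_{g_i}\in G\subseteq EL$ and $I$ is a left ideal of $EL$, we have $g_ib_i=\pi_{g_i}\cdot b_i\in EL\cdot I\subseteq I$. Thus the net $(g_ib_i)_i$ is contained in $I$, and the second observation is just that $I$ is closed, so its limit $f$ must also belong to $I$. This gives $a\circ B\subseteq I$, which is what we wanted.

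There is really no obstacle here; the statement is essentially a direct consequence of the definition together with the hypotheses on $I$. One could alternatively phrase the same argument via Proposition~\ref{prop:circ_description}: for each open $V\ni f$, pick $g\in G$ and $b\in B\subseteq I$ with $gb\in V$; then $gb\in I$, so every neighbourhood of $f$ meets $I$, forcing $f\in\overline I=I$. Either formulation works; the net-based one is cleaner since it mirrors the definition of $\circ$ directly.
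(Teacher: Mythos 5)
Your proof is correct and takes essentially the same approach as the paper: both arguments note that each $g_ib_i=\pi_{g_i}b_i\in EL\cdot I\subseteq I$ by the left-ideal property, and then conclude by closedness of $I$.
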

	\begin{proof}
		Since $B\subseteq I$, for any $g_i\in G$, $b_i\in B$ we have $g_ib_i=\pi_{g_i}b_i\in ELb_i\subseteq ELI\subseteq I$. Because $I$ is closed, the result follows.
	\end{proof}
	
	\begin{fct}
		\label{fct:circ_with_idemp}
		If $v\in EL$ is any idempotent, then for any $a\in EL$ and $B\subseteq EL$ we have $v(a\circ B)\subseteq v((va)\circ B)$.
	\end{fct}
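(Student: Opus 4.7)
The plan is very short: this is essentially a one-line consequence of the earlier calculational facts combined with the defining property of an idempotent.

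First I would invoke Fact~\ref{fct:circ_calculations}(5), which states (in its second half) that for any $a', b' \in EL$ and $C \subseteq EL$ we have $a'(b' \circ C) \subseteq (a'b') \circ C$. Applying this with $a' := v$, $b' := a$, and $C := B$ yields the inclusion
\[
v(a \circ B) \subseteq (va) \circ B.
\]

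Next, I would left-multiply both sides by $v$. Since the relation $S_1 \subseteq S_2$ between subsets of $EL$ clearly implies $vS_1 \subseteq vS_2$, this gives
\[
v \cdot v(a \circ B) \subseteq v \cdot ((va) \circ B) = v((va) \circ B).
\]
Finally, since $v$ is an idempotent, $v \cdot v = v$, so the left-hand side simplifies to $v(a \circ B)$, yielding the desired inclusion
\[
v(a \circ B) \subseteq v((va) \circ B).
\]

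There is no real obstacle here: all the heavy lifting (the interaction between the $\circ$ operation, left continuity of multiplication, and Proposition~\ref{prop:circ_description}) is already packaged into Fact~\ref{fct:circ_calculations}(5), and the idempotency of $v$ is used only to absorb the extra factor of $v$ that appears on the left. The main conceptual point worth emphasising is merely that one cannot hope to prove the stronger statement $v(a\circ B) \subseteq (va) \circ B$ and then deduce the displayed inclusion by simply ``cancelling'' the outer $v$; instead the trick is to multiply rather than cancel, exploiting $v^2 = v$ to reintroduce the outer $v$ on the right-hand side.
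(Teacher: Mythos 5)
Your proof is correct and follows essentially the same route as the paper: both arguments use Fact~\ref{fct:circ_calculations}(5) to obtain $v(a\circ B)\subseteq (va)\circ B$ and then exploit $v^2=v$ to absorb an extra left factor of $v$. The only difference is cosmetic — the paper first rewrites $v(a\circ B)$ as $v(v(a\circ B))$ and then substitutes, whereas you multiply the inclusion on the left by $v$ and then simplify; these are the same computation presented in opposite order.
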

	\begin{proof}
		First, since $v$ is an idempotent, $v(a\circ B)=v(v(a\circ B))$. By Fact~\ref{fct:circ_calculations}(5), $v(a\circ B)\subseteq (va)\circ B$, so $v(a\circ B)\subseteq v((va)\circ B)$.
	\end{proof}
	
	\begin{fct}
		\label{fct:tau_closure}
		\index{clt@$\cl_{\tau}$}
		Given a minimal left ideal $\cM\unlhd E(G,X)$ and an idempotent $u\in \cM$, $\cl_\tau(A):=(u\cM)\cap (u\circ A)$ is a closure operator on $u\cM$.
	\end{fct}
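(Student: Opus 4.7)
The plan is to verify the four Kuratowski closure axioms: $\cl_\tau(\emptyset)=\emptyset$, extensivity $A\subseteq\cl_\tau(A)$, idempotency $\cl_\tau(\cl_\tau(A))=\cl_\tau(A)$, and preservation of finite unions $\cl_\tau(A\cup B)=\cl_\tau(A)\cup\cl_\tau(B)$, for all $A,B\subseteq u\cM$. Each is a short computation using Fact~\ref{fct:circ_calculations}.

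The first axiom is immediate: $u\circ\emptyset=\emptyset$ since there are no nets $(g_ib_i)_i$ with $b_i\in\emptyset$. For extensivity, note that since $u$ is the identity of the group $u\cM$, for any $A\subseteq u\cM$ we have $A=uA$; by Fact~\ref{fct:circ_calculations}(3), $uA\subseteq u\circ A$, and trivially $A\subseteq u\cM$, so $A\subseteq(u\cM)\cap(u\circ A)=\cl_\tau(A)$. For preservation of unions, apply Fact~\ref{fct:circ_calculations}(4): $u\circ(A\cup B)=(u\circ A)\cup(u\circ B)$; intersecting with $u\cM$ distributes over unions, giving $\cl_\tau(A\cup B)=\cl_\tau(A)\cup\cl_\tau(B)$.

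The only slightly less automatic point is idempotency. One inclusion is free from extensivity. For the other, start with $\cl_\tau(A)=(u\cM)\cap(u\circ A)\subseteq u\circ A$, so by monotonicity of $u\circ(-)$ (clear from the definition) together with Fact~\ref{fct:circ_calculations}(2),
\[
u\circ\cl_\tau(A)\ \subseteq\ u\circ(u\circ A)\ \subseteq\ (uu)\circ A\ =\ u\circ A,
\]
using $uu=u$. Intersecting with $u\cM$ yields $\cl_\tau(\cl_\tau(A))\subseteq(u\cM)\cap(u\circ A)=\cl_\tau(A)$, completing the proof. No step looks like it will pose any real obstacle; the argument is entirely a bookkeeping exercise in the algebraic identities already established for the operation $\circ$, and the idempotency of $u$ inside $\cM$ is what makes everything close up neatly.
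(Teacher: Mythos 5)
Your proof is correct and follows essentially the same route as the paper: check the four Kuratowski axioms directly, using Fact~\ref{fct:circ_calculations}(3) for extensivity, (4) for additivity, and (2) together with $u^2=u$ for idempotence. The only cosmetic difference is that you explicitly flag the monotonicity of $u\circ(-)$ in the idempotence step, which the paper leaves implicit; this is a fair and harmless addition.
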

	\begin{proof}
		The identity $\cl_{\tau}(\emptyset)=\emptyset$ is trivial by the definition of $\circ$.
		
		Idempotence follows from Fact~\ref{fct:circ_calculations}(2), as $u\circ(u\circ A)\subseteq (u^2)\circ A=u\circ A$.
		
		Likewise, extensivity follows easily by Fact~\ref{fct:circ_calculations}(3): if $A\subseteq u\cM$, then $A=uA\subseteq u\circ A$.
		
		Finally, additivity is immediate by Fact~\ref{fct:circ_calculations}(4).
	\end{proof}
	
	\begin{dfn}
		\label{dfn:tau_topology}
		\index{topology!t@$\tau$}
		By the \emph{$\tau$ topology} we mean the topology on $u\cM$ given by the closure operator $\cl_\tau$ from Fact~\ref{fct:tau_closure}.
		\xqed{\lozenge}
	\end{dfn}
	The following fact gives us another description of the operator $\cl_\tau$.
	\begin{fct}
		\label{fct:taucl_alt}
		$\cl_\tau(A)=u(u\circ A)$
	\end{fct}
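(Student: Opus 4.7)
The plan is to show the two inclusions $(u\cM)\cap(u\circ A)\subseteq u(u\circ A)$ and $u(u\circ A)\subseteq (u\cM)\cap(u\circ A)$ directly, using only that $u$ is an idempotent, that $\cM$ is a closed left ideal, and the calculus of the operation $\circ$ already established in Facts~\ref{fct:circ_calculations}, \ref{fct:circ_stays_in_ideal} and \ref{fct:circ_with_idemp}. The whole argument should be essentially algebraic; there is no genuine obstacle, the only minor subtlety is keeping track of which facts about $\circ$ are needed at each step.

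First I would handle the inclusion $(\supseteq)$, which is the easier one. Given $f\in u(u\circ A)$, write $f = u g$ with $g\in u\circ A$. Since $A\subseteq u\cM\subseteq\cM$ and $\cM$ is a closed left ideal, Fact~\ref{fct:circ_stays_in_ideal} gives $u\circ A\subseteq\cM$, so $g\in\cM$ and hence $f=ug\in u\cM$. Moreover, by Fact~\ref{fct:circ_calculations}(5) (or equivalently Fact~\ref{fct:circ_with_idemp}) together with $u^2=u$, $u(u\circ A)\subseteq (u\cdot u)\circ A = u\circ A$, so $f\in u\circ A$. Thus $f\in (u\cM)\cap(u\circ A)$.

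For $(\subseteq)$, take $f\in (u\cM)\cap(u\circ A)$. Because $u$ is the identity element of the group $u\cM$, we have $uf = f$. Consequently $f = uf \in u(u\circ A)$, using the hypothesis that $f\in u\circ A$. This finishes the proof. The only place where anything nontrivial happens is in the use of Fact~\ref{fct:circ_calculations}(5) to show that $u\circ A$ is closed under left multiplication by $u$; everything else is a direct manipulation of the definition of $\cl_\tau$ together with the identity $uf = f$ for $f\in u\cM$.
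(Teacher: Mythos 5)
Your proof is correct and follows essentially the same route as the paper: both directions rest on Fact~\ref{fct:circ_stays_in_ideal} (to get $u\circ A\subseteq\cM$) and Fact~\ref{fct:circ_calculations}(5) (to get $u(u\circ A)\subseteq u\circ A$), with the reverse inclusion being immediate since $u$ acts as the identity on $u\cM$. The only differences are cosmetic: the paper leaves the trivial direction unstated, and you have the labels "easier" and "harder" swapped — the inclusion $u(u\circ A)\subseteq\cl_\tau(A)$ is the one that actually needs the $\circ$ calculus, while $\cl_\tau(A)\subseteq u(u\circ A)$ is the one-liner.
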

	\begin{proof}
		By Fact~\ref{fct:circ_calculations}(5), $u(u\circ A)\subseteq (uu)\circ A=u\circ A$, and by Fact~\ref{fct:circ_stays_in_ideal}, $u\circ A\subseteq \cM$, so $u(u\circ A)\subseteq u\cM$. Hence $u(u\circ A)\subseteq (u\cM)\cap u\circ A$. The other inclusion is trivial.
	\end{proof}
	
	When $A\subseteq u\cM$, when we write $\overline{A}$, we always mean the closure of $A$ in $EL$. The closure in the $\tau$ topology we write as $\cl_\tau(A)$, or explicitly as $u\cM\cap(u\circ A)$ or ---  via Fact~\ref{fct:taucl_alt} --- as $u(u\circ A)$.
	
	\begin{fct}
		\label{fct:tau_coarser}
		For all $A\subseteq u\cM$ we have $\overline A\subseteq u\circ A$. In particular, the $\tau$ topology on $u\cM$ is coarser than the subspace topology inherited from $EL$.
	\end{fct}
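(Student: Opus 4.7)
The plan is to first prove the set-theoretic inclusion $\overline A\subseteq u\circ A$ and then derive the topological comparison as a corollary.

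For the main inclusion, I would use Proposition~\ref{prop:circ_description} as the working characterisation of $u\circ A$. Fix $f\in \overline A$ and arbitrary open neighbourhoods $U\ni u$ and $V\ni f$; the task is to exhibit $g\in U\cap G$ and $b\in A$ with $gb\in V$. Since $f\in\overline A$, I would first pick $b\in A\cap V$. Now I exploit two things: first, that every element of $A$ is an honest element of $u\cM$ with $u$ as its left identity, so $ub=b\in V$; and second, that multiplication in $EL$ is continuous on the left, so the map $h\mapsto hb$ is continuous at $u$, giving an open $U'\ni u$ with $U'b\subseteq V$. Intersecting, $U\cap U'$ is still an open neighbourhood of $u$, and since $u\in\overline G$ (that is the very definition of the Ellis semigroup in Definition~\ref{dfn:ellis_semigroup}), the open set $U\cap U'$ meets $G$. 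Any $g$ in this intersection does the job: $g\in U\cap G$ and $gb\in V$. By Proposition~\ref{prop:circ_description}, $f\in u\circ A$.

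For the ``in particular'' part, I would unpack what it means for the $\tau$ topology to be coarser than the subspace topology on $u\cM$: every $\tau$-closed set must be closed in the subspace topology from $EL$. So suppose $A\subseteq u\cM$ is $\tau$-closed, which by Fact~\ref{fct:tau_closure} means $A=(u\cM)\cap(u\circ A)$. The subspace closure of $A$ is $\overline A\cap u\cM$, and trivially $A\subseteq \overline A\cap u\cM$; for the reverse inclusion, I apply the first part to get $\overline A\cap u\cM\subseteq(u\circ A)\cap u\cM=A$. Hence $A$ is closed in the subspace topology.

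I do not expect any serious obstacle here: the proof is essentially a matter of organising the definitions. The only subtle point is remembering that $u$ acts as the identity on $u\cM$ so that the chosen $b$ already lies in $V$, which lets left continuity of multiplication translate into a neighbourhood $U'$ of $u$ with $U'b\subseteq V$; without this observation one might vainly try to approximate $f$ directly by a product $gb$ rather than first securing $b\in V$ and then perturbing by $g\approx u$.
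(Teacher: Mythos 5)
Your argument is correct, but it takes a slightly different route from the paper's. The paper's proof is one line: $A = uA\subseteq u\circ A$ (using that $u$ is the identity of $u\cM$ together with Fact~\ref{fct:circ_calculations}(3)), and $u\circ A$ is closed by Fact~\ref{fct:circ_closed}, so $\overline A\subseteq u\circ A$. You instead verify the inclusion directly from the characterisation in Proposition~\ref{prop:circ_description}, in effect inlining the proofs of those two supporting facts: pick $b\in A\cap V$, note $ub=b\in V$, use left continuity of $h\mapsto hb$ at $u$ to shrink the neighbourhood, and invoke density of $\{\pi_g\mid g\in G\}$ in $EL$ to produce the required $g$. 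The paper's route is shorter because it packages the reasoning into lemmas already available, while yours is more self-contained and makes the mechanism transparent; the underlying idea --- that $u$ acts as identity on $u\cM$, which is exactly what makes the approximation work --- is the same, as you correctly identify. Your derivation of the ``in particular'' part from the inclusion (a $\tau$-closed $A$ equals $(u\cM)\cap(u\circ A)\supseteq u\cM\cap\overline A$) is also the intended reading, which the paper leaves implicit.
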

	\begin{proof}
		We have $A=uA\subseteq u\circ A$ and $u\circ A$ is closed by Fact~\ref{fct:circ_closed}.
	\end{proof}
	
	\begin{rem}
		Note that the $\tau$ topology is not necessarily Hausdorff, so limits of nets may not be unique. In particular, if $(a_i)_i$ is a net in $u\cM$ converging to some $a$ in $EL$, and $a'\in u\cM$ is distinct from $ua$, then even though $a_i\xrightarrow{\tau} ua\neq a'$, we may have $a_i\xrightarrow{\tau} a'$. This makes some facts more difficult to prove than it may appear at first.
		\xqed{\lozenge}
	\end{rem}
	
	\begin{fct}
		\label{fct:ulimit}
		If $(a_i)_i$ is a net in $u\cM$ converging to $a\in \overline{u\cM}$, then $(a_i)_i$ converges to $ua$ in the $\tau$-topology.
	\end{fct}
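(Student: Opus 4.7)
My plan is to unwind $\tau$-convergence to a statement about $\tau$-closed sets, then verify the resulting criterion for membership in $u \circ F$ by invoking Proposition~\ref{prop:circ_description} together with density of $G$ in $EL$.

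First, I will observe that the $\tau$-convergence of $(a_i)$ to $ua$ is equivalent, by contraposition, to the following: for every $\tau$-closed set $F \subseteq u\cM$ with $(a_i)$ cofinally in $F$ (equivalently, some subnet $(a_{i_j})$ lies entirely in $F$), the point $ua$ must belong to $F$. By Fact~\ref{fct:taucl_alt}, $F = \cl_\tau(F) = u(u \circ F) = (u\cM) \cap (u \circ F)$, so since $ua \in u\cM$ trivially, it suffices to prove $ua \in u \circ F$.

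Second, by Proposition~\ref{prop:circ_description}, to show $ua \in u \circ F$ I need to produce, for each pair of open neighbourhoods $U \ni u$ and $V \ni ua$, some $g \in U \cap G$ and $f \in F$ with $gf \in V$. Right multiplication by $a$ is continuous on $EL$ (this is exactly the left-continuity of the semigroup structure, cf.\ Fact~\ref{fct:ellis_clst_pre}), so since $ua \in V$, there is an open $U' \subseteq U$ with $u \in U'$ and $U' a \subseteq V$. By density of $G$ in $EL$ (immediate from the definition of the Ellis semigroup as the closure of $\{\pi_g : g \in G\}$), pick any $g \in U' \cap G$. Now left multiplication by $g \in G$ coincides with coordinatewise application of $\pi_g$ on $X^X$, which is continuous, so there is an open $V' \ni a$ with $g V' \subseteq V$. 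Since the subnet $(a_{i_j})$ converges to $a$ in $EL$, eventually $a_{i_j} \in V'$, giving $g a_{i_j} \in V$; taking $f := a_{i_j} \in F$ for such $j$ furnishes the required witnesses.

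I do not foresee a serious obstacle here: the argument simply combines density of $G$ in $EL$ with the two available forms of continuity—right multiplication in $EL$ by fixed elements, and left multiplication by elements of $G$. The only mildly delicate point is remembering that the $\tau$-topology is generally not Hausdorff, which forces the convergence criterion to be phrased in terms of $\tau$-closed sets capturing all cluster points of subnets, rather than uniqueness of a limit.
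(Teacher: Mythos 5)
Your proof is correct. The overall strategy matches the paper's — reduce $\tau$-convergence to a statement about $\tau$-closures, then analyse the $\circ$ operation — but the two executions differ in detail. The paper observes that $a \in \overline{a_{\geq i_0}}$ for every tail, then cites Fact~\ref{fct:tau_coarser} (i.e.\ $\overline A \subseteq u\circ A$ for $A \subseteq u\cM$) to conclude $a \in u\circ a_{\geq i_0}$, hence $ua \in u(u\circ a_{\geq i_0}) = \cl_\tau(a_{\geq i_0})$; this makes $ua$ a $\tau$-cluster point, and running the argument over every subnet yields convergence. You instead verify $ua \in u\circ F$ directly from Proposition~\ref{prop:circ_description}, using density of $G$ in $EL$, left-continuity of right multiplication, and the fact that each $\pi_g$ acts as a homeomorphism on $X^X$; this is essentially the content of Facts~\ref{fct:circ_closed} and~\ref{fct:tau_coarser} unpacked in place rather than cited, so the argument buys you nothing new but is more self-contained. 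Your reformulation of convergence in terms of $\tau$-closed sets containing the net cofinally is arguably a cleaner way to cope with the non-Hausdorff $\tau$-topology than the paper's phrasing via subnet-limits, which the paper states rather tersely and leaves the reader to unwind.
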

	\begin{proof}
		Suppose that $(a_i)$ is a net in $u\cM$. Note that for any $i_0$ we have $ua_{\geq i_0}=a_{\geq i_0}$. Since by definition $a\in \overline {a_{\geq i_0}}$ and by Fact~\ref{fct:tau_coarser} $\overline {a_{\geq i_0}}\subseteq u\circ a_{\geq i_0}$, it follows that $ua\in u(u\circ a_{\geq i_0})=\cl_{\tau}(a_{\geq i_0})$, so (because $i_0$ is arbitrary) $ua$ is a $\tau$-limit of a subnet of $(a_i)_i$. Since the same is true about any subnet of $(a_i)_i$, $ua$ is a $\tau$-limit of $(a_i)_i$.
	\end{proof}

	\begin{fct}
		\label{fct:tau_T1}
		$u\cM$ with the $\tau$ topology is a compact $T_1$ semitopological group (i.e.\ multiplication is separately continuous).
	\end{fct}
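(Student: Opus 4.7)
The plan is to verify the three conclusions---$\tau$-compactness, $T_1$-separation, and separate continuity of multiplication---in turn, leaning on Fact~\ref{fct:ulimit}, Fact~\ref{fct:circ_calculations}, and the description $\cl_\tau(A) = u(u\circ A)$ from Fact~\ref{fct:taucl_alt}. For compactness, I would exhibit $u\cM$ as the $\tau$-continuous image of the compact set $\overline{u\cM}\subseteq EL$ under $\phi\colon f\mapsto uf$: this map is well-defined ($\overline{u\cM}\subseteq \cM$, so $uf\in u\cM=u\cdot\cM$), surjective ($uf=f$ for $f\in u\cM$), and $\tau$-continuous, since $f_i\to f$ in $\overline{u\cM}$ gives $uf_i\to uf$ in $EL$ by left continuity of $u\cdot$, so Fact~\ref{fct:ulimit} yields $uf_i\xrightarrow{\tau} u(uf)=uf$. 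This delivers $\tau$-compactness immediately.

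For $T_1$, I would first establish $\cl_\tau(\{u\})=\{u\}$ by the following pointwise argument: for any net $g_i\in G$ with $\pi_{g_i}\to u$ in $EL$ and any $x\in X$, one has $(g_i u)(x)=g_i(u(x))\to u(u(x))=u(x)$ since $u$ is an idempotent as a function $X\to X$, whence every subnetial limit of $g_i u$ in $EL$ equals $u$; so $u\circ\{u\}=\{u\}$ and $\cl_\tau(\{u\})=u\cdot\{u\}=\{u\}$. To transfer this to an arbitrary singleton $\{a\}\subseteq u\cM$, I would use the identity
\[
\cl_\tau(Aa)=u\bigl(u\circ (Aa)\bigr)=u\bigl((u\circ A)\cdot a\bigr)=\bigl(u(u\circ A)\bigr)\cdot a=\cl_\tau(A)\cdot a,
\]
which follows from Fact~\ref{fct:circ_calculations}(1) and associativity; in particular $\cl_\tau(\{a\})=\cl_\tau(\{u\})\cdot a=\{a\}$. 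This same identity gives right continuity of multiplication at once: $R_a$ is $\tau$-closure-preserving, hence $\tau$-closed, and since $R_{a^{-1}}$ is its inverse (with the same property), $R_a$ is a $\tau$-homeomorphism.

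The main obstacle will be left continuity, because $\circ$ does not interact with left multiplication as cleanly; Fact~\ref{fct:circ_calculations}(5) only gives the one-sided inclusion $b(u\circ A)\subseteq b\circ A$, and right multiplication in $EL$ is not continuous, so there is no direct analogue of the right-translation identity. The plan is to show that $bF$ is $\tau$-closed for every $b\in u\cM$ and every $\tau$-closed $F\subseteq u\cM$; by symmetry applied to $b^{-1}$, this will make $L_b$ a $\tau$-homeomorphism. Given $h\in\cl_\tau(bF)$, write $h=uf'$ with $f'=\lim g_i(ba_i)$ for $g_i\in G$, $g_i\to u$ in $EL$, and $a_i\in F$. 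I expect to combine three ingredients: (i) a pointwise-convergence argument as in the $T_1$-step, applied to $(g_ib)(x)=g_i(b(x))\to u(b(x))=b(x)$, to conclude $g_ib\to b$ in $EL$; (ii) a compactness extraction of a subnet with $a_i\to a^*$ in $EL$, so that $a_i\xrightarrow{\tau}ua^*\in F$ by Fact~\ref{fct:ulimit} and $\tau$-closedness of $F$; and (iii) the group structure of $u\cM$ (ensuring that $bu=b$, $ua^*\in F$, and $b\cdot(ua^*)=ba^*$) to identify $uh$ with $b(ua^*)\in bF$. The delicate point---and the heart of the argument---will be reconciling the order $(g_ib)a_i$ with the form $g_i(ba_i)$ demanded by the $\circ$-operation, which forces a careful subnet/approximation argument rather than a one-line closure computation.
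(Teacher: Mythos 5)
Your right-translation argument (the computation $\cl_\tau(Aa)=\cl_\tau(A)\cdot a$ via Fact~\ref{fct:circ_calculations}(1)) and the $T_1$ transfer are correct, matching the paper. But two things are genuinely broken.

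\textbf{Compactness.} The claim that $f_i\to f$ in $EL$ implies $uf_i\to uf$ in $EL$ ``by left continuity of $u\cdot$'' is false. In $E(G,X)$, for fixed $u$, the map $f\mapsto uf$ is $f\mapsto (x\mapsto u(f(x)))$, and since an idempotent $u$ is generally a highly discontinuous function $X\to X$, this map is \emph{not} pointwise-continuous in $f$. (``Left topological'' only gives continuity of $f\mapsto f s_0$ for fixed $s_0$, i.e.\ right translations.) The $\tau$-continuity of $f\mapsto uf$ on $\overline{u\cM}$ is in fact true, but it is Proposition~\ref{prop:strange_cont}, proved later and by a much subtler argument; it should not be smuggled in here. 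Compactness does not need this detour: given a net in $u\cM$, pass to a subnet converging to some $f\in\overline{u\cM}$ (compactness of $\overline{u\cM}$), and Fact~\ref{fct:ulimit} says that subnet $\tau$-converges to $uf\in u\cM$.

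\textbf{Closedness of $aB$.} This is where the heart of the proof lies, and your net approach does not close. First, $(g_ib)a_i=g_i(ba_i)$ by associativity, so there is no ``order to reconcile''; the real obstruction is that left multiplication in $EL$ is not continuous, so knowing $g_ib\to b$ and $a_i\to a^*$ pointwise gives no control over $\lim (g_ib)a_i$, because $\bigl((g_ib)a_i\bigr)(x)=(g_ib)\bigl(a_i(x)\bigr)$ has both the outer function and the inner argument moving simultaneously. No subnet or approximation will repair this. The missing idea is algebraic, not topological: take $f\in\cl_\tau(aB)=u\cM\cap\bigl(u\circ(aB)\bigr)$. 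By Fact~\ref{fct:circ_calculations}(5), $u\circ(aB)\subseteq(ua)\circ B=a\circ B$, so $f\in a\circ B$. Now multiply on the left by the inverse $a^{-1}$ of $a$ in the group $u\cM$: again by Fact~\ref{fct:circ_calculations}(5), $a^{-1}f\in a^{-1}(a\circ B)\subseteq(a^{-1}a)\circ B=u\circ B$, and $a^{-1}f\in u\cM$, so $a^{-1}f\in\cl_\tau(B)=B$, whence $f\in aB$. Transporting $f$ into $B$ by the group inverse in $u\cM$ and the one-sided inclusions for $\circ$, rather than chasing limits, is exactly what makes this direction go through.
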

	\begin{proof}
		$T_1$ is immediate by the left continuity of multiplication in $EL$. Compactness follows from Fact~\ref{fct:ulimit} and compactness of $\overline{u\cM}$: given any net in $u\cM$, we can find a subnet convergent in $\overline{u\cM}$, and this subnet will be $\tau$-convergent. What is left is to show separate continuity of multiplication.
		
		Choose any $a\in u\cM$ and a $\tau$-closed $B\subseteq u\cM$. Then Facts~\ref{fct:circ_calculations} and \ref{fct:taucl_alt} yield
		\[
		\cl_\tau (Ba)= u(u\circ Ba)=u((u\circ B)a)=(u(u\circ B))a=Ba,
		\]
		which gives us continuity on the left (note that $Ba$ is the preimage of $B$ by the multiplication on the right by $a^{-1}$).
		
		For the right continuity, note that $b\in (a\circ B)\cap u\cM$ and Fact~\ref{fct:circ_calculations}(5) imply $a^{-1}b\in a^{-1}(a\circ B)\subseteq (a^{-1}a)\circ B=u\circ B$, so $a^{-1}b\in B$ (because $a^{-1}b\in u\cM$ and $B$ is $\tau$-closed), whence $b\in aB$, so $(a\circ B)\cap u\cM\subseteq aB$. In particular, by Fact~\ref{fct:circ_calculations}(5) and the assumption that $a\in u\cM$,
		\[
		\cl_\tau (aB)=(u\circ(aB))\cap u\cM\subseteq ((ua)\circ B)\cap u\cM=(a\circ B)\cap u\cM\subseteq aB.\qedhere
		\]
	\end{proof}
	
	\begin{fct}
		\label{fct:ellis_isomorphic}
		All ideal groups of $E(G,X)$ are isomorphic as semitopological groups.
	\end{fct}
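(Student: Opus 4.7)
The plan is to produce, for any two ideal groups $u\cM$ and $v\cN$, a group isomorphism that is simultaneously a homeomorphism for the $\tau$-topologies. The algebraic ingredient is already in hand: by the proof of Fact~\ref{fct:minimal_ideals_idempotents}(6), one may choose idempotents $u\in\cM$ and $v\in\cN$ satisfying $uv=v$ and $vu=u$ (or the analogous identities $uv=u$, $vu=v$ when $\cM=\cN$), so that $\phi(s)=sv$ is a group isomorphism $u\cM\to v\cN$ with inverse $\psi(t)=tu$. It will suffice to show that $\phi$ is $\tau$-continuous: the setup is symmetric in $u$ and $v$, so the same argument will yield $\tau$-continuity of $\psi$, and a continuous bijection with continuous inverse is a homeomorphism, hence an isomorphism of the semitopological groups $u\cM$ and $v\cN$ (cf.\ Fact~\ref{fct:tau_T1}).

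To establish $\tau$-continuity of $\phi$, I would first identify the preimage of a $\tau$-closed set $C\subseteq v\cN$ as $\phi^{-1}(C)=Cu\subseteq u\cM$, using the relations $uv=v$ and $vu=u$, and then show $Cu$ is $\tau$-closed, i.e.\ $u(u\circ Cu)\subseteq Cu$. By Fact~\ref{fct:circ_calculations}(5) every element of $u(u\circ Cu)$ lies in $u\circ Cu\cap u\cM$, so the real task reduces to $u\circ Cu\cap u\cM\subseteq Cu$. Taking such an element and writing it as $yu$ with $y\in u\circ C$ via Fact~\ref{fct:circ_calculations}(1), I would use $vC=C$ (which holds since $C\subseteq v\cN$) together with the chain $u\circ C=u\circ(vC)\subseteq u\circ(v\circ C)\subseteq (uv)\circ C=v\circ C$ to place $y\in v\circ C$. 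The decisive step is the computation $\phi(yu)=(yu)v=y(uv)=yv=y$, where $uv=v$ is used and $yv=y$ follows from Fact~\ref{fct:idempotents_ideals_Ellis}(4) applied inside $\cN$; thus $y\in v\cN$. Since $v\cN\cap(v\circ C)=v(v\circ C)$ and $\tau$-closedness of $C$ gives $v(v\circ C)=C$, this forces $y\in C$, whence $yu\in Cu$, as required.

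The principal obstacle throughout is that $EL$ is only left topological, so right multiplication by elements outside $G$ is generically discontinuous, and $\phi$ cannot simply be handled as a restriction of a continuous map on the ambient semigroup. Every step of the continuity proof must instead proceed through the $\circ$-operator and the mixed associativity identities of Fact~\ref{fct:circ_calculations}. The role of the specific choice of idempotents satisfying $uv=v$ and $vu=u$ is precisely to permit the $u$-based $\tau$-closure on $u\cM$ to be compared with the $v$-based one on $v\cN$; without this alignment, the natural candidate isomorphisms between the two Ellis groups fail to respect the $\tau$-closure in the required way.
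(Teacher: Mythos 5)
Your argument for the cross-ideal case ($\cM\ne\cN$, with $v$ chosen so that $uv=v$ and $vu=u$, $\phi(s)=sv$) is correct and is essentially the second half of the paper's proof, down to the same $\circ$-calculus manipulations. The step placing $y\in v\cN$ is slightly terse — it rests on $x=yu\in u\cM$ so that $y=\phi(x)$ lands in the codomain — but the logic checks out. The symmetry remark disposing of $\psi$ is also fine.

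The gap is in the same-ideal case, which your parenthetical ``$uv=u$, $vu=v$ when $\cM=\cN$'' acknowledges but does not actually resolve. When $u,v$ are idempotents in the \emph{same} minimal ideal $\cM$, Fact~\ref{fct:idempotents_ideals_Ellis}(4) forces $sv=s$ for every $s\in\cM$, so the map $\phi(s)=sv$ is the identity on $u\cM$ and does not land in $v\cM$ (the sets $u\cM$ and $v\cM$ are disjoint when $u\ne v$). The correct isomorphism here is $\phi(s)=vs$, as in Fact~\ref{fct:minimal_ideals_idempotents}(5), and its preimage of $C\subseteq v\cM$ is $uC$, not $Cu$. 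One then has to show $uC$ is $\tau$-closed, which is a genuinely different (mirror-image) computation: by Fact~\ref{fct:circ_calculations}(5), $\cl_\tau(uC)\subseteq u\circ C$, and for $x$ in $u\cM\cap(u\circ C)$ one checks $vx\in v\cM\cap(v\circ C)=\cl_\tau(C)=C$ using $vu=v$, and then $x=uvx\in uC$ using $uv=u$. The same-ideal case cannot be dodged — even when more than one minimal ideal exists, transporting an isomorphism from $u\cM$ to $v\cM$ through a third ideal $\cN'$ still leaves you needing a within-ideal isomorphism on $\cN'$. So you must spell out this half of the argument rather than leave it to the hedge; once that is done, your write-up matches the paper's proof.
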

	\begin{proof}
		First, consider the case where the groups are contained in a single left ideal $\cM$. As in the proof of Fact~\ref{fct:minimal_ideals_idempotents}(6), it is enough to show that for any idempotents $u,v\in \cM$, the map $s\mapsto vs$, $u\cM\to v\cM$ is closed in $\tau$ topology (then by the same token, the inverse map will also be closed, so it will be a topological isomorphism). In other words, we need to show that if $A\subseteq u\cM$ is $\tau$-closed, then so is $vA$. But since $vu=v$ and $uv=u$, we have (using Fact~\ref{fct:circ_calculations}(5)):
		\begin{multline*}
			\cl_{\tau}(vA)=v(v\circ(vA)))=vu(v\circ (vuA))=v(u(v\circ (v(uA))))\subseteq\\\subseteq v((uvv)\circ (uA))=v(u(u\circ(uA)))=v\cl_{\tau}(A)=vA
		\end{multline*}
		
		Likewise (having in mind the proof of Fact~\ref{fct:minimal_ideals_idempotents}), for varying ideals, it is enough to show that if $u\in \cM$ and $v\in \mathcal N$ are idempotents in minimal left ideals $\cM$ and $\mathcal N$ such that $uv=v$ and $vu=u$, then $s\mapsto sv$ is closed. We have a similar calculation, for $\tau$-closed $A\subseteq u\cM$ (again, using Fact~\ref{fct:circ_calculations}):
		\begin{equation*}
			\begin{multlined}
				\cl_{\tau}(Av)=v(v\circ Av)=uv(v\circ A)v\subseteq  v((vv)\circ A)v = u(v\circ A)v=\\=u(v\circ (uA))v\subseteq u((vu)\circ A)v=u(u\circ A)v=\cl_{\tau}(A)v=Av
			\end{multlined}\qedhere
		\end{equation*}
	\end{proof}
	
	\begin{rem}
		\label{rem:explicit_ellisgroup_isomorphism_2}
		It is not hard to see from the proof of Fact~\ref{fct:ellis_isomorphic} that the isomorphism described by Remark~\ref{rem:explicit_ellisgroup_isomorphism} is actually a homeomorphism.\xqed{\lozenge}
	\end{rem}
	
	\begin{dfn}
		\index{Ellis!group}
		\label{dfn:ellis_group}
		The \emph{Ellis group of $(G,X)$} is the (unique isomorphism type of an) ideal group $u\cM$ of $E(G,X)$.
		\xqed{\lozenge}
	\end{dfn}
	
	\begin{fct}
		\index{uM/H(uM)@$u\cM/H(u\cM)$}
		\index{H(uM)@$H(u\cM)$}
		\label{fct:H(uM)}
		$H(u\cM)=\bigcap_V \cl_\tau(V)$, where $V$ runs over the $\tau$-open neighbourhoods of $u$ in $u\cM$ is a ($\tau$-)closed normal subgroup of $u\cM$, and $u\cM/H(u\cM)$ is a compact Hausdorff group.
	\end{fct}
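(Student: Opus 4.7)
The plan is to deduce this fact directly from the general Fact~\ref{fct:semitop_T2_quot} about compact $T_1$ semitopological groups, applied to $u\cM$ equipped with the $\tau$-topology. The groundwork has already been laid in the preceding facts: by Fact~\ref{fct:tau_T1}, $(u\cM,\tau)$ is a compact $T_1$ semitopological group, with $u$ as its identity element. That is precisely the hypothesis required to invoke Fact~\ref{fct:semitop_T2_quot}.

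Concretely, I would first recall that Fact~\ref{fct:semitop_T2_quot} defines, for any compact $T_1$ semitopological group $G$, the subgroup $H(G) := \bigcap_V \overline{V}$ where $V$ ranges over neighbourhoods of the identity, and asserts that $H(G)$ is a closed normal subgroup and that $G/H(G)$ is a compact Hausdorff topological group. Specialising to $G = (u\cM, \tau)$, the identity is $u$, the closure operator is $\cl_\tau$, and the neighbourhoods of the identity are exactly the $\tau$-open neighbourhoods of $u$. So $H(u\cM) = \bigcap_V \cl_\tau(V)$ by definition, and both the subgroup claim and the Hausdorffness of the quotient follow immediately.

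There is essentially no obstacle here: once one has verified that the $\tau$-topology really does make $u\cM$ into a compact $T_1$ semitopological group (which is Fact~\ref{fct:tau_T1}), the statement is a straightforward translation of a general structural theorem about such groups into the concrete setting of Ellis groups. The only thing worth spelling out, for clarity, is the identification of ``neighbourhood of the identity in the abstract group $u\cM$'' with ``$\tau$-open neighbourhood of the idempotent $u$,'' and the identification of topological closure in $(u\cM, \tau)$ with the operator $\cl_\tau$ as defined earlier; both are immediate from Definition~\ref{dfn:tau_topology}. Thus the proof reduces to a single sentence invoking Fact~\ref{fct:semitop_T2_quot} via Fact~\ref{fct:tau_T1}.
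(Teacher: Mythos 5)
Your proposal is correct and matches the paper's proof exactly: the paper also simply invokes Fact~\ref{fct:semitop_T2_quot} via Fact~\ref{fct:tau_T1}. Your extra sentences unpacking the translation between the abstract notation of Fact~\ref{fct:semitop_T2_quot} and the concrete $\tau$-topology setting are accurate and harmless, but not needed.
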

	\begin{proof}
		By Fact~\ref{fct:tau_T1}, we can apply Fact~\ref{fct:semitop_T2_quot}, which finishes the proof.
	\end{proof}
	
	The next proposition is Lemma 4.1 from \cite{KPR15} (joint with Krzysztof Krupiński and Anand Pillay).
	
	\begin{figure}[H]
		\centering
		\includegraphics[width={\textwidth}]{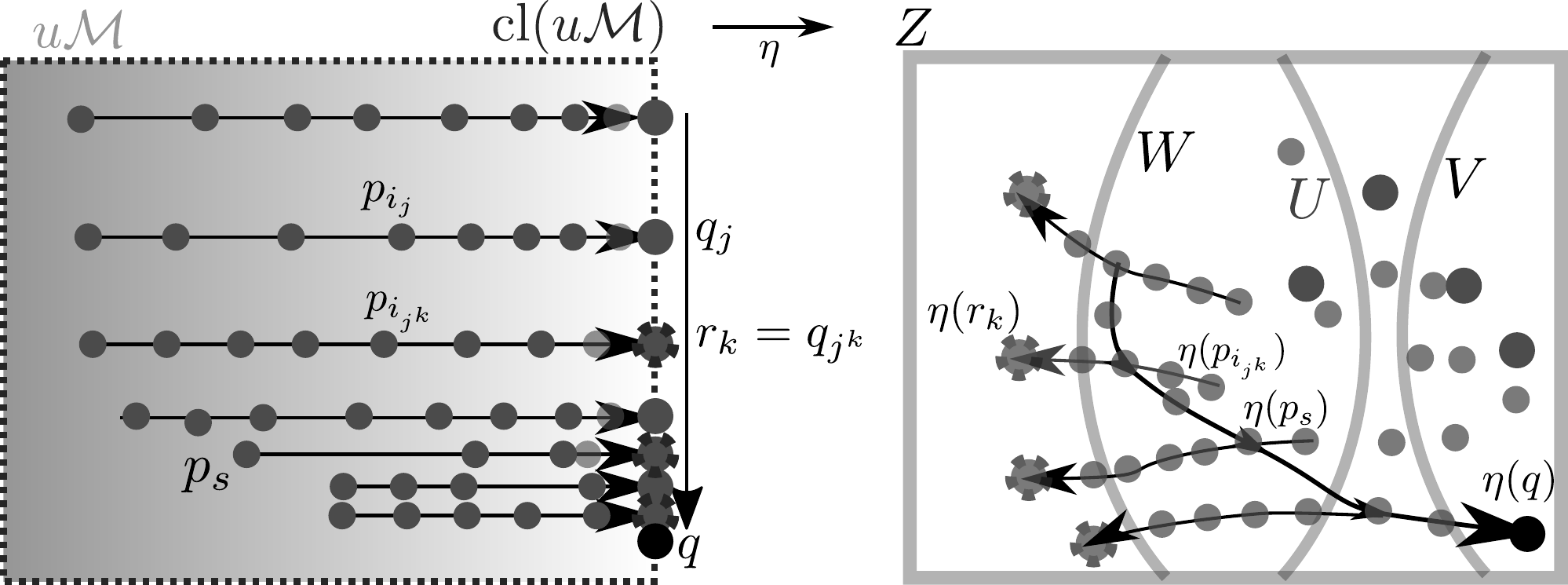}
		{Illustration of the nets described in the following proof.}
	\end{figure}
	
	\begin{prop}
		\label{prop:strange_cont}
		Let $\zeta\colon \overline{u\cM} \to u\cM $ be the function defined by $\zeta(x)=ux$ and let $\xi\colon u\cM \to Z$ be a continuous function, where $Z$ is a regular (e.g.\ compact, Hausdorff) space and $u\cM $ is equipped with the $\tau$-topology. Then $\xi \circ \zeta:\overline{u\cM} \to Z$ is continuous, where $\overline{u\cM}$ is equipped with the topology induced from the Ellis semigroup $EL$.
		
		In particular, the function $\overline{u\cM}\to u\cM/H(u\cM)$, $f\mapsto ufH(u\cM)$ is continuous.
	\end{prop}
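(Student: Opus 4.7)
My plan is to argue by contradiction, exploiting regularity of $Z$ and Fact~\ref{fct:ulimit} to reduce convergence in $\overline{u\cM}$ to convergence along nets that actually lie in $u\cM$. The essential difficulty is that $\zeta$, left multiplication by the idempotent $u$, is \emph{not} continuous on $\overline{u\cM}$: in $EL\subseteq X^X$ with pointwise convergence, the map $f\mapsto uf$ is continuous at $f$ only if $u$ is continuous on the values of $f$, which is not given. So a naive composition argument is unavailable, even though $\xi$ is $\tau$-continuous (and hence, by Fact~\ref{fct:tau_coarser}, continuous for the finer subspace topology inherited from $EL$). The key leverage will be that whenever a net of elements of $u\cM$ converges in $EL$ to some $f\in\overline{u\cM}$, its $\tau$-limit is forced to equal $uf$ — exactly the value we want to track through $\xi$.

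First I would suppose $\xi\circ\zeta$ fails to be continuous at some $f\in\overline{u\cM}$. This yields a net $(f_i)_i$ in $\overline{u\cM}$ with $f_i\to f$ in $EL$ and an open set $W\ni\xi(uf)$ such that, after passing to a subnet, $\xi(uf_i)\notin W$ for every $i$. By regularity of $Z$ I pick an open set $U_1\supseteq Z\setminus W$ whose closure still misses $\xi(uf)$; thus $\xi(uf_i)\in U_1$ for every $i$, while $\xi(uf)\in Z\setminus\overline{U_1}$.

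Next I refine each $f_i$ via $u\cM$: for every $i$, since $f_i\in\overline{u\cM}$, there is a net $(h_{i,k})_k$ in $u\cM$ with $h_{i,k}\to f_i$ in $EL$. By Fact~\ref{fct:ulimit}, $h_{i,k}\xrightarrow{\tau} uf_i$, so $\tau$-continuity of $\xi$ gives $\xi(h_{i,k})\to \xi(uf_i)\in U_1$, and since $U_1$ is open, $\xi(h_{i,k})$ is eventually in $U_1$. I then diagonalize: index a new net by pairs $(i,V)$ with $V$ an open neighborhood of $f$ in $EL$, ordered by $(i',V')\geq(i,V)$ iff $i'\geq i$ and $V'\subseteq V$. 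For each such $(i,V)$, the convergence $f_i\to f$ yields some $i'\geq i$ with $f_{i'}\in V$, and then some $k$ large enough that $h_{(i,V)}:=h_{i',k}$ lies in $V$ and satisfies $\xi(h_{(i,V)})\in U_1$. The resulting net $(h_{(i,V)})$ lies in $u\cM$ and converges to $f$ in $EL$.

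Finally, Fact~\ref{fct:ulimit} applied to $h_{(i,V)}\to f$ gives $h_{(i,V)}\xrightarrow{\tau} uf$, and $\tau$-continuity of $\xi$ forces $\xi(h_{(i,V)})\to\xi(uf)$. Since $\xi(uf)\in Z\setminus\overline{U_1}$ is an open condition, $\xi(h_{(i,V)})$ eventually lies outside $\overline{U_1}$, contradicting $\xi(h_{(i,V)})\in U_1$. This completes the argument. The ``in particular'' clause follows by taking $Z=u\cM/H(u\cM)$, which is compact Hausdorff (and thus regular) by Fact~\ref{fct:H(uM)}, with $\xi$ the quotient map (which is $\tau$-continuous by construction). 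The main obstacle throughout is exactly the noncontinuity of $\zeta$; it is bypassed by the detour through nets in $u\cM$, whose $\tau$-limits are pinned down by Fact~\ref{fct:ulimit}, together with a mild regularity-assisted separation in $Z$.
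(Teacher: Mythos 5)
Your proof is correct and follows essentially the same route as the paper's: assume failure of continuity, separate via regularity of $Z$, lift the offending net into $u\cM$, diagonalize to a single net in $u\cM$ converging to $f$ in $EL$, and then invoke Fact~\ref{fct:ulimit} together with $\tau$-continuity of $\xi$ to derive a contradiction. The only difference is cosmetic: you run the diagonalization Kelley-style over pairs (index, EL-neighbourhood of $f$), building in the constraint $\xi(h_{(i,V)})\in U_1$ along the way, whereas the paper uses the index set $J\times\prod_j I^j$ and combines the two requirements at the very end — both are valid.
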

	\begin{proof}
		Denote $\xi \circ \zeta$ by $\eta$.
		By Fact~\ref{fct:ulimit}, we know that for any net $(p_i)_i$ in $u\cM $ and $ p \in \overline{u\cM}$ such that $\lim p_i = p$ one has $\tau\mbox{-}\!\lim p_i=up$. So, in such a situation, $\eta(p)=\xi(up)=\lim_i \xi(p_i)=\lim_i \xi(up_i)=\lim_i \eta(p_i)$.
		
		Consider any net $(q_j)_{j\in J}$ in $\overline{u\cM}$ converging to $q$ in $\overline{u\cM}$. The goal is to show that $\lim_j \eta(q_j)=\eta(q)$. Suppose for a contradiction that there is an open neighbourhood $W$ of $\eta(q)$ and a subnet $(r_k)$ of $(q_j)$ such that all points $\eta(r_k)$ belong to $W^c$. Since $Z$ is regular, we can find open subsets $U$ and $V$ such that $W^c \subseteq U$, $\eta(q) \in V$ and $U \cap V=\emptyset$.
		
		For each $j$ we can choose a net $(p_{i^j})_{i^j \in I^j}$ in $u\cM $ such that $\lim_{i^j} p_{i^j}=q_j$.
		
		For each $k$, $r_k = q_{j_{k}}$ for some $j_k \in J$, and $\eta(r_k) \in U$. Hence, since by the first paragraph of the proof $\eta(r_k)=\eta(q_{j_k})= \lim_{i^{j_k}} \eta(p_{i^{j_k}})$, we see that for big enough $i^{j_k} \in I^{j_k}$ one has $\eta(p_{i^{j_k}}) \in U$.
		
		On the other hand, let $S:=J \times \prod_{j \in J} I^j$ be equipped with the product order. For $s \in S$, put $p_s:= p_{i^{j_s}}$, where $j_s$ is the first coordinate of $s$ and $i^{j_s}$ is the $j_s$-coordinate of $s$. Since $\lim_{j \in J} q_j=q$ and $\lim_{i^j \in I^j} p_{i^j}=q_j$, we get $\lim_s p_s=q$. So, by the first paragraph of the proof, $\lim_s \eta(p_s)=\eta(q) $, and hence, for $s\in S$ big enough, $\eta(p_s) \in V$.
		
		By the last two paragraphs, we can find $j \in J$ and $i^j \in I^j$ (big enough) so that $\eta(p_{i^j}) \in U \cap V$, a contradiction, as the last set is empty.
	\end{proof}

	\chapter{Side results}
	\label{app:side}
	This appendix contains various results which have turned up in the context of the thesis, but remain tangential to the main results.
	
	\section[On the existence of a semigroup structure on the type space \texorpdfstring{${S_{\bar c}(\mathfrak{C})}$}{Sc(C)}]{On the existence of a semigroup structure on the type space \texorpdfstring{${S_{\bar c}(\mathfrak{C})}$}{Sc(C)}}
	\sectionmark{Semigroup structure on ${S_{{\bar c}}(\mathfrak{C})}$}
	\label{section: semigroup operation}
	This section was originally the appendix in \cite{KPR15} (joint with Krzysztof Krupiński and Anand Pillay). The original proof of Corollary~\ref{cor:smt_type} in that paper used what we may consider an application of Lemma~\ref{lem:weakly_grouplike} to the case of the $\Aut(\fC)$-ambit $(\Aut(\fC),S_{\bar c}(\fC),\tp(\bar c/\fC))$, where $\fC$ is the monster model and $\bar c$ is its enumeration. In particular, it used the enveloping semigroup $E(\Aut(\fC),S_{\bar c}(\fC))$. In many natural cases, the enveloping semigroup of a dynamical system is naturally isomorphic to that system itself. For example, if we consider $(G,\beta G)$, then $E(G,\beta G)\cong \beta G$.
	
	The question that arises is whether or not $S_{\bar c}(\fC)$ is its own Ellis semigroup, or in other words, whether it admits a left topological semigroup structure (compatible with the action of $\Aut(\fC)$). In this section, we show that the answer is no, unless the underlying theory is stable (in which case, the answer is yes).
	
	The general idea is as follows. We establish the inclusion of $\Aut(\fC)$ in the type space $S_{\bar c}(\fC)$ as a universal object in a certain category. This allows us to describe the existence of a semigroup operation on $S_{\bar c}(\fC)$ in terms of a ``definability of types'' kind of statement, which in turn can be related to stability using a type counting argument.
	
	\begin{prop}\label{proposition: category C}
		Consider $\Aut(\fC)\subseteq S_{\bar c}(\fC)$ given by $\sigma \mapsto \tp(\sigma(\bar c)/\fC)$.
		Consider the category $\catg$ whose objects are maps $\Aut(\fC)\to K$ such that:
		\begin{itemize}
			\item
			$K$ is a compact, zero-dimensional, Hausdorff space,
			\item
			preimages of clopen sets in $K$ are relatively $\fC$-definable in $\Aut(\fC)$, i.e.\ for each clopen $C$ there is a formula $\varphi(x,a)$ with $a$ from $\fC$ such that $\sigma$ is in the preimage of $C$ if and only if $\models \varphi(\sigma(\bar c),a)$,
		\end{itemize}
		where morphisms are continuous maps between target spaces with the obvious commutativity property.
		Then the inclusion of $\Aut(\fC)$ into the space $S_{\bar c}(\fC)$ is the initial object of $\catg$.
	\end{prop}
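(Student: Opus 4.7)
First, I would verify that the map $i \colon \Aut(\fC) \to S_{\bar c}(\fC)$, $\sigma \mapsto \tp(\sigma(\bar c)/\fC)$, is itself an object of $\catg$: the type space $S_{\bar c}(\fC)$ is a Stone space, so it is compact, Hausdorff and zero-dimensional, and clopen sets are exactly those of the form $[\varphi(x,\bar a)]$ for a formula $\varphi(x,y)$ and parameters $\bar a \in \fC$, whose preimage under $i$ is $\{\sigma \in \Aut(\fC) \mid \models \varphi(\sigma(\bar c),\bar a)\}$, which is visibly relatively $\fC$-definable in $\Aut(\fC)$. The strategy for initiality then proceeds by Stone duality: it suffices to produce a Boolean algebra homomorphism from the clopens of $K$ to the clopens of $S_{\bar c}(\fC)$, compatible with $f$ and $i$.

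The key preliminary observation I would prove is that the image of $i$ is dense in $S_{\bar c}(\fC)$. Indeed, a nonempty basic clopen in $S_{\bar c}(\fC)$ is given by a formula $\varphi(x_{i_1},\dots,x_{i_n},\bar a)$ (with $\bar a \in \fC$) that is consistent with the partial type of $(c_{i_1},\dots,c_{i_n})$ over $\emptyset$. By $\kappa$-saturation of $\fC$ this partial type together with $\varphi$ is realised by some tuple $(d_1,\dots,d_n)$ in $\fC$, and by strong $\kappa$-homogeneity of $\fC$ one can find $\sigma \in \Aut(\fC)$ with $\sigma(c_{i_j}) = d_j$; then $i(\sigma)$ lies in the given clopen. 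This density will be used crucially in the next step.

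Given any object $f \colon \Aut(\fC) \to K$ in $\catg$, for each clopen $C \subseteq K$ I would choose a formula $\varphi_C(x,\bar a_C)$ (with $\bar a_C \in \fC$) such that $f^{-1}[C] = \{\sigma \mid \models \varphi_C(\sigma(\bar c),\bar a_C)\}$. The density argument above guarantees that, although $\varphi_C$ is not canonical, the clopen $[\varphi_C(x,\bar a_C)] \subseteq S_{\bar c}(\fC)$ is independent of the choice: if two such formulas agree on the image of $i$, then the corresponding clopens have the same intersection with a dense set, hence are equal. Then I would check that $C \mapsto [\varphi_C(x,\bar a_C)]$ preserves finite meets, joins and complements (which reduces to the same property of preimages under $f$, again via the density/uniqueness), obtaining a Boolean homomorphism between the algebras of clopens. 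By Stone duality this yields a unique continuous map $g \colon S_{\bar c}(\fC) \to K$ with $g^{-1}[C] = [\varphi_C]$ for every clopen $C$. Commutativity $g \circ i = f$ follows from tracing through the definitions, and uniqueness of $g$ follows from density of $i[\Aut(\fC)]$ together with Hausdorffness of $K$.

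The main obstacle I expect is precisely the well-definedness of the assignment $C \mapsto [\varphi_C]$; this is the point where the structure of $\fC$ as a monster model is essential, and it is the only place where a non-trivial ingredient (saturation plus strong homogeneity, used to deduce density) enters. Everything else—Stone duality, Boolean homomorphism check, uniqueness—is then formal.
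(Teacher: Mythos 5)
Your proof is correct, but it takes a genuinely different route from the paper's. The paper constructs the extension $\bar h\colon S_{\bar c}(\fC)\to K$ pointwise: viewing a type $p\in S_{\bar c}(\fC)$ as an ultrafilter on the relatively $\fC$-definable subsets of $\Aut(\fC)$, it forms $K_p:=\bigcap\{\overline{h[D]}\mid D\in p\}$, shows this is a singleton (the nontrivial step, using compactness, zero-dimensionality and the defining property of $\catg$), and lets $\bar h(p)$ be that point; continuity is then essentially built into the definition. You instead work at the level of Boolean algebras of clopens: you define a map $\CLO(K)\to\CLO(S_{\bar c}(\fC))$ via $C\mapsto[\varphi_C]$, verify well-definedness and the homomorphism property using the density of $i[\Aut(\fC)]$, and invoke Stone duality to obtain the continuous map. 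The two are essentially dual arguments, and both ultimately exploit the same facts (density of the image, zero-dimensionality of $K$, and the definability condition); the paper's version is slightly more self-contained since it avoids appealing to Stone duality, whereas yours isolates the purely combinatorial/Boolean content of the construction more cleanly and makes the ``why it works'' more transparent to a reader familiar with Stone duality. In both approaches density also supplies uniqueness, which you and the paper handle the same way.

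One small remark: when you write that you ``would check that $C\mapsto[\varphi_C]$ preserves finite meets, joins and complements,'' this is indeed routine, but it is worth noting that the argument is the same density argument already used for well-definedness (the two candidate clopens agree on the dense image $i[\Aut(\fC)]$ and hence coincide); spelling that out once is enough.
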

	\begin{proof}
		Firstly, $\Aut(\fC)$ is dense in $S_{\bar c}(\fC)$, so the uniqueness part of the universal property is immediate. What is left to show is that for every $h\colon \Aut(\fC)\to K$, $h\in \catg$, we can find a continuous map $\bar h\colon S_{\bar c}(\fC)\to K$ extending $h$.
		
		Choose any $p\in S_{\bar c}(\fC)$ and consider it as an ultrafilter
		on relatively $\fC$-definable subsets of $\Aut(\fC)$, and then consider $K_p:=\bigcap \left\{\overline{h[D]}\mid D\in p\right\}\subseteq K$.
		It is the intersection of a centered (i.e.\ with the finite intersection property) family of nonempty, closed subsets of $K$, so it is nonempty. In fact, it is a singleton.
		If not, there are two distinct elements $k_1,k_2 \in K_p$. Take a clopen neighbourhood $U$ of $k_1$ such that $k_2 \notin U$. Since $h \in \catg$, $h^{-1}[U]= \{\sigma \in \Aut(\fC) \mid \;\models \varphi(\sigma(\bar c),a)\}$ for some formula $\varphi(\bar x, a)$. If $\varphi(\bar x, a) \in p$, then $K_p \subseteq \overline{h[h^{-1}[U]]} \subseteq \overline{U}=U$, a contradiction as $k_2 \notin U$. If $\neg \varphi(\bar x, a) \in p$, then $K_p \subseteq \overline{h[\Aut(\fC) \setminus h^{-1}[U]]} \subseteq \overline{K \setminus U}=K \setminus U$, a contradiction as $k_1 \notin K \setminus U$.
		In conclusion, we can define $\bar h(p)$ to be the unique point in $K_p$.
		
		We see that $\bar h$ extends $h$. Moreover, $\bar h$ is continuous, because the preimage of a clopen set $C\subseteq K$ is the basic open set in $S_{\bar c}(\fC)$ corresponding to the relatively definable set $h^{-1}[C]$.
	\end{proof}
	
	In Corollary~\ref{cor: semigroup iff definability of types}, we will establish the aforementioned ``definability of types''-like condition from the existence of a semigroup operation.
	For this we will need the following definition.

	\begin{dfn}
		\index{piecewise definable type}
		Let $M$ be a model (e.g.\ $M=\fC$). A type $q(x) \in S(M)$ is {\em piecewise definable} if for every type $p(y)\in S(\emptyset)$ and formula $\varphi(x,y)$ the set of $a \in p(M)$ for which $q\vdash \varphi(x,a)$ is relatively $M$-definable in $p(M)$ (that is, there is a formula $\delta(y,c)$ (with $c$ from $M$) such that for any $a\in p(M)$ we have $q\vdash \varphi(x,a)$ if and only if $\delta(a,c)$).\xqed{\lozenge}
	\end{dfn}

	\begin{rem}\label{remark: piecewise weak definability equals piecewise definability}
		Let $q(x) \in S(\fC)$. The following conditions are equivalent.
		\begin{enumerate}
			\item $q(x)$ is piecewise definable.
			\item For every type $p(y)\in S(\emptyset)$ and formula $\varphi(x,y)$ there is a type $\bar p(yz) \in S(\emptyset)$ extending $p(y)$ such that the set of $ab \models \bar p$ for which $q\vdash \varphi(x,a)$ is relatively $\fC$-definable in $\bar p(\fC)$ (that is, there is a formula $\delta(yz,c)$ (with $c$ from $\fC$) such that for any $ab \models \bar p$ we have $q\vdash \varphi(x,a)$ if and only if $\delta(ab,c)$).
			\item For every $\varphi(x,y)$ and every $a$ from $\fC$ there is some $b$ from $\fC$ such that the set of all $a'b'$ from $\fC$ with $a'b'\equiv ab$ and $q\vdash \varphi(x,a')$ is relatively definable over $\fC$ (among all $a'b'$ from $\fC$ equivalent to $ab$).
		\end{enumerate}\xqed{\lozenge}
	\end{rem}
	
	\begin{proof}
		The equivalence $(2) \Leftrightarrow (3)$ is obvious. It is also clear that $(1) \Rightarrow (2)$, by taking $z$ and $b$ to be empty in (2). It remains to prove $(2) \Rightarrow (1)$.
		
		Take any type $p(y) \in S(\emptyset)$ and formula $\varphi(x,y)$. By (2), there is a type $\bar p(yz) \in S(\emptyset)$ extending $p(y)$ and a formula $\delta(yz,c)$ (with $c$ from $\fC$) such that for any $ab \models \bar p$ we have $q\vdash \varphi(x,a)$ if and only if $\delta(ab,c)$. This implies that for any $a,b,b'$ such that $ab \models \bar p$ and $ab' \models \bar p$ we have $\delta(ab,c) \leftrightarrow \delta(ab',c)$. By compactness, there is a formula $\psi(y,z) \in \bar p(yz)$ such that for any $a,b,b'$ with $ab \models \psi(y,z)$ and $ab' \models \psi(y,z)$ we have $\delta(ab,c) \leftrightarrow \delta(ab',c)$. Put
		$$\delta'(y,c) : = (\exists z) (\psi(y,z) \wedge \delta(yz,c)).$$
		It remains to check that for any $a \models p$, $q\vdash \varphi(x,a)$ if and only if $\delta'(a,c)$.
		
		First, assume $q\vdash \varphi(x,a)$ and $a \models p$. Take $b$ such that $ab \models \bar p$. Then $\psi(a,b) \wedge \delta(ab,c)$, and so $\delta'(a,c)$.
		
		Now, assume that $\delta'(a,c)$ and $a \models p$. Then there is $b$ such that $\psi(a,b)$ and $\delta(ab,c)$. There is also $b'$ with $ab' \models \bar p$, and then $\psi(a,b')$. By the choice of $\psi$, we conclude that $\delta(ab',c)$. Hence, $q\vdash \varphi(x,a)$.
	\end{proof}

	\begin{cor}\label{cor: semigroup iff definability of types}
		The natural action $\Aut(\fC)\times S_{\bar c}(\fC)\to S_{\bar c}(\fC)$ extends to a left-continuous semigroup operation on $S_{\bar c}(\fC)$ if and only if each complete type over $\fC$ is piecewise definable.
	\end{cor}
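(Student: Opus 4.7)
The approach is to apply Proposition~\ref{proposition: category C} to the ``right multiplication'' map $h_q \colon \Aut(\fC) \to S_{\bar c}(\fC)$ defined by $\sigma \mapsto \sigma \cdot q$, where the natural action satisfies $\varphi(x, a) \in \sigma \cdot q \iff \varphi(x, \sigma^{-1}(a)) \in q$. A left-continuous semigroup operation on $S_{\bar c}(\fC)$ extending the action corresponds exactly to providing, for each $q \in S_{\bar c}(\fC)$, a continuous extension $\bar h_q \colon S_{\bar c}(\fC) \to S_{\bar c}(\fC)$, in terms of which one sets $p \cdot q := \bar h_q(p)$. Since $S_{\bar c}(\fC)$ is a compact, zero-dimensional, Hausdorff (Stone) space, Proposition~\ref{proposition: category C} tells us $\bar h_q$ exists if and only if $h_q \in \catg$; that is, for every formula $\varphi(x, a)$ with $a$ from $\fC$, the preimage $h_q^{-1}([\varphi(x,a)]) = \{\sigma : q \vdash \varphi(x, \sigma^{-1}(a))\}$ is of the form $\{\sigma : \models \psi(\sigma(\bar c), e)\}$ for some formula $\psi$ and tuple $e$ from $\fC$. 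The proof thus reduces to showing this definability condition (over all $q$, $\varphi$, $a$) is equivalent to piecewise definability of every $q \in S_{\bar c}(\fC)$.

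For the easy direction, assume $q$ is piecewise definable and use form (1) of Remark~\ref{remark: piecewise weak definability equals piecewise definability}: for $\varphi(x, y)$ and $p = \tp(a/\emptyset)$, there is $\delta(y, c)$ with $c$ from $\fC$ such that $q \vdash \varphi(x, a') \iff \models \delta(a', c)$ for all $a' \in p(\fC)$. Since $\sigma^{-1}(a) \in p(\fC)$, applying an automorphism gives $q \vdash \varphi(x, \sigma^{-1}(a)) \iff \models \delta(a, \sigma(c))$; writing $c$ as a finite sub-tuple of $\bar c$, $\sigma(c)$ becomes the corresponding sub-tuple of $\sigma(\bar c)$, and the condition takes the required form $\models \psi(\sigma(\bar c), a)$. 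Conversely, suppose the preimage is $\{\sigma : \models \psi(\sigma(\bar c), d)\}$ with $\psi(\sigma(\bar c), d) = \psi'(\sigma(b^*), d)$ for some finite tuple $b^*$ from $\fC$ (since $\psi$ only mentions finitely many coordinates). Applying $\sigma^{-1}$ to the right-hand side yields $q \vdash \varphi(x, \sigma^{-1}(a)) \iff \models \psi'(b^*, \sigma^{-1}(d))$. Setting $a' := \sigma^{-1}(a)$ and $e := \sigma^{-1}(d)$, we have $\sigma(a', e) = (a, d)$, so $(a', e) \equiv (a, d)$, and every realization of $\bar p := \tp(a, d/\emptyset) \supseteq \tp(a/\emptyset)$ arises this way. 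Hence for $(a', e) \models \bar p$, $q \vdash \varphi(x, a') \iff \models \delta(a', e, b^*)$ with $\delta(y, z, w) := \psi'(w, z)$, which is precisely form (2) of Remark~\ref{remark: piecewise weak definability equals piecewise definability}, giving piecewise definability of $q$.

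Finally, once each $\bar h_q$ exists, defining $p \cdot q := \bar h_q(p)$ gives a left-continuous operation extending the $\Aut(\fC)$-action (the two operations agree on $\Aut(\fC) \subseteq S_{\bar c}(\fC)$ since $\sigma \cdot \tau = \tp(\sigma\tau(\bar c)/\fC)$ in both). For associativity $(p \cdot q) \cdot r = p \cdot (q \cdot r)$, first establish the equivariance $\bar h_r(\sigma \cdot p) = \sigma \cdot \bar h_r(p)$ for every $\sigma \in \Aut(\fC)$: both sides are continuous in $p$ and agree on the dense subset $\Aut(\fC)$ by group associativity of the action. Then $(p \cdot q) \cdot r = \bar h_r(\bar h_q(p))$ and $p \cdot (q \cdot r) = \bar h_{q \cdot r}(p)$ are continuous in $p$ and, via equivariance, agree on $\Aut(\fC)$, hence agree everywhere. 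The main obstacle is the reparametrization step in the reverse definability translation: the key insight is that $e := \sigma^{-1}(d)$ yields a pair $(a', e)$ realizing the auxiliary type $\bar p = \tp(a, d/\emptyset)$, which is exactly the extension of $p$ needed to match form (2) of Remark~\ref{remark: piecewise weak definability equals piecewise definability}.
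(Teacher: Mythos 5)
Your proposal is correct and follows essentially the same route as the paper: reduce via Proposition~\ref{proposition: category C} to showing that each right-multiplication map $h_q$ lies in $\catg$ if and only if $q$ is piecewise definable, and then translate the definability of $h_q$-preimages into piecewise definability by conjugating by automorphisms and using that parameters from $\fC$ are sub-tuples of $\bar c$ (you pass through forms (1)/(2) of Remark~\ref{remark: piecewise weak definability equals piecewise definability} where the paper uses form (3), but these are equivalent). One small improvement over the paper: you spell out the associativity of the resulting operation via the equivariance $\bar h_r(\sigma\cdot p)=\sigma\cdot\bar h_r(p)$ and density of $\Aut(\fC)$, whereas the paper leaves this ``as a standard exercise on limits of nets.''
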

	
	\begin{proof}
		First, using Proposition~\ref{proposition: category C}, we will easily deduce:
		\begin{clm*}
			The action $\Aut(\fC)\times S_{\bar c}(\fC)\to S_{\bar c}(\fC)$ extends to a left-continuous semigroup operation on $S_{\bar c}(\fC)$ if and only if for each $q\in S_{\bar c}(\fC)$ the mapping $h_q \colon \Aut(\fC)\to S_{\bar c}(\fC)$ given by $\sigma \mapsto \sigma(q)$ is in the category $\catg$ (i.e.\ the preimages of clopen sets are relatively $\fC$-definable).
		\end{clm*}
		
		\begin{clmproof}[Proof of claim]
			$(\Rightarrow)$ Let $*$ be a left-continuous semigroup operation on $S_{\bar c}(\fC)$ extending the action of $\Aut(\fC)$. Consider any $q \in S_{\bar c}(\fC)$. Define $\bar h_q \colon S_{\bar c}(\fC) \to S_{\bar c}(\fC)$ by $\bar h_q(p) := p*q$. Then $\bar h_q$ is a continuous extension of $h_q$. By continuity, the preimages of clopen sets by $\bar h_q$ are clopen, and therefore their intersections with $\Aut(\fC)$ (which are exactly the preimages of clopen sets by the original map $h_q$) are relatively $\fC$-definable.
			
			$(\Leftarrow)$ By Proposition~\ref{proposition: category C}, for any $q \in S_{\bar c}(\fC)$ there exists a continuous function $\bar h_q \colon S_{\bar c}(\fC) \to S_{\bar c}(\fC)$ which extends $h_q$. For $p,q \in S_{\bar c}(\fC)$ define $p*q := \bar h_q(p)$. It is clear that $*$ (treated as a two-variable function) is left continuous and extends the action of $\Aut(\fC)$ on $S_{\bar c}(\fC)$. We leave as a standard exercise on limits of nets to check that $*$ is also associative.
		\end{clmproof}
		
		By the claim and Remark~\ref{remark: piecewise weak definability equals piecewise definability}, the whole proof boils down to showing that for any type $q(x) \in S_{\bar c}(\fC)$ we have the following equivalence: the preimage by $h_q$ of any clopen subset of $S_{\bar c}(\fC)$ is relatively definable in $\Aut(\fC)$ if and only if $q(x)$ satisfies item (3) of Remark~\ref{remark: piecewise weak definability equals piecewise definability}.

		Let us fix an arbitrary $q(x) \in S_{\bar c}(\fC)$, and any formula $\varphi(x,a)$ for some
		$a$ from $\fC$. The preimage by $h_q$ of the clopen set $[\varphi(x,a)]$ equals
		\[
		\{\sigma \in \Aut(\fC)\mid \sigma(q)\vdash \varphi(x,a)\}=\{\sigma \in \Aut(\fC)\mid q\vdash \varphi(x,\sigma^{-1}(a))\}.
		\]
		
		Now, for $(\Leftarrow)$, suppose there is some $b$ from $\fC$ and a formula $\delta(yz,c)$ (with $c$ from $\fC$) such that for any $a'b'\equiv ab$ we have $q\vdash \varphi(x,a')$ if and only if $\models \delta(a'b',c)$. Then (taking $a'b'=\sigma^{-1}(ab)$) we have that
		\[
		q\vdash \varphi(x,\sigma^{-1}(a))\iff \models \delta(\sigma^{-1}(ab),c)\iff \models \delta(ab,\sigma(c)),
		\]
		and the last statement is clearly relatively $\fC$-definable about $\sigma$.
		
		For $(\Rightarrow)$, suppose $\{\sigma\in \Aut(\fC)\mid q\vdash \varphi(x,\sigma^{-1}(a))\}$ is defined by some formula $\delta$, i.e.\ for some $d,c$ from $\fC$, for any $\sigma \in \Aut(\fC)$ we have
		\[
		q\vdash \varphi(x,\sigma^{-1}(a))\iff \models\delta(d,\sigma(c)) \iff \models \delta(\sigma^{-1}(d),c).
		\]
		We can assume without loss of generality that $d=ab$ for some $b$ from $\fC$ (adding dummy variables to $\delta$ if necessary). But then, for $a'b'\equiv ab$ there is some automorphism $\sigma$ such that $\sigma(a'b')=ab$, so we have
		\[
		q\vdash \varphi(x,a')\iff \models \delta(a'b',c).\qedhere
		\]
	\end{proof}
	
	This easily implies that stability is sufficient for the existence of a semigroup structure.
	
	\begin{cor}\label{corollary: stability implies semigroup}
		If $T$ is stable, then $S_{\bar c}(\fC)$ has a left-continuous semigroup operation extending the action of $\Aut(\fC)$ on $S_{\bar c}(\fC)$
	\end{cor}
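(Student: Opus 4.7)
The plan is to combine Corollary~\ref{cor: semigroup iff definability of types} with the classical fact that every complete type over any set of parameters is definable in a stable theory. By Corollary~\ref{cor: semigroup iff definability of types}, the existence of a left-continuous semigroup operation on $S_{\bar c}(\fC)$ extending the action of $\Aut(\fC)$ is equivalent to every $q \in S_{\bar c}(\fC)$ being piecewise definable, so this is what I need to verify.

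First, I would invoke the standard characterisation of stability: $T$ is stable if and only if every complete type $q(x) \in S(A)$, for every $A$ (including $A = \fC$ itself), is \emph{definable}, meaning that for every formula $\varphi(x,y)$ there is a formula $d_q\varphi(y) = \delta(y,c)$ with parameters $c$ from $A$ such that for all $a \in A$, $q \vdash \varphi(x,a)$ if and only if $\models \delta(a,c)$. This applies to any $q \in S_{\bar c}(\fC)$, giving a defining formula $\delta(y,c)$ with $c$ from $\fC$ for each $\varphi(x,y)$.

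Next, I would observe that (full) definability trivially implies piecewise definability: given any $p(y) \in S(\emptyset)$, the set $\{a \in p(\fC) \mid q \vdash \varphi(x,a)\}$ is just $\{a \in p(\fC) \mid \models \delta(a,c)\}$, which is relatively $\fC$-definable in $p(\fC)$ by the very same formula $\delta(y,c)$. Thus every type in $S_{\bar c}(\fC)$ is piecewise definable, and the corollary follows from Corollary~\ref{cor: semigroup iff definability of types}.

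There is essentially no obstacle here; the work has been done in Corollary~\ref{cor: semigroup iff definability of types}, and all that remains is the one-line observation that definability is a strictly stronger property than piecewise definability, together with citing the equivalence between stability and definability of types. (The converse implication, which is not part of this corollary but is the main thrust of the section, is the genuinely nontrivial direction: one shows that piecewise definability of all types over $\fC$ forces a bound on the number of types and hence stability.)
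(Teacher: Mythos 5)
Your proof is correct and follows the paper's argument exactly: stability gives definability of types over $\fC$, definability trivially implies piecewise definability, and then Corollary~\ref{cor: semigroup iff definability of types} applies. The paper states this in a single sentence; your version merely spells out the same steps in more detail.
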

	\begin{proof}
		If $T$ is stable, then every type over $\fC$ is definable, so in particular it is piecewise definable, which by Corollary~\ref{cor: semigroup iff definability of types} implies that the semigroup structure exists.
	\end{proof}
	
	For the other direction, we will use Corollary~\ref{cor: semigroup iff definability of types} and an easy counting argument. But before that we need to establish a transfer property for piecewise definability.
	
	\begin{prop}\label{proposition: transfer of piecewise definability}
		Suppose that each complete type over $\fC$ is piecewise definable. Then each complete type over any model $M$ of cardinality less then $\kappa$ (where $\kappa$ is the degree of saturation of $\fC$) is piecewise definable.
	\end{prop}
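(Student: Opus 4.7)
The plan is to extend $q \in S(M)$ to a global $\Aut(\fC/M)$-invariant type $q' \in S(\fC)$, apply the hypothesis (piecewise definability of $q'$) to obtain an $\fC$-definition of the relevant set, and then descend the definition to $M$ using the $\Aut(\fC/M)$-invariance via a compactness argument.

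\textbf{First step: obtaining an invariant extension.} The coheir construction from the proof of Fact~\ref{fct:model_coheir} automatically yields $\Aut(\fC/M)$-invariance. Indeed, letting $\tilde p$ be an ultrafilter on $M$ extending $\{\varphi(M) : \varphi \in q\}$ and setting $q' := \{\varphi(x,c) : c \in \fC, \varphi(M,c) \in \tilde p\}$, we observe that since any $\sigma \in \Aut(\fC/M)$ fixes $M$ pointwise, the sets $\varphi(M,c)$ and $\varphi(M,\sigma(c))$ coincide, so $\varphi(x,c) \in q' \iff \varphi(x,\sigma(c)) \in q'$, i.e.\ $\sigma(q') = q'$.

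\textbf{Main argument.} Fix $p(y) \in S(\emptyset)$ and $\varphi(x,y)$. By hypothesis applied to $q'$, the set $A := \{a \in p(\fC) : \varphi(x,a) \in q'\}$ is relatively $\fC$-definable in $p(\fC)$ by some $\delta(y,c)$ with $c \in \fC$. By $\Aut(\fC/M)$-invariance of $q'$, $A$ is $\Aut(\fC/M)$-invariant, so for every $c' \equiv_M c$ the formula $\delta(y,c')$ also defines $A$ in $p(\fC)$. Writing $r(z) := \tp(c/M)$ and using saturation of $\fC$, this lets me rewrite
\[
A = \{a \in p(\fC) : \exists z \in \fC\ (r(z) \wedge \delta(a,z))\},
\]
so $A$ is relatively type-definable over $M$ in $p(\fC)$ by $\Pi^+(y) := p(y) \cup \{\exists z (\bigwedge r_0(z) \wedge \delta(y,z)) : r_0 \subseteq r \text{ finite}\}$; analogously, $p(\fC) \setminus A$ is relatively type-definable over $M$ via $\Pi^-(y)$, defined with $\neg \delta$ in place of $\delta$.

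\textbf{Descent by compactness and conclusion.} Since $A$ and $p(\fC) \setminus A$ partition $p(\fC)$, the partial type $p \cup \Pi^+ \cup \Pi^-$ is inconsistent, and compactness in the saturated $\fC$ yields a finite sub-conjunction of $\Pi^+$ giving a single formula $\psi(y)$ over $M$ which defines $A$ in $p(\fC)$. Restricting to $p(M)$, for $a \in p(M) \subseteq M$ we have $\varphi(x,a) \in q \iff \varphi(x,a) \in q' \iff a \in A \iff \psi(a)$, which shows $\{a \in p(M) : q \vdash \varphi(x,a)\}$ is relatively $M$-definable in $p(M)$ by $\psi$. The main technical subtlety lies in the first step --- recognising that the coheir construction in Fact~\ref{fct:model_coheir} is automatically $\Aut(\fC/M)$-invariant; once that is in hand, the topological descent is a standard compactness manoeuvre.
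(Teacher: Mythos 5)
Your proof is correct and follows the same strategy as the paper: extend $q$ to an $M$-invariant global coheir, apply the hypothesis to get a $\fC$-definition of the set $A$, and then descend the defining formula to $M$ using $\Aut(\fC/M)$-invariance together with saturation/homogeneity. The paper compresses the final descent into a one-liner (an invariant, relatively definable subset of $p(\fC)$ is relatively definable over the base it is invariant under), whereas you spell out the underlying compactness argument via the approximating partial types $\Pi^\pm$; this is the same idea, just made explicit.
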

	
	\begin{proof}
		Take any $q(x) \in S(M)$.
		Consider any type $p(y) \in S(\emptyset)$ and formula $\varphi(x,y)$.
		Take a coheir extension $\bar q \in S(\fC)$ of $q$. Then $q$ is invariant over $M$. By assumption, $\bar q$ is piecewise definable. So there is a formula $\delta(y,c)$ (with $c$ from $\fC$) such that for any $a \in p(\fC)$, $\bar q\vdash \varphi(x,a)$ if and only if $\delta(a,c)$. Denote by $A$ the set of all $a \in p(\fC)$ satisfying these equivalent conditions; so $A$ is a relatively definable subset of $p(\fC)$. By the invariance of $\bar q$ over $M$, we see that $A$ is invariant over $M$, and so, by $\kappa$-saturation and strong $\kappa$-homogeneity of $\fC$, the subset $A$ of $p(\fC)$ is relatively definable over $M$. In other words, there is a formula $\delta'(y,m)$ (with $m$ from $M$) such that for any $a \in p(\fC)$, $\bar q\vdash \varphi(x,a)$ if and only if $\delta'(a,m)$. Hence, for any $a \in p(M)$, $q\vdash \varphi(x,a)$ if and only if $\delta'(a,m)$.
	\end{proof}
	
	\begin{cor}\label{cor: semigroup = stability}
		$S_{\bar c}(\fC)$ has a left-continuous semigroup operation extending the action of $\Aut(\fC)$ on $S_{\bar c}(\fC)$ if and only if $T$ is stable.
	\end{cor}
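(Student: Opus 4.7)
The direction $(\Leftarrow)$ is Corollary~\ref{corollary: stability implies semigroup}, so only $(\Rightarrow)$ requires work; the plan is to combine Corollary~\ref{cor: semigroup iff definability of types}, Proposition~\ref{proposition: transfer of piecewise definability}, and a straightforward count of $1$-types. Assume the semigroup operation on $S_{\bar c}(\fC)$ exists; by Corollary~\ref{cor: semigroup iff definability of types}, every $q \in S_{\bar c}(\fC)$ is piecewise definable.

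First I would transfer this to types in finitely many variables: every $q_0 \in S_n(\fC)$ for finite $n$ is piecewise definable, because piecewise definability is preserved by restriction to a subtuple of the variables, and every such $q_0$ arises as such a restriction. To see the latter, realise $q_0$ by a tuple $\bar b$ in a larger monster $\fC^+$; by the saturation of $\fC$ some $n$-subtuple $\bar c_I$ of $\bar c$ satisfies $\bar c_I \equiv \bar b$ over $\emptyset$, and by the homogeneity of $\fC^+$ some $\sigma \in \Aut(\fC^+)$ satisfies $\sigma(\bar c_I) = \bar b$, so that $\bar a := \sigma(\bar c) \equiv \bar c$ has $\bar a_I = \bar b$ and $\tp(\bar a/\fC) \in S_{\bar c}(\fC)$ restricts to $q_0$. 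Applying Proposition~\ref{proposition: transfer of piecewise definability} then yields piecewise definability of every complete type in finitely many variables over every small model $M \preceq \fC$.

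Finally, the count: a piecewise definable $q \in S_1(M)$ is determined, with overcount, by specifying for each pair $(\varphi(x,y), p(y) \in S_y(\emptyset))$ a formula $\delta(y,c)$ with $c$ from $M$ defining $\{a \in p(M) \mid q \vdash \varphi(x,a)\}$ inside $p(M)$. There are at most $|T|$ formulas and $2^{|T|}$ types in $S(\emptyset)$, hence at most $2^{|T|}$ such pairs, and for each at most $|T| + |M|$ possible defining formulas, so $|S_1(M)| \leq (|T|+|M|)^{2^{|T|}}$. Taking $\lambda := 2^{2^{|T|}}$, which is below the strong-limit degree of saturation $\kappa$ and therefore realised by some $M \preceq \fC$ of this cardinality, the bound becomes $\lambda^{2^{|T|}} = 2^{2^{|T|} \cdot 2^{|T|}} = 2^{2^{|T|}} = \lambda$, so $T$ is $\lambda$-stable and hence stable. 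The main technical subtlety is the transfer step at the start: one must check that piecewise definability passes to restrictions (which is immediate from the definition) and exhibit the extension via the homogeneity argument above.
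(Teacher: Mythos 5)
Your proof is correct and follows essentially the same route as the paper's: piecewise definability via Corollary~\ref{cor: semigroup iff definability of types}, transfer to small models via Proposition~\ref{proposition: transfer of piecewise definability}, and a type-count yielding $\beth_2(|T|)$-stability. Your explicit reduction to finitary types over $\fC$ is not strictly needed if one reads Corollary~\ref{cor: semigroup iff definability of types} as stated (it already asserts piecewise definability for \emph{all} complete types over $\fC$, not just those in $S_{\bar c}(\fC)$), and counting $S_1(M)$ directly rather than the local type spaces $S_\varphi(M)$ is a cosmetic variation; otherwise the arguments coincide.
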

	\begin{proof}
		The ``if'' part is the content of Corollary~\ref{corollary: stability implies semigroup}.
		
		$(\Rightarrow)$ Assume $S_{\bar c}(\fC)$ has a left-continuous semigroup operation extending the action of $\Aut(\fC)$ on $S_{\bar c}(\fC)$. Then, by Corollary~\ref{cor: semigroup iff definability of types}, all complete types over $\fC$ are piecewise definable. We will show that this implies that $T$ is $\beth_2(|T|)$-stable (where $\beth_2(|T|): = 2^{2^{|T|}}$). Consider any $M \models T$ of cardinality at most $\beth_2(|T|)$. We need to show that $|S_1(M)| \leq \beth_2(|T|)$. For this it is enough to prove that for any $\varphi(x,y)$ (where $x$ is a single variable) $|S_\varphi(M)| \leq \beth_2(|T|)$. Without loss of generality $M \prec \fC$.
		By Proposition~\ref{proposition: transfer of piecewise definability}, each complete type over $M$ is piecewise definable. This implies that each type $q \in S_\varphi(M)$ is determined by a function $S_y(\emptyset) \to \lang(M)$ which takes $p(y)$ to $\delta(y,c)$ witnessing piecewise definability of $q$ (or, more precisely, of an arbitrarily chosen extension of $q$ to a type in $S_1(M)$) for the formula $\varphi(x,y)$. So $|S_\varphi(M)| \leq |\lang(M)|^{|S_y(\emptyset)|} \leq (\beth_2(|T|))^{2^{|T|}}=\beth_2(|T|)$.
	\end{proof}
	
	It is well known that if $T$ is stable, then it is $2^{|T|}$-stable. The reason why we worked with $\beth_2(|T|)$ in the above proof is that this is the ``degree'' of stability which we can deduce directly from piecewise definability. Then, knowing that $T$ is stable, we have the usual definability of types which implies $2^{|T|}$-stability.

	\section{Closed group-like implies properly group-like}
	Here, we show that closed group-like equivalence relations form a subclass of properly group-like equivalence relations (in particular, e.g.\ in Lemma~\ref{lem:main_abstract_grouplike}(2), in ``closed or properly group-like'', the ``closed'' part is redundant, and likewise in Lemma~\ref{lem:weakly_grouplike}(2).
	
	\begin{prop}
		\label{prop:closed_glike_is_properly_glike}
		Suppose $(G,X,x_0)$ is an ambit and $E$ is a closed group-like equivalence relation on $X$. Then $E$ is properly group-like.
	\end{prop}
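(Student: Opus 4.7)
The plan is to take $\tilde G := {}^\ast G$, a sufficiently ($|X|^+$-)saturated nonstandard elementary extension of $G$, regarded purely as an abstract group, and to define $\pi\colon \tilde G\to X$ by $\pi(\tilde g):=\st(\tilde g\cdot x_0)$, where $\st\colon {}^\ast X\to X$ is the standard-part map (well-defined because $X$ is compact Hausdorff). The equivalence $\equiv$ on $\tilde G$ is the fibre equivalence of $\pi$, identifying $X$ with $\tilde G/{\equiv}$; surjectivity of $\pi$ is immediate from density of $G\cdot x_0$ in $X$ together with saturation.

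The homomorphism property of $\tilde r = q\circ \pi\colon \tilde G \to X/E$ then follows from group-likeness of $E$. Standardly, $[g_1 g_2 x_0]_E = [g_1 x_0]_E \cdot [g_2 x_0]_E$ for all $g_1,g_2 \in G$; this identity transfers to $^\ast(X/E)$, and descends under $\st$ using joint continuity of multiplication in the topological group $X/E$ (which is Hausdorff by closedness of $E$ and Fact~\ref{fct:quot_T2_iff_closed}) together with $q\circ\st = \st\circ q$.

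The main effort is in verifying pseudocompleteness. Given nets with $g_i x_0 \to x_1$, $p_i \to x_2$, $g_i p_i \to x_3$, I would use density to replace each $p_i$ by some $h_i x_0$ with $h_i \in G$: for any given neighbourhoods $U_1\ni x_1$, $U_2\ni x_2$, $U_3\ni x_3$, choose $i$ large enough so that $g_i x_0\in U_1$, $p_i\in U_2$ and $g_i p_i\in U_3$, then pick $h_i\in G$ with $h_i x_0$ in a neighbourhood of $p_i$ small enough to lie in both $U_2$ and $g_i^{-1}(U_3)$, giving $h_i x_0\in U_2$ and $g_i h_i x_0\in U_3$. This yields finite satisfiability in $G \times G$ of the two-variable type $\{\tilde g_1 x_0 \in {}^\ast U_1,\ \tilde g_2 x_0 \in {}^\ast U_2,\ \tilde g_1 \tilde g_2 x_0 \in {}^\ast U_3\}$ (indexed by standard neighbourhoods of $x_1,x_2,x_3$), and saturation produces the required $\tilde g_1, \tilde g_2 \in \tilde G$.

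The subtle point, which I expect to be the hardest, is closedness of $F_0$. Although $F_0 \subseteq [x_0]_E$ (since $\tilde r$ is a homomorphism), the inclusion is typically strict---already for $G=\bR$ acting by translation on $X=\bR/\bZ$ with $E$ trivial, this construction gives $F_0=\{x_0\}$---so one cannot simply invoke closedness of the $E$-class. Instead, for $y \in X$ the condition $y \in F_0$ is equivalent to satisfiability of a type $T(\tilde h)$ in a single variable $\tilde h \in {}^\ast G$, whose conditions are $\tilde h x_0 \in {}^\ast U$ for open $U\ni y$, together with the internal conditions ``$(\exists \tilde g_1)(\tilde g_1 x_0,\ \tilde g_1 \tilde h x_0)\in {}^\ast V$'' for open $V \supseteq \Delta_X$ (where the inner existential makes each condition internal). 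If $y \in \overline{F_0}$, every neighbourhood of $y$ meets $F_0$, and the witness pair $(\tilde h', \tilde g_1')$ produced by any such point of $F_0$ simultaneously satisfies any prescribed finite subtype of $T$; by $|X|^+$-saturation, $T$ is realized in full, forcing $y \in F_0$.
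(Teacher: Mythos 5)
Your proposal follows essentially the same nonstandard route as the paper's proof: take a saturated elementary extension, let $\tilde G$ be (the $G$-sort of) it, define $[\tilde g]_\equiv = \st(\tilde g\cdot x_0)$, and verify the three clauses of Definition~\ref{dfn:prop_glike}. The pseudocompleteness argument by finite satisfiability and saturation is the one the paper uses. Two points are worth flagging.

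First, a small matter of precision: you cannot take ${}^*G$ ``regarded purely as an abstract group''; the map $\pi(\tilde g)=\st(\tilde g\cdot x_0)$ only makes sense after you pass to an elementary extension of the full structure $(G,X,\cdot)$ with predicates for the open subsets of all finite powers of $X$ (which is what the paper does), so that $\tilde g\cdot x_0\in {}^*X$ and $\st$ are available. Similarly, ${}^*(X/E)$ is not literally part of such a structure; the paper sidesteps this by working with the closed ternary relation $\mu\subseteq X^3$ encoding the graph of multiplication in $X/E$ and transferring the statement $(g_1x_0,g_2x_0,g_1g_2x_0)\in\mu$.

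Second, and this is a genuine (though easily fillable) gap: in your closedness argument, the claimed equivalence ``$y\in F_0$ iff $T(\tilde h)$ is satisfiable'' is not immediate in the direction you actually use. Realizing $T$ gives you $\tilde h$ with $\st(\tilde h x_0)=y$ and \emph{for each} open $V\supseteq\Delta_X$, \emph{some} $\tilde g_1$ with $(\tilde g_1x_0,\tilde g_1\tilde h x_0)\in {}^*V$; but to conclude $y\in F_0$ you need a \emph{single} $\tilde g_1$ witnessing this for all $V$ simultaneously, i.e. $\tilde g_1\equiv\tilde g_1\tilde h$. That quantifier swap requires a second application of saturation (the family of internal sets $S_V=\{\tilde g_1:(\tilde g_1x_0,\tilde g_1\tilde hx_0)\in {}^*V\}$ over the parameter $\tilde h$ has the finite intersection property since $S_{V\cap V'}\subseteq S_V\cap S_{V'}$, hence $\bigcap_V S_V\neq\emptyset$). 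The paper's argument is more direct here: it observes that the preimage of the diagonal under $\st\times\st$ (the monad of $\Delta_X$) is type-definable, hence so is the internal set $F_0'=\{\tilde g_1^{-1}\tilde g_2x_0\mid [\tilde g_1]_\equiv=[\tilde g_2]_\equiv\}$, and then $F_0=\st[F_0']$ is closed because the standard-part image of a type-definable set in ${}^*X$ is closed in $X$. Your observation that $F_0\subsetneq[x_0]_E$ in general (and that the existential over $\tilde g_1$ keeps each condition internal) shows you have the right picture; just make the second saturation step explicit.
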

	\begin{proof}
		The proof works via non-standard analysis.
		
		Consider the structure $M=(G,X,\cdot)$, where $G$ has its group structure, while $X$ has predicates for all open subsets of all its powers, and $\cdot\colon G\times X\to X$ is the group action.
		
		Now, given any $N=(G',X',\cdot )\equiv M$, we have a standard part function ${\st}\colon X'\to X$: $\st(x')=x$ when for every open $U\ni x$ we have $x'\in U^N$. By the Hausdorff condition, there is at most one such $x$, and by compactness, it always exists. Moreover, note that if $x'\in U^N$, then $\st(x')\in U$. Similarly, a finite tuple of elements of $X$ also has a standard part (which is the tuple of standard parts of its coordinates).
		
		Now let $M^*=(G^*,X^*,\cdot)\succeq M$ be a highly saturated elementary extension (to be precise, it is enough for it to be saturated in any cardinality greater than the local character of $X$, e.g.\ if $X$ is first-countable, we can take any ultrapower of $M$ with respect to a non-principal ultrafilter). For $\tilde g\in G^*$ let $[\tilde g]_\equiv=\st(\tilde g\cdot x_0)$. We will show that $\tilde G:=G^*$ witnesses proper group-likeness of $E$.
		
		(Note that the action of $G^*$ on $X^*$ generally does \emph{not} give us a well-defined action of $G^*$ on $X$, unless the action of $G$ on $X$ is equicontinuous, as we can have $\st(x_1^*)=\st(x_2^*)$ but $\st(\tilde g\cdot x_1^*)\neq \st(\tilde g\cdot x_2^*)$.)
		
		First, we need to show that $\tilde g\mapsto [\st(\tilde g\cdot x_0)]_E$ is a group homomorphism. Let $\mu\subseteq X^3$ be the set of triples $(x_1,x_2,x_3)$ such that $[x_1]_E\cdot [x_2]_E=[x_3]_E$. Then by group-likeness, for all $g_1,g_2\in G$ we have that $(g_1x_0,g_2x_0,g_1g_2x_0)\in \mu$, and therefore, for any open $U\supseteq \mu$ we have $(g_1x_0,g_2x_0,g_1g_2x_0)\in U$. By elementarity, for all $\tilde g_1,\tilde g_2\in G^*$ we have that $(\tilde g_1x_0,\tilde g_2x_0,\tilde g_1\tilde g_2 x_0)\in U^*$, where $U^*=U^{M^*}$. By the preceding remarks, we have also that $(\st(\tilde g_1x_0),\st(\tilde g_2x_0),\st(\tilde g_1\tilde g_2x_0))\in U$.
		
		Now, since $X/E$ is a Hausdorff topological group, the graph of its multiplication is closed, and it follows that $\mu$ is closed, and as such (because $X$ is compact Hausdorff), it is equal to the intersection of all the open sets contained it, so (because $U$ in the preceding paragraph was arbitrary) in fact, we have for any $\tilde g_1,\tilde g_2$ that $(\st(\tilde g_1x_0),\st(\tilde g_2x_0),\st(\tilde g_1\tilde g_2x_0))\in \mu$, i.e.\ $[\st(\tilde g_1x_0)]_E\cdot [\st(\tilde g_2x_0)]_E=[\st(\tilde g_1\tilde g_2 x_0)]_E$.
		
		To show that we have pseudocompleteness, note that if we have for some nets $g_ix_0\to x_1$ and $p_i\to x_2$ and $g_i\cdot p_i\to x_3$, for any open neighbourhoods $U_1,U_2,U_3$ of $x_1,x_2,x_3$ (respectively), there are $g',g''\in G$ such that $g'x_0\in U_1$, $g''x_0\in U_2$ and $g'g''x_0\in U_3$. Indeed, if we take any $i$ such that $g_ix_0\in U_1$, $p_i\in U_2$ and $g_ip_i\in U_3$, then we can take $g'=g_i$ and $g''$ such that $g''x_0\in U_2\cap g_i^{-1}[U_3]\ni p_i$ (which exist because $(X,x_0)$ is a $G$-ambit, and $U_2\cap g_i^{-1}[U_3]$ is a nonempty open set).
		
		It follows by compactness that we have $\tilde g_1,\tilde g_2\in G^*$ such that $\st(\tilde g_1\cdot x_0)=x_1$, $\st(\tilde g_2\cdot x_0)=x_2$ and $\st(\tilde g_1\tilde g_2\cdot x_0)=x_3$, which gives us pseudocompleteness.
		
		For the final part, note that $[\tilde g_1]_\equiv=[\tilde g_2]_\equiv$ means just that for every open $U\subseteq X$ we have that $\tilde g_1\cdot x_0\in U^*\leftrightarrow \tilde g_2\cdot x_0\in U^*$, which is a type-definable condition. Thus $F_0'=\{ \tilde g_1^{-1}\tilde g_2x_0\mid [\tilde g_1]_\equiv=[\tilde g_2]_\equiv \}$ is a type-definable set, so by compactness, $F_0=\{\st(x^*)\mid x^*\in F_0' \}$ is closed.
	\end{proof}
	
	Note that if $E$ is closed group-like, then $E$ itself is a symmetric closed set containing the diagonal and $E\circ E=E$, so $\mathcal E=\{E\}$ could conceivably witness \emph{uniformly} proper group-likeness of $E$ (according to Definition~\ref{dfn:unif_prop_glike}). Furthermore, closed group-like equivalence relations share many properties of uniformly properly group-like equivalence relations. This suggests the following question.
	
	\begin{qu}
		\label{qu:clsd_gplike_upglike}
		Are closed group-like equivalence relations uniformly properly group-like?
	\end{qu}

	The problem is that it is not clear how to choose $\tilde G$: we would need to have that for every $\tilde g\in \tilde G$ such that $[\tilde g]_{\equiv}\Er x_0$, for every other $\tilde g'\in \tilde G$, $[\tilde g']_{\equiv}\Er [\tilde g\tilde g']_{\equiv}$
	
	Note that for $\tilde G=G^*$ as in the proof of Proposition~\ref{prop:closed_glike_is_properly_glike}, there seems to be no obvious reason for this to be true.

	\microtypesetup{disable}
	\printbibliography
	\addcontentsline{toc}{chapter}{Bibliography}
	\microtypesetup{enable}
	\printindex
	\addcontentsline{toc}{chapter}{List of symbols and definitions}
\end{document}